\newtheorem{thm}{Theorem}[section]
\newtheorem{prop}[thm]{Proposition}
\newtheorem{lem}[thm]{Lemma}
\newtheorem{cor}[thm]{Corollary}
\newtheorem{claim}[thm]{Claim}
\newtheorem{tthm}{Theorem}
\theoremstyle{definition}
\newtheorem{definition}{Definition}[section]
\newtheorem*{tgp}{Tame Generators Problem}
\newtheorem{conj}{Conjecture}
\newtheorem{qqq}[conj]{Question}
\newtheorem{pp}[conj]{Problem}
\newtheorem{q}{Question}
\newtheorem{problem}[q]{Problem}
\theoremstyle{remark}
\begin{document}

\title{Wildness of polynomial automorphisms 
in three variables}

\author{Shigeru Kuroda}

\address{Department of Mathematics and Information Sciences, 
Tokyo Metropolitan University,  
1-1Minami-Osawa, Hachioji, 
Tokyo 192-0397, Japan}
\email{kuroda@tmu.ac.jp}

\date{}

\subjclass[2000]{Primary 14R10; Secondary 13F20. }

\maketitle

\thanks{Partly supported by the Grant-in-Aid for 
Young Scientists (B) 21740026, 
The Ministry of Education, Culture, 
Sports, Science and Technology, Japan.}

\tableofcontents

\newcommand{\E }{{\rm E}}
\newcommand{\J }{{\rm J}}
\newcommand{\T }{{\rm T}}
\newcommand{\U }{{\cal U}}
\newcommand{\I }{{\cal I}}
\newcommand{\Aff }{{\rm Aff}}
\newcommand{\Affo }{{\rm E_0}}
\newcommand{\Aut }{{\rm Aut}}
\newcommand{\GL }{{\it GL}}
\newcommand{\Exp }{\mathop{\rm Exp}\nolimits}
\newcommand{\Diag }{\mathop{\rm Diag}\nolimits}
\newcommand{\End }{\mathop{\rm End}\nolimits}
\newcommand{\lc }{\mathop{\mathrm{lc}}\nolimits}
\newcommand{\lt }{\mathop{\mathrm{lt}}\nolimits}
\newcommand{\gr }{\mathop{\rm gr}\nolimits}
\newcommand{\trd }{\mathop{\rm trans.deg}\nolimits}

\newcommand{\lcm}{{\rm lcm}}
\newcommand{\Der}{\mathop{\rm Der}\nolimits}
\newcommand{\lnd}{\mathop{\rm LND}\nolimits}
\newcommand{\rank}{\mathop{\rm rank}\nolimits}
\newcommand{\degw}{\deg_{\w}}
\newcommand{\degv}{\deg_{\vv}}
\newcommand{\degu}{\deg_{\uu}}

\newcommand{\rr}{{\bf r}}
\newcommand{\qq}{{\bf q}}
\newcommand{\hh}{{\bf h}}
\newcommand{\w}{{\bf w}}
\newcommand{\id	}{{\rm id}}
\newcommand{\zs}{\{ 0\} }
\newcommand{\sm}{\setminus}
\newcommand{\A}{G_{\w }}
\newcommand{\B}{{\mathcal B}}
\newcommand{\aA}{{\mathcal A}^{\w}}
\newcommand{\C}{{\bf C}}
\newcommand{\R}{{\bf R}}
\newcommand{\Q}{{\bf Q}}
\newcommand{\N}{{\bf N}}
\newcommand{\Z}{{\bf Z}}
\newcommand{\e}{{\bf e}}
\newcommand{\vv}{{\bf v}}

\newcommand{\uu}{{\bf u}}
\newcommand{\bt}{{\bf t}}

\newcommand{\Zn}{{\Z _{\geq 0}}}
\newcommand{\Gammap}{{\Gamma _{> 0}}}
\newcommand{\ep}{\epsilon}
\newcommand{\sym}{\mathfrak{S}}
\newcommand{\x}{{\bf x}}
\newcommand{\kx}{k[{\bf x}]}
\newcommand{\Rx}{R[{\bf x}]}
\newcommand{\Ry}{R[{\bf y}]}
\newcommand{\Kx}{K[{\bf x}]}
\newcommand{\Lx}{L[{\bf x}]}
\newcommand{\Sx}{S[{\bf x}]}
\newcommand{\kapx}{\kappa [{\bf x}]}

\newcommand{\Gi}{G(x_1)}

\renewcommand{\Theta }{F}

\newcommand{\kxr}{k({\bf x})}
\newcommand{\y}{{\bf y}}
\newcommand{\ky}{k[{\bf y}]}
\newcommand{\pl }{\mathop{\rm pl}\nolimits}

\newcommand{\el}{l}

\chapter*{Introduction}
\label{sect:intro}
\setcounter{equation}{0}

For each integral domain $R$, 
we denote by 
$\Rx=R[x_1,\ldots ,x_n]$ the polynomial ring 
in $n$ variables over $R$, 
where 
$\x =\{ x_1,\ldots ,x_n\} $ is a set of variables 
and $n\in \N $. 
For an $R$-subalgebra $A$ of $\Rx $, 
we consider the automorphism group 
$\Aut (\Rx /A)$ of the ring $\Rx $ over $A$. 
We say that $\phi \in \Aut (\Rx /R)$ is {\it affine} 
if $\deg \phi (x_i)=1$ for $i=1,\ldots ,n$, 
or equivalently 
$$
\phi (x_i)=\sum _{j=1}^na_{i,j}x_j+b_i
$$
for $i=1,\ldots ,n$ 
for some $(a_{i,j})_{i,j}\in \GL (n,R)$ 
and $b_1,\ldots ,b_n\in R$. 
Here, 
$\deg f$ denotes the total degree of $f$ for each $f\in \Rx $. 
We say that $\phi \in \Aut (\Rx /R)$ is {\it elementary} 
if $\phi $ belongs to $\Aut (\Rx /A_i)$ 
for some $i\in \{ 1,\ldots ,n\} $, 
where 
\begin{equation}\label{eq:A_i}
A_i:=R[\x \sm \{ x_i\} ]. 
\end{equation}
If this is the case, 
then we have $\phi (x_i)=\alpha x_i+f$ 
for some $\alpha \in R^{\times }$ and $f\in A_i$, 
since $\Aut (\Rx /A_i)=\Aut (A_i[x_i]/A_i)$. 
By $\Aff (R,\x )$, $\E (R,\x )$, 
and $\T (R,\x )$, we denote 
the subgroups of $\Aut (\Rx /R)$ generated by 
all the affine automorphisms, 
all the elementary automorphisms, 
and $\Aff (R,\x )\cup \E (R,\x )$, 
respectively. 
We caution that 
$\Aff (R,\y )$ (resp.\ $\E (R,\y )$ 
and $\T (R,\y )$) may not be equal to 
$\Aff (R,\x )$ (resp.\ $\E (R,\x )$ 
and $\T (R,\x )$) 
for another set $\y $ of variables, 
i.e., a subset of $\Rx $ with $n$ elements such that $\Ry=\Rx $. 
When $R$ and $\x $ are clear from the context, 
we sometimes say that $\phi \in \Aut (\Rx /R)$ 
is {\it tame} if $\phi $ belongs to $\T (R,\x )$, 
and {\it wild} otherwise.

The following problem is 
a fundamental problem in polynomial ring theory.

\begin{tgp}\rm 
When is $\T (R,\x )$ equal to $\Aut (\Rx /R)$? 
\end{tgp}

The equality holds when $n=1$. 
In fact, 
every element of $\Aut (\Rx /R)$ 
is affine and elementary.

When $n=2$, 
it is known that $\T (R,\x )=\Aut (\Rx /R)$ 
if and only if $R$ is a field. 
Here, 
the ``if" part is due to Jung~\cite{Jung} 
in the case where $R$ is a field of characteristic zero, 
and to van der Kulk~\cite{Kulk} in the general case. 
The ``only if" part is due to 
Nagata~\cite[Exercise 1.6 of Part 2]{Nagata}.

Throughout this monograph, 
$k$ denotes an arbitrary field of characteristic zero. 
In the case of $n=3$, 
Shestakov-Umirbaev~\cite{SU} gave a criterion for 
deciding whether a given element of 
$\Aut (\kx /k)$ belongs to $\T (k, \x )$. 
As a corollary, 
they proved a statement 
equivalent to the following theorem 
(\cite[Corollary 10]{SU}).

\begin{tthm}[Shestakov-Umirbaev]\label{thm:tame23}
If 
$n=3$ and $k$ is a field of characteristic zero, 
then it holds that 
$$
\Aut (\kx /k[x_3])\cap \T (k,\x )
=\T (k[x_3],\{ x_1,x_2\} ). 
$$
\end{tthm}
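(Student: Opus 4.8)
The plan is to prove the two inclusions separately, the inclusion $\supseteq$ being routine and the inclusion $\subseteq$ carrying all the content. For $\supseteq$, I would check that each generator of $\T(k[x_3],\{x_1,x_2\})$ lies in $\Aut(\kx/k[x_3])\cap\T(k,\x)$. That such a generator fixes $k[x_3]$ is clear, since both the elementary and the affine generators over the base $k[x_3]$ send $x_3$ to itself. For tameness over $k$: an elementary generator over $k[x_3]$ has the form $x_1\mapsto \alpha x_1+g(x_2,x_3)$ with $\alpha\in k^{\times}$ (or the analogue in $x_2$), which is already elementary over $k$ because $g(x_2,x_3)\in k[x_2,x_3]$. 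An affine generator over $k[x_3]$ has linear part in $\GL(2,k[x_3])$; since $k[x_3]$ is Euclidean, this matrix is a product of elementary matrices and a diagonal matrix whose entries are units, hence constants, so the corresponding automorphism factors into transvections $x_1\mapsto x_1+r(x_3)x_2$ and scalings, each elementary or affine over $k$, while the constant-term translations $x_i\mapsto x_i+b_i(x_3)$ are likewise elementary over $k$. Thus every generator is tame over $k$, giving $\supseteq$.

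For $\subseteq$, fix $\phi\in\Aut(\kx/k[x_3])\cap\T(k,\x)$ and write $\phi=(f_1,f_2,x_3)$. I would argue by induction on $\deg f_1+\deg f_2$, maintaining the invariant that $\phi$ fixes $x_3$ and is tame over $k$. By the Shestakov--Umirbaev reduction theory (the criterion referenced above), a tame automorphism that is not affine admits either an elementary reduction or one of the Shestakov--Umirbaev reductions, each strictly lowering the total degree. The heart of the argument is to show that for a $\phi$ fixing $x_3$ the only reductions that can occur are elementary reductions of $f_1$ or $f_2$ using the remaining coordinate together with $x_3$; such a reduction corresponds to composing $\phi$ with an elementary automorphism $x_1\mapsto x_1-g(x_2,x_3)$ (or the $x_2$-analogue), which fixes $x_3$ and hence lies in $\T(k[x_3],\{x_1,x_2\})$. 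The resulting $\phi'$ again fixes $x_3$, is tame over $k$, and has strictly smaller $\deg f_1+\deg f_2$, so by induction $\phi'\in\T(k[x_3],\{x_1,x_2\})$, whence so is $\phi$. The process terminates at an automorphism affine over $k$ and fixing $x_3$, which is visibly affine over $k[x_3]$ and so a generator of $\T(k[x_3],\{x_1,x_2\})$.

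The main obstacle is to verify the \emph{central claim} that no Shestakov--Umirbaev reduction, and no reduction of the third coordinate, can occur for a $\phi$ fixing $x_3$. I would derive this from the structural hypotheses built into the Shestakov--Umirbaev reductions. A reduction of the third coordinate is impossible because $f_3=x_3$ already has the minimal degree $1$, whereas any reduction strictly lowers the degree of the coordinate on which it acts. A Shestakov--Umirbaev reduction of $f_1$ or $f_2$ would use $x_3$ as one of its two auxiliary coordinates, and its defining hypotheses---the algebraic dependence of the leading terms of the two auxiliary coordinates, together with lower bounds on their degrees---cannot be met when one auxiliary coordinate is the degree-one polynomial $x_3$: dependence of a leading form with $x_3$ would force that leading form into $k[x_3]$, which is incompatible with $\phi$ being an automorphism of the asserted degree while fixing $x_3$. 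Carrying out this degree bookkeeping precisely, against the exact definitions of the several types of Shestakov--Umirbaev reductions, is the technical crux; once it is settled, the only surviving reduction is elementary in $x_1$ or $x_2$, and the induction closes.
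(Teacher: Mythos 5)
The monograph does not actually prove this theorem: it is quoted from Shestakov--Umirbaev \cite[Corollary 10]{SU}, so there is no in-paper proof to compare against. Your proposal follows what is in fact the standard derivation (the route of \cite{SU} and of the generalization \cite{SU2}), and it is sound in structure. The inclusion $\supseteq$ is handled correctly: since $k[x_3]$ is Euclidean, $\Aff (k[x_3],\{ x_1,x_2\} )$ is generated by transvections $x_1\mapsto x_1+r(x_3)x_2$, constant scalings and translations by elements of $k[x_3]$, all of which are elementary or affine over $k$ --- exactly the remark made in Chapter~1 of the monograph. Your descent for $\subseteq$ also closes as you describe: an elementary reduction cannot act on the third coordinate (the new coordinate would be constant), and an elementary reduction acting on the first or second coordinate is composition with an element of $\Aut (\kx /k[x_2,x_3])$ or $\Aut (\kx /k[x_1,x_3])$, which fixes $x_3$, is elementary over $k[x_3]$, and preserves both tameness over $k$ and the invariant $\phi (x_3)=x_3$; the base case $\degw \phi =|\w |$ for $\w =(1,1,1)$ is the affine case, and an affine automorphism fixing $x_3$ is visibly affine over $k[x_3]$.

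The one step you defer --- ruling out Shestakov--Umirbaev reductions for $\phi =(f_1,f_2,x_3)$ --- does hold, but your heuristic for it is off target in two ways, and the correct argument is simpler than you anticipate. First, an S-U reduction is a condition on the \emph{whole} permuted triple $F_{\sigma }$, not on ``two auxiliary coordinates'' attached to a reduction of $f_1$ or $f_2$; so $x_3$ must itself occur among $f_{\sigma (1)},f_{\sigma (2)},f_{\sigma (3)}$. Second, the contradiction comes from the degree constraints recorded in Chapter~5 of this monograph, not from algebraic dependence of leading forms with $x_3$. Take $\w =(1,1,1)$, so $\Gamma =\Z $. Property (P1) gives $(g_1^{\w })^2\approx (g_2^{\w })^s$ with $s\geq 3$ odd, whence $2\degw g_1=s\degw g_2$ forces $\degw g_2$ to be even; thus $\delta =\tfrac{1}{2}\degw g_2$ is an integer and, since $g_2$ is a coordinate, $\delta \geq 1$. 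Property (P7) then gives $\degw f_{\sigma (i)}>\delta \geq 1$ for $i=1,2,3$, contradicting $\degw x_3=1$. With this verification inserted in place of your sketched justification, the induction you set up is complete and proves the theorem.
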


It was previously known that some elements of 
$\Aut (\kx /k[x_3])$ 
do not belong to $\T (k[x_3],\{ x_1,x_2\} )$. 
For example, 
Nagata defined $\psi \in \Aut (\kx /k[x_3])$ by 
\begin{equation}\label{eq:Nagataautom}
\psi (x_1)=
x_1-2(x_1x_3+x_2^2)x_2-(x_1x_3+x_2^2)^2x_3,\ \ 
\psi (x_2)=
x_2+(x_1x_3+x_2^2)x_3, 
\end{equation}
and proved that $\psi $ does not belong to 
$\T (k[x_3],\{ x_1,x_2\} )$. 
He also conjectured that 
$\psi $ does not belong to $\T (k,\x )$ 
(see Theorem 1.4 and Conjecture 3.1 of Part 2 of \cite{Nagata}). 
By means of Theorem~\ref{thm:tame23}, 
Shestakov and Umirbaev 
decided that these elements of 
$\Aut (\kx /k[x_3])$ do not belong to $\T (k,\x )$. 
Thereby, 
Nagata's conjecture was also solved in the affirmative. 
For this work, 
Shestakov and Umirbaev 
were awarded the E.\ H.\ Moore Prize for 2007 
by the American Mathematical Society. 
The Tame Generators Problem is not solved in the other cases.

The purpose of this monograph is to study when 
elements of $\Aut (\kx /k)$ do not belong to 
$\T (k,\x )$ in the case of $n=3$ in more detail. 
Let $\phi $ be an element of $\Aut (\kx /k)$. 
If $\phi $ fixes two variables, 
then $\phi $ is elementary, 
and hence belongs to $\T (k,\x )$. 
Assume that $\phi $ fixes exactly one variable. 
Then, 
$\phi $ 
may not belong to $\T (k,\x )$ anymore. 
Thanks to Theorem~\ref{thm:tame23}, 
the study of such automorphisms 
is reduced to 
the case of two variables. 
For this reason, 
Part~\ref{part:domain} of this monograph 
is devoted to developing a theory of 
automorphisms 
in two variables over a domain. 
As applications, 
we prove the wildness of elements of $\Aut (\kx /k)$ 
fixing one variable 
in various situations.

Recently, 
the author generalized 
the theory of Shestakov and Umirbaev 
in \cite{SU1} and \cite{SU2}. 
This makes it possible to 
decide efficiently 
whether a given element of $\Aut (\kx /k)$ 
belongs to $\T (k,\x )$. 
In Part~\ref{chap:GSU} of this monograph, 
we deal with elements of $\Aut (\kx /k)$ 
which fix no variable. 
We prove the wildness of such elements of $\Aut (\kx /k)$ 
using the generalized Shestakov-Umirbaev theory.

Our strategy for studying wildness of automorphisms 
is as follows. 
For an $R$-subalgebra $A$ of $\Rx $, 
consider the {\it tame intersection}
$$
\Aut (\Rx /A)\cap \T (R,\x )
$$
with the automorphism group over $A$. 
If an element of $\Aut (\Rx /A)$ 
does not belong to this subgroup, 
then the element 
does not belong to $\T (R,\x )$ 
by definition. 
Hence, 
determining the tame intersection 
is the same as giving a sufficient condition for wildness 
of elements of $\Aut (\Rx /A)$. 
For example, 
Theorem~\ref{thm:tame23} 
describes the structure of the above 
subgroup for $R=k$, $n=3$ and $A=k[x_3]$, 
from which it was decided that 
some elements of $\Aut (\kx /k[x_3])$ 
do not belong to $\T (k,\x )$. 
We will prove the wildness of 
a great many automorphisms 
by studying tame intersections.

The method of tame intersection 
is particularly effective 
in the study of 
so-called exponential automorphisms. 
We say that 
$D\in \Der _R\Rx $ is {\it locally nilpotent} 
if $D^l(f)=0$ holds for some $l\in \N $ for each $f\in \Rx $. 
We denote by $\lnd _R\Rx $ 
the set of all the locally nilpotent derivations 
of $\Rx $ over $R$. 
Assume that $R$ is a $\Q $-domain, 
and $D$ is an element of $\lnd _R\Rx $. 
Then, 
we can define the {\it exponential automorphism} 
$\exp D\in \Aut (\Rx /R)$ by 
$$
(\exp D)(f)=\sum _{l\geq 0}
\frac{D^l(f)}{l!}
$$ 
for $f\in \Rx $. 
Since various automorphisms of interest have this form, 
it is of great importance 
to find conditions for wildness of exponential automorphisms. 
Note that $\ker D$ is an $R$-subalgebra of $\Rx $, 
and $(\exp D)(f)=f$ holds 
for each $f\in \ker D$. 
Hence, $\exp D$ always belongs to $\Aut (\Rx /\ker D)$. 
Therefore, 
the study of the wildness of $\exp D$ 
is reduced to the study of 
the tame intersection
$$
\Aut (\Rx /\ker D)\cap \T (R,\x ). 
$$

Generalizing the idea of ``tame intersection", 
we also formulate the notion 
of ``W-test polynomial" in Chapter \ref{sect:GSU} 
(Definition~\ref{defn:W-test}). 
This is an effective new technique 
for proving the wildness of automorphisms 
which are not explicitly defined.

Let us list problems and questions 
to be studied in this monograph. 
For $f\in \Rx $ and $D\in \Der _R\Rx $, 
it is known that the element 
$fD$ of $\Der _R\Rx $ 
is locally nilpotent 
if and only if $D(f)=0$ and $D$ is locally nilpotent 
(cf.~\cite[Corollary 1.3.34]{Essen}). 
It is also known that, 
even if $\exp D$ is tame, 
$\exp fD$ may be wild for some $f\in \ker D$. 
It is natural to ask 
whether the converse is true in general	.

\begin{q}\label{p:fixed points}\rm
Let $D$ be a nonzero element of $\lnd _R\Rx $. 
Does $\exp D$ always belong to $\T (R,\x )$ 
if $\exp fD$ belongs to $\T (R,\x )$ 
for some $f\in \ker D\sm \zs $? 
\end{q}

We say that $D\in \Der _R\Rx $ is {\it triangular} 
if $D(x_i)$ belongs to $R[x_1,\ldots ,x_{i-1}]$ 
for $i=1,\ldots ,n$. 
If this is the case, 
then $D$ is locally nilpotent. 
Moreover, 
we have 
$$
(\exp D)(x_i)=x_i+f_i
$$ 
for some $f_i\in R[x_1,\ldots ,x_{i-1}]$ 
for $i=1,\ldots ,n$. 
Hence, 
$\exp D$ belongs to $\E (R,\x )$, 
and so belongs to $\T (R,\x )$. 
In general, however, 
$\exp fD$ does not necessarily belong to $\T (R,\x )$ 
for $f\in \ker D\sm R$. 
For example, 
assume that $n=3$, 
and define $D\in \Der _k\kx $ by 
$$
D(x_1)=-2x_2,\quad D(x_2)=x_3,\quad D(x_3)=0. 
$$
Then, 
$D$ is triangular if $x_1$ and $x_3$ are interchanged, 
and $f=x_1x_3+x_2^2$ belongs to $\ker D$. 
Moreover, 
$\exp fD$ is equal to Nagata's automorphism 
defined in (\ref{eq:Nagataautom}).

The following problem arises naturally.

\begin{problem}\label{prob:triangular}\rm
Assume that $D\in \lnd _R\Rx $ is triangular. 
When does $\exp fD$ belong to $\T (R,\x )$ 
for $f\in \ker D\sm R$? 
\end{problem}

As mentioned, 
it is possible that $\T (R,\x )$ 
is not equal to $\T (R,\y )$ 
for another set $\y $ of variables. 
Namely, 
$\phi ^{-1}\circ \T(R,\x )\circ \phi $ 
may not be equal to $\T (R,\x )$ 
for $\phi \in \Aut (\Rx /R)$. 
A member of a set of variables 
is called a {\it coordinate}. 
More precisely, 
we call $f\in \Rx $ a {\it coordinate} 
of $\Rx $ over $R$ if 
$R[f,f_2,\ldots ,f_n]=\Rx $ 
for some $f_2,\ldots ,f_n\in \Rx $, 
or equivalently 
$f=\phi (x_1)$ for some $\phi \in \Aut (\Rx /R)$. 
A coordinate $f$ of $\Rx $ over $R$ 
is said to be {\it tame} 
if $f=\phi (x_1)$ for some $\phi \in \T (R,\x )$, 
and {\it wild} otherwise. 
We mention that Jie-Tai Yu conjectured that 
the two coordinates of $\kx $ over $k$ 
given in (\ref{eq:Nagataautom}) 
are wild 
(the strong Nagata conjecture). 
The conjecture was solved in the affirmative 
by Umirbaev-Yu~\cite{UY}.

For polynomials, 
we introduce three notions of wildness. 
Here, 
when $R$ is just a domain containing $\Z $, 
we mean by $\exp D$ the automorphism 
$\exp \bar{D}$ of $\bar{R}[\x]:=\Q \otimes _{\Z }\Rx $ 
for each $D\in \lnd _R\Rx $, 
where $\bar{D}$ is the natural extension 
of $D$ to $\bar{R}[\x ]$. 
Since $\Rx $ is identified with a subring of $\bar{R}[\x ]$, 
it induces an element of $\Aut (\Rx /R)$ 
if $(\exp D)(\Rx )=\Rx $.

\begin{definition}
\label{defin:wild coordinates}\rm
Let $f$ be an element of $\Rx $. 

\noindent(i) 
We say that $f$ 
is {\it totally wild} if 
$\Aut (\Rx /R[f])\cap \T (R,\x )=\{ \id _{\Rx }\} $. 

\noindent(ii) 
We say that $f$ is {\it quasi-totally wild} 
if $\Aut (\Rx /R[f])\cap \T (R,\x )$ 
is a finite group. 

\noindent(iii) 
Assume that $R$ contains $\Z $. 
We say that $f$ is {\it exponentially wild} 
if there does not exist $D\in \lnd _R\Rx \sm \zs $ 
such that $D(f)=0$ and $\exp D$ induces 
an element of $\T (R,\x )$. 
\end{definition}

In order to avoid ambiguity, 
we sometimes call $f$ a totally 
(resp.\ quasi-totally, exponentially) wild 
element of $\Rx $ over $R$ if $f$ satisfies 
(i) (resp.\ (ii), (iii)) of Definition~\ref{defin:wild coordinates}. 
If a coordinate $f$ of $\Rx $ over $R$ is a totally 
(resp.\ quasi-totally, exponentially) wild element of 
$\Rx $ over $R$, 
then we call $f$ a totally 
(resp.\ quasi-totally, exponentially) wild coordinate of 
$\Rx $ over $R$.

By definition, 
totally wild polynomials are quasi-totally wild. 
We claim that quasi-totally wild coordinates 
are wild when $n\geq 2$. 
In fact, 
if $f$ is a tame coordinate, 
and $\phi \in \T (R,\x )$ 
is such that $\phi (x_1)=f$, 
then $\Aut (\Rx /R[f])\cap \T (R,\x )$ 
contains infinitely many 
automorphisms defined for each $l\geq 0$ 
by $\phi (x_i)\mapsto \phi (x_i)$ for $i\neq 2$, 
and $\phi (x_2)\mapsto \phi (x_2+x_1^l)$.

Assume that $R$ contains $\Z $, 
and $D$ is a nonzero element of $\lnd _R\Rx $. 
Then, 
we have $(\exp D)^l=\exp lD\neq \id _{\bar{R}[\x ]}$ 
for each $l\geq 1$. 
Hence, 
$\exp D$ has an infinite order. 
If $\exp D$ induces an element of $\Aut (\Rx /R)$, 
then it also has an infinite order. 
Thus, 
if $f\in \Rx $ is quasi-totally wild, 
then $\exp D$ does not 
induce an element of 
$\Aut (\Rx /R[f])\cap \T (R,\x )$. 
Consequently, 
there does not exist $D\in \lnd _R\Rx \sm \zs $ 
such that $D(f)=0$ and 
$\exp D$ induces an element of $\T (R,\x )$. 
Therefore, 
we know that quasi-totally wild polynomials are exponentially wild. 
When $n=1$, 
no coordinate is wild or exponentially wild. 
We claim that exponentially wild coordinates 
are wild when $n\geq 2$. 
In fact, 
if $f$ is a tame coordinate, 
and $\phi \in \T (R,\x )$ 
is such that $\phi (x_1)=f$, 
then the locally nilpotent derivation 
\begin{equation}\label{eq:exp pd}
D:=\phi \circ \left( 
\frac{\partial }{\partial x_2}
\right)\circ\phi ^{-1}
\end{equation}
kills $f$ and $\exp D$ induces an element of $\T (R,\x )$.

\begin{problem}\label{prob:awcoord}\rm
Find coordinates which are 
totally (quasi-totally, exponentially) wild. 
\end{problem}

The existence of such coordinates means 
the existence of a very large class of wild 
automorphisms. 
It is noteworthy that, 
if $f$ is a totally wild coordinate, 
and $f_2,\ldots ,f_n\in \Rx $ are such that 
$k[f,f_2,\ldots ,f_n]=\Rx $, 
then the automorphism defined by 
\begin{equation}\label{eq:scalar auto}
f\mapsto f,\ \ f_2\mapsto cf_2\text{ \ and \ }
f_i\mapsto f_i\text{ \ for \ }i=3,\ldots ,n
\end{equation}
does not belong to $\T (R,\x )$ 
for each $c\in R^{\times }\sm \{ 1\} $. 
Similarly, 
if $R$ contains $\Z $, 
and if $\phi \in \Aut (\Rx /R)$ is such that 
$\phi (x_1)$ is exponentially wild, 
then the exponential automorphism for 
the locally nilpotent derivation defined as in (\ref{eq:exp pd}) 
induces an element of $\Aut (\Rx /R)$ 
not belonging to $\T (R,\x )$.

We say that $D\in \Der _R\Rx $ is {\it triangularizable} 
if $\tau ^{-1}\circ D\circ \tau $ is triangular 
for some $\tau \in \Aut (\Rx /R)$. 
If this is the case, 
then $D$ is locally nilpotent. 
Since 
\begin{equation}\label{eq:exp normal}
\exp \tau ^{-1}\circ D\circ \tau 
=\tau ^{-1}\circ (\exp D)\circ \tau , 
\end{equation}
it follows that $\exp D$ belongs to $\T (R,\x )$ if 
$\tau $ belongs to $\T (R,\x )$. 
If $\tau $ does not belong to $\T (R,\x )$, 
however, 
$\exp D$ does not belong to $\T (R,\x )$ in general.

Due to the following theorem of 
Rentschler~\cite{Rentschler}, 
every element of $\lnd _k\kx $ is triangularizable 
when $n=2$.

\begin{tthm}[Rentschler]
\label{thm:Rentschler original}
Assume that $n=2$. 
For each $D\in \lnd _k\kx \sm \zs $, 
there exist $\tau \in \T (k,\x )$ 
and $f\in k[x_1]\sm \zs $ such that 
$$
\tau ^{-1}\circ D\circ \tau =f\frac{\partial }{\partial x_2}. 
$$ 
Hence, 
there exits a coordinate $g$ of $\kx $ over $k$ 
such that $\ker D=k[g]$. 
\end{tthm}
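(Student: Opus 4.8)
The plan is to prove the displayed triangularization first and then read off the ``hence'' clause for free. Once we have $\tau \in \T(k,\x)$ and $f \in k[x_1]\sm\zs$ with $\tau^{-1}\circ D\circ\tau = f\,\partial/\partial x_2$, the kernel of $f\,\partial/\partial x_2$ is visibly $k[x_1]$, since $f\neq 0$ and $\kx$ is a domain force $f\,\partial/\partial x_2(p)=0$ to be equivalent to $\partial p/\partial x_2=0$. Hence $\ker D=\tau(k[x_1])=k[\tau(x_1)]$, and $g:=\tau(x_1)$ is a coordinate of $\kx$ over $k$ because $\tau$ is an automorphism. Thus everything reduces to producing the tame conjugator $\tau$.

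To build $\tau$ I would argue by induction on the total degree of $D$, writing $D=a\,\partial/\partial x_1+b\,\partial/\partial x_2$ with $a=D(x_1)$ and $b=D(x_2)$, using the pair $(\deg a,\deg b)$ (or simply $\deg a+\deg b$) as the invariant to be reduced. The degenerate cases are immediate: if $a=0$, then writing $b=\sum_j c_j(x_1)x_2^j$ one checks in characteristic zero that local nilpotency of $b\,\partial/\partial x_2$ forces $b\in k[x_1]$, so $D$ is already in the required form with $\tau=\id$; the case $b=0$ is symmetric after the affine swap $x_1\leftrightarrow x_2$, which lies in $\Aff(k,\x)\subseteq\T(k,\x)$. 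So I may assume $a,b\neq 0$.

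The heart of the argument is the leading part of $D$. Passing to the associated graded derivation $\gr D$ for the total-degree filtration (with $\partial/\partial x_i$ lowering degree by one) yields a nonzero homogeneous derivation that is again locally nilpotent. I would then invoke the classification of homogeneous elements of $\lnd_k\kx$: the kernel of such a derivation is generated by a homogeneous $h$, the coefficient vector is proportional to $(\partial h/\partial x_2,-\partial h/\partial x_1)$, and local nilpotency forces the fibers of $h$ to be disjoint affine lines, whence $h$ is a power of a linear form. After a linear, hence tame, change of coordinates I may take that linear form to be $x_1$, so that $\gr D=c\,x_1^{m}\,\partial/\partial x_2$ for some $c\in k^{\times}$ and $m\geq 0$; in particular the leading part of $D$ is triangular (or, in the sub-case $\deg a>\deg b$, equals $c\,x_2^{d}\,\partial/\partial x_1$, the mirror situation). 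Establishing this classification cleanly and making the linear normalization compatible with the induction is the step I expect to be the main obstacle.

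With the leading part normalized to $c\,x_1^{m}\,\partial/\partial x_2$, the top-degree term of $b=D(x_2)$ is the pure power $c\,x_1^{m}$, while $a=D(x_1)$ has strictly smaller degree. I would then apply an elementary automorphism $x_2\mapsto x_2-q(x_1)$ with $q\in k[x_1]$ chosen to cancel this leading term (and the analogous move in $x_1$ in the mirror sub-case); such maps lie in $\E(k,\x)\subseteq\T(k,\x)$. Each reduction strictly decreases the chosen degree invariant while keeping the conjugator inside $\T(k,\x)$, which is closed under composition. The induction therefore terminates at a derivation $f\,\partial/\partial x_2$ with $f\in k[x_1]\sm\zs$, and composing the successive tame conjugators yields the desired $\tau\in\T(k,\x)$. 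The careful bookkeeping ensuring that every step genuinely lowers the invariant, even as the intervening linear changes perturb both $a$ and $b$, is where the real work lies.
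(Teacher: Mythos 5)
First, note that the paper contains no proof of this statement to compare against: it is quoted directly from Rentschler's paper \cite{Rentschler}, so your proposal can only be measured against the classical argument, whose shape you have correctly reproduced. Your derivation of the ``hence'' clause is sound ($\ker \bigl(f\,\partial /\partial x_2\bigr)=k[x_1]$, so $\ker D=\tau (k[x_1])=k[\tau (x_1)]$ with $g=\tau (x_1)$ a coordinate), as are the degenerate cases and the classification of nonzero homogeneous elements of $\lnd _k\kx $: since the kernel is factorially closed of transcendence degree one, no two non-proportional linear forms can both be killed, so the homogeneous kernel generator is $\lambda \ell ^m$ for a single linear form $\ell $ (your ``disjoint fibers'' picture needs base change to $\bar{k}$, but descent of $\ell $ to $k$ is easy in characteristic zero). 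One remark on economy: in two variables over a field, $\T (k,\x )=\Aut (\kx /k)$ by Jung--van der Kulk \cite{Jung}, \cite{Kulk}, which the paper cites, so the tameness of $\tau $ is automatic once any conjugator exists; your explicit tame bookkeeping matters only if you want Rentschler's theorem independent of Jung's theorem.

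The genuine gap sits exactly where you flag ``the real work,'' and it is more than bookkeeping. After normalizing so that the top graded piece is $cx_1^m\,\partial /\partial x_2$, your leading-part analysis controls only the top homogeneous part of $D$ \emph{as a derivation}: it gives $\deg a\leq m-1$ for $a=D(x_1)$ and pins the degree-$m$ form of $b=D(x_2)$ to $cx_1^m$, but it says nothing about the leading form $\bar{a}$ of $a$. Conjugating by $x_2\mapsto x_2-q(x_1)$ replaces $b$ by a substitution instance of $b-q'(x_1)a$, so cancelling $cx_1^m$ forces the leading form of $q'a$ to equal $cx_1^m$, i.e.\ you need $\bar{a}$ to be a constant times a power of $x_1$ --- a fact your graded classification does not deliver. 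It is in fact true, but only via the Newton-polygon rigidity of a kernel generator: writing $\ker D=k[g]$ (which itself needs Zaks' lemma or the kernel theorem) and $D=h(g)\Delta _g$, the leading form of $g$ has the shape $a(x_i-bx_j^l)^m$ as in the analogue of Proposition~\ref{prop:slope}, and this edge structure is what forces $\bar{a}$, $\bar{b}$ to be powers of the same linear form and simultaneously controls the degree inflation caused by the outer substitution $x_2\mapsto x_2+q(x_1)$ (without which strict decrease of $\deg a+\deg b$ is not established --- a triangular substitution can raise total degree). The standard repair is to run the induction on $\deg g$ rather than on $(\deg a,\deg b)$: a single elementary move then visibly lowers $\deg g$, and the derivation follows. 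As written, your induction does not close.
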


When $n\geq 3$, 
there exist elements of $\lnd _k\kx $ 
which are not triangularizable 
due to Bass~\cite{Bass} and Popov~\cite{Popov}. 
Combining this result and 
a result of Smith~\cite{Smith}, 
Freudenburg showed that, 
for each $n\geq 4$, 
there exists $D\in \lnd _k\kx $ 
such that $D$ is not triangularizable 
and $\exp D$ belongs to $\T (k,\x )$ 
(cf.\ Lemma 3.36 of \cite{Fbook} 
and the remark following it). 
So 
he asked the following question 
(cf.~\cite[Section~5.3.2]{Fbook}).

\begin{q}[Freudenburg]\label{q:Freudenburg}\rm
Assume that $n=3$. 
Let $D\in \lnd _k\kx $ be such that 
$\exp D$ belongs to $\T (k,\x )$. 
Is $D$ always triangularizable? 
\end{q}

We say that $D\in \Der _R\Rx $ is {\it affine} 
if $\deg D(x_i)\leq 1$ for $i=1,\ldots ,n$. 
If this is the case, 
then $\deg D^l(x_i)\leq 1$ holds for each $l\geq 0$. 
Hence, if $D\in \lnd _R\Rx $ is affine, 
then we have $\deg {(\exp D)(x_i)}=1$ for $i=1,\ldots ,n$. 
Thus, $\exp D$ belongs to $\Aff (R,\x )$, 
and so belongs to $\T (R,\x )$.

We say that $D\in \Der _R\Rx $ 
is {\it tamely triangularizable} 
if $\tau ^{-1}\circ D\circ \tau $ is triangular 
for some $\tau \in \T(R,\x )$, 
and is {\it tamely affinizable} 
if $\tau ^{-1}\circ D\circ \tau $ is affine 
for some $\tau \in \T(R,\x )$. 
If $D\in \lnd _R\Rx $ is 
tamely triangularizable or tamely affinizable, 
then it is obvious that 
$\exp D$ belongs to $\T (R,\x )$. 

We pose the following problem. 

\begin{problem}\label{q:strong}\rm
Find conditions under which 
$D$ is tamely triangularizable or tamely affinizable 
if and only if $\exp D$ belongs to $\T (R,\x )$. 
\end{problem}

There exists a relation between 
Question~\ref{p:fixed points} 
and Problem~\ref{q:strong} as follows. 
Let $\mathcal{D}$ be a subset of $\lnd _R\Rx $ such that 
$fD$ belongs to $\mathcal{D}$ for each 
$D\in \mathcal{D}$ and $f\in \ker D$. 
Assume that $D$ is tamely triangularizable 
or tamely affinizable for every $D\in \mathcal{D}$ 
such that $\exp D$ belongs to $\T (R,\x )$. 
Then, 
Question~\ref{p:fixed points} has an affirmative answer 
for each $D\in \mathcal{D}$. 
Actually, 
if $\exp fD$ belongs to $\T (R,\x )$ for 
$D\in \mathcal{D}$ and $f\in \ker D\sm \zs $, 
then there exists $\tau \in \T (R,\x )$ such that 
$$
D':=\tau ^{-1}(f)(\tau ^{-1}\circ D\circ \tau ) 
=\tau ^{-1}\circ (fD)\circ \tau 
$$
is triangular or affine by assumption, 
since $fD$ belongs to $\mathcal{D}$. 
If $D'$ is triangular, 
then $D_0:=\tau ^{-1}\circ D\circ \tau $ 
must be triangular, 
since $\tau ^{-1}(f)\neq 0$. 
Hence, 
$\exp D$ belongs to $\T (R,\x )$. 
If $D'$ is affine, 
then $D_0$ must be affine, 
since $\deg D'(x_i)=\deg \tau ^{-1}(f)+\deg D_0(x_i)$. 
Hence, 
$\exp D$ belongs to $\T (R,\x )$.

We can modify 
Question~\ref{q:Freudenburg} 
as follows.

\begin{q}\label{q:strong2}\rm
Assume that $n=3$. 
Let $D\in \lnd _k\kx $ be such that 
$\exp D$ belongs to $\T (k,\x )$. 
Is $D$ always tamely triangularizable? 
\end{q}

The {\it rank} $\rank D$ of $D\in \Der _k\kx $ 
is by definition the minimal number $r\geq 0$ 
for which there exists $\sigma \in \Aut (\kx /k)$ 
such that $D(\sigma (x_i))\neq 0$ for $i=1,\ldots ,r$ 
(cf.~\cite{Frank}). 
For example, 
if $n=2$, 
then we have $\rank D\leq 1$ for each $D\in \lnd _k\kx $ 
by Theorem~\ref{thm:Rentschler original}. 
We remark that, 
if $n\geq 2$ and $D$ is triangular, 
then $D$ always kills a tame coordinate of $\kx $ over $k$. 
In fact, 
$D$ kills $x_1$ if $D(x_1)=0$, 
and $D(x_1)x_2-f$ if $D(x_1)\neq 0$, 
where $f\in k[x_1]$ is such that 
$\partial f/\partial x_1=D(x_2)$. 
Hence, 
if $n\geq 2$ and $D$ is triangular, 
then we have $\rank D\leq n-1$. 
Consequently, 
if $n\geq 2$ and $D$ is triangularizable, 
then we have $\rank D\leq n-1$. 
Freudenburg~\cite{Frank} 
gave the first examples of $D\in \lnd _k\kx $ 
with $\rank D=n$ for each $n\geq 3$.

In connection with Question~\ref{q:Freudenburg}, 
we are interested in the following question.

\begin{q}\label{prob:rank3}\rm
Assume that $n\geq 3$. 
Does there exist $D\in \lnd _k\kx $ such that 
$\rank D=n$ and $\exp D$ belongs to $\T (k,\x )$? 
\end{q}

We mention that, 
when $n\geq 3$, 
tameness and wildness of $\exp D$ 
are not previously determined 
for any $D\in \lnd _k\kx $ with $\rank D=n$.

The study of polynomial automorphisms 
is closely related to the study of 
locally nilpotent derivations, 
so we are interested in locally nilpotent derivations. 
The following problem is an important problem 
in polynomial ring theory with little progress.

\begin{problem}
\label{p:classify}\rm
Describe all the elements of $\lnd _k\kx $. 
\end{problem}

When $n=1$, 
every element of $\lnd _k\kx $ has the form 
$c(\partial /\partial x_1)$ for some $c\in k$. 
When $n=2$, 
Theorem~\ref{thm:Rentschler original} gives 
a solution to the problem. 
Assume that $n=3$. 
If $\rank D\leq 2$, 
then $\sigma ^{-1}\circ D\circ \sigma $ 
belongs to $\lnd _{k[x_3]}\kx $ 
for some $\sigma \in \Aut (\kx /k)$. 
Since each element of $\lnd _{k[x_3]}\kx $ naturally 
extends to an element of $\lnd _{k(x_3)}k(x_3)[x_1,x_2]$, 
the problem is reduced to the case of $n\leq 2$. 
When $\rank D=3$, 
the problem remains open. 
Actually, 
there is no simple way to 
find $D\in \lnd _k\kx $ with $\rank D=3$. 
The problem is also open 
when $n\geq 4$ and $\rank D\geq 3$.

The outline of this monograph is as follows 
(see also~\cite{bessatsu} 
for a summary of an early 
version of this monograph). 
Part~\ref{part:domain} 
deals with automorphisms in two variables 
over a domain as mentioned. 
In Chapter~\ref{chap:taoad}, 
we give a useful criterion for deciding 
wildness of elements of $\Aut (\Rx /R)$. 
In the study of polynomial automorphisms, 
the notion of ``elementary reduction" 
is of great importance. 
As an analogue of this notion, 
we introduce the notion of ``affine reduction". 
Note that $\Aff (R,\x )$ 
is contained in $\E (R,\x )$ 
for some kinds of $R$. 
For example, if $R$ is a local ring or a Euclidean domain, 
then every element of $\GL (n,R)$ 
is a product of elementary matrices. 
Consequently, 
$\Aff (R,\x )$ is contained in $\E (R,\x )$. 
In general, 
however, 
$\Aff (R,\x )$ is not contained in $\E (R,\x )$, 
and so 
$\T (R,\x )$ is not equal to $\E (R,\x )$. 
To analyze elements of $\T (R,\x )\sm \E (R,\x )$, 
the notion of ``affine reduction" is essential. 
For a subgroup $G$ of $\Aff (R,\x )$, 
let $G^+$ be the subgroup of 
$\T (R,\x )$ generated by $G$ and $\E (R,\x )$. 
Then, 
we can naturally formulate a criterion for 
deciding whether a given element of $\Aut (\Rx /R)$ 
belongs to $G^+$ 
using the notions of 
``affine reduction" and ``elementary reduction" 
(Theorem~\ref{thm:AEreduction}). 
In the case where $G=\Aff (R,\x )$, 
this gives a tameness criterion for elements of $\Aut (\Rx /R)$.

Chapter~\ref{chap:trc} is devoted to developing the machinery 
to be used in Chapters~\ref{chap:t&t} and \ref{chap:atit}. 
For $f\in \Rx \sm R$, 
we say that $f$ is {\it tamely reduced} over $R$ 
if 
$$
\deg _{x_1}\tau (f)+\deg _{x_2}\tau (f)
\leq \deg _{x_1}f+\deg _{x_2}f
$$
holds for every $\tau \in \T (R,\x )$. 
Here, 
$\deg _{x_i}f$ denotes the degree of $f$ in $x_i$ 
for each $i$. 
We investigate the properties of such a polynomial 
using the tameness criterion given in 
Chapter~\ref{chap:taoad}. 
As a consequence, 
we determine the structure of 
the tame intersection 
$$
H(f):=\Aut (\Rx /R[f])\cap \T (R,\x )
$$
in a coarse sense 
(Theorems~\ref{prop:intersection theorem} and 
\ref{thm:reduced coordinate}). 
This result plays key roles in the proofs 
of various interesting theorems 
in the following two chapters.

In Chapter~\ref{chap:t&t}, 
we study tameness and 
wildness of exponential automorphisms. 
In Section~\ref{sect:triangularizability}, 
we answer Question~\ref{p:fixed points} 
in the affirmative when 
$n=2$ (Theorem~\ref{cor:fixed points}), 
and when $n=3$, $R=k$ 
and $D$ kills a tame coordinate of $\kx $ over $k$ 
(Theorem~\ref{thm:anstoF} (ii)). 
We solve Problem~\ref{q:strong} when $n=2$ 
(Theorem~\ref{thm:triangularizability}). 
We answer Question~\ref{q:strong2} 
in the affirmative when 
$D$ kills a tame coordinate 
of $\kx $ over $k$ (Theorem~\ref{thm:anstoF} (i)). 
In Section~\ref{sect:triangular}, 
we solve Problem~\ref{prob:triangular} 
in the cases where $n=2$ (Theorem~\ref{thm:hD}), 
and where $n=3$ and $R=k$ (Theorem~\ref{thm:triangular3}). 
As an application, 
we describe all the wild automorphisms of $\kx $ over $k$ 
of the form $\exp fD$ for some $D\in \lnd _k\kx $ 
and $f\in \ker D$ in which $D$ is triangular 
(Proposition~\ref{prop:triangular family}). 
In Section~\ref{sect:affine lnd}, 
we study a problem similar to Problem~\ref{prob:triangular} 
for affine locally nilpotent derivations 
instead of triangular derivations, 
and solve this problem for $n=2$ 
(Theorem~\ref{thm:affine lnd}).

Assume that $R$ contains $\Z $, 
and let $S$ be an over domain of $R$, 
i.e., a domain which contains $R$ as a subring. 
Consider a coordinate $f\in \Rx $ of $\Sx $ 
over $S$ with 
$H(f)\neq \{ \id _{\Rx }\} $ 
and 
$\deg _{x_1}f\geq \deg _{x_2}f$  
which is tamely reduced over $R$. 
The main result of Chapter~\ref{chap:atit} 
is a classification of such 
elements of $\Rx $. 
If $\deg _{x_2}f=0$, 
then $f$ belongs to $R[x_1]$. 
Since $f$ is a coordinate of $\Sx $ over $S$, 
it follows that $f$ is a linear polynomial in $x_1$ over $R$. 
In the case where $\deg _{x_2}f\geq 1$, 
we classify such polynomials into 
five types of polynomials 
(Definition~\ref{def:invariant coord}, 
Theorem~\ref{thm:Gamma (f)}). 
Moreover, 
we describe the structure of $H(f)$ 
for the five types of 
polynomials (Theorem~\ref{thm:H(f)}). 
As a consequence, 
we show that $f$ is exponentially wild 
if and only if $f$ is quasi-totally wild 
for a coordinate $f\in \Rx $ of $\Sx $ over $S$ 
(Corollary~\ref{cor:expw=>qtw} (i)). 
In Section~\ref{sect:tame intersect appl}, 
we apply the results to the coordinate 
$f_i:=(\exp hD)(x_i)$ of $\Rx $ over $R$ for $i=1,2$. 
Here, 
$R$ is a $\Q $-domain, 
and $D\in \lnd _R\Rx $ and $h\in \ker D\sm R$ 
are such that $D$ is triangular and $\exp hD$ is wild. 
Then, we completely determine wildness, 
quasi-totally wildness 
and totally wildness of $f_i$ 
(Theorem~\ref{thm:nagata qtw}), 
and thereby solving 
Problem~\ref{prob:awcoord} when $n=2$ and $R$ is a $\Q $-domain.

Part~\ref{chap:GSU} 
contains highly technical applications of 
the generalized Shestakov-Umirbaev theory. 
Throughout, 
we assume that $n=3$, 
and study the 
wildness of elements of $\Aut (\kx /k)$. 
In Chapter~\ref{sect:GSU}, 
we briefly review 
the 
generalized Shestakov-Umirbaev 
theory, 
and derive some consequences needed later. 
In Chapter~\ref{chapter:atcoord} 
we prove that some coordinates of $\kx $ over $k$ 
are quasi-totally wild 
or totally wild (Corollary~\ref{cor:aw}). 
Thus, 
we solve Problem~\ref{prob:awcoord} 
for $n=3$ and $R=k$. 
This is one of the most difficult result 
in this monograph.

In Chapter~\ref{chapter:rank3}, 
we study 
Question~\ref{prob:rank3}. 
First, we construct large families of elements of $\lnd _k\kx $ 
(Theorems~\ref{thm:lsc1} (i) and \ref{thm:lsc2} (i)) 
by generalizing the construction of Freudenburg~\cite{Flsc}. 
Then, 
we check that most of the members of the families 
have rank three 
(Theorems~\ref{prop:rank} and ~\ref{prop:rank2}) 
by using a technique based on ``plinth ideal" 
(Proposition~\ref{prop:rank3criterion}). 
Finally, 
we completely determine the tameness and wildness of 
the exponential automorphisms by means of 
``W-test polynomials" 
(Theorems~\ref{thm:lsc1} (iii) and \ref{thm:lsc2} (ii)). 
The result is that 
$\exp D$ is wild whenever $\rank D=3$. 
This gives a partial affirmative answer 
to Question~\ref{prob:rank3}. 
In the last section, 
toward the solution of Problem~\ref{p:classify} for $n=3$, 
we discuss how to get more examples of 
locally nilpotent derivations of rank three.

We conclude this monograph with 
problems, questions and conjectures.

\part{Automorphisms in two variables over a domain}
\label{part:domain}

\chapter{Tame automorphisms over a domain}
\label{chap:taoad}

\section{Graded structures}\setcounter{equation}{0}
\label{sect:grading}

Graded structures on $\Rx $ 
play important roles in the study of $\Aut (\Rx /R)$. 
Let $\Gamma $ be a 
{\it totally ordered additive group}, i.e., 
an additive group 
equipped with a total ordering such that 
$\alpha \leq \beta $ implies 
$\alpha +\gamma \leq \beta +\gamma $ 
for each $\alpha ,\beta ,\gamma \in \Gamma $. 
Then, $\Gamma $ is torsion-free. 
In this monograph, 
we assume that a totally ordered additive group 
is always finitely generated without mentioning it. 
Then, 
it follows that $\Gamma $ is free. 
Hence, 
we sometimes regard $\Gamma $ as a subgroup of 
the $\Q $-vector space $\Q \otimes _{\Z }\Gamma $.

Let $\w =(w_1,\ldots ,w_n)$ 
be an $n$-tuple of elements of $\Gamma $. 
We define the $\w $-{\it weighted grading} 
$$\Rx =\bigoplus _{\gamma \in \Gamma }\Rx _{\gamma }$$ 
by setting $\Rx _{\gamma }$ 
to be the $R$-submodule of $\Rx $ 
generated by the monomials 
$x_1^{a_1}\cdots x_n^{a_n}$ 
for $a_1,\ldots ,a_n\in \Zn $ 
with $\sum _{i=1}^na_iw_i=\gamma $ 
for each $\gamma \in \Gamma $. 
Here, 
$\Zn $ denotes the set of nonnegative integers. 
The set of positive integers will be denoted by $\N $. 
We say that $f\in \Rx \sm \zs $ 
is $\w $-{\it homogeneous} 
if $f$ belongs to $\Rx _{\gamma }$ for some $\gamma \in \Gamma $. 
Write $f\in \Rx \sm \zs $ as 
$f=\sum _{\gamma \in \Gamma }f_{\gamma }$, 
where $f_{\gamma }\in 
\Rx _{\gamma }$ for each $\gamma \in \Gamma $. 
Then, 
we define the $\w $-{\it degree} of $f$ by 
$$
\degw f=\max \{ \gamma \in \Gamma \mid f_{\gamma }\neq 0\} . 
$$
We define $f^{\w }=f_{\delta }$, 
where $\delta :=\deg _{\w }f$. 
When $f=0$, 
we define $f^{\w }=0$ and $\deg _{\w } f=-\infty $. 
Here, $-\infty $ is 
a symbol which is less than any element of $\Gamma $. 
If $\Gamma =\Z $ and $w_i=1$ for $i=1,\ldots ,n$, 
then the $\w $-degree of $f$ 
is the same as the total degree $\deg f$ of $f$. 
When $\Gamma =\Z ^n$, 
we denote $\deg _{x_i}f=\deg _{\e _i}f$ for $i=1,\ldots ,n$, 
where $\e _1,\ldots ,\e _n$ 
are the coordinate unit vectors of $\R ^n$. 
Let $f=g/h$ be an element of the field of 
fractions of $\Rx $, 
where $g,h\in \Rx $ with $g\neq 0$. 
Then, 
we define 
$$
\degw f=\degw g-\degw h 
\quad \text{and}\quad 
f^{\w }=\frac{g^{\w }}{h^{\w }}. 
$$
We note that this definition does not depend 
on the choice of $g$ and $h$.

Next, 
let $\Omega $ be the module of differentials of 
$\Rx $ over $R$, and $\omega $ an element of 
the $r$-th exterior power $\bigwedge ^r\Omega $ 
of the $\Rx $-module $\Omega $ 
for $r\in \N $. 
Then, 
we may uniquely express
\begin{equation}\label{eq:omega}
\omega =\sum _{1\leq i_1<\cdots <i_r\leq n}
f_{i_1,\ldots ,i_r}dx_{i_1}\wedge \cdots \wedge dx_{i_r},
\end{equation}
where $f_{i_1,\ldots ,i_r}\in \Rx $ for each $i_1,\ldots ,i_r$. 
Here, 
$df$ denotes the differential of $f$ for each $f\in \Rx $. 
We define the $\w $-{\it degree} of $\omega $ by 
\begin{equation*}
\deg _{\w }\omega =\max \{ \deg_{\w }
f_{i_1,\ldots ,i_r}x_{i_1}\cdots x_{i_r}\mid 
1\leq i_1<\cdots <i_r\leq n\} . 
\end{equation*}
Then, 
we have 
\begin{equation}\label{eq:deg df = deg f}
\deg _{\w }df
=\max \{ \deg _{\w }(\partial f/\partial x_i)x_i\mid i=1,\ldots ,n\} 
\leq \deg _{\w }f
\end{equation}
for each $f\in \kx $, 
since $df=\sum _{i=1}^n(\partial f/\partial x_i)dx_i$. 
Here, 
we note that the equality holds 
if $f$ does not belong to $R$, 
$R$ contains $\Z $, 
and $w_i>0$ for $i=1,\ldots ,n$. 
For $f\in \Rx $, $\omega \in \bigwedge ^r\Omega $ 
and $\eta \in \bigwedge ^s\Omega $, 
we have 
$$
\degw f\omega =\degw f+\degw \omega 
\quad \text{and}\quad 
\degw \omega \wedge \eta \leq \degw \omega +\degw \eta . 
$$ 
Take $f_1,\ldots ,f_r\in \Rx $ 
and set $\omega =df_1\wedge \cdots \wedge df_r$. 
Then, 
it follows that 
\begin{equation}\label{eq:inequomega}
\degw \omega 
\leq \sum _{i=1}^r\degw df_i
\leq \sum _{i=1}^r\degw f_i. 
\end{equation}
When $f_1,\ldots ,f_r$ are $\w $-homogeneous, 
we have $\degw \omega =\sum _{i=1}^r\degw f_i$ 
if and only if $\omega \neq 0$. 
Indeed, 
if $\omega $ is written as in (\ref{eq:omega}), 
then the $\w $-degree of each monomial appearing in 
$f_{i_1,\ldots ,i_r}x_{i_1}\cdots x_{i_r}$ 
is equal to $\sum _{i=1}^r\degw f_i$. 
Let $K$ be the field of fractions of $R$. 
Then, 
it is well-known that $\omega =0$ 
if $f_1,\ldots ,f_r$ 
are algebraically dependent over $K$ 
(cf.~\cite[Section 26]{Matsumura}). 
Thus, 
we have 
$\degw \omega <\sum _{i=1}^r\degw f_i$ 
if $f_1^{\w },\ldots ,f_r^{\w }$ 
are algebraically dependent over $K$.

For an endomorphism $\phi $ of 
the $R$-algebra $\Rx $, 
we define an $n\times n$ matrix by 
$$
J\phi =\left( 
\frac{\partial \phi (x_i)}{\partial x_j}
\right) _{i,j}. 
$$
Then, 
we have 
$d\phi (x_1)\wedge \cdots \wedge d\phi (x_n)
=(\det J\phi )dx_1\wedge \cdots \wedge dx_n$. 
If $\phi $ is an element of $\Aut (\Rx /R)$, 
then $\det J\phi $ belongs to $R^{\times }$. 
Hence, 
we obtain the inequality 
\begin{align}\begin{split}\label{eq:ineq-wedge}
&\deg _{\w }\phi 
:=\sum _{i=1}^n\degw \phi (x_i)
\geq 
d\phi (x_1)\wedge \cdots \wedge d\phi (x_n) \\
&\quad
=\degw (\det J\phi )dx_1\wedge \cdots \wedge dx_n
=\sum _{i=1}^nw_i=:|\w |. 
\end{split}\end{align}
If $\phi (x_1)^{\w},\ldots ,\phi (x_n)^{\w }$ 
are algebraically dependent over $K$, 
then we have $\degw \phi >|\w |$ 
by the discussion above.

For an $R$-submodule $A$ of $\Rx $, 
we denote by $A^{\w }$ the $R$-submodule of $\Rx $ 
generated by $\{ f^{\w }\mid f\in A\} $. 
If $A$ is an $R$-subalgebra of $\Rx $, 
then $A^{\w }$ forms an $R$-subalgebra of $\Rx $, 
since $(fg)^{\w }=f^{\w }g^{\w }$ 
holds for each $f,g\in \Rx $. 
Clearly, 
$R[f_1^{\w },\ldots ,f_r^{\w }]$ 
is contained in 
$R[f_1,\ldots ,f_r]^{\w }$ 
for $f_1,\ldots ,f_r\in \Rx $. 
We remark that, 
if $f_1^{\w },\ldots ,f_r^{\w }$ 
are algebraically independent over $K$, 
then we have 
$$
R[f_1^{\w },\ldots ,f_r^{\w }]
=R[f_1,\ldots ,f_r]^{\w }. 
$$
To see this, 
take any $h=\sum _{i_1,\ldots ,i_r}c_{i_1,\ldots ,i_r}
f_1^{i_1}\cdots f_r^{i_r}\in R[f_1,\ldots ,f_r]\sm \zs $, 
and define $\mu $ to be the maximum among 
$\degw f_1^{i_1}\cdots f_r^{i_r}$ 
for $i_1,\ldots ,i_r$ 
with $c_{i_1,\ldots ,i_r}\neq 0$, 
and $h'$ to be the sum of 
$$
(c_{i_1,\ldots ,i_r}
f_1^{i_1}\cdots f_r^{i_r})^{\w }
=c_{i_1,\ldots ,i_r}
(f_1^{\w })^{i_1}\cdots (f_r^{\w })^{i_r}
$$ 
for $i_1,\ldots ,i_r$ such that 
$\degw f_1^{i_1}\cdots f_r^{i_r}=\mu $. 
Then, 
$h'$ belongs to $\Rx _{\mu }$, 
and is nonzero 
by the assumption that 
$f_1^{\w },\ldots ,f_r^{\w }$ 
are algebraically independent over $K$. 
Hence, we know that $h^{\w }=h'$. 
Since $h'$ belongs to 
$R[f_1^{\w },\ldots ,f_r^{\w }]$, 
it follows that so does $h^{\w }$.

\begin{lem}\label{lem:minimal autom}
For any $\w \in \Gamma ^n$ and $\phi \in \Aut (\Rx /R)$, 
we have 
$$
\degw \phi \geq |\w |.  
$$
Furthermore, 
it holds that $\degw \phi =|\w |$ if and only if 
$\phi (x_1)^{\w },\ldots ,\phi (x_n)^{\w }$ 
are algebraically independent over $K$. 
\end{lem}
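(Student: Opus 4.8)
The plan is to treat the two assertions in turn, building directly on the differential-form estimates established just before the statement. For the inequality $\degw\phi\geq|\w|$ I would simply invoke (\ref{eq:ineq-wedge}): writing $f_i=\phi(x_i)$ and $\omega=df_1\wedge\cdots\wedge df_n$, one has $\omega=(\det J\phi)\,dx_1\wedge\cdots\wedge dx_n$ with $\det J\phi\in R^{\times}$, so $\degw\omega=|\w|$, while (\ref{eq:inequomega}) gives $\degw\omega\leq\sum_{i=1}^n\degw f_i=\degw\phi$. This is essentially already recorded in the discussion preceding the lemma, so the first claim needs only to be assembled.

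For the ``only if'' direction I would argue by contraposition, again reusing the text: if $\phi(x_1)^{\w},\ldots,\phi(x_n)^{\w}$ are algebraically dependent over $K$, then $\degw\phi>|\w|$, as noted immediately after (\ref{eq:ineq-wedge}). Hence $\degw\phi=|\w|$ forces the leading forms to be algebraically independent.

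The substance of the lemma is the ``if'' direction, and this is where I expect the main work. Assume $g_i:=\phi(x_i)^{\w}$ are algebraically independent over $K$. The key observation is that these leading forms must in fact generate the whole ring: by the remark preceding the lemma, algebraic independence yields $R[g_1,\ldots,g_n]=R[f_1,\ldots,f_n]^{\w}$, and since $\phi$ is an automorphism we have $R[f_1,\ldots,f_n]=\Rx$, whence $R[g_1,\ldots,g_n]=\Rx^{\w}=\Rx$, every monomial being $\w$-homogeneous. Consequently the $R$-endomorphism $\psi$ of $\Rx$ sending $x_i$ to $g_i$ is surjective, and it is injective because the $g_i$ are algebraically independent over $K\supseteq R$; thus $\psi\in\Aut(\Rx/R)$.

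It then remains to convert this into a degree statement. Since $\psi$ is an automorphism, $\det J\psi\in R^{\times}$, so $dg_1\wedge\cdots\wedge dg_n=(\det J\psi)\,dx_1\wedge\cdots\wedge dx_n$ is nonzero of $\w$-degree $|\w|$. On the other hand the $g_i$ are $\w$-homogeneous, so the criterion recalled before the lemma gives $\degw(dg_1\wedge\cdots\wedge dg_n)=\sum_{i=1}^n\degw g_i=\degw\phi$. Comparing the two evaluations yields $\degw\phi=|\w|$, completing the argument. The only delicate point is the recognition that the leading forms already generate $\Rx$, turning $\psi$ into a genuine automorphism; once that is in hand the degree bookkeeping is routine.
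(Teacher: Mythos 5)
Your proposal is correct and follows essentially the same route as the paper: the inequality and the ``only if'' direction from (\ref{eq:ineq-wedge}) and the note after it, and for the ``if'' direction the observation that $R[\phi(x_1)^{\w},\ldots,\phi(x_n)^{\w}]=\Rx$ yields an automorphism $\psi$ with $\psi(x_i)=\phi(x_i)^{\w}$, followed by the two evaluations of $\degw d\phi(x_1)^{\w}\wedge\cdots\wedge d\phi(x_n)^{\w}$. Your explicit justification that $\psi$ is injective and surjective merely spells out what the paper leaves implicit.
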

\begin{proof}
Thanks to (\ref{eq:ineq-wedge}) 
and the note 
following it, 
it suffices to check the ``if" part 
of the last statement. 
Assume that $\phi (x_1)^{\w },\ldots ,\phi (x_n)^{\w }$ 
are algebraically independent over $K$. 
Then, 
it holds that 
$$
R[\phi (x_1)^{\w },\ldots ,\phi (x_n)^{\w }]
=R[\phi (x_1),\ldots ,\phi (x_n)]^{\w }=\Rx ^{\w }
=\Rx 
$$
as remarked. 
Hence, we may define $\psi \in \Aut (\Rx /R)$ 
by $\psi (x_i)=\phi (x_i)^{\w }$ for $i=1,\ldots ,n$. 
Then, we have 
$$
\omega :=
d\phi (x_1)^{\w }\wedge \cdots \wedge d\phi (x_n)^{\w }
=(\det J\psi )dx_1\wedge \cdots \wedge dx_n. 
$$ 
Since $\det J\psi $ belongs to $R^{\times }$, 
we get $\degw \omega =|\w |$. 
On the other hand, 
we obtain 
$$
\degw \omega =\sum _{i=1}^n
\degw \phi (x_i)^{\w } 
$$ 
by the $\w $-homogeneity of $\phi (x_i)^{\w }$'s. 
Since $\degw \phi (x_i)^{\w }=\degw \phi (x_i)$ 
for each $i$, 
this is equal to $\degw \phi $. 
Therefore, 
we conclude that $\degw \phi =|\w |$. 
\end{proof}

For a permutation $x_{i_1},\ldots ,x_{i_n}$ 
of $x_1,\ldots ,x_n$, 
we define 
\begin{equation*}
J(R;x_{i_1},\ldots ,x_{i_n})
\end{equation*} 
to be the set of $\phi \in \Aut (\Rx /R)$ 
such that 
$\phi (R[x_{i_1},\ldots ,x_{i_l}])$ 
is contained in 
$R[x_{i_1},\ldots ,x_{i_l}]$ 
for $l=1,\ldots ,n$. 
Let $\phi $ be any element of 
$J(R;x_{i_1},\ldots ,x_{i_n})$. 
Then, 
$\phi $ induces an automorphism of 
$R[x_{i_1},\ldots ,x_{i_l}]$ for $l=1,\ldots ,n$. 
By induction on $n$, 
it is easy to check that 
\begin{equation}\label{eq:jonquiere}
\phi (x_{i_l})=a_lx_{i_l}+h_l
\end{equation}
for some $a_l\in R^{\times }$ 
and $h_l\in R[x_{i_1},\ldots ,x_{i_{l-1}}]$ 
for $l=1,\ldots ,n$. 
From this, 
we see that $J(R;x_{i_1},\ldots ,x_{i_n})$
is a subgroup of $\E (R,\x )$.

In the rest of this section, 
we assume that $n=2$, 
and investigate the $\w $-degrees of coordinates 
of $\Rx $ over $R$.

\begin{lem}\label{lem:deg coordinate}
Let $f$ be a coordinate of $\Rx $ over $R$, 
and $\w =(w_1,w_2)$ an element of $\Gamma ^2$. 
Then, the following assertions hold$:$

\noindent{\rm (i)} 
If $f$ belongs to $R[x_1]$, 
then $\degw f$ is equal to $w_1$ or zero, 
and is greater than or equal to $w_1$.

\noindent{\rm (ii)} If $f$ does not belong to $R[x_i]$ 
and $w_j>0$ for $i,j\in \{ 1,2\} $ with $i\neq j$, 
then we have $\degw f\geq w_j$. 
\end{lem}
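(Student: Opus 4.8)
The plan is to reduce both parts to the basic fact that a coordinate of a polynomial ring in one variable over a domain is affine, and then to compute $\w$-degrees directly. The key input is the differential identity already recorded in the excerpt: if $f$ is a coordinate of $\Rx$ over $R$ and $\phi\in\Aut(\Rx/R)$ is chosen with $\phi(x_1)=f$ and partner $\phi(x_2)=g$, then $df\wedge dg=(\det J\phi)\,dx_1\wedge dx_2$ with $\det J\phi\in R^{\times}$. When $f\in R[x_1]$ this is $(\partial f/\partial x_1)(\partial g/\partial x_2)\,dx_1\wedge dx_2=(\det J\phi)\,dx_1\wedge dx_2$, so $\partial f/\partial x_1$ is a unit of $\Rx$; since $\Rx^{\times}=R^{\times}$, it is a unit of $R$, and hence $f=ax_1+b$ with $a\in R^{\times}$ and $b\in R$. (A coordinate is nonconstant, so $a\neq 0$.)

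For part (i) I would simply read off the $\w$-degree from $f=ax_1+b$. The monomial $ax_1$ contributes $w_1$ and the constant $b$, when nonzero, contributes $0$; therefore $\degw f=\max\{w_1,0\}$ if $b\neq 0$ and $\degw f=w_1$ if $b=0$. In either case $\degw f$ lies in $\{w_1,0\}$ and satisfies $\degw f\geq w_1$, which is exactly the assertion. This part is routine once linearity is in hand.

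For part (ii), say $i=1$ and $j=2$, so $f\notin R[x_1]$ and $w_2>0$, and I want $\degw f\geq w_2$. The case $w_1\geq 0$ is immediate: since $f\notin R[x_1]$, some monomial $x_1^ax_2^b$ of $f$ has $b\geq 1$, and its $\w$-degree $aw_1+bw_2\geq bw_2\geq w_2$, whence $\degw f\geq w_2$. The genuine content is $w_1<0<w_2$, where I would argue by contradiction: assume $\degw f<w_2$ and pick a partner $g$ with $\phi=(f,g)\in\Aut(\Rx/R)$. Applying Lemma~\ref{lem:minimal autom}, and first replacing $g$ by a partner whose leading form is algebraically independent from $f^{\w}$ (reducing $\degw g$, which is the larger of the two, by subtracting suitable multiples), I get $\degw f+\degw g=|\w|$ and a $\w$-homogeneous automorphism $(f^{\w},g^{\w})$. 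Since $f^{\w}$ then has $\w$-degree $<w_2$ and is the first component of a homogeneous automorphism, feeding it through the one-variable coordinate fact above forces $f^{\w}=cx_1$ with $c\in R^{\times}$, i.e. $\degw f=w_1$.

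The hard part is the final upgrade from the leading form to $f$ itself. Knowing $f^{\w}=cx_1$ says only that the top $\w$-stratum of $f$ avoids $x_2$; it is perfectly consistent with $f=cx_1+(\text{terms of }\w\text{-degree}<w_1\text{ involving }x_2)$, so Lemma~\ref{lem:minimal autom} alone cannot finish the argument. I expect to close this gap by reducing to the case $R=K$ a field, which is legitimate since neither $\degw f$ nor the property of being a coordinate changes under $R\hookrightarrow K$, and then invoking the structure of $\Aut(K[x_1,x_2]/K)$: after stripping off the diagonal leading automorphism one is left with an automorphism congruent to the identity modulo lower $\w$-degree, and an induction (on $\deg f$, via elementary reductions) should force all $x_2$-terms of $f$ to vanish, contradicting $f\notin R[x_1]$. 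Controlling these lower-order terms, equivalently showing that a coordinate with leading form $cx_1$ is genuinely a polynomial in $x_1$ alone, is where essentially all the difficulty of the lemma lies.
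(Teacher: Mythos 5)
Your proof of part (i) has a characteristic gap. The lemma is stated over an \emph{arbitrary} domain $R$ — Chapter 1 imposes no characteristic hypothesis, and the machinery resting on this lemma (Claim~\ref{claim:AE2}, hence Theorem~\ref{thm:AEreduction}, Lemma~\ref{lem:power} and Proposition~\ref{prop:slope}) is later applied over fields of characteristic two in the examples of Chapter 4. Your deduction ``$\partial f/\partial x_1\in R^{\times }$, hence $f=ax_1+b$'' is valid only when $R\supseteq \Q$ (or at least $\Z$): in characteristic $p$ the polynomial $f=x_1+x_1^p$ has derivative $1$, so unit derivative does not force linearity. (The conclusion itself is still true in characteristic $p$, but needs a different proof.) The paper's route is characteristic-free: since $\phi (R[x_1])=R[f]\subseteq R[x_1]$, the automorphism $\phi $ with $\phi (x_1)=f$ belongs to $J(R;x_1,x_2)$, and the normal form (\ref{eq:jonquiere}) gives $f=ax_1+b$ with $a\in R^{\times }$, $b\in R$ directly.

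For part (ii) your easy case $w_1\geq 0$ is fine, but in the genuine case $w_1<0<w_2$ you have not given a proof: the decisive step — that a coordinate whose $\w $-leading form is $cx_1$ lies in $R[x_1]$ — is left open, and as you say it carries all the difficulty. Moreover, the tools you propose to close it are downstream of this lemma in the paper's logical order: the elementary-reduction theory for weights with a negative entry is exactly statement (II) at the end of Section 2.1, whose proof invokes Lemma~\ref{lem:deg coordinate} (both parts), and your preliminary step of lowering $\degw g$ by subtracting polynomials in $f$ needs well-orderedness of $\Sigma _{\w }$, which is again (II); so the induction ``via elementary reductions'' would be circular as the theory is built here. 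The paper's actual argument is elementary and sign-free, and is the idea you missed: if no monomial $x_j^t$ with $t\geq 1$ appears in $f$, then $f-c$ is divisible by $x_i$, where $c\in R$ is the constant term; but $f-c$ is again a coordinate, and a coordinate divisible by $x_i$ must equal $ax_i$ with $a\in R^{\times }$ (its residue ring is a polynomial ring, hence a domain, so it is irreducible), forcing $f=ax_i+c\in R[x_i]$, a contradiction. Thus some pure power $x_j^t$ appears in $f$, and since its $\w $-degree $tw_j$ involves no $w_1$ at all, $\degw f\geq tw_j\geq w_j$ regardless of the sign of $w_i$ — no leading-form analysis, no automorphism structure theory, and no reduction machinery is needed.
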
\begin{proof}
(i) Let $\phi \in \Aut (\Rx /R)$ be such that $\phi (x_1)=f$. 
Then, $\phi $ belongs to $J(R;x_1,x_2)$, 
since $f$ belongs to $R[x_1]$ by assumption. 
Hence, 
we have $f=ax_1+b$ 
for some $a\in R^{\times }$ and $b\in R$. 
Thus, 
we get $\degw f=\max \{ w_1,\degw b\} $. 
Since $\degw b$ is equal to zero or $-\infty $, 
it follows that 
$\degw f$ is equal to $w_1$ or zero, 
and is greater than or equal to $w_1$.

(ii) We show that the monomial $x_j^t$ 
appears in $f$ for some $t\geq 1$. 
Supposing the contrary, 
$f-c$ is divisible by $x_i$ for some $c\in R$. 
Since $f-c$ is also a coordinate of $\Rx $ over $R$, 
we have $f-c=ax_i$ for some $a\in R^{\times }$. 
It follows that 
$f=ax_i-c$ belongs to $R[x_i]$, a contradiction. 
Hence, $x_j^t$ appears in $f$ for some $t\geq 1$. 
Thus, we get $\degw f\geq tw_j$. 
Since $w_j>0$ by assumption, 
we have $tw_j\geq w_j$. 
Therefore,
we conclude that $\degw f\geq w_j$. 
\end{proof}

Let $\phi $ be an element of $\Aut (\Rx /R)$. 
Then, $\phi (x_1)$ or $\phi (x_2)$ 
does not belong to $R[x_i]$ for $i=1,2$. 
Hence, 
if $w_1>0$ or $w_2>0$, 
then we have 
\begin{equation}\label{eq:max phi}
\max \{ \degw \phi (x_1),\degw \phi (x_2)\} >0
\end{equation}
by Lemma~\ref{lem:deg coordinate} (ii).

\section{Affine reductions and elementary reductions}
\setcounter{equation}{0}
\label{sect:AE}

For a subgroup $G$ of $\Aff (R,\x )$, 
we define $G^+$ 
to be the subgroup of $\T (R,\x )$ 
generated by $G\cup \E(R,\x )$. 
By definition, 
we have $\Aff (R,\x )^+=\T (R,\x )$ 
and $\{ \id _{\Rx }\} ^+=\E (R,\x )$. 
In this section, 
we give a criterion for deciding whether 
a given element of $\Aut (\Rx /R)$ 
belongs to $G^+$ when $n=2$.

Let $\phi $ be an element of $\Aut (\Rx /R)$, 
and $\w $ an element of $\Gamma ^n$. 
We say that $\phi $ admits a $G$-{\it reduction} 
for the weight $\w $ 
if there exists $\alpha \in G$ such that 
$$
\degw \phi \circ \alpha <\degw \phi . 
$$
Here, 
the composition is defined by 
$(\sigma \circ \tau )(f)=\sigma (\tau (f))$ 
for each $\sigma ,\tau \in \Aut (\Rx /R)$ and $f\in \Rx $ 
as usual. 
We call a $G$-reduction an 
{\it affine reduction} if $G=\Aff (R,\x )$. 
We say that $\phi $ admits an {\it elementary reduction} 
for the weight $\w $ 
if there exists $\ep \in \Aut (\Rx /A_i)$ 
for some $i\in \{ 1,\ldots ,n\} $ such that 
$$
\degw \phi \circ \ep <\degw \phi , 
$$
where we define $A_i$ as in (\ref{eq:A_i}). 
This condition is equivalent to the condition that 
$\phi (x_i)^{\w }$ 
belongs to $R[\{ \phi (x_j)\mid j\neq i\}]^{\w }$ 
for some $i$. 
If there is no fear of confusion, 
we omit to mention the weight $\w $.

We define $\Affo(R,\x )$ to be the subgroup of 
$\Aff (R,\x )\cap \E (R,\x )$ 
generated by 
$$
\bigcup _{i=1}^n\Aut (\Rx /A_i)\cap \Aff(R,\x ), 
$$
and $\bar{G}$ to be the subgroup of $\Aff (R,\x )$ 
generated by $G\cup \Affo(R,\x )$. 
Clearly, 
$\bar{G}$ is contained in $G^+$. 
If $G$ contains $\Affo(R,\x )$, 
then $\bar{G}$ is equal to $G$. 
Hence, we have $\overline{\Aff (R,\x )}=\Aff (R,\x )$.

\begin{thm}\label{thm:AEreduction}
Assume that $n=2$. 
Let $G$ be a subgroup of $\Aff (R,\x )$, 
and $\w \in \Gamma ^2$ 
such that $w_1>0$ or $w_2>0$. 
If $\degw \phi >|\w |$ holds for $\phi \in G^+$, 
then $\phi $ admits a $\bar{G}$-reduction 
or elementary reduction for the weight $\w $. 
\end{thm}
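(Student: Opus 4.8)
The plan is to reduce everything to the behaviour of the two leading forms $\phi(x_1)^{\w}$ and $\phi(x_2)^{\w}$, and to bring in the hypothesis $\phi\in G^+$ only at the end, where it controls the linear part of $\phi$. First I would invoke Lemma~\ref{lem:minimal autom}: since $\degw\phi>|\w|$ and $n=2$, the forms $\phi(x_1)^{\w},\phi(x_2)^{\w}$ are algebraically dependent over $K$. Writing $f_i=\phi(x_i)$ and $d_i=\degw f_i$, I may assume $d_1\ge d_2$ after possibly composing with the coordinate swap (which lies in $\Affo\subseteq\bar{G}$); by (\ref{eq:max phi}) we then have $d_1>0$. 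The degenerate possibility $d_2\le 0$ I would dispose of directly with Lemma~\ref{lem:deg coordinate}, which forces $f_2$ into some $R[x_i]$ with the positively weighted variable absent and hence yields an elementary reduction, so the substantive case is $d_1\ge d_2>0$.

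The structural heart is the classification of an algebraically dependent pair of $\w$-homogeneous polynomials in two variables: there is a $\w$-homogeneous $p\in\Kx$ and integers $m\ge l\ge 1$ with $\gcd(m,l)=1$ such that $f_1^{\w}$ and $f_2^{\w}$ are $K$-scalar multiples of $p^{m}$ and $p^{l}$. I would then use that $\phi$ is an automorphism — concretely that $\det J\phi\in R^{\times}$, as recorded just before (\ref{eq:ineq-wedge}) — to rule out $l\ge 2$ and to force the surviving scalars into $R$. The target is the dichotomy: either (\emph{elementary type}) $l=1$ and $m\ge 2$, whence $f_1^{\w}=c\,(f_2^{\w})^{m}$ with $c\in R$, so that $\phi(x_1)^{\w}\in R[\phi(x_2)]^{\w}$; or (\emph{affine type}) $d_1=d_2$ and $f_1^{\w},f_2^{\w}$ are $R$-proportional in $\Rx_{d_1}$.

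In the elementary type, $\phi(x_1)^{\w}\in R[\phi(x_2)]^{\w}$ is exactly the criterion recorded in Section~\ref{sect:AE} for an elementary reduction, realised by $\ep\in\Aut(\Rx/A_2)$ with $\ep(x_1)=x_1-c\,x_2^{m}$. In the affine type, write the proportionality as $q\,f_1^{\w}=p\,f_2^{\w}$ with $p,q\in R$; I would produce an affine $\beta$ whose linear part has first row $(q,-p)$, so that the degree-$d_1$ part of $\phi\circ\beta(x_1)=q f_1-p f_2$ vanishes and hence $\degw\phi\circ\beta<\degw\phi$. The place where $\phi\in G^+$ enters is in guaranteeing $\beta\in\bar{G}$: the linear part of every generator of $G^+$ lies in the group generated by the linear parts of $G$ together with those of $\Affo$ (the latter absorbing the diagonal maps in $\Diag$ and the shears coming from elementary automorphisms, as in (\ref{eq:jonquiere})), so the linear part of $\phi$ itself lies in the linear part of $\bar{G}$, and this is what I would use to complete $(q,-p)$ to a matrix realising a genuine $\bar{G}$-reduction rather than an arbitrary affine one.

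The main obstacle is precisely the passage from $K$ to $R$. Over the fraction field $K$ all of $\GL(2,K)$ is generated by elementary matrices, so the dependence of the leading forms yields an elementary or affine reduction almost for free; but over a general domain $R$ the cancelling scalars — the constant $c$, or the entries completing $(q,-p)$ — need not lie in $R$, and an affine reduction need not lie in $\bar{G}$. Controlling this is the crux, and I expect to spend most of the effort on two points: showing, via $\det J\phi\in R^{\times}$, that the power and proportionality constants are forced into $R$ (ruling out phenomena such as $f_2^{\w}$ having non-unit content, which over $\Z$ would obstruct both reductions); and, in the affine type, exploiting membership $\phi\in G^+$ — not merely $\phi\in\Aut(\Rx/R)$ — to keep the reducing affine map inside $\bar{G}$. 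Everything else is the standard leading-form bookkeeping built on Lemma~\ref{lem:minimal autom} and the inequalities (\ref{eq:inequomega})--(\ref{eq:ineq-wedge}).
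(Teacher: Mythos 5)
Your setup (Lemma~\ref{lem:minimal autom}, algebraic dependence of the leading forms, the power classification $f_1^{\w}\approx p^m$, $f_2^{\w}\approx p^l$) is fine, but the central step --- ``use $\det J\phi\in R^{\times}$ to rule out $l\geq 2$ and to force the surviving scalars into $R$'' --- is false, and with it the whole plan of bringing in $\phi\in G^+$ ``only at the end.'' The conclusion of Theorem~\ref{thm:AEreduction} genuinely fails for automorphisms outside $G^+$, so no argument whose only input beyond leading-form dependence is $\det J\phi\in R^{\times}$ (a property of \emph{every} automorphism) can work. Concretely, take $R=k[x_3]$ and Nagata's automorphism $\psi$ of (\ref{eq:Nagataautom}), regarded as an element of $\Aut (\Rx /R)$ in the two variables $x_1,x_2$, with $\w =(1,1)$. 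Then $\degw \psi =4+2>2=|\w |$ and $\det J\psi \in R^{\times}$, while $\psi (x_1)^{\w }=-x_3x_2^4$ and $\psi (x_2)^{\w }=x_3x_2^2$ are algebraically dependent with $\psi (x_1)^{\w }=c\,(\psi (x_2)^{\w })^2$ for $c=-x_3^{-1}\in K\sm R$. One checks directly that $\psi (x_1)^{\w }\notin R[\psi (x_2)]^{\w }$ (the degree-$4$ part of $R[\psi (x_2)]^{\w }$ consists of the elements $ax_3^2x_2^4$ with $a\in R$) and $\psi (x_2)^{\w }\notin R[\psi (x_1)]^{\w }$, so no elementary reduction exists; and since $\degw \psi (x_1)\neq \degw \psi (x_2)$, Lemma~\ref{lem:AE initial}~(i) rules out an affine reduction. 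So exactly your ``elementary type'' conclusion $c\in R$ is what distinguishes tame from wild, and it cannot be extracted from the Jacobian condition. There is also a circularity hazard in your ``structural heart'': the normal form $f^{\w (f)}=a(x_i-bx_j^l)^m$ for leading forms of coordinates (Proposition~\ref{prop:slope}) is deduced in the paper \emph{from} Theorem~\ref{thm:AEreduction} via Lemma~\ref{lem:power}, so it is not available here. (Your final ``linear part'' step in the affine type has the same defect in miniature: knowing that the linear part of $\phi$ lies in the group generated by linear parts of $\bar G$ does not produce the specific reducing map inside $\bar G$.)

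The paper's proof uses $G^+$-membership as the engine throughout, not as a final adjustment: it defines $G_{\w }$ as the set of automorphisms admitting a full chain of strict $\w $-degree reductions by right composition with elements of $\bar G\cup J$, terminating at degree $|\w |$, and proves (Proposition~\ref{prop:A}) that $\phi \circ \tau \in G_{\w }$ for all $\phi \in G_{\w }$ and $\tau \in \bar G\cup J$, by induction on $\degw \phi $ over the value set $\Sigma _{\w }$, whose well-orderedness (statement (II)) has to be established separately by the coordinate analysis of Section~2.1. Since $\id _{\Rx }\in G_{\w }$ and $G^+$ is generated by $\bar G\cup J$ as a semigroup, this yields $G^+\subseteq G_{\w }$ and hence the theorem; the base case and bookkeeping your heuristic was meant to replace are Lemmas~\ref{lem:JJ} and \ref{claim:AE1} and Claim~\ref{claim:AE2}. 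Note finally that Lemma~\ref{lem:AE initial} gives the \emph{easy} direction (a reduction forces the constants into $V(R)$ or $R$); your proposal needs the converse, and that converse is precisely the content that only the induction along a reduction sequence supplies.
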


We mention that a similar result is known for 
$G=\Aff (R,\x )$ in the case where $\Gamma =\Z $ and $\w =(1,1)$ 
(see for example \cite[Proposition 1]{Furter}). 
In the following, 
we prove Theorem~\ref{thm:AEreduction} 
using a technique similar to \cite{SU}.

Assume that $n=2$. 
We define $\iota \in \Aut (\Rx/R)$ by 
$$
\iota (x_1)=x_2\quad \text{and} \quad \iota (x_2)=x_1. 
$$
Then, 
we have 
$\iota =\iota _1\circ \iota _2\circ \iota _3$, 
where we define 
$\iota _1,\iota _2,\iota _3\in \Aut (\Rx /R)$ 
by 
\begin{align*}
\iota _1(x_1)&=x_1-x_2 &  \iota _2(x_1)&=x_1 & 
\iota _3(x_1)&=x_2-x_1 \\
\iota _1(x_2)&=x_2 & \iota _2(x_2)&=x_2+x_1 & 
\iota _3(x_2)&=x_2. 
\end{align*}
Hence, 
$\iota $ belongs to $\Affo (R ,\x )$. 
For $i,j\in \{ 1,2\} $ with $i\neq j$, 
we define 
$$
J_{i,j}=J(R;x_i,x_j)\cup J(R;x_i,x_j)\circ \iota . 
$$ 
Then, 
$J_{i,j}$ is equal to the set of 
$\phi \in \Aut (\Rx /R)$ 
such that $\phi (x_1)$ or $\phi (x_2)$ belongs to $R[x_i]$. 
Since 
\begin{equation}\label{eq:JJJ}
\iota \circ J(R;x_i,x_j)=J (R;x_j,x_i)\circ \iota , 
\end{equation}
we get 
\begin{align*}
\iota \circ J_{i,j}
&=
\iota \circ J(R;x_i,x_j)\cup \iota \circ J(R;x_i,x_j)\circ \iota \\
&=J(R;x_j,x_i)\circ \iota \cup J(R;x_j,x_i)
=J_{j,i}. 
\end{align*}

\begin{lem}\label{lem:JJ}
For $i,j\in \{ 1,2\} $ with $i\neq j$, 
we have the following$:$

\noindent{\rm (i)} 
$\psi ^{-1}$ and 
$\psi ^{-1}\circ \tau $ 
belong to $J:=J_{1,2}\cup J_{2,1}$ 
for each $\psi ,\tau \in J_{i,j}$.

\noindent{\rm (ii)} 
$\phi \circ \tau $ belongs to $J_{i,j}$ 
for each $\phi \in J (R;x_i,x_j)\circ \iota $ 
and $\tau \in J_{j,i}$. 
\end{lem}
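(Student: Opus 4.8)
The plan is to reduce the whole statement to three structural facts already on the table: that each $J(R;x_i,x_j)$ is a subgroup of $\Aut (\Rx /R)$ (as observed just after (\ref{eq:jonquiere})), that $\iota ^2=\id _{\Rx }$ (immediate, since $\iota $ merely transposes $x_1$ and $x_2$, so $\iota ^{-1}=\iota $), and the commutation relation (\ref{eq:JJJ}), namely $\iota \circ J(R;x_i,x_j)=J(R;x_j,x_i)\circ \iota $, together with its consequence $\iota \circ J_{i,j}=J_{j,i}$ derived just above the lemma. Granting these, both parts become mechanical coset arithmetic: every automorphism in sight is a product of a triangular factor in some $J(R;x_a,x_b)$ and at most one copy of $\iota $, and (\ref{eq:JJJ}) lets me push every $\iota $ to the right past the triangular factors, at the sole cost of interchanging the pair $(x_a,x_b)$.

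For part (i) I first record that $\psi ^{-1}\in J$ for $\psi \in J_{i,j}$. Writing either $\psi \in J(R;x_i,x_j)$ or $\psi =b\circ \iota $ with $b\in J(R;x_i,x_j)$, inversion gives either $\psi ^{-1}\in J(R;x_i,x_j)\subseteq J_{i,j}$, or $\psi ^{-1}=\iota \circ b^{-1}\in \iota \circ J(R;x_i,x_j)=J(R;x_j,x_i)\circ \iota \subseteq J_{j,i}$, using $\iota ^{-1}=\iota $ and (\ref{eq:JJJ}). For $\psi ^{-1}\circ \tau $ I run through the four cases according to which of $J(R;x_i,x_j)$ and $J(R;x_i,x_j)\circ \iota $ contain $\psi $ and $\tau $. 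In each case I use closure of $J(R;x_i,x_j)$ under products and inverses to absorb the triangular factors, and (\ref{eq:JJJ}) together with $\iota ^2=\id _{\Rx }$ to clear the $\iota $'s; the four products land respectively in $J(R;x_i,x_j)$, $J(R;x_i,x_j)\circ \iota $, $J(R;x_j,x_i)\circ \iota $, and $J(R;x_j,x_i)$, all of which lie in $J=J_{1,2}\cup J_{2,1}$.

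For part (ii) I write $\phi =b\circ \iota $ with $b\in J(R;x_i,x_j)$ and split on whether $\tau \in J(R;x_j,x_i)$ or $\tau \in J(R;x_j,x_i)\circ \iota $. In the first case $\iota \circ \tau \in \iota \circ J(R;x_j,x_i)=J(R;x_i,x_j)\circ \iota $ by (\ref{eq:JJJ}), so $\phi \circ \tau \in J(R;x_i,x_j)\circ J(R;x_i,x_j)\circ \iota =J(R;x_i,x_j)\circ \iota \subseteq J_{i,j}$; in the second case the two resulting copies of $\iota $ cancel via $\iota ^2=\id _{\Rx }$ after one application of (\ref{eq:JJJ}), leaving $\phi \circ \tau \in J(R;x_i,x_j)\subseteq J_{i,j}$.

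The computations are routine, and the only genuine input is the commutation relation (\ref{eq:JJJ}), which itself rests on the fact that conjugation by $\iota $ interchanges the flags $R\subset R[x_1]$ and $R\subset R[x_2]$. I expect the main, and only mild, obstacle to be the bookkeeping of exactly which of the four cosets $J(R;x_i,x_j)$, $J(R;x_i,x_j)\circ \iota $, $J(R;x_j,x_i)$, $J(R;x_j,x_i)\circ \iota $ a given product falls into. Keeping $\iota ^{-1}=\iota $ and the swap $i\leftrightarrow j$ forced by (\ref{eq:JJJ}) in view throughout makes each case collapse cleanly, so no further idea should be required.
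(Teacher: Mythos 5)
Your proof is correct and follows essentially the same route as the paper: both arguments rest on the subgroup property of $J(R;x_i,x_j)$, the relation $\iota ^{-1}=\iota $, and the commutation rule $\iota \circ J(R;x_i,x_j)=J(R;x_j,x_i)\circ \iota $ from (\ref{eq:JJJ}). The only difference is organizational --- where you split explicitly into four (resp.\ two) cases, the paper compresses the same bookkeeping by using the closure $J(R;x_i,x_j)\circ J_{i,j}\subseteq J_{i,j}$ and, in (ii), the insertion $\phi \circ \tau =(\phi \circ \iota )\circ (\iota \circ \tau )$.
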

\begin{proof}
(i) 
By assumption, 
$\psi $ belongs to $J(R;x_i,x_j)$ 
or $J(R;x_i,x_j)\circ \iota $. 
If $\psi $ belongs to $J(R;x_i,x_j)$, 
then $\psi ^{-1}$ belongs to $J(R;x_i,x_j)$, 
since $J(R;x_i,x_j)$ is a group. 
Since $\tau $ is an element of $J_{i,j}$ by assumption, 
it follows that 
$\psi ^{-1}\circ \tau $ belongs to $J_{i,j}$. 
If $\psi $ belongs to $J(R;x_i,x_j)\circ \iota $, 
then $\psi ^{-1}$ belongs to 
$\iota \circ J(R;x_i,x_j)$. 
By (\ref{eq:JJJ}), 
it follows that 
$\psi ^{-1}$ belongs to $J (R;x_j,x_i)\circ \iota $, 
and hence belongs to $J_{j,i}$. 
Since $\psi ^{-1}$ and $\tau $ 
belong to $\iota \circ J(R;x_i,x_j)$ and $J_{i,j}$, 
respectively, 
we know that $\psi ^{-1}\circ \tau $ 
belongs to $\iota \circ J_{i,j}=J_{j,i}$. 
Therefore, 
$\psi ^{-1}$ and 
$\psi ^{-1}\circ \tau $ 
belong to $J$ in either case.

(ii) 
Since $\phi \circ \iota $ belongs to $J(R;x_i,x_j)$, 
and $\iota \circ \tau $ belongs to $\iota \circ J_{j,i}=J_{i,j}$, 
we know that 
$\phi \circ \tau =(\phi \circ \iota )\circ (\iota \circ \tau )$ 
belongs to $J_{i,j}$. 
\end{proof}

By Lemma~\ref{lem:JJ} (i), 
we see that $J$ is closed under the inverse operation. 
Since $\bar{G}$ is a group, 
it follows that $\bar{G}\cup J$ 
is closed under the inverse operation. 
Note that $J$ is contained in $\E (R,\x )$, 
since $\iota $ and $J(R;x_i,x_j)$'s 
are contained in $\E (R,\x )$. 
Hence, 
$\bar{G}\cup J$ is contained in $G^+$. 
Since $J$ contains all the elementary automorphisms, 
and $\bar{G}$ contains $G$, 
we know that $G^+$ is generated by $\bar{G}\cup J$. 
Because $\bar{G}\cup J$ 
is closed under the inverse operation, 
this implies that $G^+$ is generated by $\bar{G}\cup J$ 
as a semigroup.

Let $\phi $ and $\w $ be any 
elements of $\Aut (\Rx /R)$ and $\Gamma ^2$, 
respectively. 
Then, we show that 
$\phi $ admits an elementary reduction for the weight $\w $ 
if and only if $\degw \phi \circ \sigma <\degw \phi $ 
for some $\sigma \in J$. 
Since every elementary automorphism of $\Rx $ 
belongs to $J$, 
the ``only if" part is clear. 
To prove the ``if" part, 
assume that $\degw \phi \circ \sigma <\degw \phi $ 
for some $\sigma \in J$. 
Take $i,j\in \{ 1,2\} $ with $i\neq j$ 
such that $\sigma $ belongs to $J_{i,j}$. 
Then, 
$\sigma (x_1)$ or $\sigma (x_2)$ belongs to $R[x_i]$. 
Hence, we may write 
\begin{equation}\label{eq:JJ-red}
\sigma (x_p)=\alpha x_i-g\quad 
\text{and}\quad \sigma (x_q)=\beta x_j-h, 
\end{equation}
where $p,q\in \{ 1,2\} $ with $p\neq q$, 
$\alpha ,\beta \in R^{\times }$, $g\in R$ 
and $h\in R[x_i]$. 
Then, 
we have 
\begin{equation}\label{eq:J-red}
\degw \phi \circ \sigma 
=\degw (\alpha \phi (x_i)-g)
+\degw (\beta \phi (x_j)-\phi (h)). 
\end{equation}
Since (\ref{eq:J-red}) is less than 
$\degw \phi =\degw \phi (x_i)+\degw \phi (x_j)$ 
by assumption, 
it follows that 
$$
\degw (\alpha \phi (x_i)-g)<\degw \phi (x_i)\quad 
\text{or}\quad 
\degw (\beta \phi (x_j)-\phi (h))<\degw \phi (x_j). 
$$
This implies that 
$\alpha \phi (x_i)^{\w }=g$ or 
$\beta \phi (x_j)^{\w }=\phi (h)^{\w }$. 
Since 
$\alpha ^{-1}g$ and $\beta ^{-1}\phi (h)^{\w }$ belong to 
$R[\phi (x_j)]^{\w }$ and $R[\phi (x_i)]^{\w }$, 
respectively, 
we know that 
$\phi (x_i)^{\w }$ 
belongs to $R[\phi (x_j)]^{\w }$, 
or $\phi (x_j)^{\w }$ belongs to $R[\phi (x_i)]^{\w }$. 
Therefore, 
$\phi $ admits an elementary reduction for the weight $\w $.

Now, 
we prove Theorem~\ref{thm:AEreduction}. 
We define $G_{\w }$ 
to be the set of $\phi \in \Aut (\Rx /R)$ 
for which there exist $l\in \N $, 
$\phi _1,\ldots ,\phi _{l-1}\in \Aut (\Rx /R)$ 
and $\phi _l\in \bar{G}\cup J$ 
as follows: 
\begin{align}
& \phi _1=\phi \text{ and } \degw \phi _l=|\w |; 
\tag{A}\label{eq:Gw1} \\
& \degw \phi _{i+1}<\degw \phi _i\text{ and } 
\phi _{i+1}=\phi _i\circ \tau _i
\text{ for some }\tau _i\in \bar{G}\cup J 
\tag{B} \label{eq:Gw2} \text{ for } 1\leq i<l. 
\end{align}

Assume that $\phi \in G_{\w }$ 
satisfies $\degw \phi >|\w |$. 
Then, 
we have $l\geq 2$ in view of (\ref{eq:Gw1}). 
Hence, 
we see from (\ref{eq:Gw2}) that 
$\degw \phi \circ \tau <\degw \phi $ 
for some $\tau \in \bar{G}\cup J$. 
Thus, 
$\phi $ admits a $\bar{G}$-reduction 
or elementary reduction. 
Therefore, 
it suffices to verify that $G^+$ 
is contained in $G_{\w }$.

The following is a key proposition.

\begin{prop}\label{prop:A}
$\phi \circ \tau $ belongs to $\A $ 
for each $\phi \in \A $ and $\tau \in \bar{G}\cup J$. 
\end{prop}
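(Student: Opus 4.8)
The plan is to prove the closure statement of Proposition~\ref{prop:A} and use it as the inductive engine for the inclusion $G^{+}\subseteq \A$. Since $\bar{G}\cup J$ is closed under the inverse operation and generates $G^{+}$ as a semigroup, once we know that every element $g$ of $\bar{G}\cup J$ already lies in $\A$ (immediate: either $\degw g=|\w|$, giving a chain of length one, or $g$ reduces to $\id _{\Rx}$ in a single step via $g^{-1}\in \bar{G}\cup J$, using $\degw \id _{\Rx}=|\w|$ and $\id _{\Rx}\in \bar{G}$), Proposition~\ref{prop:A} lets us append generators one at a time to conclude $G^{+}\subseteq \A$. So it suffices to prove the closure itself, and I would argue by induction on the minimal length $l$ of a reduction chain $\phi _1=\phi ,\ldots ,\phi _l$ for $\phi $, as in (\ref{eq:Gw1})--(\ref{eq:Gw2}).

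Fix such a chain and $\tau \in \bar{G}\cup J$, and set $\psi =\phi \circ \tau $. First I would dispose of the easy case $\degw \psi >\degw \phi $: here $\phi =\psi \circ \tau ^{-1}$ with $\tau ^{-1}\in \bar{G}\cup J$ and $\degw \phi <\degw \psi $, so prepending $\psi $ to the chain of $\phi $ produces a valid chain $\psi ,\phi _1,\ldots ,\phi _l$ for $\psi $, giving $\psi \in \A $ with no appeal to the induction hypothesis. The substance is the case $\degw \psi \le \degw \phi $. When $l\ge 2$ (equivalently $\degw \phi >|\w|$) the chain provides a genuine first reduction $\phi _2=\phi \circ \tau _1$ with $\tau _1\in \bar{G}\cup J$ and $\degw \phi _2<\degw \phi $; writing $\psi =\phi _2\circ (\tau _1^{-1}\circ \tau )$, the goal becomes to realize the a priori two-generator product $\tau _1^{-1}\circ \tau $ as a single element of $\bar{G}\cup J$. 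Granting this, the induction hypothesis applied to $\phi _2$, whose minimal chain has length $l-1$, yields $\psi \in \A $.

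Collapsing $\tau _1^{-1}\circ \tau $ is where Lemma~\ref{lem:JJ} does the work: when $\tau _1$ and $\tau $ lie in the same part $J_{i,j}$, Lemma~\ref{lem:JJ}(i) gives $\tau _1^{-1}\circ \tau \in J$ directly, and the relation $\iota \circ J(R;x_i,x_j)=J(R;x_j,x_i)\circ \iota $ of (\ref{eq:JJJ}), together with $\iota \in \Affo (R,\x )\subseteq \bar{G}$, lets me reconcile the two index orders and absorb the affine factor $\iota $ into $\bar{G}$; the purely affine combinations collapse inside the group $\bar{G}$. I expect the main obstacle to be exactly the remaining interactions: the mixed affine and Jonqui\`ere-type products $\tau _1^{-1}\circ \tau $ that do not formally simplify to a single generator, and, most acutely, the base case $l=1$ with $\degw \psi =|\w|$, where the chain is forced to have length one and one must show outright that $\psi \in \bar{G}\cup J$. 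Both resist a purely formal treatment and, I anticipate, require the leading-term computation recorded in (\ref{eq:JJ-red})--(\ref{eq:J-red}) together with the algebraic-independence criterion of Lemma~\ref{lem:minimal autom}: one exploits that $\degw \phi >|\w|$ forces $\phi (x_1)^{\w },\phi (x_2)^{\w }$ to be algebraically dependent over the fraction field, controls thereby how the $\w $-leading parts of $\psi (x_i)$ transform under $\tau $, and concludes that $\psi $ either admits a further reduction feeding the induction or already has minimal $\w $-degree and lies in $\bar{G}\cup J$.
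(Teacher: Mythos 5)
Your skeleton coincides with the paper's own proof: the case $\degw \phi \circ \tau >\degw \phi $ is disposed of by prepending via $\tau ^{-1}$, exactly as in the remark following Proposition~\ref{prop:A}, and the substance is reduced to showing that $\sigma :=\tau _1^{-1}\circ \tau $ lies in $\bar{G}\cup J$, where $\tau _1$ is the first step of a reduction chain for $\phi $. Your one structural deviation --- inducting on the minimal chain length $l$ rather than on $\degw \phi $ --- is legitimate and even slightly cleaner: it is well-founded for free, whereas the paper's induction on $\degw \phi $ needs $\Sigma _{\w }$ to be well-ordered, which is why the paper first restricts to $\w \in W$ and must establish statement (II) separately. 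You also correctly settle the same-class cases ($\tau _1,\tau $ both in $\bar{G}$, or both in the same $J_{i,j}$, via Lemma~\ref{lem:JJ}~(i) and (\ref{eq:JJJ})).

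However, the two steps you defer are the entire content of the proof, and the way you frame the first one points in a direction that would fail. A product of a genuinely affine element and a genuinely de Jonqui\`ere-type element is in general \emph{not} an element of $\bar{G}\cup J$, so ``collapsing'' mixed products $\tau _1^{-1}\circ \tau $ is impossible as stated; what the paper proves instead (Lemma~\ref{claim:AE1}) is that the mixed case never arises: if $\degw \phi \circ \sigma \leq \degw \phi $ for $\sigma \in \bar{G}\cup J$, then $\sigma $ is forced into $\bar{G}$ when $\degw \phi (x_1)=\degw \phi (x_2)$, and into the specific $J_{i,j}$ when $\degw \phi (x_i)<\degw \phi (x_j)$, by degree counts resting on (\ref{eq:max phi}) and the shape computations (\ref{eq:JJ-red})--(\ref{eq:J-red}). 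Applied simultaneously to $\tau _1$ (which strictly lowers $\degw \phi $) and to $\tau $ (which does not raise it), relative to the \emph{same} $\phi $, this puts both in one class, and Lemma~\ref{lem:JJ}~(i) finishes the inductive step. Likewise your base case $l=1$ requires an explicit structure statement, the paper's Claim~\ref{claim:AE2}: after normalizing $w_1\leq w_2$, $w_2>0$, an element $\phi \in \bar{G}\cup J$ with $\degw \phi =|\w |$ lies in $\bar{G}$ if $w_1=w_2$, and in $J(R;x_1,x_2)$ or $J(R;x_1,x_2)\circ \iota $ (with the matching degree inequality) if $w_1<w_2$; this is proved from Lemma~\ref{lem:deg coordinate}, and combined with Lemma~\ref{claim:AE1} and Lemma~\ref{lem:JJ}~(ii) it yields $\phi \circ \tau \in \bar{G}\cup J$. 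Gesturing at Lemma~\ref{lem:minimal autom} and ``leading-term computations'' identifies the right toolbox but supplies neither statement, so the proposal has a genuine gap precisely where you predicted the difficulty would lie.
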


Clearly, 
$\id _{\Rx }$ belongs to $\bar{G}\cup J$, 
and satisfies $\degw \id _{\Rx }=|\w |$. 
Hence, 
$\id _{\Rx }$ belongs to $G_{\w }$. 
Since $G^+$ is generated by $\bar{G}\cup J$ 
as a semigroup, 
we know by Proposition~\ref{prop:A} 
that $G^+$ is contained in $G_{\w }$. 
Therefore, 
Theorem~\ref{thm:AEreduction} 
follows from Proposition~\ref{prop:A}.

We remark that, 
if there exists $\tau \in \bar{G}\cup J$ such that 
$\degw \psi \circ \tau <\degw \psi $ 
and $\psi \circ \tau $ belongs to $G_{\w }$ 
for $\psi \in \Aut (\Rx /R)$, 
then $\psi $ belongs to $G_{\w }$ by the definition of $G_{\w }$. 
From this, 
we see that Proposition~\ref{prop:A} holds when 
$\degw \phi \circ \tau >\degw \phi $. 
In fact, 
$(\phi \circ \tau )\circ \tau ^{-1}=\phi $ 
belongs to $G_{\w }$ by assumption, 
and $\tau ^{-1}$ belongs to $\bar{G}\cup J$ 
because $\bar{G}\cup J$ is closed under the inverse operation.

\begin{lem}\label{claim:AE1}
Let $\phi \in \Aut (\Rx /R)$ and $\sigma \in \bar{G}\cup J$ 
be such that $\degw \phi \circ \sigma \leq \degw \phi $. 
Then, the following statements hold$:$

\noindent{\rm (i)} 
If $\degw \phi (x_1)=\degw \phi (x_2)$, 
then $\sigma $ belongs to $\bar{G}$.

\noindent{\rm (ii)} 
If $\degw \phi (x_i)<\degw \phi (x_j)$ 
for $i,j\in \{ 1,2\} $ with $i\neq j$, 
then $\sigma $ belongs to $J_{i,j}$. 
\end{lem}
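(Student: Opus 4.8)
The plan is to argue by contradiction on the $\w$-degree, exploiting that after precomposing with $\sigma$ each new coordinate $\phi(\sigma(x_p))$ has a dominant term which is a unit multiple of a single $\phi(x_q)$ (or of a power of one), so its leading form cannot cancel. Throughout I write $d_p=\degw\phi(x_p)$, so that $\degw\phi=d_1+d_2$. Since $w_1>0$ or $w_2>0$, \eqref{eq:max phi} gives $\max\{d_1,d_2\}>0$; this positivity is exactly what makes every element of $R$, which has $\w$-degree $\le0$, negligible against the dominant terms. In both parts I assume $\degw\phi\circ\sigma\le\degw\phi$ and split according to whether $\sigma$ lies in $\bar{G}\subseteq\Aff(R,\x)$ or in $J=J_{1,2}\cup J_{2,1}$.

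First I would treat the affine case $\sigma\in\bar{G}$, writing $\sigma(x_p)=a_{pi}x_i+a_{pj}x_j+b_p$ with $(a_{pq})\in\GL(2,R)$, so that $\phi(\sigma(x_p))=a_{pi}\phi(x_i)+a_{pj}\phi(x_j)+b_p$. In part (i) there is nothing to prove. In part (ii), where $d_i<d_j$ (hence $d_j>0$), I note that if both $a_{1j}$ and $a_{2j}$ were nonzero then $a_{pj}\phi(x_j)$ would be the unique top-degree summand of $\phi(\sigma(x_p))$ for each $p$, so $\degw\phi(\sigma(x_p))=d_j$ and $\degw\phi\circ\sigma=2d_j>d_i+d_j$, contradicting the hypothesis. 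Since column $j$ of $(a_{pq})$ cannot vanish, exactly one of $a_{1j},a_{2j}$ is zero; the corresponding image $\sigma(x_1)$ or $\sigma(x_2)$ then lies in $R[x_i]$, so $\sigma\in J_{i,j}$.

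Next I would treat $\sigma\in J$, using the normal form \eqref{eq:JJ-red}: if $\sigma\in J_{a,b}$ then $\sigma(x_p)=\alpha x_a-g$ and $\sigma(x_q)=\beta x_b-h$ with $\alpha,\beta\in R^{\times}$, $g\in R$ and $h\in R[x_a]$; put $t=\deg_{x_a}h$. Here $\degw\phi(\sigma(x_p))=d_a$, since $\alpha\phi(x_a)$ dominates the constant $g$, while $\phi(\sigma(x_q))=\beta\phi(x_b)-h(\phi(x_a))$ carries the term $\lc(h)(\phi(x_a)^{\w})^{t}$ of $\w$-degree $td_a$ when $t\ge1$. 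In part (i), with $d_1=d_2=d>0$, the hypothesis $d+\max\{d,td\}\le2d$ forces $t\le1$; then $h$ is affine, so $\sigma$ is an affine element of $J_{a,b}$, and since such an element is a product of affine elementary automorphisms and possibly $\iota$, it lies in $\Affo(R,\x)\subseteq\bar{G}$, as required. In part (ii) the only case not already covered is the ``wrong'' alternative $\sigma\in J_{j,i}$ (so $a=j$, $b=i$), where the hypothesis $d_j+\max\{d_i,td_j\}\le d_i+d_j$ together with $d_j>0$ forces $t=0$; then $h\in R$ and $\sigma(x_q)=\beta x_i-h\in R[x_i]$, so in fact $\sigma\in J_{i,j}$.

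The step I expect to be the main obstacle is the degree bookkeeping when one of the weights is non-positive, where one must be sure that the dominant term never cancels and that constants never interfere. This is resolved by the two observations above: the decisive degree ($d_j$ in (ii), $d$ in (i)) is strictly positive by \eqref{eq:max phi}, and the dominant summand is always a unit times a single coordinate or a power of one, so its leading form is automatically nonzero. The only other point needing care is the final identification that an affine element of $J_{a,b}$ really lies in $\Affo(R,\x)$, which follows because $\iota$ and the affine members of $J(R;x_a,x_b)$ are already in $\Affo(R,\x)$.
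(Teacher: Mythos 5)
Your proof is correct and follows essentially the same route as the paper's: the same case split between $\bar{G}$ and $J$, the same use of the normal form (\ref{eq:JJ-red}) with the positivity of the decisive degree from (\ref{eq:max phi}) to rule out cancellation, and the same key inclusion $J\cap \Aff (R,\x )\subseteq \Affo (R,\x )\subseteq \bar{G}$ via (\ref{eq:jonquiere}). The only difference is presentational — you argue directly where the paper argues by contradiction (assuming $\sigma \notin \bar{G}$, resp.\ $\sigma \notin J_{i,j}$) — but the degree estimates ($3\delta >2\delta $ in (i), $2\delta >\degw \phi $ in (ii)) are identical.
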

\begin{proof}
(i) 
In view of (\ref{eq:jonquiere}), 
we see that $J(R;x_i,x_j)\cap \Aff (R,\x )$ 
is contained in $\Affo (R ,\x )$ 
for each $i,j\in \{ 1,2\} $ with $i\neq j$. 
Since $\iota $ is affine, 
we have 
$$
J(R;x_i,x_j)\circ \iota \cap \Aff (R,\x )
=\bigl(J(R;x_i,x_j)\cap \Aff (R,\x )\bigr)\circ \iota . 
$$
Since $\iota $ belongs to $\Affo (R ,\x )$, 
the right-hand side of this equality 
is also contained in $\Affo (R,\x )$. 
Hence, $J_{i,j}\cap \Aff (R,\x )$ 
is contained in $\Affo (R,\x )$. 
Thus, $J\cap \Aff (R,\x )$ 
is contained in $\Affo (R,\x )$, 
and therefore contained in $\bar{G}$.

Now, 
suppose to the contrary that 
$\sigma $ does not belong to $\bar{G}$. 
Then, 
$\sigma $ belongs to $J$. 
Since $J\cap \Aff (R,\x )$ 
is contained in $\bar{G}$, 
it follows that 
$\sigma $ does not belong to $\Aff (R,\x )$. 
Write $\sigma (x_1)$ and $\sigma (x_2)$ 
as in (\ref{eq:JJ-red}), 
where $i,j\in \{ 1,2\} $ with $i\neq j$. 
Then, 
we have $\deg _{x_i}h\geq 2$. 
Since $w_1>0$ or $w_2>0$, 
and $\delta :=\degw \phi (x_1)=\degw \phi (x_2)$ 
by assumption, 
we get $\delta >0$ by (\ref{eq:max phi}). 
Thus, 
we see from (\ref{eq:J-red}) that 
$$
\degw \phi \circ \sigma 
=\degw \phi (x_i)+\degw \phi (h)
\geq 3\delta >2\delta =\degw \phi , 
$$ 
a contradiction. 
Therefore, 
$\sigma $ belongs to $\bar{G}$.

(ii) 
Since $\degw \phi (x_i)<\degw \phi (x_j)=:\delta $ 
by assumption, 
we have $\degw \phi <2\delta $, 
and $\delta >0$ by (\ref{eq:max phi}). 
Suppose to the contrary that 
$\sigma $ does not belong to $J_{i,j}$. 
Then, 
$\sigma (x_1)$ and $\sigma (x_2)$ do not belong to $R[x_i]$, 
and $\sigma $ belongs to $\bar{G}$ or $J_{j,i}$. 
First, 
assume that $\sigma $ belongs to $\bar{G}$. 
Then, 
$\sigma $ is affine. 
Hence, 
$(\phi \circ \sigma )(x_l)$ 
is a linear polynomial in 
$\phi (x_i)$ and $\phi (x_j)$ over $R$ 
for $l=1,2$. 
Moreover, 
$(\phi \circ \sigma )(x_l)$ 
does not belong to $R[\phi (x_i)]$, 
since $\sigma (x_l)$ does not belong to $R[x_i]$. 
Thus, 
we know that 
$\degw (\phi \circ \sigma )(x_l)=\degw \phi (x_j)=\delta $. 
Therefore, 
we get $\degw \phi \circ \sigma =2\delta >\degw \phi $, 
a contradiction. 
Next, 
assume that $\sigma $ belongs to $J_{j,i}$. 
Then, we may write 
$\sigma (x_p)=\alpha x_j-g$ 
and $\sigma (x_q)=\beta x_i-h$, 
where $p,q\in \{ 1,2\} $ with $p\neq q$, 
$\alpha ,\beta \in R^{\times }$, $g\in R$ 
and $h\in R[x_j]$. 
Since $\sigma (x_q)$ does not belong to $R[x_i]$, 
we have $\deg _{x_j}h\geq 1$. 
Hence, we know that 
$\degw \phi \circ \sigma 
=\degw \phi (x_j)+\degw \phi (h)
\geq 2\delta >\degw \phi $, 
a contradiction. 
Therefore, 
$\sigma $ belongs to $J_{i,j}$. 
\end{proof}

Let $W$ be the set of $\w \in \Gamma ^2$ 
such that $w_1>0$ or $w_2>0$, 
and 
$$
\Sigma _{\w }:=
\{ \degw \psi \mid \psi \in \Aut (\Rx /R)\} 
$$ 
is a well-ordered subset of $\Gamma $. 
If $w_i\geq 0$ for $i=1,2$, 
then 
$\{ l_1w_1+l_2w_2\mid l_1,l_2\in \Zn \} $ 
is a well-ordered subset of $\Gamma $ 
(cf.~\cite[Lemma 6.1]{SU2}). 
Hence, $\Sigma _{\w }$ is also well-ordered. 
Thus, 
$W$ contains the set of 
$\w \in \Gamma ^2$ 
such that $w_1>0$ and $w_2\geq 0$, 
or $w_1\geq 0$ and $w_2>0$. 
Consider the following statements: 

\smallskip 

\noindent{\rm (I)} 
Proposition~\ref{prop:A} 
holds for each $\w \in W$. 

\noindent{\rm (II)} 
If $w_1>0$ or $w_2>0$ for $\w \in \Gamma ^2$, 
then $\Sigma _{\w }$ 
is a well-ordered subset of $\Gamma $. 

\smallskip 

\noindent
By the discussion after Proposition~\ref{prop:A}, 
(I) implies that Theorem~\ref{thm:AEreduction} 
holds for each $\w \in W $. 
On the other hand, 
(II) implies that $\w \in \Gamma ^2$ 
belongs to $W$ if $w_1>0$ or $w_2>0$. 
Thus, 
Theorem~\ref{thm:AEreduction} 
follows from (I) and (II). 
We prove (I) in the rest of this section. 
By making use of it, 
we prove (II) at the end of 
Section~\ref{sect:coordinate}. 
To prove (I), 
we may assume that 
$\degw \phi \circ \tau \leq \degw \phi $ 
by the remark after Proposition~\ref{prop:A}. 
To prove (I) and (II), 
we may assume that $w_1\leq w_2$ and $w_2>0$ 
by interchanging $x_1$ and $x_2$ if necessary.

Let us prove (I). 
Take any $\w \in W$. 
Then, 
$\Sigma _{\w }$ is a well-ordered subset of $\Gamma $. 
Since $\degw \phi $ belongs to $\Sigma _{\w }$ 
for each $\phi \in G_{\w }$, 
we prove the statement of Proposition~\ref{prop:A} 
by induction on $\degw \phi $. 
By Lemma~\ref{lem:minimal autom}, 
$\mu :=\min \{ \degw \phi \mid \phi \in G_{\w }\} $ 
is at least $|\w |$. 
Since $\id _{\Rx }$ belongs to $G_{\w }$, 
we get $\mu =|\w |$. 
So assume that $\degw \phi =|\w |$. 
Then, 
we have $\degw \phi \circ \tau \leq |\w |$ 
by the assumption that 
$\degw \phi \circ \tau \leq \degw \phi $. 
This implies that 
$\degw \phi \circ \tau =|\w |$ 
because of Lemma~\ref{lem:minimal autom}. 
Note that 
$\sigma $ belongs to $G_{\w }$ 
if and only if $\sigma $ belongs to $\bar{G}\cup J$ 
for $\sigma \in \Aut (\Rx /R)$ with $\degw \sigma =|\w |$. 
Hence, it suffices to show that 
$\phi \circ \tau $ belongs to $\bar{G}\cup J$. 
Since $\phi $ is an element of $G_{\w }$ 
with $\degw \phi =|\w |$, 
we know that $\phi $ belongs to $\bar{G}\cup J$. 
More precisely, 
$\phi $ satisfies the following conditions.

\begin{claim}\label{claim:AE2}
The following statements hold$:$ 

\noindent{\rm (1)} 
If $w_1=w_2$, 
then $\degw \phi (x_1)=\degw \phi (x_2)$ 
and $\phi $ belongs to $\bar{G}$. 

\noindent{\rm (2)} 
If $w_1<w_2$, 
then one of the following conditions holds$:$\\
{\rm (a)} 
$\degw \phi (x_1)<\degw \phi (x_2)$ 
and $\phi $ belongs to $J(R;x_1,x_2)$. \\
{\rm (b)} 
$\degw \phi (x_2)<\degw \phi (x_1)$ 
and $\phi $ belongs to $J(R;x_1,x_2)\circ \iota $. 
\end{claim}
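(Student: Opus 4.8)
The plan is to run everything off the single hypothesis $\degw \phi =|\w|$. By Lemma~\ref{lem:minimal autom}, this hypothesis says precisely that the leading forms $\phi(x_1)^{\w}$ and $\phi(x_2)^{\w}$ are algebraically independent over $K$, and it also gives the degree identity
$$
\degw \phi(x_1)+\degw \phi(x_2)=|\w|=w_1+w_2 .
$$
Combined with the standing normalization $w_1\le w_2$, $w_2>0$ and the already-known membership $\phi \in \bar{G}\cup J$, this identity is the engine that forces $\phi$ into the advertised subgroups. Throughout I would write $d_i:=\degw \phi(x_i)$.

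First I would dispose of part (1), where $w_1=w_2=:w>0$. Here $\degw f=w\deg f$ for every $f$, so the identity above becomes $\deg \phi(x_1)+\deg \phi(x_2)=2$; since an automorphism cannot send a variable into $R$, each total degree is $\ge 1$, hence both equal $1$. Thus $\phi$ is affine and $d_1=d_2=w$. Because $\phi$ is then an affine element of $\bar{G}\cup J$, and $J\cap \Aff(R,\x)\subseteq \bar{G}$ (the inclusion established inside the proof of Lemma~\ref{claim:AE1}), I conclude $\phi \in \bar{G}$, which is exactly (1).

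For part (2), with $w_1<w_2$, I would split on whether $\phi \in \bar{G}$ or $\phi \in J$. If $\phi \in \bar{G}$ it is affine, say $\phi(x_i)=a_{i1}x_1+a_{i2}x_2+b_i$; since $w_2>\max\{w_1,0\}$, the leading form of $\phi(x_i)$ is $a_{i2}x_2$ whenever $a_{i2}\neq 0$, so if both second-column entries were nonzero the two leading forms would both be scalar multiples of $x_2$, violating algebraic independence. As the coefficient matrix is invertible, exactly one of $a_{12},a_{22}$ is nonzero, and the remaining image then lies in $R[x_1]$. If $\phi \in J$ and $\phi$ is affine we are back in $\bar{G}$, so I may assume $\phi$ is a non-affine element of $J=J_{1,2}\cup J_{2,1}$ and apply the Jonquière normal form (\ref{eq:jonquiere}) in each of the four constituent groups. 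For $J(R;x_1,x_2)$ and $J(R;x_1,x_2)\circ \iota$ the conclusion is immediate; for the ``swapped'' groups $J(R;x_2,x_1)$ and $J(R;x_2,x_1)\circ \iota$ one image has the shape $a x_2+b$ and the other the shape $a'x_1+h(x_2)$, and any positive $x_2$-degree in the tail $h$ would push $\degw \phi$ strictly above $|\w|$; the degree identity therefore collapses $h$ into $R$, so that image lies in $R[x_1]$. In every case exactly one of $\phi(x_1),\phi(x_2)$ lies in $R[x_1]$ (both cannot, for an automorphism), placing $\phi$ in $J(R;x_1,x_2)$ when $\phi(x_1)\in R[x_1]$ and in $J(R;x_1,x_2)\circ \iota$ when $\phi(x_2)\in R[x_1]$. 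Finally, the image lying in $R[x_1]$ has $\w$-degree at most $\max\{w_1,0\}<w_2$, while the identity forces the other to have $\w$-degree $w_2$, yielding the strict inequalities in (a) and (b) respectively.

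The step I expect to be the main obstacle is the non-affine $J$ case, and specifically the reduction of the swapped groups $J_{2,1}\setminus J_{1,2}$: a priori $\phi$ could carry an image into $R[x_2]$ rather than $R[x_1]$, which the claim must forbid. The real content there is that minimality $\degw \phi=|\w|$ is incompatible with a genuine $x_2$-tail of positive degree, so the tail degenerates to a constant and membership in $J(R;x_1,x_2)$ (up to $\iota$) is recovered. The only genuinely fiddly bookkeeping is that when $w_1\le 0$ it is $\max\{w_1,0\}$, not $w_1$, that controls the $\w$-degree of the $R[x_1]$-image, so one must check that the degree identity still pins $d_1$ and $d_2$ to $\{w_1,w_2\}$; this is where I would take the most care.
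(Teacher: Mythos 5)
Your proposal is correct in substance, but it is organized quite differently from the paper's proof, which avoids your case analysis entirely. The paper never splits on $\phi \in \bar{G}$ versus $\phi \in J$ in part (2) and never invokes the Jonqui\`ere normal forms (\ref{eq:jonquiere}) or Lemma~\ref{lem:minimal autom}: it simply chooses $(i,j)$ with $\phi (x_j)\notin R[x_1]$ and applies Lemma~\ref{lem:deg coordinate}, whose part (ii) gives $\degw \phi (x_j)\geq w_2$ and whose parts (i) and (ii) (with $w_1\leq w_2$) give $\degw \phi (x_i)\geq w_1$; the hypothesis $\degw \phi =|\w |$ then pins $\degw \phi (x_i)=w_1$ and $\degw \phi (x_j)=w_2$, and when $w_1<w_2$ this forces $\phi (x_i)\in R[x_1]$ (otherwise $\degw \phi (x_i)\geq w_2$), from which membership in $J(R;x_1,x_2)$ or $J(R;x_1,x_2)\circ \iota $ is immediate according to $i=1$ or $i=2$ --- so $\phi \in \bar{G}\cup J$ is only needed in part (1), where your argument and the paper's coincide. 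Your route, through the four cosets of $J$ plus the affine case and the algebraic-independence criterion, does work and gives a concrete picture of $\phi $, but it incurs exactly the bookkeeping you flag and do not carry out: when $w_1\leq 0$, the identity $d_1+d_2=|\w |$ (with $d_l:=\degw \phi (x_l)$) alone does \emph{not} force the non-$R[x_1]$ image to have degree $w_2$, since it permits $d_1=0$, $d_2=w_1+w_2$, and then the strict inequality in (a) could fail. The one-line fix is precisely Lemma~\ref{lem:deg coordinate} (ii) --- or, within your normal forms, the observation that the monomial $x_2$ occurs with unit coefficient in the non-$R[x_1]$ image, so $d_2\geq w_2>0$, whence $d_1\geq w_1$ and the sum identity kill the bad sub-case and yield $d_1=w_1$, $d_2=w_2$. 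Since this degree estimate is the mechanism on which the paper builds the entire claim, leading with it lets you dispense with the coset elimination (in particular the swapped groups $J(R;x_2,x_1)$ and $J(R;x_2,x_1)\circ \iota $, which your tail-collapse argument handles correctly but at extra cost).
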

\begin{proof}
Take $(i,j)\in \{ (1,2),(2,1)\} $ such that $\phi (x_j)$ 
does not belong to $R[x_1]$. 
Then, 
we have $\degw \phi (x_j)\geq w_2$ 
by Lemma~\ref{lem:deg coordinate} (ii), 
since $w_2>0$ by assumption. 
Whether or not $\phi (x_i)$ belongs to $R[x_1]$, 
we have $\degw \phi (x_i)\geq w_1$ 
by (i) and (ii) of Lemma~\ref{lem:deg coordinate}, 
since $w_1\leq w_2$ by assumption. 
Thus, 
we conclude that 
$\degw \phi (x_i)=w_1$ and $\degw \phi (x_j)=w_2$ 
from the assumption that $\degw \phi =|\w |$.

(1) Since $w_1=w_2$ by assumption, 
the first part follows from the preceding discussion. 
Since $\phi $ belongs to $\bar{G}\cup J$, 
we show that 
$\phi $ belongs to $\bar{G}$ when $\phi $ belongs to $J$. 
As remarked in the proof of Lemma~\ref{claim:AE1}, 
$J\cap \Aff (R,\x )$ is contained in $\bar{G}$. 
So we show that $\phi $ belongs to $\Aff (R,\x )$. 
By the definition of the $\w $-degree, 
we have $\degw h=w_1\deg h$ for each $h\in \Rx $, 
since $w_1=w_2$ and $w_2>0$. 
Hence, we get 
$w_1\deg \phi (x_l)=\degw \phi (x_l)$ for $l=1,2$. 
Since $\degw \phi (x_l)=w_1$, 
it follows that $\deg \phi (x_l)=1$ for $l=1,2$. 
Thus, 
$\phi $ belongs to $\Aff (R,\x )$. 
Therefore, 
$\phi $ belongs to $\bar{G}$.

(2) 
We claim that $\phi (x_i)$ belongs to $R[x_1]$. 
In fact, 
if not, 
$w_1=\degw \phi (x_i)$ is not less than 
$w_2$ by Lemma~\ref{lem:deg coordinate} (ii), 
since $w_2>0$ 
by assumption. 
This contradicts that $w_1<w_2$. 
If $(i,j)=(1,2)$, then 
it follows that $\phi $ belongs to $J(R;x_1,x_2)$. 
Since $w_1<w_2$, 
we know that 
$\degw \phi (x_1)=\degw \phi (x_i)=w_1$ 
is less than $\degw \phi (x_2)=\degw \phi (x_j)=w_2$. 
Therefore, $\phi $ satisfies (a). 
If $(i,j)=(2,1)$, 
then $\phi (x_2)=\phi (x_i)$ belongs to $R[x_1]$. 
Hence, 
$\phi $ belongs to $J(R;x_1,x_2)\circ \iota $. 
Since $w_1<w_2$, 
we know that 
$\degw \phi (x_2)=\degw \phi (x_i)=w_1$ 
is less than $\degw \phi (x_1)=\degw \phi (x_j)=w_2$. 
Therefore, $\phi $ satisfies (b). 
\end{proof}

Now, 
we prove that $\phi \circ \tau $ belongs to $\bar{G}\cup J$. 
Since $\degw \phi \circ \tau \leq \degw \phi $, 
the statements (i) and (ii) of Lemma~\ref{claim:AE1} 
hold for $\sigma =\tau $. 
If $w_1=w_2$, 
then we know by Claim~\ref{claim:AE2} (1) 
and Lemma~\ref{claim:AE1} (i) that 
$\phi $ and $\tau $ belong to $\bar{G}$. 
Hence, 
$\phi \circ \tau $ belongs to $\bar{G}$. 
If $w_1<w_2$, 
then we have (a) or (b) of Claim~\ref{claim:AE2} (2). 
In the case of (a), 
$\tau $ belongs to $J_{1,2}$ 
by Lemma~\ref{claim:AE1} (ii). 
Since $\phi $ belongs to $J(R;x_1,x_2)$, 
it follows that 
$\phi \circ \tau $ belongs to $J_{1,2}$. 
In the case of (b), 
$\tau $ belongs to $J_{2,1}=\iota \circ J_{1,2}$ 
by Lemma~\ref{claim:AE1} (ii). 
Since $\phi $ belongs to $J(R;x_1,x_2)\circ \iota $, 
it follows that 
$\phi \circ \tau $ belongs to $J_{1,2}$. 
Hence, 
$\phi \circ \tau $ belongs to $\bar{G}\cup J$ 
in every case. 
Thus, $\phi \circ \tau $ belongs to $G_{\w }$. 
Therefore, 
the statement of Proposition~\ref{prop:A} 
holds when $\degw \phi =|\w |$.

Next, assume that $\degw \phi >|\w |$. 
Since $\phi $ is an element of $\A $, 
we may find $\psi \in \bar{G}\cup J$ such that 
$\degw \phi \circ \psi <\degw \phi $ and 
$\phi \circ \psi $ belongs to $G_{\w }$ 
in view of (\ref{eq:Gw2}). 
Then, 
$(\phi \circ \psi )\circ \sigma $ belongs to $G_{\w }$ 
for any $\sigma \in \bar{G}\cup J$ 
by induction assumption. 
We show that 
$\psi ^{-1}\circ \tau $ belongs to $\bar{G}\cup J$. 
Then, 
it follows that 
$\phi \circ \tau 
=(\phi \circ \psi )\circ (\psi ^{-1}\circ \tau )$ 
belongs to $G_{\w }$. 
If $\degw \phi (x_1)=\degw \phi (x_2)$, 
then $\psi $ belongs to $\bar{G}$ 
by Lemma~\ref{claim:AE1} (i) 
since $\degw \phi \circ \psi <\degw \phi $ 
by the choice of $\psi $. 
Since $\degw \phi \circ \tau \leq \degw \phi $ 
by assumption, 
$\tau $ also belongs to $\bar{G}$ 
by Lemma~\ref{claim:AE1} (i). 
Hence, 
$\psi ^{-1}\circ \tau $ belongs to $\bar{G}$. 
Likewise, 
if $\degw \phi (x_i)<\degw \phi (x_j)$ 
for some $i,j\in \{ 1,2\} $ with $i\neq j$, 
then $\tau $ and $\psi $ belong to $J_{i,j}$ 
by Lemma~\ref{claim:AE1} (ii). 
Hence, 
$\psi ^{-1}\circ \tau $ belongs to $J$ 
by Lemma~\ref{lem:JJ} (i). 
Therefore, 
$\psi ^{-1}\circ \tau $ belongs to $\bar{G}\cup J$ 
in either case. 
This completes the proof of (I), 
and thereby proving Theorem~\ref{thm:AEreduction} 
for each $\w \in W$. 
In particular, 
Theorem~\ref{thm:AEreduction} is verified 
for each $\w \in \Gamma ^2$ such that $w_1>0$ and $w_2\geq 0$, 
or $w_1\geq 0$ and $w_2>0$.

\section{Analysis of reductions}
\setcounter{equation}{0}

Throughout this section, 
we assume that $n=2$. 
We investigate properties of affine reductions 
and elementary reductions.

To deal with affine reductions, 
the following notion is crucial. 
Let $K$ be the field of fractions of $R$. 
We define $V(R)$ to be the set of 
$\alpha /\beta \in K^{\times }$ 
for $\alpha ,\beta \in R\sm \zs $ 
such that $\alpha R+\beta R=R$. 
Note that $\alpha R+\beta R=R$ if and only if 
$\alpha $ and $\beta $ are the entries of 
a row or column 
vector of an element of $\GL (2,R)$ 
for $\alpha ,\beta \in R$.

\begin{lem}\label{lem:V(R)}
\noindent{\rm (i)} 
$R\sm \zs $ is contained in $V(R)$.

\noindent{\rm (ii)} 
For $\alpha ,\beta \in R\sm \zs $, 
it holds that $\alpha /\beta $ 
belongs to $V(R)$ if and only if 
$\alpha R+\beta R$ is a principal ideal of $R$.

\noindent{\rm (iii)} 
For $\gamma \in V(R)$ and $(a_{i,j})_{i,j}\in \GL (2,R)$, 
it holds that 
$$
\frac{a_{1,1}\gamma +a_{1,2}}{a_{2,1}\gamma +a_{2,2}}
$$
belongs to $V(R)$. 
\end{lem}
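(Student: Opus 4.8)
The plan is to move freely between the three equivalent formulations packaged in the setup: that $\gamma\in V(R)$ means $\gamma=\alpha/\beta$ for some $\alpha,\beta\in R\sm\zs$ with $\alpha R+\beta R=R$; that such coprimality is equivalent to $(\alpha,\beta)$ being a row or column of a matrix in $\GL(2,R)$ (the remark preceding the lemma); and, for (ii), the principality of $\alpha R+\beta R$. Part (i) is then immediate: given $\alpha\in R\sm\zs$ I would write $\alpha=\alpha/1$ and observe $\alpha R+1\cdot R=R$, so $\alpha\in V(R)$.

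For part (ii) I would prove the two implications by transporting a nonzero scalar across a fraction. For the ``if'' direction, suppose $\alpha R+\beta R=\delta R$; since $0\neq\alpha\in\delta R$ forces $\delta\neq0$, I can write $\alpha=\delta\alpha''$ and $\beta=\delta\beta''$ with $\alpha'',\beta''\in R\sm\zs$, and cancelling $\delta$ from $\delta R=\delta(\alpha''R+\beta''R)$---valid since $R$ is a domain---yields $\alpha''R+\beta''R=R$; as $\alpha/\beta=\alpha''/\beta''$, this places $\alpha/\beta$ in $V(R)$. For the converse, write $\alpha/\beta=\alpha'/\beta'$ with $\alpha'R+\beta'R=R$, pick $u,v\in R$ with $\alpha'u+\beta'v=1$, and set $t:=\alpha u+\beta v$. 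Using the cross-relation $\alpha\beta'=\alpha'\beta$ I expect the identities $t\alpha'=\alpha$ and $t\beta'=\beta$ to follow by direct substitution, so that $\alpha R+\beta R=t(\alpha'R+\beta'R)=tR$ is principal, with $t\neq0$ because $\alpha\neq0$.

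For part (iii) the cleanest route uses the $\GL(2,R)$ characterization directly. Writing $\gamma=\alpha/\beta$ and clearing the denominator rewrites the expression as $\alpha''/\beta''$ with $\alpha''=a_{1,1}\alpha+a_{1,2}\beta$ and $\beta''=a_{2,1}\alpha+a_{2,2}\beta$, that is, the column with entries $\alpha'',\beta''$ equals $(a_{i,j})_{i,j}$ applied to the column with entries $\alpha,\beta$. Since $\alpha R+\beta R=R$, that second column is a column of some $M\in\GL(2,R)$ (if the remark furnishes it as a row, transpose, as the transpose of a $\GL(2,R)$ matrix again lies in $\GL(2,R)$); hence the first is the corresponding column of $(a_{i,j})_{i,j}M\in\GL(2,R)$, and the remark gives $\alpha''R+\beta''R=R$. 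The point requiring a little care is membership in $K^{\times}$: invertibility of $(a_{i,j})_{i,j}$ and $(\alpha,\beta)\neq(0,0)$ guarantee $(\alpha'',\beta'')\neq(0,0)$, and precisely when the transformed ratio lies in $K^{\times}$ are both $\alpha''$ and $\beta''$ nonzero, giving $\alpha''/\beta''\in V(R)$.

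The only real obstacle is the forward half of (ii): identifying $t=\alpha u+\beta v$ as the correct generator and verifying $t\alpha'=\alpha$, $t\beta'=\beta$. Everything else is bookkeeping with nonzero elements of a domain, supported by the coprimality-versus-$\GL(2,R)$ dictionary.
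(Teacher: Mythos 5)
Your proof is correct and follows essentially the same route as the paper's: (i) via $\alpha=\alpha/1$; (ii) by producing the generator $t=\alpha u+\beta v$ (the paper's $\gamma=\alpha/\alpha'$ satisfies the identical identity $\gamma=\alpha a+\beta b$) and cancelling by a nonzero element of the domain for the converse; and (iii) by clearing denominators and noting that an invertible matrix over $R$ preserves unimodularity of the pair, which you phrase through the row/column-of-$\GL (2,R)$ dictionary while the paper states it directly as the ideal equality $(a_{1,1}\gamma _1+a_{1,2}\gamma _2)R+(a_{2,1}\gamma _1+a_{2,2}\gamma _2)R=\gamma _1R+\gamma _2R$ --- the same fact. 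Your closing caveat in (iii) about the transformed ratio actually lying in $K^{\times }$ addresses an edge case the paper's proof silently glosses over, and handling it explicitly is a small improvement rather than a deviation.
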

\begin{proof}
(i) For each $a\in R\sm \zs $, 
we have $aR+1R=R$. 
Hence, $a=a/1$ belongs to $V(R)$. 

(ii) 
Assume that $\alpha /\beta $ belongs to $V(R)$ 
for $\alpha ,\beta \in R\sm \zs $. 
Then, 
there exist $\alpha ',\beta '\in R\sm \zs $ 
such that $\alpha '/\beta '=\alpha /\beta $ 
and $\alpha 'R+\beta 'R=R$. 
Set $\gamma =\alpha /\alpha '=\beta /\beta '$ 
and take $a,b\in R$ such that $\alpha 'a+\beta 'b=1$. 
Then, 
we have $\alpha a+\beta b=\gamma $. 
Hence, 
$\gamma $ belongs to $R$, 
and $\gamma R$ is contained in $\alpha R+\beta R$. 
Since $\alpha =\gamma \alpha '$ 
and $\beta =\gamma \beta '$ belong to $\gamma R$, 
we know that $\alpha R+\beta R$ 
is contained in $\gamma R$. 
Thus, 
we get $\alpha R+\beta R=\gamma R$. 
Therefore, 
$\alpha R+\beta R$ is a principal ideal of $R$. 
Next, 
assume that $\alpha R+\beta R=\gamma R$ 
for some $\gamma \in R$. 
Then, 
$\alpha $ and $\beta $ belong to $\gamma R$. 
Hence, 
we have $\alpha =\alpha '\gamma $ 
and $\beta =\beta '\gamma $ 
for some $\alpha ',\beta '\in R$. 
Then, we get $\alpha 'R+\beta 'R=R$. 
Therefore, 
$\alpha /\beta =\alpha '/\beta '$ belongs to $V(R)$.

(iii) 
Let $\gamma _1,\gamma _2\in R\sm \zs $ be such that 
$\gamma =\gamma _1/\gamma _2$ and $\gamma _1R+\gamma _2R=R$. 
Then, 
we have 
$$
\delta :=
\frac{a_{1,1}\gamma +a_{1,2}}{a_{2,1}\gamma +a_{2,2}}
=\frac{a_{1,1}\gamma _1+a_{1,2}\gamma _2}{
a_{2,1}\gamma _1+a_{2,2}\gamma _2}
$$
and 
$$
(a_{1,1}\gamma _1+a_{1,2}\gamma _2)R
+(a_{2,1}\gamma _1+a_{2,2}\gamma _2)R
=\gamma _1R+\gamma _2R=R. 
$$
Hence, $\delta $ belongs to $V(R)$. 
\end{proof}

By Lemma~\ref{lem:V(R)} (ii), 
it follows that 
$V(R)=K^{\times }$ if $R$ is a PID\null. 
If $r$ belongs to $V(R)$, then $r^{-1}$ 
and $ur$ belong to $V(R)$ for each $u\in R^{\times}$ 
by Lemma~\ref{lem:V(R)} (iii).

The following lemma naturally follows from 
the definition of affine reduction 
and elementary reduction.

\begin{lem}\label{lem:AE initial}
Let $\phi ,\tau \in \Aut (\Rx /R)$ and $\w \in \Gamma ^2$ 
be such that $\degw \phi (x_i)>0$ for $i=1,2$ 
and $\degw \phi \circ \tau <\degw \phi $. 

\noindent{\rm (i)} 
If $\tau $ belongs to $\Aff (R,\x )$, 
then we have $\phi (x_1)^{\w }=a\phi (x_2)^{\w }$ 
for some $a\in V(R)$, 
and $(\phi \circ \tau )(x_i)^{\w }=b\phi (x_1)^{\w }$ 
for some $i\in \{ 1,2\} $ and $b\in K^{\times }$. 

\noindent{\rm (ii)} 
Assume that $\tau $ belongs to $J_{i,j}$ 
for some $i,j\in \{ 1,2\} $ with $i\neq j$. 
Then, 
we have $\phi (x_j)^{\w }=b(\phi (x_i)^{\w })^t$ 
for some $b\in R\sm \zs $ and $t\geq 1$, 
and $(\phi \circ \tau )(x_p)^{\w }=\alpha \phi (x_i)^{\w }$ 
for some $p\in \{ 1,2\} $ and $\alpha \in R^{\times }$. 
\end{lem}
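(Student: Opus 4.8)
The plan is to handle the two parts separately, using in each case that a strict drop $\degw \phi\circ\tau < \degw\phi$ can occur only through cancellation of $\w$-leading terms, subject to the invertibility of the relevant coefficient matrix over $R$. Throughout I would write $\delta_i:=\degw\phi(x_i)$, so that $\delta_1,\delta_2>0$ by hypothesis and hence $\phi(x_i)^\w\neq 0$ for $i=1,2$.

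For (i) I would write $\tau$ affinely as $\tau(x_l)=a_{l,1}x_1+a_{l,2}x_2+b_l$ with $(a_{p,q})_{p,q}\in\GL(2,R)$ and $b_l\in R$, so that $(\phi\circ\tau)(x_l)=a_{l,1}\phi(x_1)+a_{l,2}\phi(x_2)+b_l$. The first key step is to show $\delta_1=\delta_2$: if, say, $\delta_1<\delta_2$, then every $(\phi\circ\tau)(x_l)$ with $a_{l,2}\neq 0$ has $\w$-degree exactly $\delta_2$ because nothing else reaches that degree, and since the second column of $(a_{p,q})$ is nonzero, a short case check (using that the determinant is a unit, so the complementary entry cannot vanish) forces $\degw\phi\circ\tau\geq\delta_1+\delta_2=\degw\phi$, a contradiction. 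Hence $\delta_1=\delta_2=:\delta$. A drop now means that some row $i$ satisfies $a_{i,1}\phi(x_1)^\w+a_{i,2}\phi(x_2)^\w=0$; as this row is nonzero and $\phi(x_l)^\w\neq 0$, both $a_{i,1},a_{i,2}$ are nonzero, whence $\phi(x_1)^\w=a\phi(x_2)^\w$ with $a=-a_{i,2}/a_{i,1}$. Because $(a_{i,1},a_{i,2})$ is a row of a matrix in $\GL(2,R)$, the remark defining $V(R)$ gives $a_{i,1}R+a_{i,2}R=R$, and the closure of $V(R)$ under inverses and unit multiples (noted after Lemma~\ref{lem:V(R)}) yields $a\in V(R)$. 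For the second assertion I note that both $(\phi\circ\tau)(x_l)$ cannot lose their leading term at once—that would annihilate the invertible matrix applied to the nonzero vector $(\phi(x_1)^\w,\phi(x_2)^\w)$—so some $(\phi\circ\tau)(x_l)$ has $\w$-degree $\delta$, and its leading term, a combination of the now-proportional $\phi(x_1)^\w,\phi(x_2)^\w$, is a $K^\times$-multiple of $\phi(x_1)^\w$.

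For (ii) I would invoke the normal form (\ref{eq:JJ-red}) for $\tau\in J_{i,j}$, namely $\tau(x_p)=\alpha x_i-g$ and $\tau(x_q)=\beta x_j-h$ with $\{p,q\}=\{1,2\}$, $\alpha,\beta\in R^\times$, $g\in R$ and $h\in R[x_i]$, so that $(\phi\circ\tau)(x_p)=\alpha\phi(x_i)-g$ and $(\phi\circ\tau)(x_q)=\beta\phi(x_j)-\phi(h)$. Since $\alpha\phi(x_i)$ has $\w$-degree $\delta_i>0$ and $g\in R$, no cancellation occurs and $(\phi\circ\tau)(x_p)^\w=\alpha\phi(x_i)^\w$, which is the second assertion. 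Comparing $\w$-degrees through (\ref{eq:J-red}), the reduction forces $\degw(\beta\phi(x_j)-\phi(h))<\delta_j$, that is, $\beta\phi(x_j)^\w=\phi(h)^\w$; writing $t:=\deg_{x_i}h$ (necessarily $\geq 1$, for otherwise $\phi(h)\in R$ has nonpositive $\w$-degree) and using that the top $x_i$-power of $h$ dominates because $\delta_i>0$, I obtain $\phi(h)^\w=c(\phi(x_i)^\w)^t$ with leading coefficient $c\in R$, and therefore $\phi(x_j)^\w=b(\phi(x_i)^\w)^t$ with $b=c\beta^{-1}\in R\sm\zs$.

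The main obstacle is the first step of (i): proving $\delta_1=\delta_2$ and then extracting the proportionality while tracking that the coefficients lie in $R$, not merely in $K$. It is exactly the $\GL(2,R)$-structure—each row generating the unit ideal—that places the ratio $a$ in $V(R)$ rather than just in $K^\times$, so this is where the argument must be carried out with care. Part (ii) is essentially bookkeeping once the decomposition (\ref{eq:JJ-red}) is in hand.
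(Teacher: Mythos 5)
Your proposal is correct and follows essentially the same route as the paper's proof: in (i) the strict drop forces one row of the $\GL (2,R)$-matrix to cancel the leading forms, the unit-ideal condition on that row placing $a$ in $V(R)$ while invertibility guarantees the complementary row survives, and in (ii) the normal form (\ref{eq:JJ-red}) yields both assertions exactly as in the paper. The only cosmetic differences are that you establish $\degw \phi (x_1)=\degw \phi (x_2)$ before locating the cancelling row (the paper obtains this equality afterwards, as a byproduct of the cancellation), and that in (ii) you identify $\phi (h)^{\w }$ via the dominating top power of $h$ rather than via $R[\phi (x_i)]^{\w }=R[\phi (x_i)^{\w }]$ and $\w $-homogeneity.
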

\begin{proof}
(i) Take $A=(a_{i,j})_{i,j}\in \GL (2,R)$ 
and $b_1,b_2\in R$ such that 
$$
\tau (x_i)=a_{i,1}x_1+a_{i,2}x_2+b_i
$$ 
for $i=1,2$, 
and put 
$$
\delta _i=\max \{ \degw a_{i,1}\phi (x_1),\degw a_{i,2}\phi (x_2)\} 
$$ 
for $i=1,2$. 
Then, 
we have $\delta _i>0$ for $i=1,2$, 
since no row of $A$ is zero, 
and $\degw \phi (x_j)>0$ for $j=1,2$ 
by assumption. 
Since no column of $A$ is zero, 
we know that 
$\delta _1+\delta _2\geq \degw \phi $. 
We claim that 
$a_{j,1}\phi (x_1)^{\w }+a_{j,2}\phi (x_2)^{\w }=0$ 
for some $j\in \{ 1,2\} $. 
In fact, 
if not, we have 
$$
\degw \bigl((\phi \circ \tau )(x_i)-b_i\bigr) 
=\degw (a_{i,1}\phi (x_1)+a_{i,2}\phi (x_2))
=\delta _i
$$
for $i=1,2$. 
Since $b_i$ 
is an element of $R$, 
and $\delta _i$ is positive, 
this implies that 
$\degw (\phi \circ \tau )(x_i)=\delta _i$ 
for $i=1,2$. 
Hence, 
we get 
$\degw \phi \circ \tau =\delta _1+\delta _2\geq \degw \phi $, 
a contradiction. 
Thus, 
we may find $j\in \{ 1,2\} $ as claimed. 
Then, 
$a_{j,1}$ and $a_{j,2}$ are nonzero, 
since $(a_{j,1},a_{j,2})\neq (0,0)$, 
and $\phi (x_l)^{\w }\neq 0$ for $l=1,2$. 
Put $a=-a_{j,2}/a_{j,1}$. 
Then, 
$a$ belongs to $V(R)$, 
and satisfies $\phi (x_1)^{\w }=a\phi (x_2)^{\w }$. 
Take $i\in \{ 1,2\} $ with $i\neq j$. 
Then, 
$b:=a_{i,1}+a_{i,2}a^{-1}$ belongs to $K^{\times }$, 
since $\det A\neq 0$. 
Hence, 
we get 
$$
a_{i,1}\phi (x_1)^{\w }+a_{i,2}\phi (x_2)^{\w }
=a_{i,1}\phi (x_1)^{\w }+a_{i,2}(a^{-1}\phi (x_1)^{\w })
=b\phi (x_1)^{\w }\neq 0. 
$$ 
Since $\degw \phi (x_1)=\degw \phi (x_2)$, 
this implies that 
$$
\bigl((\phi \circ \tau )(x_i)-b_i\bigr) ^{\w }
=\bigl(a_{i,1}\phi (x_1)+a_{i,2}\phi (x_2)\bigr) ^{\w }
=b\phi (x_1)^{\w }. 
$$
Because $b_i$ is a constant 
and $\degw \phi (x_1)>0$, 
it follows that 
$(\phi \circ \tau )(x_i)^{\w }=b\phi (x_1)^{\w }$.

(ii) Since $\tau $ is an element of $J_{i,j}$, 
we have an expression as (\ref{eq:JJ-red}) 
with $\sigma =\tau $. 
Then, 
we may write $(\phi \circ \tau )(x_p)=\alpha \phi (x_i)-g$. 
Since $\degw \phi (x_i)>0$ and $g$ is a constant, 
it follows that 
$(\phi \circ \tau )(x_p)^{\w }=\alpha \phi (x_i)^{\w }$, 
proving the latter part. 
Consequently, 
we get $\degw (\phi \circ \tau )(x_p)=\degw \phi (x_i)$.  
Since $\degw \phi \circ \tau <\degw \phi $ by assumption, 
we know that the $\w $-degree of 
$(\phi \circ \tau )(x_q)=\beta \phi (x_j)-\phi (h)$ 
is less than $\degw \phi (x_j)$. 
Because $\beta \neq 0$, 
this implies that 
$\beta \phi (x_j)^{\w }=\phi (h)^{\w }$. 
Since $\phi (h)$ belongs to $R[\phi (x_i)]$, 
it follows that 
$\phi (x_j)^{\w }=\beta ^{-1}\phi (h)^{\w }$ 
belongs to $R[\phi (x_i)]^{\w }=R[\phi (x_i)^{\w }]$. 
Hence, 
we may write $\phi (x_j)^{\w }=b(\phi (x_i)^{\w })^t$ 
in view of the $\w $-homogeneity of $\phi (x_j)^{\w }$, 
where $b\in R\sm \zs $ and $t\geq 0$. 
Then, we have $t\geq 1$, 
since $\degw \phi (x_l)>0$ for $l=1,2$. 
\end{proof}

From (i) and (ii) of Lemma~\ref{lem:AE initial}, 
we get the following statement. 
Assume that $\phi \in \Aut (\Rx /R)$, 
$\tau \in \Aff (R,\x )\cup J$ and $\w \in \Gamma ^2$ 
satisfy 
$\degw \phi (x_l)>0$ for $l=1,2$ and 
$\degw \phi \circ \tau <\degw \phi $. 
Then, 
we have 
$$
\phi (x_j)^{\w }=a(\phi (x_i)^{\w })^t
\quad \text{and} \quad 
\phi (x_i)^{\w }=b(\phi \circ \tau )(x_q)^{\w }
$$
for some $i,j,q\in \{ 1,2\} $ with $i\neq j$, 
$a,b\in K^{\times }$ and $t\geq 1$. 
Hence, 
$\phi (x_1)^{\w }$ and $\phi (x_2)^{\w }$ 
are powers of $(\phi \circ \tau )(x_q)^{\w }$ 
multiplied by elements of $K^{\times }$ 
for some $q\in \{ 1,2\} $.

Now, 
let $\kappa $ be any field, 
and $\w \in \Gamma ^2$ such that $w_i>0$ for $i=1,2$. 
Then, 
$\w $ belongs to the set $W$ 
defined after Lemma~\ref{claim:AE1}. 
Hence, 
Proposition~\ref{prop:A} holds for this $\w $ by (I). 
As a consequence, we know that 
$\T (\kappa ,\x )=\Aff (\kappa ,\x )^+$  
is contained in $\Aff (\kappa ,\x )_{\w }$. 
On the other hand, 
$\T (\kappa ,\x )$ is equal to $\Aut (\kapx /\kappa )$ 
due to Jung~\cite{Jung} and van der Kulk~\cite{Kulk}. 
Thus, 
$\Aut (\kapx /\kappa )$ 
is contained in $\Aff (\kappa ,\x )_{\w }$.

\begin{lem}\label{lem:power}
Assume that $w_i>0$ for $i=1,2$. 
Then, 
for each $\phi \in \Aut (\kapx /\kappa )$ 
and $p\in \{ 1,2\} $, 
there exist $a\in \kappa ^{\times }$, 
$\psi \in \Aff (\kappa ,\x )\cup J$ and $t\geq 1$ 
such that 
$\phi (x_p)^{\w }=a(\psi (x_1)^{\w })^{t}$. 
\end{lem}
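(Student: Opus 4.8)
The plan is to induct on $\degw\phi$. Because $w_1>0$ and $w_2>0$, the weight $\w$ lies in the set $W$ defined after Lemma~\ref{claim:AE1}, so $\Sigma_{\w}=\{\degw\psi\mid\psi\in\Aut(\kapx/\kappa)\}$ is well-ordered and such an induction is legitimate. I first record a uniform fact: since $\phi(x_1)$ and $\phi(x_2)$ are coordinates of $\kapx$ over $\kappa$ and $w_1,w_2>0$, Lemma~\ref{lem:deg coordinate} forces $\degw\phi(x_l)\geq\min\{w_1,w_2\}>0$ for $l=1,2$. Hence the positivity hypothesis required to apply Lemma~\ref{lem:AE initial} and the consequence stated after its proof is automatic for every automorphism that arises.

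For the base case $\degw\phi=|\w|$, I use that an element of $\Aff(\kappa,\x)_{\w}$ of $\w$-degree $|\w|$ belongs to $\Aff(\kappa,\x)\cup J$ (noted in the proof of~(I)), together with $\phi\in\Aut(\kapx/\kappa)=\T(\kappa,\x)\subseteq\Aff(\kappa,\x)_{\w}$. When $p=1$ I take $\psi=\phi$, and when $p=2$ I take $\psi=\phi\circ\iota$; the latter still lies in $\Aff(\kappa,\x)\cup J$ since $\iota\in\Aff(\kappa,\x)$ gives $\Aff(\kappa,\x)\circ\iota=\Aff(\kappa,\x)$, while $J_{i,j}\circ\iota=J_{i,j}$ (using $\iota^2=\id_{\kapx}$) gives $J\circ\iota=J$. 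In both cases $\psi(x_1)=\phi(x_p)$, so $\phi(x_p)^{\w}=\psi(x_1)^{\w}$ and the assertion holds with $a=1$, $t=1$.

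For the inductive step, assume $\degw\phi>|\w|$. Since $\phi\in\T(\kappa,\x)=\Aff(\kappa,\x)^+$, Theorem~\ref{thm:AEreduction} applied with $G=\Aff(\kappa,\x)$ (so $\bar G=\Aff(\kappa,\x)$) shows that $\phi$ admits an affine reduction or an elementary reduction; recalling that an elementary reduction amounts to $\degw(\phi\circ\sigma)<\degw\phi$ for some $\sigma\in J$, this produces $\tau\in\Aff(\kappa,\x)\cup J$ with $\degw(\phi\circ\tau)<\degw\phi$. Set $\phi'=\phi\circ\tau$. The statement derived from~(i) and~(ii) of Lemma~\ref{lem:AE initial} then yields a single index $q\in\{1,2\}$ for which both $\phi(x_1)^{\w}$ and $\phi(x_2)^{\w}$ are nonzero scalar multiples of powers (of exponent $\geq1$) of $\phi'(x_q)^{\w}$; in particular $\phi(x_p)^{\w}=c\,(\phi'(x_q)^{\w})^{s}$ for some $c\in\kappa^{\times}$ and $s\geq1$. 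As $\phi'\in\Aut(\kapx/\kappa)$ with $\degw\phi'<\degw\phi$, the induction hypothesis applied to $\phi'$ and the index $q$ gives $a'\in\kappa^{\times}$, $\psi\in\Aff(\kappa,\x)\cup J$ and $t'\geq1$ with $\phi'(x_q)^{\w}=a'(\psi(x_1)^{\w})^{t'}$. Substituting yields $\phi(x_p)^{\w}=c(a')^{s}(\psi(x_1)^{\w})^{t's}$, the required form with $a=c(a')^{s}\in\kappa^{\times}$ and $t=t's\geq1$.

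The degree-positivity observation and the base-case membership are routine. The load-bearing point is the inductive passage, and the reason it works is precisely the combinatorial content of Lemma~\ref{lem:AE initial}: after one reduction both leading forms $\phi(x_1)^{\w}$ and $\phi(x_2)^{\w}$ become scalar powers of the \emph{same} $\phi'(x_q)^{\w}$, which is what lets a single index $q$ be fed into the induction hypothesis and the exponents be multiplied. I expect the only thing to watch is keeping the indices $i,j,q,p$ consistent and checking that all exponents stay $\geq1$, so that $t=t's\geq1$.
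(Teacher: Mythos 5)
Your proof is correct and takes essentially the same route as the paper: the paper inducts on the length of a reduction chain witnessing $\phi \in \Aff (\kappa ,\x )_{\w }$, while you induct on $\degw \phi $ (legitimate by the well-orderedness of $\Sigma _{\w }$) and invoke Theorem~\ref{thm:AEreduction} to manufacture each reduction step, but the engine is identical — the remark after Lemma~\ref{lem:AE initial} giving both leading forms as scalar powers of a single $(\phi \circ \tau )(x_q)^{\w }$. Your base case at degree $|\w |$ (via $\psi =\phi $ or $\psi =\phi \circ \iota $, using $J\circ \iota =J$) and the multiplication of exponents $t=t's\geq 1$ match the paper's treatment exactly.
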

\begin{proof}
By the discussion above, 
$\phi $ belongs to $\Aff (\kappa ,\x )_{\w }$. 
Hence, 
there exist $l\in \N $, 
$\phi _1,\ldots ,\phi _{l-1}\in \Aut (\kapx /\kappa )$ 
and $\phi _l\in T:=\Aff (\kappa ,\x )\cup J$ 
satisfying 
(\ref{eq:Gw1}) and (\ref{eq:Gw2}). 
We prove the lemma by induction on $l$. 
If $l=1$, 
then we have $\phi =\phi _l$ by (\ref{eq:Gw1}). 
Hence, $\phi$ belongs to $T$. 
Thus, 
if $p=1$, 
then the statement holds for $a=1$, $t=1$ and $\psi =\phi $. 
Note that $(\phi \circ \iota )(x_1)=\phi (x_2)$ 
and $\phi \circ \iota $ belongs to $T$. 
Hence, if $p=2$, 
then the statement holds for $a=1$, $t=1$ 
and $\psi =\phi \circ \iota $. 
Assume that $l\geq 2$. 
Then, 
for $q=1,2$, 
there exists $\psi _q\in T$ such that 
$\phi _{2}(x_q)^{\w }$ is a power of 
$\psi _q(x_1)^{\w }$ multiplied by a nonzero constant 
by induction assumption. 
By (\ref{eq:Gw2}), 
there exists $\tau _1\in T$ such that 
$\phi _2=\phi \circ \tau _1$ 
and $\deg \phi \circ \tau _1=\degw \phi_2<\degw \phi $. 
Since $\phi (x_l)$ is not a constant, 
we have $\degw \phi (x_l)>0$ for $l=1,2$ 
by the choice of $\w $. 
Hence, 
we know from the remark after 
Lemma~\ref{lem:AE initial} that 
$\phi (x_1)^{\w }$ and $\phi (x_2)^{\w }$ 
are powers of $(\phi \circ \tau _1)(x_q)^{\w }$ 
multiplied by nonzero constants 
for some $q\in \{ 1,2\} $. 
Therefore, 
$\phi (x_p)^{\w }$ is a 
power of $\psi _q(x_1)$ 
multiplied by a nonzero constant, 
since so is 
$(\phi \circ \tau _1)(x_q)^{\w }=\phi _{2}(x_q)^{\w }$. 
\end{proof}

\chapter{Tamely reduced coordinates}
\label{chap:trc}

\section{Structure of coordinates}
\label{sect:coordinate}
\setcounter{equation}{0}

Throughout this chapter, 
we assume that $n=2$. 
The purpose of this chapter is to discuss reductions of polynomials 
by tame automorphisms. 
The notion of ``tamely reduced" coordinates 
introduced in Section~\ref{sect:reduced coordinate} 
will play a crucial role in the next two chapters.

In this section, 
we study the structure of coordinates. 
Let $\kappa $ be any field. 
For each $f\in \kapx \sm \zs $, 
we define an element $\w (f)$ of $(\Zn )^2$ by 
$$
\w (f)=(\deg _{x_2}f,\deg _{x_1}f). 
$$
Set $p_i=\deg _{x_i}f$ for $i=1,2$. 
Then, 
the monomial $x_1^{p_1}x_2^{t}$ 
appears in $f$ for some $t\geq 0$. 
Hence, 
we have 
\begin{equation}\label{eq:w(f)}
\deg _{\w (f)}f\geq 
\deg _{\w (f)}x_1^{p_1}x_2^{t}
=p_1p_2+tp_1\geq p_1p_2. 
\end{equation}
From this, we see that $x_1^{p_1}$ appears in $f$ 
if $\deg _{\w (f)}f=p_1p_2$ and $p_1>0$. 
If this is the case, 
then $x_1^{p_1}$ appears in $f^{\w (f)}$, 
since $\deg _{\w (f)}x_1^{p_1}=p_1p_2=\deg _{\w (f)}f$. 
Likewise, 
if $\deg _{\w (f)}f=p_1p_2$ and $p_2>0$, 
then $x_2^{p_2}$ appears in $f^{\w (f)}$.

\begin{lem}\label{lem:w(f)}
Assume that $p_i>0$ and $x_j^{q}$ 
appears in $f^{\w (f)}$ 
for some 
$i,j\in \{ 1,2\} $ with $i\neq j$ 
and $q\geq 0$. 
Then, 
we have $q=p_j$ and $\deg _{\w (f)}f=p_1p_2$. 
Hence, 
both $x_1^{p_1}$ and $x_2^{p_2}$ appear in $f^{\w (f)}$. 
\end{lem}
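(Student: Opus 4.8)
The plan is to read off everything from a single principle: a monomial that appears in the leading form $f^{\w (f)}$ is a monomial of $f$ of maximal $\w (f)$-degree, so its $\w (f)$-degree equals $\deg _{\w (f)}f$. Pairing this with the lower bound $\deg _{\w (f)}f\geq p_1p_2$ from (\ref{eq:w(f)}) will pin down $q$ and the value of $\deg _{\w (f)}f$ simultaneously. The only bookkeeping I must keep straight is that the weight is the \emph{transposed} tuple $\w (f)=(\deg _{x_2}f,\deg _{x_1}f)=(p_2,p_1)$, so the weight of $x_j$ is $p_i$ (the indices flip), not $p_j$.

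First I would compute $\deg _{\w (f)}x_j^q=q\,p_i$ using this flip, and note that since $x_j^q$ occurs in $f^{\w (f)}$ this quantity is exactly $\deg _{\w (f)}f$. Next, because $x_j^q$ is a monomial of $f$ we have $q\leq \deg _{x_j}f=p_j$, while (\ref{eq:w(f)}) gives $q\,p_i=\deg _{\w (f)}f\geq p_1p_2=p_ip_j$. Dividing the latter by $p_i$, which is legitimate precisely because of the hypothesis $p_i>0$, yields $q\geq p_j$. The two inequalities force $q=p_j$, and substituting back gives $\deg _{\w (f)}f=q\,p_i=p_1p_2$, which is the first pair of conclusions.

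For the final assertion I would treat the two variables asymmetrically. The monomial $x_j^{p_j}=x_j^q$ appears in $f^{\w (f)}$ by hypothesis, so nothing further is needed there; and since $\deg _{\w (f)}f=p_1p_2$ with $p_i>0$, the remark following (\ref{eq:w(f)}) supplies that $x_i^{p_i}$ also appears in $f^{\w (f)}$. Hence both $x_1^{p_1}$ and $x_2^{p_2}$ appear. The one point deserving care is that I must obtain $x_j^{p_j}$ directly from the hypothesis rather than from that remark, since the remark requires a positive exponent and here I am only given $p_i>0$, leaving open the degenerate case $p_j=0$; routing $x_j^{p_j}$ through the hypothesis sidesteps this cleanly, so no genuine obstacle remains.
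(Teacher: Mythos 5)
Your proof is correct and follows essentially the same route as the paper: compute $\deg_{\w (f)}x_j^q=p_iq=\deg_{\w (f)}f$, squeeze $q$ between $p_j$ (from $q\leq \deg_{x_j}f$) and $p_j$ (from $(\ref{eq:w(f)})$ divided by $p_i>0$), then get $x_i^{p_i}$ from the remark after $(\ref{eq:w(f)})$ and $x_j^{p_j}$ directly from the hypothesis. Your explicit note that $x_j^{p_j}$ must come from the hypothesis rather than the remark (since $p_j>0$ is not assumed) is a careful reading of exactly what the paper's proof does implicitly.
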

\begin{proof}
Since $x_j^{q}$ appears in $f^{\w (f)}$, 
we have $\deg _{\w (f)}f=\deg _{\w (f)}x_j^{q}=p_iq$. 
Hence, 
we get $p_iq\geq p_ip_j$ in view of (\ref{eq:w(f)}). 
Since $p_i>0$ by assumption, 
it follows that $q\geq p_j$. 
On the other hand, 
we have $q\leq \deg _{x_j}f=p_j$, 
since $x_j^{q}$ is a monomial appearing in $f$. 
Thus, 
we get $q=p_j$, 
and therefore 
$\deg _{\w (f)}f=p_iq=p_1p_2$. 
Since $p_i>0$, 
this implies that $x_i^{p_i}$ 
appears in $f^{\w (f)}$ 
by the remark. 
Consequently, 
both $x_1^{p_1}$ and $x_2^{p_2}$ appear in $f^{\w (f)}$. 
\end{proof}

Now, 
assume that $f$ 
is a coordinate of $\kapx $ over $\kappa $. 
Then, 
we have 
$$
|\w (f)|=\deg_{x_2}f+\deg _{x_1}f\geq 1, 
$$ 
since $f$ is not a constant. 
We remark that $|\w (f)|=1$ if and only if 
$f$ is a linear polynomial in $x_i$ 
over $\kappa $ for some $i\in \{ 1,2\} $, 
and hence if and only if 
$f$ belongs to $\kappa [x_i]$ for some $i\in \{ 1,2\} $.

\begin{prop}\label{prop:slope}
Let $f$ be a coordinate of $\kapx $ 
over $\kappa $ with $|\w (f)|>1$. 
Then, 
there exist $i,j\in \{ 1,2\} $ with $i\neq j$, 
$l,m\in \N $ and $a,b\in \kappa ^{\times }$ such that 
$$
\deg _{x_i}f=m,\ \ \deg _{x_j}f=lm
\text{ \ and \ }
f^{\w (f)}=a(x_i-bx_j^l)^m. 
$$
\end{prop}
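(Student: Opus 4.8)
The plan is to extract the leading form $f^{\w (f)}$ from the rigid structure supplied by Lemma~\ref{lem:power}, and then to read off the binomial shape by a direct inspection of the possible leading forms. First I would record that both $\deg _{x_1}f$ and $\deg _{x_2}f$ are positive. Writing $p_i=\deg _{x_i}f$, if one of them vanished then $f$ would lie in $\kappa [x_i]$ for some $i$; being a coordinate, $f$ would then be linear in that variable, forcing $|\w (f)|=1$, contrary to hypothesis (this is the remark preceding the proposition). Hence $w_1=p_2>0$ and $w_2=p_1>0$, so $\w (f)$ meets the hypothesis of Lemma~\ref{lem:power}. Choosing $\phi \in \Aut (\kapx /\kappa )$ with $\phi (x_1)=f$ and applying Lemma~\ref{lem:power} with $p=1$ and $\w =\w (f)$, I obtain $a_0\in \kappa ^{\times }$, $\psi \in \Aff (\kappa ,\x )\cup J$ and $t\geq 1$ with
$$
f^{\w (f)}=a_0\bigl(\psi (x_1)^{\w (f)}\bigr)^{t}.
$$
Setting $m:=t$, it therefore suffices to understand the single polynomial $g:=\psi (x_1)^{\w (f)}$.

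Next I would observe that $f^{\w (f)}$ cannot be a monomial. If it were, say a power of $x_1$, then Lemma~\ref{lem:w(f)} (applicable since $p_2>0$) would force both $x_1^{p_1}$ and $x_2^{p_2}$ to occur in $f^{\w (f)}$, and $x_2^{p_2}$ with $p_2>0$ contradicts monomiality; the case of a power of $x_2$ is symmetric. Since $g^{t}$ is a monomial whenever $g$ is, this rules out $g$ being a monomial.

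The heart of the argument, and the step I expect to cost the most work, is to classify $g=\psi (x_1)^{\w (f)}$ from the explicit description of $\Aff (\kappa ,\x )\cup J$. Using $J=J_{1,2}\cup J_{2,1}$, the definitions $J_{i,j}=J(\kappa ;x_i,x_j)\cup J(\kappa ;x_i,x_j)\circ \iota $, and the triangular form (\ref{eq:jonquiere}), the element $\psi (x_1)$ falls into one of four shapes: linear in $x_1$, linear in $x_2$, $\alpha x_1+h(x_2)$ with $h\in \kappa [x_2]$, or $\alpha x_2+h(x_1)$ with $h\in \kappa [x_1]$ (here $\alpha \in \kappa ^{\times }$), the affine case being subsumed as the sub-case where $h$ has degree at most one. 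In each shape I would compare the $\w (f)$-degrees $\deg _{\w (f)}x_1=p_2$ and $\deg _{\w (f)}x_2=p_1$ of the competing monomials of $\psi (x_1)$. The outcome is always that $g$ is either a monomial or a genuine binomial $\alpha (x_i-bx_j^{l})$ with $i\neq j$, $b\in \kappa ^{\times }$ and $l\geq 1$ — the binomial arising exactly when the two extreme monomials tie in $\w (f)$-degree. Combined with the previous paragraph, $g$ must be such a binomial, whence
$$
f^{\w (f)}=a_0\alpha ^{t}(x_i-bx_j^{l})^{m}=a(x_i-bx_j^{l})^{m},\qquad a:=a_0\alpha ^{t}\in \kappa ^{\times }.
$$

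Finally I would pin down the degrees. Since $x_i^{m}$ and $x_j^{lm}$ both occur in $f^{\w (f)}=a(x_i-bx_j^{l})^{m}$, two applications of Lemma~\ref{lem:w(f)} (legitimate because $p_1,p_2>0$) give $m=\deg _{x_i}f$ and $lm=\deg _{x_j}f$, which are precisely the asserted degree identities. This completes the plan; the only genuinely laborious ingredient is the four-way case analysis of $\psi (x_1)^{\w (f)}$, all other steps being short deductions from Lemmas~\ref{lem:power} and~\ref{lem:w(f)}.
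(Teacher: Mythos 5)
Your proposal is correct and follows essentially the same route as the paper's proof: positivity of $p_1,p_2$ from $|\w (f)|>1$, Lemma~\ref{lem:power} applied with $\w =\w (f)$ to write $f^{\w (f)}$ as a power of $\psi (x_1)^{\w (f)}$, the monomial/binomial classification of that leading form, exclusion of the monomial case via Lemma~\ref{lem:w(f)}, and the degree identities again from Lemma~\ref{lem:w(f)}. The only difference is cosmetic: you spell out the four-way case analysis of $\psi (x_1)$ and rule out monomiality of $g$ before classifying, where the paper asserts the dichotomy in one line and eliminates the monomial outcome afterwards.
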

\begin{proof}
Since $|\w (f)|>1$ by assumption, 
$f$ does not belong to $\kappa [x_i]$ for $i=1,2$. 
Hence, 
we have $p_i:=\deg _{x_i}f>0$ for $i=1,2$. 
By Lemma~\ref{lem:power}, 
we know that 
$f^{\w (f)}=c(\psi (x_1)^{\w (f)})^{m}$ 
for some 
$c\in \kappa ^{\times }$, 
$\psi \in \Aff (\kappa ,\x )\cup J$ 
and $m\geq 1$. 
Then, 
$\psi (x_1)^{\w (f)}$ has the form 
$\alpha x_i^l$ or $\alpha (x_i+bx_j^l)$ for some 
$\alpha ,b \in \kappa ^{\times }$, $l\geq 1$ 
and $i,j\in \{ 1,2\} $ with $i\neq j$. 
Thus, 
$f^{\w (f)}$ is written as $ax_i^{lm}$ or 
$a(x_i+bx_j^l)^m$, 
where $a:=c\alpha ^m$. 
If $f^{\w (f)}=ax_i^{lm}$, 
then the two monomials $x_1^{p_1}$ and $x_2^{p_2}$ 
appear in $f^{\w (f)}$ by Lemma~\ref{lem:w(f)}, 
a contradiction. 
Therefore, 
we conclude that 
$f^{\w (f)}=a(x_i+bx_j^l)^m$. 
Since 
$x_i^m$ and $x_j^{lm}$ appear in $f^{\w (f)}$, 
it follows from Lemma~\ref{lem:w(f)} that 
$\deg _{x_i}f=p_i=m$ and $\deg _{x_j}f=p_j=lm$. 
\end{proof}

Next, 
let $f\in \kapx \sm \kappa $ 
be such that $p_i:=\deg _{x_i}f>0$ for $i=1,2$. 
Then, 
the following conditions are equivalent:

\smallskip

\noindent{\rm (a)} 
$\deg _{\w (f)}f=p_1p_2$.

\noindent{\rm (b)} 
$x_1^{p_1}$ and $x_2^{p_2}$ appear in $f^{\w (f)}$.

\smallskip

\noindent 
In fact, since $p_i>0$ for $i=1,2$ by assumption, 
(a) implies (b) by the remark after (\ref{eq:w(f)}), 
and the converse is obvious. 
In view of Proposition~\ref{prop:slope}, 
we see that 
the equivalent conditions are satisfied 
if $f$ is a coordinate of $\kapx $ over $\kappa $ with $|\w (f)|>1$.

Put 
$$
q_i=\frac{p_i}{\gcd (p_1,p_2)}
$$ 
for $i=1,2$. 
Then, 
every $\w (f)$-homogeneous element of $\kapx $ 
is written as a product of a monomial 
and irreducible binomials of the form 
$x_1^{q_1}-bx_2^{q_2}$ for some $b\in \bar{\kappa }^{\times }$. 
Here, $\bar{\kappa }$ denotes an algebraic closure of $\kappa $. 
Hence, 
if $f$ satisfies 
(a) and (b), 
then we may write 
\begin{equation}\label{eq:f^{w(f)}}
f^{\w (f)}=a\prod _{i=1}^m(x_1^{q_1}-b_ix_2^{q_2}) 
\end{equation}
because of (b) and the $\w (f)$-homogeneity of $f^{\w (f)}$, 
where $a\in \kappa ^{\times }$, 
$m\in \N $ and $b_i\in \bar{\kappa }^{\times }$ 
for $i=1,\ldots ,m$. 
Then, 
we have $p_i=mq_i$ for $i=1,2$.

In this situation, 
we have the following proposition.

\begin{prop}\label{prop:slopee}
Let $f$ be as above, 
and let $\w \in \Gamma ^2$ and $\tau \in \Aut (\kapx /\kappa )$ 
be such that $w_1>0$ or $w_2>0$, 
and $(\tau (x_1)^{\w })^{q_1}\neq b_i(\tau (x_2)^{\w })^{q_2}$ 
for $i=1,\ldots ,m$. 
Then, we have 
\begin{equation}\label{eq:slopeedeg}
\degw \tau (f)=
\max \{ p_1\degw \tau (x_1),p_2\degw \tau (x_2)\} . 
\end{equation}
\end{prop}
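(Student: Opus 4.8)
The plan is to read off $\degw\tau(f)$ from the Newton polygon of $f$ together with the multiplicativity of $\w$-leading forms under the substitution $x_i\mapsto\tau(x_i)$. Write $g_i=\tau(x_i)$ and $d_i=\degw g_i$ for $i=1,2$, and set $M=\max\{p_1d_1,p_2d_2\}$. Since $\deg_{x_i}f=p_i$ and, by condition~(a), $\deg_{\w(f)}f=p_1p_2$ for the weight $\w(f)=(p_2,p_1)$, every monomial $x_1^{a_1}x_2^{a_2}$ occurring in $f$ satisfies $a_1,a_2\ge 0$ and $a_1p_2+a_2p_1\le p_1p_2$; that is, the support of $f$ is contained in the triangle $T$ with vertices $(0,0)$, $(p_1,0)$ and $(0,p_2)$.

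First I would prove the upper bound $\degw\tau(f)\le M$. For a monomial $x_1^{a_1}x_2^{a_2}$ of $f$ one has $\degw g_1^{a_1}g_2^{a_2}=a_1d_1+a_2d_2=:\ell(a_1,a_2)$, so $\degw\tau(f)$ is at most the maximum of the linear functional $\ell$ over the support of $f$. As $\ell$ is linear and $T$ is a triangle, this maximum is attained at a vertex and equals $\max\{0,p_1d_1,p_2d_2\}$. Because $w_1>0$ or $w_2>0$, equation~(\ref{eq:max phi}) gives $\max\{d_1,d_2\}>0$, and since $p_1,p_2>0$ we get $M>0$; hence the maximum is exactly $M$.

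For the reverse inequality I would show that the $\w$-homogeneous component of $\tau(f)$ of degree $M$ is nonzero. Only the monomials of $f$ with $\ell(a_1,a_2)=M$ contribute to it, their contribution being $c_{a_1,a_2}(g_1^{\w})^{a_1}(g_2^{\w})^{a_2}$, where I use $(uv)^{\w}=u^{\w}v^{\w}$. If $p_1d_1\ne p_2d_2$, then $\ell$ attains $M$ at a single vertex of $T$, say $(p_1,0)$ when $M=p_1d_1$; the component is then the single term $c_{p_1,0}(g_1^{\w})^{p_1}$, which is nonzero because $x_1^{p_1}$ occurs in $f$ by condition~(b). This disposes of the ``generic'' case without using the noncollinearity hypothesis.

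The main obstacle is the case $p_1d_1=p_2d_2=M$, where $\ell\equiv M$ along the entire edge from $(p_1,0)$ to $(0,p_2)$, so leading terms could a priori cancel. The monomials of $f$ on that edge are exactly those of $\w(f)$-degree $p_1p_2$, i.e.\ the monomials of $f^{\w(f)}$, whence the degree-$M$ component of $\tau(f)$ equals $f^{\w(f)}(g_1^{\w},g_2^{\w})$. Substituting into the factorization~(\ref{eq:f^{w(f)}}) yields
$$
f^{\w(f)}(g_1^{\w},g_2^{\w})
=a\prod_{i=1}^m\bigl((g_1^{\w})^{q_1}-b_i(g_2^{\w})^{q_2}\bigr),
$$
and each factor is nonzero precisely by the hypothesis $(\tau(x_1)^{\w})^{q_1}\ne b_i(\tau(x_2)^{\w})^{q_2}$. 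As $\bar{\kappa}[\x]$ is a domain, the product is nonzero, so the degree-$M$ component does not vanish and $\degw\tau(f)=M$, establishing~(\ref{eq:slopeedeg}).
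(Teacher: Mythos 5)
Your proof is correct, and it reorganizes the paper's argument rather than reproducing it. The paper decomposes $f=f^{\w (f)}+(f-f^{\w (f)})$, computes $\degw \tau (f^{\w (f)})=\max \{ p_1\xi _1,p_2\xi _2\} $ from the factorization (\ref{eq:f^{w(f)}}) --- the hypothesis guaranteeing that each factor $\tau (x_1)^{q_1}-b_i\tau (x_2)^{q_2}$ has $\w $-degree exactly $\max \{ q_1\xi _1,q_2\xi _2\} $ --- and then kills the remainder monomial by monomial via the explicit inequalities $a_2<p_2-a_1p_2p_1^{-1}$ and $a_1<p_1-a_2p_1p_2^{-1}$, with a case split on whether $p_1\xi _1\leq p_2\xi _2$. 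You instead compute the $\w $-homogeneous component of $\tau (f)$ in degree $M$ directly: convexity of the Newton triangle (legitimate after passing to $\Q \otimes _{\Z }\Gamma $, where the paper also works) replaces those coefficient inequalities in showing that monomials off the critical locus contribute degree $<M$, and your case split is on $p_1d_1=p_2d_2$ versus $p_1d_1\neq p_2d_2$. That localization is a genuine clarification: when $p_1d_1\neq p_2d_2$ the hypothesis is automatic, since the two sides of $(\tau (x_1)^{\w })^{q_1}=b_i(\tau (x_2)^{\w })^{q_2}$ would have different $\w $-degrees, so the only place it can bite --- and the only place you invoke it --- is the tie case, where you identify the $\w $-leading form of $\tau (f)$ as $f^{\w (f)}\bigl(\tau (x_1)^{\w },\tau (x_2)^{\w }\bigr)=a\prod _{i=1}^m\bigl((\tau (x_1)^{\w })^{q_1}-b_i(\tau (x_2)^{\w })^{q_2}\bigr)$, which is slightly sharper information than the degree equality (\ref{eq:slopeedeg}) itself. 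Both proofs rest on the same two pillars, the factorization of $f^{\w (f)}$ over $\bar{\kappa }$ and the positivity $M>0$ supplied by (\ref{eq:max phi}), so the gain is in transparency and economy rather than generality; just make sure, if you write this up, to record the standard vertex argument (each lattice point of the triangle is a rational convex combination of the vertices, and equality in $\ell \leq M$ forces the point onto the maximizing face), since that is the step doing the work the paper's two-case estimate does.
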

\begin{proof}
We may extend $\tau $ to an element of 
$\Aut (\bar{\kappa }[\x ] /\bar{\kappa })$ 
naturally. 
Then, 
we have 
$$
\tau (f^{\w (f)})
=a\prod _{i=1}^m
(\tau (x_1)^{q_1}-b_i\tau (x_2)^{q_2}). 
$$
Put $\xi _i=\degw \tau (x_i)$ for $i=1,2$. 
Then, 
we obtain 
$$
\degw (\tau (x_1)^{q_1}-b_i\tau (x_2)^{q_2})
=\max \{ q_1\xi_ 1,q_2\xi _2\} 
$$
for $i=1,\ldots ,m$ by the assumption that 
$(\tau (x_1)^{\w })^{q_1}\neq b_i(\tau (x_2)^{\w })^{q_2}$ 
and $b_i\neq 0$. 
Hence, it follows that 
$$
\degw \tau (f^{\w (f)})
=m\max \{ q_1\xi_ 1,q_2\xi _2\} =\max \{ p_1\xi_ 1,p_2\xi _2\} =:d. 
$$
Thus, 
it suffices to check that 
$\degw \tau (f-f^{\w (f)})<d$. 
Let $x_1^{a_1}x_2^{a_2}$ be any monomial 
appearing in $f-f^{\w (f)}$. 
Then, 
we have 
$$
a_1p_2+a_2p_1=\deg _{\w (f)}x_1^{a_1}x_2^{a_2}
<\deg _{\w (f)}f=p_1p_2, 
$$
since $f$ satisfies the condition (a) by assumption. 
This yields that 
\begin{equation}\label{eq:slopeeq}
a_2<p_2-a_1p_2p_1^{-1}
\quad \text{and}\quad 
a_1<p_1-a_2p_1p_2^{-1}. 
\end{equation}
First, 
assume that $p_1\xi _1\leq p_2\xi _2$. 
Then, 
we have $d=p_2\xi _2$. 
Since $w_1>0$ or $w_2>0$ by assumption, 
we know by (\ref{eq:max phi}) that 
$\max \{ \xi _1,\xi _2\} >0$. 
Hence, we get $\xi _2>0$. 
Thus, 
it follows from 
the first inequality of (\ref{eq:slopeeq}) that 
\begin{align*}
&\degw \tau (x_1^{a_1}x_2^{a_2})
=a_1\xi _1+a_2\xi _2<
a_1\xi _1+
(p_2-a_1p_2p_1^{-1}) \xi _2 \\
&\quad =a_1p_1^{-1}(p_1\xi _1-p_2\xi _2)+p_2\xi _2
\leq p_2\xi _2=d. 
\end{align*}
Next, assume that $p_1\xi _1>p_2\xi _2$. 
Then, 
we have $d=p_1\xi _1$, 
and $\xi _1>0$ by (\ref{eq:max phi}) as above. 
Hence, 
it follows from the second inequality of (\ref{eq:slopeeq}) 
that 
\begin{align*}
&\degw \tau (x_1^{a_1}x_2^{a_2})
=a_1\xi _1+a_2\xi _2<
(p_1-a_2p_1p_2^{-1})\xi _1+a_2\xi _2 \\
&\quad =
a_2p_2^{-1}(p_2\xi _2-p_1\xi _1)+p_1\xi _1<p_1\xi _1=d. 
\end{align*}
Thus, we have proved 
$\degw \tau (f-f^{\w (f)})<d$. 
Therefore, 
we get $\degw \tau (f)=d$. 
\end{proof}

Let $f$ be a coordinate of $\kapx $ 
over $\kappa $ with $|\w (f)|>1$. 
Then, 
$f$ satisfies the assumption of 
Proposition~\ref{prop:slopee}. 
Write $f^{\w (f)}$ as in Proposition~\ref{prop:slope}, 
and take $\w \in \Gamma ^2$ 
such that $w_1>0$ or $w_2>0$. 
Clearly, 
$x_i^{\w }$ 
is not equal to $b(x_j^{\w })^l$. 
Hence, 
we get 
\begin{equation}\label{eq:slopedeg}
\degw f=\max \{ m\degw x_i,lm\degw x_j\} 
=\max \{ mw_i,lmw_j\} 
\end{equation}
by applying Proposition~\ref{prop:slopee} with $\tau =\id _{\kapx }$.

Now, 
we prove the statement (II) after Lemma~\ref{claim:AE1}. 
We may assume that $w_1<0$ and $w_2>0$ 
as noted. 
First, we show that 
$\degw f$ belongs to $\{ tw_2\mid t\in \N \} $ 
for each coordinate $f$ of $\Rx $ over $R$ 
not belonging to $R[x_1]$. 
Since $w_2>0$ by assumption, 
this is clear if $f$ belongs to $R[x_2]$. 
Assume that $f$ does not belong to $R[x_2]$. 
Then, 
we have $|\w (f)|>1$, 
since $f$ also does not belong to $R[x_1]$ by assumption. 
Note that $f$ is regarded as a coordinate of $\Kx $ over $K$. 
Hence, 
we know by (\ref{eq:slopedeg}) that 
$\degw f=\max \{ t_1w_1,t_2w_2\} $ 
for some $t_1,t_2\in \N $. 
Since $w_1<0$ and $w_2>0$, 
we get $\degw f=t_2w_2$. 
Thus, 
$\degw f$ belongs to $\{ tw_2\mid t\in \N \} $. 
Now, 
observe that 
\begin{equation}\label{eq:Sigma}
\{ tw_2\mid t\in \N \} 
\cup \{ w_1+w_2\}  
\end{equation}
is a  well-ordered subset of $\Gamma $. 
We show that $\Sigma _{\w }$ is contained in (\ref{eq:Sigma}). 
Take any $\phi \in \Aut (\Rx /R)$. 
If $\phi (x_i)$ does not belong to $R[x_1]$ for $i=1,2$, 
then $\degw \phi (x_i)$ belongs to $\{ tw_2\mid t\in \N \} $ 
for $i=1,2$ by the preceding discussion. 
Hence, 
$\degw \phi $ belongs to (\ref{eq:Sigma}). 
Assume that 
$\phi (x_i)$ belongs to $R[x_1]$ for some $i\in \{ 1,2\} $. 
Then, 
$\degw \phi (x_i)$ is equal to zero or $w_1$ 
by Lemma~\ref{lem:deg coordinate}~(i). 
Because 
$\phi $ belongs to $J_{1,2}$, 
we have $\phi (x_j)=ax_2+g$ 
for some $a\in R^{\times }$ and $g\in R[x_1]$ 
for $j\neq i$. 
Hence, we get $\degw \phi (x_j)=w_2$, 
since $\degw ax_2=w_2>0$ and $\degw g=(\deg _{x_1}g)w_1\leq 0$. 
Thus, 
$\degw \phi $ is equal to $w_2$ or $w_1+w_2$, 
and so belongs to (\ref{eq:Sigma}). 
Therefore, 
$\Sigma _{\w }$ is contained in (\ref{eq:Sigma}), 
proving the well-orderedness of $\Sigma _{\w }$. 
This completes the proof of (II), and 
thereby completing the proof 
of Theorem~\ref{thm:AEreduction}.

\section{Reduction of a polynomial}
\setcounter{equation}{0}
\label{sect:reduced coordinate}

Let $f$ be an element of $\Rx \sm R$. 
Recall that 
$f$ is said to be {\it tamely reduced} over $R$ 
if 
$$
|\w (\tau (f))|\geq |\w (f)|
$$ 
holds for every $\tau \in \T (R,\x )$. 
Obviously, 
$f$ is tamely reduced over $R$ if $|\w (f)|=1$, 
since $f$ is a linear polynomial in $x_i$ over $R$ 
for some $i\in \{ 1,2\} $.

Now, assume that 
$p_i:=\deg _{x_i}f>0$ for $i=1,2$, 
and $f$ satisfies the equivalent conditions (a) and (b) 
before Proposition~\ref{prop:slopee}. 
Then, 
$f^{\w (f)}$ is written as in (\ref{eq:f^{w(f)}}) 
with $\kappa =K$.

With this notation, 
we have the following proposition.

\begin{prop}\label{prop:reduced polynomials}
Let $f\in \Rx $ be as above. 
Then, 
the following conditions are equivalent$:$

\noindent
{\rm (1)} $f$ is tamely reduced over $R$.

\noindent {\rm (2)} 
The following statements hold$:$ 

\noindent 
{\rm (i)} 
If $p_1=p_2$, 
then $b_1,\ldots ,b_m$ do not belong to $V(R)$. 

\noindent 
{\rm (ii)} 
If $p_1<p_2$ and $p_1$ divides $p_2$, 
then $b_1,\ldots ,b_m$ do not belong to $R$. 

\noindent 
{\rm (iii)} 
If $p_1>p_2$ and $p_2$ divides $p_1$, 
then $b_1^{-1},\ldots ,b_m^{-1}$ do not belong to $R$.

\noindent
{\rm (3)} 
Let $\Gamma $ be any totally ordered additive group, 
$\w \in \Gamma ^2$ such that $w_1>0$ or $w_2>0$, 
and $\tau $ any element of $\T (R,\x )$. 
Then, we have 
$$
\degw \tau (f)=
\max \{ p_i\degw \tau (x_i)\mid i=1,2\} . 
$$ 
\end{prop}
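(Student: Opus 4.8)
The plan is to establish the cycle of implications $(1)\Rightarrow(2)\Rightarrow(3)\Rightarrow(1)$, drawing throughout on the reduction machinery of Sections~\ref{sect:AE} and \ref{sect:coordinate}.

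For $(3)\Rightarrow(1)$ I would specialize the formula in (3) to the two weights $\e_1=(1,0)$ and $\e_2=(0,1)$, giving $\deg_{x_1}\tau(f)=\max\{p_1\deg_{x_1}\tau(x_1),\,p_2\deg_{x_1}\tau(x_2)\}$ and the analogous identity for $\deg_{x_2}\tau(f)$, valid for every $\tau\in\T(R,\x)$. Since $|\w(\tau(f))|=\deg_{x_1}\tau(f)+\deg_{x_2}\tau(f)$, it then suffices to observe that each column of the matrix $(\deg_{x_i}\tau(x_j))$ is nonzero (Lemma~\ref{lem:minimal autom} applied to $\e_1$ and $\e_2$) and each row is nonzero (because $\tau(x_1),\tau(x_2)$ are nonconstant), and to run a short check on which entries are positive. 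This forces $\deg_{x_1}\tau(f)+\deg_{x_2}\tau(f)\ge p_1+p_2=|\w(f)|$, i.e. $f$ is tamely reduced.

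The implication $(2)\Rightarrow(3)$ is the conceptual heart. Given $\w$ with $w_1>0$ or $w_2>0$ and $\tau\in\T(R,\x)$, Proposition~\ref{prop:slopee} already yields the formula once the genericity condition $(\tau(x_1)^{\w})^{q_1}\ne b_i(\tau(x_2)^{\w})^{q_2}$ holds for all $i$, so I would argue by contradiction, supposing it fails for some $b_k$. Taking $\w$-degrees in $(\tau(x_1)^{\w})^{q_1}=b_k(\tau(x_2)^{\w})^{q_2}$ shows $\degw\tau(x_1)$ and $\degw\tau(x_2)$ have the same sign, hence are both positive by \eqref{eq:max phi}, and that $\tau(x_1)^{\w},\tau(x_2)^{\w}$ are algebraically dependent, so $\degw\tau>|\w|$ by Lemma~\ref{lem:minimal autom}. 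Theorem~\ref{thm:AEreduction} (with $G=\Aff(R,\x)$) then supplies an affine or an elementary reduction of $\tau$, and since both leading degrees are positive I may feed it into Lemma~\ref{lem:AE initial}. An affine reduction forces $\tau(x_1)^{\w}=a\,\tau(x_2)^{\w}$ with $a\in V(R)$, whence $q_1=q_2=1$, $p_1=p_2$ and $b_k=a\in V(R)$, contradicting (i); an elementary reduction forces one leading form to equal $c$ times a power of the other with $c\in R$, whence $q_1=1$ (so $p_1\mid p_2$) and $b_k=c\in R$, contradicting (ii), or the symmetric statement contradicting (iii). The delicate point is matching each reduction type to the divisibility alternatives (i)--(iii), and noticing that when no divisibility holds, the very existence of a reduction is contradictory.

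For $(1)\Rightarrow(2)$ I would argue contrapositively, producing for each failure of (i), (ii), (iii) a tame $\tau$ with $|\w(\tau(f))|<|\w(f)|$. If (i) fails, $p_1=p_2$ and some $b_k=\alpha/\beta\in V(R)$; using $\alpha R+\beta R=R$ I pick $\tau\in\Aff(R,\x)\subseteq\T(R,\x)$ sending the factor $x_1-b_kx_2$ of $f^{\w(f)}$ to a scalar multiple of $x_1$, which lowers $\deg_{x_2}$ of the top total-degree form and hence $|\w(\tau(f))|$. If (ii) fails, $p_1\mid p_2$ and some $b_k\in R$; the elementary map $\tau(x_1)=x_1+b_kx_2^{p_2/p_1}$, $\tau(x_2)=x_2$ preserves $\w(f)$-homogeneity, and the coefficient of $x_2^{p_2}$ in $\tau(f)$ is a multiple of $\prod_i(b_k-b_i)=0$; case (iii) is symmetric. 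I expect the main technical obstacle to lie exactly in these last verifications: one must simultaneously check that $\tau$ does not raise $\deg_{x_1}$ and that the top $x_2$-power (resp. $x_1$-power) genuinely cancels, using the inequality $a\,(p_2/p_1)+c\le p_2$ for the exponents of every monomial $x_1^ax_2^c$ of $f$ together with the vanishing of the product of leading coefficients.
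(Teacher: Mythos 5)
Your proposal follows essentially the same route as the paper's proof: (1)$\Rightarrow$(2) via the same explicit affine and elementary automorphisms that kill a linear factor of $f^{\w (f)}$ (your direct exponent estimates reproduce the content of Lemmas~\ref{lem:for prop:reduced polynomials} and \ref{lem:w-homoge autom}), (2)$\Rightarrow$(3) by verifying the genericity hypothesis of Proposition~\ref{prop:slopee} through Theorem~\ref{thm:AEreduction} and Lemma~\ref{lem:AE initial}, and (3)$\Rightarrow$(1) by specializing the weight to $\e _1$ and $\e _2$. The one detail to repair is the $t=1$ subcase of an elementary reduction in (2)$\Rightarrow$(3): there $tq_1=q_2$ with $\gcd (q_1,q_2)=1$ forces $q_1=q_2=1$, hence $p_1=p_2$, so the contradiction must be routed through (i) using $R\sm \zs \subseteq V(R)$ and closure of $V(R)$ under inversion (Lemma~\ref{lem:V(R)}), not through (ii) or (iii) as your sketch states --- exactly the adjustment the paper makes.
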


To prove this proposition, 
we use the following lemmas.

\begin{lem}\label{lem:for prop:reduced polynomials}
Let $f$ be as above, 
and $g\in \Rx $ such that 
$\deg _{\w (f)}g\leq \deg _{\w (f)}f$ 
and $\deg _{x_i}g^{\w (f)}<\deg _{x_i}f$ 
for some $i\in \{ 1,2\} $. 
Then, we have $|\w (g)|<|\w (f)|$. 
\end{lem}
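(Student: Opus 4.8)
The plan is to prove the statement at the level of partial degrees: I will show that unless $g$ attains the maximal partial degree in \emph{both} variables its weight $|\w(g)|$ already drops, and that attaining the maximal partial degree in both variables is incompatible with the hypothesis on $g^{\w(f)}$. Write $p_i=\deg_{x_i}f$ and $r_i=\deg_{x_i}g$, so that $|\w(f)|=p_1+p_2$ and $|\w(g)|=r_1+r_2$; the goal is therefore $r_1+r_2<p_1+p_2$. Recalling that $\w(f)=(p_2,p_1)$, the variable $x_i$ carries weight $p_j$ for $\{i,j\}=\{1,2\}$, so a monomial $x_i^ax_j^b$ has $\w(f)$-degree $ap_j+bp_i$. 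From the hypothesis together with condition (a) I have $\deg_{\w(f)}g\leq \deg_{\w(f)}f=p_1p_2$.

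First I would bound each partial degree. Picking a monomial $x_i^{r_i}x_j^{s}$ of $g$ realizing $r_i=\deg_{x_i}g$, its $\w(f)$-degree is $r_ip_j+sp_i\leq p_1p_2$; since $s\geq 0$ and $p_i,p_j>0$, this forces $r_i\leq p_i$ for $i=1,2$. The key step is to show that the \emph{equality} $r_i=p_i$ propagates to the leading form: if $r_i=p_i$, then the monomial $x_i^{p_i}x_j^{s}$ of $g$ has $\w(f)$-degree $p_1p_2+sp_i$, which cannot exceed $p_1p_2$, so $s=0$ and $\deg_{\w(f)}g=p_1p_2$. Consequently $x_i^{p_i}$ itself is a monomial of $g^{\w(f)}$, whence $\deg_{x_i}g^{\w(f)}\geq p_i$; combined with $\deg_{x_i}g^{\w(f)}\leq \deg_{x_i}g=p_i$ this gives $\deg_{x_i}g^{\w(f)}=p_i$.

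Finally I would assemble the two halves. By hypothesis there is some $i$ with $\deg_{x_i}g^{\w(f)}<p_i$; by the previous step this rules out $r_i=p_i$, so $r_i<p_i$ for that index, while $r_j\leq p_j$ holds for the other. Adding the two inequalities yields $r_1+r_2<p_1+p_2$, that is, $|\w(g)|<|\w(f)|$, as desired. The only slightly delicate point — the one I would state most carefully — is the key step above: the passage from a bound on the partial degree $\deg_{x_i}g$ to a statement about the $\w(f)$-leading form $g^{\w(f)}$, where the observation is that the maximal partial degree in $x_i$ can be realized only by the pure power $x_i^{p_i}$, and only when $g$ simultaneously attains the maximal $\w(f)$-degree $p_1p_2$. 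No machinery beyond the definition of $\w(f)$-degree and condition (a) on $f$ is needed, so I expect no genuine obstacle here.
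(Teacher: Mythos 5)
Your proof is correct and follows essentially the same route as the paper's: in both arguments, the bound $\deg_{\w (f)}g\leq \deg_{\w (f)}f=p_1p_2$ together with $p_1,p_2>0$ forces $\deg_{x_i}g\leq p_i$ for $i=1,2$, and equality at the distinguished index is excluded because the maximal $x_i$-degree could then be realized only by the pure power $x_i^{p_i}$, which would have to appear in $g^{\w (f)}$ and contradict $\deg_{x_i}g^{\w (f)}<\deg_{x_i}f$. The paper packages this as two separate contradiction arguments (strict inequality at the distinguished index, weak inequality at the other), whereas you first establish the weak bounds directly and then pass to the contrapositive, but the underlying computation is identical.
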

\begin{proof}
It suffices to show that 
$\deg _{x_i}g<p_i$ 
and $\deg _{x_j}g\leq p_j$, 
where $j\neq i$. 
Suppose to the contrary that 
$\deg _{x_i}g\geq p_i$. 
Then, 
the monomial $x_i^{p_i+s}x_j^t$ appears in $g$ 
for some $s,t\geq 0$. 
Since $p_l>0$ for $l=1,2$ by assumption, 
we have 
$$
p_ip_j\leq (p_i+s)p_j+tp_i
=\deg _{\w (f)}x_i^{p_i+s}x_j^t
\leq \deg _{\w (f)}g
\leq \deg _{\w (f)}f. 
$$
On the other hand, we know that 
$\deg _{\w (f)}f=p_ip_j$ 
by the condition (a) before Proposition~$\ref{prop:slopee}$. 
Hence, 
it follows 
that $s=t=0$ and $\deg _{\w (f)}x_i^{p_i}=\deg _{\w (f)}g$. 
Thus, 
$x_i^{p_i}$ appears in $g^{\w (f)}$. 
Therefore, we get 
$\deg _{x_i}g^{\w (f)}\geq p_i=\deg _{x_i}f$, 
a contradiction. 
This proves that $\deg _{x_i}g<p_i$. 
Next, suppose to the contrary that 
$\deg _{x_j}g>p_j$. 
Then, 
the monomial $x_i^sx_j^{p_j+t}$ appears in $g$ 
for some $s\geq 0$ and $t\geq 1$. 
Hence, we have 
$$
\deg _{\w (f)}g\geq 
\deg _{\w (f)}x_i^sx_j^{p_j+t}=sp_j+(p_j+t)p_i>p_ip_j 
=\deg _{\w (f)}f, 
$$
a contradiction. 
Therefore, we get $\deg _{x_j}g\leq p_j$. 
\end{proof}

We say that $\tau \in \Aut (\Rx /R)$ 
is $\w $-{\it homogeneous} if 
$\tau (x_i)$ is $\w $-homogeneous 
and $\degw \tau (x_i)=w_i$ for each $i$. 
If this is the case, 
then $\tau (\Rx _{\gamma })$ 
is contained in $\Rx _{\gamma }$ 
for each $\gamma \in \Gamma $. 
Hence, 
we have $\degw \tau (h)=\degw h$ and 
$\tau (h)^{\w }=\tau (h^{\w })$ 
for each $h\in \Rx $.

\begin{lem}\label{lem:w-homoge autom}
Let $f$ be as above, 
and assume that $\tau \in \Aut (\Rx /R)$ is $\w (f)$-homogeneous. 
If $\deg _{x_i}\tau (f^{\w (f)})<\deg _{x_i}f^{\w (f)}$ 
for some $i\in \{ 1,2\} $, 
then we have $|\w (\tau (f))|<|\w (f)|$. 
\end{lem}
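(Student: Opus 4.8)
The plan is to deduce this lemma directly from Lemma~\ref{lem:for prop:reduced polynomials} by setting $g:=\tau (f)$ and verifying that $g$ meets the two hypotheses of that lemma, namely $\deg _{\w (f)}g\leq \deg _{\w (f)}f$ and $\deg _{x_i}g^{\w (f)}<\deg _{x_i}f$ for some $i\in \{ 1,2\} $. Once both are in hand, Lemma~\ref{lem:for prop:reduced polynomials} delivers $|\w (g)|<|\w (f)|$, which is the conclusion.

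The key input is the $\w (f)$-homogeneity of $\tau $. By the remark immediately preceding the lemma, a $\w (f)$-homogeneous automorphism maps each graded piece $\Rx _\gamma $ into itself, so it preserves $\w (f)$-degree and commutes with the formation of leading forms. Applying this with $h=f$ gives the two facts I would rely on: $\deg _{\w (f)}\tau (f)=\deg _{\w (f)}f$ and $\tau (f)^{\w (f)}=\tau (f^{\w (f)})$. The first identity immediately yields the first hypothesis of Lemma~\ref{lem:for prop:reduced polynomials}, in fact with equality.

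For the second hypothesis I would start from the given assumption $\deg _{x_i}\tau (f^{\w (f)})<\deg _{x_i}f^{\w (f)}$. Since $f^{\w (f)}$ is the $\w (f)$-leading part of $f$, every monomial occurring in it already occurs in $f$, whence $\deg _{x_i}f^{\w (f)}\leq \deg _{x_i}f$. Combining this with $g^{\w (f)}=\tau (f)^{\w (f)}=\tau (f^{\w (f)})$ from the previous step, I obtain $\deg _{x_i}g^{\w (f)}=\deg _{x_i}\tau (f^{\w (f)})<\deg _{x_i}f$, which is exactly what is required. Invoking Lemma~\ref{lem:for prop:reduced polynomials} then finishes the argument.

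I do not expect a serious obstacle: the statement is essentially a repackaging of Lemma~\ref{lem:for prop:reduced polynomials} in the special case $g=\tau (f)$ with $\tau $ homogeneous. The one point genuinely using the hypothesis is the identification $\tau (f)^{\w (f)}=\tau (f^{\w (f)})$; without homogeneity, applying $\tau $ could lower the $\w (f)$-degree through cancellation or break the clean description of the leading form, and both hypotheses of Lemma~\ref{lem:for prop:reduced polynomials} could fail. Everything else is routine bookkeeping with the definitions of $\w (f)$ and the $\w (f)$-degree.
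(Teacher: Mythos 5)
Your proposal is correct and is essentially the paper's own proof: the same two consequences of $\w(f)$-homogeneity ($\deg_{\w(f)}\tau(f)=\deg_{\w(f)}f$ and $\tau(f)^{\w(f)}=\tau(f^{\w(f)})$), the same inequality chain $\deg_{x_i}\tau(f)^{\w(f)}<\deg_{x_i}f^{\w(f)}\leq\deg_{x_i}f$, and the same final application of Lemma~\ref{lem:for prop:reduced polynomials} with $g=\tau(f)$. No gaps.
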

\begin{proof}
Since $\tau $ is $\w (f)$-homogeneous, 
we have $\deg _{\w (f)}\tau (f)=\deg _{\w (f)}f$ 
and $\tau (f)^{\w (f)}=\tau (f^{\w (f)})$. 
Since $\deg _{x_i}\tau (f^{\w (f)})<\deg _{x_i}f^{\w (f)}$ 
by assumption, 
it follows that 
$\deg _{x_i}\tau (f)^{\w (f)}
<\deg _{x_i}f^{\w (f)}\leq \deg _{x_i}f$. 
Therefore, we get 
$|\w (\tau (f))|<|\w (f)|$ 
by applying Lemma~\ref{lem:for prop:reduced polynomials} 
with $g=\tau (f)$. 
\end{proof}

Now, 
let us prove Proposition~\ref{prop:reduced polynomials}. 
First, 
we prove that (1) implies the three statements of 
(2) by contradiction. 
Suppose that 
$p_1=p_2$ and 
$b_s$ belongs to $V(R)$ for some $s$. 
Then, 
we have $q_1=q_2=1$. 
Since $b_s$ belongs to $V(R)$, 
there exists $(a_{i,j})_{i,j}\in \GL (2,R)$ 
such that $b_s=a_{1,1}/a_{2,1}$. 
Define $\tau \in \Aff (R,\x )$ by 
$\tau (x_i)=a_{i,1}x_1+a_{i,2}x_2$ for $i=1,2$. 
Then, 
$\tau $ is $\w (f)$-homogeneous, 
and satisfies $\deg _{x_1}\tau (x_1-b_ix_2)\leq 1$ 
for 
$i=1,\ldots ,m$. 
By the choice of $(a_{i,j})_{i,j}$, 
we have 
$$
\tau (x_1-b_sx_2)=
a_{1,1}x_1+a_{1,2}x_2-b_s(a_{2,1}x_1+a_{2,2}x_2)
=(a_{1,2}-b_sa_{2,2})x_2. 
$$ 
Hence, we know that 
$$
\deg _{x_1}
\tau (f^{\w (f)})=
\deg _{x_1}
a\prod _{i=1}^m\tau (x_1-b_ix_2)
<m=\deg _{x_1}f^{\w (f)}. 
$$
Thus, 
we conclude that $|\w (\tau (f))|<|\w (f)|$ 
by Lemma~\ref{lem:w-homoge autom}. 
Since $\tau $ is affine, 
this contradicts that 
$f$ is tamely reduced over $R$. 
Therefore, 
(1) implies (i) of (2).

Next, 
suppose that $p_1<p_2$, $p_1$ divides $p_2$, 
and $b_s$ belongs to $R$ for some $s$. 
Then, we have $q_1=1$ and $q_2=p_2/p_1$. 
Since $b_s$ is an element of $R$, 
we may define $\tau \in \Aut (\Rx /R[x_2])$ by 
$\tau (x_1)=x_1+b_sx_2^{q_2}$. 
Then, $\tau $ is $\w (f)$-homogeneous, 
and 
$$
\tau (f^{\w (f)})
=a\prod _{i=1}^m\tau (x_1-b_ix_2^{q_2})
=a\prod _{i=1}^m\bigl(x_1+(b_s-b_i)x_2^{q_2}\bigr). 
$$
From this, we know that 
$\deg _{x_2}\tau (f^{\w (f)})<mq_2=\deg _{x_2}f^{\w (f)}$. 
Thus, we 
conclude that 
$|\w (\tau (f))|<|\w (f)|$ 
by Lemma~\ref{lem:w-homoge autom}. 
Since $\tau $ is elementary, 
this contradicts that 
$f$ is tamely reduced over $R$. 
Therefore, (1) implies (ii) of (2). 
We can check that (1) implies (iii) of (2) similarly.

Next, 
we prove that (2) implies (3). 
Take any totally ordered additive group $\Gamma $, 
$\w \in \Gamma ^2$ with $w_1>0$ or $w_2>0$, 
and any $\tau \in \T (R,\x )$. 
Then, 
we may regard $\tau $ as an element of $\Aut (\Kx /K)$. 
Hence, 
we may use Proposition~\ref{prop:slopee} 
for $\w $ and $\tau $ by taking $\kappa =K$, 
since $w_1>0$ or $w_2>0$ by assumption. 
Thus, 
it suffices to check that 
$(\tau (x_1)^{\w })^{q_1}\neq b_i(\tau (x_2)^{\w })^{q_2}$ 
for $i=1,\ldots ,m$. 
Suppose to the contrary that 
\begin{equation}\label{eq:proof:reduced1}
(\tau (x_1)^{\w })^{q_1}=
b_s(\tau (x_2)^{\w })^{q_2}
\end{equation}
for some $s$. 
Then, 
$\tau (x_1)^{\w }$ and $\tau (x_2)^{\w }$ 
are algebraically dependent over $K$. 
Hence, 
we have $\degw \tau >|\w |$ by Lemma~\ref{lem:minimal autom}. 
Since $\tau $ belongs to 
$\T (R,\x )=\Aff (R,\x )^+$, 
and $w_1>0$ or $w_2>0$ by assumption, 
it follows from Theorem~\ref{thm:AEreduction} that 
$\tau $ admits an affine reduction 
or elementary reduction for the weight $\w $.

First, 
assume that $\tau $ admits an affine reduction. 
Since $w_1>0$ or $w_2>0$, 
we have $\degw \tau (x_i)>0$ 
for some $i\in \{ 1,2\} $ by (\ref{eq:max phi}), 
and hence for any $i\in \{ 1,2\} $ by (\ref{eq:proof:reduced1}). 
On account of Lemma~\ref{lem:AE initial} (i) with $\phi =\tau $, 
we know that $\tau (x_1)^{\w }=c\tau (x_2)^{\w }$ 
for some $c\in V(R)$. 
Then, 
we have $c^{q_1}(\tau (x_2)^{\w })^{q_1}=b_s(\tau (x_2)^{\w })^{q_2}$ 
in view of (\ref{eq:proof:reduced1}). 
Since $\degw \tau (x_2)>0$, 
it follows that $q_1=q_2$. 
This implies that $q_1=q_2=1$. 
Hence, we get $c=b_s$. 
Thus, 
$b_s$ belongs to $V(R)$. 
On the other hand, 
we have $p_1=p_1$ since $q_1=q_2$. 
This contradicts (i) of (2).

Next, 
assume that $\tau $ admits an elementary reduction. 
Then, by Lemma~\ref{lem:AE initial}~(ii) with $\phi =\tau $, 
we know that $\tau (x_i)^{\w }=c(\tau (x_j)^{\w })^t$ 
for some $i,j\in \{ 1,2\} $ with $i\neq j$, 
$c\in R\sm \zs $ and $t\geq 1$. 
Note that $c$ and $c^{-1}$ belong to $V(R)$. 
Hence, 
we can get a contradiction 
as in the previous case when $t=1$. 
Assume that $t\geq 2$. 
If $(i,j)=(1,2)$, 
then we have 
$c^{q_1}(\tau (x_2)^{\w })^{tq_1}
=b_s(\tau (x_2)^{\w })^{q_2}$. 
Since $\degw \tau (x_2)>0$, 
this gives that $tq_1=q_2$. 
Hence, we get $q_1=1$, $q_2=t\geq 2$ 
and $b_s=c$. 
Thus, 
we know that $p_1<p_2$, 
$p_1$ divides $p_2$, 
and $b_s$ belongs to $R$. 
This contradicts (ii) of (2). 
If $(i,j)=(2,1)$, 
then we have 
$(\tau (x_1)^{\w })^{q_1}
=b_sc^{q_2}(\tau (x_1)^{\w })^{tq_2}$. 
Since $\degw \tau (x_1)>0$, 
this gives that $q_1=tq_2$. 
Hence, we get $q_2=1$, $q_1=t\geq 2$ 
and $b_s^{-1}=c$. 
Thus, 
we know that $p_1>p_2$, 
$p_2$ divides $p_1$, 
and $b_s^{-1}$ belongs to $R$. 
This contradicts (iii) of (2). 
Therefore, 
$(\tau (x_1)^{\w })^{q_1}\neq b_i(\tau (x_2)^{\w })^{q_2}$ 
holds for $i=1,\ldots ,m$. 
This proves that (2) implies (3).

Finally, 
we prove that (3) implies (1). 
Without loss of generality, 
we may assume that $p_1\leq p_2$ 
by interchanging $x_1$ and $x_2$ 
if necessary. 
Take any $\tau \in \T(R,\x )$. 
Then, 
we have 
\begin{align}\begin{split}\label{eq:max}
\deg _{x_j}\tau (f)
=\max \{ p_i\deg _{x_j}\tau (x_i)\mid i=1,2\} 
\end{split}\end{align}
for $j=1,2$ 
by applying (3) with $\Gamma =\Z $ and $\w =\e _j$. 
First, take $s\in \{ 1,2\} $ such that 
$\deg _{x_s}\tau (x_2)\geq 1$. 
Then, 
we obtain 
$\deg _{x_s}\tau (f)\geq p_2$ from (\ref{eq:max}) with $j=s$. 
Next, take $l\in \{ 1,2\} $ 
such that $\deg _{x_t}\tau (x_l)\geq 1$ 
for $t\in \{ 1,2\} $ with $t\neq s$. 
Then, 
we get 
$\deg _{x_t}\tau (f)\geq p_l$ 
by (\ref{eq:max}) with $j=t$. 
Since $p_1\leq p_2$ by assumption, 
it follows that $\deg _{x_t}\tau (f)\geq p_1$. 
Hence, 
we have 
$$
|\w (\tau (f))|
=\deg _{x_s}\tau (f)+\deg _{x_t}\tau (f)\geq 
p_2+p_1=|\w (f)|. 
$$
Thus, 
$f$ is tamely reduced over $R$. 
Therefore, 
(3) implies (1). 
This completes the proof of 
Proposition~\ref{prop:reduced polynomials}.

For $f\in \Rx \sm R$, 
we consider the subgroup 
$$
H(f):=\Aut (\Rx /R[f])\cap \T (R,\x )
$$
of $\Aut (\Rx /R)$. 
It is worthwhile to mention that, 
if $R=k[x_3]$, 
then we have 
\begin{equation}\label{eq:k[x_3,f]}
H(f)=\Aut (k[x_1,x_2,x_3]/k[x_3,f])\cap 
\T (k,\{ x_1,x_2,x_3\} )
\end{equation}
by virtue of Theorem~\ref{thm:tame23}. 
Here, 
we recall that $k$ 
is an arbitrary field of characteristic zero 
throughout this monograph.

By means of Proposition~\ref{prop:reduced polynomials}, 
we obtain the following theorem.

\begin{thm}\label{prop:intersection theorem}
Assume that 
$f\in \Rx $ satisfies $\deg _{x_i}f>0$ for $i=1,2$ 
and the equivalent conditions {\rm (a)} and {\rm (b)} 
before Proposition~$\ref{prop:slopee}$. 
If $f$ is tamely reduced over $R$, 
then the following assertions hold$:$ 

\noindent
{\rm (i)} If $\deg _{x_1}f=\deg _{x_2}f$, 
then $H(f)$ 
is contained in $\Aff (R,\x )$. 

\noindent
{\rm (ii)} If $\deg _{x_i}f>\deg _{x_j}f$ 
for $i,j\in \{ 1,2\} $ with $i\neq j$, 
then $H(f)$ is contained in $J(R;x_i,x_j)$. 
\end{thm}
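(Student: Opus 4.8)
The plan is to read everything off condition (3) of Proposition~\ref{prop:reduced polynomials}, which is available because $f$ is assumed tamely reduced over $R$ and satisfies (a) and (b). Write $p_i=\deg _{x_i}f$ for $i=1,2$, and take any $\phi \in H(f)$; by definition $\phi $ belongs to $\T (R,\x )$ and fixes $R[f]$, so in particular $\phi (f)=f$. The idea is then to compute $\degw f$ in two ways for an arbitrary totally ordered additive group $\Gamma $ and an arbitrary $\w \in \Gamma ^2$ with $w_1>0$ or $w_2>0$. Applying condition (3) with $\tau =\id _{\Rx }$ gives $\degw f=\max \{ p_iw_i\mid i=1,2\} $, while applying it with $\tau =\phi $ (legitimate since $\phi \in \T (R,\x )$) together with $\phi (f)=f$ gives $\degw f=\degw \phi (f)=\max \{ p_i\degw \phi (x_i)\mid i=1,2\} $. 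Equating the two yields the identity
$$
\max \{ p_iw_i\mid i=1,2\} =\max \{ p_i\degw \phi (x_i)\mid i=1,2\} ,
$$
valid for every admissible $\w $. Both parts of the theorem follow by specializing $\w $.

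For part (i), with $p_1=p_2=:p$ this identity reads $p\max \{ w_1,w_2\} =p\max \{ \degw \phi (x_1),\degw \phi (x_2)\} $. Specializing to $\Gamma =\Z $ and $\w =(1,1)$, so that $\degw $ is ordinary total degree, I obtain $\max \{ \deg \phi (x_1),\deg \phi (x_2)\} =1$. Since $\phi $ is an automorphism of $\Rx $ over $R$, neither $\phi (x_1)$ nor $\phi (x_2)$ can lie in $R$ (otherwise their images could not generate $\Rx $), so $\deg \phi (x_i)\geq 1$ for $i=1,2$; together with the maximum being $1$ this forces $\deg \phi (x_1)=\deg \phi (x_2)=1$. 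Hence $\phi $ belongs to $\Aff (R,\x )$, giving $H(f)\subseteq \Aff (R,\x )$.

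For part (ii), say $p_i>p_j$. Here I would specialize the identity to $\Gamma =\Z $ and $\w =\e _j$, so that $\degw $ becomes $\deg _{x_j}$. The left-hand side is $\max \{ p_i\cdot 0,p_j\cdot 1\} =p_j$, while the right-hand side is $\max \{ p_i\deg _{x_j}\phi (x_i),p_j\deg _{x_j}\phi (x_j)\} $. If $\deg _{x_j}\phi (x_i)\geq 1$, then the first term is at least $p_i>p_j$, contradicting that the maximum equals $p_j$; therefore $\deg _{x_j}\phi (x_i)=0$, i.e.\ $\phi (x_i)$ belongs to $R[x_i]$. Since $n=2$, this is precisely the defining condition for $J(R;x_i,x_j)$, so $\phi \in J(R;x_i,x_j)$ and $H(f)\subseteq J(R;x_i,x_j)$.

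I expect no genuinely hard step: once Proposition~\ref{prop:reduced polynomials} is in place the argument is a short degree computation, and the only points requiring a little care are verifying that the chosen weights are admissible (both $(1,1)$ and $\e _j$ have a positive coordinate, as required to invoke condition (3)) and observing that an automorphism cannot send a variable to a constant, which is what excludes the degenerate possibility $\deg \phi (x_i)=0$ in part (i). All the real content has already been absorbed into the equivalence (1)$\Leftrightarrow$(3) of Proposition~\ref{prop:reduced polynomials}.
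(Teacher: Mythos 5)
Your proof is correct, and its skeleton is the same as the paper's: both arguments read the theorem off condition (3) of Proposition~\ref{prop:reduced polynomials} (available exactly because $f$ is tamely reduced and satisfies (a) and (b)), applied to $\phi \in H(f)$ together with $\phi (f)=f$. Your part (ii) is essentially verbatim the paper's: specialize to $\w =\e _j$, deduce $\deg _{x_j}\phi (x_i)=0$ from $p_i>p_j$, hence $\phi \in J(R;x_i,x_j)$. Where you genuinely diverge is part (i). The paper specializes (3) only to the coordinate weights $\e _1,\e _2$ (its identity (\ref{eq:maxx})), which yields $\deg _{x_j}\tau (x_i)\leq 1$ for all $i,j$; this leaves open the non-affine possibility $\w (\tau (x_i))=(1,1)$, i.e.\ a monomial $x_1x_2$ occurring in $\tau (x_i)$, and the paper must rule this out by appealing to the structure of coordinates (Propositions~\ref{prop:slope} and \ref{prop:slopee}). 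You instead specialize (3) to the total-degree weight $\w =(1,1)$ — admissible since both entries are positive — and get $\max \{ \deg \phi (x_1),\deg \phi (x_2)\} =1$ directly, so that affineness follows once the degenerate case $\phi (x_i)\in R$ is excluded, which you do correctly via the automorphism property. This is a small but real simplification: your version of (i) is shorter and independent of the coordinate-structure results, whereas the paper's version stays within the partial-degree identity (\ref{eq:max}) that it has in any case set up for repeated later use.
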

\begin{proof}
Since $f$ satisfies the assumption of 
Proposition~\ref{prop:reduced polynomials}, 
and is tamely reduced over $R$, 
we know that $f$ satisfies the equivalent conditions 
of Proposition~\ref{prop:reduced polynomials}. 
In particular, 
(\ref{eq:max}) holds for every 
$\tau \in \T (R,\x )$ due to (3).

Now, 
take any $\tau \in H(f)$. 
Then, $\tau $ belongs to $\T (R,\x )$ by definition. 
Hence, 
$\tau $ satisfies (\ref{eq:max}). 
Since $\tau (f)=f$, 
it follows that 
\begin{align}\begin{split}\label{eq:maxx}
\deg _{x_j}f=\max \{ 
(\deg _{x_i}f)\deg _{x_j}\tau (x_i)\mid i=1,2\} 
\end{split}\end{align}
for $j=1,2$.

(i) Since $\deg _{x_1}f=\deg _{x_2}f$, 
we get 
$\max \{ \deg _{x_j}\tau (x_i)\mid i=1,2\} =1$ 
for $j=1,2$ by (\ref{eq:maxx}). 
Put $\w _i=\w (\tau (x_i))$ for $i=1,2$. 
Then, 
it follows that $|\w _i|=1$ or $\w _i=(1,1)$. 
If $|\w _i|=1$, 
then $\tau (x_i)$ is a linear polynomial. 
The same holds when $\w _i=(1,1)$ 
in view of Proposition~\ref{prop:slopee}. 
Thus, 
$\tau $ belongs to $\Aff (R,\x )$. 
Therefore, 
$H(f)$ is contained in $\Aff (R,\x )$.

(ii) Since $\deg _{x_i}f>\deg _{x_j}f$, 
we have $\deg _{x_i}f>0$, and 
$$
\deg _{x_i}f>\deg _{x_j}f
\geq (\deg _{x_i}f)\deg _{x_j}\tau (x_i)
$$
by (\ref{eq:maxx}). 
Hence, 
we get $\deg _{x_j}\tau (x_i)<1$, 
and so 
$\tau (x_i)$ belongs to $R[x_i]$. 
Thus, 
$\tau $ belongs to $J(R;x_i,x_j)$. 
Therefore, 
$H(f)$ is contained in $J(R;x_i,x_j)$. 
\end{proof}

Now, 
let $S$ be an over domain of $R$. 
In the rest of this section, 
we consider the case where 
$f\in \Rx $ is a coordinate of $\Sx $ over $S$. 
Assume that $|\w (f)|>1$. 
Then, 
$f$ satisfies $\deg _{x_i}f>0$ for $i=1,2$ 
and the equivalent conditions (a) and (b) 
before Proposition~\ref{prop:slopee}. 
Hence, 
$f$ satisfies the assumption of 
Proposition~\ref{prop:reduced polynomials}. 
Let $L$ be the field of fractions of $S$. 
Then, 
we may regard $f$ as a coordinate of $\Lx $ over $L$. 
Hence, 
there exist $i,j\in \{ 1,2\} $ with $i\neq j$, 
$a,b\in L^{\times }$ and $l,m\in \N $ such that 
$$
\deg _{x_i}f=m,\ \ \deg _{x_j}f=lm
\text{ \ and \ }
f^{\w (f)}=a(x_i-bx_j^l)^m
$$
by Proposition~\ref{prop:slope} with $\kappa =L$.

With this notation, 
the following proposition follows from 
Proposition~\ref{prop:reduced polynomials}.

\begin{prop}\label{prop:reduced coordinate}
Let $f\in \Rx $ be a coordinate of $\Sx $ over $S$ 
such that $|\w (f)|>1$. 
Then, the following assertions hold$:$

\noindent
{\rm (i)} Assume that $\deg _{x_1}f=\deg _{x_2}f$, i.e., $l=1$. 
Then, 
$f$ is tamely reduced over $R$ 
if and only if $b$ does not belong to $V(R)$. 

\noindent
{\rm (ii)} Assume that $\deg _{x_i}f<\deg _{x_j}f$, 
i.e., $l\geq 2$. 
Then, 
$f$ is tamely reduced over $R$ 
if and only if $b$ does not belong to $R$. 
\end{prop}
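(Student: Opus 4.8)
The plan is to derive the proposition from the equivalence (1)$\Leftrightarrow$(2) of Proposition~\ref{prop:reduced polynomials}, exploiting the fact that, for a coordinate of $\Sx $ over $S$, the binomials occurring in the expression (\ref{eq:f^{w(f)}}) of $f^{\w (f)}$ all carry the same coefficient. The discussion preceding the statement already records that $|\w (f)|>1$ forces $\deg _{x_i}f>0$ for $i=1,2$ together with the conditions (a), (b) before Proposition~\ref{prop:slopee}; hence $f$ satisfies the assumption of Proposition~\ref{prop:reduced polynomials}, and $f^{\w (f)}$ can be written as $c\prod _{s=1}^m(x_1^{q_1}-b_sx_2^{q_2})$ with $c\in K^{\times }$, $b_s\in \bar{K}^{\times }$ and $q_i=p_i/\gcd (p_1,p_2)$, where $p_i=\deg _{x_i}f$. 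Viewing $f$ as a coordinate of $\Lx $ over $L$, Proposition~\ref{prop:slope} furnishes the competing factorization $f^{\w (f)}=a(x_i-bx_j^l)^m$ with $\{ i,j\} =\{ 1,2\} $ and $a,b\in L^{\times }$.

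The first step is to reconcile these two factorizations of the single polynomial $f^{\w (f)}$. Since $\deg _{x_i}f=m$ and $\deg _{x_j}f=lm$ yield $\gcd (p_1,p_2)=m$, and hence $q_i=1$ and $q_j=l$, the factorization from Proposition~\ref{prop:slope} presents $f^{\w (f)}$ as a scalar multiple of a perfect $m$th power of a single binomial of exactly the shape appearing in (\ref{eq:f^{w(f)}}). Because the binomials $x_1^{q_1}-\beta x_2^{q_2}$ are pairwise non-associate irreducibles, unique factorization forces all the $b_s$ to coincide. Tracking the orientation, I would conclude $b_s=b$ for every $s$ when $i=1$, while rewriting $x_2-bx_1^l=-b(x_1^l-b^{-1}x_2)$ gives $b_s=b^{-1}$ for every $s$ when $i=2$. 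I expect this matching, and in particular keeping the two variable orderings and the attendant inversion straight, to be the only genuinely delicate point; everything else is routine.

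With the $b_s$ so identified, the conclusion follows by specializing the three cases of condition (2) of Proposition~\ref{prop:reduced polynomials}. For part (i), $l=1$ gives $p_1=p_2$, so case (2)(i) applies: $f$ is tamely reduced over $R$ if and only if none of $b_1,\ldots ,b_m$ lies in $V(R)$; as each $b_s$ equals $b$ or $b^{-1}$ and $V(R)$ is closed under inversion by Lemma~\ref{lem:V(R)} (iii), this is exactly $b\notin V(R)$. For part (ii), $l\geq 2$ gives $\deg _{x_i}f<\deg _{x_j}f$: if $i=1$ then $p_1<p_2$ with $p_1\mid p_2$, so case (2)(ii) applies, and since $b_s=b$ its condition becomes $b\notin R$; if $i=2$ then $p_2<p_1$ with $p_2\mid p_1$, so case (2)(iii) applies, and since $b_s^{-1}=b$ its condition again becomes $b\notin R$. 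Hence in either orientation $f$ is tamely reduced over $R$ precisely when $b\notin R$, which completes the plan.
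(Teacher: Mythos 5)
Your proposal is correct and takes essentially the same route as the paper, which likewise deduces the proposition directly from the equivalence of (1) and (2) in Proposition~\ref{prop:reduced polynomials}, observing that $\deg _{x_i}f=m$ divides $\deg _{x_j}f=lm$ in case (ii). Your reconciliation of the two factorizations of $f^{\w (f)}$ --- including the identification $b_s=b$ for $i=1$, the inversion $b_s=b^{-1}$ for $i=2$, and the closure of $V(R)$ under inversion via Lemma~\ref{lem:V(R)} (iii) --- simply makes explicit the matching the paper leaves implicit.
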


In fact, 
we obtain (i) and (ii) from the equivalence between 
(1) and (2) of Proposition~\ref{prop:reduced polynomials}, 
since $\deg _{x_i}f=m$ divides $\deg _{x_j}f=lm$ 
in the case of (ii).

\begin{cor}\label{lem:R/S}
Assume that $V(R)=L^{\times }$. 
Let $f\in \Rx $ be a coordinate of $\Sx $ over $S$. 
If $f$ is tamely reduced over $R$, 
then we have $\deg _{x_1}f\neq \deg _{x_2}f$. 
\end{cor}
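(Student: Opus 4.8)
The plan is to deduce the statement directly from Proposition~\ref{prop:reduced coordinate}, arguing by contradiction. I would suppose that $f$ is tamely reduced over $R$ while $\deg_{x_1}f=\deg_{x_2}f=:d$, and aim to derive a contradiction. Since $f$ is a coordinate of $\Sx$ over $S$, it is not a constant and in particular does not belong to $S$, hence not to $R\subseteq S$. This rules out $d=0$: were $d$ zero, $f$ would lie in $R$. Thus $d\geq 1$, so that $|\w(f)|=\deg_{x_2}f+\deg_{x_1}f=2d>1$, and $f$ meets the hypothesis $|\w(f)|>1$ needed to invoke Proposition~\ref{prop:reduced coordinate}.

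Next I would observe that the equality $\deg_{x_1}f=\deg_{x_2}f$ places us squarely in case (i) of that proposition. In its notation $\deg_{x_i}f=m$ and $\deg_{x_j}f=lm$ with $m\geq 1$, so the equality of the two degrees forces $l=1$. Proposition~\ref{prop:reduced coordinate}~(i) therefore applies and tells us that $f$ is tamely reduced over $R$ if and only if $b\notin V(R)$, where $b\in L^{\times}$ is the coefficient occurring in $f^{\w(f)}=a(x_i-bx_j)^m$.

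The conclusion is then immediate: by hypothesis $V(R)=L^{\times}$, whence $b\in L^{\times}=V(R)$, so Proposition~\ref{prop:reduced coordinate}~(i) shows that $f$ is \emph{not} tamely reduced over $R$, contradicting our assumption. Hence $\deg_{x_1}f\neq\deg_{x_2}f$. The only step demanding a little care is the degenerate case at the start---confirming that the coordinate hypothesis excludes $d=0$ and thereby guarantees $|\w(f)|>1$, which is exactly what licenses the use of Proposition~\ref{prop:reduced coordinate}. Beyond this there is no substantial obstacle, as all the real content has already been packaged into that proposition and into the hypothesis $V(R)=L^{\times}$.
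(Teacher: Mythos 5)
Your proof is correct and follows essentially the same route as the paper: argue by contradiction, note that the equal degrees together with $f$ being non-constant force $|\w (f)|>1$, and then apply Proposition~\ref{prop:reduced coordinate}~(i), since $b\in L^{\times }=V(R)$ contradicts tame reducedness. Your explicit check that $d=0$ is excluded is a slightly more detailed spelling-out of the paper's one-line remark that $f$ is not a constant, but the argument is identical in substance.
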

\begin{proof}
Suppose to the contrary that $\deg _{x_1}f=\deg _{x_2}f$. 
Then, we have $|\w (f)|>1$, 
since $f$ is not a constant. 
Hence, 
$f^{\w (f)}$ is written as above. 
Since $b$ is an element of $L^{\times }$, 
and $L^{\times }=V(R)$ by assumption, 
it follows that $b$ belongs to $V(R)$. 
Thus, 
$f$ is not tamely reduce over $R$ by 
Proposition~\ref{prop:reduced coordinate} (i). 
This is a contradiction. 
Therefore, 
we get $\deg _{x_1}f\neq \deg _{x_2}f$. 
\end{proof}

Theorem~\ref{prop:intersection theorem} 
immediately implies the following theorem. 
This theorem plays a crucial role 
in Chapters 3 and 4.

\begin{thm}\label{thm:reduced coordinate}
Let $f\in \Rx $ be a coordinate of $\Sx $ over $S$ 
with $|\w (f)|>1$. 
If $f$ is tamely reduced over $R$, 
then the following assertions hold$:$ 

\noindent
{\rm (i)} If $\deg _{x_1}f=\deg _{x_2}f$, 
then $H(f)$ 
is contained in $\Aff (R,\x )$.

\noindent
{\rm (ii)} If $\deg _{x_i}f>\deg _{x_j}f$ 
for $i,j\in \{ 1,2\} $ with $i\neq j$, 
then $H(f)$ is contained in $J(R;x_i,x_j)$. 
\end{thm}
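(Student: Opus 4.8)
The plan is to deduce the statement directly from Theorem~\ref{prop:intersection theorem}, whose conclusion is word-for-word the one sought here; the only work is to check that a coordinate $f$ of $\Sx$ over $S$ with $|\w (f)|>1$ meets the hypotheses of that theorem, namely $\deg _{x_i}f>0$ for $i=1,2$ together with the equivalent conditions (a) and (b) stated before Proposition~\ref{prop:slopee}. Once this is verified, the two cases transfer verbatim, since the dichotomy $\deg _{x_1}f=\deg _{x_2}f$ versus $\deg _{x_i}f>\deg _{x_j}f$ is the same in both statements.

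To verify the hypotheses I would pass to the field of fractions $L$ of $S$ and regard $f$ as a coordinate of $\Lx$ over $L$, which is legitimate because a coordinate over the domain $S$ stays a coordinate after extension of scalars to $L$. The quantities $\deg _{x_i}f$ and $f^{\w (f)}$, and hence $\w (f)$ itself, are unchanged by this passage, since the relevant leading coefficients are nonzero elements of $S\subseteq L$. As $|\w (f)|>1$, the polynomial $f$ is not linear in a single variable over $L$, so it lies in neither $L[x_1]$ nor $L[x_2]$; thus $\deg _{x_i}f>0$ for $i=1,2$. Applying Proposition~\ref{prop:slope} with $\kappa =L$ then yields $f^{\w (f)}=a(x_i-bx_j^l)^m$ for suitable indices and constants, and reading off the top monomials in $x_i$ and in $x_j$ of this binomial power shows that both $x_1^{\deg _{x_1}f}$ and $x_2^{\deg _{x_2}f}$ occur in $f^{\w (f)}$. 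This is precisely condition (b), which in turn gives condition (a).

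With the hypotheses in hand, Theorem~\ref{prop:intersection theorem} applies and delivers both assertions: when $\deg _{x_1}f=\deg _{x_2}f$ it gives that $H(f)$ is contained in $\Aff (R,\x )$, and when $\deg _{x_i}f>\deg _{x_j}f$ it gives that $H(f)$ is contained in $J(R;x_i,x_j)$. I expect no real obstacle: the substantive content is already packaged in Theorem~\ref{prop:intersection theorem}, and the one point deserving care is the base change to $L$, ensuring that $\w (f)$ and the coordinate property survive so that Proposition~\ref{prop:slope}, which is phrased over a field, can legitimately be invoked.
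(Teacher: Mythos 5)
Your proposal is correct and follows essentially the same route as the paper: the text preceding Theorem~\ref{thm:reduced coordinate} makes exactly this reduction, passing to the fraction field $L$ of $S$, invoking Proposition~\ref{prop:slope} with $\kappa =L$ to see that a coordinate with $|\w (f)|>1$ satisfies $\deg _{x_i}f>0$ for $i=1,2$ and the equivalent conditions (a) and (b), and then declaring that Theorem~\ref{prop:intersection theorem} immediately implies the result. Your one point of care, that the coordinate property and $\w (f)$ survive the base change to $L$, is handled identically (and just as briefly) in the paper.
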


\chapter{Triangularizability and tameness} 
\label{chap:t&t}

\section{Triangularizability}
\setcounter{equation}{0}
\label{sect:triangularizability}

Throughout this chapter, 
we assume that $R$ is a $\Q $-domain, 
and $K$ is the field of fractions of $R$. 
We study tameness and wildness of $\exp D$ 
for $D\in \lnd _R\Rx $ 
using the theory developed in the previous chapter.

The following theorem is a key result in this section. 
This theorem gives a solution to 
Problem~\ref{q:strong} for $n=2$.

\begin{thm}\label{thm:triangularizability}
Assume that $n=2$. 
Let $D\in \lnd _R\Rx $ be such that 
$\exp D$ belongs to $\T (R,\x )$. 
Then, 
there exists $\tau \in \T (R,\x )$ such that 
$\tau ^{-1}\circ D\circ \tau $ is triangular or affine. 
If $V(R)=K^{\times }$, 
then there exists $\tau \in \T (R,\x )$ such that 
$\tau ^{-1}\circ D\circ \tau $ is triangular. 
\end{thm}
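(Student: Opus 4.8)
The plan is to produce, after a tame change of coordinates, a tamely reduced coordinate lying in the kernel of the conjugated derivation, and then to read the shape of the derivation off the structure theorems for $H(f)$. First I dispose of $D=0$ (then $D$ is already triangular) and assume $D\neq 0$. Passing to the field of fractions, $\bar D$ is a nonzero element of $\lnd_K\Kx$, so by Rentschler's theorem (Theorem~\ref{thm:Rentschler original}) its kernel is $K[g]$ for a coordinate $g$ of $\Kx$ over $K$; after multiplying $g$ by a suitable element of $K^{\times}$ I may assume $g\in\Rx$, still a coordinate of $\Kx$ over $K$ with $D(g)=\bar D(g)=0$. Among all $\tau\in\T(R,\x)$ I choose one minimizing the integer $|\w(\tau(g))|$ and set $f:=\tau(g)$. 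By minimality $f$ is tamely reduced over $R$, it lies in $\Rx$, it remains a coordinate of $\Kx$ over $K$, and $\ker\bar E=\tau(K[g])=K[f]$, where $E:=\tau\circ D\circ\tau^{-1}\in\lnd_R\Rx$. The payoff of this step is that $E(f)=0$ and, by (\ref{eq:exp normal}), $\exp E=\tau\circ(\exp D)\circ\tau^{-1}\in\T(R,\x)$; since $f\in\ker E$ gives $(\exp E)(f)=f$, we obtain $\exp E\in H(f)$.

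Next I apply the structure theorems to the tamely reduced coordinate $f$. If $|\w(f)|=1$, then $f=ax_i+b$ with $a\in R\setminus\{0\}$, $b\in R$, so $\ker\bar E=K[x_i]$ forces $E(x_i)=0$, and local nilpotency of $E=E(x_j)\,\partial/\partial x_j$ forces $E(x_j)\in K[x_i]\cap\Rx=R[x_i]$; thus $E$ is triangular. If $|\w(f)|>1$, I apply Theorem~\ref{thm:reduced coordinate} with $S=K$: either $\deg_{x_1}f=\deg_{x_2}f$ and $H(f)\subseteq\Aff(R,\x)$, or $\deg_{x_i}f>\deg_{x_j}f$ and $H(f)\subseteq J(R;x_i,x_j)$. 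In the affine case, $\exp(lE)=(\exp E)^l\in\Aff(R,\x)$ for every $l\geq 0$, so each $\exp(lE)(x_p)=\sum_m(l^m/m!)E^m(x_p)$ has total degree $\leq 1$; extracting the coefficient of $l$ by a Vandermonde argument over $\Q$ (each $l^m$-coefficient is a $\Q$-combination of finitely many of these degree-$\leq 1$ values) shows $E(x_p)$ has degree $\leq 1$, i.e. $E$ is affine.

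In the remaining case $\exp(lE)\in J(R;x_i,x_j)$ for all $l$, so $\exp(lE)(x_i)=a^lx_i+c_l$ and $\exp(lE)(x_j)=b^lx_j+h_l$ with $a,b\in R^{\times}$, $c_l\in R$, $h_l\in R[x_i]$; the coefficient of the monomial $x_i$ (resp. $x_j$) on the left is a polynomial in $l$, which forces $a^l$ (resp. $b^l$) to be polynomial in $l$, hence $a=b=1$. Then $c_l=\sum_{m\geq 1}(l^m/m!)E^m(x_i)$ and $h_l=\sum_{m\geq 1}(l^m/m!)E^m(x_j)$ are polynomials in $l$ with coefficients in $\Rx$ taking values in $R$ and in $R[x_i]$ respectively, so all their coefficients lie there; taking $m=1$ gives $E(x_i)\in R$ and $E(x_j)\in R[x_i]$, i.e. $E$ is triangular (after the tame swap $\iota$ when $(i,j)=(2,1)$). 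In every case $E$ is triangular or affine, so $\sigma:=\tau^{-1}\in\T(R,\x)$ (composed with $\iota$ when needed) satisfies $\sigma^{-1}\circ D\circ\sigma=E$, proving the first assertion. For the second assertion, when $V(R)=K^{\times}$ I invoke Corollary~\ref{lem:R/S} with $S=K$: the tamely reduced coordinate $f$ of $\Kx$ over $K$ must satisfy $\deg_{x_1}f\neq\deg_{x_2}f$, so the affine case never occurs and $E$, hence $D$, is always tamely triangularizable.

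The main obstacle I expect is precisely the passage from $\exp E$ back to $E$: knowing only that the automorphism $\exp E$ lies in $\Aff(R,\x)$ or in $J(R;x_i,x_j)$, I must recover that the derivation $E$ is affine or triangular. This is where I crucially use $R\supseteq\Q$, so that all powers $\exp(lE)=(\exp E)^l$ are available and $\exp(lE)(x_p)$ is a genuine polynomial in $l$; the eigenvalue comparison forcing $a=b=1$ and the Vandermonde extraction of the degree-one coefficient are the delicate points, together with verifying that the coordinate $f$ produced by minimization really satisfies the hypotheses (tamely reduced over $R$, a coordinate of $\Kx$ over $K$, and $|\w(f)|>1$) required by Theorem~\ref{thm:reduced coordinate} and Corollary~\ref{lem:R/S}.
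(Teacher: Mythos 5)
Your proof is correct, and its overall architecture is the same as the paper's: Rentschler's theorem (Theorem~\ref{thm:Rentschler original}) produces a kernel generator, scaling puts it in $\Rx$, minimizing $|\w (\tau (g))|$ over $\T (R,\x )$ yields a tamely reduced coordinate $f$ of $\Kx $ over $K$ with $\exp E\in H(f)$, Theorem~\ref{thm:reduced coordinate} then places $\exp E$ in $\Aff (R,\x )$ or $J(R;x_i,x_j)$, and Corollary~\ref{lem:R/S} eliminates the affine case when $V(R)=K^{\times }$. The one step where you genuinely diverge is the passage from the automorphism back to the derivation. The paper isolates this as Lemma~\ref{lem:exp T} and proves it with the formal logarithm: from $z=\sum _{i\geq 1}(-1)^{i+1}p(z)^i/i$ with $p(z)=\exp z-1$, applied to the locally nilpotent operator $p(D)=\exp D-\id _{\Rx }$, triangularity (resp.\ affineness) of $D$ is read off from $p(D)^l(x_i)=(a_i-1)^lx_i+h_{i,l}$, local nilpotency forcing $a_i=1$. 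You instead exploit the flow: $\exp (lE)=(\exp E)^l$ stays in the group $\Aff (R,\x )$ (resp.\ $J(R;x_i,x_j)$) for all $l\in \Zn $, and since $\exp (lE)(x_p)=\sum _m(l^m/m!)E^m(x_p)$ is a polynomial in $l$ with coefficients in $\Rx $, Lagrange--Vandermonde interpolation over $\Q $ recovers each $E^m(x_p)/m!$ as a $\Q $-combination of values lying in the relevant $\Q $-submodule (degree-$\leq 1$ polynomials, $R$, or $R[x_i]$), while comparing the $x_i$-coefficient $a^l$ with a polynomial in $l$ forces $a=b=1$ via the functional equation $P(z+1)=aP(z)$. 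Both mechanisms use $\Q \subseteq R$ in an essential way; yours avoids power-series manipulation and is arguably more elementary, whereas the paper's lemma is stated once for all $n$ and is reused later (e.g.\ in the proof of Theorem~\ref{thm:affine lnd}). A small extra merit of your write-up: you treat the degenerate case $|\w (f)|=1$ separately and directly (via $\ker \bar{E}=K[x_i]$ and the standard fact that a locally nilpotent derivation killing $x_i$ must have $E(x_j)\in K[x_i]$), whereas the paper's appeal to Theorem~\ref{thm:reduced coordinate} inside Proposition~\ref{prop:sankakuka} formally requires $|\w (f)|>1$, so the boundary case where the tamely reduced $f$ is linear in $x_1$ is only implicit there; your explicit handling closes it cleanly.
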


Here, we recall that $D\in \lnd _R\Rx $ 
is said to be {\it affine} 
if $\deg D(x_i)\leq 1$ for $i=1,\ldots ,n$.

In view of the remark after Problem~\ref{q:strong}, 
Theorem~\ref{thm:triangularizability} 
implies the following theorem. 
From this theorem, 
we know that the answer to Question~\ref{p:fixed points} 
is affirmative when $n=2$.

\begin{thm}\label{cor:fixed points}
Assume that $n=2$. 
Let $D$ be an element of $\lnd _R\Rx $. 
If $\exp fD$ belongs to $\T (R,\x )$ 
for some $f\in \ker D\sm \zs $, 
then $\exp D$ belongs to $\T (R,\x )$. 
\end{thm}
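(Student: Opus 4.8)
The plan is to deduce this theorem directly from Theorem~\ref{thm:triangularizability}, via the general reduction already sketched in the remark following Problem~\ref{q:strong}. Concretely, I would take $\mathcal{D}=\lnd_R\Rx$ (with $n=2$) and first record that this family is closed under the operation $D\mapsto fD$ for $f\in\ker D$: since $f\in\ker D\sm\zs$ gives $D(f)=0$ while $D$ is locally nilpotent, the product $fD$ is again locally nilpotent by the standard criterion for membership in $\lnd_R\Rx$. Thus $fD$ is a legitimate element of $\lnd_R\Rx$ to which Theorem~\ref{thm:triangularizability} may be applied, $R$ being a $\Q$-domain as assumed throughout the chapter.

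Next, using the hypothesis $\exp fD\in\T(R,\x)$, I would apply Theorem~\ref{thm:triangularizability} to $fD$ to obtain $\tau\in\T(R,\x)$ such that $D':=\tau^{-1}\circ(fD)\circ\tau$ is triangular or affine. The key algebraic point is that conjugation commutes with multiplication by a kernel element, so that
\[
D'=\tau^{-1}\circ(fD)\circ\tau=\tau^{-1}(f)\,(\tau^{-1}\circ D\circ\tau)=g\,D_0,
\]
where I set $g:=\tau^{-1}(f)$ and $D_0:=\tau^{-1}\circ D\circ\tau$. Since $\tau$ is an automorphism and $f\neq0$, we have $g\neq0$.

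The remaining work is to descend the triangular/affine property from $D'=gD_0$ to $D_0$, using $g\neq0$ and the fact that $\Rx$ is a domain, so that the degrees $\deg_{x_1}$, $\deg_{x_2}$ and the total degree $\deg$ are all additive under multiplication by a nonzero factor. If $D'$ is triangular, i.e.\ $gD_0(x_1)\in R$ and $gD_0(x_2)\in R[x_1]$, then (whenever the relevant component is nonzero) additivity forces $\deg_{x_1}D_0(x_1)=\deg_{x_2}D_0(x_1)=0$ and $\deg_{x_2}D_0(x_2)=0$, so that $D_0(x_1)\in R$ and $D_0(x_2)\in R[x_1]$; hence $D_0$ is triangular. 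If $D'$ is affine, then $\deg D'(x_i)=\deg g+\deg D_0(x_i)$ with $\deg g\geq0$ gives $\deg D_0(x_i)\leq1$, so $D_0$ is affine. In either case $D_0$ is locally nilpotent with $\exp D_0\in\E(R,\x)$ or $\exp D_0\in\Aff(R,\x)$, hence $\exp D_0\in\T(R,\x)$. Finally, the conjugation formula (\ref{eq:exp normal}) yields $\exp D=\tau\circ(\exp D_0)\circ\tau^{-1}\in\T(R,\x)$ because $\tau\in\T(R,\x)$, which is the assertion.

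I expect the only delicate point to be this last descent step, and even it is routine once one is careful: the subtlety is that triangularity and affineness are conditions on $gD_0$ rather than on $D_0$ itself, and dividing by $g$ could a priori leave the polynomial ring. The hypotheses $g\neq0$ together with $R$ a domain rule this out through degree additivity. All the genuine content is packaged inside Theorem~\ref{thm:triangularizability}, which I take as given; the present theorem is essentially its reformulation in the language of Question~\ref{p:fixed points}.
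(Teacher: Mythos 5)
Your proof is correct and is essentially the paper's own argument: the paper derives Theorem~\ref{cor:fixed points} from Theorem~\ref{thm:triangularizability} exactly via the reduction in the remark after Problem~\ref{q:strong}, writing $\tau^{-1}\circ(fD)\circ\tau=\tau^{-1}(f)\,(\tau^{-1}\circ D\circ\tau)$ and descending triangularity or affineness using $\tau^{-1}(f)\neq 0$. Your degree-additivity justification of the descent step merely makes explicit what the paper leaves as a one-line remark.
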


With the aid of 
Theorem~\ref{thm:tame23}, 
we get the following theorem from 
Theorems~\ref{thm:triangularizability} 
and~\ref{cor:fixed points}. 
This theorem gives partial answers to 
Question~\ref{p:fixed points} and Problem~\ref{q:strong}.

\begin{thm}\label{thm:anstoF}
Assume that $n=3$. 
Let $D$ be an element of $\lnd _k\kx $ 
which kills a tame coordinate of $\kx $ over $k$. 
Then, 
the following assertions hold$:$

\noindent{\rm (i)} 
$\exp D$ belongs to $\T (k,\x )$ if and only if 
$\tau ^{-1}\circ D\circ \tau $ is triangular 
for some $\tau \in \T (k,\x )$. 

\noindent{\rm (ii)} 
If $\exp fD$ belongs to $\T (k,\x )$ 
for some $f\in \ker D\sm \zs $, 
then $\exp D$ belongs to $\T (k,\x )$. 
\end{thm}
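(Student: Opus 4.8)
The plan is to transfer everything to the two-variable polynomial ring $k[x_3][x_1,x_2]$ over the base $R=k[x_3]$ and then invoke Theorems~\ref{thm:triangularizability} and~\ref{cor:fixed points} together with the Shestakov--Umirbaev theorem (Theorem~\ref{thm:tame23}). First I would exploit the hypothesis that $D$ kills a tame coordinate $g$: by definition $g=\phi(x_1)$ for some $\phi\in\T(k,\x)$, and after composing with the transposition of $x_1$ and $x_3$ (an affine, hence tame, automorphism) I may write $g=\tau_0(x_3)$ with $\tau_0\in\T(k,\x)$. Setting $D'=\tau_0^{-1}\circ D\circ\tau_0$, conjugation preserves local nilpotence and $D'(x_3)=\tau_0^{-1}(D(g))=0$, so $D'$ kills $k[x_3]$ and therefore $D'\in\lnd_{k[x_3]}\kx$, where $\kx$ is viewed as the polynomial ring $k[x_3][x_1,x_2]$ in two variables over $R=k[x_3]$. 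By (\ref{eq:exp normal}) we have $\exp D'=\tau_0^{-1}\circ(\exp D)\circ\tau_0$, so $\exp D\in\T(k,\x)$ if and only if $\exp D'\in\T(k,\x)$; moreover $\exp D'$ fixes $x_3$, so by Theorem~\ref{thm:tame23} this is equivalent to $\exp D'\in\T(k[x_3],\{x_1,x_2\})$. The decisive structural fact is that $R=k[x_3]$ is a PID, whence $V(R)=K^{\times}$ for $K=k(x_3)$ by Lemma~\ref{lem:V(R)}(ii), putting us in the favorable case of Theorem~\ref{thm:triangularizability}.

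For part (i), the ``if'' direction is the general remark that a triangular derivation exponentiates into $\E(k,\x)\subseteq\T(k,\x)$ and that conjugating by $\tau\in\T(k,\x)$ preserves tameness via (\ref{eq:exp normal}). For ``only if'', assuming $\exp D\in\T(k,\x)$ yields $\exp D'\in\T(k[x_3],\{x_1,x_2\})$ as above; I then apply Theorem~\ref{thm:triangularizability} to $D'$ over the $\Q$-domain $R=k[x_3]$ with $V(R)=K^{\times}$, obtaining $\sigma\in\T(k[x_3],\{x_1,x_2\})$ with $\sigma^{-1}\circ D'\circ\sigma$ triangular over $R$. Since $\T(k[x_3],\{x_1,x_2\})\subseteq\T(k,\x)$ by Theorem~\ref{thm:tame23}, setting $\tau=\tau_0\circ\sigma\in\T(k,\x)$ produces a derivation $\tau^{-1}\circ D\circ\tau$ that kills $x_3$, maps $x_1$ into $k[x_3]$, and maps $x_2$ into $k[x_3][x_1]$, i.e.\ is triangular with respect to the ordering $x_3,x_1,x_2$; absorbing the corresponding coordinate permutation (an affine automorphism) into $\tau$ makes it triangular in the usual sense.

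For part (ii), given $\exp fD\in\T(k,\x)$ with $f\in\ker D\sm\zs$, I would set $f'=\tau_0^{-1}(f)$, which lies in $\ker D'\sm\zs$, and note that $f'D'=\tau_0^{-1}\circ(fD)\circ\tau_0$, so $\exp(f'D')=\tau_0^{-1}\circ(\exp fD)\circ\tau_0\in\T(k,\x)$. Since $f'D'$ again kills $x_3$, Theorem~\ref{thm:tame23} places $\exp(f'D')$ in $\T(k[x_3],\{x_1,x_2\})$. Now Theorem~\ref{cor:fixed points}, applied in two variables over the $\Q$-domain $R=k[x_3]$, gives $\exp D'\in\T(k[x_3],\{x_1,x_2\})\subseteq\T(k,\x)$, and conjugating back by $\tau_0$ yields $\exp D\in\T(k,\x)$.

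The routine verifications are all light and the substantive input is borrowed. The one point demanding care is the passage between relative tameness over $k[x_3]$ and genuine tameness over $k$: both the reduction $\exp D'\in\T(k,\x)\Leftrightarrow\exp D'\in\T(k[x_3],\{x_1,x_2\})$ and the inclusion $\T(k[x_3],\{x_1,x_2\})\subseteq\T(k,\x)$ rest entirely on the Shestakov--Umirbaev theorem, so the proof is essentially an application of Theorem~\ref{thm:tame23} that imports the sharp two-variable results of this chapter into three variables. A secondary bookkeeping issue is the reordering needed to make $\tau^{-1}\circ D\circ\tau$ triangular in part (i), which is harmless since coordinate permutations are tame.
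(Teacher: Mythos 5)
Your proof is correct and takes essentially the same route as the paper's: conjugate $D$ by a tame automorphism so that it kills one variable, use Theorem~\ref{thm:tame23} to translate tameness over $k$ into tameness over the PID $k[x_3]$ (where $V(R)=K^{\times }$), and then apply Theorem~\ref{thm:triangularizability} for (i) and Theorem~\ref{cor:fixed points} for (ii). The only cosmetic difference is that the paper sends the tame coordinate to $x_1$ rather than $x_3$, so the derivation produced in (i) is triangular in the standard ordering at once, without your harmless final coordinate permutation.
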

\begin{proof}
By assumption, 
there exists a tame coordinate $g$ of $\kx $ over $k$ 
such that $D(g)=0$. 
Take $\sigma \in \T (k,\x )$ such that $\sigma (x_1)=g$, 
and put $D'=\sigma ^{-1}\circ D\circ \sigma $. 
Then, we have $D'(x_1)=0$. 
Hence, 
$D'$ belongs to $\lnd _{k[x_1]}\kx $,  
and $\exp D'$ belongs to $\Aut (\kx /k[x_1])$.

(i) The ``if" part is clear. 
We prove the ``only if" part. 
Assume that $\exp D$ belongs to $\T (k,\x )$. 
Then, $\exp D'$ belongs to $\T (k,\x )$. 
Since $\exp D'$ belongs to $\Aut (\kx /k[x_1])$, 
it follows that $\exp D'$ belongs to 
$\T (k[x_1],\{x_2,x_3\} )$ 
on account of Theorem~\ref{thm:tame23}. 
Since $k[x_1]$ is a PID, 
we have $V(k[x_1])=k(x_1)^{\times }$. 
Regard $D'$ as a 
derivation of the polynomial ring in 
the two variables $x_2$ and $x_3$ over $k[x_1]$. 
Then, 
it follows from the last part of 
Theorem~\ref{thm:triangularizability} 
that 
$$
D'':=
\tau ^{-1}\circ D'\circ \tau 
=(\sigma \circ \tau )^{-1}\circ D\circ (\sigma \circ \tau )
$$ 
is triangular for some $\tau \in \T (k[x_1],\{ x_2,x_3\} )$, 
i.e., 
$D''(x_1)=0$, $D''(x_2)$ belongs to $k[x_1]$, 
and $D''(x_3)$ belongs to $k[x_1,x_2]$. 
This implies that 
$D''$ is a triangular derivation of $\kx $ over $k$. 
Clearly, 
$\sigma \circ \tau $ belongs to $\T (k,\x )$. 
Therefore, the ``only if" part is true.

(ii) 
Since $\exp fD$ belongs to $\T (k,\x )$ by assumption, 
and $\sigma $ is an element of $\T (R,\x )$, 
we see that 
$$
\exp \sigma ^{-1}(f)D'=\sigma ^{-1}\circ (\exp fD)\circ \sigma 
$$ 
belongs to $\T (k,\x )$. 
We may regard $\sigma ^{-1}(f)D'$ 
as an element of $\lnd _{k[x_1]}\kx $. 
Hence, 
$\exp \sigma ^{-1}(f)D'$ 
belongs to $\Aut (\kx /k[x_1])$. 
Thanks to Theorem~\ref{thm:tame23}, 
it follows that 
$\exp \sigma ^{-1}(f)D'$ belongs to 
$\T (k[x_1],\{ x_2,x_3\} )$. 
Thus, we know that 
$\exp D'$ belongs to $\T (k[x_1],\{ x_2,x_3\} )$ 
by virtue of Theorem~\ref{cor:fixed points}. 
This implies that $\exp D'$ belongs to $\T (k,\x )$. 
Therefore, 
$\exp D$ belongs to $\T (k,\x )$. 
\end{proof}

Now, 
we prove Theorem~\ref{thm:triangularizability}. 
The following is a key proposition.

\begin{prop}\label{prop:sankakuka}
Assume that $n=2$. 
Let $f\in \Rx $ be a coordinate of $\Sx $ over $S$ 
for some over domain $S$ of $R$, 
and $D\in \lnd _R\Rx $ such that $D(f)=0$ 
and $\exp D$ belongs to $\T (R,\x )$. 
If $f$ is tamely reduced over $R$, 
then the following assertions hold$:$

\noindent{\rm (i)} 
If $\deg _{x_1}f>\deg _{x_2}f$, 
then $D$ is triangular. 

\noindent{\rm (ii)} 
If $\deg _{x_1}f=\deg _{x_2}f$, 
then $D$ is affine. 
\end{prop}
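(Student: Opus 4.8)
The plan is to use the hypothesis $D(f)=0$ to force the entire family $\exp lD$ into the tame intersection $H(f)$, whose shape is tightly constrained by Theorem~\ref{thm:reduced coordinate}, and then to recover $D$ from this family by an extraction in the parameter $l$. First I would note that $D(f)=0$ means $f\in\ker D$, so $(\exp D)(f)=f$ and hence $\exp D\in\Aut(\Rx/R[f])$; together with the assumption $\exp D\in\T(R,\x)$ this gives $\exp D\in H(f)$. Since $R$ is a $\Q$-domain and $H(f)$ is a group, $(\exp D)^l=\exp lD$ lies in $H(f)$ for every integer $l\geq 0$.

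Next I would apply Theorem~\ref{thm:reduced coordinate}. In case (ii), $f$ is a nonconstant coordinate with $\deg_{x_1}f=\deg_{x_2}f$, so both degrees are $\geq 1$ and $|\w(f)|>1$; thus Theorem~\ref{thm:reduced coordinate}(i) gives $H(f)\subseteq\Aff(R,\x)$, so every $(\exp lD)(x_i)$ has total degree at most $1$. In case (i) with $|\w(f)|>1$, Theorem~\ref{thm:reduced coordinate}(ii) gives $H(f)\subseteq J(R;x_1,x_2)$, so by (\ref{eq:jonquiere}) every $(\exp lD)(x_1)$ lies in $R+Rx_1$ and every $(\exp lD)(x_2)$ lies in $R[x_1]+R[x_1]x_2$. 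The key extraction is then the following: for fixed $i$, the expression $(\exp lD)(x_i)=\sum_{m\geq 0}(l^m/m!)D^m(x_i)$ is a polynomial in $l$ with coefficients in $\Rx$ which, for infinitely many $l$, lies in a fixed $R$-submodule $M$ spanned by monomials. Comparing coefficients of each monomial outside $M$ and using that a polynomial in $l$ vanishing at infinitely many integers is zero, one concludes $D^m(x_i)\in M$ for all $m$; in particular $D(x_i)\in M$. In case (ii) this says exactly that $D$ is affine, completing that part.

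In case (i) the extraction yields $D(x_1)=cx_1+d$ and $D(x_2)=e(x_1)x_2+g(x_1)$ with $c,d\in R$ and $e,g\in R[x_1]$, and the remaining work is a local-nilpotency upgrade. From $D(x_1)=cx_1+d$ one computes $D^m(x_1)=c^mx_1+c^{m-1}d$, so local nilpotency forces $c^N=0$ for some $N$ and hence $c=0$ (as $R$ is a domain), giving $D(x_1)=d\in R$. Writing $e_m$ for the coefficient of $x_2$ in $D^m(x_2)$, the relation $D(x_1)=d$ yields the recursion $e_{m+1}=d\,e_m'+e\,e_m$ with $e_1=e$; if $e\neq 0$ then $e\,e_m$ has strictly larger $x_1$-degree than $d\,e_m'$, so $e_m\neq 0$ for all $m$, contradicting that $D^N(x_2)=0$ forces $e_N=0$. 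Hence $e=0$ and $D(x_2)=g(x_1)\in R[x_1]$, so $D$ is triangular. The one remaining subcase is the degenerate $\deg_{x_1}f=1$, $\deg_{x_2}f=0$ (where $|\w(f)|=1$ and Theorem~\ref{thm:reduced coordinate} does not apply): here $f=ax_1+b$ with $a\in R\sm\zs$ gives $D(x_1)=0$ directly, so $D=D(x_2)(\partial/\partial x_2)$, which by the local-nilpotency criterion for $fD$ recalled in the introduction forces $D(x_2)\in R[x_1]$, again triangular.

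I expect the main obstacle to be the local-nilpotency upgrade in case (i): the extraction alone only bounds the shapes of $D(x_1)$ and $D(x_2)$, and one genuinely needs the recursive degree argument to rule out the nonzero leading coefficients $c$ and $e(x_1)$. A secondary point requiring care is the degenerate subcase $|\w(f)|=1$, which must be argued separately since Theorem~\ref{thm:reduced coordinate} is stated only for $|\w(f)|>1$.
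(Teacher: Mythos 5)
Your proof is correct, and while its first half coincides with the paper's, its second half takes a genuinely different route. Like the paper, you use $D(f)=0$ and tameness to place $\exp D$ in $H(f)$ and then invoke Theorem~\ref{thm:reduced coordinate} to confine $H(f)$ to $J(R;x_1,x_2)$ in case (i) and to $\Aff (R,\x )$ in case (ii). The paper then finishes by a separate lemma (Lemma~\ref{lem:exp T}, stated for arbitrary $n$): setting $p(D)=\exp D-\id _{\Rx }$, it uses the formal identity $z=\sum _{i\geq 1}(-1)^{i+1}p(z)^i/i$ in $\Q [[z]]$ together with local nilpotency of $p(D)$ to read off triangularity, resp.\ affineness, of $D$ directly; the units $a_i$ are eliminated there because $p(D)^l(x_i)=(a_i-1)^lx_i+\cdots $ must eventually vanish. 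You avoid the logarithm entirely: since $H(f)$ is a group, $\exp lD\in H(f)$ for all $l\geq 0$, and because $R$ is a $\Q $-domain (the chapter's standing hypothesis) the coefficient of each monomial in $(\exp lD)(x_i)=\sum _m(l^m/m!)D^m(x_i)$ is a polynomial in $l$, so vanishing for infinitely many $l$ forces $D^m(x_i)\in M$ for all $m$. In case (ii) this gives affineness at once, but in case (i) it only yields $D(x_1)=cx_1+d$ and $D(x_2)=e(x_1)x_2+g(x_1)$, and your recursions $D^m(x_1)=c^mx_1+c^{m-1}d$ and $e_{m+1}=de_m'+ee_m$ with the leading-degree growth argument are genuinely needed to force $c=e=0$ --- precisely the work the paper's logarithm performs in closed form. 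The trade-off: the paper's lemma is uniform in $n$ and reusable (it reappears in the proof of Theorem~\ref{thm:affine lnd}), while your argument is elementary and self-contained, with no power-series manipulation. A further merit of your write-up is that you isolate the degenerate subcase $\deg _{x_2}f=0$ (where $|\w (f)|=1$ and Theorem~\ref{thm:reduced coordinate} is inapplicable), which the paper's proof passes over in silence; your treatment there is correct, including the citation: applying the criterion ``$fD$ is locally nilpotent if and only if $D(f)=0$ and $D$ is locally nilpotent'' with $D=\partial /\partial x_2$ and $f=D(x_2)$ does force $D(x_2)\in R[x_1]$.
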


Let $f$ and $D$ be as in Proposition~\ref{prop:sankakuka}. 
Then, 
we have $(\exp D)(f)=f$, 
since $D(f)=0$ by assumption. 
Hence, 
$\exp D$ belongs to $H(f)$ 
due to the assumption that 
$\exp D$ belongs to $\T (R,\x )$. 
Since $f$ is tamely reduced over $R$, 
we know by Theorem~\ref{thm:reduced coordinate} 
that $H(f)$ is contained in $J(R;x_1,x_2)$ 
in the case of (i), 
and in $\Aff (R,\x )$ 
in the case of (ii). 
Thus, 
$\exp D$ belongs to $J(R;x_1,x_2)$ in the case of (i), 
and to $\Aff (R,\x )$ in the case of (ii). 
Therefore, 
Proposition~\ref{prop:sankakuka} 
is proved by the following lemma.

\begin{lem}\label{lem:exp T}
For each $D\in \lnd _R\Rx $, 
the following assertions hold, 
where $n\in \N $ may be arbitrary. 

\noindent{\rm (i)} 
If $\exp D$ belongs to $J(R;x_1,\ldots ,x_n)$, 
then $D$ is triangular. 

\noindent{\rm (ii)}
If $\exp D$ belongs to $\Aff (R,\x )$, 
then $D$ is affine. 
\end{lem}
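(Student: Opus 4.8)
The plan is to recover $D$ from the automorphism $\phi:=\exp D$ by means of the operator logarithm, and then read off the required structure from a filtration that $\phi$ respects. Regarding $\phi$ as an $R$-linear operator on $\Rx$, one has $\phi-\id=D\circ U$, where $U=\sum_{l\ge 0}D^l/(l+1)!$ is an invertible operator commuting with $D$; since $D$ is locally nilpotent, so is $\phi-\id$, and hence the series $\log\phi=\sum_{l\ge 1}\frac{(-1)^{l-1}}{l}(\phi-\id)^l$ acts finitely on every element and equals $D$. (The rational coefficients cause no trouble because $R$ is a $\Q$-domain.) The point of this formula is that whenever a filtration of $\Rx$ is preserved by $\phi$, then $\phi-\id$, and therefore $D$, cannot raise it.

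For (ii), I would argue that an affine automorphism preserves total degree: the top-degree part of $\phi(f)$ is the image of the top-degree part of $f$ under the invertible linear part of $\phi$, so $\deg\phi(f)=\deg f$. Consequently $\deg(\phi-\id)(f)\le\deg f$, and by the logarithm formula $\deg D(f)\le\deg f$ for every $f$. Taking $f=x_i$ gives $\deg D(x_i)\le 1$, i.e.\ $D$ is affine. This case is immediate once the logarithm formula is in hand.

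For (i), write $B_m:=R[x_1,\dots,x_m]$. Since $\phi\in J(R;x_1,\dots,x_n)$, we have $\phi(B_m)\subseteq B_m$ for every $m$, so $\phi-\id$ and hence $D$ preserve each $B_m$; thus $D(x_m)\in B_m$. Moreover, because $\phi(x_m)=a_mx_m+h_m$ with $a_m\in R^{\times}$ and $h_m\in B_{m-1}$, the automorphism $\phi$ preserves the $x_m$-degree, so $D$ does not raise it and $\deg_{x_m}D(x_m)\le 1$. Hence $D(x_m)=c_1x_m+c_0$ with $c_0,c_1\in B_{m-1}$, and it remains only to show $c_1=0$.

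This last step is the main obstacle: the degree/logarithm argument bounds the $x_m$-degree by $1$ but cannot by itself kill the linear term. To eliminate it I would use that $D$ commutes with $\phi=\exp D$. Comparing the coefficients of $x_m$ on the two sides of $D(\phi(x_m))=\phi(D(x_m))$ — using that $\phi(c_i)\in B_{m-1}$ and $D(h_m)\in B_{m-1}$ — yields $a_mc_1=a_m\phi(c_1)$, whence $\phi(c_1)=c_1$ and therefore $D(c_1)=0$ (a nonzero $D(c_1)$ would survive after applying a suitable power of $D$ to $\phi(c_1)-c_1$). With $D(c_1)=0$ an easy induction shows that the coefficient of $x_m$ in $D^l(x_m)$ equals $c_1^l$; local nilpotency then forces $c_1^N=0$ for some $N$, so $c_1=0$ because $\Rx$ is a domain. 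Thus $D(x_m)\in B_{m-1}$ for every $m$, i.e.\ $D$ is triangular.
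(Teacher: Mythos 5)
Your proof is correct, and its framework is the paper's: recover $D$ as $\log \phi$ via the series $\sum _{l\geq 1}\frac{(-1)^{l-1}}{l}(\phi -\id _{\Rx })^l$, after observing that $\phi -\id _{\Rx }$ is a locally nilpotent operator because it is a multiple of $D$ inside $\Q [[D]]$ (the paper phrases this with $p(z)=\exp z-1$ and the identity $z=\sum _{i\geq 1}(-1)^{i+1}p(z)^i/i$). Your part (ii) coincides with the paper's argument. Where you genuinely diverge is the crucial step of (i). The paper checks by induction that $(\exp D)^j(x_i)=a_i^jx_i+g_{i,j}$ with $g_{i,j}\in R[x_1,\ldots ,x_{i-1}]$, expands $(\exp D-\id _{\Rx })^l(x_i)=\sum _{j=0}^l(-1)^{l-j}\binom{l}{j}(\exp D)^j(x_i)=(a_i-1)^lx_i+h_{i,l}$ binomially, and concludes from local nilpotency that $a_i-1$ is nilpotent, hence $a_i=1$ since $R$ is a domain; the logarithm series then places $D(x_i)$ in $R[x_1,\ldots ,x_{i-1}]$ in one stroke, with no linear term to kill. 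You instead bound $\deg _{x_m}D(x_m)\leq 1$ (legitimately — the $x_m$-degree preservation you invoke holds on $B_m$, and all your iterates of $x_m$ under $\phi -\id _{\Rx }$ stay in $B_m$), then eliminate the coefficient $c_1$ via three auxiliary facts: the commutation $D\circ \phi =\phi \circ D$, the implication $\phi (c_1)=c_1\Rightarrow D(c_1)=0$ (your telescoping argument for $\ker (\exp D-\id _{\Rx })=\ker D$ is sound over any $\Q $-domain; the paper proves a version of this only later, for fields, by a different field-theoretic route), and the induction $D^l(x_m)\in c_1^lx_m+B_{m-1}$ forcing $c_1$ nilpotent, hence zero. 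Both routes ultimately apply ``nilpotent in a domain implies zero,'' but the paper applies it once to $a_i-1$ and is shorter and more self-contained, whereas your detour isolates the reusable fixed-point fact and recovers $a_m=1$ only a posteriori. Each works; the paper's binomial computation is the more economical of the two.
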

\begin{proof}
Consider the power series 
$$
p(z)=\exp z-1=\sum _{i=1}^{\infty }\frac{z^i}{i!}, 
$$ 
where $z$ is a variable. 
Since $z=\log \bigl((\exp z-1)+1\bigr)$, 
we get the identity 
\begin{equation}\label{eq:log exp}
z=\sum _{i=1}^{\infty }(-1)^{i+1}\frac{p(z)^i}{i}
\end{equation}
in the formal power series ring $\Q [[z]]$, 
where the right-hand side of 
(\ref{eq:log exp}) makes sense 
because $p(z)$ is an element of 
$z\Q [[z]]$. 
Now, 
let $\End _R\Rx $ be the ring of the 
$R$-linear endomorphisms of $\Rx $, 
$\Q [D]$ the $\Q $-subalgebra 
of $\End _R\Rx $ generated by $D$, 
and $\Q [[D]]$ the completion of $\Q [D]$ 
by the maximal ideal of $\Q [D]$ 
generated by $D$. 
Then, 
$\Q [[D]]$ is contained in $\End _R\Rx $ 
by the assumption that $D$ is locally nilpotent. 
Since $p(D)$ is a multiple of $D$, 
we see that $p(D)$ is locally nilpotent, 
i.e., 
for each $f\in \Rx $, there exists $l\in \N $ 
such that $p(D)^l(f)=0$.

(i) 
Since $\exp D$ belongs to $J(R;x_1,\ldots ,x_n)$ 
by assumption, 
we may write 
$$(\exp D)(x_i)=a_ix_i+g_i$$ for $i=1,\ldots ,n$, 
where $a_i\in R^{\times }$ and $g_i\in R[x_1,\ldots ,x_{i-1}]$. 
Then, it is easy to check that 
$(\exp D)^j(x_i)$ has the form $a_i^jx_i+g_{i,j}$ 
for some $g_{i,j}\in R[x_1,\ldots ,x_{i-1}]$ 
for each $j\geq 0$ by induction on $j$. 
Hence, 
we get 
\begin{align*}
&p(D)^l(x_i)=(\exp D-\id _{\Rx })^l(x_i) \\
&\quad =\sum _{j=0}^l(-1)^{l-j}\binom{l}{j}(\exp D)^{j}(x_i) 
=\sum _{j=0}^l(-1)^{l-j}\binom{l}{j}a_i^{j}x_i+h_{i,l}
=(a_i-1)^lx_i+h_{i,l} 
\end{align*}
for each $l\in \N $, 
where $h_{i,l}\in R[x_1,\ldots ,x_{i-1}]$. 
Since $p(D)$ is locally nilpotent, 
it follows that $a_i=1$ for each $i$. 
Hence, 
$p(D)^l(x_i)=h_{i,l}$ 
belongs to $R[x_1,\ldots ,x_{i-1}]$. 
In view of (\ref{eq:log exp}) with $z$ replaced by $D$, 
we know that $D(x_i)$ belongs to 
$R[x_1,\ldots ,x_{i-1}]$ for each $i$. 
Therefore, 
$D$ is triangular. 

(ii) Since $\exp D$ is affine by assumption, 
$p(D)(x_i)=(\exp D)(x_i)-x_i$ 
has total degree at most one 
for $i=1,\ldots ,n$. 
Hence, 
we have $\deg p(D)^l(x_i)\leq 1$ for each $l\in \N $. 
In view of (\ref{eq:log exp}) with $z$ replaced by $D$, 
we get $\deg D(x_i)\leq 1$ for each $i$. 
Therefore, 
$D$ is affine. 
\end{proof}

We mention that the method used in the proof of this lemma 
is similar to that used in 
van den Essen~\cite[Proposition 2.1.3]{Essen} 
and Nowicki~\cite[Proposition 6.1.4 (6)]{Now}.

Now, 
let us complete the proof of Theorem~\ref{thm:triangularizability}. 
If $D=0$, 
then the theorem is clear. 
Assume that $D\neq 0$. 
Let $\tilde{D}$ be the natural extension of $D$ 
to an element of $\lnd _K\Kx $. 
Since $R$ is a $\Q $-domain, 
$K$ is of characteristic zero. 
Hence, 
there exists a coordinate $f$ of $\Kx $ over $K$ 
such that $\ker \tilde{D}=K[f]$ 
by Theorem~\ref{thm:Rentschler original}. 
Multiplying by an element of $K^{\times }$, 
we may assume that $f$ belongs to $\Rx $. 
Take $\tau \in \T (R,\x)$ such that 
$|\w (\tau (f))|$ is minimal. 
Then, 
$f':=\tau (f)$ remains a coordinate of $\Kx $ over $K$, 
and is tamely reduced over $R$. 
Without loss of generality, 
we may assume that 
$\deg _{x_1}f'\geq \deg _{x_2}f'$ 
by changing $\tau $ if necessary. 
Put $D':=\tau \circ D\circ \tau ^{-1}$. 
Then, 
$D'$ kills $f'$. 
Therefore, 
by applying 
Proposition~\ref{prop:sankakuka} with $S=K$, 
we know that $D'$ is triangular 
if $\deg _{x_1}f'>\deg _{x_2}f'$, 
and affine 
if $\deg _{x_1}f'=\deg _{x_2}f'$. 
This proves the first part of 
Theorem~\ref{thm:triangularizability}.

To prove the last part, 
assume that $V(R)=K^{\times }$. 
Then, 
we have $\deg _{x_1}f'\neq \deg _{x_2}f'$ 
thanks to Lemma~\ref{lem:R/S}, 
because $f'$ is tamely reduced over $R$. 
Since $\deg _{x_1}f'\geq \deg _{x_2}f'$ by assumption, 
it follows that 
$\deg _{x_1}f'>\deg _{x_2}f'$. 
Therefore, 
$D'$ is triangular 
by Proposition~\ref{prop:sankakuka} (i), 
proving the last part. 
This completes the proof of 
Theorem~\ref{thm:triangularizability}.

\section{Nagata type wild automorphisms}
\setcounter{equation}{0}
\label{sect:triangular}

In this section, 
we study Problem~\ref{prob:triangular}. 
First, 
assume that $n=2$, 
and let $D$ be a triangular derivation of $\Rx $ over $R$. 
We consider when $\exp fD$ 
belongs to $\T (R,\x )$ 
for $f\in \ker D$. 
If $f$ belongs to $R$, 
then $fD$ is triangular. 
Hence, $\exp fD$ belongs to $\T (R,\x )$. 
If $D(x_i)=0$ for some $i\in \{ 1,2\} $, 
then $\exp fD$ belongs to $\Aut (\Rx /R[x_i])$, 
and hence belongs to $\T (R,\x )$ for any $f\in \ker D$.

Assume that $f$ does not belong to $R$, 
and $D(x_i)\neq 0$ for $i=1,2$. 
Write 
\begin{equation}\label{eq:triangular/R}
D(x_1)=a\quad\text{ and }\quad 
D(x_2)=\sum _{i=0}^lb_ix_1^i,
\end{equation}
where $l\in \Zn $, 
and $a,b_0,\ldots ,b_l\in R$ 
with $a\neq 0$ and $b_l\neq 0$. 
Then, 
\begin{equation}\label{eq:triangular kernel h/R}
h:=ax_2-\sum _{i=0}^l\frac{b_i}{i+1}x_1^{i+1}
\end{equation}
is a coordinate of $\Kx $ over $K$ 
such that $D(h)=0$. 
By the following theorem, 
it follows that $\ker D$ is contained in $K[h]$.

\begin{thm}[Rentschler]\label{thm:Rentschler}
Assume that $n=2$. 
If $D(f)=0$ holds for $D\in \lnd _k\kx \sm \zs $ 
and a coordinate $f$ of $\kx $ over $k$, 
then we have $\ker D=k[f]$. 
\end{thm}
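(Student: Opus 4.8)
The plan is to reduce the statement immediately to Rentschler's structure theorem (Theorem~\ref{thm:Rentschler original}) and then close with a short total-degree argument. The inclusion $k[f]\subseteq \ker D$ is automatic, since $D(f)=0$ forces every element of $k[f]$ into $\ker D$; the entire content is the reverse inclusion $\ker D\subseteq k[f]$, equivalently that $g\in k[f]$ for a suitable generator $g$.

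First I would apply Theorem~\ref{thm:Rentschler original} to $D\in \lnd _k\kx \sm \zs $: since $n=2$ and $k$ is of characteristic zero, there is a coordinate $g$ of $\kx $ over $k$ with $\ker D=k[g]$. Because $D(f)=0$, the polynomial $f$ lies in $\ker D=k[g]$, so I may write $f=P(g)$ for some $P\in k[t]$. The whole theorem now reduces to showing $\deg P=1$, for then $k[f]=k[P(g)]=k[g]=\ker D$. To prove this I would exploit that $f$ is a coordinate: choose $\phi \in \Aut (\kx /k)$ with $\phi (x_1)=f$ and set $g':=\phi ^{-1}(g)$, which is again a coordinate (being the image of a coordinate under an automorphism), in particular a nonconstant polynomial. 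Applying $\phi ^{-1}$ to $f=P(g)$ yields $x_1=\phi ^{-1}(f)=P(g')$.

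Comparing total degrees in $x_1=P(g')$ finishes the proof. Here I would use the multiplicativity $\deg P(g')=(\deg P)(\deg g')$, which holds because the top-degree contribution $c_{\deg P}(g')^{\deg P}$ cannot be cancelled by the lower-order terms $c_i(g')^i$ with $i<\deg P$; concretely, $(g'^{\deg P})^{\w }=(g'^{\w })^{\deg P}\neq 0$ for $\w =(1,1)$ since $\kx $ is a domain. Thus $1=\deg x_1=(\deg P)(\deg g')$, and since $f$ is nonconstant (coordinates are never constant) we have $\deg P\geq 1$, while $\deg g'\geq 1$; hence $\deg P=1$, as required. I do not anticipate a genuine obstacle: the substantive input is entirely packaged inside Theorem~\ref{thm:Rentschler original}, and the remaining step is only the observation that a coordinate lying in $k[g]$ must be a degree-one polynomial in $g$, which the transported equation $x_1=P(g')$ makes transparent. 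The one point deserving a sentence of justification is precisely the degree multiplicativity above.
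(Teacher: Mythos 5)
Your proposal is correct and follows essentially the same route as the paper: apply Theorem~\ref{thm:Rentschler original} to get $\ker D=k[g]$ for a coordinate $g$, note $f\in k[g]$, and conclude that $f$ must be a linear polynomial in $g$, whence $k[f]=k[g]=\ker D$. The only difference is that the paper asserts the linearity of $f$ in $g$ without proof, while you justify it explicitly (transporting $f=P(g)$ by $\phi ^{-1}$ to $x_1=P(g')$ and comparing total degrees via $\deg P(g')=(\deg P)(\deg g')$), and that justification is sound.
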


We remark that this theorem is a consequence 
of Theorem~\ref{thm:Rentschler original}. 
Indeed, 
we have $\ker D=k[g]$ 
for some coordinate $g$ of $\kx $ over $k$ 
by Theorem~\ref{thm:Rentschler original}. 
Hence, if $f$ is a coordinate of $\kx $ over $k$ 
belonging to $\ker D$, 
then $f$ is a linear polynomial in $g$ over $k$. 
Therefore, 
we get $k[f]=k[g]=\ker D$.

Since $f$ is an element of $\ker D$, 
it follows that $f$ belongs to $K[h]$. 
We denote by $\deg _hf$ the degree of $f$ 
as a polynomial in $h$ over $K$. 
Then, it holds that $\deg _hf=\deg _{x_2}f$, 
since $\deg _{x_2}h=1$.

We define  
\begin{equation}\label{eq:II'}
I=\{ i\in \{ 0,\ldots ,l\} \mid b_i\not\in aR\} . 
\end{equation}
Then, 
we have the following theorem. 
This theorem gives a complete solution to 
Problem~\ref{prob:triangular} 
in the case of $n=2$.

\begin{thm}\label{thm:hD}
Let $D$ be as above, 
and $f$ an element of $\ker D\sm R$. 
Then, 
$\exp fD$ belongs to $\T (R,\x )$ 
if and only if one of the following 
conditions holds$:$ 

\smallskip 

{\rm (1)} $I=\emptyset $. \quad 
{\rm (2)} $I=\zs $, 
and $b_0/a$ belongs to $V(R)$ 
or $\deg _{x_2}f=1$. 

\smallskip 

\noindent
In particular, if $V(R)=K^{\times }$, 
then $\exp fD$ belongs to 
$\T (R,\x)$ 
if and only if 
$b_i$ belongs to $aR$ for $i=1,\ldots ,l$. 
\end{thm}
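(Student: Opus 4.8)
The plan is to analyze $\exp fD$ through the coordinate $h$ that it fixes, reducing $h$ to a tamely reduced coordinate by tame automorphisms and then invoking Theorem~\ref{thm:reduced coordinate} together with Lemma~\ref{lem:exp T}. Two elementary computations drive everything. First, since $f\in\ker D$ and $a\in R$, one checks $(fD)^2(x_1)=fD(af)=f(aD(f)+fD(a))=0$, so that $(\exp fD)(x_1)=x_1+af$; as $f\notin R$ and $a\neq 0$, the value $(fD)(x_1)=af$ does \emph{not} lie in $R$. Second, $\ker(fD)=\ker D$, so $\exp fD$ fixes $h$, and for any $\tau\in\T(R,\x)$ the conjugate $\tau\circ(\exp fD)\circ\tau^{-1}$ equals $\exp(f'D')$ with $f'=\tau(f)$ and $D'=\tau\circ D\circ\tau^{-1}$; it is tame exactly when $\exp fD$ is, and it fixes $\tau(h)$. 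Recall also from the setup that $f\in K[h]$, so $\deg_h f=\deg_{x_2}f$.

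Next I would reduce $h$. Because $R\supseteq\Q$, the coefficient $b_i/((i+1)a)$ lies in $R$ precisely when $b_i\in aR$, so the elementary automorphism $x_2\mapsto x_2+\frac{c}{i+1}x_1^{i+1}$ (with $c=b_i/a\in R$) removes the top term $x_1^{i+1}$ of $h$ exactly when $b_i\in aR$. Peeling off top terms from $i=l$ downward, $h$ reduces by tame automorphisms to $ax_2$ when $I=\emptyset$, and otherwise to $h_m:=ax_2-\sum_{i=0}^m\frac{b_i}{i+1}x_1^{i+1}$ with $m=\max I$; since $b_m\notin aR$, Proposition~\ref{prop:reduced coordinate} shows $h_m$ is tamely reduced over $R$. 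Each reducing step fixes $x_1$, so $D'(x_1)=a$ persists after conjugation.

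Now the case analysis. If $\max I\geq 1$, then $\deg_{x_1}h_m=m+1>1=\deg_{x_2}h_m$, so $H(h_m)\subseteq J(R;x_1,x_2)$ by Theorem~\ref{thm:reduced coordinate}~(ii); were $\exp fD$ tame, $\exp(f'D')\in H(h_m)$ would force $f'D'$ triangular by Lemma~\ref{lem:exp T}~(i), giving $(f'D')(x_1)=af'\in R$ and contradicting $f'\notin R$. Hence $I\not\subseteq\{0\}$ yields wildness. If $I=\emptyset$, then $h$ reduces to $ax_2$, so $D'$ kills $x_2$ and $f'D'$ is triangular after interchanging the variables, whence $\exp(f'D')\in\E(R,\x)$ is tame; this is condition~(1). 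The delicate case is $I=\{0\}$, where $h$ reduces to $h_0=ax_2-b_0x_1$ with equal $x$-degrees and slope $b_0/a$. If $b_0/a\in V(R)$, I would use the matrix in $\GL(2,R)$ furnished by the definition of $V(R)$ to turn $h_0$ into a scalar multiple of a variable by an affine tame automorphism, again reducing to the case $D'(x_2)=0$ and making $\exp fD$ tame; this is the first clause of~(2). If $b_0/a\notin V(R)$, then $h_0$ is tamely reduced with equal degrees, so $H(h_0)\subseteq\Aff(R,\x)$ and Lemma~\ref{lem:exp T}~(ii) forces $f'D'$ affine whenever $\exp fD$ is tame; since $f'$ is a polynomial of degree $\deg_h f$ in the linear form $h_0$, one has $\deg(f'D')(x_1)=\deg(af')=\deg f'=\deg_{x_2}f$, so affineness holds iff $\deg_{x_2}f=1$ (and when $\deg_{x_2}f=1$ one computes $D'(x_2)=b_0$, so $f'D'$ is genuinely affine), giving exactly the second clause of~(2). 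The main obstacle is precisely this $I=\{0\}$ analysis: extracting a tame \emph{affine} conjugation from the arithmetic condition $b_0/a\in V(R)$, and matching the total degree of $f'$ to $\deg_{x_2}f=\deg_h f$ to pin down the $\deg_{x_2}f=1$ boundary.

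Finally, the ``in particular'' clause is immediate from conditions~(1) and~(2). When $V(R)=K^{\times}$ and $I=\{0\}$, the membership $0\in I$ forces $b_0\notin aR$, hence $b_0\neq 0$ and $b_0/a\in K^{\times}=V(R)$, so the first clause of~(2) holds automatically. Thus~(1) and~(2) together collapse to $I\subseteq\{0\}$, that is, $b_i\in aR$ for $i=1,\ldots,l$.
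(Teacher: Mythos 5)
Your proposal is correct and follows essentially the same route as the paper's proof: conjugate by elementary automorphisms that normalize $h$, use Proposition~\ref{prop:reduced coordinate} to certify tame-reducedness and Theorem~\ref{thm:reduced coordinate} to trap the conjugated automorphism in $J(R;x_1,x_2)$ or $\Aff (R,\x )$, then split into the cases $\max I\geq 1$, $I=\emptyset $ and $I=\zs $ exactly as the paper does. The only cosmetic deviations are that your reduced polynomial $h_m$ retains the sub-leading terms with $i<\max I$ (harmless, since only the $\w (h_m)$-leading form enters Proposition~\ref{prop:reduced coordinate}) and that you derive the wildness contradictions through Lemma~\ref{lem:exp T} at the level of the derivation $f'D'$, where the paper argues directly on $\phi _0(x_1)=x_1+af_0$ and on $\deg f_0$.
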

\begin{proof}
Define 
$\tau \in \Aut (\Rx /R[x_1])$ by 
\begin{equation}\label{eq:tau:triangular/R}
\tau (x_2)=x_2+\sum _{i\in I'}\frac{b_i}{(i+1)a}x_1^{i+1}, 
\end{equation}
where $I':=\{ 0,\ldots ,l\} \sm I$. 
Then, 
we have 
\begin{equation}\label{eq:triangular kernel h0/R}
h_0:=\tau (h)=
ax_2-\sum _{i\in I}\frac{b_i}{(i+1)}x_1^{i+1}. 
\end{equation}
Put $D_0=\tau \circ D\circ \tau ^{-1}$ 
and $f_0=\tau (f)$. 
Then, 
$\phi :=\exp fD$ is tame if and only if 
$\phi _0:=\exp f_0D_0=\tau \circ \phi \circ \tau ^{-1}$ 
is tame. 
Note that  $D_0(h_0)=\tau (D(h))=0$ and 
$D_0(x_1)=\tau (D(\tau ^{-1}(x_1)))=\tau (D(x_1))=a$. 
Hence, 
we have $\phi _0(h_0)=h_0$ 
and 
\begin{equation}\label{eq:ah'}
\phi _0(x_1)=(\exp f_0D_0)(x_1)=x_1+f_0D_0(x_1)=x_1+af_0. 
\end{equation}

Now, 
assume that $I\neq \emptyset $ and $I\neq \zs $, 
i.e., $t:=\max I\geq 1$. 
Then, we prove that $\phi $ is wild. 
Suppose to the contrary that $\phi $ is tame. 
Then, $\phi _0$ is tame. 
Since $\phi _0(h_0)=h_0$, 
it follows that $\phi _0$ belongs to $H(h_0)$. 
From (\ref{eq:triangular kernel h0/R}), 
we see that $\deg _{x_1}h_0=t+1>1=\deg _{x_2}h_0$, 
and 
\begin{equation}\label{eq:w(h_0)}
h_0^{\w (h_0)}=a(x_2-bx_1^{t+1}), 
\quad \text{where}\quad b:=\frac{b_t}{(t+1)a}. 
\end{equation}
Since $t$ is an element of $I$, 
we know that $b_t$ does not belong to $aR$, 
and so $b$ does not belong to $R$. 
Hence, 
$h_0$ is tamely reduced over $R$ by 
Proposition~\ref{prop:reduced coordinate} (ii). 
By Theorem~\ref{thm:reduced coordinate} (ii), 
it follows that $H(h_0)$ is contained in $J(R;x_1,x_2)$. 
Thus, 
$\phi _0$ belongs to $J(R;x_1,x_2)$. 
Because of 
(\ref{eq:ah'}), 
this implies that $af_0$ belongs to $R[x_1]$. 
Since 
$D_0(f_0)=0$ and $D_0(x_1)=a\neq 0$, 
we conclude that $f_0$ belongs to $R$. 
Thus, 
$f$ belongs to $R$, a contradiction. 
Therefore, 
$\phi $ is wild 
if $I\neq \emptyset $ and $I\neq \zs $.

Next, assume that $I=\emptyset $. 
Then, we have $h_0=ax_2$. 
Since $\phi _0(h_0)=h_0$, 
it follows that $\phi _0$ 
belongs to $\Aut (\Rx /R[x_2])$. 
Hence, $\phi _0$ is elementary. 
Therefore, 
$\phi $ is tame.

Finally, assume that $I=\zs $. 
Then, 
we have $h_0=ax_2-b_0x_1$ with $b_0\neq 0$. 
Hence, 
the total degree of $f_0$ is equal to 
$\deg _{h_0}f_0=\deg _hf=\deg _{x_2}f$. 
Since $h_0=\phi _0(h_0)$, 
we have 
$$
ax_2-b_0x_1=h_0=\phi _0(h_0)=\phi _0(ax_2-b_0x_1)
=a\phi _0(x_2)-b_0(x_1+af_0) 
$$
in view of (\ref{eq:ah'}). 
This gives that $\phi _0(x_2)=x_2+b_0f_0$. 
We prove that $\phi$ is tame if and only if 
$\deg _{x_2}f=1$ or $b_0/a$ belongs to $V(R)$. 
First, 
assume that $\deg _{x_2}f=1$. 
Then, 
we have $\deg f_0=1$ as mentioned. 
Hence, 
we get $\deg \phi _0(x_i)=1$ for $i=1,2$. 
Thus, 
$\phi _0$ is affine. 
Therefore, $\phi $ is tame. 
Next, 
assume that $b_0/a$ belongs to $V(R)$. 
Then, there exists 
$\sigma \in \Aff (R,\x )$ 
such that $\sigma (h_0)=cx_2$ for some $c\in R\sm \zs $. 
Put $\phi _1=\sigma \circ \phi _0\circ \sigma ^{-1}$. 
Then, 
we have 
$\phi _1(cx_2)=\sigma (\phi _0(h_0))=\sigma (h_0)=cx_2$. 
Hence, 
$\phi _1$ belongs to $\Aut (\Rx /R[x_2])$. 
Thus, $\phi _1$ is elementary. 
Therefore, $\phi $ is tame. 
Finally, 
assume that $\deg _{x_2}f\neq 1$ and 
$b_0/a$ does not belong to $V(R)$. 
Then, 
we have $\deg _{x_2}f\geq 2$, 
since $f$ is not an element of $R$ by assumption. 
Hence, we get $\deg f_0\geq 2$, 
and so $\phi _0$ is not affine 
in view of (\ref{eq:ah'}). 
Since 
$b_0/a$ does not belong to $V(R)$, 
we know by Proposition~\ref{prop:reduced coordinate} (i) 
that $h_0=ax_2-b_0x_1$ is tamely reduced over $R$. 
Hence, 
$H(h_0)$ is contained in $\Aff (R,\x )$ 
by Theorem~\ref{thm:reduced coordinate} (i). 
Thus, 
$\phi _0$ does not belong to $H(h_0)$. 
Since $\phi _0(h_0)=h_0$, 
this implies that $\phi _0$ is wild. 
Therefore, $\phi $ is wild. 
This proves that 
$\phi $ is tame if and only if 
(1) or (2) holds.

To prove the last part, 
assume that $V(R)=K^{\times }$. 
Then, we claim that 
(2) is equivalent to $I=\zs $. 
In fact, if $I=\zs $, 
then we have $b_0\neq 0$, 
and hence $b_0/a$ always belongs to $K^{\times }=V(R)$. 
Therefore, 
the first part of the theorem implies 
that $\phi$ is tame if and only if 
$I=\emptyset $ or $I=\zs $. 
By the definition of $I$, 
this condition is equivalent to the condition that 
$b_i$ belongs to $aR$ for $i=1,\ldots ,l$. 
\end{proof}

Next, assume that $n=3$, 
and let $D$ be a triangular derivation of $\kx $ over $k$. 
We consider when $\exp fD$ belongs to $\T (k,\x )$ 
for $f\in \ker D\sm k$.

(i) Assume that 
$D(x_1)=0$ and $f$ belongs to $k[x_1]$. 
Then, $fD$ is triangular. 
Hence, 
$\exp fD$ belongs to $\T (k,\x )$.

(ii) Assume that $D(x_i)=D(x_j)=0$ 
for some $1\leq i<j\leq 3$. 
Then, 
$\exp fD$ belongs to $\Aut (\kx /k[x_i,x_j])$. 
Therefore, 
$\exp fD$ belongs to $\T (k,\x )$.

(iii) Assume that $D(x_1)\neq 0$. 
Then, $D(x_1)$ belongs to $k^{\times }$ 
by the triangularity of $D$. 
Hence, 
$s:=x_1/D(x_1)$ is an element of $k[x_1]$. 
Define 
$\tau \in J(k[x_1];x_2,x_3)$ 
by 
$$
\tau (x_i)=
\sum _{l\geq 0}\frac{D^l(x_i)}{l!}(-s)^l
$$ 
for $i=2,3$. 
Then, 
we have $D(\tau (x_i))=0$ for $i=2,3$. 
In fact, 
since $D(s)=1$, 
it follows that 
$$
D\left( 
\frac{D^l(x_i)}{l!}(-s)^l
\right) =g_{l+1}-g_{l}
\text{ for each }l\geq 0,
\text{ where }
g_{l}:=l\frac{D^{l}(x_i)}{l!}(-s)^{l-1}. 
$$
Hence, 
we know that 
$D_0:=\tau ^{-1}\circ fD\circ \tau $ 
kills $x_i$ for $i=2,3$. 
Thus, 
$\exp D_0$ belongs to $\Aut (\kx /k[x_2,x_3])$. 
Therefore, 
$\exp fD$ belongs to $\T (k,\x )$.

In the rest of the case, 
tameness of $\exp fD$ is determined 
by the following theorem.

\begin{thm}\label{thm:triangular3}
Assume that $n=3$. 
Let $D$ be a triangular derivation of $\kx $ over $k$ 
such that $D(x_1)=0$ and $D(x_i)\neq 0$ for $i=2,3$, 
and $f$ an element of $\ker D\sm k[x_1]$. 
Then, 
$\exp fD$ belongs to $\T (k,\x )$ 
if and only if $\partial D(x_3)/\partial x_2$ 
belongs to $D(x_2)k[x_1,x_2]$. 
\end{thm}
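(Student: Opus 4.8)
The plan is to reduce the problem to two variables over the base ring $R:=k[x_1]$ and then invoke Theorem~\ref{thm:hD}. Since $D(x_1)=0$, the derivation $D$ is an element of $\lnd _{k[x_1]}\kx $, and because $D(x_2)\in k[x_1]$ and $D(x_3)\in k[x_1,x_2]$ it is triangular (hence locally nilpotent) in the two variables $x_2,x_3$ over $R$. As $f\in \ker D\sm k[x_1]=\ker D\sm R$ and $D(x_i)\ne 0$ for $i=2,3$, the pair $(D,f)$ satisfies exactly the standing hypotheses preceding Theorem~\ref{thm:hD}, with $x_2$ and $x_3$ playing the roles of $x_1$ and $x_2$ there. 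Moreover $(fD)(x_1)=fD(x_1)=0$, so $\exp fD$ fixes $x_1$, i.e.\ it lies in $\Aut (\kx /k[x_1])$.

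First I would transfer between tameness over $k$ and tameness over $k[x_1]$. Applying the affine (hence tame) automorphism interchanging $x_1$ and $x_3$, Theorem~\ref{thm:tame23} yields the analogous identity $\Aut (\kx /k[x_1])\cap \T (k,\x )=\T (k[x_1],\{ x_2,x_3\} )$. Since $\exp fD$ belongs to $\Aut (\kx /k[x_1])$, this gives the crucial equivalence
$$
\exp fD\in \T (k,\x )\iff \exp fD\in \T (k[x_1],\{ x_2,x_3\} ).
$$
Now $R=k[x_1]$ is a PID, so by the remark following Lemma~\ref{lem:V(R)} we have $V(R)=K^{\times }$, where $K=k(x_1)$. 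Thus the last assertion of Theorem~\ref{thm:hD}, applied with $a:=D(x_2)$ and $D(x_3)=\sum _{i=0}^l b_ix_2^i$ (so $b_i\in k[x_1]$ and $l=\deg _{x_2}D(x_3)$), shows that $\exp fD\in \T (k[x_1],\{ x_2,x_3\} )$ if and only if $b_i\in aR=D(x_2)k[x_1]$ for $i=1,\ldots ,l$.

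It then remains to translate this divisibility condition on the coefficients into the stated condition on the derivative. Computing $\partial D(x_3)/\partial x_2=\sum _{i=1}^l i\,b_ix_2^{i-1}$, I would observe that this polynomial lies in $D(x_2)k[x_1,x_2]=a\,k[x_1,x_2]$ if and only if each coefficient $i\,b_i$ lies in $a\,k[x_1]$. Here the hypothesis that $k$ has characteristic zero is precisely what is needed: for $1\le i\le l$ each $i$ is a unit of $k$, so $i\,b_i\in a\,k[x_1]$ is equivalent to $b_i\in a\,k[x_1]$. Hence $\partial D(x_3)/\partial x_2\in D(x_2)k[x_1,x_2]$ is equivalent to the condition $b_i\in aR$ for $i=1,\ldots ,l$ obtained above, which completes the equivalence. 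The only points requiring genuine care are the justification of the $x_1$-version of Theorem~\ref{thm:tame23} by conjugation with a variable permutation and the verification that the constant term $b_0$ causes no discrepancy, since it is absent from $\partial D(x_3)/\partial x_2$ and is likewise excluded from the range $i=1,\ldots ,l$; the substantive content is carried entirely by Theorems~\ref{thm:tame23} and \ref{thm:hD}.
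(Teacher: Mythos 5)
Your proposal is correct and follows essentially the same route as the paper: reduce to the two variables $x_2,x_3$ over $R=k[x_1]$, invoke the last part of Theorem~\ref{thm:hD} using $V(k[x_1])=k(x_1)^{\times }$ (PID), translate the coefficient condition $b_i\in D(x_2)k[x_1]$ for $i\geq 1$ into the condition on $\partial D(x_3)/\partial x_2$ via characteristic zero, and transfer tameness with Theorem~\ref{thm:tame23}. Your explicit justification of the $k[x_1]$-version of Theorem~\ref{thm:tame23} by conjugating with the tame transposition of $x_1$ and $x_3$ is a point the paper leaves implicit, but it is not a different argument.
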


By the triangularity of $D$, 
we may regard $D(x_3)$ as a polynomial in $x_2$ over $k[x_1]$. 
Then, 
$\partial D(x_3)/\partial x_2$ 
belongs to $D(x_2)k[x_1,x_2]$ 
if and only if the coefficient of 
$x_2^i$ in $D(x_3)$ 
belongs to $D(x_2)k[x_1]$ 
for each $i\geq 1$.

Theorem~\ref{thm:triangular3} 
is derived from Theorem~\ref{thm:hD} 
with the aid of Theorem~\ref{thm:tame23} as follows. 
Let $R=k[x_1]$, 
$y_i=x_{i+1}$ for $i=1,2$ 
and $\y =\{ y_1,y_2\} $. 
Then, 
we may regard $D$ 
as a triangular derivation of $\Ry $ over $R$. 
By assumption, 
$f$ does not belong to $R=k[x_1]$, 
and $D(y_i)=D(x_{i+1})\neq 0$ for $i=1,2$. 
Hence, 
$D$ fulfills the assumption of Theorem~\ref{thm:hD}. 
Since $k[x_1]$ is a PID, 
we have $V(k[x_1])=k(x_1)^{\times }$. 
Thus, 
we know by the last part of Theorem~\ref{thm:hD} 
that $\phi :=\exp fD$ 
belongs to $\T (R,\y )$ if and only if 
the coefficient of $y_1^i=x_2^i$ in $D(y_2)=D(x_3)$ 
belongs to $D(y_1)R=D(x_2)k[x_1]$ 
for each $i\geq 1$. 
This condition is equivalent to the condition that 
$\partial D(x_3)/\partial x_2$ 
belongs to $D(x_2)k[x_1,x_2]$ as remarked. 
Thanks to Theorem~\ref{thm:tame23}, 
$\phi $ belongs to $\T (R,\y )=\T (k[x_1],\{ x_2,x_3\} )$ 
if and only if $\phi $ belongs to $\T (k,\x )$, 
since $\phi $ is an element of $\Aut (\Kx /k[x_1])$. 
Therefore, 
$\phi $ belongs to $\T (k,\x )$ 
if and only if $\partial D(x_3)/\partial x_2$ 
belongs to $D(x_2)k[x_1,x_2]$. 
This proves Theorem~\ref{thm:triangular3}.

As an application of 
Theorem~\ref{thm:triangular3}, 
we describe all the wild automorphisms 
of $\kx $ over $k$ of the form 
$\exp fD$ for some triangular derivation $D$ of $\kx $ over $k$ 
and $f\in \ker D$. 
Let $\Lambda $ be the set of 
$(g,h)\in (k[x_1]\sm \zs )\times (x_2k[x_1,x_2]\sm \zs )$ 
as follows: 

\smallskip 

\noindent (i) 
$g$ and $h$ have no common factor; 

\noindent (ii) 
$g$ is a monic polynomial in $x_1$; 

\noindent (iii) 
$\partial ^2h/\partial x_2^2$ 
does not belong to $gk[x_1,x_2]$. 

\smallskip 

\noindent
Here, 
by ``no common factor", 
we mean ``no non-constant common factor". 
For each $(g,h)\in \Lambda $, 
we define a triangular derivation $T_{g,h}$ 
of $\kx $ over $k$ by 
\begin{equation}\label{eq:T_{g,h}}
T_{g,h}(x_1)=0,\quad 
T_{g,h}(x_2)=g,\quad 
T_{g,h}(x_3)=-\frac{\partial h}{\partial x_2}. 
\end{equation}
Then, 
we have 
$$
T_{g,h}(gx_3+h)
=gT_{g,h}(x_3)+
\frac{\partial h}{\partial x_2}T_{g,h}(x_2)=0. 
$$
Hence, $k[x_1,gx_3+h]$ is contained in $\ker T_{g,h}$. 
Take any $f\in k[x_1,gx_3+h]\sm k[x_1]$. 
Then, 
it follows from Theorem~\ref{thm:triangular3} that 
$$
\Phi _{g,h}^f:=\exp fT_{g,h}
$$ 
does not belong to $\T (k,\x )$, 
since 
$\partial T_{g,h}(x_3)/\partial x_2=-\partial ^2h/\partial x_2^2$ 
does not belong to $T_{g,h}(x_2)k[x_1,x_2]=gk[x_1,x_2]$ by (iii).

\begin{prop}\label{prop:triangular family}
Assume that $n=3$. 
Let $D$ be a triangular derivation of $\kx $ over $k$ 
and $f\in \ker D$ such that $\exp fD$ 
does not belong to $\T (k,\x )$. 
Then, 
there exist unique $(g,h)\in \Lambda $ and 
$f_0\in k[x_1,gx_3+h]\sm k[x_1]$ 
such that $fD=f_0T_{g,h}$. 
\end{prop}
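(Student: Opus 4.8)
The plan is to first reduce to the situation of Theorem~\ref{thm:triangular3}, then read off $(g,h)$ and $f_0$ from a greatest‑common‑divisor normalization of $D$, and finally settle uniqueness via the kernel. Since $D$ is triangular, $D(x_1)\in k$, $a:=D(x_2)\in k[x_1]$ and $b:=D(x_3)\in k[x_1,x_2]$. If $f\in k[x_1]$, or $D(x_1)\neq 0$, or two of the $D(x_i)$ vanish, then $\exp fD$ is tame by the case analysis (i)--(iii) preceding the statement; hence the hypothesis forces $D(x_1)=0$, $a\neq 0$, $b\neq 0$ and $f\in\ker D\sm k[x_1]$. In particular $D$ meets the hypotheses of Theorem~\ref{thm:triangular3}, so wildness of $\exp fD$ is equivalent to $\partial b/\partial x_2\notin a\,k[x_1,x_2]$; I will use this last condition to verify property (iii) of $\Lambda$.

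For the construction, I would set $d:=\gcd(a,b)$, which lies in $k[x_1]$ because every divisor of $a\in k[x_1]$ lies in $k[x_1]$, and let $\gamma:=\lc(a/d)$. Define $g:=a/(\gamma d)$, let $h\in x_2k[x_1,x_2]$ be the unique polynomial with $\partial h/\partial x_2=-b/(\gamma d)$ (available since $d\mid b$ and $k$ has characteristic zero), and put $f_0:=\gamma d f$. Because $g=\frac{1}{\gamma d}D(x_2)$ and $-\partial h/\partial x_2=\frac{1}{\gamma d}D(x_3)$, one has $T_{g,h}=\frac{1}{\gamma d}D$ as derivations over $k(x_1)$, whence $f_0T_{g,h}=fD$. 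Property (ii) holds by the choice of $\gamma$, and property (i) holds because $\gcd(a/d,b/d)=1$ forces $\gcd(g,\partial h/\partial x_2)=1$, which for $g\in k[x_1]$ and $h\in x_2k[x_1,x_2]$ is equivalent to $\gcd(g,h)=1$ (in characteristic zero $g$ divides $h$ iff $g$ divides $\partial h/\partial x_2$). Property (iii) amounts to $\partial b/\partial x_2\notin a\,k[x_1,x_2]$, which holds by the previous paragraph.

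It remains to place $f_0$ in the right ring. Writing $w:=gx_3+h$, a direct computation gives $T_{g,h}(w)=0$, and since $g$ is a unit in $k(x_1)$ the element $w$ is a coordinate of $k(x_1)[x_2,x_3]$ over $k(x_1)$; by Theorem~\ref{thm:Rentschler} this yields $\ker\tilde T_{g,h}=k(x_1)[w]$, where $\tilde T_{g,h}$ denotes the natural extension. As $f_0=\gamma d f\in\ker\tilde T_{g,h}$ and $f_0\in\kx$, the point is to prove the intersection identity $\kx\cap k(x_1)[w]=k[x_1,w]$. I expect this to be the main obstacle, and I would prove it by a reduction‑modulo‑$p$ argument: given $u=\sum_j c_j(x_1)w^j$ with $c_j\in k(x_1)$ and $u\in\kx$, if some $c_j$ had a pole at an irreducible $p\in k[x_1]$, then clearing the minimal power of $p$ and reducing modulo $p$ would produce a nontrivial $k[x_1]/(p)$‑linear dependence among the powers of $\bar w=\bar g x_3+\bar h$; but $\bar w$ is transcendental over $k[x_1]/(p)$ in every case ($\bar g\neq 0$ when $p\nmid g$, and $\bar h$ is a nonconstant polynomial in $x_2$ when $p\mid g$, using $\gcd(g,h)=1$ and $h\in x_2k[x_1,x_2]$), a contradiction. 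Thus $f_0\in k[x_1,w]$, and $f_0\notin k[x_1]$ since otherwise $f=f_0/(\gamma d)\in k(x_1)\cap\kx=k[x_1]$.

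For uniqueness, suppose also $fD=f_0'T_{g',h'}$ with $(g',h')\in\Lambda$ and $f_0'\in k[x_1,g'x_3+h']\sm k[x_1]$. Then $f_0T_{g,h}=f_0'T_{g',h'}$ with $f_0,f_0'\neq 0$ forces the extended derivations to have equal kernels, so $k(x_1)[gx_3+h]=k(x_1)[g'x_3+h']$; as both generators are coordinates of $k(x_1)[x_2,x_3]$ over $k(x_1)$, they satisfy an affine relation $g'x_3+h'=\lambda(gx_3+h)+\mu$ with $\lambda\in k(x_1)^{\times}$ and $\mu\in k(x_1)$. Reading off the $x_3$‑coefficient yields $g'=\lambda g$, and setting $x_2=0$ (so that $h,h'$ vanish) gives $\mu=0$, hence $gh'=g'h$. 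Since $g,g'$ are monic and $\gcd(g,h)=\gcd(g',h')=1$, this forces $g=g'$, then $h=h'$, and finally $f_0=fa/g=f_0'$. I would close by noting that the only computational content is the Gauss‑type gcd facts, the equivalence $\gcd(g,h)=\gcd(g,\partial h/\partial x_2)$, and the identity $T_{g,h}=\frac{1}{\gamma d}D$.
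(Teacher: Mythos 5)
Your proof is correct, and its skeleton matches the paper's: reduce via the case analysis (i)--(iii) and Theorem~\ref{thm:triangular3} to the situation $D(x_1)=0$, $D(x_2),D(x_3)\neq 0$, $f\notin k[x_1]$; extract a factor in $k[x_1]$ to produce a monic $g$ and an antiderivative $h\in x_2k[x_1,x_2]$; verify the three conditions defining $\Lambda $ (with (iii) coming from the wildness criterion); identify $\ker T_{g,h}=k[x_1,gx_3+h]$ using Theorem~\ref{thm:Rentschler} over $k(x_1)$; and settle uniqueness with coprimality and monicity. You differ in two technical executions, both legitimate. First, you normalize by $d=\gcd\bigl(D(x_2),D(x_3)\bigr)$, whereas the paper divides by the largest $f_1\in k[x_1]$ dividing both $D(x_2)$ and the antiderivative $h_1$ of $-D(x_3)$; these agree precisely because of the characteristic-zero lemma you state: for $h\in x_2k[x_1,x_2]$ and $g\in k[x_1]$, the coefficient of $x_2^{j-1}$ in $\partial h/\partial x_2$ is $jh_j$ with $j\neq 0$ in $k$, so $g\mid h$ if and only if $g\mid \partial h/\partial x_2$. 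Second, where the paper proves $T_{g,h}$ irreducible (from $g$ and $\partial h/\partial x_2$ having no common factor) and then invokes the ``kernel criterion'' to conclude $\ker T_{g,h}=k[x_1,gx_3+h]$, you instead prove the intersection identity $\kx \cap k(x_1)[w]=k[x_1,w]$ by hand, clearing the $p$-adic pole and reducing modulo an irreducible $p\in k[x_1]$, with transcendence of $\bar{w}$ over $k[x_1]/(p)$ in the case $p\mid g$ resting on exactly the same coprimality $\gcd(g,h)=1$; this makes the argument self-contained at the cost of some length (your pole-clearing should be read in the localization $k[x_1]_{(p)}$, so that denominators prime to $p$ are harmless -- your sketch is consistent with this). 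Your uniqueness route also differs: the paper reads off $f_1g_1=f_2g_2$ and $f_1\partial h_1/\partial x_2=f_2\partial h_2/\partial x_2$ from the components and integrates $\partial (g_1h_2)/\partial x_2=\partial (g_2h_1)/\partial x_2$ to get $g_1h_2=g_2h_1$ directly, while you pass through equality of the extended kernels and an affine change of generator; both arrive at $gh'=g'h$ and the same monic-plus-coprime endgame. The only omission is cosmetic: record that $h\neq 0$ (immediate from $D(x_3)\neq 0$, and also forced by condition (iii) of $\Lambda $), so that $(g,h)$ genuinely lies in $\Lambda $.
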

\begin{proof}
First, 
we prove the existence of $g$, $h$ and $f_0$. 
Due to Theorem~\ref{thm:triangular3} and 
the discussion before this theorem, 
we know that 
$D(x_1)=0$, 
$f$ does not belong to $k[x_1]$, 
and $\partial D(x_3)/\partial x_2$ 
does not belong to $D(x_2)k[x_1,x_2]$. 
Moreover, 
$D(x_2)$ and $D(x_3)$ are nonzero elements of 
$k[x_1]$ and $k[x_1,x_2]$, 
respectively. 
Hence, 
we can construct $h_1\in x_2k[x_1,x_2]$ 
such that $\partial h_1/\partial x_2=-D(x_3)$ 
by integrating $-D(x_3)$ in $x_2$. 
Take the highest degree polynomial 
$f_1\in k[x_1]\sm \zs $ such that 
$f_1$ divides both $D(x_2)$ and $h_1$, 
and that $g:=D(x_2)/f_1$ is a monic polynomial. 
Then, 
$h:=h_1/f_1$ belongs to $x_2k[x_1,x_2]$, 
and $g$ and $h$ have no common factor 
by the maximality of $\deg _{x_1}f_1$. 
By the definition of $h$, $f_1$ and $h_1$, 
we have 
$$
f_1\frac{\partial ^2h}{\partial x_2^2}
=\frac{\partial ^2(f_1h)}{\partial x_2^2}
=\frac{\partial ^2h_1}{\partial x_2^2}
=-\frac{\partial D(x_3)}{\partial x_2}. 
$$ 
Since this polynomial 
does not belong to $D(x_2)k[x_1,x_2]=f_1gk[x_1,x_2]$, 
it follows that 
$\partial ^2h/\partial x_2^2$ 
does not belong to $gk[x_1,x_2]$. 
Thus, 
$(g,h)$ belongs to $\Lambda $. 
Moreover, 
we have $D=f_1T_{g,h}$,	
since 
$$
D(x_1)=0,\quad 
D(x_2)=f_1g\quad\text{and}\quad
D(x_3)=-\frac{\partial h_1}{\partial x_2}
=-f_1\frac{\partial h}{\partial x_2}. 
$$
Set $f_0=ff_1$. 
Then, 
we get $fD=f_0T_{g,h}$. 
Furthermore, 
$f_0$ belongs to $\ker D\sm k[x_1]$, 
since $f$ and $f_1$ belong to $k[x_1]\sm \zs $ 
and $\ker D\sm k[x_1]$, respectively. 
Because $\ker D=\ker T_{g,h}$, 
it remains only to show that 
$\ker T_{g,h}=k[x_1,gx_3+h]$. 
We prove this by means of the 
``kernel criterion" (cf.~\cite[Proposition~5.12]{Fbook}). 
Since $gx_3+h$ is a coordinate of 
$k(x_1)[x_2,x_3]$ over $k(x_1)$ 
such that $T_{g,h}(gx_3+h)=0$, 
we know by Theorem~\ref{thm:Rentschler} that 
the kernel of the extension of 
$T_{g,h}$ to $k(x_1)[x_2,x_3]$ 
is generated by $gx_3+h$ over $k(x_1)$. 
Hence, 
$\ker T_{g,h}$ is contained in $k(x_1,gx_3+h)$. 
Since $g$ and $h$ have no common factor, 
and are elements of $k[x_1]\sm \zs $ 
and $x_2k[x_1,x_2]\sm \zs $, respectively, 
it follows that $g$ and $\partial h/\partial x_2$ 
have no common factor. 
Hence, 
$T_{g,h}(x_2)$ and $T_{g,h}(x_3)$ 
have no common factor. 
This implies that 
$T_{g,h}$ is irreducible, 
i.e., 
$T_{g,h}(\kx )$ is contained in 
no proper principal ideal of $\kx $. 
In this situation, 
we may conclude that 
$\ker T_{g,h}=k[x_1,gx_3+h]$ 
by virtue of the ``kernel criterion". 
Therefore, 
$f_0$ belongs to $k[x_1,gx_3+h]\sm k[x_1]$. 
This proves the existence of $g$, $h$ and $f_0$.

To prove the uniqueness, 
assume that $f_1T_{g_1,h_1}=f_2T_{g_2,h_2}$ 
for some $(g_i,h_i)\in \Lambda $ 
and $f_i\in k[x_1,g_ix_3+h_i]\sm k[x_1]$ 
for $i=1,2$. 
Then, 
we have 
$$
f_1g_1
=f_2g_2\quad \text{and}\quad 
f_1\frac{\partial h_1}{\partial x_2}
=f_2\frac{\partial h_2}{\partial x_2}. 
$$
Since 
$g_1$ and $g_2$ are elements of $k[x_1]$, 
and $f_1$ and $f_2$ are nonzero, 
this gives that 
$$
\frac{\partial g_1h_2}{\partial x_2}
=g_1\frac{\partial h_2}{\partial x_2}
=(g_1f_2^{-1})f_2\frac{\partial h_2}{\partial x_2}
=(g_2f_1^{-1})f_1\frac{\partial h_1}{\partial x_2}
=g_2\frac{\partial h_1}{\partial x_2}
=\frac{\partial g_2h_1}{\partial x_2}. 
$$
Because $g_1h_2$ and $g_2h_1$ belong to $x_2k[x_1,x_2]$, 
it follows that $g_1h_2=g_2h_1$. 
Since $g_i$ and $h_i$ have no common factor 
for $i=1,2$, 
we may write $g_1=cg_2$ and $h_1=ch_2$, 
where $c\in k^{\times }$. 
Then, 
we have $c=1$ by the assumption that 
$g_1$ and $g_2$ are monic polynomials. 
Thus, we get $g_1=g_2$ and $h_1=h_2$, 
and therefore $f_1=f_2g_2/g_1=f_2$. 
This proves the uniqueness of $g$, $h$ and $f_0$. 
\end{proof}

\section{Affine locally nilpotent derivations}
\setcounter{equation}{0}
\label{sect:affine lnd}

Similarly to Problem~\ref{prob:triangular}, 
we can consider the following problem.

\begin{problem}\label{prob:affine}\rm
Assume that $D\in \lnd _R\Rx $ is affine. 
When does $\exp fD$ belong to $\T (R,\x )$ 
for $f\in \ker D\sm R$? 
\end{problem}

We remark that this problem 
is reduced to Problem~\ref{prob:triangular} 
when $n=2$ and $V(R)=K^{\times }$. 
Indeed, 
if $D\in \lnd _R\Rx $ is affine, 
then $\exp D$ belongs to $\T (R,\x )$. 
Hence, 
$D$ is tamely triangularizable 
due to the last part of 
Theorem~\ref{thm:triangularizability}.

The following is the main result of this section.

\begin{thm}\label{thm:affine lnd}
Assume that $n=2$, 
and that $D\in \lnd _R\Rx $ is affine, 
and $\psi ^{-1}\circ D\circ \psi $ is not triangular 
for any $\psi \in \Aff (R,\x )$. 
Then, 
$\exp fD$ belongs to $\T (R,\x )$ 
if and only if $f$ belongs to $R$ 
for $f\in \ker D$. 
\end{thm}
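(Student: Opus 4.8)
The plan is to dispatch the ``if'' direction in one line and reduce the ``only if'' direction to the structure theory of the kernel of $D$, splitting on the degree of a generator of $\ker\tilde D$ over $K$. If $f\in R$ then $fD$ is again affine, so $\exp fD\in\Aff(R,\x)\subseteq\T(R,\x)$; that is the ``if'' part. For the converse I assume $\exp fD\in\T(R,\x)$ with $f\in\ker D$ and $f\neq 0$, aiming to conclude $f\in R$. First I would record the structural dichotomy for affine $D$. Its linear part $M$ (the coefficient matrix of the degree-$1$ terms of $D(x_1),D(x_2)$) is nilpotent, since $D$ is locally nilpotent, and moreover $M\neq 0$: the case $M=0$ forces $D(x_1),D(x_2)\in R$, making $D$ triangular and hence affinely triangularizable, contrary to hypothesis. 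Extending $D$ to $\tilde D\in\lnd_K\Kx$ and applying Theorem~\ref{thm:Rentschler original}, I get $\ker\tilde D=K[g]$ for a coordinate $g$ of $\Kx$ over $K$, which I scale into $\Rx$. The Jordan form of $M$ then yields two cases: $\deg g=2$ when the constant part of $D$ is not in $\mathrm{im}\,M$, and $\deg g=1$ (with $D=\ell E$ for a constant vector field $E$ and an affine form $\ell\in\ker E$) otherwise.

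In the degree-$2$ case the top form of $g$ is $a\ell^2$, where $\ell$ is the linear form characterized by $D(\ell)\in R$; hence $\deg_{x_1}g=\deg_{x_2}g=2$ and $g^{\w(g)}=a(x_1-bx_2)^2$ with $b$ the coefficient ratio of $\ell$. The key observation is that $D$ is affinely triangularizable exactly when some $R$-affine coordinate lies in the $\ell$-direction, i.e.\ exactly when $b\in V(R)$ by Lemma~\ref{lem:V(R)}; so the hypothesis gives $b\notin V(R)$, and Proposition~\ref{prop:reduced coordinate}(i) then shows $g$ is tamely reduced over $R$. Since $\exp fD$ fixes $g$, it lies in $H(g)$, which is contained in $\Aff(R,\x)$ by Theorem~\ref{thm:reduced coordinate}(i). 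Now Lemma~\ref{lem:exp T}(ii) forces $fD$ to be affine; as $M\neq 0$ gives $\deg D(x_i)\geq 1$ for some $i$, the identity $(fD)(x_i)=fD(x_i)$ yields $\deg f+1\leq 1$, whence $f\in R$.

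In the degree-$1$ case I write $D=\ell E$ with $E=w_1\,\partial/\partial x_1+w_2\,\partial/\partial x_2$ and $\ell$ an affine form having $\deg_{x_2}\ell=1$, so $\ker D=\ker E=\Rx\cap K[g]$ with $g=w_2x_1-w_1x_2$. If $w_1=0$ or $w_2=0$ then $D$ is already triangular and excluded, so $w_1,w_2\neq 0$. Then $\exp fD=\exp(FE)$ with $F:=f\ell\in\ker E\sm R$, and $E$ is precisely the triangular derivation of Theorem~\ref{thm:hD} with $a=w_1$, $b_0=w_2$ and $l=0$. Non-triangularizability translates into $w_2/w_1\notin V(R)$, which makes $I=\{0\}$ and rules out condition~(1) as well as the $V(R)$-alternative of condition~(2); hence tameness of $\exp(FE)$ forces $\deg_{x_2}F=1$, i.e.\ $\deg_{x_2}f=0$. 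Combining $f\in R[x_1]$ with $f\in\ker E=\Rx\cap K[g]$ and the fact that $g$ genuinely involves $x_2$ forces $f\in R$.

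The hard part, and the place where both hypotheses are indispensable, is excluding the degenerate situation in which $\ker\tilde D$ is generated by a scalar multiple of a tame coordinate or of a single variable: there $\exp fD$ would be tame for an entire polynomial family of $f\in\ker D$, and the theorem would be false. I expect to rule this out uniformly through the $V(R)$-criterion — in the degree-$1$ case a linear generator is a tame coordinate only when its coefficient ratio lies in $V(R)$, and in the degree-$2$ case the tame-reducedness of $g$ (again governed by $b\notin V(R)$, via Proposition~\ref{prop:reduced coordinate}(i)) prevents $g$ from being tamely transformed to a linear form, so that $|\w(g)|>1$ is forced. In both cases the degenerate configurations coincide exactly with the affinely triangularizable $D$, so they are removed by hypothesis. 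A secondary technical point I must pin down carefully is the classification of affine locally nilpotent $D$ (nilpotency of $M$ and the two Jordan cases, carried out over $R$ rather than only over $K$), since it underlies the entire case split.
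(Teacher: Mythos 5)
Your overall skeleton agrees with the paper's: the paper also splits on whether $\delta :=D(p)$ vanishes, where $p=\alpha _1x_1+\alpha _2x_2$ is the distinguished linear form read off from the nilpotent linear part, and your quadratic case ($\delta \neq 0$) is essentially its argument --- a kernel element with equal partial degrees and leading form a multiple of $p^2$, tamely reduced because $\alpha _1/\alpha _2\notin V(R)$ (Proposition~\ref{prop:reduced coordinate}), so that $\exp fD\in H(g)\subseteq \Aff (R,\x )$ by Theorem~\ref{thm:reduced coordinate}~(i), and then Lemma~\ref{lem:exp T}~(ii) plus a degree count. One soft spot there: you assert ``affinely triangularizable if and only if $b\in V(R)$'' without proof; only the implication $b\in V(R)\Rightarrow$ triangularizable is needed, and it does require an argument (the paper completes $\alpha p$ to $\psi \in \Aff (R,\x )$ using the $V(R)$-data and uses local nilpotency to see $\psi ^{-1}\circ D\circ \psi $ is triangular), but this is routine to supply.

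The genuine gap is your degree-one case ($\delta =0$). You reduce to Theorem~\ref{thm:hD} by writing $fD=FE$ with $E=w_1\partial /\partial x_1+w_2\partial /\partial x_2$ ``the triangular derivation with $a=w_1$, $b_0=w_2$'' and $F=f\ell $; but Theorem~\ref{thm:hD} is a statement over $R$, so this needs $w_1,w_2\in R$ and $F\in \ker E\sm R$ with $F\in \Rx $, i.e.\ an $R$-integral factorization of $D$. Over a UFD a content argument provides one, but the theorem is asserted for an arbitrary $\Q $-domain, and there such a factorization can fail, and it tends to fail exactly under the theorem's hypothesis: producing $\gamma \in K^{\times }$ with $\gamma \ell \in \Rx $ and $\alpha _1/\gamma ,\alpha _2/\gamma \in R$ essentially forces an ideal of representatives of $\alpha _1,\alpha _2$ to be principal, i.e.\ $\alpha _1/\alpha _2\in V(R)$ by Lemma~\ref{lem:V(R)}~(ii). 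Concretely, over $R=\Q [a,b,c]/(ac-b^2)$ put $\ell =ax_1+bx_2+b/a$ and $D(x_1)=b\ell $, $D(x_2)=-a\ell $: this $D\in \lnd _R\Rx $ is affine with $\delta =0$ and is not affinely triangularizable (a triangularizing change of coordinates would need a unimodular row proportional to $(a,b)$, forcing the nonprincipal ideal $(a,b)$ to be principal), yet for $f=a\ell +1\in \ker D\sm R$ every candidate $E$ has $E(x_1)=a'$, $E(x_2)=-a'a/b$ with $a'a/b\in R$, and one checks that $F=fb\ell /a'$ lies in $\Rx $ for no such $a'$: the divisibility constraints force $a'$ to generate the nonprincipal ruling $(b,c)$. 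The paper sidesteps all of this: in the $\delta =0$ case it does not factor $D$ at all, but takes the linear kernel element $h=t\alpha _2p$, which lies in $\Rx $ because its coefficients are entries of the matrix of the linear part, shows $h$ is tamely reduced (again from $\alpha _1/\alpha _2\notin V(R)$), and runs the same $H(h)\subseteq \Aff (R,\x )$ argument as in the quadratic case. Replacing your reduction to Theorem~\ref{thm:hD} by this uniform argument closes the gap; as written, your proof of the ``only if'' direction is only valid when $R$ admits the needed factorizations, e.g.\ when $R$ is a UFD.
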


The following is a key lemma.

\begin{lem}\label{lem:affine lnd}
Let $D$ be as in Theorem~$\ref{thm:affine lnd}$. 
Then, the following assertions hold$:$ 

\noindent{\rm (i)} 
We have $\deg _{x_i}D(x_j)=1$ for every $i,j\in \{ 1,2\} $. 

\noindent{\rm (ii)} 
There exists $h\in \Rx $ satisfying the following conditions$:$ 

\noindent{\rm (1)} $D(h)=0$. 

\noindent{\rm (2)} $1\leq \deg _{x_1}h=\deg _{x_2}h\leq 2$. 

\noindent{\rm (3)} $h$ is a coordinate of $\Kx $ over $K$. 

\noindent{\rm (4)} $h$ is tamely reduced over $R$. 
\end{lem}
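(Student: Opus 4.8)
The plan is to read everything off the linear part of $D$. Writing $D(x_j)=a_{j1}x_1+a_{j2}x_2+b_j$ with $a_{ij},b_j\in R$, the restriction of the $K$-extension $\tilde D$ to the space $K+Kx_1+Kx_2$ of polynomials of degree at most one is a locally nilpotent $K$-linear endomorphism, hence nilpotent; so the matrix $A=(a_{ij})$ is nilpotent over $K$, giving $a_{11}+a_{22}=0$ and $a_{11}^2=-a_{12}a_{21}$ (vanishing of trace and determinant), together with $A^2=0$. These relations drive the whole argument. For (i) I would argue by contradiction: if one of the four entries vanished, the relations would force a second to vanish and $A$ to become strictly triangular either in the given basis or after the swap $\iota$; since $D$ is affine, this makes $D$ or $\iota^{-1}\circ D\circ\iota$ triangular, contradicting the hypothesis (recall $\iota\in\Aff(R,\x)$). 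Hence all $a_{ij}\neq0$, i.e. $\deg_{x_i}D(x_j)=1$.

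For (ii), since $\det A=0$ and $A\neq0$ I set $p=a_{21}x_1-a_{11}x_2$; the relations cancel the linear terms of $D(p)$, so $D(p)=c:=a_{21}b_1-a_{11}b_2\in R$ and $\deg_{x_1}p=\deg_{x_2}p=1$ with both coefficients nonzero. If $c=0$ I take $h=p$. If $c\neq0$ I first note that $\ker\tilde D$ contains no nonzero linear form (such a form would have coefficient vector spanning $\ker A^{\mathrm T}=K(a_{21},-a_{11})$, and its constant obstruction is then a nonzero multiple of $c$), and I look for $h=p^2+ex_1+fx_2$: because $\ker D_{\mathrm{lin}}=K[p]$ the degree-two part is forced to be a multiple of $p^2$, the vanishing of the linear terms of $D(h)$ is a rank-one system in $(e,f)$ that is consistent exactly because $\det A=0$, and the leftover constant obstruction $eb_1+fb_2$ is killed by moving along the one-parameter family of solutions, using $c\neq0$. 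Clearing denominators gives $h\in\Rx$ with $\deg_{x_1}h=\deg_{x_2}h=2$, settling (1) and (2).

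For (3) I use $h\in\ker\tilde D=K[g]$ with $g$ a coordinate by Rentschler (Theorem~\ref{thm:Rentschler original}). In the case $c=0$, $h=p$ is a nonzero linear form, hence a coordinate; in the case $c\neq0$, $\ker\tilde D$ has minimal nonconstant degree $2$ (no linear elements), so $\deg g=2$ and the degree-two element $h$ is affine in $g$, again a coordinate.

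The main work, and the main obstacle, is (4). In both cases the leading form is $h^{\w(h)}=a_{21}^{\,m}\bigl(x_1-(a_{11}/a_{21})x_2\bigr)^m$ with $m\in\{1,2\}$ and equal $x_1,x_2$-degrees, so by Proposition~\ref{prop:reduced coordinate}(i) tame reducedness is equivalent to $a_{11}/a_{21}\notin V(R)$, and I would prove this by contradiction. Suppose $a_{11}/a_{21}=\alpha/\beta$ with $\alpha R+\beta R=R$; then $y_1:=\beta x_1-\alpha x_2$ is an $R$-coordinate with $D(y_1)=\beta b_1-\alpha b_2\in R$, and completing $\{y_1\}$ to an $R$-linear coordinate system one finds, using $A^2=0$, that $D(y_2)=(a_{21}\Delta/\beta^2)\,y_1+(\text{const})$ with $\Delta\in R^{\times}$, so $D$ is affinely triangularizable \emph{as soon as} $a_{21}/\beta^2\in R$. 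This integrality is the crux: it does not follow from $\alpha/\beta\in V(R)$ alone, but it follows once the constraint $a_{12}=-a_{11}^2/a_{21}\in R$ is fed in. Indeed $a_{11}\beta=a_{21}\alpha$ gives $a_{21}\alpha^2=-a_{12}\beta^2$, while $\alpha R+\beta R=R$ forces $\alpha^2R+\beta^2R=R$; writing $1=u\alpha^2+v\beta^2$ then yields $a_{21}=\beta^2(v a_{21}-u a_{12})\in\beta^2R$, so $a_{21}/\beta^2\in R$. Thus $D$ would be affinely triangularizable, contradicting the hypothesis, and therefore $a_{11}/a_{21}\notin V(R)$ and $h$ is tamely reduced. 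As a consistency check, over a PID one has $V(R)=K^{\times}$, so the hypothesis can hold for no such $D$, in agreement with Corollary~\ref{lem:R/S}.
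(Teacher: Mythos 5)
Your proposal is correct and takes essentially the same route as the paper: both arguments exploit nilpotency of the linear part (trace and determinant zero) to show all four entries of $A$ are nonzero, prove that the slope of the kernel form $p$ lies outside $V(R)$ by deriving an affine triangularization of $D$ otherwise, and then take $h=p$ when $D(p)=0$ and $h=p^2+(\text{linear correction})$ when $D(p)\neq 0$, finishing with Proposition~\ref{prop:reduced coordinate}. The only deviations are cosmetic: your integrality step $a_{21}\in\beta^2R$ in (4) is actually automatic, since conjugating $D$ by the affine $R$-automorphism built from $y_1$ keeps the resulting derivation $\Rx$-valued (which is how the paper concludes triangularity there), and your Rentschler detour in (3) replaces the paper's one-line verification $K[h,p]=K[x_1,p]=\Kx$.
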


By assuming this lemma, 
we can prove Theorem~\ref{thm:affine lnd} as follows. 
The ``if" part of the theorem is clear. 
We prove the ``only if" part. 
Assume that $\phi :=\exp fD$ belongs to $\T (R,\x )$ 
for some $f\in \ker D$. 
Take $h\in \Rx $ as in Lemma~\ref{lem:affine lnd} (ii). 
Then, we have $\phi (h)=h$ by (1). 
Hence, 
$\phi $ belongs to $H(h)$, 
since $\phi $ belongs to $\T (R,\x )$ by assumption. 
Because $h$ satisfies (2), (3) and (4), 
we know by Theorem~\ref{thm:reduced coordinate} (i) that 
$H(h)$ is contained in $\Aff (R,\x )$. 
Thus, 
$\phi $ belongs to $\Aff (R,\x )$. 
By Lemma~\ref{lem:exp T} (ii), 
it follows that $fD$ is affine. 
By Lemma~\ref{lem:affine lnd} (i), 
this implies that $f$ belongs to $R$. 
Therefore, 
the ``only if" part of Theorem~\ref{thm:affine lnd} 
follows from Lemma~\ref{lem:affine lnd}.

Let us prove Lemma~\ref{lem:affine lnd}. 
Write 
$$
(D(x_1),D(x_2))=(x_1,x_2)A+(b_1,b_2), 
$$ 
where $A\in M(2,R)$ and $b_1,b_2\in R$. 
Then, $A$ is a nilpotent matrix. 
Since $D$ is not triangular, 
we have $A\neq O$. 
Hence, we know from linear algebra that 
$$
P^{-1}AP=
\left(\begin{array}{@{\,}cc@{\,}}
0 & 1 \\ 0 & 0
\end{array}\right) 
$$
for some $P\in {\it GL}(2,K)$. 
From this, we see that $A$ has the form 
$$
A=t\left(\begin{array}{@{\,}cc@{\,}}
\alpha _1\alpha _2& -\alpha _1^2 \\
\alpha _2^2 & -\alpha _1\alpha _2
\end{array}\right) 
$$
for some $t\in K^{\times }$ and $\alpha _1,\alpha _2\in K$. 
Then, 
$\alpha _1$ is nonzero, 
for otherwise $D$ is triangular 
if $x_1$ and $x_2$ are interchanged. 
Similarly, 
$\alpha _2$ is also nonzero. 
Hence, 
we have $\deg _{x_i}D(x_j)=1$ for every $i,j\in \{ 1,2\} $. 
This proves Lemma~\ref{lem:affine lnd} (i). 
We show that $\alpha _1/\alpha _2$ does not belong to $V(R)$. 
Suppose to the contrary that 
$\alpha _1/\alpha _2$ belongs to $V(R)$. 
Then, 
there exist $\alpha \in K^{\times }$ 
and $\beta _1,\beta _2\in R$ such that 
$(\alpha \alpha _1)\beta _2-(\alpha \alpha _2)\beta _1=1$, 
and $\alpha \alpha _1$ and $\alpha \alpha _2$ belong to $R$. 
Put $p=\alpha _1x_1+\alpha _2x_2$, 
and define $\psi \in \Aff (R,\x )$ by 
$$
\psi (x_1)=\alpha p,\quad 
\psi (x_2)=\beta _1x_1+\beta _2x_2. 
$$
Then, 
we have $\delta :=D(p)=\alpha _1b_1+\alpha _2b_2$. 
Hence, 
$$
(\psi ^{-1}\circ D\circ \psi )(x_1)
=\psi ^{-1}(D(\alpha p))=\alpha \delta 
$$
is a constant. 
Since $D':=\psi ^{-1}\circ D\circ \psi $ 
is locally nilpotent, 
this implies that $D'(x_2)$ belongs to $R[x_1]$. 
Thus, 
$D'$ is triangular, 
a contradiction. 
Therefore, 
$\alpha _1/\alpha _2$ does not belong to $V(R)$. 
When $\delta =0$, 
we define $h=t\alpha _2p$. 
Then, 
$h$ belongs to $\Rx $, 
since $t\alpha _2\alpha _i$ is an entry of 
$A$ for $i=1,2$. 
Since $\delta =0$, 
we get $D(h)=t\alpha _2\delta =0$, 
proving (1). 
Since $\alpha _1$ and $\alpha _2$ are nonzero, 
we see that $h$ is a coordinate of $\Kx $ over $K$ such that 
$\deg _{x_1}h=\deg _{x_2}h=1$, 
proving (3) and (2). 
Since $\alpha _1/\alpha _2$ does not belong to $V(R)$, 
we know by 
Proposition~\ref{prop:reduced coordinate} (ii) 
that $h$ is tamely reduced over $R$, 
proving (4). 
Thus, 
$h$ satisfies all the conditions of 
Lemma~\ref{lem:affine lnd} (ii). 
Next, assume that $\delta \neq 0$. 
We define 
$$
h=t\alpha _1\delta \left( 
x_1-\frac{b_1}{\delta }p-\frac{t\alpha _2}{2\delta }p^2
\right) . 
$$
Then, $h$ belongs to $\Rx $, 
since $t\alpha _1\delta $, $t\alpha _1p$ 
and $t^2\alpha _1\alpha _2p^2$ belong to $\Rx $. 
Since 
$$
D(h)=t\alpha _1\delta \left( 
(t\alpha _2p+b_1)-\frac{b_1}{\delta }\delta 
-\frac{t\alpha _2}{2\delta }(2p)\delta 
\right) =0, 
$$
we get (1). 
It is easy to check that 
$\deg _{x_i}h=\deg _{x_i}p^2=2$ for $i=1,2$, 
proving (2). 
Since $K[h,p]=K[x_1,p]=\Kx $, 
we see that $h$ is a coordinate of $\Kx $ over $K$, 
proving (3). 
Since $h^{\w (h)}$ is equal to $p^2$ 
up to a nonzero constant multiple, 
and since $\alpha _1/\alpha _2$ does not belong to $V(R)$, 
we know that $h$ is tamely reduced over $R$ by 
Proposition~\ref{prop:reduced coordinate} (ii). 
Hence, $h$ satisfies (4). 
Therefore, 
$h$ satisfies all the conditions of 
Lemma~\ref{lem:affine lnd} (ii). 
This completes the proof of 
Lemma~\ref{lem:affine lnd}, 
and thereby completing the proof of 
Theorem~\ref{thm:affine lnd}.

\chapter{Tame intersection theorem}
\label{chap:atit}

\section{Main result}
\setcounter{equation}{0}
\label{sect:invariant}

Assume that $n=2$. 
Let $S$ be an over domain of $R$, 
and $f\in \Rx $ a coordinate of $\Sx $ over $S$ 
which is tamely reduced over $R$. 
As we have seen in the previous chapter, 
the tame intersection
$$
H(f)=\Aut (\Rx /R[f])\cap \T (R,\x ) 
$$
plays important roles in proving the wildness of automorphisms. 
This motivates us to study $H(f)$ in detail.

Throughout this chapter, 
we assume that $R$ contains $\Z $ 
unless otherwise stated. 
Under this assumption, 
we investigate coordinates $f\in \Rx $ of $\Sx $ over $S$ 
which are tamely reduced over $R$, and for which 
$H(f)\neq \{ \id _{\Rx } \} $. 
Our goal is to give a complete classification of such $f$'s, 
and to describe the concrete structures of $H(f)$'s.

We mention that 
$\Aut (\Rx /R[f])$ itself 
is an infinite group for the following reason. 
Recall that, 
for each $D\in \lnd _R\Rx $, 
we mean by $\exp D$ the exponential automorphism for 
the natural extension of $D$ to $\bar{R}:=\Q \otimes _{\Z }\Rx $. 
Since $D$ is locally nilpotent, 
we may find $m\in \N $ such that $D^m(x_i)=0$ for $i=1,2$. 
Then, 
$\phi :=\exp m!D$ 
induces an element of $\Aut (\Rx /R)$, 
since $\phi (x_i)$ 
and $\phi ^{-1}(x_i)=(\exp -m!D)(x_i)$ 
belong to $\Rx $ for $i=1,2$. 
Now, 
define $\Delta _f\in \Der _R\Rx $ by 
\begin{equation}\label{eq:Delta _f:tr}
\Delta _f(x_1)=-\frac{\partial f}{\partial x_2}
\quad \text{and}\quad 
\Delta _f(x_2)=\frac{\partial f}{\partial x_1}. 
\end{equation}
Then, 
we have $\Delta _f(f)=0$. 
We show that $\Delta _f$ is locally nilpotent. 
It suffices to check that $\Delta _f$ 
extends to a locally nilpotent derivation of $\Sx $. 
Since $f$ is a coordinate of $\Sx $ over $S$, 
there exists $\psi \in \Aut (\Sx /S)$ 
such that $\psi (x_1)=f$. 
Then, 
we have $\Delta _f(\psi (x_1))=\Delta _f(f)=0$, 
and 
$$
\Delta _f(\psi (x_2))
=-\frac{\partial f}{\partial x_2}
\frac{\partial \psi (x_2)}{\partial x_1}
+\frac{\partial f}{\partial x_1}
\frac{\partial \psi (x_2)}{\partial x_2}
=\det J\psi . 
$$
Since $\det J\psi $ belongs to $S^{\times }$, 
it follows that $\Delta _f^2(\psi (x_2))=0$. 
Hence, 
$\Delta _f$ extends 
to a locally nilpotent derivation 
of $\Sx =S[\psi (x_1),\psi (x_2)]$. 
Thus, 
$\Delta _f$ 
is locally nilpotent. 
As remarked, 
there exists $c\in \N $ such that 
$\phi :=\exp c\Delta _f$ 
induces an element of $\Aut (\Rx /R)$. 
Then, 
$\phi $ belongs to $\Aut (\Rx /R[f])$, 
since $\Delta _f(f)=0$. 
Because $\phi $ has an infinite order, 
we conclude that 
$\Aut (\Rx /R[f])$ is an infinite group

Now, 
let $K$ be the field of fractions of $R$. 
Then, $K$ is of characteristic zero 
by the assumption that $R$ contains $\Z $. 
Thanks to the following lemma, 
we may assume that 
$S=K$ without loss of generality.

\begin{lem}\label{lem:R/Sc}
If $f\in \Rx $ is a coordinate of $\Sx $ over $S$, 
then $f$ is a coordinate of $\Kx$ over $K$. 
\end{lem}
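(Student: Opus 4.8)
The plan is to compare the Jacobian derivation $\Delta _f$ introduced in \eqref{eq:Delta _f:tr} over two base fields and to invoke Rentschler's theorem on each level. Write $L$ for the field of fractions of $S$, so that $K\subseteq L$, and note both are of characteristic zero since $R$ contains $\Z $. First I would check that $\Delta _f$ extends to a nonzero locally nilpotent derivation over each of $K$ and $L$. Nonvanishing holds because $f$ is not constant and $K$ has characteristic zero, so $df\neq 0$ forces one of the partial derivatives of $f$ to be nonzero. Local nilpotence over $L$ follows exactly as in the discussion preceding the lemma: taking $\psi \in \Aut (\Lx /L)$ with $\psi (x_1)=f$, one has $\Delta _f(\psi (x_1))=0$ and $\Delta _f(\psi (x_2))=\det J\psi \in L^{\times }$, hence $\Delta _f^2(\psi (x_2))=0$. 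Moreover each $\Delta _f^l(x_i)$ lies in $\Rx $, so its vanishing in $\Lx $ (for large $l$) forces vanishing in $\Kx $ as well, giving local nilpotence over $K$ too.

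Next I would apply Rentschler's theorem in its two forms. Over $K$, Theorem~\ref{thm:Rentschler original} provides a coordinate $g$ of $\Kx $ over $K$ with $\ker \Delta _f=K[g]$; since $\Delta _f(f)=0$, the element $f$ lies in $K[g]$, so $f=P(g)$ for some $P\in K[t]$. Over $L$, the polynomial $f$ is a coordinate of $\Lx $ over $L$ by hypothesis and is killed by $\Delta _f$, so Theorem~\ref{thm:Rentschler} yields $\ker \Delta _f=L[f]$ when $\Delta _f$ is regarded over $L$. Because $g$ already lies in the kernel computed over $K$, it lies in the kernel computed over $L$ as well, whence $g\in L[f]$ and $g=Q(f)$ for some $Q\in L[t]$.

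Finally I would combine these to obtain $f=P(Q(f))$. As $f$ is a coordinate of $\Lx $ over $L$, it is transcendental over $L$, so the substitution $t\mapsto f$ is injective on $L[t]$ and the identity $P(Q(t))=t$ holds in $L[t]$; comparing degrees gives $\deg _tP\cdot \deg _tQ=1$, hence $\deg _tP=1$. Thus $P(t)=at+b$ with $a\in K^{\times }$ and $b\in K$, so that $K[f]=K[g]$. Since $g$ is a coordinate of $\Kx $ over $K$, so is $f$, which is the desired conclusion. I expect the main obstacle to be the bookkeeping across the extension $K\subseteq L$: namely verifying that the extensions of $\Delta _f$ remain nonzero and locally nilpotent over both fields, and carefully justifying that the kernel computed over $K$ is contained in the kernel computed over $L$ so that Rentschler's theorem can be invoked coherently at both levels.
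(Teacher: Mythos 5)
Your proof is correct, but it takes a genuinely different route from the paper. The paper argues by a minimal counterexample: it considers the set $U$ of elements of $\Kx $ that are coordinates of $\Lx $ over $L$ but not of $\Kx $ over $K$, picks $f\in U$ with $|\w (f)|$ minimal, observes that minimality forces $f$ to be tamely reduced over $K$, writes $f^{\w (f)}=a(x_i-bx_j^l)^m$ with $a,b\in L^{\times }$ via Proposition~\ref{prop:slope} applied over $L$, and then extracts $a\in K^{\times }$ and (using characteristic zero, through the coefficient $mab$ of $x_i^{m-1}x_j^l$) $b\in K^{\times }=V(K)$, contradicting tame reducedness by Proposition~\ref{prop:reduced coordinate}. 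You instead exploit the locally nilpotent derivation $\Delta _f$ of (\ref{eq:Delta _f:tr}), whose local nilpotence over $L$ the paper has already verified just before the lemma, descend it to $K$ by noting $\Delta _f^l(x_i)\in \Rx $, and apply Rentschler's theorem twice: Theorem~\ref{thm:Rentschler original} over $K$ gives $\ker \Delta _f=K[g]$ with $g$ a coordinate, Theorem~\ref{thm:Rentschler} over $L$ gives $\ker \Delta _f=L[f]$, and the inclusion of the $K$-kernel in the $L$-kernel yields $f=P(g)$, $g=Q(f)$, whence $P(Q(t))=t$ and $\deg _tP=1$, so $K[f]=K[g]$. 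All the steps check out: $\Delta _f\neq 0$ since $f\notin K$ and $K$ has characteristic zero; the degree count rules out constant $P$ or $Q$; and the standing hypothesis $\Z \subseteq R$ enters exactly where it must, since both forms of Rentschler's theorem are genuinely characteristic-zero statements (consistent with the paper's characteristic-two counterexample at the end of the section, where your argument would fail already at $\Delta _f\neq 0$ and at Rentschler). What each approach buys: the paper's proof stays entirely inside the weighted-degree machinery of Chapter~\ref{chap:trc} -- the same minimality trick recurs in the proof of Lemma~\ref{lem:k[x^t,y]}~(i) -- and isolates the characteristic-zero hypothesis in one explicit coefficient; your proof is shorter granted the classical LND facts, reuses the $\Delta _f$ computation the paper makes anyway, and replaces the leading-form analysis by a transparent kernel comparison across the base extension $K\subseteq L$.
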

\begin{proof}
Let $U$ be the set of $f\in \Kx $ such that 
$f$ is not a coordinate of $\Kx $ over $K$, 
but is a coordinate of $\Lx $ over $L$, 
where $L$ is the field of fractions of $S$. 
Then, $\tau (U)$ is contained in $U$ 
for each $\tau \in \Aut (\Kx /K)$. 
Now, 
suppose to the contrary that 
there exists a coordinate $f'\in \Rx $ 
of $\Sx $ over $S$ 
which is not a coordinate of $\Kx $ over $K$. 
Then, 
$f'$ is a coordinate of $\Lx $ over $L$. 
Hence, $f'$ belongs to $U$. 
Thus, $U$ is not empty. 
Take $f\in U$ so that $|\w (f)|$ is minimal. 
Then, 
$\tau (f)$ belongs to $U$ 
for each $\tau \in \Aut (\Kx /K)$. 
Hence, 
we get $|\w (\tau (f))|\geq |\w (f)|$ 
by the minimality of $|\w (f)|$. 
Thus, 
$f$ is tamely reduced over $K$. 
Since $f$ is not a coordinate of $\Kx $ over $K$, 
we see that $f$ is not a linear polynomial. 
Because $f$ is not a constant, 
we get $|\w (f)|>1$. 
By applying Proposition~\ref{prop:slope} with $\kappa =L$, 
we may write $f^{\w (f)}=a(x_i+bx_j^l)^m$, 
where $a,b\in L^{\times }$, 
$i,j\in \{ 1,2\} $ with $i\neq j$ 
and $l,m\in \N $. 
Then, 
$ax_i^m$ and $mabx_i^{m-1}x_j^l$ 
belong to $\Kx $, 
since $f$ is an element of $\Kx $. 
Hence, 
$a$ belongs to $K^{\times }$. 
Since $m\geq 1$, 
and $K$ is of characteristic zero, 
it follows that $b$ belongs to $K^{\times }$. 
Thus, $b$ belongs to $K$ and $V(K)$. 
Thanks to 
Proposition~\ref{prop:reduced coordinate}, 
this implies that $f$ is not tamely reduced over $K$, 
a contradiction. 
Therefore, 
every coordinate $f\in \Rx $ of $\Sx $ over $S$ 
is a coordinate of $\Kx $ over $K$. 
\end{proof}

When $R$ does not contain $\Z $, 
a statement similar to Lemma~\ref{lem:R/Sc} 
does not hold in general. 
We will give a counterexample 
at the end of the next section.

Let us define five types of elements of $\Rx $. 
In (1) and (2) of the following definition, 
$c\in R\sm \zs $ denotes 
the leading coefficient of the polynomial 
$g\in R[x_1]$. 
It is easy to check that the following 
five types of polynomials are 
coordinates of $\Kx $ over $K$.

\begin{definition}\label{def:invariant coord}
Let $f$ be an element of $\Rx $.

\noindent (1) 
We say that $f$ is of {\it type I} if 
$$
f=f_1:=ax_2+g 
$$
for some $a\in R\sm \zs $ and $g\in R[x_1]$ 
such that $\deg _{x_1}g\geq 2$, 
and $c$ does not belong to $aR$.

\noindent (2) 
We say that $f$ is of {\it type II} if 
$$
f=f_2:=a'x_1+h 
$$
for some $a'\in K^{\times }$ 
and $h\in \Kx $ as follows:

\noindent
(a) There exist $\zeta \in R\sm \{ 1\} $ 
and $e\geq 2$ such that $\zeta ^e=1$ 
and $a'$ belongs to $R':=R[(\zeta -1)^{-1}]$.

\noindent
(b) There exists 
$g\in R[x_1]$ such that $\deg _{x_1}g\geq 2$, 
$c$ does not belong to $(\zeta -1)R$ 
and $h$ belongs $R'[y_2^{e}]\sm R'$, 
where 
$y_2:=(\zeta -1)x_2+g$.

\noindent (3) 
We say that $f$ is of {\it type III} if 
$$
f=f_3:=a_1x_1+a_2x_2+b
$$
for some $a_1,a_2\in R\sm \zs $ and $b\in R$ 
such that $a_1/a_2$ does not belong to $V(R)$.

\noindent (4) 
We say that $f$ is of {\it type IV} if 
$$
f=f_4:=a\tau _4(x_1)^2+\tau _4(x_2) 
$$
for some $a\in R\sm \zs $, 
and $\tau _4\in \Aff (K,\x )$ 
for which there exist $\alpha _1,\alpha _2\in K^{\times }$ 
such that $\tau _4(x_1)=\alpha _1x_1+\alpha _2x_2$ 
and $\alpha _1/\alpha _2$ does not belong to $V(R)$.

\noindent (5) 
We say that $f$ is of {\it type V} if 
$$
f=f_5:=\tau _5(x_2+g') 
$$
for some $\tau _5\in \Aff (K,\x )$ 
and $g'\in K[x_1]\sm \zs $ 
with $\deg _{x_1}g'\geq 3$ as follows: 

\noindent (a) 
$g'$ belongs to $x_1^{e'}K[x_1^{e'}]$ for some $e'\geq 2$. 

\noindent (b) 
Let $\alpha _i,\beta _i\in K$ for $i=0,1,2$ 
be such that 
$$
\tau _5(x_1)=\alpha _1x_1+\alpha _2x_2+\alpha _0
\quad \text{and}\quad 
\tau _5(x_2)=\beta _1x_1+\beta _2x_2+\beta _0. 
$$
Then, we have 
$\alpha _i\neq 0$ for $i=1,2$, and 
$\alpha _1/\alpha _2$ does not belong to $V(R)$. 

\noindent (c) 
There exists $\zeta '\in R\sm \{ 1\} $ 
such that $(\zeta ')^{e'}=1$, 
and 
$$
\gamma _{i,j}(\zeta '):=\frac{(\zeta '-1)\alpha _i\beta _j}
{\alpha _1\beta _2-\alpha _2\beta _1}
$$ 
belongs to $R$ for $i=0,1,2$ and $j=1,2$. 
\end{definition}

By definition, 
no element of $\Rx $ is 
of type III or IV or V if $V(R)=K^{\times }$. 
If $R$ is a $\Q $-domain 
and $\zeta \in R\sm \{ 1\} $ 
is a root of unity, 
then $\zeta -1$ is a nonzero element of 
$\Q [\zeta ]=\Q (\zeta )$, 
and hence belongs to $R^{\times }$. 
Thus, 
no element of $\Rx $ is of type II 
by (b) of (2). 
If $R$ is a $\Q $-domain, 
then (b) and (c) of (5) imply that 
$\beta _i\neq 0$ for $i=1,2$. 
In fact, 
if $\beta _1=0$, 
then we have 
$\gamma _{2,2}(\zeta ')
=(\zeta '-1)\alpha _2/\alpha _1$. 
Since $\zeta '-1$ belongs to $R^{\times }$, 
it follows from (c) that 
$\alpha _2/\alpha _1$ belongs to $R$. 
Hence, $\alpha _1/\alpha _2$ belongs to $V(R)$, 
a contradiction to (b). 
Similarly, 
we get a contradiction if $\beta _2=0$, 
since 
$\gamma _{1,1}(\zeta ')=(1-\zeta ')\alpha _1/\alpha _2$ 
belongs to $R$.

Put $\lambda =\deg _{x_1}g$, 
$\lambda _1=\deg _{y_2}h$ 
and $\lambda _2=\deg _{x_1}g'$, 
where we regard $h$ 
as a polynomial in $y_2$ over $R'$. 
Then, 
we have $\lambda \geq 2$, $\lambda _1\geq 2$, 
$\lambda _2\geq 3$ and 
\begin{equation}\label{eq:w(f_i)}
\w (f_i)=(\deg _{x_2}f_i,\deg _{x_1}f_i)
=\left\{ 
\begin{array}{cl}
(1,\lambda )& \text{if }i=1 \\
(\lambda _1,\lambda \lambda _1)& \text{if }i=2 \\
(1,1)& \text{if }i=3 \\
(2,2)& \text{if }i=4 \\
(\lambda _2,\lambda _2)& \text{if }i=5. 
\end{array}
\right. 
\end{equation}
Let $c_1\in R'$ and $c_2\in K$ be 
the leading coefficients of $h$ and $g'$, 
respectively. 
Then, we have 
\begin{equation}\label{eq:f_i^w}
f_i^{\w (f_i)}=\left\{ 
\begin{array}{cl}
ax_2+cx_1^{\lambda }& \text{if }i=1 \\
c_1\bigl((\zeta -1)x_2+cx_1^{\lambda }\bigr)^{\lambda _1}& 
\text{if }i=2 \\
a_1x_1+a_2x_2& \text{if }i=3 \\
a(\alpha _1x_1+\alpha _2x_2)^2& \text{if }i=4 \\
c_2(\alpha _1x_1+\alpha _2x_2)^{\lambda _2}
& \text{if }i=5. 
\end{array}
\right. 
\end{equation}
By assumption, 
$c/a$ does not belong to $R$ when $i=1$, 
$c/(\zeta -1)$ 
does not belong to $R$ when $i=2$, 
$a_1/a_2$ does not belong to $V(R)$ when $i=3$, 
and $\alpha _1/\alpha _2$ 
does not belong to $V(R)$ when $i=4,5$. 
Hence, 
we know by Proposition~\ref{prop:reduced coordinate}  
that $f_i$ is tamely reduced over $R$ 
for $i=1,\ldots ,5$. 
This implies that, 
if $f_i$ is a coordinate of $\Rx $ over $R$, 
then $f_i$ is wild. 
Indeed, 
if a tame coordinate of $\Rx $ over $R$ 
is tamely reduced over $R$, 
then it must be a linear polynomial in $x_i$ over $R$ 
for some $i\in \{ 1,2\} $.

The following is the main theorem of this chapter.

\begin{thm}\label{thm:Gamma (f)}
Assume that $n=2$. 
Let $R$ be a domain containing $\Z $, 
$K$ the field of fractions of $R$, 
and $f\in \Rx $ a coordinate of $\Kx $ over $K$. 
Then, 
$f$ is of one of the types I through V 
if and only if the following three conditions hold$:$ 

\noindent{\rm (A)} $H(f)$ is not equal to $\{ \id _{\Rx }\} $. 

\noindent{\rm (B)} $f$ is tamely reduced over $R$. 

\noindent{\rm (C)} $\deg _{x_1}f\geq \deg _{x_2}f\geq 1$. 

\end{thm}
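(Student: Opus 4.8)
The plan is to prove the two implications separately. The ``only if'' direction is largely a verification, while the substance lies in the ``if'' direction. Assume first that $f$ is of one of the types I--V. Conditions (B) and (C) are already in hand from the preparatory computations: the values of $\w (f_i)$ recorded in (\ref{eq:w(f_i)}) give $\deg _{x_1}f_i\geq \deg _{x_2}f_i\geq 1$ in each case, which is (C), and the leading forms $f_i^{\w (f_i)}$ in (\ref{eq:f_i^w}) together with the non-membership of $c/a$, $c/(\zeta -1)$, $a_1/a_2$, $\alpha _1/\alpha _2$ in $R$ or $V(R)$ are exactly what Proposition~\ref{prop:reduced coordinate} needs to conclude that each $f_i$ is tamely reduced over $R$, which is (B). So only (A) requires a construction.

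\textbf{The witnesses for (A).} For each type I would exhibit one nontrivial element of $H(f)$. For types I, III and IV I would use the locally nilpotent derivation $\Delta _f$ of (\ref{eq:Delta _f:tr}): it is triangular for type I (indeed $\Delta _f(x_1)=-a\in R$ and $\Delta _f(x_2)=\partial g/\partial x_1\in R[x_1]$) and affine for types III and IV (since $\deg f_i\leq 2$), so for a suitable $c\in \N $ the automorphism $\exp c\Delta _f$ lies in $\E (R,\x )$ or $\Aff (R,\x )$, hence in $\T (R,\x )$; it fixes $f$ because $\Delta _f(f)=0$ and is nontrivial of infinite order. For type II the elementary automorphism $\sigma $ with $\sigma (x_1)=x_1$ and $\sigma (x_2)=\zeta x_2+g$ satisfies $\sigma (y_2)=\zeta y_2$, and therefore fixes $f=a'x_1+h$ because $h\in R'[y_2^e]$ and $\zeta ^e=1$. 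For type V the affine automorphism $\sigma =\tau _5\circ \delta \circ \tau _5^{-1}$, with $\delta (x_1)=\zeta 'x_1$ and $\delta (x_2)=x_2$, fixes $f=\tau _5(x_2+g')$ because $g'\in x_1^{e'}K[x_1^{e'}]$ and $(\zeta ')^{e'}=1$; here conditions (b) and (c) of Definition~\ref{def:invariant coord} are precisely what force the coefficients $\gamma _{i,j}(\zeta ')$ of $\sigma $ to lie in $R$, so that $\sigma \in \Aff (R,\x )\subseteq \T (R,\x )$. In every case the witness is nontrivial, so $H(f)\neq \{ \id _{\Rx }\} $.

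\textbf{The classification.} For the converse, assume (A), (B), (C). Since $f$ is a coordinate of $\Kx $ over $K$ with $\deg _{x_1}f\geq \deg _{x_2}f\geq 1$, we have $|\w (f)|>1$, so Proposition~\ref{prop:slope} gives $f^{\w (f)}=a(x_i-bx_j^l)^m$. I would split on (C). If $\deg _{x_1}f>\deg _{x_2}f$ then $l\geq 2$ and $j=1$, and Theorem~\ref{thm:reduced coordinate}(ii) forces $H(f)\subseteq J(R;x_1,x_2)$; if $\deg _{x_1}f=\deg _{x_2}f$ then $l=1$ and Theorem~\ref{thm:reduced coordinate}(i) forces $H(f)\subseteq \Aff (R,\x )$. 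In either case (A) supplies a witness $\phi \in H(f)\setminus \{ \id _{\Rx }\} $ whose shape is constrained by the containment; comparing leading forms in $\phi (f)=f$ by means of Proposition~\ref{prop:reduced polynomials} shows that the scalars by which $\phi $ acts on the relevant coordinates are roots of unity. The decisive dichotomy is whether the linear part of $\phi $ is unipotent or has an eigenvalue that is a nontrivial root of unity. In the unipotent case the formal logarithm yields a triangular or affine element of $\lnd _R\Rx $ killing $f$ (the $\log $--$\exp $ correspondence underlying Lemma~\ref{lem:exp T}); applying Rentschler's Theorem~\ref{thm:Rentschler} over $K$ to its kernel and tracking $\deg _{x_i}f$ yields type I (unequal degrees), type III ($\deg f=1$) or type IV ($\deg f=2$). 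In the remaining case $\phi $ has finite order and diagonalizes over $K$; conjugating by an affine $\tau _5$, or passing to the twisted variable $y_2$, turns $f$ into $v+g'(u)$ or $a'x_1+h$, and invariance under the scaling forces $g'\in x_1^{e'}K[x_1^{e'}]$ or $h\in R'[y_2^e]$, giving type V (equal degrees $\geq 3$) or type II (unequal degrees). Throughout, the $V(R)$- versus $R$-membership conditions are read off from tamely-reducedness via Proposition~\ref{prop:reduced coordinate}, while the arithmetic conditions ($\zeta ,\zeta '\in R$, $\gamma _{i,j}(\zeta ')\in R$) come from the requirement that $\phi $ be defined over $R$.

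\textbf{Main obstacle.} The hard part is this reconstructive step and, above all, its completeness: one must verify that no configuration beyond the five listed can occur. The delicate points are controlling the lower-order terms of $f$ (not merely $f^{\w (f)}$) through the equation $\phi (f)=f$, working with the affine coordinate changes $\tau _4,\tau _5\in \Aff (K,\x )$ that are defined only over $K$ while insisting that the resulting data be defined over $R$, and matching the degree data of (\ref{eq:w(f_i)}) to the output of each branch. This bookkeeping, rather than any single conceptual step, is where the weight of the argument lies.
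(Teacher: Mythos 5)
Your overall route is the paper's own: the same witnesses for (A) (the exponentials $\exp c\Delta _{f}$ for types I, III, IV as in Proposition~\ref{prop:H_i infinite}; the elementary map $x_2\mapsto \zeta x_2+g$ for type II; the conjugated scaling $\phi _{\zeta '}=\tau _5\circ (x_1\mapsto \zeta 'x_1)\circ \tau _5^{-1}$ for type V, with (c) guaranteeing definition over $R$), the same confinement of $H(f)$ into $J(R;x_1,x_2)$ or $\Aff (R,\x )$ via Proposition~\ref{prop:slope} and Theorem~\ref{thm:reduced coordinate}, and the same endgame (logarithm plus Theorem~\ref{thm:Rentschler} in the unipotent case, twisted variables and fixed rings in the torsion case) --- this is exactly the content of the paper's Proposition~\ref{prop:inv}. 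However, the reconstructive step as you have set it up contains a genuine gap, precisely at the point you flag as decisive.

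Your dichotomy --- the witness is unipotent, or its linear part has an eigenvalue that is a nontrivial root of unity --- is not exhaustive, and your justification (``comparing leading forms shows that the scalars are roots of unity'') controls only one of the two eigenvalues. Writing $\phi (x_1)=u_1x_1+v$, $\phi (x_2)=u_2x_2+g$ in the unequal-degree case, the constancy of the top $x_2$-coefficient of $f$ gives only $u_2^m=1$; nothing constrains $u_1\in R^{\times }$, which may have infinite order, and this really happens: over $R=\Z [1/3]$ the polynomial $f=2x_2+x_1^2$ is of type I, and $\phi (x_1)=3x_1$, $\phi (x_2)=x_2-4x_1^2$ is a nontrivial element of $H(f)$ that is neither unipotent nor has any torsion eigenvalue other than $1$. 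The paper closes this with two ingredients your sketch omits: Lemma~\ref{lem:eigenvalue}, a Jacobian-similarity argument showing that the linear part of any automorphism fixing a coordinate has $1$ as an eigenvalue (so $u_1=1$ or $u_2=1$), and the fixed-ring computations (Lemma~\ref{lem:k[x^t,y]} (ii) together with Lemma~\ref{lem:e-n} and Theorem~\ref{thm:Rentschler}) showing that an infinite-order eigenvalue forces $\Kx ^{\phi }$ to be so small that $\deg _{x_2}f\geq 2$ is contradicted. Since that exclusion is unavailable when $\deg _{x_2}f=1$, the paper never routes types I and III through the witness at all: for $m=1$ they are read off directly from $f^{\w (f)}=a(x_2-bx_1^l)$, $f\in \Rx $ and Proposition~\ref{prop:reduced coordinate}, with (A) playing no role --- whereas your plan of extracting types I and III from the witness dichotomy would fail against the example above. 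A minor further slip: the logarithm of a unipotent witness lies in $\lnd _K\Kx $, not $\lnd _R\Rx $ (the kernel analysis is carried out over $K$, as your appeal to Rentschler in fact presupposes).
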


As discussed in the next section, 
there exist many elements of $\Rx $ 
which are coordinates of $\Kx $ over $K$ 
satisfying (B) and (C), 
but are of none of the types I through V\null. 
For such $f\in \Rx $, 
we have $H(f)=\{ \id _{\Rx }\} $ 
due to Theorem~\ref{thm:Gamma (f)}. 
Since $\Aut (\Rx /R[f])$ itself is an infinite group, 
this means the existence of 
a large number of wild automorphisms.

When $R$ does not contain $\Z $, 
a statement similar to the ``if" part of 
Theorem~\ref{thm:Gamma (f)} 
does not hold in general. 
We will give a counterexample 
at the end of the next section.

Next, 
we define a subset
$H_i$ of $\Aut (\Rx/R)$ 
for the polynomial $f_i$ 
in Definition~\ref{def:invariant coord} 
for $i=1,\ldots ,5$.

\begin{definition}\label{def:H_i}
\noindent (1) 
For $f_1$, we define 
$$H_1=\{ 
\phi \in J(R;x_1,x_2)\mid 
\phi (x_2)=x_2+a^{-1}(g-\phi (g))\} .
$$

\noindent (2) 
For $f_2$, 
let $\mu $ be the maximal integer 
such that $h$ belongs to $R'[y_2^{\mu }]$, 
and $Z$ the set of $\omega \in R^{\times }$ 
such that $\omega ^{\mu }=1$ and 
$g_{\omega }:=(\omega -1)(\zeta -1)^{-1}g$ belongs to $R[x_1]$. 
Then, we define 
$$
H_2=\left\{ 
\phi \in \Aut (\Rx /R[x_1])\mid
\phi (x_2)=\omega x_2+g_{\omega }
\text{ for some }\omega \in Z
\right\} . 
$$

\noindent (3) 
For $f_3$, 
we define $H_3$ 
to be the set of $\phi \in \Aff (R,\x )$ 
defined by 
\begin{equation}\label{eq:affine}
(\phi (x_1),\phi (x_2))=(x_1,x_2)A+(b_1,b_2)
\end{equation}
for some $A\in \GL (2,R)$ and $b_1,b_2\in R$ 
such that 
\begin{equation}\label{eq:H_3condition}
A{\bf a} 
={\bf a}
\quad \text{and}\quad
a_1b_1+a_2b_2=0,\quad \text{where}\quad 
{\bf a}:=\left(\begin{array}{@{\,}c@{\,}}
a_1\\ a_2
\end{array}\right) .
\end{equation}

\noindent (4) 
For $f_4$, 
we define $H_4$ 
to be the set of $\phi \in \Aff (R,\x )$ such that 
\begin{equation}\label{eq:H_4condition}
\bigl((\tau _4^{-1}\circ \phi \circ \tau _4)(x_1),
(\tau _4^{-1}\circ \phi \circ \tau _4)(x_2)\bigr)
=(x_1,x_2)
\left(\begin{array}{@{\,}cc@{\,}}
\ep  & 2\ep \alpha a\\
0 & 1
\end{array}\right)-(\alpha ,\alpha ^2a)
\end{equation}
for some $\ep \in \{ 1,-1\} $ and $\alpha \in K$.

\noindent (5) 
For $f_5$, 
let $\mu '$ be the maximal integer 
such that $g'$ belongs to $K[x_1^{\mu '}]$, 
and $Z'$ the set of $\omega \in R^{\times }$ 
such that $\omega ^{\mu '}=1$ and 
$\gamma _{i,j}(\omega )$ belongs to $R$ 
for $i=0,1,2$ and $j=1,2$. 
Then, 
we can define 
$\phi _{\omega }\in \Aff (R,\x )$ by 
\begin{equation}\label{eq:h5phi}
\begin{aligned}
\phi _{\omega }(x_1)&=(1+\gamma _{1,2}(\omega ))x_1
+\gamma _{2,2}(\omega )x_2+\gamma _{0,2}(\omega ) \\
\phi _{\omega }(x_2)&=-\gamma _{1,1}(\omega )x_1
+(1-\gamma _{2,1}(\omega ))x_2-\gamma _{0,1}(\omega )
\end{aligned}
\end{equation}
for each $\omega \in Z'$. 
Actually, 
we have $\det J\phi _{\omega }=\omega $, 
since 
\begin{equation*}
\begin{aligned}
&(1+\gamma _{1,2}(\omega ))(1-\gamma _{2,1}(\omega ))
-(-\gamma _{1,1}(\omega ))\gamma _{2,2}(\omega ) \\
&\quad =
1+
\frac{(\omega -1)(\alpha _1\beta _2-\alpha _2\beta _1)}{
\alpha _1\beta _2-\alpha _2\beta _1}
-\frac{(\omega -1)^2\alpha _1\beta _2\alpha _2\beta _1}
{(\alpha _1\beta _2-\alpha _2\beta _1)^2}
+\frac{(\omega -1)^2\alpha _1\beta _1\alpha _2\beta _2}{
(\alpha _1\beta _2-\alpha _2\beta _1)^2}=\omega . 
\end{aligned}
\end{equation*}
We define 
$H_5=\{ \phi _{\omega }\mid \omega \in Z'\} $. 

\end{definition}

In the notation above, 
the following theorem holds.

\begin{thm}\label{thm:H(f)}
Let $f_1,\ldots ,f_5$ 
be as in Definition~$\ref{def:invariant coord}$, 
where $f_5$ need not to satisfy {\rm (c)} of {\rm (5)}. 
Then, 
we have $H(f_i)=H_i$ for $i=1,\ldots ,5$. 
\end{thm}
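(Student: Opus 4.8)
The plan is to prove the two inclusions $H_i\subseteq H(f_i)$ and $H(f_i)\subseteq H_i$ separately for each of the five types, the common starting point being the observation recorded after Definition~\ref{def:invariant coord} that each $f_i$ is tamely reduced over $R$ (via Proposition~\ref{prop:reduced coordinate}), together with the degree data (\ref{eq:w(f_i)}). Applying Theorem~\ref{thm:reduced coordinate} confines the tame intersection: since $\deg_{x_1}f_i>\deg_{x_2}f_i$ for $i=1,2$ we get $H(f_i)\subseteq J(R;x_1,x_2)$, while for $i=3,4,5$ we have $\deg_{x_1}f_i=\deg_{x_2}f_i$ and hence $H(f_i)\subseteq\Aff(R,\x)$. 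The inclusion $H_i\subseteq H(f_i)$ is in every case a direct verification: each element of $H_i$ is elementary (types I, II) or affine (types III, IV, V) by construction, hence tame, so I only need to check $\phi(f_i)=f_i$. For type~II this reduces to the identity $\phi(y_2)=\omega y_2$, which follows from $g_\omega=(\omega-1)(\zeta-1)^{-1}g$ and gives $\phi(h)=h$ because $h\in R'[y_2^\mu]$ and $\omega^\mu=1$; for types~IV and V it follows from the conjugation descriptions (\ref{eq:H_4condition}) and (\ref{eq:h5phi}) once one checks $\det J\phi_\omega=\omega$, as displayed in Definition~\ref{def:H_i}.

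For the reverse inclusion the two low-degree types are immediate. In type~I, writing $\phi\in J(R;x_1,x_2)$ as $\phi(x_1)=a_1x_1+c_1$, $\phi(x_2)=a_2x_2+h_2$ and imposing $\phi(f_1)=f_1=ax_2+g$, the coefficient of $x_2$ forces $a_2=1$, and the remaining terms give exactly $\phi(x_2)=x_2+a^{-1}(g-\phi(g))$, i.e.\ $\phi\in H_1$. In type~III, writing $\phi\in\Aff(R,\x)$ as in (\ref{eq:affine}) and comparing the linear and constant parts of $\phi(f_3)=f_3$ yields precisely $A{\bf a}={\bf a}$ and $a_1b_1+a_2b_2=0$, which are the defining relations (\ref{eq:H_3condition}) of $H_3$.

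The substance of the theorem is the reverse inclusion for types~II, IV and V, where the leading form of $f_i$ is a power of a single linear form. In each case I would compare the top $\w(f_i)$-homogeneous parts of $\phi(f_i)=f_i$, using Proposition~\ref{prop:slopee} and (\ref{eq:f_i^w}), to show that $\phi$ sends the relevant linear form to a scalar multiple $\omega$ times itself (the form being $y_2^{\w}=(\zeta-1)x_2+cx_1^\lambda$ for type~II and $p=\alpha_1x_1+\alpha_2x_2$ for types~IV, V). For type~IV the exponent is $2$, so matching leading coefficients forces $\omega=\ep\in\{1,-1\}$; passing to $\psi:=\tau_4^{-1}\circ\phi\circ\tau_4$, which fixes $ax_1^2+x_2=\tau_4^{-1}(f_4)$, and solving $a\psi(x_1)^2+\psi(x_2)=ax_1^2+x_2$ degree by degree produces exactly the matrix form (\ref{eq:H_4condition}). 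For types~II and V the exponent is larger, so the analysis instead forces $\omega$ to be a root of unity: a successive comparison of lower-degree terms pins down the remaining coefficients (in type~V it shows $\phi$ introduces no mixed term and no shift) and forces $\omega^\mu=1$, respectively $\omega^{\mu'}=1$, because $h\in R'[y_2^\mu]$, respectively $g'\in K[x_1^{\mu'}]$, must be preserved. In type~V, passing to $\psi:=\tau_5^{-1}\circ\phi\circ\tau_5$, which fixes $x_2+g'=\tau_5^{-1}(f_5)$, identifies $\psi$ with the scaling $x_1\mapsto\omega x_1$, $x_2\mapsto x_2$, whose $\tau_5$-conjugate is the $\phi_\omega$ of (\ref{eq:h5phi}).

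The main obstacle is the bookkeeping that turns these scaling conclusions into the exact membership conditions defining $Z$ and $Z'$, complicated by the fact that $\tau_4,\tau_5\in\Aff(K,\x)$ have coefficients only in $K$. Concretely, having shown $\phi$ is the $\tau_5$-conjugate of a scaling by a root of unity $\omega$ with $\omega^{\mu'}=1$, I must use $\phi\in\Aff(R,\x)$ to read off that every coefficient $\gamma_{i,j}(\omega)$ in (\ref{eq:h5phi}) lies in $R$, i.e.\ $\omega\in Z'$, and check that no further constraint survives; the analogous step for type~II produces the conditions $g_\omega\in R[x_1]$ and $\omega^\mu=1$ defining $Z$. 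Keeping careful track of which expressions are forced into $R$ versus merely $K$ (or $R'$), and verifying that the two inclusions meet with no spurious elements on either side, is the delicate part; the leading-form reductions above are what make it tractable.
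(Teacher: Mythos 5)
Your overall architecture coincides with the paper's: tame reducedness of the $f_i$ plus Theorem~\ref{thm:reduced coordinate} confine $H(f_i)$ to $J(R;x_1,x_2)$ for $i=1,2$ and to $\Aff (R,\x )$ for $i=3,4,5$; the forward inclusions and the reverse inclusions for types I and III are the same direct computations; and for types IV and V your conjugation plan is exactly the paper's, because there $\phi ':=\tau _i^{-1}\circ \phi \circ \tau _i$ is \emph{affine} and fixes $x_2+g$ with $\deg _{x_1}g\geq 2$, hence lies in $J(K;x_1,x_2)$ (Lemma~\ref{lem:inv coord H=H}); only finitely many coefficients remain, and the $x_1^{\lambda _2-1}$-coefficient argument, the $K[x_1^{\mu '}]$ argument and Lemma~\ref{lem:inv coord H=H2} pin them down, after which reading $\gamma _{i,j}(\omega )\in R$ off $\phi \in \Aff (R,\x )$ is also how the paper finishes.

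The genuine gap is the reverse inclusion for type II. There $\phi \in J(R;x_1,x_2)$ is not affine: $\phi (x_1)=u_1x_1+v$ and $\phi (x_2)=u_2x_2+p$ with $p\in R[x_1]$ of a priori unbounded degree, and your plan --- read a scaling $\phi (y_2)=\omega y_2$ off the top $\w (f_2)$-homogeneous part of $\phi (f_2)=f_2$, then compare lower terms --- fails at exactly the hard point. First, for a unipotent $\phi $ (i.e.\ $u_1=u_2=1$ with $(v,p)\neq (0,0)$) the top-form identity degenerates to $\omega =1$ and yields essentially no constraint; no comparison of finitely many lower terms rules such $\phi $ out, and the paper excludes this case instead by writing $\phi =\exp D$ for a triangular derivation $D$ over $K$ (Lemma~\ref{lem:e-n2}), so that $f_2\in \Kx ^{\phi }=\ker D$ (Lemma~\ref{lem:e-n}) would be a linear polynomial in a coordinate by Theorem~\ref{thm:Rentschler}, contradicting $\deg _{x_2}f_2\geq 2$. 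Second, even when the top form is genuinely scaled, the comparison yields only $u_2=\omega $ together with one relation $(\zeta -1)p_{\lambda }+cu_1^{\lambda }=\omega c$ involving the top coefficient $p_{\lambda }$ of $p$; nothing at this level forces $u_1=1$ or $v=0$, which is what ``$\phi (y_2)=\omega y_2$'' really asserts. The paper obtains $\phi \in H(x_1)$ --- this is Proposition~\ref{prop:inv}~(i) --- from three non-formal inputs your sketch omits: the eigenvalue lemma (Lemma~\ref{lem:eigenvalue}), which forces $u_1=1$ or $u_2=1$ by a Jacobian-conjugation argument; the fixed-ring analysis of Lemma~\ref{lem:k[x^t,y]}~(ii), which kills the case $u_1\neq 1$, $u_2=1$ and is precisely where the hypothesis $\Z \subseteq R$ enters; and an application of the unipotent case to $\phi ^e$ to get $v=0$. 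Only after $\phi (x_1)=x_1$ is secured do your remaining steps (the $y_2^{\lambda _1-1}$-coefficient argument and Lemma~\ref{lem:inv coord H=H2} giving $\omega ^{\mu }=1$, i.e.\ Lemma~\ref{lem:H_2=H(f_2)cap H(x_1)}) close the proof; as written, your type II case rests on an unproved claim that no leading-form computation can supply.
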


We remark that $H_2$ is a finite set with at most $\mu $ elements, 
since $Z$ consists of $\mu $-th roots of unity. 
Similarly, 
$H_5$ is a finite set with at most $\mu '$ elements. 
Thus, 
$H(f_2)$ and $H(f_5)$ are finite groups 
due to Theorem~\ref{thm:H(f)}. 
Therefore, 
if $f$ is of type II or V for $f\in \Rx $, 
then $f$ is quasi-totally wild.

\begin{prop}\label{prop:H_i infinite}
$f_1$, $f_3$ and $f_4$ are not exponentially wild. 
In particular, 
$H(f_1)$, $H(f_3)$ and $H(f_4)$ are infinite groups. 
\end{prop}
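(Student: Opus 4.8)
The plan is to prove directly the sharper non-existence statement underlying Definition~\ref{defin:wild coordinates}(iii): for each $i\in\{1,3,4\}$ I would produce a single nonzero $D_i\in\lnd_R\Rx$ with $D_i(f_i)=0$ and $\exp D_i$ tame, which is exactly what it means for $f_i$ not to be exponentially wild. The witness in every case will be (a scalar multiple of) the derivation $\Delta_{f_i}$ of Section~\ref{sect:invariant}, given by $\Delta_{f_i}(x_1)=-\partial f_i/\partial x_2$ and $\Delta_{f_i}(x_2)=\partial f_i/\partial x_1$. Since each of $f_1,\dots,f_5$ is a coordinate of $\Kx$ over $K$, the discussion of $\Delta_f$ in Section~\ref{sect:invariant} already gives that $\Delta_{f_i}\in\lnd_R\Rx$, that $\Delta_{f_i}(f_i)=0$, and that $\Delta_{f_i}\neq0$ because $f_i\notin R$; it also provides $c\in\N$ with $\exp(c\Delta_{f_i})$ inducing an element of $\Aut(\Rx/R)$. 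Thus the task reduces to checking, case by case, that this induced automorphism is tame.

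The key step is to read off the shape of $\Delta_{f_i}$ from the explicit forms in Definition~\ref{def:invariant coord}. For $f_1=ax_2+g$ with $g\in R[x_1]$ one computes $\Delta_{f_1}(x_1)=-a\in R$ and $\Delta_{f_1}(x_2)=\partial g/\partial x_1\in R[x_1]$, so $\Delta_{f_1}$ is triangular. For $f_3=a_1x_1+a_2x_2+b$ one gets the constant derivation $\Delta_{f_3}(x_1)=-a_2$, $\Delta_{f_3}(x_2)=a_1$, which is affine. For $f_4=a\tau_4(x_1)^2+\tau_4(x_2)$ the polynomial $f_4$ has total degree two, so both partial derivatives are linear and $\Delta_{f_4}$ is affine. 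Consequently $c\Delta_{f_1}$ is triangular and $c\Delta_{f_3},c\Delta_{f_4}$ are affine; hence $\exp(c\Delta_{f_1})\in J(R;x_1,x_2)\subseteq\E(R,\x)$, while $\exp(c\Delta_{f_3}),\exp(c\Delta_{f_4})\in\Aff(R,\x)$, and in all three cases $\exp(c\Delta_{f_i})\in\T(R,\x)$. This shows that none of $f_1$, $f_3$, $f_4$ is exponentially wild.

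For the final assertion I would deduce the infinitude of $H(f_i)=\Aut(\Rx/R[f_i])\cap\T(R,\x)$ from the automorphism just built. Write $\phi_i:=\exp(c\Delta_{f_i})$. Then $\phi_i(f_i)=f_i$ because $\Delta_{f_i}(f_i)=0$, so $\phi_i\in\Aut(\Rx/R[f_i])$, and $\phi_i\in\T(R,\x)$ by the previous paragraph; hence $\phi_i\in H(f_i)$. Since $R$ contains $\Z$ and $c\Delta_{f_i}\neq0$, we have $\phi_i^{\,l}=\exp(lc\Delta_{f_i})\neq\id_{\Rx}$ for every $l\geq1$, so $\phi_i$ has infinite order and $H(f_i)$ is an infinite group. (This is also the contrapositive of the implication, recorded in the Introduction, that quasi-totally wild polynomials are exponentially wild.)

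I do not anticipate a genuine obstacle, since everything hinges on the explicit derivation $\Delta_{f_i}$ and on recognizing it as triangular or affine. The only points needing care are already routine: passing from $\Delta_{f_i}$ to a multiple $c\Delta_{f_i}$ that actually stabilizes $\Rx$ (handled verbatim as in Section~\ref{sect:invariant}), and observing that once $\exp(c\Delta_{f_i})$ is known to map $\Rx$ into itself its components have coefficients in $R$, so that the triangular form yields membership in $\E(R,\x)$ and the degree-one bound yields membership in $\Aff(R,\x)$.
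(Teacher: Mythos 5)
Your proposal is correct and follows essentially the same route as the paper: both use the witness derivation $\Delta_{f_i}$ of (\ref{eq:Delta _f:tr}), scaled by an integer factor so that its exponential stabilizes $\Rx$, then verify tameness by reading the partial derivatives off Definition~\ref{def:invariant coord} and conclude infinitude of $H(f_i)$ from the infinite order of the exponential. The only cosmetic difference is that you classify $\Delta_{f_3}$ as affine while the paper calls it triangular; since it is a constant derivation it is both, and either classification yields $\exp(c\Delta_{f_3})\in \T(R,\x)$.
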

\begin{proof}
For $i=1,3,4$, 
define $\Delta _{f_i}$ as in (\ref{eq:Delta _f:tr}). 
Then, $\Delta _{f_i}$ is locally nilpotent, 
since $f_i$ is a coordinate of $\Kx $ over $K$. 
Take $m_i\in \N $ 
such that $\Delta _{f_i}^{m_i}(x_j)=0$ 
for $j=1,2$, 
and set $D_i=m_i!\Delta _{f_i}$. 
Then, 
we have $D_i(f_i)=0$, 
and $\exp D_i$ induces an element of $\Aut (\Rx /R)$. 
From the definition of $f_1$ and $f_3$, 
we see that 
$\partial f_i/\partial x_2$ belongs to $R$, 
and $\partial f_i/\partial x_1$ belongs to $R[x_1]$ 
for $i=1,3$. 
Hence, 
$D_1$ and $D_3$ are triangular. 
Thus, 
$\exp D_1$ and $\exp D_3$ 
belong to $\T (R,\x )$. 
Since $\partial f_4/\partial x_i$ is a linear polynomial 
for $i=1,2$, 
we see that $D_4$ is affine. 
Hence, 
$\exp D_4$ 
belongs to $\T (R,\x )$. 
Therefore, 
$f_i$ is not exponentially wild for $i=1,3,4$. 
Since $\exp D_i$ belongs to $H(f_i)$ 
and has an infinite order, 
we know that $H(f_i)$ is an infinite group for $i=1,3,4$. 
\end{proof}

As a consequence of 
Theorems~\ref{thm:Gamma (f)} and \ref{thm:H(f)} 
and Proposition~\ref{prop:H_i infinite}, 
we get the following corollary.

\begin{cor}\label{cor:expw=>qtw}
Assume that $f\in \Rx $ 
is a coordinate of $\Kx $ over $K$. 

\noindent{\rm (i)} 
$f$ is quasi-totally wild 
if and only if 
$f$ is exponentially wild.

\noindent{\rm (ii)} 
Assume that $R$ 
is a $\Q $-domain such that $V(R)=K^{\times }$. 
If $H(f)$ is not equal to $\{ \id _{\Rx }\} $, 
then $H(f)$ is an infinite group. 
\end{cor}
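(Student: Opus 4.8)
The plan is to recast both parts in terms of the cardinality of $H(f)$ and then read off everything from the classification in Theorems~\ref{thm:Gamma (f)} and~\ref{thm:H(f)} together with Proposition~\ref{prop:H_i infinite}. The key preliminary observation is that both properties in question are invariant under replacing $f$ by $\tau(f)$ for $\tau\in\T(R,\x)$: the map $\phi\mapsto\tau\circ\phi\circ\tau^{-1}$ is a group isomorphism $H(f)\to H(\tau(f))$ (it fixes $\tau(f)$ and preserves $\T(R,\x)$), and $D\mapsto\tau\circ D\circ\tau^{-1}$ is a bijection of $\lnd_R\Rx$ carrying LNDs killing $f$ to LNDs killing $\tau(f)$, with $\exp$ of tameness preserved by (\ref{eq:exp normal}). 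Hence whether $H(f)$ is finite and whether $f$ is exponentially wild are unchanged. Since $f$ is a coordinate of $\Kx$ over $K$, I would choose $\tau\in\T(R,\x)$ minimizing $|\w(\tau(f))|$, so that $\tau(f)$ is tamely reduced over $R$, and after an affine interchange of variables I may also assume $\deg_{x_1}f\geq\deg_{x_2}f$. Throughout I thus reduce to the tamely reduced, degree-ordered situation.

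For part~(i), the forward implication is exactly the statement proved in the Introduction that quasi-totally wild polynomials are exponentially wild, so only the converse remains, which I would establish in contrapositive form: if $H(f)$ is infinite (i.e.\ $f$ is not quasi-totally wild) then $f$ is not exponentially wild. After the reduction, if $|\w(f)|=1$ then $f$ is linear in $x_1$, and $\partial/\partial x_2$ is a nonzero locally nilpotent derivation killing $f$ whose exponential is elementary, hence tame; so $f$ is not exponentially wild. If $|\w(f)|>1$, then by Proposition~\ref{prop:slope} (with $\kappa=K$) we have $\deg_{x_i}f\geq1$ for $i=1,2$, so conditions (B) and (C) of Theorem~\ref{thm:Gamma (f)} hold, and $H(f)\neq\{\id_{\Rx}\}$ gives (A); hence $f$ is of one of the types I through V. Types II and V have finite $H(f)$ by the remark following Theorem~\ref{thm:H(f)}, contradicting infiniteness, so $f$ is of type I, III or IV, and Proposition~\ref{prop:H_i infinite} shows $f$ is not exponentially wild. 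This finishes~(i).

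For part~(ii) I would first record that, under $V(R)=K^{\times}$ and $R$ a $\Q$-domain, no element of $\Rx$ is of type II, III, IV or V. Indeed, the remarks following Definition~\ref{def:invariant coord} note that $V(R)=K^{\times}$ excludes types III, IV and V, while for a $\Q$-domain any root of unity $\zeta\in R\sm\{1\}$ has $\zeta-1\in R^{\times}$, so condition (b) of~(2) fails and type II is excluded as well. Now reduce $f$ to be tamely reduced with $\deg_{x_1}f\geq\deg_{x_2}f$, which preserves $H(f)\neq\{\id_{\Rx}\}$. If $|\w(f)|=1$ then $H(f)$ is visibly infinite. If $|\w(f)|>1$, then (A), (B), (C) all hold, so Theorem~\ref{thm:Gamma (f)} forces $f$ to be of type~I, the only surviving type, whence $H(f)$ is infinite by Proposition~\ref{prop:H_i infinite}.

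The substantive content is entirely carried by the earlier structural theorems; for the corollary itself the only points requiring care are the bookkeeping of the reduction step (legitimacy of minimizing $|\w(\tau(f))|$ and invariance of the two properties under tame conjugation) and the degenerate case $|\w(f)|=1$, which must be handled by hand since Theorem~\ref{thm:Gamma (f)} presupposes $\deg_{x_2}f\geq1$. I do not anticipate any genuine obstacle beyond this.
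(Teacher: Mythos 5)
Your proof is correct and follows essentially the same route as the paper: reduce to a tamely reduced, degree-ordered representative, invoke conditions (A)--(C) of Theorem~\ref{thm:Gamma (f)} to place $f$ among types I--V, and then sort the types using Proposition~\ref{prop:H_i infinite} and the finiteness of $H(f_2)$, $H(f_5)$. Your contrapositive formulation of (i) and your explicit treatment of the $|\w (f)|=1$ case via $\partial /\partial x_2$ are only cosmetic rearrangements of the paper's argument, which instead uses exponential wildness directly to force $\deg _{x_2}f\geq 1$.
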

\begin{proof}
(i) 
It suffices to prove the ``if" part. 
Assume that $f$ is exponentially wild. 
Without loss of generality, 
we may assume that $H(f)\neq \{ \id _{\Rx }\} $. 
By replacing $f$ with $\tau (f)$ for some $\tau \in \T (R,\x )$, 
we may assume further that $f$ is tamely reduced over $R$ 
and $\deg _{x_1}f\geq \deg _{x_2}f$. 
Since $f$ is exponentially wild by assumption, 
$f$ is not killed by $\partial /\partial x_2$. 
Hence, 
we get $\deg _{x_2}f\geq 1$. 
Thus, 
$f$ satisfies (A), (B) and (C) of 
Theorem~\ref{thm:Gamma (f)}. 
Therefore, 
$f$ must be of one of the types I through V\null. 
By Proposition~\ref{prop:H_i infinite}, 
$f$ is not of type I or III or IV\null. 
Hence, 
$f$ must be of type II or V\null. 
Therefore, 
$f$ is quasi-totally wild 
as mentioned after Theorem~\ref{thm:H(f)}.

(ii) 
By replacing $f$ with $\tau (f)$ for some $\tau \in \T (R,\x )$, 
we may assume that 
$f$ is tamely reduced over $R$ and 
$\deg _{x_1}f\geq \deg _{x_2}f$. 
If $\deg _{x_2}f=0$, 
then $f$ is a linear polynomial in $x_1$ over $R$. 
Hence, 
we have $H(f)=H(x_1)$. 
Thus, $H(f)$ is an infinite group. 
Assume that $\deg _{x_2}f\geq 1$, 
and $H(f)\neq \{ \id _{\Rx }\} $. 
Then, 
$f$ satisfies (A), (B) and (C) of Theorem~\ref{thm:Gamma (f)}. 
Hence, 
$f$ must be of one of the types I through V\null. 
Since $R$ is a $\Q $-domain 
such that $V(R)=K^{\times }$, 
we know that $f$ is of none of the types II through V 
by the remark after Definition~\ref{def:invariant coord}. 
Thus, 
$f$ is of type I\null. 
Therefore, $H(f)$ is an infinite group by 
Proposition~\ref{prop:H_i infinite}. 
\end{proof}

Thanks to Theorem~\ref{thm:tame23}, 
Corollary~\ref{cor:expw=>qtw} (ii) 
implies the following corollary.

\begin{cor}\label{cor:225}
Assume that $n=3$. 
Let $f\in \kx $ 
be a coordinate of $k(x_3)[x_1,x_2]$ over $k(x_3)$. 
Then, $\Aut (\kx /k[x_3,f])\cap \T (k,\x )$ 
is equal to $\{ \id _{\kx }\} $ or an infinite group. 
\end{cor}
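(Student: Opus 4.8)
The plan is to reduce the assertion directly to Corollary~\ref{cor:expw=>qtw}~(ii). First I would set $R=k[x_3]$ and view the three-variable ring $\kx=k[x_1,x_2,x_3]$ as the two-variable polynomial ring $R[x_1,x_2]$ over $R$, so that the two-variable theory of the preceding chapters applies with base ring $R$ and variables $\{x_1,x_2\}$. I would then check that $R$ meets the hypotheses of Corollary~\ref{cor:expw=>qtw}~(ii): because $k$ has characteristic zero, $R=k[x_3]$ contains $\Q$ and is a $\Q$-domain, and because $k[x_3]$ is a PID, the remark following Lemma~\ref{lem:V(R)}~(ii) gives $V(R)=K^{\times}$, where $K=k(x_3)$ denotes the field of fractions of $R$.

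The polynomial $f$ of the statement is, by hypothesis, a coordinate of $k(x_3)[x_1,x_2]=K[x_1,x_2]$ over $K$, which is exactly the standing assumption on $f$ in Corollary~\ref{cor:expw=>qtw}~(ii). The one thing to verify is that the two descriptions of the tame intersection agree. This is supplied by the identity (\ref{eq:k[x_3,f]}): with $R=k[x_3]$, the group $H(f)=\Aut(R[x_1,x_2]/R[f])\cap \T(R,\{x_1,x_2\})$ studied throughout this chapter coincides with $\Aut(\kx/k[x_3,f])\cap \T(k,\x)$. This is precisely the place where Theorem~\ref{thm:tame23} is used, allowing one to recast the three-variable tame intersection over $k$ fixing $k[x_3,f]$ as an $H(f)$ in the two-variable theory over the base $k[x_3]$.

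With these identifications made, Corollary~\ref{cor:expw=>qtw}~(ii) applies unchanged and yields the dichotomy: either $H(f)=\{\id_{\kx}\}$, or $H(f)$ is an infinite group. Passing back through (\ref{eq:k[x_3,f]}) gives the claimed statement for $\Aut(\kx/k[x_3,f])\cap \T(k,\x)$. There is no serious obstacle, since the substance is already contained in Theorem~\ref{thm:tame23} and Corollary~\ref{cor:expw=>qtw}~(ii); the only genuine care needed is notational, namely keeping track of the role the symbol $\x$ and the various $\Aut$ and $\T$ groups play in the two-variable versus three-variable settings, and confirming at the outset that $V(k[x_3])=k(x_3)^{\times}$.
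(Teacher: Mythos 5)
Your proposal is correct and follows essentially the same route as the paper: the paper likewise sets $R=k[x_3]$, observes that $R$ is a $\Q$-domain with $V(R)=K^{\times}$ (since $k[x_3]$ is a PID), identifies $\Aut(\kx/k[x_3,f])\cap\T(k,\x)$ with $H(f)$ via Theorem~\ref{thm:tame23} (i.e.\ the identity (\ref{eq:k[x_3,f]})), and then invokes Corollary~\ref{cor:expw=>qtw}~(ii). Nothing is missing.
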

Actually, 
$R=k[x_3]$ is a $\Q $-domain such that $V(R)=K^{\times }$, 
and 
$$
H(f)=\Aut (\kx /k[x_3,f])\cap \T (k[x_3],\{ x_1,x_2\} )
=\Aut (\kx /k[x_3,f])\cap \T (k,\x )
$$ 
by Theorem~\ref{thm:tame23}.

\section{Examples}
\setcounter{equation}{0}

In this section, 
we construct various examples. 
First, 
we give the five types of polynomials. 
It is easy to find polynomials of types I through IV\null. 
For example, 
let $R=\Z $. 
Then, 
$g_1:=2x_2+x_1^2$ is of type I, 
and 
$$
g_2:=x_1+(2x_2+x_1^2)^2
$$
is of type II with 
$\zeta =-1$, $e=2$ and $g=-x_1^2$. 
Let $R=\Z [a_1,a_2]$ be the polynomial ring 
in $a_1$ and $a_2$ over $\Z $. 
Then, $a_1/a_2$ does not belong to $V(R)$ 
by Lemma~\ref{lem:V(R)} (ii). 
Hence, $g_3:=a_1x_1+a_2x_2$ is of type III, 
and 
$$
g_4:=(a_1x_1+a_2x_2)^2+x_2
$$ 
is of types IV with $a=1$ and $\tau _4\in \Aff (K,\x )$ 
defined by $\tau _4(x_1)=a_1x_1+a_2x_2$ 
and $\tau _4(x_2)=x_2$.

To construct a polynomial of type V, 
consider the subring $R:=\Z [y,2z,yz]$ 
of the polynomial ring $\Z [y,z]$. 
Then, 
it is easy to see that 
$y$ is an irreducible element of $R$, 
$z$ does not belong to $R$, 
and $I:=yR+yzR$ is not equal to $R$. 
We claim that $I$ 
is not a principal ideal of $R$. 
In fact, 
if $I=pR$ for some $p\in R\sm R^{\times }$, 
then $p$ divides $y$, 
since $y$ belongs to $I$. 
Hence, 
we get $pR=yR$ 
by the irreducibility of $y$. 
Since $yz$ belongs to $I$, 
it follows that $y$ divides $yz$. 
Hence, $z$ belongs to $R$, 
a contradiction. 
Thus, 
$I$ is not a principal ideal of $R$. 
Therefore, 
$yz/y$ does not belong to $V(R)$ 
by Lemma~\ref{lem:V(R)} (ii). 
Define $\tau _5\in \Aff (K,\x )$ by 
$\tau _5(x_1)=yx_1+yzx_2$ and $\tau _5(x_2)=x_2$. 
Then, 
$$
g_5:=\tau _5(x_2+x_1^4)
$$ 
is an element of $\Rx $ 
of the form of $f_5$ with $g'=x_1^4$, 
and 
$\tau _5$ satisfies (b) of 
Definition~\ref{def:invariant coord} (5). 
Since $x_1^4$ belongs to $x_1^4K[x_1^4]$, 
we see that (a) holds for $e'=4$. 
Observe that 
$\gamma _{1,2}(-1)=-2y/y=-2$, 
$\gamma _{2,2}(-1)=-2yz/y=-2z$ 
and $\gamma _{i,j}(-1)=0$ 
if $i=0$ or $j=1$. 
Hence, 
$\gamma _{i,j}(-1)$ belongs to $R$ 
for $i=0,1,2$ and $j=1,2$. 
Since $(-1)^4=1$, 
we conclude that (c) holds for $\zeta =-1$. 
Therefore, $g_5$ is of type V.

We remark that $g_1,\ldots ,g_5$ above 
are not coordinates of $\Rx $ over $R$. 
This can be verified by using the following lemma 
and proposition.

\begin{lem}\label{lem:notcoordinate}
Let $f$ be a coordinate of $\Rx $ over $R$, 
$\mathfrak{p}$ a prime ideal of $R$, 
and $\bar{f}$ the image of $f$ in $(R/\mathfrak{p})[\x ]$. 
If $\bar{f}$ belongs to $(R/\mathfrak{p})[x_i]$ 
for some $i\in \{ 1,2\} $, 
then we have $\deg _{x_i}f=1$. 
\end{lem}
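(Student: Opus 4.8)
The plan is to push the whole question down to the residue domain $\bar{R}:=R/\mathfrak{p}$ by means of the reduction homomorphism, using the principle that the image of a coordinate under a surjection of polynomial rings is again a coordinate. Since $\mathfrak{p}$ is prime, $\bar{R}$ is a domain, so the two-variable results of Section~\ref{sect:grading} are available over $\bar{R}$, and I write $\pi\colon\Rx\to\bar{R}[\x]$ for the $R$-algebra surjection induced by $R\to\bar{R}$, so that $\pi(f)=\bar{f}$.

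First I would show that $\bar{f}$ is a coordinate of $\bar{R}[\x]$ over $\bar{R}$. By the definition of a coordinate there is $f_2\in\Rx$ with $R[f,f_2]=\Rx$; applying $\pi$ yields $\bar{R}[\bar{f},\pi(f_2)]=\pi(\Rx)=\bar{R}[\x]$, which is precisely the statement that $\bar{f}$ is a coordinate of $\bar{R}[\x]$ over $\bar{R}$. The hypothesis $\bar{f}\in\bar{R}[x_i]$ then places this coordinate inside $\bar{R}[x_i]$, so the automorphism of $\bar{R}[\x]$ carrying $x_1$ to $\bar{f}$ lies in $J(\bar{R};x_i,x_j)$ with $\{i,j\}=\{1,2\}$; by (\ref{eq:jonquiere}), equivalently by Lemma~\ref{lem:deg coordinate}~(i) read over $\bar{R}$, this forces $\bar{f}=\bar{a}x_i+\bar{b}$ with $\bar{a}\in\bar{R}^{\times}$ and $\bar{b}\in\bar{R}$. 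In particular $\deg_{x_i}\bar{f}=1$.

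The remaining task is to transfer this from $\bar{f}$ back to $f$, and I expect this to be the main obstacle. Reduction modulo $\mathfrak{p}$ can only lower the $x_i$-degree, so $\deg_{x_i}f\geq\deg_{x_i}\bar{f}=1$ is immediate, and the real content is the reverse bound $\deg_{x_i}f\leq 1$. Writing $f=\sum_{t\geq 0}c_t(x_j)x_i^{t}$ with $c_t\in R[x_j]$ and $d=\deg_{x_i}f$, the equality $\deg_{x_i}\bar{f}=1$ tells me only that $c_t\in\mathfrak{p}[x_j]$ for every $t\geq 2$, which does not by itself exclude $d\geq 2$; the delicate point is exactly that the leading coefficient $c_d$ must not be annihilated in $\bar{R}$. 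This is the step where the coordinate property of $f$ over $R$ itself, rather than merely over $\bar{R}$, has to be invoked, and the natural place for it to enter is the unit Jacobian $\det J\phi\in R^{\times}$ of an automorphism $\phi$ with $\phi(x_1)=f$: one would reduce this relation modulo $\mathfrak{p}$ and examine the top $x_i$-form of $\det J\phi$, which is where a control on the survival of the leading coefficient $c_d$ in $\bar{R}$—and hence the conclusion $d=1$—would have to be extracted.
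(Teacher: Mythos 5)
Your first two steps reproduce the paper's own proof almost verbatim: the paper takes $\phi \in \Aut (\Rx /R)$ with $\phi (x_i)=f$, passes to $\bar{\phi }:=\id _{\bar{R}}\otimes \phi $ on $\bar{R}[\x ]=\bar{R}\otimes _R\Rx $ (your surjection $\pi $ is the same device), observes that $\bar{\phi }(x_i)=\bar{f}\in \bar{R}[x_i]$ forces $\bar{\phi }$ to induce an element of $\Aut (\bar{R}[x_i]/\bar{R})$, and, since $\bar{R}$ is a domain because $\mathfrak{p}$ is prime, concludes that $\bar{f}$ is linear in $x_i$. The crucial point is that the paper's proof \emph{ends there}: its final sentence reads ``Therefore, we get $\deg _{x_i}\bar{f}=1$.'' The conclusion actually proved --- and the one actually used afterwards, since the reductions of $g_1,g_2,g_4,g_5$ have $x_i$-degree $2,4,2,4$ respectively --- concerns $\bar{f}$, not $f$; the ``$\deg _{x_i}f=1$'' in the displayed statement is a misprint for ``$\deg _{x_i}\bar{f}=1$''.

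The step you flagged as the main obstacle, the reverse bound $\deg _{x_i}f\leq 1$, is therefore not merely delicate: it is false, and no refinement of your Jacobian idea can deliver it. Take $R=\Z $, $\mathfrak{p}=2\Z $ and $f=x_1+2x_2+2x_1^2$. With $h:=x_2+x_1^2$ one has $f=x_1+2h$, hence $x_1=f-2h$ and $x_2=h-x_1^2$ belong to $\Z [f,h]$, so $\Z [f,h]=\Z [x_1,x_2]$ and $f$ is even a tame coordinate; yet $\bar{f}=x_1$ belongs to $(\Z /2\Z )[x_1]$ while $\deg _{x_1}f=2$. Moreover, for the automorphism $\phi $ with $\phi (x_1)=f$ and $\phi (x_2)=h$ one computes $\det J\phi =(1+4x_1)\cdot 1-2\cdot 2x_1=1$, so the unit-Jacobian relation survives reduction modulo $\mathfrak{p}$ without imposing any constraint on the top coefficient $c_2=2$, which simply vanishes in $\bar{R}$ --- exactly the scenario you hoped to exclude. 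Your instinct that the coordinate property of $f$ over $R$ itself would have to intervene at that point was sound, and its failure to give purchase is the symptom that the literal statement cannot be true: the correct resolution is to stop where your argument (and the paper's) stops, at $\deg _{x_i}\bar{f}=1$, which is all that the subsequent applications to $g_1,g_2,g_4,g_5$ require.
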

\begin{proof}
Since $f$ is a coordinate of $\Rx $ over $R$, 
we may find $\phi \in \Aut (\Rx /R)$ such that $\phi (x_i)=f$. 
Set $\bar{R}=R/\mathfrak{p}$. 
Then, 
$\bar{\phi }:=\id _{\bar{R}}\otimes \phi $ 
is an automorphism of 
$\bar{R}[\x ]=\bar{R}\otimes _R\Rx $ over $\bar{R}$. 
Since $\bar{\phi }(x_i)=\bar{f}$ belongs to $\bar{R}[x_i]$ 
by assumption, 
$\bar{\phi }$ induces an element of 
$\Aut (\bar{R}[x_i]/\bar{R})$. 
Because $\bar{R}$ is a domain, 
this implies that 
$\bar{\phi }(x_i)$ is a linear polynomial. 
Therefore, we get $\deg _{x_i}\bar{f}=1$. 
\end{proof}

Since the images of $g_1$ and $g_2$ in $(\Z /2\Z )[\x ]$ 
are $x_1^2$ and $x_1+x_1^4$, respectively, 
we know by Lemma~\ref{lem:notcoordinate} that 
$g_1$ and $g_2$ are not coordinates of $\Z [\x ]$ over $\Z $. 
Since the image of $g_4$ 
in $(\Z [a_1,a_2]/(a_1))[\x ]$ is $(a_2x_2)^2+x_2$, 
we know that 
$g_4$ is not a coordinate of 
$(\Z [a_1,a_2])[\x ]$ over $\Z [a_1,a_2]$ similarly. 
As for $g_5$, 
observe that $\mathfrak{p}=(y,2z,2)$ 
is a prime ideal of $R=\Z [y,2z,yz]$. 
Since the image of $g_5$ in $(R/\mathfrak{p})[\x ]$ 
is $(yzx_2)^4+x_2$, 
we know that 
$g_5$ is not a coordinate of $\Rx $ over $R$.

By the following proposition, 
$g_3$ is not a coordinate of $\Rx $ over $R$.

\begin{prop}\label{prop:f1f3}
No coordinate of $\Rx $ over $R$ is of type III. 
\end{prop}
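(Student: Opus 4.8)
The plan is to argue by contradiction via reduction modulo a maximal ideal, in the spirit of the proof of Lemma~\ref{lem:notcoordinate}. Suppose $f=f_3=a_1x_1+a_2x_2+b$ is of type III and is also a coordinate of $\Rx $ over $R$. The defining hypothesis of type III is that $a_1/a_2$ does not belong to $V(R)$. First I would translate this into an ideal-theoretic statement: by the very definition of $V(R)$, if $a_1R+a_2R$ were equal to $R$, then the pair $(a_1,a_2)$ would already witness $a_1/a_2\in V(R)$; hence $a_1R+a_2R$ must be a proper ideal of $R$. (Equivalently, Lemma~\ref{lem:V(R)}~(ii) shows $a_1R+a_2R$ is not even principal.) By Zorn's lemma there is then a maximal ideal $\mathfrak{m}$ of $R$ containing $a_1R+a_2R$.

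Next I would reduce modulo $\mathfrak{m}$. Write $\bar{R}=R/\mathfrak{m}$, which is a field, and let $\bar{f}$ denote the image of $f$ in $\bar{R}[\x ]=\bar{R}\otimes _R\Rx $. Since $a_1,a_2\in \mathfrak{m}$, we have $\bar{a}_1=\bar{a}_2=0$, so $\bar{f}=\bar{b}$ is a constant. On the other hand, because $f$ is a coordinate of $\Rx $ over $R$, there is $\phi \in \Aut (\Rx /R)$ with $\phi (x_1)=f$; then $\bar{\phi }:=\id _{\bar{R}}\otimes \phi $ is an automorphism of $\bar{R}[\x ]$ over $\bar{R}$ (its inverse being $\id _{\bar{R}}\otimes \phi ^{-1}$), and $\bar{\phi }(x_1)=\bar{f}=\bar{b}\in \bar{R}$. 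This is the contradiction I am after: an $\bar{R}$-algebra automorphism cannot send the variable $x_1$ to a constant, since $\bar{\phi }(x_1-\bar{b})=0$ forces $x_1-\bar{b}=0$ by injectivity of $\bar{\phi }$, which is absurd.

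The argument is short, and the only place that requires care is the first step, namely correctly extracting a proper ideal from the condition $a_1/a_2\notin V(R)$ and ensuring such an ideal sits inside a maximal one; everything after that is the standard base-change observation that a coordinate stays a coordinate after reducing the base ring modulo a prime, together with the elementary fact that a constant is never a coordinate over a domain. I expect no genuine obstacle beyond being precise about these points. In particular, I would emphasize that the same reduction fails to give a contradiction if one merely invokes Lemma~\ref{lem:notcoordinate} directly (which only yields $\deg _{x_i}f=1$, already true here); it is the sharper injectivity argument for $\bar{\phi }$ that does the real work.
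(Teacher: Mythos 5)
Your proof is correct, but it follows a genuinely different route from the paper's. The paper argues in one line via the Jacobian: if $\phi (x_1)=f_3$, then the first row of $J\phi $ is $(\partial f_3/\partial x_1,\partial f_3/\partial x_2)=(a_1,a_2)$, so cofactor expansion along that row gives $\det J\phi \in a_1\Rx +a_2\Rx $ (and evaluating the coefficients at the origin puts it in $a_1R+a_2R$); since $\det J\phi $ belongs to $R^{\times }$, this forces $a_1R+a_2R=R$, i.e.\ $a_1/a_2\in V(R)$, contradicting the type III hypothesis. You instead take the contrapositive reading of $a_1/a_2\notin V(R)$ — namely that $a_1R+a_2R$ is a proper ideal — pass to a maximal ideal $\mathfrak{m}$ containing it, and observe that the coordinate $f_3$ reduces to the constant $\bar{b}$ in $(R/\mathfrak{m})[\x ]$, which is impossible because base change sends automorphisms to automorphisms and an injective $\bar{R}$-algebra endomorphism cannot annihilate $x_1-\bar{b}$. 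Both arguments are sound, and both hinge on the same translation of the type III condition into the statement $a_1R+a_2R\neq R$; what the paper's Jacobian computation buys is brevity and no appeal to maximal ideals or tensor products, while your reduction argument is more robust in that it only invokes the general principle that coordinates stay coordinates under base change. Your closing remark is also well taken: Lemma~\ref{lem:notcoordinate} as stated only returns $\deg _{x_i}f=1$, which is vacuous for the linear polynomial $f_3$, so the sharper injectivity argument (or the paper's determinant argument) is genuinely needed to reach a contradiction.
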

\begin{proof}
Suppose to the contrary that 
$\phi (x_1)=f_3$ for some $\phi \in \Aut (\Rx /R)$. 
Then, 
$\det J\phi $ belongs to $a_1R+a_2R$, 
since $\partial f_3/\partial x_i=a_i$ for $i=1,2$. 
Because $\det J\phi $ belongs to $R^{\times }$, 
we get $a_1R+a_2R=R$. 
This contradicts that $a_1/a_2$ 
does not belong to $V(R)$. 
Therefore, 
$f_3$ is not a coordinate of $\Rx $ over $R$. 
\end{proof}

Next, 
we give examples of coordinates of $\Rx $ over $R$ 
which are of types I, IV and V\null. 
The following fact is well-known 
(see \cite[Lemma 1.1.8]{Essen} for a more general statement).

\begin{lem}\label{lem:Essen:Lem1.1.8}
Let $\phi $ be an endomorphism of the $R$-algebra $\Rx $ 
such that $\det J\phi $ belongs to $R^{\times }$. 
If $K[\phi (x_1),\phi (x_2)]=\Kx $, 
then $\phi $ belongs to $\Aut (\Rx /R)$. 
\end{lem}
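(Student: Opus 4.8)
The plan is to extend $\phi $ to the fraction field $K$, invert it there, and then show that the inverse already has coefficients in $R$. First I would pass to the $K$-linear extension $\phi _K\in \End _K\Kx $. The hypothesis $K[\phi (x_1),\phi (x_2)]=\Kx $ says $\phi _K$ is surjective; and if $\phi (x_1),\phi (x_2)$ were algebraically dependent over $K$, then its image would have transcendence degree at most $1$ over $K$, contradicting surjectivity onto the transcendence-degree-$2$ ring $\Kx $. Hence $\phi (x_1),\phi (x_2)$ are algebraically independent over $K$, so $\phi _K$ is also injective and therefore a $K$-automorphism. Writing $\psi :=\phi _K^{-1}$ and using that $\Rx $ embeds into $\Kx $ (as $R$ is a domain), it suffices to prove $\psi (x_1),\psi (x_2)\in \Rx $: then $\psi $ restricts to an $R$-algebra endomorphism of $\Rx $ that is a two-sided inverse of $\phi $, giving $\phi \in \Aut (\Rx /R)$.

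The crux is to control the coefficients of $\psi (x_i)$, and here the essential point is that $\det J\phi $ is a unit of $R$, not merely of $\Kx $. Indeed, since $R$ is a domain the units of $\Rx $ are exactly $R^{\times }$, so $\det J\phi \in R^{\times }$ is a nonzero constant. Writing $y_i:=\phi (x_i)$, $c_i:=y_i|_{\x =0}\in R$ and $c:=(c_1,c_2)$, the degree-one part of the map $\x \mapsto (y_1,y_2)$ is governed by $L:=(J\phi )|_{\x =0}$, and $\det L=(\det J\phi )|_{\x =0}=\det J\phi \in R^{\times }$, so $L\in \GL (2,R)$. I would then build the formal composition inverse of $\x \mapsto (y_1,y_2)$ by solving $\x =L^{-1}\bigl((y_1-c_1,y_2-c_2)-(\text{terms of order}\geq 2)\bigr)$ recursively for the homogeneous components of $\x $ as power series in $y_1-c_1,y_2-c_2$. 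The key observation is that this recursion divides only by entries of $L^{-1}$, i.e. only by $\det L\in R^{\times }$; no factorials ever enter, because one inverts the map by composition rather than by exponentiation. Hence the formal inverse has all coefficients in $R$, and by uniqueness of the two-sided composition inverse it coincides with the Taylor expansion of the polynomial $\psi (x_i)$ about the point $\x =c$.

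Finally I would upgrade this to genuine membership in $\Rx $. We already know $\psi (x_i)\in \Kx $ is an honest polynomial; writing it both as $\sum _\beta a_\beta \x ^\beta $ with $a_\beta \in K$ and as $\sum _\gamma b_\gamma (\x -c)^\gamma $ with $b_\gamma \in R$ (the Taylor coefficients just computed, only finitely many nonzero), the two families are related by $a_\beta =\sum _{\gamma \geq \beta }\binom{\gamma }{\beta }(-c)^{\gamma -\beta }b_\gamma $. Since each multi-index binomial $\binom{\gamma }{\beta }$ is an integer, $c\in R^2$, and $b_\gamma \in R$, every $a_\beta $ lies in $R$, so $\psi (x_i)\in \Rx $, which finishes the proof.

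I expect the main obstacle to be exactly the point where $R$ need not contain $\Q $: a naive Taylor-series inversion would divide by factorials and escape $R$. The idea that rescues the argument is that composition inversion of a polynomial map with a unit linear part is an entirely $R$-rational process, dividing only by $\det J\phi \in R^{\times }$; the identification of this formal inverse with the already-constructed $K$-polynomial inverse $\psi $, together with the integrality of binomial coefficients, then transfers the $R$-integrality of the Taylor coefficients to the ordinary coefficients.
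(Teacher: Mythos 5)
Your proof is correct. Note that the paper itself offers no proof of this lemma: it is invoked as a well-known fact with a citation to van den Essen's book (Lemma 1.1.8), and your argument is essentially the standard one behind that citation — first upgrade $\phi $ to a $K$-automorphism via the surjectivity/transcendence-degree argument, then control the inverse by the formal inverse function theorem over $R$: since the linear part $L=(J\phi )|_{\x =0}$ lies in $\GL (2,R)$ (because $\det J\phi \in R^{\times }$ is constant), the composition-inverse recursion divides only by $\det L$, so the formal inverse has coefficients in $R$, and uniqueness of two-sided formal inverses identifies it with the Taylor expansion of the polynomial inverse. Two cosmetic remarks: the expansion of $\psi (x_i)$ is about the point $c=(\phi (x_1)(0),\phi (x_2)(0))$ in the \emph{target} variables (i.e.\ about $\y =c$, not $\x =c$); and in the final binomial-coefficient step no hypothesis that $R$ contains $\Z $ is needed, since integer multiples of ring elements make sense in any ring via the canonical map $\Z \to R$.
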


Assume that $R=\Z $. 
Then, 
it is easy to see that 
$$
h_1:=4x_2+1+x_1+2x_1^2 
$$
is a type I element of $\Rx $. 
We show that $h_1$ 
is a coordinate of $\Rx $ over $R$. 
Since 
$$
2h_1^2-3h_1\equiv 
2(1+x_1)^2+(1+x_1+2x_1^2)
\equiv x_1-1\pmod{4\Rx }, 
$$ 
we see that 
$$
h_1':=\frac{1}{4}\left(x_1-1-(2h_1^2-3h_1)\right) 
$$
belongs to $\Rx $. 
Hence, 
we may define an endomorphism $\phi $ 
of the $R$-algebra $\Rx $ 
by $\phi (x_1)=h_1$ and $\phi (x_2)=h_1'$. 
Then, 
we have $\det J\phi =-1$, 
since 
$$
dh_1\wedge dh_1'=\frac{1}{4}dh_1\wedge dx_1=-dx_1\wedge dx_2. 
$$
Thus, $\det J\phi $ belongs to $R^{\times }$. 
Since $K[h_1,h_1']=K[h_1,x_1]=\Kx $, 
we conclude that $\phi $ belongs to $\Aut (\Rx /R)$ 
by Lemma~\ref{lem:Essen:Lem1.1.8}. 
Therefore, 
$h_1$ is a coordinate of $\Rx $ over $R$.

Next, 
let $R=\Z [\alpha _1,\alpha _2]$ 
be the polynomial ring in 
$\alpha _1$ and $\alpha _2$ over $\Z $. 
Define elements of $\Rx $ by 
$$
h_4=\alpha _1(\alpha _1x_1+\alpha _2x_2)^2+x_2,\quad 
h_4'=\alpha _2(\alpha _1x_1+\alpha _2x_2)^2-x_1. 
$$
Then, $h_4$ is of type IV 
with $a=\alpha _1$ and 
$\tau _4\in \Aff (K,\x )$ defined by 
$\tau _4(x_1)=\alpha _1x_1+\alpha _2x_2$ 
and $\tau _4(x_2)=x_2$. 
Since 
$\alpha _2h_4-\alpha _1h_4'=\alpha _1x_1+\alpha _2x_2$, 
we see that $x_1$ and $x_2$ belong to $R[h_4,h_4']$. 
Hence, we get $R[h_4,h_4']=\Rx $. 
Therefore, 
$h_4$ is a coordinate of $\Rx $ over $R$.

Finally, we construct a coordinate of type V\null. 
Let $R_0$ be the polynomial ring 
in four variables $\alpha _i$ and $\beta _i$ 
for $i=1,2$ over $\Z $. 
Put $\delta =\alpha _1\beta _2-\alpha _2\beta _1$, 
and consider the subring 
$$
R:=R_0\left[2\alpha _1\delta ^{-1},
2\alpha _2\delta ^{-1},
4\beta _1\delta ^{-1},
4\beta _2\delta ^{-1},
(\alpha _1+2\beta _1)\delta ^{-1},
(\alpha _2+2\beta _2)\delta ^{-1}\right]
$$
of $R_0[\delta ^{-1}]$. 
Define $\tau _5\in \Aff (K,\x )$ by 
$$
\tau _5(x_1)=\alpha _1x_1+\alpha _2x_2+2\quad 
\text{and} 
\quad 
\tau _5(x_2)=\beta _1x_1+\beta _2x_2. 
$$
Then, 
$h_5:=\tau _5(x_2+x_1^4)$ is an element of $\Rx $ 
of the form of $f_5$ with $g'=x_1^4$, 
and satisfies (a) of 
Definition~\ref{def:invariant coord} (5) 
for $e'=4$. 
We check (b) and (c).

Define a homomorphism $\psi :R_0\to \Z $ 
of $\Z $-algebras by 
$$
\psi (\alpha _1)=0,\quad 
\psi (\alpha _2)=2,\quad 
\psi (\beta _1)=-2,\quad 
\psi (\beta _2)=3. 
$$
Then, we have $\psi (\delta )=4$. 
Hence, 
$\psi $ extends to a homomorphism 
$\bar{\psi }:R_0[\delta ^{-1}]\to \Z [1/4]$ 
of $\Z $-algebras. 
Then, 
we have 
$\bar{\psi }\bigl(4\beta _i\delta ^{-1}\bigr)
=\psi (\beta _i)$ 
for $i=1,2$, 
and 
$$
\bar{\psi }\bigl(2\alpha _1\delta ^{-1}\bigr)=0,\ 
\bar{\psi }\bigl(2\alpha _2\delta ^{-1}\bigr)=1,\ 
\bar{\psi }\bigl((\alpha _1+2\beta _1)\delta ^{-1}\bigr)=-1,\ 
\bar{\psi }\bigl((\alpha _2+2\beta _2)\delta ^{-1}\bigr)=2. 
$$
Thus, 
$\bar{\psi }(R)$ is contained in $\Z $. 
Therefore, 
we get $\bar{\psi }(R)=\Z $.

We prove that $2$ is an irreducible element of $R$ 
by contradiction. 
Suppose that $2=pq$ for some $p,q\in R\sm \zs $ 
not belonging to $R^{\times }$. 
Since $p$ and $q$ belong to $R_0[\delta ^{-1}]$, 
we may write 
$p=p'\delta ^{-l}$ and $q=q'\delta ^{-m}$, 
where $p',q'\in R_0$ and $l,m\in \Zn $. 
Then, 
we have $2\delta ^{l+m}=p'q'$. 
Since 
$2$ and $\delta $ are irreducible elements of $R_0$, 
we may write 
$p'=2u\delta ^{l'}$ and $q'=u\delta ^{m'}$ 
by interchanging $p'$ and $q'$ if necessary, 
where $u\in \{ 1,-1\} $, and 
$l',m'\in \Zn $ are such that $l'+m'=l+m$. 
We show that $(l,m)=(l',m')$. 
Then, it follows that 
$q=(u\delta ^{m'})\delta ^{-m}=u$ 
belongs to $R^{\times }$, 
and we are led to a contradiction. 
Suppose to the contrary that $(l,m)\neq (l',m')$. 
Then, 
we have $l>l'$ or $m>m'$. 
If $l>l'$, 
then $u\delta ^{l-l'-1}p$ belongs to $R$, 
since so does $p$. 
Since $p=(2u\delta ^{l'})\delta ^{-l}$, 
we have $u\delta ^{l-l'-1}p=2\delta ^{-1}$. 
Hence, $2\delta ^{-1}$ belongs to $R$. 
Thus, 
$\bar{\psi }(2\delta ^{-1})=1/2$ belongs to 
$\bar{\psi }(R)=\Z $, a contradiction. 
Similarly, 
if $m>m'$, 
then $u\delta ^{m-m'-1}q$ belongs to $R$. 
Since $q=(u\delta ^{m'})\delta ^{-m}$, 
we have $u\delta ^{m-m'-1}q=\delta ^{-1}$. 
Hence, $\delta ^{-1}$ belongs to $R$. 
Thus, 
$\bar{\psi }(\delta ^{-1})=1/4$ belongs to 
$\bar{\psi }(R)=\Z $, a contradiction. 
Therefore, 
$2$ is an irreducible element of $R$.

We show that $I:=\alpha _1R+\alpha _2R$ 
is not a principal ideal of $R$. 
First, note that $I$ is not a unit ideal, 
since $\bar{\psi }(I)=2\Z $. 
Suppose that $I=sR$ for some $s\in R\sm R^{\times }$. 
Then, 
$s$ divides 
$$
\alpha _1\left((\alpha _2+2\beta _2)\delta ^{-1}\right)
-\alpha _2\left((\alpha _1+2\beta _1)\delta ^{-1}\right)
=2(\alpha _1\beta _2-\alpha _2\beta _1)\delta ^{-1}=2. 
$$
Since $s$ is not a unit of $R$, 
it follows that $sR=2R$ by the irreducibility of $2$. 
Hence, 
2 divides $\alpha _1$. 
Thus, 
$\alpha _1/2$ belongs to $R$, 
and so belongs to $R_0[\delta ^{-1}]$, 
a contradiction. 
Therefore, 
$I$ is not a principal ideal of $R$. 
Consequently, 
$\alpha _1/\alpha _2$ does not belong to $V(R)$ 
by Lemma~\ref{lem:V(R)} (ii). 
This proves that 
$\tau _5$ satisfies (b) of 
Definition~\ref{def:invariant coord} (5). 
Observe that 
$$
\gamma _{i,j}(-1)=-2\alpha _i\beta _j\delta ^{-1}
=-\beta _j(2\alpha _i\delta ^{-1})
\quad \text{and}\quad 
\gamma _{0,j}(-1)=-4\beta _j\delta ^{-1}
$$ 
belong to $R$ 
for each $i,j\in \{ 1,2\} $. 
Since $(-1)^{e'}=1$, 
we know that (c) holds for $\zeta =-1$. 
Therefore, 
$h_5$ is of type V\null.

Next, we show that $h_5$ 
is a coordinate of $\Rx $ over $R$. 
Consider the polynomial 
$$
h_5':=\delta ^{-1}\bigl( \tau _5(x_1)+2(h_5-17)\bigr) . 
$$
Then, 
we have 
$K[h_5,h_5']=K[h_5,\tau _5(x_1)]
=K[\tau _5(x_2),\tau _5(x_1)]=\Kx $, 
and 
\begin{equation}\label{eq:dh_5wedgedh_5'}
dh_5\wedge dh_5'
=\delta ^{-1}dh_5\wedge d\tau _5(x_1)
=\delta ^{-1}d\tau _5(x_2)\wedge d\tau _5(x_1)
=-dx_1\wedge dx_2. 
\end{equation}
We check that $h_5'$ belongs to $\Rx $. 
Note that $p:=\tau _5(x_1)^4-16$ 
is divisible by $\alpha _1x_1+\alpha _2x_2$. 
Since $2\delta ^{-1}(\alpha _1x_1+\alpha _2x_2)$ 
belongs to $\Rx $, 
we see that $2\delta ^{-1}p$ belongs to $\Rx $. 
Since $h_5-17=\tau (x_2)+p-1$, 
it follows that 
$$
h_5'=\delta ^{-1}\bigl( \tau _5(x_1)+2(\tau (x_2)+p-1)\bigr) 
=(\alpha _1+2\beta _1)\delta ^{-1}x_1+
(\alpha _2+2\beta _2)\delta ^{-1}x_2+2\delta ^{-1}p
$$
belongs to $\Rx $. 
Hence, 
we may define an endomorphism $\phi $ 
of the $R$-algebra $\Rx $ 
by $\phi (x_1)=h_5$ and $\phi (x_2)=h_5'$. 
Then, we have $\det J\phi =-1$ by (\ref{eq:dh_5wedgedh_5'}). 
Hence, $\det J\phi $ belongs to $R^{\times }$. 
Thus, 
$\phi $ belongs to $\Aut (\Rx /R)$ 
by Lemma~\ref{lem:Essen:Lem1.1.8}. 
Therefore, 
$h_5$ is a coordinate of $\Rx $ over $R$.

As remarked after Theorem~\ref{thm:Gamma (f)}, 
we can easily construct elements of $\Rx $ 
which are coordinates of $\Kx $ over $K$ 
satisfying (B) and (C) of Theorem~\ref{thm:Gamma (f)}, 
but are of none of the types I through V\null. 
For example, 
take any polynomial $f_1$ of type I, 
and $\Phi _i(z)\in R[z]$ 
with $e_i:=\deg _z\Phi _i(z)\geq 2$ 
for each $i\geq 1$. 
We define a sequence 
$(p_i)_{i=0}^{\infty }$ of elements of $\Rx $ by 
$$
p_0=x_1,\quad 
p_1=f_1\quad\text{and}\quad 
p_{i+1}=p_{i-1}+\Phi _i(p_i)
\quad \text{for}\quad i\geq 1. 
$$
Then, 
$p_i$ is a coordinate of $\Kx $ over $K$ for each $i\geq 0$, 
since $K[p_{0},p_{1}]=\Kx $ and 
$R[p_{l},p_{l+1}]=R[p_{l-1},p_{l}]$ for each $l\geq 1$. 
We show that $p_i$ satisfies (B) and (C), 
but is of none of the types I through V when $i\geq 3$. 
Let $c_i$ be the leading coefficient of $\Phi _i(z)$ 
for each $i\geq 1$. 
First, 
we show that 
\begin{equation}\label{eq:notI-V1}
p_{i}^{\w }=c_1\cdots c_{i-1}(f_1^{\w })^{e_1\cdots e_{i-1}}
\end{equation}
holds for each $\w \in \N ^2$ and $i\geq 1$ by induction on $i$. 
The assertion is clear if $i=1$. 
Assume that $i\geq 2$. 
Then, we have 
$p_{j}^{\w }=c_1\cdots c_{j-1}(f_1^{\w })^{e_1\cdots e_{j-1}}$ 
for $j=1,\ldots ,i-1$ by induction assumption. 
When $i\geq 3$, 
this implies that 
$\degw p_{i-1}=e_{i-2}\degw p_{i-2}$. 
Since $e_{i-2}\geq 2$, 
we get $\degw p_{i-1}>\degw p_{i-2}$. 
The same holds when $i=2$ 
since $\degw f_1>\degw x_1$. 
Hence, 
$\degw \Phi _{i-1}(p_{i-1})=e_{i-1}\degw p_{i-1}$ 
is greater than $\degw p_{i-2}$. 
Thus, 
it follows that 
\begin{align*}
&p_{i}^{\w }
=\bigl( p_{i-2}+\Phi _{i-1}(p_{i-1})\bigr) ^{\w }
=\Phi _{i-1}(p_{i-1})^{\w }
=c_{i-1}(p_{i-1}^{\w })^{e_{i-1}} \\
&\quad =c_{i-1}\bigl( 
c_1\cdots c_{i-2}(f_1^{\w })^{e_1\cdots e_{i-2}}
\bigr)^{e_{i-1}}
=c_1\cdots c_{i-1}(f_1^{\w })^{e_1\cdots e_{i-1}}. 
\end{align*}
Therefore, 
(\ref{eq:notI-V1}) holds for every $i\geq 1$. 
Since $p_i$ is a coordinate of $\Kx $ over $K$, 
we may write $p_i^{\w (p_i)}$ 
as in Proposition~\ref{prop:slope}. 
Thanks to (\ref{eq:notI-V1}), 
this implies that 
$f_1^{\w (p_i)}=ax_2+cx_1^{\lambda }$. 
Hence, 
$p_i^{\w (p_i)}$ 
is a power of $x_2+(c/a)x_1^{\lambda }$ 
multiplied by a nonzero constant. 
Since $f_1$ is of type I, 
we have $\lambda \geq 2$, 
and $c/a$ does not belong to $R$. 
Hence, 
$p_i$ is tamely reduced over $R$ 
by Proposition~\ref{prop:reduced coordinate} (ii). 
Thus, 
$p_i$ satisfies (B). 
By Proposition~\ref{prop:slope}, 
we have $\deg _{x_l}p_i=\deg _{x_l}p_i^{\w (p_i)}$ 
for $l=1,2$. 
Hence, 
we see from (\ref{eq:notI-V1}) 
that 
\begin{equation}\label{eq:notI-V}
\deg _{x_1}p_{i}=\lambda e_1\cdots e_{i-1}
\quad \text{and}\quad 
\deg _{x_2}p_{i}=e_1\cdots e_{i-1}. 
\end{equation}
Since $\lambda \geq 2$, 
it follows that $p_i$ satisfies (C). 
Moreover, 
we know that $p_i$ is not of type other than II 
owing to (\ref{eq:w(f_i)}). 
We show that $p_i$ is not of type II\null. 
Suppose to the contrary that $p_i$ is of type II\null. 
Then, 
we may find $q\in \Kx $ such that 
$\deg _{x_2}q=1$ and $K[p_i,q]=\Kx $. 
Since $K[p_i,p_{i-1}]=\Kx $, 
we may write $q=\alpha p_{i-1}+\Psi (p_i)$, 
where $\alpha \in K^{\times }$ and $\Psi (z)\in K[z]$. 
Since $i\geq 3$, we have 
$\deg _{x_2}p_i>\deg _{x_2}p_{i-1}\geq e_1\geq 2$ 
by (\ref{eq:notI-V}). 
Hence, 
$\deg _{x_2}q$ is equal to the maximum between 
$\deg _{x_2}p_{i-1}$ and $\deg _{x_2}\Psi (p_i)$, 
and is at least $\deg _{x_2}p_{i-1}\geq 2$. 
This is a contradiction. 
Thus, 
$p_i$ is not of type II\null. 
Therefore, 
$p_i$ is of none of the types I through V\null.

In closing, 
we construct counterexamples to 
statements similar to Lemma~\ref{lem:R/Sc} 
and the ``if" part of Theorem~\ref{thm:Gamma (f)} 
in the case where $R$ does not contain $\Z $. 
First, 
let $R=(\Z /2\Z )(z^2)$ and $S=(\Z /2\Z )(z)$, 
and consider the element 
$$
f:=x_2+x_1^2+z^2x_2^2 
$$
of $\Rx $. 
Since $f=x_2+(x_1+zx_2)^2$, 
we can define 
$\psi \in \Aut (\Sx /S)$ by 
$\psi (x_1)=f$ and $\psi (x_2)=x_1+zx_2$. 
Hence, 
$f$ is a coordinate of $\Sx $ over $S$. 
Observe that 
$f^{\w (f)}=(x_1+zx_2)^2$. 
Since $z$ does not belong to $R=K$, 
this implies that $f$ 
is not a coordinate of $\Kx $ over $K$ 
in view of Proposition~\ref{prop:slope}. 
Therefore, 
$f$ is a counterexample to 
a statement similar to Lemma~\ref{lem:R/Sc}.

Next, 
let $R=(\Z /2\Z )[z]$, 
and define $\sigma _i\in \Aut (\Kx /K)$ 
for $i=0,1,2$ by 
$(\sigma _0(x_1),\sigma _0(x_2))=(x_1,zx_2)$, 
$$
(\sigma _1(x_1),\sigma _1(x_2))=(x_1,x_2+x_1+x_1^2)
\ \text{ and }\ 
(\sigma _2(x_1),\sigma _2(x_2))=(x_1+x_2^2,x_2). 
$$
Then, 
consider the polynomial 
\begin{align*}
g&:=(\sigma _0\circ \sigma _1\circ \sigma _2\circ \sigma _1)(x_2) \\
&=(\sigma _0\circ \sigma _1)
\bigl( 
x_2+(x_1+x_2^2)+(x_1^2+x_2^4)
\bigr) \\
&=\sigma _0
\bigl( 
(x_2+x_1+x_1^2)+x_1+(x_2^2+x_1^2+x_1^4)+x_1^2
+(x_2^4+x_1^4+x_1^8)
\bigr)\\
&=x_1^2+x_1^8+zx_2+z^2x_2^2+z^4x_2^4. 
\end{align*}
Clearly, 
$g$ belongs to $\Rx $, 
and is a coordinate of $\Kx $ over $K$ by definition. 
We show that $g$ satisfies (A), (B) and (C) 
of Theorem~\ref{thm:Gamma (f)}, 
but is of none of the types I through V\null. 
Define 
$\phi \in \Aut (\Rx /R[x_2])$ by $\phi (x_1)=x_1+1$. 
Then, 
$\phi $ belongs to $\T (R,\x )$, 
and satisfies $\phi (g)=g$. 
Hence, 
$\phi $ belongs to $H(g)$. 
Thus, $g$ satisfies (A). 
Since $g^{\w (g)}=z^4(x_2+z^{-1}x_1^2)^4$, 
and $z^{-1}$ does not belong to $R$, 
we know that $g$ is tamely reduced over $R$ 
by Proposition~\ref{prop:reduced coordinate} (ii). 
Hence, $g$ satisfies (B). 
Since $\deg _{x_1}g=8$ and $\deg _{x_2}g=4$, 
we see that $g$ satisfies (C). 
Moreover, 
$g$ is not of type other than II 
because of (\ref{eq:w(f_i)}). 
Since $R$ 
does not contain a 
root of unity of positive order, 
we know that $g$ is not of type II\null. 
Therefore, 
$g$ is of none of the types I through V.

\section{Proof (I)}
\setcounter{equation}{0}

We begin with the proof of 
the ``only if" part of Theorem~\ref{thm:Gamma (f)}. 
Let $f\in \Rx $ be a coordinate of $\Kx $ over $K$ 
of one of the types I through V\null. 
Then, $f$ is tamely reduced over $R$ 
by the discussion after (\ref{eq:f_i^w}). 
Hence, $f$ satisfies (B). 
From (\ref{eq:w(f_i)}), 
we see that $f$ satisfies (C). 
By Proposition~\ref{prop:H_i infinite}, 
$H(f_i)$ is an infinite group 
for $i=1,3,4$. 
Hence, $f$ satisfies (A) 
if $f$ is of type I or III or IV\null. 
We show that 
$H_2$ and $H_5$ are not equal to $\{ \id _{\Rx }\} $. 
Then, 
when Theorem~\ref{thm:H(f)} is proved, 
it is also proved that 
$f$ satisfies (A) 
if $f$ is of type II or V.

For $f_2$, 
set $d=\lcm (e,\mu )$. 
Then, 
$h$ belongs to $R'[y_2^d]$, 
since $h$ belongs to $R'[y_2^e]$ and $R'[y_2^{\mu }]$ 
by assumption. 
By the maximality of $\mu $, 
it follows that $d=\mu $. 
Hence, $\mu $ is divisible by $e$. 
Since $\zeta ^e=1$ by assumption, 
we get 
$\zeta ^{\mu }=(\zeta ^e)^{\mu /e}=1$. 
Because $g_{\zeta }=(\zeta -1)(\zeta -1)^{-1}g=g$ 
belongs to $R[x_1]$, 
we know that $\zeta $ belongs to $Z$. 
Define $\phi \in \Aut (\Rx /R[x_1])$ 
by $\phi (x_2)=\zeta x_2+g$. 
Then, $\phi $ belongs to $H_2$, 
and is not equal to $\id _{\Rx }$. 
Therefore, 
$H_2$ is not equal to 
$\{ \id _{\Rx }\} $. 
Similarly, 
since $g'$ belongs to $K[x_1^{e'}]$ and $K[x_1^{\mu '}]$ 
by assumption, 
we know 
that $\mu '$ is divisible by $e'$ 
by the maximality of $\mu '$. 
Hence, 
we have $(\zeta ')^{\mu '}=((\zeta ')^{e'})^{\mu '/e'}=1$. 
Since $\gamma _{i,j}(\zeta ')$ 
belongs to $R$ for $i=0,1,2$ and $j=1,2$ by assumption, 
it follows that $\zeta '$ belongs to $Z'$. 
Thus, 
$H_5$ contains $\phi _{\zeta '}$. 
Since $\det J\phi _{\zeta '}=\zeta '\neq 1$, 
we have $\phi _{\zeta '}\neq \id _{\Rx }$. 
Therefore, 
$H_5$ is not equal to $\{ \id _{\Rx }\} $.

Next, we prove Theorem~\ref{thm:H(f)}. 
Assume that $i=1$. 
Since $f_1$ is tamely reduced over $R$, 
and $\deg _{x_1}f_1=\lambda $ 
is greater than $\deg _{x_2}f_1=1$, 
we know that 
$H(f_1)$ is contained in $J(R;x_1,x_2)$ 
thanks to Theorem~\ref{thm:reduced coordinate} (ii). 
By definition, 
$H_1$ is also contained in $J(R;x_1,x_2)$. 
Hence, it suffices to show that 
$\phi $ belongs to $H(f_1)$ 
if and only if $\phi $ belongs to $H_1$ 
for each $\phi \in J(R;x_1,x_2)$. 
Take any $\phi \in J(R;x_1,x_2)$. 
Then, $\phi $ belongs to $\T  (R,\x )$. 
Hence, 
$\phi $ belongs to $H(f_1)$ if and only if 
$\phi (f_1)=f_1$. 
Since $f_1=ax_2+g$ and $a\neq 0$, 
this condition is equivalent to 
$a\phi (x_2)+\phi (g)
=ax_2+g$, 
and to $\phi (x_2)=x_2+a^{-1}(g-\phi (g))$. 
This condition is equivalent to the condition that 
$\phi $ belongs to $H_1$. 
Thus, 
$\phi $ belongs to $H(f_1)$ if and only if $\phi $ 
belongs to $H_1$. 
Therefore, 
we get $H(f_1)=H_1$.

We treat the case $i=2$ later. 
For $i=3,4,5$, 
we have $\deg _{x_1}f_i=\deg _{x_2}f_i$ 
by (\ref{eq:w(f_i)}). 
Since $f_i$ is tamely reduced over $R$, 
we know that $H(f_i)$ is contained in $\Aff (R,\x )$ 
thanks to Theorem~\ref{thm:reduced coordinate} (i). 
By definition, 
$H_i$ is also contained in $\Aff (R,\x )$. 
So we show that $\phi $ belongs to $H(f_i)$ 
if and only if $\phi $ belongs to $H_i$ 
for each $\phi \in \Aff (R,\x )$. 
Since $\Aff (R,\x )$ is contained in $\T (R,\x )$, 
it suffices to check that $\phi (f_i)=f_i$ 
if and only if $\phi $ belongs to $H_i$.

Assume that $i=3$. 
Set $f_3'=f_3-b$. 
Then, 
we have $\phi (f_3)=f_3$ 
if and only if $\phi (f_3')=f_3'$. 
Write $\phi $ as in (\ref{eq:affine}). 
Then, 
since 
$f_3'=a_1x_1+a_2x_2=(x_1,x_2){\bf a}$, 
we have 
\begin{align*}
&\phi (f_3')
=\phi \bigl((x_1,x_2){\bf a}\bigr)
=(\phi (x_1),\phi (x_2)){\bf a} \\
&\quad =\bigl(
(x_1,x_2)A+(b_1,b_2)\bigr) {\bf a}
=(x_1,x_2)(A{\bf a})+a_1b_1+a_2b_2. 
\end{align*}
Hence, 
we know that $\phi (f_3')=f_3'$ if and only if 
$$
(x_1,x_2)(A{\bf a})+a_1b_1+a_2b_2=
(x_1,x_2){\bf a}, 
$$
and hence if and only if 
$A$, $b_1$ and $b_2$ satisfy (\ref{eq:H_3condition}). 
Thus, 
we have $\phi (f_3)=f_3$ 
if and only if $\phi $ 
belongs to $H_3$. 
This proves that $H(f_3)=H_3$.

We use the following lemma 
for the cases of $i=4,5$.

\begin{lem}\label{lem:inv coord H=H}
Let $\psi \in \Aff (K,\x )$ be such that 
$\psi (x_2+g)=x_2+g$ for some $g\in K[x_1]$ 
with $\deg _{x_1}g\geq 2$. 
Then, 
$\psi $ belongs to $J(K;x_1,x_2)$. 
\end{lem}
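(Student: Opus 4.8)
The plan is to compute $\psi(x_2+g)$ explicitly and compare homogeneous components of top total degree. Since $\psi$ is affine, I would write
\[
\psi(x_1)=a_{1,1}x_1+a_{1,2}x_2+c_1,\qquad \psi(x_2)=a_{2,1}x_1+a_{2,2}x_2+c_2,
\]
with $(a_{i,j})_{i,j}\in\GL(2,K)$ and $c_1,c_2\in K$. Set $d=\deg_{x_1}g\geq 2$ and let $c\in K^{\times}$ be the leading coefficient of $g$, so that the total-degree-$d$ part of $g$ is $cx_1^d$.

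First I would expand $\psi(x_2+g)=\psi(x_2)+g(\psi(x_1))$ and set this equal to $x_2+g$. Because $\psi(x_2)$ and $x_2$ both have total degree at most $1<d$, they contribute nothing in total degree $d$; on the other hand $\psi(x_1)$ is a polynomial of total degree $1$ with linear part $a_{1,1}x_1+a_{1,2}x_2$, so the degree-$d$ homogeneous component of $g(\psi(x_1))$ is exactly $c(a_{1,1}x_1+a_{1,2}x_2)^d$. Comparing the degree-$d$ parts of the two sides of $\psi(x_2+g)=x_2+g$ therefore yields
\[
c(a_{1,1}x_1+a_{1,2}x_2)^d=cx_1^d,
\]
and hence $(a_{1,1}x_1+a_{1,2}x_2)^d=x_1^d$ since $c\neq 0$.

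The only step needing a small argument is to deduce $a_{1,2}=0$ from this identity, and I do not expect it to be an obstacle. The linear form $u:=a_{1,1}x_1+a_{1,2}x_2$ is nonzero (otherwise $0=x_1^d$), hence irreducible in the UFD $\Kx$, and $u^d=x_1^d$ forces $u$ to divide $x_1^d$; since $x_1$ is prime, $u$ must be an associate of $x_1$, i.e.\ $u=\lambda x_1$ for some $\lambda\in K^{\times}$ with $\lambda^d=1$. In particular $a_{1,2}=0$, so $\psi(x_1)=a_{1,1}x_1+c_1$ belongs to $K[x_1]$. This gives $\psi(K[x_1])\subseteq K[x_1]$, which together with $\psi\in\Aut(\Kx/K)$ means precisely that $\psi$ belongs to $J(K;x_1,x_2)$ by the definition of the latter.
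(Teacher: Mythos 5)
Your proof is correct, and it reaches the conclusion by a recognizably different mechanism than the paper. The paper never writes out the matrix of $\psi$: it isolates $\psi (g)=x_2+g-\psi (x_2)$, notes that the right-hand side has $x_2$-degree at most $1$ because $\psi $ is affine and $g\in K[x_1]$, and compares this with $\deg _{x_2}\psi (g)=(\deg _{x_1}g)\deg _{x_2}\psi (x_1)\geq 2\deg _{x_2}\psi (x_1)$, which forces $\deg _{x_2}\psi (x_1)\leq 0$ outright, with no factorization step. You instead compare the top total-degree homogeneous components of $\psi (x_2)+g(\psi (x_1))=x_2+g$, obtaining the identity $(a_{1,1}x_1+a_{1,2}x_2)^d=x_1^d$, and then need the additional (but easy) step that a nonzero linear form whose $d$-th power equals $x_1^d$ is an associate of $x_1$; your irreducibility argument in the UFD $\Kx $ handles this correctly, and one could even shortcut it by substituting $x_1=0$ to get $a_{1,2}^d=0$. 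Both arguments hinge on exactly the same hypothesis, $\deg _{x_1}g\geq 2$, which ensures the degree-$\geq 2$ part of the fixed polynomial can only come from $g(\psi (x_1))$. The paper's inequality is slightly leaner; your leading-form computation extracts marginally more information (the linear part of $\psi (x_1)$ is $\lambda x_1$ with $\lambda ^d=1$), though the lemma does not need it.
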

\begin{proof}
It suffices to show that 
$\deg _{x_2}\psi (x_1)\leq 0$. 
By assumption, 
we have 
$\psi (g)=\psi \bigl((x_2+g)-x_2\bigr)=x_2+g-\psi (x_2)$. 
Since $\deg _{x_2}g=0$ and $\psi $ is affine, 
this implies that 
$\deg _{x_2}\psi (g)\leq 1$. 
On the other hand, 
we have 
$$
\deg _{x_2}\psi (g)
=(\deg _{x_1}g)\deg _{x_2}\psi (x_1)\geq 2\deg _{x_2}\psi (x_1), 
$$
since $g$ is a polynomial 
in the single variable $x_1$ 
with $\deg _{x_1}g\geq 2$. 
Thus, 
we get $2\deg _{x_2}\psi (x_1)\leq 1$. 
This gives that $\deg _{x_2}\psi (x_1)\leq 0$. 
Therefore, 
$\psi $ belongs to $J(K;x_1,x_2)$. 
\end{proof}

Now, assume that $i=4$. 
Then, 
$\phi $ fixes $f_4=\tau _4(ax_1^2+x_2)$ if and only if 
$\phi ':=\tau _4^{-1}\circ \phi \circ \tau _4$ 
fixes $ax_1^2+x_2$. 
Since $\phi '$ belongs to $\Aff (K,\x )$, 
this implies that $\phi '$ belongs to $J(K;x_1,x_2)$ 
by Lemma~\ref{lem:inv coord H=H}, 
and so implies that 
$\phi '(x_1)=\ep x_1-\alpha $ 
for some $\ep \in K^{\times }$ and $\alpha \in K$. 
Hence, 
$\phi '$ fixes $ax_1^2+x_2$ 
if and only if 
$\phi '(x_1)=\ep x_1-\alpha $ 
and 
\begin{equation}\label{eq:inv coord 4a}
a(\ep x_1-\alpha )^2+\phi '(x_2)=\phi '(ax_1^2+x_2)=ax_1^2+x_2
\end{equation}
for some $\ep \in K^{\times }$ and $\alpha \in K$. 
Since $\phi '$ is affine, 
(\ref{eq:inv coord 4a}) implies that $\ep ^2=1$. 
Hence, 
for $\ep \in K^{\times }$ and $\alpha \in K$, 
we have (\ref{eq:inv coord 4a}) if and only if 
$\ep $ belongs to $\{ 1,-1\} $ and 
$$
\phi '(x_2)=2\ep \alpha ax_1+x_2-\alpha ^2a. 
$$
Since $\phi '(x_1)=\ep x_1-\alpha $, 
we may write $(\phi '(x_1),\phi '(x_2))$ 
as in (\ref{eq:H_4condition}). 
Thus, 
we know that $\phi (f_4)=f_4$ 
if and only if (\ref{eq:H_4condition}) holds 
for some $\ep \in \{ 1,-1\} $ and $\alpha \in K$. 
Therefore, 
we conclude that $H(f_4)=H_4$.

To prove Theorem~\ref{thm:H(f)} for $i=5$, 
we need the following lemma.

\begin{lem}\label{lem:H5}
For $\omega \in K^{\times }$, 
we have 
\begin{equation}\label{eq:lem:H5}
\phi _{\omega }(\tau _5(x_1))=\omega \tau _5(x_1),\quad 
\phi _{\omega }(\tau _5(x_2))=\tau _5(x_2). 
\end{equation}
\end{lem}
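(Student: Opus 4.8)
The plan is to prove (\ref{eq:lem:H5}) by direct computation, since both $\phi_\omega$ and $\tau_5$ are explicit affine maps and the two claimed equalities are just identities in $\Kx$. It is convenient first to abbreviate $\delta := \alpha_1\beta_2-\alpha_2\beta_1$ for the common denominator, so that $\gamma_{i,j}(\omega)=(\omega-1)\alpha_i\beta_j/\delta$ for all the indices occurring. I would then expand $\phi_\omega(\tau_5(x_1))=\alpha_1\phi_\omega(x_1)+\alpha_2\phi_\omega(x_2)+\alpha_0$ using the $R$-linearity of $\phi_\omega$ together with the definitions (\ref{eq:h5phi}), and collect the coefficients of $x_1$, of $x_2$, and the constant term separately.

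The key step is the simplification of each of these three coefficients. The coefficient of $x_1$ in $\phi_\omega(\tau_5(x_1))$ comes out as $\alpha_1+\alpha_1\gamma_{1,2}(\omega)-\alpha_2\gamma_{1,1}(\omega)$; substituting the definition of $\gamma_{i,j}$ and pulling $(\omega-1)\alpha_1/\delta$ out of the last two terms yields $(\omega-1)\alpha_1(\alpha_1\beta_2-\alpha_2\beta_1)/\delta=(\omega-1)\alpha_1$, so the coefficient collapses to $\omega\alpha_1$. Exactly the same cancellation of the factor $\alpha_1\beta_2-\alpha_2\beta_1$ against $\delta$ governs the coefficient of $x_2$ (giving $\omega\alpha_2$) and the constant term (giving $\omega\alpha_0$). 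Hence $\phi_\omega(\tau_5(x_1))=\omega(\alpha_1x_1+\alpha_2x_2+\alpha_0)=\omega\tau_5(x_1)$, which is the first identity.

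For the second identity I would run the analogous expansion $\phi_\omega(\tau_5(x_2))=\beta_1\phi_\omega(x_1)+\beta_2\phi_\omega(x_2)+\beta_0$. Here the phenomenon is even simpler: in each of the three coefficients (of $x_1$, of $x_2$, and the constant) the two contributions carrying a factor $\gamma_{i,j}(\omega)$ become literally equal after substitution — for instance the $x_1$-coefficient contains $\beta_1\gamma_{1,2}(\omega)$ and $-\beta_2\gamma_{1,1}(\omega)$, both equal to $\pm(\omega-1)\alpha_1\beta_1\beta_2/\delta$ — so they cancel outright and leave $\beta_1$, $\beta_2$, and $\beta_0$ respectively. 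Thus $\phi_\omega(\tau_5(x_2))=\tau_5(x_2)$.

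The only obstacle is arithmetic bookkeeping; there is no conceptual difficulty once $\delta$ is named, and the single identity $\alpha_1\beta_2-\alpha_2\beta_1=\delta$ drives every cancellation. I would also record that the argument uses no hypothesis on $\omega$ beyond $\omega\in K^\times$; in particular the integrality requirement (c) of Definition~\ref{def:invariant coord}~(5) plays no role, which is precisely why the lemma may be invoked for arbitrary scalar $\omega$ before $\phi_\omega$ is known to preserve $\Rx$.
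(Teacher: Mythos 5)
Your proposal is correct and is essentially the same argument as the paper's: a direct computation of the composed affine maps in which every cancellation is driven by the identity $\alpha_1\beta_2-\alpha_2\beta_1=\delta$. The paper merely organizes the same coefficient bookkeeping in matrix form (computing the product of the two $2\times 2$ coefficient matrices and the action on the translation vector), whereas you collect the coefficients of $x_1$, $x_2$, and the constant term directly; the entries computed are identical.
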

\begin{proof}
Put $\phi =\phi _{\omega }$, $\tau =\tau _5$ 
and $\gamma _{i,j}=\gamma _{i,j}(\omega )$ 
for each $i$ and $j$. 
Then, we have 
\begin{equation}\label{lem:H5pf}
\bigl( 
\phi (\tau (x_1)),\phi (\tau (x_2))
\bigr) 
=\bigl( 
\phi (x_1),\phi (x_2)
\bigr) 
\left(\begin{array}{@{\,}cc@{\,}}
\alpha _1& \beta _1\\
\alpha _2& \beta _2
\end{array}\right)
+(\alpha _0,\beta _0) 
\end{equation}
and 
$$
\bigl( \phi (x_1),\phi (x_2)\bigr) 
=(x_1,x_2)
\left(\begin{array}{@{\,}cc@{\,}}
1+\gamma _{1,2} & 
-\gamma _{1,1} \\ 
\gamma _{2,2} & 
1-\gamma _{2,1}
\end{array}\right)
+(\gamma _{0,2},-\gamma _{0,1}). 
$$
A direct computation shows that 
\begin{align*}
& \left(\begin{array}{@{\,}cc@{\,}}
1+\gamma _{1,2} & 
-\gamma _{1,1} \\ 
\gamma _{2,2} & 
1-\gamma _{2,1}
\end{array}\right)
\left(\begin{array}{@{\,}cc@{\,}}
\alpha _1& \beta _1\\
\alpha _2& \beta _2
\end{array}\right) \\
&\quad =
\left(\begin{array}{@{\,}cc@{\,}}
\alpha _1+\gamma _{1,2}\alpha _1-\gamma _{1,1}\alpha _2 & 
\beta _1+\gamma _{1,2}\beta _1-\gamma _{1,1}\beta _2 \\
\gamma _{2,2}\alpha _1+\alpha _2-\gamma _{2,1}\alpha _2 & 
\gamma _{2,2}\beta _1+\beta _2-\gamma _{2,1}\beta _2 
\end{array}\right) 
=\left(\begin{array}{@{\,}cc@{\,}}
\omega \alpha _1& \beta _1\\
\omega \alpha _2& \beta _2
\end{array}\right) 
\end{align*}
and 
$$
(\gamma _{0,2},-\gamma _{0,1})
\left(\begin{array}{@{\,}cc@{\,}}
\alpha _1& \beta _1\\
\alpha _2& \beta _2
\end{array}\right) 
=\frac{(\omega -1)\alpha _0}{\alpha _1\beta _2-\alpha _2\beta _1}
(\beta _2,-\beta _1)
\left(\begin{array}{@{\,}cc@{\,}}
\alpha _1& \beta _1\\
\alpha _2& \beta _2
\end{array}\right) 
=\bigl( (\omega -1)\alpha _0,0
\bigr) . 
$$
Hence, 
the right-hand side of (\ref{lem:H5pf}) is equal to 
\begin{align*}
(x_1,x_2)\left(\begin{array}{@{\,}cc@{\,}}
\omega \alpha _1& \beta _1\\
\omega \alpha _2& \beta _2
\end{array}\right) 
+\bigl( (\omega -1)\alpha _0,0
\bigr) +(\alpha _0,\beta _0)
=\bigl( \omega \tau (x_1),\tau (x_2)\bigr) . 
\end{align*}
Therefore, 
we get 
$\phi (\tau (x_1))=\omega \tau (x_1)$ 
and $\phi (\tau (x_2))=\tau (x_2)$. 
\end{proof}

We show that 
$\phi (f_5)=f_5$ if $\phi $ belongs to 
$H_5=\{ \phi _{\omega }\mid \omega \in Z'\} $. 
Take $\omega \in Z'$ such that $\phi =\phi _{\omega }$. 
Then, 
we have $\omega ^{\mu '}=1$. 
Hence, 
every element of $K[\tau _5(x_1)^{\mu '},\tau _5(x_2)]$ 
is fixed under $\phi _{\omega }$ by Lemma~\ref{lem:H5}. 
Since $g'$ belongs to $K[x_1^{\mu '}]$, 
we see that 
$f_5=\tau _5(x_2+g')$ belongs to 
$K[\tau _5(x_1)^{\mu '},\tau _5(x_2)]$. 
Hence, $f_5$ is fixed under $\phi _{\omega }$. 
Therefore, 
we get $\phi (f_5)=f_5$.

To prove the converse, 
we use the following lemma.

\begin{lem}\label{lem:inv coord H=H2}
Let $f$ be an element of $K[z]\sm K$, 
$m$ the maximal integer for which 
$f$ belongs to $K[z^m]$, 
and $\phi \in \Aut (K[z]/K)$ 
such that $\phi (z)=\alpha z$ 
for some $\alpha \in K^{\times }$. 
If $\phi (f)=f$, then we have $\alpha ^m=1$. 
\end{lem}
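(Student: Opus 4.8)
The plan is to expand $f$ in the monomial basis and read off the equation $\phi(f)=f$ coefficient by coefficient. First I would write $f=\sum_{i\geq 0}c_iz^i$ with $c_i\in K$, and introduce the support $S=\{i\geq 1\mid c_i\neq 0\}$ of positive exponents actually occurring in $f$. Since $f$ does not belong to $K$ by assumption, $S$ is nonempty. The first thing to record is that $m=\gcd S$: indeed, $f$ belongs to $K[z^d]$ if and only if $d$ divides every element of $S$ (the constant term $c_0$ causes no trouble, as $0$ is divisible by any $d$), so the largest integer $d$ with $f\in K[z^d]$ is precisely the greatest common divisor of the elements of $S$.

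Next I would use that $\phi$ is a $K$-algebra endomorphism with $\phi(z)=\alpha z$, whence $\phi(f)=\sum_{i\geq 0}c_i\alpha^iz^i$. Comparing the coefficients of $z^i$ in the equality $\phi(f)=f$ gives $c_i\alpha^i=c_i$ for every $i$, and since $c_i\neq 0$ for $i\in S$, this forces $\alpha^i=1$ for every $i\in S$.

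Finally I would invoke a B\'ezout relation for the greatest common divisor. Because $m=\gcd S$, there are finitely many $i_1,\ldots ,i_r\in S$ and integers $n_1,\ldots ,n_r\in\Z$ with $m=\sum_{j=1}^rn_ji_j$. Since $\alpha^{i_j}=1$ for each $j$ by the previous step, I obtain $\alpha^m=\prod_{j=1}^r(\alpha^{i_j})^{n_j}=1$, which is exactly the assertion.

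There is essentially no difficult step here: the argument is a routine comparison of coefficients. The only point deserving a little care is the identification of $m$ with $\gcd S$ together with the passage from ``$\alpha^i=1$ for all $i\in S$'' to ``$\alpha^m=1$'', which is precisely where the maximality of $m$ enters, through the gcd and B\'ezout. I note that the characteristic-zero hypothesis on $K$ plays no role and the statement holds over an arbitrary field.
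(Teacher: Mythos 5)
Your proof is correct and follows essentially the same route as the paper: read off $\alpha^i=1$ on the support $S$ of $f$ and conclude via a B\'ezout relation for the gcd. The only cosmetic difference is that you identify $m=\gcd S$ directly, whereas the paper introduces the positive generator $m'$ of the ideal $(S)$, shows $m'$ divides $m$ by an lcm/maximality argument, and then takes the power $\alpha ^m=(\alpha ^{m'})^{m/m'}=1$; both versions are sound.
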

\begin{proof}
Let $S$ be the set of $i\in \Z $ 
such that the monomial $z^i$ appears in $f$ 
with nonzero coefficient. 
Then, 
we have $\alpha ^i=1$ for each $i\in S$ 
by the assumption that $\phi (f)=f$. 
Let $m'$ be the positive generator 
of the ideal $(S)$ of $\Z $ generated by $S$. 
Then, 
we may write 
$m'=\sum _{i\in S}in_i$, 
where $n_i\in \Z $ for each $i\in S$. 
Hence, 
we have 
$\alpha ^{m'}=\prod _{i\in S}(\alpha ^i)^{n_i}=1$. 
Since $(S)$ is generated by $m'$, 
we know that $S$ is contained in $m'\Z $. 
Hence, 
$f$ belongs to $K[z^{m'}]$. 
By assumption, 
$f$ also belongs to $K[z^m]$. 
Thus, 
$f$ belongs to $K[z^d]$, 
where $d:=\lcm (m,m')$. 
By the maximality of $m$, 
it follows that $d=m$. 
Hence, $m'$ divides $m$. 
Therefore, 
we conclude that 
$\alpha ^m=(\alpha ^{m'})^{m/m'}=1$. 
\end{proof}

Now, assume that $\phi \in \Aff (R,\x )$ 
fixes $f_5=\tau _5(x_2+g')$. 
Then, 
$\phi ':=\tau _5^{-1}\circ \phi \circ \tau _5$ 
fixes $x_2+g'$. 
Since $\phi '$ belongs to $\Aff (K,\x )$, 
and $\deg _{x_1}g'\geq 3$ by assumption, 
it follows that 
$\phi '$ belongs to $J(K;x_1,x_2)$ 
by Lemma~\ref{lem:inv coord H=H}. 
Write $\phi '(x_1)=\omega x_1+\beta $, 
where $\omega \in K^{\times }$ and $\beta \in K$. 
We show that $\beta =0$. 
Suppose to the contrary that $\beta \neq 0$. 
Then, 
the monomial $x_1^{\lambda _2-1}$ 
appears in 
$\phi '(c_2x_1^{\lambda _2})
=c_2(\omega x_1+\beta )^{\lambda _2}$. 
Set $g''=g'-c_2x_1^{\lambda _2}$. 
Then, 
we have 
$$
\phi '(c_2x_1^{\lambda _2})
=\phi '\bigl( 
(x_2+g')-(x_2+g'')
\bigr) 
=x_2+g'-\phi '(x_2+g''). 
$$
To obtain a contradiction, 
it suffices to check that $x_1^{\lambda _2-1}$ 
does not appear in $g'$ and $\phi '(x_2+g'')$. 
Since $g'$ belongs to $K[x_1^{\mu '}]$, 
we know that $\lambda _2=\deg _{x_1}g'$ 
is divisible by $\mu '$. 
Since $\mu '\geq 2$, 
this implies that 
$\lambda _2-1$ is not divisible by $\mu '$. 
Hence, 
$x_1^{\lambda _2-1}$ does not appear in $g'$. 
Note that $\lambda _2-2\geq 1$ 
by the assumption that $\lambda _2\geq 3$. 
Since $g''$ is an element of 
$K[x_1^{\mu '}]$ with $\deg _{x_1}g''<\lambda _2$, 
we have 
$\deg _{x_1}g''\leq \lambda _2-\mu '\leq \lambda _2-2$. 
Hence, 
$x_2+g''$ has total degree at most $\lambda _2-2$. 
Since an affine automorphism preserves the total degrees, 
it follows that 
$\deg \phi '(x_2+g'')\leq \lambda _2-2$. 
Thus, 
$x_1^{\lambda _2-1}$ 
does not appear in $\phi '(x_2+g'')$. 
Therefore, we are led to a contradiction. 
This proves that $\beta =0$. 
Hence, 
we have $\phi '(x_1)=\omega x_1$, 
and so $\phi '(x_1^i)=\omega ^ix_1^i$ 
for each $i\geq 0$. 
Since $g'$ belongs to $K[x_1^{\mu '}]$, 
it follows that $g'-\phi '(g')$ 
belongs to $x_1^{\mu '}K[x_1^{\mu '}]$. 
On the other hand, 
we have $g'-\phi '(g')=\phi '(x_2)-x_2$, 
since $\phi '(x_2+g')=x_2+g'$. 
Hence, 
$\phi '(x_2)-x_2$ belongs to $x_1^{\mu '}K[x_1^{\mu '}]$. 
Since 
$\phi '$ is affine and 
$\mu '\geq 2$, 
this implies that $\phi '(x_2)=x_2$. 
Consequently, 
we have $\phi '(g')=g'$. 
Since $\mu '$ is the maximal integer 
for which $g'$ belongs to $K[x_1^{\mu '}]$, 
we conclude that $\omega ^{\mu '}=1$ 
by means of Lemma~\ref{lem:inv coord H=H2}. 
Note that 
$\phi (\tau _5(x_1))=\omega \tau _5(x_1)$ 
and $\phi (\tau _5(x_2))=\tau _5(x_2)$, 
since 
$\phi '(x_1)=\omega x_1$, $\phi '(x_2)=x_2$ 
and $\phi '=\tau _5^{-1}\circ \phi \circ \tau _5$. 
By Lemma~\ref{lem:H5}, 
this implies that $\phi =\phi _{\omega }$. 
Because $\phi $ is an element of $\Aff (R,\x )$, 
it follows that 
$\omega =\det J\phi _{\omega }$ belongs to $R^{\times }$. 
Moreover, 
$\phi _{\omega }(x_i)=\phi (x_i)$ belongs to $\Rx $ 
for $i=1,2$. 
From this, we see that 
$\gamma _{i,j}(\omega )$ belongs to $R$ 
for $i=0,1,2$ and $j=1,2$. 
Thus, 
$\omega $ belongs to $Z'$. 
Therefore, 
$\phi =\phi _{\omega }$ belongs to $H_5$. 
This completes the proof of 
Theorem~\ref{thm:H(f)} for $i=5$.

Here is a useful identity to be 
used in the following discussions. 
Let $A$ be a domain, 
and $\gamma _i$ and $p_i$ 
elements of $A$ for $i=1,2$. 
Define an endomorphism $\phi $ of 
the $A$-algebra $A[z]$ by $\phi (z)=\gamma _1z+p_1$, 
and define 
$$
z'=\gamma _2z+p_2\quad\text{and}\quad 
p'=\gamma _2p_1-(\gamma _1-1)p_2. 
$$
Then, it holds that 
\begin{equation}\label{eq:ch1:useful id}
\phi (z')
=\gamma _2\phi (z)
+p_2
=\gamma _1(\gamma _2z+p_2)-(\gamma _1-1)p_2+\gamma _2p_1 
=\gamma _1z'+p'.
\end{equation}

Now, 
we show that $H_2$ is contained in $H(f_2)$. 
Take any $\phi \in H_2$. 
Then, 
we have $\phi (x_1)=x_1$ and 
$\phi (x_2)=\omega x_2+g_{\omega }$ 
for some $\omega \in Z$. 
In the notation above 
with $A=R[x_1]$, $z=x_2$ and 
$(\gamma _1,p_1,\gamma _2,p_2)
=(\omega ,g_{\omega },\zeta -1,g)$, 
we have 
$\phi (z)=\omega x_2+g_{\omega }=\phi (x_2)$, 
$z'=(\zeta -1)x_2+g=y_2$ 
and $p'=(\zeta -1)g_{\omega }-(\omega -1)g=0$. 
Hence, we get $\phi (y_2)=\gamma _1z'+p'=\omega y_2$ 
by (\ref{eq:ch1:useful id}). 
Since $\omega ^{\mu }=1$, 
it follows that 
$\phi $ fixes every element of $K[x_1,y_2^{\mu }]$. 
Hence, 
$\phi $ fixes $f_2=ax_1+h$, 
since $h$ belongs to $K[y_2^{\mu }]$ 
by the choice of $\mu $. 
Clearly, 
$\phi $ belongs to $\T (R,\x )$. 
Thus, $\phi $ belongs to $H(f_2)$. 
Therefore, 
$H_2$ is contained in $H(f_2)$.

The reverse inclusion is proved by using 
the following lemma and proposition, 
where we denote $H(x_1)=\Aut (\Rx /R[x_1])$ for simplicity.

\begin{lem}\label{lem:H_2=H(f_2)cap H(x_1)}
$H_2':=H(f_2)\cap H(x_1)$ 
is contained in $H_2$. 
\end{lem}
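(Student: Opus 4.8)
The plan is to take an arbitrary $\phi\in H_2'=H(f_2)\cap H(x_1)$ and show it has the shape prescribed by $H_2$. Since $\phi$ lies in $H(x_1)=\Aut(\Rx/R[x_1])$, it fixes $x_1$ and restricts to an automorphism of the polynomial ring $R[x_1][x_2]$ over $R[x_1]$; hence there are $\omega\in R[x_1]^{\times}=R^{\times}$ and $v\in R[x_1]$ with $\phi(x_2)=\omega x_2+v$. Because $\phi$ also fixes $f_2=a'x_1+h$ and fixes $x_1$, it fixes $h$. So everything reduces to extracting information from the single equation $\phi(h)=h$, and the goal becomes to prove that $\omega\in Z$ and $v=g_{\omega}$.

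First I would compute the action of $\phi$ on $y_2=(\zeta-1)x_2+g$. Applying the identity (\ref{eq:ch1:useful id}) with $A=R[x_1]$, $z=x_2$, and $(\gamma_1,p_1,\gamma_2,p_2)=(\omega,v,\zeta-1,g)$ gives $\phi(y_2)=\omega y_2+p'$, where $p'=(\zeta-1)v-(\omega-1)g$. The crucial observation is that $p'=0$ is exactly equivalent to $v=(\omega-1)(\zeta-1)^{-1}g=g_{\omega}$, i.e. to $\phi(x_2)=\omega x_2+g_{\omega}$. Thus membership of $\phi$ in $H_2$ splits into the two assertions $p'=0$ and $\omega^{\mu}=1$.

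The main step, and the one I expect to be the real obstacle, is proving $p'=0$; the difficulty is precisely that the affine shift $p'$ prevents a direct appeal to Lemma~\ref{lem:inv coord H=H2}, so it must be removed first. I would use the coordinate change $\Kx=K[x_1][y_2]$ and regard $h=P(y_2)$ as a polynomial in $y_2$ over $K$, with nonzero leading coefficient $c_1$ and $\deg_{y_2}h=\lambda_1$. Since only powers of $y_2$ that are multiples of $\mu$ occur in $h$ and $\mu\ge 2$, the monomial $y_2^{\lambda_1-1}$ is absent from $h$, while the second-highest nonzero term of $h$ has $y_2$-degree at most $\lambda_1-\mu\le\lambda_1-2$. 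Expanding $\phi(h)=P(\omega y_2+p')$ and comparing the coefficient of $y_2^{\lambda_1-1}$ on both sides of $\phi(h)=h$ leaves only the contribution $c_1\lambda_1\omega^{\lambda_1-1}p'$ coming from the top term of $P$, which must therefore vanish. As $c_1\ne 0$, $\omega\in R^{\times}$, and $K$ has characteristic zero (because $R\supseteq\Z$), this forces $p'=0$, hence $\phi(y_2)=\omega y_2$ and $v=g_{\omega}\in R[x_1]$.

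It then remains to pin down $\omega$. With $\phi(y_2)=\omega y_2$ the automorphism $\phi$ restricts to an element of $\Aut(K[y_2]/K)$ fixing $h$, and $\mu$ is the maximal integer with $h\in K[y_2^{\mu}]$: this coincides with the defining condition $h\in R'[y_2^{\mu}]$, since the pattern of exponents occurring in $h$ does not depend on whether its coefficients are read in $R'$ or in $K$. Lemma~\ref{lem:inv coord H=H2}, applied with $z=y_2$ and $\alpha=\omega$, then yields $\omega^{\mu}=1$. Combining $\omega^{\mu}=1$, $\omega\in R^{\times}$, and $g_{\omega}=v\in R[x_1]$ shows $\omega\in Z$ and $\phi(x_2)=\omega x_2+g_{\omega}$, so $\phi\in H_2$, as required.
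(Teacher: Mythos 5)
Your proof is correct and takes essentially the same route as the paper's: both write $\phi(x_2)=\omega x_2+v$, pass to $\phi(y_2)=\omega y_2+p'$ via (\ref{eq:ch1:useful id}), kill the shift $p'$ by examining the coefficient of $y_2^{\lambda_1-1}$ in $\phi(h)=h$ (using that all exponents in $h$ are multiples of $\mu\geq 2$), and then get $\omega^{\mu}=1$ from Lemma~\ref{lem:inv coord H=H2}. The only difference is cosmetic: the paper argues by contradiction that $y_2^{\lambda_1-1}$ would appear in $(\omega y_2+q)^{\lambda_1}$ but in neither $h$ nor $\phi(h')$, whereas you extract the coefficient $c_1\lambda_1\omega^{\lambda_1-1}p'$ directly and invoke characteristic zero.
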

\begin{proof}
Take any $\phi \in H_2'$. 
Then, we have $\phi (x_1)=x_1$. 
Hence, 
we may write $\phi (x_2)=\omega x_2+p$, 
where $\omega \in R^{\times }$ and $p\in R[x_1]$.	 
We show that $\omega ^{\mu }=1$ and $p=g_{\omega }$. 
Then, 
it follows that $\phi $ belongs to $H_2$. 
By applying (\ref{eq:ch1:useful id}) with 
$A=R[x_1]$, $z=x_2$ and 
$(\gamma _1,p_1,\gamma _2,p_2)=(\omega ,p,\zeta -1,g)$, 
we get $\phi (y_2)=\omega y_2+q$, 
where 
$$
q:=(\zeta -1)p-(\omega -1)g. 
$$
Then, 
we have $p=g_{\omega }$ if and only if $q=0$. 
So we prove that $q=0$. 
Note that $h=f_2-ax_1$ is fixed under $\phi $, 
since $\phi $ belongs to $H(f_2)\cap H(x_1)$. 
Hence, 
we have 
$$
c_1(\omega y_2+q)^{\lambda _1}=\phi (c_1y_2^{\lambda _1})=
\phi (h-h')=h-\phi (h'), 
$$
where $h':=h-c_1y_2^{\lambda _1}$. 
Now, 
suppose to the contrary that $q\neq 0$. 
Then, 
the monomial $y_2^{\lambda _1-1}$ 
appears in $(\omega y_2+q)^{\lambda _1}$ 
as a polynomial in $y_2$ over $R'[x_1]$. 
Hence, 
$y_2^{\lambda _1-1}$ 
appears in $h$ or $\phi (h')$ 
by the preceding equality. 
Since $h$ belongs to $R'[y_2^{\mu }]$, 
we know that $\lambda _1=\deg _{y_2}h$ 
is divisible by $\mu $. 
Since $\mu \geq 2$, 
this implies that 
$\lambda _1-1$ is not divisible by $\mu $. 
Hence, 
$y_2^{\lambda _1-1}$ does not appear in $h$. 
Note that 
$\deg _{y_2}h'\leq \lambda _1-\mu \leq \lambda _1-2$, 
since $h'$ is an element of 
$R'[y_2^{\mu }]$ with $\deg _{y_2}h'<\lambda _1$. 
Because $\phi (y_2)=\omega y_2+q$, 
it follows that $\deg _{y_2}\phi (h')=\deg _{y_2}h'\leq \lambda _1-2$. 
Hence, 
$y_2^{\lambda _1-1}$ does not appear in $\phi (h')$. 
This is a contradiction, 
thus proving $q=0$. 
Therefore, we get $p=g_{\omega }$ 
and $\phi (y_2)=\omega y_2$. 
Since $\phi (h)=h$, 
and $\mu $ is the maximal number such that 
$h$ belongs to $R'[y_2^{\mu }]$, 
it follows that $\omega ^{\mu }=1$ 
by Lemma~\ref{lem:inv coord H=H2}. 
This proves that $\phi $ belongs to $H_2$. 
Therefore, 
$H_2'$ is contained in $H_2$. 
\end{proof}

The following proposition is a key 
to the proof of the ``if" part of 
Theorem~\ref{thm:Gamma (f)}, 
and Theorem~\ref{thm:H(f)} for $i=2$.

\begin{prop}\label{prop:inv}
Let $f\in \Rx $ be a coordinate of $\Kx $ over $K$ 
which is tamely reduced over $R$. 

\noindent{\rm (i)} 
Assume that $\deg _{x_1}f>\deg _{x_2}f\geq 2$. 
If $\phi (f)=f$ for some 
$\phi \in J(R;x_1,x_2)$ with $\phi \neq \id _{\Rx }$, 
then $\phi $ belongs to $H(x_1)$ and $f$ is of type II.

\noindent{\rm (ii)} 
Assume that $\deg _{x_1}f=\deg _{x_2}f\geq 2$. 
If $\phi (f)=f$ for some $\phi \in \Aff (R,\x)$ 
with $\phi \neq \id _{\Rx }$, 
then $f$ is of type IV, 
or $f$ satisfies the condition $(5)$ of 
Definition~$\ref{def:invariant coord}$ 
except for {\rm (c)}. 
\end{prop}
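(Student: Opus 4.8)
The plan is to handle both parts by the same template: first pin down the $\w(f)$-leading form of $f$ via Proposition~\ref{prop:slope}, then use the single relation $\phi(f)=f$ to extract a root of unity and a semi-invariant, and finally play the rigidity of a coordinate (again Proposition~\ref{prop:slope}) against the tameness hypothesis coming from Proposition~\ref{prop:reduced coordinate} to force the special shape. For (i), since $f$ is tamely reduced with $\deg_{x_1}f>\deg_{x_2}f\geq 2$, Proposition~\ref{prop:slope} gives $f^{\w(f)}=a(x_2-bx_1^l)^m$ with $m=\deg_{x_2}f\geq 2$, $\deg_{x_1}f=lm$, $l\geq 2$, and $b\notin R$ by Proposition~\ref{prop:reduced coordinate}~(ii). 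Writing $\phi\in J(R;x_1,x_2)$ as $\phi(x_1)=a_1x_1+h_1$, $\phi(x_2)=a_2x_2+h_2$ as in (\ref{eq:jonquiere}), I would compare the coefficient of $x_2^m$ in $\phi(f)=f$ to get $a_2^m=1$, set $\zeta:=a_2$, and read off from the coefficient of $x_2^{m-1}$ a polynomial $g\in K[x_1]$ (essentially $\tfrac{\zeta-1}{am}$ times that coefficient) for which $y_2:=(\zeta-1)x_2+g$ satisfies $\phi(y_2)=\zeta y_2$. Since $y_2^{\w(f)}=(\zeta-1)(x_2-bx_1^l)$, the leading coefficient $c$ of $g$ equals $-(\zeta-1)b$, whence $c\notin(\zeta-1)R$. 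Passing to the coordinate system $\{x_1,y_2\}$ and expanding $f=\sum_k\tilde c_k(x_1)y_2^k$, invariance becomes $\tilde c_k(a_1x_1+h_1)=\zeta^{-k}\tilde c_k(x_1)$ for every $k$.

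The hard part of (i) is to deduce $\phi(x_1)=x_1$ and the constancy required by type II. Each nonzero $\tilde c_k$ is a semi-invariant of $x_1\mapsto a_1x_1+h_1$; if $a_1$ were not a root of unity then all $\tilde c_k$ with $k\geq1$ would be constants, killing the $x_1$-dependence and contradicting that $f$ is a coordinate, while if $a_1$ is a root of unity $\neq1$ I would apply Proposition~\ref{prop:slope} in the coordinates $\{x_1,y_2\}$ and note that the exponents of the binomial making up $f^{\w}$ must lie in the character lattice $\{(p,q):a_1^p\zeta^q=1\}$, which (as $\zeta^m=1$) forces $\deg_{x_1}f\geq 2$ there and is incompatible with tame reducedness; this rules out $a_1\neq1$, and $h_1=0$ follows, while the degenerate case $\zeta=1$ collapses to $\phi=\id$ by a degree-in-$x_2$ argument, contradicting $\phi\neq\id$. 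With $\phi(x_1)=x_1$ the relation forces $\tilde c_k=0$ unless $e\mid k$, where $e\geq 2$ is the order of $\zeta$, and Proposition~\ref{prop:slope} applied once more shows the surviving $\tilde c_{je}$ ($j\geq1$) are constants and $\tilde c_0$ is linear in $x_1$; collecting $a'$, $h$, $g$, $y_2$ and checking $a'\in R'$ yields type II. Thus $\phi\in H(x_1)$ and $f$ is of type II.

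For (ii), $\deg_{x_1}f=\deg_{x_2}f=m\geq2$ forces $l=1$ in Proposition~\ref{prop:slope}, so $f^{\w(f)}=a\,p^m$ for a linear form $p$, with $b\notin V(R)$ by Proposition~\ref{prop:reduced coordinate}~(i). Comparing leading forms in $\phi(f)=f$ shows the linear part of the affine map $\phi$ scales $p$ by a root of unity $\zeta'$ with $(\zeta')^m=1$ (the scaling being nonzero because affine maps preserve degree). If $m=2$ I would argue directly, without invoking $\phi$: $f=a\,p^2+(\text{affine linear})$, and coordinate-ness makes the linear remainder transverse to $p$, so after rescaling $p$ to $\tau_4(x_1)=\alpha_1x_1+\alpha_2x_2$ and setting $\tau_4(x_2)=f-a\tau_4(x_1)^2$ one obtains type IV, with $\alpha_1/\alpha_2\notin V(R)$ inherited from $b\notin V(R)$ via Lemma~\ref{lem:V(R)}. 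If $m\geq3$ the harder task is to show $\zeta'\neq1$ (otherwise $\phi=\id$) and then, after choosing $\tau_5\in\Aff(K,\x)$ diagonalizing the linear part of $\phi$ so that $\tau_5(x_1)$ spans the $\zeta'$-eigenline $p$ (so that $\tau_5^{-1}\circ\phi\circ\tau_5$ is $x_1\mapsto\zeta'x_1$, $x_2\mapsto x_2$ as in Lemma~\ref{lem:H5}), to solve the cohomological equation expressing $\phi$-invariance and conclude that $\tau_5^{-1}(f)$ has no mixed $x_1^ax_2^b$ ($b\geq1$) terms. Then Lemma~\ref{lem:inv coord H=H} gives that $\tau_5^{-1}(f)=x_2+g'(x_1)$, Lemma~\ref{lem:inv coord H=H2} gives $g'\in K[x_1^{e'}]$ with $e'=\mathrm{ord}(\zeta')\geq2$, and $\deg_{x_1}g'=m\geq3$; these are exactly conditions (a) and (b) of Definition~\ref{def:invariant coord}~(5).

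The main obstacle, in both parts, is the rigidity step: converting the eigenvalue/semi-invariant data coming from the single relation $\phi(f)=f$ into the statement that $f$ has no ``intermediate'' terms, equivalently that $\phi$ fixes the heavy variable $x_1$ in (i) and that $\tau_5^{-1}(f)$ is linear in $x_2$ in (ii). I expect this to hinge on pitting the Newton-polygon constraint of a coordinate (Proposition~\ref{prop:slope}) against the integrality hypotheses $b\notin R$ (resp.\ $b\notin V(R)$); once the shape is established, the bookkeeping for the precise integrality conditions such as $a'\in R'$ and $c\notin(\zeta-1)R$ in (i) is routine.
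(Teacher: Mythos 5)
Your architecture tracks the paper's (extract an eigenvalue from $\phi(f)=f$, build a semi-invariant, identify the fixed ring, then play Proposition~\ref{prop:slope} against Proposition~\ref{prop:reduced coordinate}), but there is a load-bearing gap: the unipotent case. In (i) you dispose of $\zeta=1$ with ``collapses to $\phi=\id_{\Rx}$ by a degree-in-$x_2$ argument,'' and in (ii) you assert that $\zeta'=1$ would force $\phi=\id_{\Rx}$. Both claims are false as stated: a nonidentity automorphism with unipotent linear part can perfectly well fix a coordinate (e.g.\ $\phi(x_1)=x_1$, $\phi(x_2)=x_2+1$ fixes $x_1$). What must actually be shown is that such a $\phi$ cannot fix a coordinate with $\deg_{x_2}f\geq 2$ (resp.\ with $\deg f\geq 3$), and in the paper this is precisely the heaviest step: one writes the unipotent $\phi$ as $\exp D$ for a triangular derivation (Lemma~\ref{lem:e-n2}), identifies the fixed ring $\Kx ^{\phi }$ with $\ker D$ (Lemma~\ref{lem:e-n}), and invokes Theorem~\ref{thm:Rentschler} to get $\ker D=K[x_2-q]$ with $\deg_{x_1}q\leq 2$, whence $\deg_{x_2}f=1$ in (i) and $\deg f=2$ (type IV, excluded when $m\geq 3$) in (ii); the mixed subcase $\zeta=1$, $a_1\neq 1$ needs Lemma~\ref{lem:k[x^t,y]} (ii), whose proof uses $\Z \subset R$. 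The gap propagates: your ``$h_1=0$ follows'' (i.e.\ $v=0$, which is what puts $\phi $ in $H(x_1)$) is obtained in the paper by applying the unipotent case to $\phi ^e$ to conclude $\phi ^e=\id_{\Rx}$ and hence $ev=0$; and in (ii) your diagonalization to $x_1\mapsto \zeta 'x_1$, $x_2\mapsto x_2$ silently assumes the translation part $\beta _2$ vanishes, which again rests on Lemma~\ref{lem:k[x^t,y]} (ii).

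Smaller repairs: your reason for excluding $a_1$ a root of unity $\neq 1$ (with $\zeta \neq 1$) is ``incompatible with tame reducedness,'' but the actual obstruction is that a coordinate has nowhere-vanishing gradient, while a map whose linear part has no eigenvalue $1$ has a fixed point at which invariance forces the gradient of $f$ to vanish --- this is the content of Lemma~\ref{lem:eigenvalue}, which you never invoke. In (ii), invariance alone does not kill mixed monomials: the fixed ring of $x_1\mapsto \zeta 'x_1$, $x_2\mapsto x_2$ is $K[x_1^{e'},x_2]$, which contains $x_1^{e'}x_2$; linearity in $x_2$ comes from coordinate-ness via the descent of Lemma~\ref{lem:k[x^t,y]} (i), not from Proposition~\ref{prop:slope} directly, and your citations here are misdirected (Lemma~\ref{lem:inv coord H=H} constrains $\psi $, not $f$, and Lemma~\ref{lem:inv coord H=H2} deduces $(\zeta ')^{e'}=1$ from $g'\in K[x_1^{e'}]$, not the reverse); the same descent is needed for your claim that the surviving $\tilde c_{je}$ are constants in (i). On the positive side, two of your shortcuts are genuine and correct: the construction of the semi-invariant $y_2=(\zeta -1)x_2+g$ directly from the $x_2^{m-1}$-coefficient, valid before knowing $\phi (x_1)=x_1$, and the $m=2$ argument in (ii), where $\deg _{\w (f)}f=4$ forces $f=a(x_2-bx_1)^2+\ell $ with $\ell $ affine, so type IV follows without using $\phi $ at all (slightly stronger than the paper's route). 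Neither, however, repairs the unipotent case.
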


We prove Proposition~\ref{prop:inv} in the next section. 
In the rest of this section, 
we derive from Proposition~\ref{prop:inv} 
the ``if" part of Theorem~\ref{thm:Gamma (f)}, 
and Theorem~\ref{thm:H(f)} for $i=2$.

Let $f\in \Rx $ be a coordinate of $\Kx $ over $K$ 
which satisfies (A), (B) and (C) of Theorem~\ref{thm:Gamma (f)}. 
We show that $f$ is of one of the types I through V\null. 
Since $\deg _{x_1}f\geq \deg _{x_2}f\geq 1$ by (C), 
we have $|\w (f)|>1$. 
Hence, 
there exist $l,m\in \N $ 
and $\alpha ,\beta \in K^{\times }$ 
such that 
$\deg _{x_1}f=lm$, $\deg _{x_2}f=m$ 
and $f^{\w (f)}=\alpha (x_2+\beta x_1^l)^m$ 
by Proposition~\ref{prop:slope}. 
Because $f$ is tamely reduced over $R$ by (B), 
it follows from 
Proposition~\ref{prop:reduced coordinate} that 
$\beta $ does not belong to $V(R)$ if $l=1$, 
and to $R$ if $l\geq 2$.

Assume that $l=m=1$. 
Then, 
we have 
$\deg _{x_1}f=\deg _{x_2}f=1$ and 
$f^{\w (f)}=\alpha (x_2+\beta x_1)$. 
Hence, 
$f$ has the form 
$\alpha x_2+\alpha \beta x_1+b$ 
for some $b\in R$. 
Since $f$ belongs to $\Rx $, 
it follows that 
$a_2:=\alpha $ and $a_1:=\alpha \beta $ 
belong to $R\sm \zs $. 
Since $l=1$, 
we know that $a_1/a_2=\beta $ 
does not belong to $V(R)$. 
Therefore, 
$f$ is of type III.

Assume that $l\geq 2$ and $m=1$. 
Then, 
we have 
$\deg _{x_1}f=l$, $\deg _{x_2}f=1$ 
and $f^{\w (f)}=\alpha (x_2+\beta x_1^l)$. 
From this, we see that 
$g:=f-\alpha x_2$ is an element of $R[x_1]$ 
of degree $l\geq 2$ 
with leading coefficient $\alpha \beta $. 
Since $l\geq 2$, 
we know that 
$\beta $ does not belong to $R$. 
Hence, 
$\alpha \beta $ does not belong to $\alpha R$. 
Therefore, 
$f$ is of type I.

Assume that $l\geq 2$ and $m\geq 2$. 
Then, 
we have $\deg _{x_1}f>\deg _{x_2}f\geq 2$. 
Thanks to Theorem~\ref{thm:reduced coordinate} (ii), 
this implies that 
$H(f)$ is contained in $J(R;x_1,x_2)$ 
because of (B). 
By (A), 
there exists $\phi \in H(f)$ 
with $\phi \neq \id _{\Rx }$. 
Then, 
$\phi $ belongs to $J(R;x_1,x_2)$ 
and satisfies $\phi (f)=f$. 
Therefore, 
$f$ is of type II 
due to Proposition~\ref{prop:inv} (i).

Finally, 
assume that $l=1$ and $m\geq 2$. 
Then, 
we have 
$\deg _{x_1}f=\deg _{x_2}f\geq 2$. 
Thanks to Theorem~\ref{thm:reduced coordinate} (i), 
this implies that $H(f)$ is contained in $\Aff (R,\x )$ 
because of (B). 
By (A), 
there exists $\phi \in H(f)$ 
with $\phi \neq \id _{\Rx }$. 
Then, 
$\phi $ belongs to $\Aff (R,\x )$ 
and satisfies $\phi (f)=f$. 
Due to Proposition~\ref{prop:inv} (ii), 
this implies that $f$ is of type IV, 
or satisfies the condition of 
Definition~\ref{def:invariant coord} (5) 
except for (c).

We show that $f$ is of type V in the latter case. 
By assumption, 
we may write $f=\tau _5(x_2+g')$. 
Here, 
$\tau _5$ is an element of $\Aff (K,\x )$ 
satisfying (b) of 
Definition~\ref{def:invariant coord} (5), 
and $g'$ is an element of $x_1^{e'}K[x_1^{e'}]$ 
for some $e'\geq 2$ with $\deg _{x_1}g'\geq 3$. 
By Theorem~\ref{thm:H(f)} for $i=5$, 
we have $H(f)=H_5$. 
Hence, 
we get $Z'\sm  \{ 1\} \neq \emptyset $ by (A). 
Take $\omega \in Z'$ with $\omega \neq 1$. 
Then, we have $\omega ^{\mu '}=1$ and 
$\gamma _{i,j}(\omega )$ belongs to $R$ 
for $i=0,1,2$ and $j=1,2$ by definition. 
Since $g'$ belongs to $K[x_1^{\mu '}]$ and $x_1^{e'}K[x_1^{e'}]$, 
we know that $g'$ belongs to $x_1^{\mu '}K[x_1^{\mu '}]$. 
Thus, 
$f$ satisfies (a) and (c) of 
Definition~\ref{def:invariant coord} (5) 
with $e'$ replaced by $\mu '$. 
Therefore, $f$ is of type V\null.

This proves that 
$f$ is of one of the types I through V\null. 
Therefore, 
the``if" part of Theorem~\ref{thm:Gamma (f)} 
follows from Proposition~\ref{prop:inv}.

To complete the proof of 
Theorem~\ref{thm:H(f)} for $i=2$, 
it remains only to prove that 
$H(f_2)$ is contained in $H_2$. 
Thanks to Lemma~\ref{lem:H_2=H(f_2)cap H(x_1)}, 
it suffices to verify that 
$H(f_2)$ is contained in $H(x_1)$. 
By (\ref{eq:w(f_i)}), 
we have $\deg _{x_1}f_2>\deg _{x_2}f_2\geq 2$. 
In view of Theorem~\ref{thm:reduced coordinate} (ii), 
this implies that 
$H(f_2)$ is contained in $J(R;x_1,x_2)$, 
since $f_2$ is tamely reduced over $R$. 
By Proposition~\ref{prop:inv} (i), 
we know that 
$$
\Aut (\Rx /R[f_2])\cap J(R;x_1,x_2)
$$ 
is contained in $H(x_1)$. 
Thus, $H(f_2)$ is contained in $H(x_1)$. 
This proves that $H(f_2)$ is contained in $H_2$. 
Therefore, 
Proposition~\ref{prop:inv} (i) implies 
Theorem~\ref{thm:H(f)} for $i=2$.

\section{Proof (II)} 
\setcounter{equation}{0}
\label{sect:tame intersection pf 2}

The goal of this section is to prove 
Proposition~\ref{prop:inv}. 
Recall that elements of $\Aut (\Rx /R)$ 
are naturally regarded as 
elements of $\Aut (A[\x ]/A)$ 
for any $R$-domain $A$. 
For each $\phi \in \Aut (\Rx /R)$, 
we define an $A$-subalgebra of $A[\x ]$ by 
$$
A[\x ]^{\phi }=\{ f\in A[\x ]\mid \phi (f)=f\} . 
$$

\begin{lem}\label{lem:k[x^t,y]}
\noindent{\rm (i)} 
Let $t\geq 2$ be an integer, 
and $f\in R[x_1^t,x_2]$ 
a coordinate of $\Kx $ over $K$. 
Then, 
we have $f=\alpha x_2+p$ 
for some $\alpha \in R\sm \zs $ 
and $p\in R[x_1^t]$.

\noindent{\rm (ii)} 
Let $\phi \in \Aut (\Kx /K)$ 
be such that $\phi (x_1)=\alpha x_1$ 
and $\phi (x_2)=x_2+q$ for some $\alpha \in K^{\times }$ 
and $q\in K[x_1]\sm \zs $ with $\phi (q)=q$. 
Then, 
$\Kx ^{\phi }$ is contained in $K[x_1]$. 
\end{lem}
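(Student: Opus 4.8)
The plan is to treat the two parts separately, in each case using that $K$ has characteristic zero (since $R$ contains $\Z$) and writing the relevant polynomial as a polynomial in $x_2$ with coefficients in $K[x_1]$.

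For (i), I would write $f=\sum_{k=0}^{m}c_kx_2^k$ with $c_k\in R[x_1^t]$, $c_m\neq 0$ and $m=\deg _{x_2}f$, the goal being to force $m=1$. First note that, since $t\geq 2$ and every monomial of $f$ has $x_1$-degree divisible by $t$, the derivative $\partial f/\partial x_1$ is divisible by $x_1^{t-1}$, hence by $x_1$. Next choose $\psi\in\Aut(\Kx/K)$ with $\psi(x_1)=f$; then $\det J\psi\in K^{\times}$, and expanding $\det J\psi=(\partial f/\partial x_1)(\partial \psi(x_2)/\partial x_2)-(\partial f/\partial x_2)(\partial \psi(x_2)/\partial x_1)$ and reducing modulo $x_1$ annihilates the first summand. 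Thus $(\partial f/\partial x_2)(0,x_2)$ divides a nonzero constant in $K[x_2]$, so it is itself a nonzero constant, and since $\mathrm{char}\,K=0$ this forces $c_k(0)=0$ for every $k\geq 2$.

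The main obstacle is then to rule out $m\geq 2$, and here I would bring in the slope structure. If $m\geq 2$ then $f\notin K[x_2]$, so $|\w(f)|>1$ and Proposition~\ref{prop:slope} (with $\kappa=K$) applies, giving $f^{\w(f)}=a(x_i-bx_j^l)^m$ for some $i\neq j$ and $a,b\in K^{\times}$. The case $(i,j)=(1,2)$ is impossible: the monomials $x_1^m$ and $x_1^{m-1}x_2^l$ both occur with nonzero coefficient (using $\mathrm{char}\,K=0$), forcing $t\mid m$ and $t\mid(m-1)$, a contradiction. Hence $(i,j)=(2,1)$ and $f^{\w(f)}=a(x_2-bx_1^l)^m$ with $\deg _{x_2}f=m$; comparing $\w(f)$-degrees shows the coefficient $c_m$ of $x_2^m$ equals the constant $a$, so $c_m(0)=a\neq 0$, contradicting $c_k(0)=0$ for $k\geq 2$. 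Therefore $m=1$, and $f=c_1x_2+c_0$ with $c_1\in R\sm\zs$ and $c_0\in R[x_1^t]$, as claimed.

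For (ii), suppose $F\in\Kx^{\phi}$ and write $F=\sum_{j=0}^{d}a_jx_2^j$ with $a_j\in K[x_1]$, $a_d\neq 0$, $d=\deg _{x_2}F$; I must show $d=0$. Applying $\phi$ gives $\phi(F)=\sum_j a_j(\alpha x_1)(x_2+q)^j$. Comparing coefficients of $x_2^d$ in $\phi(F)=F$ yields $a_d(\alpha x_1)=a_d(x_1)$, and comparing coefficients of $x_2^{d-1}$ (using this) yields $a_{d-1}(x_1)-a_{d-1}(\alpha x_1)=d\,q\,a_d(x_1)$. Now assume $d\geq 1$. Since $q(\alpha x_1)=q(x_1)$ and $a_d(\alpha x_1)=a_d(x_1)$, the product $qa_d$ is fixed by $x_1\mapsto\alpha x_1$, so every exponent occurring in it satisfies $\alpha^{s}=1$; in particular $\alpha^{u+e}=1$ for its top exponent, where $u=\deg q$ and $e=\deg a_d$. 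The operator $a\mapsto a(x_1)-a(\alpha x_1)$ multiplies the coefficient of $x_1^{s}$ by $1-\alpha^{s}$, so the coefficient of $x_1^{u+e}$ on the left-hand side is $0$, whereas on the right-hand side it is $d$ times the (nonzero) leading coefficient of $qa_d$, which is nonzero because $\mathrm{char}\,K=0$ gives $d\neq 0$. This contradiction forces $d=0$, i.e. $F\in K[x_1]$. The uniform treatment of constant and non-constant $q$ via the $\alpha$-fixed top exponent of $qa_d$ is the delicate point here.
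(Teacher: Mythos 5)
Your part (ii) is correct and is essentially the paper's own argument in concrete form: the paper packages the computation in the operator $\delta =\phi -\id _{\Kx }$, notes that $\delta ^2(h)=0$ implies $\delta (h)=0$ for $h\in K[x_1]$, and compares the two top $x_2$-coefficients of an invariant polynomial to force $l\,p_0\,q=0$; your exponent-by-exponent eigenvalue bookkeeping for $x_1\mapsto \alpha x_1$ is the same mechanism.

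Part (i) takes a genuinely different route from the paper, which argues by descent: the set $U$ of counterexamples in $K[x_1^t,x_2]$ is stable under the maps $\psi _q(x_2)=x_2-q$ with $q\in K[x_1^t]$, a counterexample with $|\w (f)|$ minimal is chosen, and Proposition~\ref{prop:slope} together with Lemma~\ref{lem:w-homoge autom} produces $\psi _q(f)\in U$ of strictly smaller $|\w (f)|$, a contradiction. Your Jacobian-modulo-$x_1$ trick (using $\partial f/\partial x_1\in x_1\Kx $ to force $(\partial f/\partial x_2)(0,x_2)\in K^{\times }$, hence $c_1(0)\neq 0$ and $c_k(0)=0$ for $k\geq 2$) is sound, and so is your use of Proposition~\ref{prop:slope} to exclude the case $(i,j)=(1,2)$ and to get $c_m(0)=a\neq 0$ in the case $(2,1)$; this correctly yields $m=\deg _{x_2}f=1$.

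However, there is a genuine gap at the very last step: from $m=1$ you conclude $f=c_1x_2+c_0$ ``with $c_1\in R\sm \zs $,'' but nothing you have proved shows that $c_1$ is a \emph{constant}. All the facts you established ($\deg _{x_2}f=1$, $c_1(0)\neq 0$, $c_k(0)=0$ for $k\geq 2$) are satisfied by $f=(1+x_1^t)x_2+x_1^t$, which is not of the required form; of course this polynomial is not a coordinate, but your final inference never invokes the coordinate hypothesis again, so it cannot rule out such a shape. The missing step is exactly where the conclusion of the lemma lives, so it must be supplied. It is available with your own tools: run the Proposition~\ref{prop:slope} case analysis whenever $\deg _{x_1}f\geq 1$, not only when $m\geq 2$. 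The case $(i,j)=(1,2)$ is impossible exactly as you argue; in the case $(i,j)=(2,1)$ one has $\w (f)=(m,lm)$ and $\deg _{\w (f)}f=lm^2$, so any monomial $x_1^ix_2^m$ with $i\geq 1$ has $\w (f)$-degree $im+lm^2>\deg _{\w (f)}f$ and cannot occur in $f$; hence the coefficient of $x_2^m$ is the constant $a$, which lies in $R\sm \zs $ because it belongs to $R[x_1^t]$. Combined with your step 2 this forces $m=1$ and $c_1=a\in R\sm \zs $ simultaneously (and if $\deg _{x_1}f=0$, then $f\in R[x_2]$ is linear and the claim is trivial). With that restructuring your proof of (i) is complete.
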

\begin{proof}
(i) 
Let $U$ be the set of $f\in K[x_1^t,x_2]$ 
which is a coordinate of $\Kx $ over $K$, 
but is not of the form 
$f=\alpha x_2+p$ 
for any $\alpha \in K^{\times }$ and $p\in K[x_1^t]$.  
For each $q\in K[x_1^t]$, 
we define $\psi _q\in \Aut (\Kx /K[x_1])$ 
by $\psi _q(x_2)=x_2-q$. 
Then, 
we remark that $\psi _q(U)$ 
is contained in $U$. 
Now, 
suppose to the contrary that 
there exists a coordinate 
$f'\in R[x_1^t,x_2]$ of $\Kx $ over $K$ 
which is not of the form 
$f'=\alpha x_2+p$ 
for any $\alpha \in R\sm \zs $ 
and $p\in R[x_1^t]$.  
Then, 
$f'$ belongs to $U$. 
Actually, 
if $f'=\alpha x_2+p$ 
for some $\alpha \in K^{\times }$ and $p\in K[x_1^t]$, 
then $\alpha $ and $p$ belong to 
$R\sm \zs $ and $R[x_1^t]$, respectively. 
Hence, 
$U$ is not empty. 
Choose $f\in U$ so that $|\w (f)|$ is minimal. 
First, 
we show that $|\w (f)|>1$. 
Suppose to the contrary that $|\w (f)|=1$. 
Then, 
$f$ is a linear polynomial in $x_i$ over $K$ 
for some $i\in \{ 1,2\} $. 
Since $f$ is an element of $K[x_1^t,x_2]$ with $t\geq 2$, 
we know that $i=2$. 
Hence, we have $f=\alpha x_2+\beta $ 
for some $\alpha \in K^{\times }$ and $\beta \in K$. 
Thus, 
$f$ does not belong to $U$, 
a contradiction. 
Therefore, 
we get $|\w (f)|>1$. 
By Proposition~\ref{prop:slope}, 
there exist 
$i,j\in \{ 1,2\} $ with $i\neq j$, 
$l,m\in \N $ and 
$\alpha ,\beta \in K^{\times }$ such that 
$\deg _{x_i}f=m$, $\deg _{x_j}f=lm$ and 
$f^{\w (f)}=\alpha (x_i+\beta x_j^l)^m$. 
Since $K$ is of characteristic zero, 
the monomials 
$x_ix_j^{l(m-1)}$ and $x_i^{m-1}x_j^{l}$ 
appear in $f^{\w (f)}$, 
and hence appear in $f$. 
Because $f$ is an element of $K[x_1^t,x_2]$ 
with $t\geq 2$, 
it follows that $(i,j)=(2,1)$, 
and $l$ is a multiple of $t$. 
Hence, 
$q:=\beta x_1^l$ belongs to $K[x_1^t]$. 
Put $\psi =\psi _q$. 
Then, 
$\psi (U)$ is contained in $U$ as remarked. 
Hence, 
$\psi (f)$ belongs to $U$. 
Thus, 
we get $|\w (\psi (f))|\geq |\w (f)|$ 
by the minimality of $|\w (f)|$. 
To obtain a contradiction, 
we show that $|\w (\psi (f))|<|\w (f)|$ 
using Lemma~\ref{lem:w-homoge autom}. 
Since $f$ is a coordinate of $\Kx $ over $K$ 
with $|\w (f)|>0$, 
we know that $f$ satsfies the equivalent conditions (a) and (b) 
before Proposition~\ref{prop:slopee}. 
By definition, 
we have $\psi (x_1)=x_1$ and 
$\psi (x_2)=x_2-\beta x_1^l$. 
Since $\w (f)=(m,lm)$, 
we see that $\psi $ is $\w (f)$-homogeneous. 
Since 
$$
\psi (f^{\w (f)})=\psi \bigl( 
\alpha (x_2+\beta x_1^l)^m
\bigr) 
=\alpha x_2^m, 
$$
we have 
$\deg _{x_1}\psi (f^{\w (f)})<\deg _{x_1}f^{\w (f)}$. 
Thus, it follows that 
$|\w (\psi (f))|<|\w (f)|$ 
by Lemma~\ref{lem:w-homoge autom}. 
Therefore, 
we are led to a contradiction, 
proving (i).

(ii) 
Define a $K$-linear endomorphism of 
$\Kx $ by 
$\delta :=\phi -\id _{\Kx }$. 
Then, we have $\ker \delta =\Kx ^{\phi }$, 
and 
$$
\delta (x_1^i)
=\phi (x_1)^i-x_1^i
=\alpha ^ix_1^i-x_1^i
=(\alpha ^i-1)x_1^i
$$ 
for each $i\in \Zn $. 
If $\delta ^2(cx_1^i)=c(\alpha ^i-1)^2x_1^i=0$ 
for $c\in K$ and $i\in \Zn $, 
then we have $c=0$ or $\alpha ^i=1$, 
and hence $\delta (cx_1^i)=c(\alpha ^i-1)x_1^i=0$. 
Thus, 
$\delta ^2(f)=0$ implies 
$\delta (f)=0$ for each $f\in K[x_1]$.

Now, 
take any $p\in \Kx ^{\phi }\sm \zs $, 
and write $p=\sum _{i=0}^lp_{l-i}x_2^i$, 
where $l\in \Zn $ and 
$p_0,\ldots ,p_l\in K[x_1]$ 
with $p_0\neq 0$. 
Then, 
it suffices to show that $l=0$. 
Since $\phi (K[x_1])$ is contained in $K[x_1]$, 
and $\phi (x_2)=x_2+q$, 
we have 
\begin{align*}
p&=\phi (p)=
\sum _{i=0}^l\phi (p_{l-i})(x_2+q)^i \\
&=\phi (p_0)x_2^l+(l\phi (p_0)q+\phi (p_1))x_2^{l-1}
+(\text{terms of lower degree in }x_2). 
\end{align*}
Hence, 
we get $\phi (p_0)=p_0$ 
and $l\phi (p_0)q+\phi (p_1)=p_1$. 
Thus, 
$p_0$ belongs to $\Kx ^{\phi }$ 
and 
$$
\delta (p_1)=\phi (p_1)-p_1=-l\phi (p_0)q=-lp_0q. 
$$ 
Since $q$ belongs to $\Kx ^{\phi }$ by assumption, 
$-lp_0q$ belongs to $\Kx ^{\phi }=\ker \delta $. 
Hence, it follows that $\delta ^2(p_1)=0$. 
This implies that 
$\delta (p_1)=0$ as mentioned. 
Thus, we get $lp_0q=0$. 
Since $p_0$ and $q$ are nonzero, 
we conclude that $l=0$ 
due to the assumption that $R$ contains $\Z $. 
Therefore, 
$p$ belongs to $K[x_1]$, 
proving (ii). 
\end{proof}

In the following discussion and two lemmas, 
$n\in \N $ may be arbitrary. 
For each endomorphism $\phi $ of the $R$-algebra $\Rx $ 
and each matrix 
$A=(a_{i,j})_{i,j}$ with entries in $\Rx $, 
we denote $\phi (A)=(\phi (a_{i,j}))_{i,j}$. 
Then, 
by chain rule, 
we have 
\begin{equation}\label{eq:Jacobian chain rule}
J(\phi \circ \psi )=\phi (J\psi )\cdot J\phi 
\end{equation}
for endomorphisms $\phi $ and $\psi $ 
of the $R$-algebra $\Rx $. 
If $\psi $ is an automorphism, 
then we get 
\begin{equation}\label{eq:chain}
J(\psi^{-1}\circ \phi \circ \psi )
=(\psi ^{-1}\circ \phi )(J\psi )\cdot 
\psi ^{-1}(J\phi )\cdot 
J(\psi ^{-1}). 
\end{equation}
Now, 
define an endomorphism $\ep _0$ 
of the $R$-algebra $\Rx $ 
by $\ep _0(x_i)=0$ for $i=1,\ldots,n$. 
Assume that 
$\ep _0\circ \phi =\ep _0\circ \psi =\ep _0$, 
i.e., 
$\phi (x_i)$ and $\psi (x_i)$ 
have no constant terms 
for each $i$. 
Then, 
the matrices 
$\ep _0\bigl(J(\psi^{-1}\circ \phi \circ \psi )\bigr)$ 
and $\ep _0(J\phi )$ are similar 
for the following reason. 
Since 
$\ep _0=(\ep _0\circ \psi )\circ \psi ^{-1}
=\ep  _0\circ \psi ^{-1}$, 
we have 
\begin{align*}
&\ep _0(J\psi )\cdot \ep _0\bigl(J(\psi ^{-1})\bigr)=
(\ep _0\circ \psi ^{-1})(J\psi )
\cdot \ep _0\bigl(J(\psi ^{-1})\bigr) \\
&\quad =\ep _0\bigl(\psi ^{-1}(J\psi )
\cdot J(\psi ^{-1})\bigr)
=\ep _0\bigl(J(\psi ^{-1}\circ \psi )\bigr)=E. 
\end{align*}
Hence, 
we get $\ep _0\bigl(J(\psi ^{-1})\bigr)
=\ep _0(J\psi )^{-1}$. 
Thus, 
it follows from (\ref{eq:chain}) that 
\begin{align*}
\ep _0\bigl(J(\psi^{-1}\circ \phi \circ \psi )\bigr)
&=\ep _0\bigl((\psi ^{-1}\circ \phi )(J\psi )\bigr) \cdot 
\ep _0\bigl(\psi ^{-1}(J\phi )\bigr) \cdot 
\ep _0\bigl(J(\psi ^{-1})\bigr)\\
&=(\ep _0\circ \psi ^{-1}\circ \phi )(J\psi ) \cdot 
(\ep _0\circ \psi ^{-1})(J\phi )\cdot 
\ep _0(J\psi )^{-1}\\
&=\ep _0(J\psi )\cdot \ep _0(J\phi )\cdot \ep _0(J\psi )^{-1}.
\end{align*}
Therefore, 
$\ep _0(J(\psi^{-1}\circ \phi \circ \psi ))$ 
and $\ep _0(J\phi )$ are similar.

\begin{lem}\label{lem:eigenvalue}
Let $\kappa $ be any field, 
and $\phi \in J(\kappa ;x_1,\ldots ,x_n)$ 
such that $\phi (f)=f$ 
for some coordinate $f$ of $\kapx $ over $\kappa $. 
Then, we have $\phi (x_i)=x_i+g$ 
for some $i\in \{ 1,\ldots ,n\} $ and 
$g\in \kappa [x_1,\ldots ,x_{i-1}]$. 
\end{lem}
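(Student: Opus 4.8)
The plan is to recover the lemma from an eigenvalue computation for the linearization of $\phi$ at a suitable point, played off against the constraint that $\phi$ fixes the coordinate $f$. By (\ref{eq:jonquiere}) applied to the identity ordering, every $\phi \in J(\kappa ;x_1,\ldots ,x_n)$ has the form $\phi (x_l)=a_lx_l+H_l$ with $a_l\in \kappa ^{\times }$ and $H_l\in \kappa [x_1,\ldots ,x_{l-1}]$; the assertion to prove is exactly that $a_i=1$ for some $i$, since then $g:=H_i$ works. So I would argue by contradiction and assume $a_l\neq 1$ for every $l$.

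Under this assumption $\phi $ has a fixed point: solving $a_lp_l+H_l(p_1,\ldots ,p_{l-1})=p_l$ inductively on $l$ determines $p=(p_1,\ldots ,p_n)\in \kappa ^n$, the step $l$ being solvable because $a_l\neq 1$. Letting $t$ be the translation $t(x_i)=x_i+p_i$ and setting $\tilde{\phi }:=t^{-1}\circ \phi \circ t$, I would note that $\tilde{\phi }$ again lies in the group $J(\kappa ;x_1,\ldots ,x_n)$, that a direct substitution leaves the coefficient of $x_l$ in $\tilde{\phi }(x_l)$ equal to $a_l$, and that $\tilde{\phi }(x_l)$ now has no constant term, i.e. $\ep _0\circ \tilde{\phi }=\ep _0$. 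Moreover $\tilde{\phi }$ fixes the coordinate $\tilde{f}:=t^{-1}(f)$, hence also the coordinate $h:=\tilde{f}-\ep _0(\tilde{f})$, which has vanishing constant term.

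Since $h$ is a coordinate of $\kapx $ over $\kappa $, there is $\psi _1\in \Aut (\kapx /\kappa )$ with $\psi _1(x_1)=h$; composing on the right with the translation $x_i\mapsto x_i-\ep _0(\psi _1(x_i))$ yields $\psi \in \Aut (\kapx /\kappa )$ with $\psi (x_1)=h$ and $\ep _0\circ \psi =\ep _0$ (note $\ep _0(\psi _1(x_1))=\ep _0(h)=0$). Now $\phi ':=\psi ^{-1}\circ \tilde{\phi }\circ \psi $ fixes $x_1$, so the first row of $J\phi '$, and hence of $\ep _0(J\phi ')$, is $(1,0,\ldots ,0)$; expanding the characteristic determinant along this row shows that $1$ is an eigenvalue of $\ep _0(J\phi ')$. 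Because both $\tilde{\phi }$ and $\psi $ are annihilated by $\ep _0$, the discussion preceding the lemma gives that $\ep _0(J\phi ')$ and $\ep _0(J\tilde{\phi })$ are similar and therefore share eigenvalues. But $\ep _0(J\tilde{\phi })$ is lower triangular with diagonal $a_1,\ldots ,a_n$, none of which is $1$ by assumption, so $1$ is not among its eigenvalues --- a contradiction. Hence $a_i=1$ for some $i$, and $g:=H_i\in \kappa [x_1,\ldots ,x_{i-1}]$ completes the proof.

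I expect the eigenvalue pairing to be the conceptual crux: matching the eigenvalue $1$ that is forced by $\phi '(x_1)=x_1$ against the triangular diagonal $a_1,\ldots ,a_n$ of the linearization. The bulk of the remaining work is the bookkeeping required to make the similarity statement applicable, namely relocating the fixed point of $\phi $ to the origin by $t$ and stripping the constant terms from $\psi $, while verifying that the triangular shape, the diagonal entries $a_l$, and the fixed coordinate all survive these modifications; this part is routine but cannot be skipped, since the similarity of Jacobians at the origin was only established for endomorphisms killed by $\ep _0$.
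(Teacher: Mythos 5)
Your proposal follows the same route as the paper's proof: write $\phi (x_l)=a_lx_l+H_l$, assume $a_l\neq 1$ for all $l$, locate a fixed point inductively, conjugate by a translation to kill constant terms, strip the constant terms from an automorphism sending $x_1$ to the fixed coordinate, and then play the eigenvalue $1$ forced by $\phi '(x_1)=x_1$ against the triangular diagonal $a_1,\ldots ,a_n$ via the similarity of $\ep _0(J\,\cdot \,)$ under conjugation by constant-term-free automorphisms. The one step that fails as written is the direction of the translation conjugation. With $t(x_i)=x_i+p_i$ and $\tilde{\phi }=t^{-1}\circ \phi \circ t$ (composition as in the paper, $(\sigma \circ \tau )(f)=\sigma (\tau (f))$), one gets
\[
\tilde{\phi }(x_l)
=a_l(x_l-p_l)+H_l(x_1-p_1,\ldots ,x_{l-1}-p_{l-1})+p_l,
\]
whose constant term is $(1-a_l)p_l+H_l(-p_1,\ldots ,-p_{l-1})
=H_l(p_1,\ldots ,p_{l-1})+H_l(-p_1,\ldots ,-p_{l-1})$ by the fixed-point equation; this is nonzero in general (for $H_l$ a nonzero constant and $\kappa $ of characteristic different from two, it equals $2H_l$). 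So the claim $\ep _0\circ \tilde{\phi }=\ep _0$, which is exactly what makes the similarity statement applicable to $\phi '=\psi ^{-1}\circ \tilde{\phi }\circ \psi $, does not hold for your $\tilde{\phi }$. The repair is immediate: conjugate instead by the translation $x_i\mapsto x_i-p_i$ (this is the paper's $\tau $, with $c_i=p_i$), or equivalently set $\tilde{\phi }=t\circ \phi \circ t^{-1}$; then the constant term of $\tilde{\phi }(x_l)$ is $a_lp_l+H_l(p_1,\ldots ,p_{l-1})-p_l=0$, and the rest of your argument --- the normalization of $\psi $, the similarity of $\ep _0(J\phi ')$ and $\ep _0(J\tilde{\phi })$, and the eigenvalue contradiction --- goes through verbatim, matching the paper's proof.
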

\begin{proof}
Write $\phi (x_i)=\alpha _ix_i+g_i$ 
for $i=1,\ldots ,n$, 
where $\alpha _i\in \kappa ^{\times }$ 
and $g_i\in \kappa [x_1,\ldots ,x_{i-1}]$. 
We show that $\alpha _i=1$ for some $i$ 
by contradiction. 
Suppose that 
$\alpha _i\neq 1$ for all $i$. 
Then, 
we can define $c_1,\ldots ,c_n\in \kappa $ by 
$$
c_i:=-(\alpha _i-1)^{-1}
g_i(c_1,\ldots ,c_{i-1})
$$
by induction on $i$. 
Define $\tau \in \Aut (\kapx /\kappa )$ by 
$\tau (x_i)=x_i-c_i$ for $i=1,\ldots ,n$. 
Then, 
we have 
\begin{equation}\label{eq:eigen 1}
\begin{aligned}
&(\phi \circ \tau )(x_i)
=\phi (x_i-c_i)=\alpha _ix_i+g_i-c_i \\
&\quad =\alpha _i(x_i-c_i)+g_i+(\alpha _i-1)c_i
=\alpha _i(x_i-c_i)+g_i-g_i(c_1,\ldots ,c_{i-1})
\end{aligned}
\end{equation}
for $i=1,\ldots ,n$. 
Set $\phi _0=\tau ^{-1}\circ \phi \circ \tau $. 
Then, 
we see from (\ref{eq:eigen 1}) that 
$\phi _0(x_i)=\tau ^{-1}\bigl((\phi \circ \tau )(x_i)\bigr)$ 
has the form $\alpha _ix_i+g_i'$ for some 
$g_i'\in \kappa [x_1,\ldots ,x_{i-1}]$ for each $i$. 
Hence, 
$J\phi _0$ is a lower triangular matrix 
with diagonal entries $\alpha _1,\ldots ,\alpha _n$. 
Thus, the same holds for $\ep _0(J\phi _0)$. 
Therefore, 
$\alpha _1,\ldots ,\alpha _n$ 
are the eigenvalues of $\ep _0(J\phi _0)$.

Observe that (\ref{eq:eigen 1}) is sent to zero 
by the substitution 
$x_j\mapsto c_j$ for $j=1,\ldots ,i$. 
Since 
$(\ep _0\circ \tau ^{-1})(x_j)
=\ep _0(x_j+c_j)=c_j$ 
for $j=1,\ldots ,n$, 
it follows that 
$$
(\ep _0\circ \phi _0)(x_i)
=(\ep _0\circ \tau ^{-1})\bigl(
(\phi \circ \tau )(x_i)\bigr)=0. 
$$
Hence, 
we get $\ep _0\circ \phi _0=\ep _0$. 
Since $f$ is a coordinate of $\kapx $ 
over $\kappa $ by assumption, 
there exists $\sigma \in \Aut (\kapx /\kappa )$ 
such that $\sigma (x_1)=f$. 
Define $\sigma _0\in \Aut (\kapx /\kappa )$ by 
$$
\sigma _0(x_i)
=(\tau ^{-1}\circ \sigma )(x_i)
-\ep _0\bigl( (\tau ^{-1}\circ \sigma )(x_i)\bigr)
$$
for $i=1,\ldots ,n$. 
Then, 
we have $\ep _0(\sigma _0(x_i))=0$ for each $i$. 
Hence, we get 
$\ep _0\circ \sigma _0=\ep _0$. 
Set 
$\phi _1=\sigma _0^{-1}\circ \phi _0\circ \sigma _0$. 
Then, 
$\ep _0(J\phi _0)$ and $\ep _0(J\phi _1)$ are similar 
by the discussion above. 
Accordingly, 
$\alpha _1,\ldots ,\alpha _n$ 
are the eigenvalues of $\ep _0(J\phi _1)$. 
Since $\phi (\sigma (x_1))=\phi (f)=f=\sigma (x_1)$, 
and $\beta :=\ep _0\bigl( (\tau ^{-1}\circ \sigma )(x_1)\bigr)$ 
is a constant, 
we have 
\begin{align*}
&(\phi _0\circ \sigma _0)(x_1)
=(\tau ^{-1}\circ \phi \circ \tau )
\bigl( 
(\tau ^{-1}\circ \sigma )(x_1)
-\beta \bigr) \\
&\quad =\tau ^{-1}\bigl( 
\phi (\sigma (x_1))
\bigr) -
(\tau ^{-1}\circ \phi \circ \tau )(\beta ) 
=(\tau ^{-1}\circ \sigma )(x_1)-\beta 
=\sigma _0(x_1). 
\end{align*}
Hence, we get 
$$
\phi _1(x_1)
=(\sigma _0^{-1}\circ \phi _0\circ \sigma _0)(x_1)
=\sigma _0^{-1}\bigl( 
(\phi _0\circ \sigma _0)(x_1)
\bigr) 
=\sigma _0^{-1}\bigl( 
\sigma _0(x_1)
\bigr) 
=x_1. 
$$
This shows that the first row of $J\phi _1$ 
is $(1,0,\ldots ,0)$, 
and the same holds for $\ep _0(J\phi _1)$. 
Thus, 
$1$ is an eigenvalue of $\ep _0(J\phi _1)$. 
Since $\alpha _1,\ldots ,\alpha _n$ are the eigenvalues of 
$\ep _0(J\phi _1)$, 
this contradicts that $\alpha _i\neq 1$ for all $i$. 
Therefore, 
we have $\alpha _i=1$ for some $i$. 
\end{proof}

We also use locally nilpotent derivations 
to prove Proposition~\ref{prop:inv}. 
For each domain $A$, 
we denote by $Q(A)$ the field of fractions of $A$. 
Then, 
the following facts are well-known. 
Here, 
we recall that $k$ 
is an arbitrary field of characteristic zero. 
For $D\in \lnd _k\kx \sm \zs $, 
the transcendence degree of $Q(\ker D)$ over $k$ 
is equal to $n-1$ 
(cf.\ \cite[Proposition 1.3.32]{Essen}). 
If $D(f)\neq 0$ for $D\in \Der _k\kx $ and $f\in \kx $, 
then $f$ is transcendental over $Q(\ker D)$ 
(cf.~\cite[Proposition 5.2]{Lang}).

The following lemma is a consequence of these facts.

\begin{lem}\label{lem:e-n}
For $D\in \lnd _k\kx $, 
we put $\phi =\exp D$. Then, 
we have $\kx ^{\phi }=\ker D$. 
\end{lem}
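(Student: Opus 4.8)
The plan is to prove the two inclusions $\ker D\subseteq \kx ^{\phi }$ and $\kx ^{\phi }\subseteq \ker D$ separately. The first is immediate and was already noted in the introduction: if $D(f)=0$, then $D^l(f)=0$ for every $l\geq 1$, so $\phi (f)=\sum _{l\geq 0}D^l(f)/l!=f$, and hence $f$ belongs to $\kx ^{\phi }$.

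For the reverse inclusion I would simply reuse the operator-theoretic machinery already assembled in the proof of Lemma~\ref{lem:exp T}. Working with $R=k$ (a field of characteristic zero, hence a $\Q $-domain), we have $\phi -\id _{\kx }=p(D)$, where $p(z)=\exp z-1$, and $p(D)$ is a locally nilpotent element of $\End _k\kx $ lying in $\Q [[D]]$. Substituting $D$ for $z$ in the formal identity (\ref{eq:log exp}) gives
\[
D=\sum _{i=1}^{\infty }(-1)^{i+1}\frac{p(D)^i}{i}
\]
as an identity in $\Q [[D]]\subseteq \End _k\kx $, the right-hand side being meaningful precisely because $p(D)$ is locally nilpotent, so that it acts as a finite sum on each element of $\kx $.

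Now I would take any $f\in \kx ^{\phi }$. Then $p(D)(f)=\phi (f)-f=0$, so $p(D)^i(f)=0$ for every $i\geq 1$, and applying the displayed identity to $f$ yields $D(f)=\sum _{i\geq 1}(-1)^{i+1}p(D)^i(f)/i=0$. Thus $f$ belongs to $\ker D$, and combining the two inclusions gives $\kx ^{\phi }=\ker D$.

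Since all the routine verifications are already packaged inside Lemma~\ref{lem:exp T}, there is essentially no obstacle once that framework is invoked; the only subtle point is that one must pass to the completion $\Q [[D]]$ rather than $\Q [D]$, which is harmless here because the local nilpotency of $p(D)$ collapses the series to a finite sum on each $f$. I note that the author's preparatory facts on the transcendence degree of $Q(\ker D)$ suggest an alternative route: if $f\in \kx ^{\phi }\sm \ker D$, then $D(f)\neq 0$ forces $f$ to be transcendental over $Q(\ker D)$, giving $\trd _kQ(\kx ^{\phi })=n$ and hence making $k(\x )$ finite algebraic over $Q(\kx ^{\phi })$, which contradicts the infinite order of $\phi $ when $D\neq 0$ (the case $D=0$ being trivial). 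The logarithm argument above is, however, shorter and entirely self-contained.
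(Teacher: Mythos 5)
Your proof is correct, but your main argument is genuinely different from the paper's. The paper proves the nontrivial inclusion $\kx ^{\phi }\subseteq \ker D$ by contradiction through field theory: if $D(f)\neq 0$ for some $f\in \kx ^{\phi }$, then $f$ is transcendental over $Q(\ker D)$, which has transcendence degree $n-1$ over $k$; hence $\kxr $ is a finite extension of $L:=Q(\kx ^{\phi })$, so the extension of $\phi $ to an automorphism of $\kxr $ over $L$ has finite order, contradicting the infinite order of $\exp D$ for $D\neq 0$ (so the ``alternative route'' you sketch at the end is in fact the paper's actual proof, and your sketch of it is accurate). Your logarithm argument instead extracts $D$ from $\phi $ via the identity (\ref{eq:log exp}) with $z$ replaced by $D$, so that $p(D)(f)=\phi (f)-f=0$ forces $D(f)=0$ directly; the termwise evaluation of the series at $f$ is justified by the local nilpotency of $p(D)$, exactly the manipulation the paper itself performs inside the proof of Lemma~\ref{lem:exp T} (and again in Lemma~\ref{lem:e-n2}), so nothing new needs to be checked. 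Your route buys several things: it is shorter, avoids the transcendence-degree facts quoted from van den Essen and Lang, handles $D=0$ uniformly rather than as a separate trivial case, and actually proves the stronger statement $\Rx ^{\exp D}=\ker D$ over an arbitrary $\Q $-domain $R$, whereas the paper's argument genuinely uses that $k$ is a field. The paper's approach, for its part, isolates the structural reason behind the lemma --- a fixed field of transcendence degree $n$ forces finite order --- which is the kind of argument that survives in settings where no logarithm is available.
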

\begin{proof}
Obviously, 
$\ker D$ is contained in $\kx ^{\phi }$. 
We prove the reverse inclusion by contradiction. 
Suppose that 
$D(f)\neq 0$ for some $f\in \kx ^{\phi }$. 
Then, 
$f$ is transcendental over $Q(\ker D)$ as mentioned.
Clearly, 
$D$ is nonzero. 
Hence, 
$Q(\ker D)$ has transcendence degree 
$n-1$ over $k$ as mentioned. 
Thus, 
the transcendence degree of $Q(\ker D)(f)$ 
over $k$ is equal to $n$. 
Since 
$L:=Q(\kx ^{\phi })$ contains $Q(\ker D)(f)$, 
we know that 
$\kxr :=Q(\kx )$ is a finite extension of $L$. 
Note that $\phi $ extends 
to an automorphism of $\kxr $ over $L$. 
Hence, 
$\phi $ must be of finite order. 
This contradicts that 
$\phi =\exp D$ with $D\neq 0$. 
Thus, 
$\kx ^{\phi }$ is contained in $\ker D$. 
Therefore, 
we have $\kx ^{\phi }=\ker D$. 
\end{proof}

For $\phi \in \Aut (\kx /k)$, 
consider the $k$-linear endomorphism 
$\delta =\phi -\id _{\kx }$ of $\kx $. 
For each $f,g\in \kx $, 
we have 
$$
\delta (fg)=\phi(fg)-fg
=\bigl(\phi (f)-f\bigr)g
+\phi (f)\bigl( \phi (g)-g\bigr) 
=\delta (f)g+\phi (f)\delta (g). 
$$
Since $\delta \circ \phi =\phi \circ \delta $, 
it follows that 
$$
\delta ^l(fg)=\sum _{i=0}^l\binom{l}{i}
\phi ^i(\delta ^{l-i}(f))\delta ^i(g)
$$
for each $l\geq 1$. 
By this formula, 
we know that $\delta ^{l+m}(fg)=0$ 
if $\delta ^l(f)=0$ and $\delta ^m(g)=0$ 
for $l,m\in \N $. 
Hence, 
we see that 
$\delta $ is locally nilpotent if 
$\delta ^{l_i}(x_i)=0$ for some $l_i\in \N $ 
for $i=1,\ldots ,n$.

\begin{lem}\label{lem:e-n2}
Let $\phi \in \Aut (\kx /k)$ be such that 
$\phi (x_i)=x_i+g_i$ for some 
$g_i\in k[x_1,\ldots ,x_{i-1}]$ 
for $i=1,\ldots ,n$. 
Then, 
we have $\phi =\exp D$ 
for some triangular derivation 
$D$ of $\kx $ over $k$. 
\end{lem}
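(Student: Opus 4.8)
The plan is to recover the derivation $D$ from $\phi$ as its logarithm. Set $\delta:=\phi-\id_{\kx}$, a $k$-linear endomorphism of $\kx$. By hypothesis $\delta(x_i)=g_i\in k[x_1,\ldots,x_{i-1}]$, and since $\phi(k[x_1,\ldots,x_{i-1}])\subseteq k[x_1,\ldots,x_{i-1}]$, the map $\delta$ preserves each subalgebra $k[x_1,\ldots,x_{i-1}]$. First I would check that $\delta$ is locally nilpotent. Arguing by induction on $i$, I would show that each $x_i$ is annihilated by some power of $\delta$: for $i=1$ we have $\delta(x_1)=g_1\in k$, so $\delta^2(x_1)=0$; and for general $i$ the polynomial $g_i$ is a finite sum of monomials in $x_1,\ldots,x_{i-1}$, each of which is killed by a power of $\delta$ in view of the product formula $\delta^{l+m}(fg)=0$ (valid when $\delta^l(f)=\delta^m(g)=0$) recorded just before the statement, whence $\delta^{l_i}(x_i)=\delta^{l_i-1}(g_i)=0$ for a suitable $l_i$. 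The remark preceding the lemma then gives that $\delta$ is locally nilpotent.

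Next I would define $D$ through the logarithmic series
\[
D:=\sum_{l\ge 1}(-1)^{l-1}\frac{\delta^l}{l},
\]
which makes sense as a $k$-linear endomorphism of $\kx$ because $k\supseteq\Q$ and $\delta$ is locally nilpotent, so that only finitely many terms act nontrivially on any fixed $f\in\kx$. The identity $\exp D=\id_{\kx}+\delta=\phi$ would then follow from the formal power series identity $\exp\bigl(\sum_{l\ge 1}(-1)^{l-1}w^l/l\bigr)=1+w$, the companion of (\ref{eq:log exp}), which transfers to $\kx$ by local nilpotence of $\delta$. Triangularity of $D$ is immediate from the series: since $\delta$ preserves $k[x_1,\ldots,x_{i-1}]$ and $\delta(x_i)=g_i$ lies there, every term $\delta^l(x_i)$ lies in $k[x_1,\ldots,x_{i-1}]$, and hence so does $D(x_i)$. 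As noted in the introduction, a triangular derivation is automatically locally nilpotent, so $D\in\lnd_k\kx$.

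The main obstacle is verifying that $D$ is genuinely a derivation, not merely a $k$-linear map with $\exp D=\phi$. The approach I would take exploits that $\phi$ is an algebra homomorphism. Writing $\mu\colon\kx\otimes_k\kx\to\kx$ for multiplication, the homomorphism property reads $\mu\circ(\phi\otimes\phi)=\phi\circ\mu$. Since $\phi\otimes\id$ and $\id\otimes\phi$ are commuting endomorphisms each of the form identity plus a locally nilpotent map, the logarithm of $\phi\otimes\phi=(\phi\otimes\id)\circ(\id\otimes\phi)$ equals $D\otimes\id+\id\otimes D$. The intertwining relation passes to every polynomial in the two endomorphisms, hence to the convergent log series, giving $\mu\circ(D\otimes\id+\id\otimes D)=D\circ\mu$; evaluating on $f\otimes g$ yields $D(fg)=D(f)g+fD(g)$, which is exactly the Leibniz rule. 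Alternatively, one may simply invoke the standard correspondence between locally nilpotent derivations and automorphisms whose difference with the identity is locally nilpotent over a $\Q$-algebra (cf.\ van den Essen~\cite[Proposition 2.1.3]{Essen} and Nowicki~\cite[Proposition 6.1.4 (6)]{Now}), from which both $\exp D=\phi$ and the derivation property follow at once. Combining these steps completes the proof.
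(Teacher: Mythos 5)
Your proposal is correct and follows the same skeleton as the paper's proof: show $\delta=\phi-\id_{\kx}$ is locally nilpotent (your induction on $i$, killing each $x_i$ via the product formula recorded before the lemma, is the same idea as the paper's induction on $n$ via restriction to $k[x_1,\ldots,x_{n-1}]$), define $D$ as the logarithmic series, and read off triangularity of $D(x_i)$ from the fact that $\delta$ preserves $k[x_1,\ldots,x_{i-1}]$. The one genuine difference is the key step that $D$ is actually a derivation with $\exp D=\phi$: the paper disposes of this entirely by citing van den Essen \cite[Proposition 2.1.3]{Essen} and Nowicki \cite[Proposition 6.1.4 (6)]{Now}, whereas your primary argument is self-contained, deducing the Leibniz rule from the intertwining relation $\mu\circ(\phi\otimes\phi)=\phi\circ\mu$ together with the additivity of the logarithm on the commuting unipotent factors $\phi\otimes\id$ and $\id\otimes\phi$. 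That argument is sound (the relation $\mu\circ(S-\id)^l=(\phi-\id)^l\circ\mu$ with $S=\phi\otimes\phi$ passes to the locally finite log series, and $\log(\phi\otimes\id)=D\otimes\id$ since $(\delta\otimes\id)^l=\delta^l\otimes\id$), and it buys independence from the cited literature at the cost of some extra bookkeeping on $\kx\otimes_k\kx$; the paper's citation route, which you also note as an alternative, is exactly its proof.
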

\begin{proof}
By induction on $n$, 
we can check that 
$\delta =\phi -\id _{\kx }$ is locally nilpotent. 
In fact, 
assuming that the restriction of 
$\delta $ to $k[x_1,\ldots ,x_{n-1}]$ 
is locally nilpotent, 
we have $\delta ^l(x_n)=0$ for some $l\in \N $, 
since $\delta (x_n)=(x_n+g_n)-x_n=g_n$ 
belongs to $k[x_1,\ldots ,x_{n-1}]$. 
This implies that $\delta $ is locally nilpotent 
by the discussion above. 
Due to 
van den Essen~\cite[Proposition 2.1.3]{Essen} 
and Nowicki~\cite[Proposition 6.1.4 (6)]{Now}, 
it follows that 
$$
D:=\sum _{i\geq 1}(-1)^{i+1}
\frac{\delta ^i}{i}
$$ 
belongs to $\lnd _k\kx $ and satisfies 
$\phi =\exp D$. 
Since 
$\delta (x_i)=(x_i+g_i)-x_i=g_i$ belongs to 
$k[x_1,\ldots ,x_{i-1}]$, 
and $\delta (k[x_1,\ldots ,x_{i-1}])$ 
is contained in $k[x_1,\ldots ,x_{i-1}]$, 
we see that $D(x_i)$ 
belongs to 
$k[x_1,\ldots ,x_{i-1}]$ for $i=1,\ldots ,n$. 
Therefore, $D$ is triangular. 
\end{proof}

Now, 
we prove Proposition~\ref{prop:inv} (i). 
Let $f\in \Rx $ be a coordinate of $\Kx $ over $K$ 
which is tamely reduced over $R$ 
and satisfies $\deg _{x_1}f>\deg _{x_2}f\geq 2$. 
Take 
$\id _{\Rx }\neq \phi \in J(R;x_1,x_2)$ 
such that $\phi (f)=f$, 
and write 
\begin{equation}\label{eq:prop inv pf phi}
\phi (x_1)=u_1x_1+v \text{ \ and \ }\phi (x_2)=u_2x_2+g, 
\end{equation}
where $u_1,u_2\in R^{\times }$, 
$v\in R$ and $g\in R[x_1]$. 
Then, 
we have $u_1=1$ or $u_2=1$ 
by Lemma~\ref{lem:eigenvalue}. 
Since $f$ is a coordinate of $\Kx $ over $K$ 
with $\deg _{x_1}f>\deg _{x_2}f\geq 2$, 
we see from Proposition~\ref{prop:slope} that 
$f$ has the form 
$\alpha x_2^l+(\text{terms of lower degree in }x_2)$ 
for some $\alpha \in R\sm \zs $ and $l\geq 2$. 
Then, 
we have 
$$
\phi (f)=
\alpha u_2^lx_2^l+(\text{terms of lower degree in }x_2)
$$
because of (\ref{eq:prop inv pf phi}). 
Since $\phi (f)=f$, 
it follows that $u_2^l=1$. 
Hence, 
$u_2$ is a root of unity. 
We show that $u_1=1$ and $u_2\neq 1$ 
by contradiction.

Suppose that $u_1=u_2=1$. 
Then, 
there exists a triangular derivation $D$ of $\Kx $ 
over $K$ such that $\phi =\exp D$ 
by Lemma~\ref{lem:e-n2}. 
Here, we identify $\phi $ 
with the natural extension to $\Kx $. 
Since $\phi \neq \id _{\Rx }$, 
we have $D\neq 0$. 
Assume that $D(x_1)=0$. 
Then, 
we have $D(x_2)\neq 0$. 
Hence, we get $\ker D=K[x_1]$. 
Since $f$ belongs to $\Kx ^{\phi }$, 
and $\Kx ^{\phi }=\ker D$ 
by Lemma~\ref{lem:e-n}, 
it follows that $f$ belongs to $K[x_1]$. 
Hence, 
$f$ is a linear polynomial in $x_1$ over $K$. 
This contradicts that $\deg _{x_2}f\geq 2$. 
If $D(x_1)\neq 0$, 
then $D(x_1)$ belongs to $K^{\times }$ 
by the triangularity of $D$. 
By integrating $D(x_1)^{-1}D(x_2)$ in $x_1$, 
we can construct $q\in K[x_1]$ such that 
$dq/dx_1=D(x_1)^{-1}D(x_2)$. 
Then, 
we have 
$D(x_2-q)=D(x_2)-D(x_1)dq/dx_1=0$. 
Since $x_2-q$ is a coordinate of $\Kx $ over $K$, 
it follows that $\ker D=K[x_2-q]$ 
by Theorem~\ref{thm:Rentschler}. 
Hence, $f$ belongs to $K[x_2-q]$. 
Since $f$ is a coordinate, 
$f$ must be a linear polynomial in $x_2-q$ over $K$. 
Thus, we get $\deg _{x_2}f=1$, 
a contradiction. 
Therefore, 
we have $(u_1,u_2)\neq (1,1)$.

Next, 
suppose that $u_1\neq 1$ and $u_2=1$. 
Set 
$$
z_1=(u_1-1)x_1+v. 
$$
Then, 
we have $K[z_1]=K[x_1]$ since $u_1\neq 1$. 
Moreover, 
we get 
$$
\phi (z_1)=u_1z_1+(u_1-1)v-(u_1-1)v=u_1z_1
$$
by applying 
(\ref{eq:ch1:useful id}) with 
$(\gamma _1,p_1,\gamma _2,p_2)=(u_1,v,u_1-1,v)$. 
Since $g$ belongs to $K[x_1]=K[z_1]$, 
we may write $g=\sum _{i\geq 0}\beta _iz_1^i$, 
where $\beta _i\in K$ for each $i$. 
Let $I$ be the set of $i$ such that 
$\beta _i\neq 0$ and $u_1^i\neq 1$. 
Then, 
define 
$$
z_2=x_2+q,\quad \text{where}\quad 
q:=\sum _{i\in I}\beta _i(1-u_1^i)^{-1}z_1^i. 
$$
Since $u_2=1$, 
we have 
\begin{align*}
\phi (z_2)=\phi (x_2+q)
=(x_2+g)+\phi (q)=(z_2-q)+g+\phi (q)
=z_2+q', 
\end{align*}
where 
\begin{align*}
q'&:=g+\phi (q)-q=\sum _{i\geq 0}\beta _iz_1^i+
\sum _{i\in I}\beta _i(1-u_1^i)^{-1}(\phi (z_1^i)-z_1^i) \\
&\quad =\sum _{i\geq 0}\beta _iz_1^i+
\sum _{i\in I}\beta _i(1-u_1^i)^{-1}(u_1^i-1)z_1^i
=\sum _{i\not\in I}\beta _iz_1^i. 
\end{align*}
Note that 
$i$ does not belong to $I$ if and only if 
$\beta _i=0$ or $u_1^i=1$, 
and so if and only if 
$\phi (\beta _iz_1^i)=\beta _iu_1^iz_1^i$ 
is equal to $\beta _iz_1^i$. 
Hence, 
we get $\phi (q')=q'$. 
If $q'\neq 0$, 
then we know by Lemma~\ref{lem:k[x^t,y]} (ii) that 
$\Kx ^{\phi }$ is contained in $K[z_1]=K[x_1]$. 
Hence, 
$f$ is a linear polynomial in $x_1$ over $K$, 
a contradiction. 
If $q'=0$, 
then we have $\phi (z_2)=z_2$. 
Since $\phi (z_1)=u_1z_1$ with $u_1\neq 1$, 
it follows that 
$\Kx ^{\phi }=K[z_1,z_2]^{\phi }$ 
is equal to $K[z_1^t,z_2]$ for some $t\geq 2$ 
if $u_1$ is a root of unity, 
and to $K[z_2]$ otherwise. 
In the former case, 
$f$ is a linear polynomial in 
$z_2$ over $K[z_1^t]$ 
by Lemma~\ref{lem:k[x^t,y]} (i). 
In the latter case, 
$f$ is a linear polynomial in $z_2$ over $K$. 
In either case, 
we have $\deg _{x_2}f=1$, 
a contradiction. 
Therefore, 
we conclude that $u_1=1$ and $u_2\neq 1$.

Since $u_2$ is a root of unity as mentioned, 
we may find the maximal integer $e\geq 2$ 
such that $u_2^e=1$. 
Then, we have 
$$
\phi ^e(x_1)=x_1+ev\quad \text{and}\quad 
\phi ^e(x_2)=x_2+p
$$
for some $p\in R[x_1]$. 
Hence, $\phi ^e$ belongs to $J(R;x_1,x_2)$. 
Since $\phi ^e(f)=f$, 
we may conclude that 
$\phi ^e=\id _{\Rx }$ 
from the discussion for the case of $u_1=u_2=1$. 
Hence, 
we have $ev=0$, 
and so $v=0$. 
Thus, 
we get $\phi (x_1)=x_1$. 
Therefore, 
$\phi $ belongs to $H(x_1)$, 
proving the first part of 
Proposition~\ref{prop:inv} (i).

We check that $f$ is of type II\null. 
Set $u=u_2-1$, 
and define 
$$
y_2=ux_2+g 
\quad \text{and}\quad 
R'=R[u^{-1}]. 
$$ 
Then, 
we have $R'[x_1,y_2]=R'[\x ]$ since $u_2\neq 1$. 
Moreover, 
we get $\phi (y_2)=u_2y_2$ by applying 
(\ref{eq:ch1:useful id}) with 
$(\gamma _1,p_1,\gamma _2,p_2)=(u_2,g,u,g)$. 
Hence, 
it follows that 
$$
R'[\x ]^{\phi }=R'[x_1,y_2]^{\phi }=R'[x_1,y_2^e] 
$$
by the definition of $e$. 
Since $f$ belongs to this set, 
we know by Lemma~\ref{lem:k[x^t,y]} (i) that 
$f=a'x_1+h$ for some 
$a'\in R'\sm \zs $ and $h\in R'[y_2^e]$. 
Then, 
$h$ does not belong to $R'$ 
by the assumption that $\deg _{x_2}f\geq 2$. 
Furthermore, 
we have $\lambda :=\deg _{x_1}g\geq 2$ 
by the assumption that 
$\deg _{x_1}f>\deg _{x_2}f$. 
Let $c$ be the leading coefficient of $g$. 
Then, 
$f^{\w (f)}$ is equal to a power of 
$y_2^{\w (y_2)}=ux_2+cx_1^{\lambda }$ 
multiplied by an element of $R'\sm \zs $. 
Since $\lambda \geq 2$ 
and $f$ is tamely reduced over $R$ by assumption, 
it follows from Proposition~\ref{prop:reduced coordinate} (ii) 
that $c$ does not belong to $uR$. 
Thus, 
$f$ satisfies all the conditions of 
Definition~\ref{def:invariant coord} (2). 
Therefore, 
$f$ is of type II\null. 
This completes the proof of 
Proposition~\ref{prop:inv} (i).

We use the following lemma 
to prove Proposition~\ref{prop:inv} (ii).

\begin{lem}\label{lem:inv:affinecase}
Let $f\in \Rx $ be such that 
$f=\psi (\gamma x_2+q)$ for some 
$\psi \in \Aff (K,\x )$, 
$\gamma \in K^{\times }$ 
and $q\in K[x_1]$. 
If $f$ is tamely reduced over $R$, 
and $\deg _{x_1}f=\deg _{x_2}f$, 
then the following statements hold$:$

\noindent{\rm (i)} 
If $\deg _{x_1}q=2$, then $f$ is of type IV. 

\noindent{\rm (ii)} 
Assume that $q$ belongs to $K[(sx_1+t)^l]\sm K$ 
for some $s\in K^{\times }$, $t\in K$ 
and $l\geq 2$. 
If $\deg _{x_1}q\geq 3$, 
then $f$ satisfies the conditions of 
Definition~$\ref{def:invariant coord}$ $(5)$ 
except for {\rm (c)}. 
\end{lem}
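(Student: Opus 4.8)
The plan is to start with a computation common to both parts. Since $\psi \in \Aff (K,\x )$ and $\gamma x_2+q$ is a coordinate of $\Kx $ over $K$ (as $\gamma \neq 0$), the polynomial $f=\psi (\gamma x_2+q)$ is itself a coordinate of $\Kx $ over $K$. Writing $\psi (x_1)=a_1x_1+a_2x_2+a_0$ and $\psi (x_2)=b_1x_1+b_2x_2+b_0$, letting $c$ be the leading coefficient of $q$ and $\lambda =\deg _{x_1}q\geq 2$, I would note that $f=\gamma \psi (x_2)+q(\psi (x_1))$ has top total-degree part $c(a_1x_1+a_2x_2)^{\lambda }$, since $\deg \gamma \psi (x_2)=1<\lambda $. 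Because $\deg _{x_1}f=\deg _{x_2}f$, Proposition~\ref{prop:slope} forces $l=1$ in the expression $f^{\w (f)}=a(x_i-bx_j^l)^m$, so $f^{\w (f)}=c(a_1x_1+a_2x_2)^{\lambda }$ with $a_1,a_2\neq 0$ and $m=\lambda $. Feeding this into Proposition~\ref{prop:reduced coordinate}~(i) and using that $f$ is tamely reduced gives $b\notin V(R)$; matching $a(x_i-bx_j)$ with a scalar multiple of $a_1x_1+a_2x_2$ and using that $V(R)$ is closed under inverses and unit multiples then yields the crucial fact $a_1/a_2\notin V(R)$.

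For part~(i), where $\lambda =2$, the target is to write $f=a\tau _4(x_1)^2+\tau _4(x_2)$. I would set $\tau _4(x_1)=x_1+(a_2/a_1)x_2$ and take $a$ to be the coefficient of $x_1^2$ in $f^{\w (f)}=c(a_1x_1+a_2x_2)^2$, namely $a=ca_1^2$. The key point is that $a$ then lies in $R\setminus \{ 0\} $, because $f^{\w (f)}$ is a homogeneous component of $f\in \Rx $, so all its coefficients lie in $R$; this is exactly how one sidesteps the difficulty that $c$ itself need not lie in $R$. By construction $a\tau _4(x_1)^2=f^{\w (f)}$, so $\tau _4(x_2):=f-a\tau _4(x_1)^2$ has total degree at most $1$ and I define $\tau _4$ by these two formulas. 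It remains to check $\tau _4\in \Aff (K,\x )$: the linear part of $\tau _4(x_1)$ is $a_1^{-1}(a_1x_1+a_2x_2)$, while the linear part of $\tau _4(x_2)$ equals the degree-one part of $f$, which is $\gamma (b_1x_1+b_2x_2)$ plus a scalar multiple of $a_1x_1+a_2x_2$; these are independent since $\psi $ is invertible (the determinant is proportional to $\gamma \neq 0$). Finally $\tau _4(x_1)=\alpha _1'x_1+\alpha _2'x_2$ has $\alpha _1'=1$, $\alpha _2'=a_2/a_1$ both nonzero and $\alpha _1'/\alpha _2'=a_1/a_2\notin V(R)$, so $f$ is of type IV.

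For part~(ii), where $\lambda \geq 3$ and $q\in K[(sx_1+t)^l]\setminus K$, I would reduce $q$ to a polynomial in $x_1^l$ by an affine change of variables, absorbing the constant term of $q$. Writing $u=sx_1+t$ and $q=\sum _{j\geq 0}c_ju^{lj}$, I set $g'(x_1)=\sum _{j\geq 1}c_jx_1^{lj}$ and define $\sigma \in \Aff (K,\x )$ by $\sigma (x_1)=sx_1+t$ and $\sigma (x_2)=\gamma x_2+c_0$. A direct check gives $\sigma (x_2+g')=\gamma x_2+q$, whence $f=\tau _5(x_2+g')$ with $\tau _5:=\psi \circ \sigma $. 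Then $g'\in x_1^lK[x_1^l]$ with $\deg _{x_1}g'=\deg _{x_1}q\geq 3$, which is condition (a) with $e'=l\geq 2$. For (b), I compute $\tau _5(x_1)=s\psi (x_1)+t=sa_1x_1+sa_2x_2+(sa_0+t)$, so $\alpha _1=sa_1$, $\alpha _2=sa_2$ are nonzero and $\alpha _1/\alpha _2=a_1/a_2\notin V(R)$, which is exactly (b). Condition (c) is not asserted, so the argument stops here.

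The main obstacle is the integrality issue in part~(i): the naive choice $a=c$, $\tau _4(x_1)=a_1x_1+a_2x_2$ fails because $c$ need not lie in $R$. The resolution—rescaling $\tau _4(x_1)$ to be monic in $x_1$ so that $a$ becomes a genuine coefficient of $f^{\w (f)}\in \Rx $—is the one subtle step; everything else is bookkeeping with affine maps together with the already-established facts $a_1,a_2\neq 0$ and $a_1/a_2\notin V(R)$. A minor parallel care in part~(ii) is to peel the constant term $c_0$ of $q$ into $\tau _5(x_2)$, so that $g'$ has zero constant term as required by the membership $g'\in x_1^{e'}K[x_1^{e'}]$.
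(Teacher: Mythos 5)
Your proposal is correct and follows essentially the same route as the paper: both parts rest on Proposition~\ref{prop:slope} together with Proposition~\ref{prop:reduced coordinate} to get that $f^{\w (f)}$ is a power of a linear form $a_1x_1+a_2x_2$ with $a_1,a_2\neq 0$ and $a_1/a_2\notin V(R)$, and then exhibit the required affine map ($\tau _4$ in (i), $\tau _5=\psi \circ \sigma $ in (ii), the latter being exactly the paper's $\tau $). The only, harmless, deviation is in (i), where the paper abstractly picks $u\in K^{\times }$ with $u^2\beta _2\in R$, while you normalize $\tau _4(x_1)$ to be monic in $x_1$ so that $a=ca_1^2$ is literally a coefficient of $f^{\w (f)}\in \Rx $; both settle the integrality of $a$ in the same spirit.
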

\begin{proof}
Write $\psi (x_1)=\alpha _1'x_1+\alpha _2'x_2+\alpha _0'$, 
where $\alpha _1',\alpha _2',\alpha _0'\in K$. 
If $\deg _{x_1}q=2$, then 
we may write 
$$
q=\beta _2(x_1-\alpha _0')^2+\beta _1x_1+\beta _0, 
$$
where $\beta _0,\beta _1,\beta _2\in K$ with $\beta _2\neq 0$. 
Take $u\in K^{\times }$ such that $a:=u^2\beta _2$ 
belongs to $R$. 
Then, 
we can define $\tau \in \Aff (K,\x )$ by 
$$
\tau (x_1)=u^{-1}\psi (x_1-\alpha _0'),\quad 
\tau (x_2)=\psi (\beta _1x_1+\gamma x_2+\beta _0), 
$$
since $\gamma \neq 0$. 
Using this $\tau $, 
we may write 
\begin{equation*}
f=\psi (q+\gamma x_2)
=\psi \bigl(
\beta _2(x_1-\alpha _0')^2+\beta _1x_1+\beta _0+\gamma x_2\bigr)
=a\tau (x_1)^2+\tau (x_2). 
\end{equation*}
Put $\alpha _i=u^{-1}\alpha _i'$ for $i=1,2$. 
Then, 
we have $\tau (x_1)=\alpha _1x_1+\alpha _2x_2$. 
Since $\deg _{x_1}f=\deg _{x_2}f$ by assumption, 
we know that $\alpha _1$ and $\alpha _2$ are nonzero. 
Hence, 
we get 
$$
f^{\w (f)}=a\tau (x_1)^2
=a\alpha _2^2\bigl((\alpha _1/\alpha _2)x_1+x_2\bigr)^2. 
$$ 
Since $f$ is tamely reduced over $R$ by assumption, 
this implies that 
$\alpha _1/\alpha _2$ does not belong to $V(R)$ 
by Proposition~\ref{prop:reduced coordinate} (i). 
Therefore, 
$f$ is of type IV\null.

Next, 
assume that $q$ is as in (ii). 
Put $y_1=sx_1+t$, 
and write $q=q'+\delta $, 
where $q'\in y_1^lK[y_1^l]$ and $\delta \in K$. 
Let $g'$ be an element of $x_1^lK[x_1^l]$ obtained from 
$q'$ by replacing $y_1$ with $x_1$. 
Then, 
we have 
$\lambda :=\deg _{x_1}g'=\deg _{y_1}q'=\deg _{y_1}q\geq 3$. 
Define $\tau \in \Aff (K,\x )$ by 
$\tau (x_1)=\psi (y_1)$ and $\tau (x_2)=\psi (\gamma x_2+\delta )$. 
Then, we have 
\begin{equation}\label{eq:pf:lem:inv:affinecase2}
f=\psi (\gamma x_2+q'+\delta )
=\psi (\gamma x_2+\delta )+\tau (g')
=\tau (x_2+g'). 
\end{equation}
Hence, 
$f$ is written as in 
Definition~\ref{def:invariant coord} (5), 
and satisfies (a). 
Put $\alpha _i:=s\alpha _i'$ for $i=1,2$. 
Then, we have 
$$
\tau (x_1)
=\psi (sx_1+t)=\alpha _1x_1+\alpha _2x_2+s\alpha _0'+t. 
$$ 
Since $\deg _{x_1}f=\deg _{x_2}f$ by assumption, 
we see from (\ref{eq:pf:lem:inv:affinecase2}) that 
$\alpha _1$ and $\alpha _2$ are nonzero, 
and 
$$
f^{\w (f)}
=(g')^{\w (f)}
=c(\alpha _1x_1+\alpha _2x_2)^{\lambda }
=c\alpha _2^{\lambda }((\alpha _1/\alpha _2)x_1+x_2)^{\lambda }, 
$$ 
where $c\in K^{\times }$ is the leading coefficient of $g'$. 
Since $f$ is tamely reduced over $R$ by assumption, 
this implies that 
$\alpha _1/\alpha _2$ does not belong to $V(R)$ 
by Proposition~\ref{prop:reduced coordinate} (ii). 
Thus, 
(b) of Definition~\ref{def:invariant coord} (5) 
is satisfied. 
Therefore, 
$f$ satisfies the conditions of 
Definition~\ref{def:invariant coord} (5) 
except for (c). 
\end{proof}

Now, 
we prove Proposition~\ref{prop:inv} (ii). 
Let 
$f\in \Rx $ be a coordinate of $\Kx $ over $K$ 
which is tamely reduced over $R$ 
and satisfies $\deg _{x_1}f=\deg _{x_2}f\geq 2$. 
Assume that $\phi (f)=f$ for some 
$\id _{\Rx }\neq \phi \in \Aff (R,\x )$. 
Then, 
it suffices to show that 
$f$ is written as in Lemma~\ref{lem:inv:affinecase}. 
Write $(\phi (x_1),\phi (x_2))=(x_1,x_2)A+(b_1,b_2)$, 
where $A\in \GL (2,R)$ and $b_1,b_2\in R$. 
Let $K'$ be an extension field of $K$ 
to which the eigenvalues of $A$ belong. 
Then, 
we know from linear algebra 
that $P^{-1}AP$ is upper triangular 
for some $P\in \GL (2,K')$. 
We define $\psi \in \Aff (K',\x )$ 
by $(\psi (x_1),\psi (x_2))=(x_1,x_2)P$, 
and put $\phi '=\psi ^{-1}\circ \phi \circ \psi $. 
Then, we have 
\begin{align*}
&(\phi '(x_1),\phi '(x_2))
=\bigl(
(\psi ^{-1}\circ \phi )(x_1),(\psi ^{-1}\circ \phi )(x_2)\bigr)P \\
&\quad 
=\bigl( (\psi ^{-1}(x_1),\psi ^{-1}(x_2))A +(b_1,b_2)\bigr) P
=(x_1,x_2)P^{-1}AP+(b_1,b_2)P. 
\end{align*}
Since $P^{-1}AP$ is regular and upper triangular, 
we may write
$$
\phi '(x_1)=u_1x_1+\beta _1
\quad \text{and}\quad 
\phi '(x_2)=vx_1+u_2x_2+\beta _2,
$$
where 
$u_1,u_2\in (K')^{\times }$ 
are the eigenvalues of $A$, 
and $v,\beta _1,\beta _2\in K'$. 
Hence, 
$\phi '$ belongs to $J(K';x_1,x_2)$. 
Since $\phi (f)=f$ by assumption, 
$\phi '$ fixes the coordinate 
$\psi ^{-1}(f)$ of $K'[\x ]$ over $K'$. 
Thus, 
we have $u_1=1$ or $u_2=1$ 
by Lemma~\ref{lem:eigenvalue}. 
Since 
$u_1u_2=\det P^{-1}AP=\det A$ belongs to $R^{\times }$, 
it follows that $u_1$ and $u_2$ belong to $R^{\times }$. 
Hence, we may assume that $K'=K$. 
Since $u_1=1$ or $u_2=1$, 
we have $u_1\neq u_2$ if and only if $(u_1,u_2)\neq (1,1)$. 
If this is the case, 
then we may choose $P$ so that 
$P^{-1}AP$ is a diagonal matrix. 
Then, we have $v=0$. 
In this case, 
we may assume further that $u_1\neq 1$ and $u_2=1$ 
by replacing $P$ if necessary. 
Thus, 
we are reduced to the following two cases: 
$$
\left\{ 
\begin{array}{cl}
\phi '(x_1)&=x_1+\beta _1 \\ 
\phi '(x_2)&=x_2+\alpha x_1+\beta _2,\quad 
\end{array}
\right. 
\left\{ 
\begin{array}{cl}
\phi '(x_1)&=ux_1+\beta _1 \\ 
\phi '(x_2)&=x_2+\beta _2. 
\end{array}
\right. 
$$
Here, 
$\alpha $, $\beta _1$ and $\beta _2$ 
are elements of $K$, 
and $u\neq 1$ is an element of $R^{\times }$.

First, 
we consider the former case. 
Define $D\in \Der _K\Kx $ by 
$$
D(x_1)=\beta _1\quad \text{and}\quad 
D(x_2)=\alpha x_1+\beta _2-\frac{\alpha \beta _1}{2}. 
$$
Then, $D$ is triangular, 
and satisfies 
$\exp D=\phi '$, 
since 
\begin{align*}
(\exp D)(x_1)&=x_1+D(x_1)=x_1+\beta _1 \\
(\exp D)(x_2)&=x_2+D(x_2)+\frac{D^2(x_2)}{2}
=x_2+\left( 
\alpha x_1+\beta _2-\frac{\alpha \beta _1}{2}\right) 
+\frac{\alpha \beta _1}{2}. 
\end{align*}
Hence, 
we have $\Kx ^{\phi '}=\ker D$ by Lemma~\ref{lem:e-n}, 
and so $\Kx ^{\phi }=\psi (\ker D)$. 
Since $f$ is a coordinate of $\Kx $ over $K$ 
belonging to $\Kx ^{\phi }$, 
we know that, 
if $\ker D=K[h]$ for some $h\in \Kx $, 
then $f$ is a linear polynomial 
in $\psi (h)$ over $K$.

We prove that $\beta _1\neq 0$ by contradiction. 
If $\beta _1=0$, 
then we have $\ker D=K[x_1]$. 
Hence, 
$f$ is a linear polynomial 
in $\psi (x_1)$ over $K$. 
Since $\psi $ is affine, 
it follows that $\deg f=1$, 
a contradiction. 
Thus, 
we get $\beta _1\neq 0$. 
Define 
$$
q=\frac{\alpha }{2\beta _1}x_1^2+
\left(  \frac{\beta _2}{\beta _1}-\frac{\alpha }{2}
\right) x_1. 
$$
Then,  
we have $D(x_2-q)=0$, 
since 
$$
D(q)=D(x_1)\frac{\partial q}{\partial x_1}
=\beta _1\left( 
2\frac{\alpha }{2\beta _1}x_1
+\frac{\beta _2}{\beta _1}-\frac{\alpha }{2}
\right) 
=D(x_2). 
$$
Since $x_2-q$ is a coordinate of $\Kx $ over $K$, 
it follows that $\ker D=K[x_2-q]$ 
by Theorem~\ref{thm:Rentschler}. 
Hence, we have 
$$
f=\gamma \psi (x_2-q)+\gamma '
=\psi \bigl(\gamma x_2+(\gamma '-\gamma q)\bigr)
$$ 
for some $\gamma \in K^{\times }$ and $\gamma '\in K$ 
as remarked. 
Since $\deg f\geq 2$ by assumption, 
this implies that $\deg _{x_1}q\geq 2$. 
Thus, 
we get $\alpha \neq 0$, 
and so 
$\deg _{x_1}(\gamma '-\gamma q)
=\deg _{x_1}q=2$. 
Therefore, 
$f$ is expressed as in 
Lemma~\ref{lem:inv:affinecase} (i).

Next, 
we consider the latter case. 
Set $y_1=(u-1)x_1+\beta _1$. 
Then, 
we have $\Kx =K[y_1,x_2]$ since $u\neq 1$. 
Moreover, 
we get $\phi '(y_1)=uy_1$ 
by applying (\ref{eq:ch1:useful id}) with 
$(\gamma _1,p_1,\gamma _2,p_2)=(u,\beta _1,u-1,\beta _1)$. 
We prove that $\beta _2=0$ 
and $u$ is a root of unity. 
Suppose to the contrary that $\beta _2\neq 0$. 
Then, 
$\Kx ^{\phi '}=K[y_1,x_2]^{\phi '}$ 
is contained in $K[y_1]$ 
by Lemma~\ref{lem:k[x^t,y]} (ii). 
Hence, 
$f$ belongs to $\psi (K[y_1])$. 
This implies that $f$ 
is a linear polynomial in $\psi (y_1)$ over $K$. 
Since $\psi $ is affine, 
it follows that 
$\deg f=\deg \psi (y_1)=\deg y_1=1$, 
a contradiction. 
Thus, we get $\beta _2=0$, 
and so $\phi '(x_2)=x_2$. 
Suppose that $u$ is not a root of unity. 
Then, we have 
$\Kx ^{\phi '}=K[y_1,x_2]^{\phi '}=K[x_2]$, 
since $\phi '(y_1)=uy_1$. 
Hence, 
$f$ belongs to $\psi (K[x_2])$. 
This implies that $f$ 
is a linear polynomial in $\psi (x_2)$ over $K$. 
Thus, we get $\deg f=1$, 
a contradiction. 
Therefore, 
$u$ is a root of unity. 
Consequently, 
we have 
$\Kx ^{\phi '}=K[y_1^l,x_2]$ for some $l\geq 2$. 
Note that $\psi ^{-1}(f)$ belongs to $\Kx ^{\phi '}$, 
and is a coordinate of $\Kx $ over $K$. 
Hence, 
by virtue of Lemma~\ref{lem:k[x^t,y]} (i), 
we may write $\psi ^{-1}(f)=\gamma x_2+q$, 
where $\gamma \in K^{\times }$ and $q\in K[y_1^l]$. 
Then, 
we have $f=\psi (\gamma x_2+q)$. 
Since $\deg f\geq 2$ by assumption, 
we see that $q$ does not belong to $K$. 
Hence, 
we know that $\deg _{x_1}q=\deg _{y_1}q\geq l\geq 2$. 
If $\deg _{x_1}q=2$, 
then $f$ is expressed as in Lemma~\ref{lem:inv:affinecase} (i). 
If $\deg _{x_1}q\geq 3$, 
then $f$ is expressed as in Lemma~\ref{lem:inv:affinecase} (ii). 
This completes the proof of Proposition~\ref{prop:inv} (ii), 
and thereby completing the proof of 
Theorems~\ref{thm:Gamma (f)} and \ref{thm:H(f)}.

\section{Application}\label{sect:tame intersect appl}
\setcounter{equation}{0}

Assume that $R$ is a $\Q $-domain, 
and let $D$ be a triangular derivation of $\Rx $ over $R$ 
with $D(x_i)\neq 0$ for $i=1,2$. 
Take any $f\in \ker D\sm R$, 
and put $\phi =\exp fD$. 
In this section, 
we investigate 
when the coordinates $\phi (x_1)$ and $\phi (x_2)$ 
of $\Rx $ over $R$ 
are totally wild or quasi-totally wild. 
Note that 
$\phi (x_i)$ is quasi-totally wild if and only if 
$\phi (x_i)$ is exponentially wild for $i=1,2$ 
by Corollary~\ref{cor:expw=>qtw} (i). 
If $\phi (x_i)$ is exponentially wild, 
then $\phi (x_i)$ is wild 
as mentioned after Definition~\ref{defin:wild coordinates}. 
If $\phi (x_i)$ is wild for some $i\in \{ 1,2\} $, 
then $\phi $ does not belong to $\T (R,\x )$ by definition. 
Conversely, 
if $\phi $ does not belong to $\T (R,\x )$, 
then $\phi (x_i)$ is wild for $i=1,2$. 
In fact, 
if $\phi (x_i)$ is tame for some $i\in \{ 1,2\} $, 
and $\tau \in \T (R,\x )$ is such that $\tau (x_i)=\phi (x_i)$, 
then $\tau ^{-1}\circ \phi $ belongs to $\Aut (\Rx /R[x_i])$.

Write $D$ as in (\ref{eq:triangular/R}), 
and define $I$ as in 
(\ref{eq:II'}). 
Then, due to Theorem~\ref{thm:hD}, 
$\phi $ does not belong to $\T (R,\x )$ 
if and only if one of the following conditions holds: 

\noindent{\rm (W1) } 
$I\cap \{ 1,\ldots ,l\} \neq \emptyset $. 

\noindent{\rm (W2) } 
$I=\zs $, $b_0/a$ does not belong to $V(R)$ 
and $\deg _{x_2}f\neq 1$. 

\noindent 
Here, we note that $\deg _{x_2}f\neq 1$ 
if and only if $\deg _{x_2}f\geq 2$, 
since $f$ is not an element of $R$ by assumption.

Define $\tau \in \Aut (\Rx /R[x_1])$ 
as in (\ref{eq:tau:triangular/R}), 
and set $g_i=\tau (\phi (x_i))$ for $i=1,2$. 
Then, 
$\phi (x_i)$ is totally (resp.\ quasi-totally) wild 
if and only if 
$g_i$ is totally (resp.\ quasi-totally) wild for $i=1,2$.

With this notation, 
we have the following theorems.

\begin{thm}\label{prop:at2}
Assume that $\phi =\exp fD$ 
does not belong to $\T (R,\x )$. 
Then, the following assertions hold$:$ 

\noindent{\rm (i)} 
$\deg _{x_1}g_1\geq 2$, 
$\deg _{x_1}g_1\geq \deg _{x_2}g_2\geq 1$, 
and $\deg _{x_1}g_2\geq \deg _{x_2}g_2\geq 2$.

\noindent{\rm (ii)} 
$g_1$ and $g_2$ are tamely reduced over $R$.

\noindent{\rm (iii)} 
Assume that $H(g_i)$ is not equal to $\{ \id _{\Rx }\} $ 
for some $i\in \{ 1,2\} $. 
Then, 
$g_i$ is of type I or IV or V if $i=1$, 
and of type IV or V if $i=2$. 
\end{thm}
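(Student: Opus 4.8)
The plan is to compute the two bidegrees $\w(g_1),\w(g_2)$ and the associated leading forms explicitly, and then to feed the outcome into the classification theorem (Theorem~\ref{thm:Gamma (f)}). Write $D$ as in (\ref{eq:triangular/R}), set $P(X)=\sum_{i=0}^{l}\frac{b_i}{i+1}X^{i+1}$ and $f_0=\tau(f)$, so that $h=ax_2-P(x_1)$. Since $\tau$ fixes $x_1$, (\ref{eq:ah'}) gives $g_1=x_1+af_0$; and applying $\tau$ to the identity $\phi(h)=h$ (valid because $(fD)(h)=fD(h)=0$), using $a\tau(x_2)=ax_2+(P-P_I)(x_1)$ from (\ref{eq:tau:triangular/R}) and $h_0=ax_2-P_I(x_1)$ from (\ref{eq:triangular kernel h0/R}), where $P_I(X)=\sum_{i\in I}\frac{b_i}{i+1}X^{i+1}$, yields
\[
a\,g_2=h_0+P(g_1).
\]
Moreover $h$ is a coordinate of $\Kx$ over $K$ killed by $D$, so Theorem~\ref{thm:Rentschler} gives $\ker D=K[h]$; hence $f\in K[h]$ and $f_0=c_dh_0^{d}+\cdots\in K[h_0]$, with $d:=\deg_{x_2}f=\deg_{h_0}f_0\geq1$ since $f\notin R$.

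By Theorem~\ref{thm:hD}, the hypothesis $\phi\notin\T(R,\x)$ forces (W1) or (W2). Put $t=\max I$; then $t\geq1$ in case (W1) and $t=0$ (with $I=\zs$ and $d\geq2$) in case (W2), and in either case $0\leq t\leq l$ and $b_t\notin aR$. From (\ref{eq:triangular kernel h0/R}) one reads $\deg_{x_2}h_0=1$ and $\deg_{x_1}h_0=t+1$. Because $f_0$ has top $h_0$-term $c_dh_0^{d}$ and $a\,g_2$ has top term $\frac{b_l}{l+1}g_1^{l+1}$ (the $h_0$-summand being of strictly smaller bidegree), a degree count with no leading cancellation gives $\deg_{x_1}g_1=d(t+1)$, $\deg_{x_2}g_1=d$, $\deg_{x_1}g_2=(l+1)d(t+1)$ and $\deg_{x_2}g_2=(l+1)d$. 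Thus $\deg_{x_1}g_1\geq\deg_{x_2}g_1\geq1$ and $\deg_{x_1}g_1\geq2$, while $\deg_{x_1}g_2\geq\deg_{x_2}g_2\geq2$, which is (i).

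For (ii) the key observation is that $\w(g_1)$ and $\w(g_2)$ are both positive multiples of $\w(h_0)=(1,t+1)$, so the graded leading forms are powers of one binomial. By multiplicativity of $(\cdot)^{\w}$ one has $f_0^{\w(g_1)}=c_d(h_0^{\w(g_1)})^{d}$ with $h_0^{\w(g_1)}=ax_2-\frac{b_t}{t+1}x_1^{t+1}$, and therefore
\[
g_1^{\w(g_1)}=A_1\bigl(x_2-Bx_1^{t+1}\bigr)^{d},\qquad
g_2^{\w(g_2)}=A_2\bigl(x_2-Bx_1^{t+1}\bigr)^{d(l+1)},
\]
where $B=\frac{b_t}{(t+1)a}\in K^{\times}$ and $A_1,A_2\in K^{\times}$. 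Thus $g_1$ and $g_2$ share the same slope coefficient $B$ and exponent $t+1$, and I can apply Proposition~\ref{prop:reduced coordinate} (with $S=K$) to both at once. When $t\geq1$ the two variables have different degrees, so tame reducedness is equivalent to $B\notin R$, which holds since $b_t\notin aR$ and $t+1\in R^{\times}$ (as $R\supseteq\Q$); when $t=0$ the degrees coincide and tame reducedness is equivalent to $B=b_0/a\notin V(R)$, which is exactly condition (W2). Hence $g_1,g_2$ are tamely reduced over $R$, proving (ii).

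Finally, assume $H(g_i)\neq\{\id_{\Rx}\}$. By (ii) and (i), $g_i$ satisfies (B) and (C) of Theorem~\ref{thm:Gamma (f)}, so $g_i$ is of one of the types I--V. Since $R$ is a $\Q$-domain, no element of $\Rx$ is of type II (remark after Definition~\ref{def:invariant coord}). For $i=1$, $\deg_{x_1}g_1\geq2$ excludes type III, whose bidegree is $(1,1)$ by (\ref{eq:w(f_i)}); so $g_1$ is of type I, IV or V. For $i=2$, $\deg_{x_2}g_2\geq2$ excludes both type III and type I (bidegree $(1,\lambda)$); so $g_2$ is of type IV or V. The main obstacle I anticipate is the bookkeeping in the second and third paragraphs: verifying the exact bidegrees and the absence of cancellation between $h_0$ and the top term of $P(g_1)$, and recognizing that the single binomial $x_2-Bx_1^{t+1}$ governs both $g_1$ and $g_2$. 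Once that structural fact is secured, (ii) and (iii) follow mechanically from Proposition~\ref{prop:reduced coordinate} and Theorem~\ref{thm:Gamma (f)}.
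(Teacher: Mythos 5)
Your proof is correct and follows essentially the same route as the paper's: your identity $ag_2=h_0+P(g_1)$ is just a rewriting of the paper's (\ref{eq:inv:appl:g1g2}), your leading-form and bidegree computations reproduce (\ref{eq:pf:ta2:1}) and (\ref{eq:pf:ta2:2}) (including the case split $t\geq 1$ versus $t=0$ with $d\geq 2$ forced by (W2), which is exactly what rules out cancellation), and parts (ii) and (iii) are deduced exactly as in the paper from Proposition~\ref{prop:reduced coordinate} and Theorem~\ref{thm:Gamma (f)} --- you also correctly read the middle inequality of (i) as $\deg_{x_1}g_1\geq\deg_{x_2}g_1\geq 1$, which is evidently what is intended. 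The only cosmetic difference is that you exclude type III by its bidegree $(1,1)$ from (\ref{eq:w(f_i)}), whereas the paper invokes Proposition~\ref{prop:f1f3} via the fact that $g_i$ is a coordinate of $\Rx$ over $R$; both are valid.
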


Next, consider the following condition: 
\begin{equation}\label{eq:inv:appl:cond}
I=\zs \text{ and }
b_0/a\text{ does not belong to }V(R),
\text{ and }l=0\text{ if }i=2. 
\end{equation}

\begin{thm}\label{prop:at2'}
The following assertions hold$:$ 

\noindent{\rm (i)} 
$g_1$ is of type I if and only if 
$\deg _{x_2}f=1$ and 
$I\cap \{ 1,\ldots ,l\} \neq \emptyset $.

\noindent{\rm (ii)} 
$g_i$ is of type IV for $i\in \{ 1,2\} $ 
if and only if $\deg _{x_2}f=2$ 
and $(\ref{eq:inv:appl:cond})$ holds.

\noindent{\rm (iii)} 
If $g_i$ is of type V for $i\in \{ 1,2\} $, 
then we have $\deg _{x_2}f\geq 3$ 
and $(\ref{eq:inv:appl:cond})$. 
\end{thm}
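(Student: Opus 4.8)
The plan is to reduce everything to two explicit formulas for the tamely reduced representatives $g_1,g_2$ and then read off the type from the degree data in (\ref{eq:w(f_i)}), using the tameness dichotomy of Theorem~\ref{thm:hD} to dispose of the borderline degree cases. Writing $d=\deg_{x_2}f$ and using $\phi(h)=h$, $\phi(x_1)=x_1+af$ together with $\ker\tilde D=K[h]$ from Theorem~\ref{thm:Rentschler}, I first record that $f_0:=\tau(f)$ lies in $K[h_0]$ with $\deg_{h_0}f_0=d$, that $g_1=x_1+af_0$, and (substituting $x_2=a^{-1}(h+\sum_i\tfrac{b_i}{i+1}x_1^{i+1})$ and applying $\tau$) that
\[
g_2=\frac1a\Bigl(h_0+\sum_{i=0}^l\frac{b_i}{i+1}\,g_1^{\,i+1}\Bigr).
\]
All of this drops out of the computations already made in the proof of Theorem~\ref{thm:hD}.

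Next I would do the degree bookkeeping. Since $h_0=ax_2-\sum_{i\in I}\tfrac{b_i}{i+1}x_1^{i+1}$, the quantity $\deg_{x_1}h_0$ governs everything: one finds $\deg_{x_2}g_1=d$ and $\deg_{x_2}g_2=(l+1)d$, while the equalities $\deg_{x_1}g_1=\deg_{x_2}g_1$ and $\deg_{x_1}g_2=\deg_{x_2}g_2$ hold when $I=\zs$ and fail, with $\deg_{x_1}>\deg_{x_2}$, when $I$ meets $\{1,\dots,l\}$. This already yields (i): if $g_1$ is of type~I it is wild, and if $I\cap\{1,\dots,l\}\ne\emptyset$ then (W1) holds; in either case $\phi\notin\T(R,\x)$, so $g_1$ is tamely reduced by Theorem~\ref{prop:at2}, and by the classification (\ref{eq:w(f_i)}) the only type admitting $\deg_{x_2}=1<\deg_{x_1}$ is type~I (as in the ``if'' part of Theorem~\ref{thm:Gamma (f)}). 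Reading off the degrees shows type~I is equivalent to $d=1$ and $I\cap\{1,\dots,l\}\ne\emptyset$.

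For (ii) and (iii) the decisive remark is that type~IV and type~V coordinates are wild, so whenever $g_i$ is of one of these types we have $\phi(x_i)$ wild, hence $\phi\notin\T(R,\x)$; thus Theorem~\ref{prop:at2} applies ($g_i$ is tamely reduced) and Theorem~\ref{thm:hD} tells us (W1) or (W2) holds. Being type~IV or~V forces $\deg_{x_1}g_i=\deg_{x_2}g_i$, which by the bookkeeping forces $I=\zs$ (it cannot meet $\{1,\dots,l\}$, nor be empty since $\phi\notin\T(R,\x)$); then (W1) is impossible, so (W2) gives $b_0/a\notin V(R)$. For $g_1$ this settles both directions once $\deg_{x_2}g_1=d$ is matched with $\lambda=2$ (type~IV) or $\lambda_2\ge3$ (type~V), with no constraint on $l$. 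For the converse in (ii) I would use that $I=\zs$ makes $g_1=x_1+af_0$, and $g_2=\tfrac1a(h_0+b_0g_1)$ when $l=0$, visibly of the shape $\psi(\gamma x_2+q)$ with $q$ quadratic in a single variable, so that Lemma~\ref{lem:inv:affinecase}(i) delivers type~IV.

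The one genuinely delicate point, and the main obstacle, is showing that $g_2$ of type~V forces $l=0$ (the extra clause of (\ref{eq:inv:appl:cond}) for $i=2$), since $\w(g_2)=((l+1)d,(l+1)d)$ and the leading form — a power of $h_0$ — are compatible with every $l$. Here I pass to the linear frame $u=h_0$, $v=x_1$. Because the leading form of a type~V polynomial is a power of the linear part of $\tau_5(x_1)$, that linear part is proportional to $h_0=u$, whence $\tau_5(x_1)\in K[u]$ and any type~V polynomial is (affine)$+K[u]$, so of $v$-degree at most $1$. On the other hand $g_1=v+af_0(u)$ has $v$-degree $1$, so the displayed formula shows $g_2$ has $v$-degree exactly $l+1$, the coefficient of $v^{l+1}$ being $b_l/((l+1)a)\ne0$. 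Comparing gives $l+1\le1$, i.e.\ $l=0$, and then $\lambda_2=d\ge3$ together with the already-established $I=\zs$ and $b_0/a\notin V(R)$ is precisely (\ref{eq:inv:appl:cond}); this finishes (iii) and, with the converse above, completes (ii).
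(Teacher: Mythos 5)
Your proposal is correct and follows essentially the same route as the paper: the same explicit formulas for $g_1$ and $g_2$, the same wildness-plus-degree bookkeeping that forces $t=0$ and (W2) in the forward directions of (ii) and (iii), and the same appeal to Lemma~\ref{lem:inv:affinecase}~(i) for the converse of (ii). Even your treatment of the delicate point — excluding type V for $g_2$ when $l\geq 1$ by comparing $x_1$-degrees over $K[h_0]$ — is the paper's Lemma~\ref{lem:notV} in different clothing, since $dh_0\wedge dg_2$ being a nonzero constant multiple of $dx_1\wedge dx_2$ says precisely that $g_2$ is linear in $x_1$ over $K[h_0]$, and the paper's wedge computation is exactly your computation that this $x_1$-degree equals $l+1$.
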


First, we describe $\phi (x_1)$ and $\phi (x_2)$ concretely. 
Since $D(x_1)=a$, 
we have 
$$
\phi (x_1)=(\exp fD)(x_1)=x_1+fD(x_1)=x_1+af. 
$$
Define $h$ as in (\ref{eq:triangular kernel h/R}). 
Then, 
we have $\phi (h)=h$, 
since $D(h)=0$. 
Hence, 
we get 
$$
ax_2-\sum _{i=0}^l\frac{b_i}{i+1}x_1^{i+1}
=h=\phi (h)
=a\phi (x_2)-\sum _{i=0}^l\frac{b_i}{i+1}(x_1+af)^{i+1}. 
$$
This gives that 
$$
\phi (x_2)=x_2+
\sum _{i=0}^l\frac{b_i}{(i+1)a}
\bigl((x_1+af)^{i+1}-x_1^{i+1}\bigr) . 
$$
Set $f_0=\tau (f)$. 
Then, we have 
\begin{equation}\label{eq:inv:appl:g1g2}
\begin{aligned}
g_1&=\tau (\phi (x_1))=x_1+af_0\\
g_2&=\tau (\phi (x_2))=x_2+
\sum _{i=0}^l\frac{b_i}{(i+1)a}(x_1+af_0)^{i+1}
-\sum _{i\in I}\frac{b_i}{(i+1)a}x_1^{i+1}, 
\end{aligned}
\end{equation}
since $I$ is the complement of $I'$ in $\{ 0,1,\ldots ,l\} $.

Now, we prove Theorems~\ref{prop:at2} and \ref{prop:at2'}. 
Since $\phi $ does not belong to $\T (R,\x )$ by assumption, 
we have $I\neq \emptyset $ by Theorem~\ref{thm:hD}. 
Set $t=\max I\geq 0$ and $h_0=\tau (h)$. 
Then, 
we see from (\ref{eq:triangular kernel h0/R}) that 
$h_0^{\w (h_0)}$ is written as in (\ref{eq:w(h_0)}). 
Since $\ker D$ is contained in $K[h]$ 
as mentioned before Theorem~\ref{thm:Rentschler}, 
$f_0$ belongs to $K[h_0]\sm R$. 
Hence, 
we have 
$m:=\deg _{x_2}f=\deg _hf=\deg _{h_0}f_0\geq 1$. 
We show that 
\begin{equation}\label{eq:pf:ta2:1}
g_1^{\w (g_1)}=c_1(x_2-bx_1^{t+1})^m,\quad 
g_2^{\w (g_2)}=c_2(x_2-bx_1^{t+1})^{(l+1)m}
\end{equation}
for some $c_1,c_2\in K^{\times }$, 
where $b=((t+1)a)^{-1}b_t$. 
Since 
$f_0^{\w (f_0)}$ is equal to 
$(h_0^{\w (h_0)})^m=a^m(x_2-bx_1^{t+1})^m$ 
up to a nonzero constant multiple, 
it suffices to verify that 
$\deg _{x_1}f_0\geq 2$, 
$\deg _{x_2}f_0\geq 1$ and 
$\deg _{x_2}f_0^{l+1}\geq 2$ 
in view of 
(\ref{eq:inv:appl:g1g2}). 
Note that $\deg _{x_1}h_0=t+1$, $\deg _{x_2}h_0=1$ 
and 
$$
\deg _{x_i}f_0=(\deg _{h_0}f_0)\deg _{x_i}h_0
=m\deg _{x_i}h_0
$$
for $i=1,2$. 
First, 
assume that $t\geq 1$. 
Then, it follows that 
$\deg _{x_1}f_0\geq 2m\geq 2$ 
and $\deg _{x_2}f_0=m\geq 1$. 
Since $I$ is a subset of $\{ 0,\ldots ,l\} $, 
we have $l\geq t$. 
Hence, 
we get $\deg _{x_2}f_0^{l+1}=(l+1)m\geq 2$. 
Thus, 
the assertion holds. 
Next, assume that $t=0$. 
Then, we have $I=\zs $. 
Since $\phi $ does not belong to $\T (R,\x )$ by assumption, 
this implies that $\phi $ satisfies (W2). 
Hence, we get $m=\deg _{x_2}f\geq 2$. 
Since $\deg _{x_i}h_0=1$ for $i=1,2$, 
it follows that 
$\deg _{x_i}f_0=m\geq 2$ for $i=1,2$, 
and $\deg _{x_2}f_0^{l+1}=(l+1)m\geq m\geq 2$. 
Thus, the assertion holds. 
Therefore, 
we obtain (\ref{eq:pf:ta2:1}).

Thanks to Proposition~\ref{prop:slope}, 
we see from (\ref{eq:pf:ta2:1}) that 
\begin{equation}\label{eq:pf:ta2:2}
\begin{aligned}
\deg _{x_1}g_1&=(t+1)m &
\quad \deg _{x_1}g_2&=(t+1)(l+1)m \\
\deg _{x_2}g_1&=m& 
\deg _{x_2}g_2&=(l+1)m. 
\end{aligned}
\end{equation}
By the preceding discussion, 
we have $l\geq t$, 
and $m\geq 2$ if $t=0$. 
Hence, (i) follows from (\ref{eq:pf:ta2:2}). 
We prove (ii) using Proposition~\ref{prop:reduced coordinate}. 
First, assume that $t=0$. 
Then, we have $b=b_0/a$. 
Since $I=\zs $, 
we know that $b$ does not belong to $V(R)$ by (W2). 
By (\ref{eq:pf:ta2:1}), 
this implies that 
$g_1$ and $g_2$ are tamely reduced over $R$ 
because of Proposition~\ref{prop:reduced coordinate} (i). 
Next, 
assume that $t\geq 1$. 
Since $t$ is an element of $I$, 
we see that $b_t$ does not belong to $aR$. 
Hence, 
$b=((t+1)a)^{-1}b_t$ does not belong to $R$. 
By (\ref{eq:pf:ta2:1}), 
this implies that 
$g_1$ and $g_2$ are tamely reduced over $R$ 
because of Proposition~\ref{prop:reduced coordinate} (ii). 
This proves (ii).

To prove (iii), take any $i\in \{ 1,2\} $, 
and assume that $H(g_i)$ 
is not equal to $\{ \id _{\Rx }\} $. 
Then, 
$g_i$ satisfies (A) of Theorem~\ref{thm:Gamma (f)}. 
By (ii) and (i) of Theorem~\ref{prop:at2}, 
$g_i$ also satisfies (B) and (C). 
Thus, 
$g_i$ must be of one of the types I through V 
by Theorem~\ref{thm:Gamma (f)}. 
Since $R$ is a $\Q $-domain, 
$g_i$ is not of type II as remarked after 
Definition~\ref{def:invariant coord}. 
Since $g_i$ is a coordinate of $\Rx $ over $R$, 
we know that 
$g_i$ is not of type III 
by Proposition~\ref{prop:f1f3}. 
Since $\deg _{x_2}g_2\geq 2$ by (i), 
we see that $g_2$ is not of type I\null. 
Therefore, 
$g_i$ is of type I or IV or V if $i=1$, 
and is of type IV or V if $i=2$. 
This proves (iii), 
and thus completing the proof of 
Theorem~\ref{prop:at2}.

Next, we prove Theorem~\ref{prop:at2'}. 
Note that $\deg _{x_2}f=1$ and 
$I\cap \{ 1,\ldots ,l\} \neq \emptyset $ 
if and only if $\deg _{h_0}f_0=1$ 
and $t\geq 1$. 
If these conditions are satisfied, 
then we see from 
(\ref{eq:triangular kernel h0/R}) and 
(\ref{eq:inv:appl:g1g2}) 
that $g_1$ has the form 
$a'(ax_2-g)+x_1$ for some $a'\in K^{\times }$ 
and $g\in K[x_1]$ with $\deg _{x_1}g=t+1\geq 2$ 
whose leading coefficient is equal to $b_t/(t+1)$. 
Since $b_t/(t+1)$ does not belong to $aR$, 
we know that $g_1$ is of type I\null. 
Conversely, 
if $g_1$ is of type I, 
then we see from 
(\ref{eq:triangular kernel h0/R}) and 
(\ref{eq:inv:appl:g1g2}) 
that $\deg _{h_0}f_0=1$ 
and $t\geq 1$. 
Hence, 
we have 
$\deg _{x_2}f=1$ and 
$I\cap \{ 1,\ldots ,l\} \neq \emptyset $. 
This proves (i).

We prove the ``only if" part of (ii), and (iii). 
Take any $i\in \{ 1,2\} $, 
and assume that 
$g_i$ is of type IV or V\null. 
Then, 
we have $\deg _{x_1}g_i=\deg _{x_2}g_i$ 
by (\ref{eq:w(f_i)}). 
This implies that $t=0$ by (\ref{eq:pf:ta2:2}). 
Hence, $\phi $ does not satisfy (W1). 
Since $g_i$ is a coordinate of $\Rx $ over $R$, 
and is of type IV or V, 
we know that $g_i$ is wild 
by the remark before Theorem~\ref{thm:Gamma (f)}. 
Hence, $\phi $ does not belong to $\T (R,\x )$. 
Thus, 
$\phi $ satisfies (W2). 
Consequently, 
$\phi $ satisfies the first two parts of (\ref{eq:inv:appl:cond}). 
Now, 
assume that $g_i$ is of type IV\null. 
Then, 
we have $\deg g_i=2$ by definition. 
Since $t=0$, 
we know by (\ref{eq:pf:ta2:2}) that $m=2$, 
and $l=0$ if $i=2$. 
Hence, we get $\deg _{x_2}f=2$ 
and the last part of (\ref{eq:inv:appl:cond}). 
This proves the ``only if" part of (ii). 
Next, 
assume that $g_i$ is of type V\null. 
Then, 
we have $\deg g_i\geq 3$ by definition. 
If $i=1$, 
or if $i=2$ and $l=0$, 
then it follows from (\ref{eq:pf:ta2:2}) 
that $\deg _{x_2}f=m\geq 3$, 
since $t=0$. 
To complete the proof of (iii), 
it suffices to verify that 
$g_2$ is not of type V when $l\geq 1$.

\begin{lem}\label{lem:notV}
Assume that $f_5$ is as in 
Definition~$\ref{def:invariant coord}$. 
Let $\bar{f}_5$ be a linear form 
in $x_1$ and $x_2$ over $K$ 
such that $f_5^{\w (f_5)}=\alpha \bar{f}_5^u$ 
for some $\alpha \in K^{\times }$ and $u\in \N $. 
Then, 
we have $d\bar{f}_5\wedge df_5=\beta dx_1\wedge dx_2$ 
for some $\beta \in K^{\times }$. 
\end{lem}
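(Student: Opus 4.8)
The plan is to pin down $\bar{f}_5$ explicitly and then compute the wedge product directly, exploiting that $\tau _5$ is affine. First I would record the leading form: by (\ref{eq:w(f_i)}) and (\ref{eq:f_i^w}) we have $\w (f_5)=(\lambda _2,\lambda _2)$ and $f_5^{\w (f_5)}=c_2(\alpha _1x_1+\alpha _2x_2)^{\lambda _2}$, where $c_2\in K^{\times }$ is the leading coefficient of $g'$ and $\tau _5(x_1)=\alpha _1x_1+\alpha _2x_2+\alpha _0$. Since $\bar{f}_5$ is a linear form (hence homogeneous) with $f_5^{\w (f_5)}=\alpha \bar{f}_5^u$, comparing degrees gives $u=\lambda _2$; and because $\alpha _1x_1+\alpha _2x_2$ is irreducible (note $\alpha _1,\alpha _2\neq 0$ by (b) of Definition~\ref{def:invariant coord} (5)), unique factorization in $\Kx $ forces $\bar{f}_5=\gamma (\alpha _1x_1+\alpha _2x_2)$ for some $\gamma \in K^{\times }$.

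Next I would differentiate. Writing $f_5=\tau _5(x_2)+g'(\tau _5(x_1))$, the chain rule yields $df_5=d\tau _5(x_2)+h\,d\tau _5(x_1)$ for some $h\in \Kx $ (namely the formal derivative of $g'$ evaluated at $\tau _5(x_1)$), whose precise value is irrelevant. On the other hand $d\bar{f}_5=\gamma (\alpha _1\,dx_1+\alpha _2\,dx_2)=\gamma \,d\tau _5(x_1)$, since $d\alpha _0=0$. Therefore, as $d\tau _5(x_1)\wedge d\tau _5(x_1)=0$, the term involving $h$ drops out and $d\bar{f}_5\wedge df_5=\gamma \,d\tau _5(x_1)\wedge d\tau _5(x_2)$.

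Finally, by the identity $d\tau _5(x_1)\wedge d\tau _5(x_2)=(\det J\tau _5)\,dx_1\wedge dx_2$ recorded in Section~\ref{sect:grading}, and since $\tau _5\in \Aff (K,\x )$ is an automorphism so that $\det J\tau _5=\alpha _1\beta _2-\alpha _2\beta _1$ lies in $K^{\times }$, I would conclude $d\bar{f}_5\wedge df_5=\beta \,dx_1\wedge dx_2$ with $\beta :=\gamma (\alpha _1\beta _2-\alpha _2\beta _1)\in K^{\times }$. There is no serious obstacle here: the lemma reduces to a short computation once $\bar{f}_5$ is identified as a scalar multiple of the linear form $\alpha _1x_1+\alpha _2x_2$. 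The only points needing a little care are the unique-factorization step identifying $\bar{f}_5$ (which relies on $\alpha _1,\alpha _2\neq 0$) and the observation that the constant term $\alpha _0$ vanishes under $d$, so that $d\bar{f}_5$ is proportional to $d\tau _5(x_1)$ and the chain-rule term is annihilated.
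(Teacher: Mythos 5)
Your proof is correct and follows essentially the same route as the paper: identify $\bar{f}_5$ as a scalar multiple of $\alpha _1x_1+\alpha _2x_2$ from the form of $f_5^{\w (f_5)}$ in (\ref{eq:f_i^w}), observe that the $g'$-part of $f_5$ contributes nothing to the wedge product, and conclude via $\det J\tau _5=\alpha _1\beta _2-\alpha _2\beta _1\in K^{\times }$. The only differences are cosmetic: you justify the identification of $\bar{f}_5$ via unique factorization and phrase the cancellation through the chain rule, where the paper simply notes that $\tau _5(x_1)$ (hence $\tau _5(g')$) lies in $K[\bar{f}_5]$.
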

\begin{proof}
From (\ref{eq:f_i^w}), 
we see that 
$\bar{f}_5=c(\alpha _1x_1+\alpha _2x_2)$ 
for some $c\in K^{\times }$. 
Hence, 
$\tau _5(x_1)$ belongs to $K[\bar{f}_5]$. 
Since $f=\tau _5(x_2+g')$ for some $g'\in K[x_1]$, 
it follows that 
$$
d\bar{f}_5\wedge df_5=
d\bar{f}_5\wedge d\tau (x_2+g')
=d\bar{f}_5\wedge d\tau _5(x_2)
=c(\alpha _1\beta _2-\alpha _2\beta _1)dx_1\wedge dx_2. 
$$
Since $\tau _5$ is an element of $\Aff (K,\x )$, 
we know that 
$\det J\tau _5=\alpha _1\beta _2-\alpha _2\beta _1$ 
belongs to $K^{\times }$. 
Therefore, 
$c(\alpha _1\beta _2-\alpha _2\beta _1)$ 
belongs to $K^{\times }$. 
\end{proof}

Since $t=0$, 
we have $I=\zs $. 
Hence, 
we get $h_0=a(x_2-bx_1)$ 
by (\ref{eq:triangular kernel h0/R}). 
Thus, 
we see from (\ref{eq:pf:ta2:1}) 
that $g_2^{\w (g_2)}=ch_0^u$ 
for some $c\in K^{\times }$ and $u\in \N $. 
Now, 
suppose to the contrary that $l\geq 1$ and 
$g_2$ is of type V\null. 
Then, we have 
$dh_0\wedge dg_2=\beta dx_1\wedge dx_2$ 
for some $\beta \in K^{\times }$ by Lemma~\ref{lem:notV}. 
By (\ref{eq:inv:appl:g1g2}) with $I=\zs $, 
we get 
\begin{align*}
dh_0\wedge dg_2
&=dh_0\wedge dx_2
+\sum _{i=0}^l\frac{b_i}{a}(x_1+af_0)^idh_0\wedge dx_1
-\frac{b_0}{a}dh_0\wedge dx_1 \\
&=-\sum _{i=0}^lb_i(x_1+af_0)^idx_1\wedge dx_2, 
\end{align*}
since $dh_0\wedge df_0=0$, 
$dh_0\wedge dx_1=-adx_1\wedge dx_2$ 
and $dh_0\wedge dx_2=-b_0dx_1\wedge dx_2$. 
By the supposition that $l\geq 1$, 
this implies that 
$dh_0\wedge dg_2\neq \beta dx_1\wedge dx_2$ 
for any $\beta \in K^{\times }$, 
a contradiction. 
Therefore, 
$g_2$ is not of type V if $l\geq 1$. 
This completes the proof of 
Theorem~\ref{prop:at2'} (iii).

Finally, 
we prove the ``if" part of Theorem~\ref{prop:at2'} (ii). 
Assume that $\deg _{x_2}f=2$ 
and (\ref{eq:inv:appl:cond}) is satisfied. 
We show that $g_i$ is of type IV for $i=1,2$ 
by means of Lemma~\ref{lem:inv:affinecase} (i). 
Since (W2) is satisfied, 
$\phi $ does not belong to $\T (R,\x )$. 
By Theorems~\ref{prop:at2} (ii), 
it follows that 
$g_i$ is tamely reduced over $R$ for $i=1,2$. 
Since $I=\zs $ by (\ref{eq:inv:appl:cond}), 
we have $t=0$. 
Hence, 
we get $\deg _{x_1}g_i=\deg _{x_2}g_i$ for $i=1,2$ 
by (\ref{eq:pf:ta2:2}). 
Since $I=\zs $, 
we have $h_0=ax_2-b_0x_1$ 
by (\ref{eq:triangular kernel h0/R}). 
Define $\psi \in \Aff (K,\x )$ by 
$\psi (x_1)=h_0$ and $\psi (x_2)=x_1$, 
and take $q\in K[x_1]$ such that $\psi (q)=f_0$. 
Then, 
we have 
$g_1=x_1+af_0=\psi (x_2+aq)$ by (\ref{eq:inv:appl:g1g2}), 
and $\deg _{x_1}q=\deg _{h_0}f_0=\deg _{x_2}f=2$ by assumption. 
Therefore, 
we conclude that $g_1$ is of type IV thanks to 
Lemma~\ref{lem:inv:affinecase} (i). 
If $i=2$, 
then we have $l=0$ by the last part of (\ref{eq:inv:appl:cond}). 
Hence, 
we get $g_2=x_2+b_0f_0$ by (\ref{eq:inv:appl:g1g2}). 
Define $\psi '\in \Aff (K,\x )$ by 
$\psi '(x_1)=h_0$ and $\psi '(x_2)=x_2$. 
Then, 
we have $g_2=\psi '(x_2+b_0q)$. 
Therefore, 
we conclude that $g_2$ is of type IV similarly. 
This proves the ``if" part of (ii) Theorem~\ref{prop:at2'}, 
and thus completing the proof of Theorem~\ref{prop:at2'}.

\begin{thm}\label{thm:nagata qtw}
For $i\in \{ 1,2\} $, 
the following statements hold$:$

\noindent{\rm (i)} 
The coordinate $\phi (x_i)$ of $\Rx $ over $R$ 
is wild and is not quasi-totally wild 
if and only if one of the following conditions holds$:$

\noindent{\rm \ (1)} 
$i=1$, $\deg _{x_2}f=1$ and 
$I\cap \{ 1,\ldots ,l\} \neq \emptyset $.

\noindent{\rm \ (2)} 
$\deg _{x_2}f=2$ and $(\ref{eq:inv:appl:cond})$ holds.

\noindent{\rm (ii)} 
The coordinate $\phi (x_i)$ of $\Rx $ over $R$ 
is quasi-totally wild 
if and only if one of the following conditions holds$:$ 

\noindent{\rm \ (1)} 
$i=2$ or $\deg _{x_2}f\geq 2$, 
and $I\cap \{ 1,\ldots ,l\} \neq \emptyset $.

\noindent{\rm \ (2)} 
$i=2$, $l\geq 1$, $\deg _{x_2}f=2$, 
$I=\zs $ and $b_0/a$ does not belong to $V(R)$.

\noindent{\rm \ (3)} 
$\deg _{x_2}f\geq 3$, 
$I=\zs $ and 
$b_0/a$ does not belong to $V(R)$.

\noindent{\rm (iii)} 
If $I\cap \{ 1,\ldots ,l\} \neq \emptyset $ 
and $\deg _{x_2}f\geq 2$, 
then $\phi (x_1)$ is a totally wild coordinate of $\Rx $ over $R$.

\noindent{\rm (iv)} 
If one of the following conditions holds, 
then $\phi (x_2)$ is a totally wild coordinate of $\Rx $ over $R$$:$ 

\noindent{\rm \ (1)} 
$I\cap \{ 1,\ldots ,l\} \neq \emptyset $. 

\noindent{\rm \ (2)} 
$l\geq 1$, $\deg _{x_2}f\geq 2$, 
$I=\zs $ and $b_0/a$ does not belong to $V(R)$. 
\end{thm}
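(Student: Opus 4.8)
The plan is to push everything through the tame conjugation $\tau$ and then read the answer off the classification already in hand. Since $\tau\in\T(R,\x)$, the subgroup $\Aut(\Rx/R[g_i])\cap\T(R,\x)=H(g_i)$ is the $\tau$-conjugate of $\Aut(\Rx/R[\phi(x_i)])\cap\T(R,\x)$, so (as recorded just before this theorem) $\phi(x_i)$ is totally, resp.\ quasi-totally, wild exactly when $g_i$ is. Moreover, by the discussion opening this section, $\phi(x_i)$ is wild for some---equivalently every---$i$ precisely when $\phi\notin\T(R,\x)$, which by Theorem~\ref{thm:hD} means that (W1) or (W2) holds; thus wildness of $\phi(x_i)$ does not depend on $i$. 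Hence the entire problem reduces to deciding, under (W1) or (W2), whether $H(g_i)$ is trivial, finite, or infinite.

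The heart of the argument is a dictionary between the size of $H(g_i)$ and the type of $g_i$. By Theorem~\ref{prop:at2}(i),(ii) the coordinate $g_i$ always satisfies conditions (B) and (C) of Theorem~\ref{thm:Gamma (f)}, so $H(g_i)\neq\{\id_{\Rx}\}$ is equivalent to $g_i$ being of one of the types I--V; and Theorem~\ref{prop:at2}(iii) restricts the possibilities to types I, IV, V when $i=1$ and to types IV, V when $i=2$ (types II and III being impossible, since $R$ is a $\Q$-domain and $g_i$ is a coordinate of $\Rx$ over $R$). Combining Proposition~\ref{prop:H_i infinite} (types I and IV give infinite $H$) with the finiteness of $H(f_5)$ noted after Theorem~\ref{thm:H(f)} (type V gives finite $H$), I obtain the three-way split: $H(g_i)=\{\id_{\Rx}\}$ exactly when $g_i$ is of no type, $H(g_i)$ is infinite exactly when $g_i$ is of type I or IV, and $H(g_i)$ is finite but nontrivial exactly when $g_i$ is of type V. Consequently $\phi(x_i)$ is wild but not quasi-totally wild iff $g_i$ is of type I or IV, it is quasi-totally but not totally wild iff $g_i$ is of type V, and it is totally wild iff $\phi$ is wild while $g_i$ is of no type.

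With this dictionary the explicit statements follow by substituting the criteria of Theorem~\ref{prop:at2'}. For part~(i) I use that $g_1$ is of type I iff condition~(1) holds (Theorem~\ref{prop:at2'}(i), an option available only for $i=1$) and that $g_i$ is of type IV iff condition~(2) holds (Theorem~\ref{prop:at2'}(ii)); both criteria already force (W1) or (W2), so $\phi$ is automatically wild, and part~(i) drops out. For parts~(iii) and~(iv) I verify that the stated hypotheses force $g_i$ to be of \emph{no} type. Under the hypothesis of~(iii), namely (W1) together with $\deg_{x_2}f\geq 2$, the type-I criterion fails ($\deg_{x_2}f\neq 1$), while the type-IV and type-V criteria fail because each requires $I=\zs$ through~(\ref{eq:inv:appl:cond}), contradicting (W1); hence $g_1$ is of no type and $\phi(x_1)$ is a totally wild coordinate. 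Under~(iv)(1), i.e.\ (W1), $g_2$ can only be of type IV or V, both again needing $I=\zs$, so $g_2$ is of no type; under~(iv)(2), where $l\geq 1$, type IV is impossible (since~(\ref{eq:inv:appl:cond}) forces $l=0$ when $i=2$) and type V is ruled out by Theorem~\ref{prop:at2'}(iii) (equivalently by the differential identity of Lemma~\ref{lem:notV}, since $l\geq 1$), so $\phi(x_2)$ is totally wild in both cases.

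Finally, part~(ii) is the complement of part~(i) within the wild range: $\phi(x_i)$ is quasi-totally wild iff $\phi$ is wild and $g_i$ is of neither type I nor type IV. Expanding ``(W1) or (W2), and not~(i)(1), and not~(i)(2)'' by a direct case split on whether $t:=\max I\geq 1$ (case (W1)) or $t=0$ (case (W2)), and recording the resulting constraints on $i$, $\deg_{x_2}f$, $l$ and $b_0/a$, reproduces exactly conditions~(1)--(3). The only real difficulty I anticipate is the Boolean bookkeeping: keeping straight the asymmetry between $i=1$ and $i=2$ and the overlapping ranges $\deg_{x_2}f=1,2,\geq 3$, while remembering throughout that quasi-totally wild subsumes totally wild and that wildness of $\phi(x_i)$ is insensitive to $i$. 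The one genuinely nonformal input beyond the cited results is the exclusion of type V for $g_2$ when $l\geq 1$, which rests on Lemma~\ref{lem:notV}; everything else is matching the numerical criteria of Theorem~\ref{prop:at2'} against the conditions in the statement.
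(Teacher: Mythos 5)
Your proposal is correct and follows essentially the same route as the paper's own proof: reduce from $\phi(x_i)$ to $g_i$ via the tame conjugation, use Theorem~\ref{prop:at2} together with Proposition~\ref{prop:H_i infinite} (and the finiteness of $H(f_5)$) to identify ``wild but not quasi-totally wild'' with types I/IV, ``totally wild'' with having no type, and then translate through the numerical criteria of Theorem~\ref{prop:at2'}, obtaining (ii) as the Boolean complement of (i) within the wild range. The only cosmetic differences are your explicit three-way dictionary on the size of $H(g_i)$ and the parenthetical appeal to Lemma~\ref{lem:notV}, which is exactly what underlies Theorem~\ref{prop:at2'}(iii) anyway.
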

\begin{proof}
We may replace $\phi (x_i)$ with $g_i$ 
in the statements of the corollary if necessary.

(i) 
If $\phi (x_i)$ is wild, 
then $\phi $ does not belong to $\T (R,\x )$. 
When this is the case, 
we know by Theorem~\ref{prop:at2} (iii) 
and Proposition~\ref{prop:H_i infinite} that 
$g_i$ is not quasi-totally wild only if 
$i=1$ and $g_i$ is of type I, 
or $g_i$ is of type IV\null. 
Conversely, 
if $g_i$ is of type I or IV, 
then $g_i$ is wild and is not quasi-totally wild. 
Thus, 
$\phi (x_i)$ is wild and is not quasi-totally wild 
if and only if $i=1$ and $g_i$ is of type I, 
or $g_i$ is of type IV\null. 
Therefore, 
(i) follows from 
(i) and (ii) of Theorem~\ref{prop:at2'}.

(ii) 
Note that $\phi (x_i)$ is quasi-totally wild 
if and only if $\phi (x_i)$ 
is wild and (1) and (2) of (i) do not hold. 
We prove that this condition 
is equivalent to the condition 
that one of (1), (2) and (3) of (ii) holds. 
First, 
assume that 
$I\cap \{ 1,\ldots ,l\} \neq \emptyset $. 
Then, 
$\phi $ does not belong to $\T (R,\x )$. 
This implies that $\phi (x_i)$ is wild as mentioned. 
Since $I\neq \zs $, 
we see that (\ref{eq:inv:appl:cond}) does not hold. 
Hence, 
(2) of (i) is not satisfied. 
In this case, 
(1) of (i) does not hold 
if and only if $i=2$ or $\deg _{x_2}f\geq 2$. 
Hence, $g_i$ is quasi-totally wild 
if and only if (1) of (ii) holds. 
Next, assume that 
$I\cap \{ 1,\ldots ,l\} =\emptyset $. 
Then, (W1) and (1) of (i) do not hold. 
Hence, 
we know that 
$g_i$ is quasi-totally wild if and only if (W2) holds 
and (2) of (i) does not hold. 
Since the first two parts of (W2) and 
(\ref{eq:inv:appl:cond}) are the same, 
it follows that 
$g_i$ is quasi-totally wild 
if and only if (W2) holds, 
and $\deg _{x_2}f\neq 2$ or $i=2$ and $l\geq 1$. 
This condition is equivalent to 
the condition that (2) or (3) of (ii) is satisfied. 
Therefore, 
$g_i$ is quasi-totally wild 
if and only if one of (1), (2) and (3) 
of (ii) holds.

(iii) 
Since $I\cap \{ 1,\ldots ,l\} \neq \emptyset $ 
by assumption, 
$\phi $ does not belong to $\T (R,\x )$. 
Hence, 
it suffices to show that 
$g_1$ is not of type I or IV or V 
in view of  Theorem~\ref{prop:at2} (iii). 
Since $\deg _{x_2}f\geq 2$ by assumption, 
$g_1$ is not of type I by Theorem~\ref{prop:at2'} (i). 
Since $I\neq \zs $, 
we see that (\ref{eq:inv:appl:cond}) does not hold. 
Hence, $g_1$ is not of type IV or V by 
(ii) and (iii) of Theorem~\ref{prop:at2'}. 
Therefore, 
$g_1$ is totally wild.

(iv) 
Assume that (1) or (2) is satisfied. 
Then, 
$\phi $ does not belong to $\T (R,\x )$, 
since (1) is the same as (W1), 
and (2) implies (W2). 
Thus, 
it suffices to show that 
$g_2$ is not of type IV or V 
in view of Theorem~\ref{prop:at2} (iii). 
Note that (\ref{eq:inv:appl:cond}) does not hold, 
since $I\neq \zs $ in the case of (1), 
and $l\geq 1$ in the case of (2). 
Hence, 
we know by (ii) and (iii) of Theorem~\ref{prop:at2'} 
that $g_2$ is not of type IV or V\null. 
Therefore, 
$g_2$ is totally wild. 
\end{proof}

Since no elements of $\Rx $ is of type IV or V 
when $V(R)=K^{\times }$, 
Theorem~\ref{thm:nagata qtw} implies the following corollary.

\begin{cor}\label{cor:at:nagata}
Assume that $V(R)=K^{\times }$. 

\noindent{\rm (i)} The following conditions are equivalent$:$ 

\noindent{\rm \ (1)} 
$\phi $ does not belong to $\T (R,\x )$ 
and $\deg _{x_2}f\geq 2$. 

\noindent{\rm \ (2)}
$\phi (x_1)$ 
is a quasi-totally wild coordinate of $\Rx $ over $R$. 

\noindent{\rm \ (3)}
$\phi (x_1)$ 
is a totally wild coordinate of $\Rx $ over $R$.

\noindent{\rm (ii)} 
The following conditions are equivalent$:$ 

\noindent{\rm \ (1)} 
$\phi $ does not belong to $\T (R,\x )$. 

\noindent{\rm \ (2)}
$\phi (x_2)$ is a wild coordinate of $\Rx $ over $R$. 

\noindent{\rm \ (3)}
$\phi (x_2)$ 
is a quasi-totally wild coordinate of $\Rx $ over $R$. 

\noindent{\rm \ (4)}
$\phi (x_2)$ 
is a totally wild coordinate of $\Rx $ over $R$. 
\end{cor}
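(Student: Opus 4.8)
This corollary is the specialization of Theorem~\ref{thm:nagata qtw} to the case $V(R)=K^{\times}$, so the whole proof will consist of feeding the hypothesis $V(R)=K^{\times}$ into the various conditions appearing in Theorem~\ref{thm:nagata qtw} and seeing which of them collapse or trivialize. The key structural observation, flagged in the sentence preceding the corollary, is that \emph{no element of $\Rx$ is of type IV or V when $V(R)=K^{\times}$}; this follows from the remark just after Definition~\ref{def:invariant coord}, since each of types III, IV, V requires some ratio $a_1/a_2$ or $\alpha_1/\alpha_2$ \emph{not} to lie in $V(R)$, which is impossible once $V(R)$ is all of $K^{\times}$. Because of this, the dichotomy in Theorem~\ref{prop:at2}~(iii) simplifies drastically: whenever $H(g_i)\neq\{\id_{\Rx}\}$ and $g_i$ is a coordinate of $\Rx$ over $R$, the derivation being over a $\Q$-domain rules out type II (remark after Definition~\ref{def:invariant coord}) and Proposition~\ref{prop:f1f3} rules out type III, so the only surviving possibility is type I, which can occur only for $g_1$.

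First I would prove part~(i). For the implication $(1)\Rightarrow(3)$, assume $\phi$ is wild and $\deg_{x_2}f\geq 2$. Since $\deg_{x_2}f\geq 2$, Theorem~\ref{prop:at2'}~(i) shows $g_1$ cannot be of type~I; and since $V(R)=K^{\times}$ forbids types IV and V, Theorem~\ref{prop:at2}~(iii) forces $H(g_1)=\{\id_{\Rx}\}$, i.e.\ $g_1=\phi(x_1)$ is totally wild. The implication $(3)\Rightarrow(2)$ is immediate from the definitions (totally wild polynomials are quasi-totally wild). For $(2)\Rightarrow(1)$, quasi-total wildness of $\phi(x_1)$ makes $\phi(x_1)$ wild, hence $\phi\notin\T(R,\x)$; and by Theorem~\ref{thm:nagata qtw}~(i), a \emph{non}-quasi-totally-wild situation for $i=1$ would force condition~(1) of that theorem, namely $\deg_{x_2}f=1$, so quasi-total wildness rules this out and gives $\deg_{x_2}f\geq 2$. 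Thus the three conditions are equivalent. I expect this part to be essentially bookkeeping once the type IV/V exclusion is in hand.

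For part~(ii), the hypothesis $V(R)=K^{\times}$ again removes types IV and V, and here $g_2$ can never be of type~I (its $x_2$-degree is $\geq 2$ by Theorem~\ref{prop:at2}~(i)), so $H(g_2)\neq\{\id_{\Rx}\}$ is simply impossible: $\phi(x_2)$ is \emph{always} totally wild whenever it is wild at all. The four conditions then telescope: $(1)\Leftrightarrow(2)$ because, as noted in the text just before Theorem~\ref{prop:at2}, $\phi\notin\T(R,\x)$ if and only if $\phi(x_2)$ is wild (one direction is the definition, the other uses that $\tau^{-1}\circ\phi$ would fix $x_2$); and $(2)\Rightarrow(4)$ is exactly the observation that total wildness is forced, with $(4)\Rightarrow(3)\Rightarrow(2)$ following from the definitional chain. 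The main thing to verify carefully is the equivalence $\phi\notin\T(R,\x)\Leftrightarrow\phi(x_2)\text{ wild}$, which I would cite directly from the paragraph opening Section~\ref{sect:tame intersect appl}.

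The step I expect to be the only real point requiring attention, rather than pure citation-chasing, is confirming that under $V(R)=K^{\times}$ the case analysis of Theorem~\ref{thm:nagata qtw} genuinely covers every possibility without a gap, in particular that the asymmetry between $i=1$ and $i=2$ (type~I being available only for $g_1$) is what produces the extra hypothesis $\deg_{x_2}f\geq 2$ in part~(i) but not in part~(ii). Concretely, the hard part is making sure that when $\deg_{x_2}f=1$ and $\phi$ is wild (so condition~(1) of Theorem~\ref{thm:nagata qtw}~(i) holds with $i=1$), $\phi(x_1)$ is indeed only wild and not quasi-totally wild, so that it is correctly excluded from part~(i)'s equivalences; this is precisely Theorem~\ref{prop:at2'}~(i) combined with Proposition~\ref{prop:H_i infinite}, and I would lean on those two results to close the argument.
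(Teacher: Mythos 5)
Your proposal is correct and follows essentially the same route as the paper: both specialize Theorem~\ref{thm:nagata qtw} (together with Theorems~\ref{prop:at2} and \ref{prop:at2'}) by feeding in the fact that $V(R)=K^{\times}$ rules out types III, IV, V as well as condition (W2) and condition (\ref{eq:inv:appl:cond}). The only difference is cosmetic: for $(2)\Rightarrow(1)$ of part (i) the paper simply cites Theorem~\ref{thm:nagata qtw}~(ii) (whose conditions (2) and (3) die under $V(R)=K^{\times}$), whereas your detour through Theorem~\ref{thm:nagata qtw}~(i) additionally needs the observation---which you use implicitly and should make explicit---that wildness of $\phi $ forces (W1), i.e.\ $I\cap \{ 1,\ldots ,l\} \neq \emptyset $, once (W2) is impossible.
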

\begin{proof}
(i) 
Assume that (1) is satisfied. 
From the first part of (1), 
we have (W1) or (W2). 
Since $V(R)=K^{\times }$ by assumption, 
(W2) does not hold. 
Hence, 
we get (W1), 
and so $I\cap \{ 1,\ldots ,l\} \neq \emptyset $. 
Since $\deg _{x_2}f\geq 2$ by the last part of (1), 
we know that $\phi (x_1)$ is totally wild 
by Theorem~\ref{thm:nagata qtw} (iii). 
Therefore, 
(1) implies (3). 
Clearly, 
(3) implies (2). 
Assume that $\phi (x_1)$ is quasi-totally wild. 
Then, 
one of (1), (2) and (3) of 
Theorem~\ref{thm:nagata qtw} (ii) holds. 
Since $V(R)=K^{\times }$ by assumption, 
(2) and (3) do not hold. 
Hence, (1) of Theorem~\ref{thm:nagata qtw} (ii) holds. 
Since $i=1$, 
it follows that $I\cap \{ 1,\ldots ,l\} \neq \emptyset $ 
and $\deg _{x_2}f\geq 2$. 
This implies (1). 
Therefore, 
(1), (2) and (3) are equivalent.

(ii) 
Since $V(R)=K^{\times }$, 
we see that (1) implies (W1). 
By (1) of Theorem~\ref{thm:nagata qtw} (iv), 
(W1) implies (4). 
Hence, (1) implies (4). 
Clearly, (4) implies (3), 
and (2) implies (1). 
We have shown that (3) implies (2) 
after Definition~\ref{defin:wild coordinates}. 
Therefore, 
(1) through(4) are equivalent. 
\end{proof}

Finally, 
assume that $n=3$, 
and consider 
the element $\Phi _{g,h}^f=\exp fT_{g,h}$ of $\Aut (\kx /k)$ 
defined before 
Proposition~\ref{prop:triangular family} 
for $(g,h)\in \Lambda $ and 
$f\in k[x_1,gx_3+h]$ not belonging to $k[x_1]$. 
As mentioned, 
$\Phi _{g,h}^f$ does not belong to $\T (k,\x )$. 
We define 
$$
H_i=\Aut (\kx /k[x_1,\Phi _{g,h}^f(x_i)])\cap \T (k,\x )
$$
for $i=2,3$. 
Then, 
we have the following corollary.

\begin{cor}\label{cor:ta2Tghf}
For $i\in \{ 2,3\} $, 
we have $H_i\neq \{ \id _{\kx }\} $ 
if and only if $i=2$ and $\deg _{x_3}f=1$. 
\end{cor}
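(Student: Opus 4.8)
The plan is to reduce the whole statement to the two-variable theory over the base ring $R=k[x_1]$ developed in Section~\ref{sect:tame intersect appl}, and then to exploit that $V(R)=K^{\times}$. First I would regard $T_{g,h}$ as a triangular derivation $D$ of $R[x_2,x_3]$ over $R$ in the variables $y_1:=x_2$ and $y_2:=x_3$. Since $T_{g,h}(x_2)=g\in R\sm \zs $ and $T_{g,h}(x_3)=-\partial h/\partial x_2\in R[y_1]$ is nonzero, $D$ meets the hypotheses of Section~\ref{sect:tame intersect appl} with $\phi =\Phi _{g,h}^f=\exp fD$. The kernel coordinate attached to $D$ is exactly $gx_3+h$, which is linear in $x_3$, so the integer $m$ of that section (there written $\deg _{x_2}f$) equals $\deg _{x_3}f$ here after the relabeling $y_2\leftrightarrow x_3$; correspondingly the coordinates $\Phi _{g,h}^f(x_2)$ and $\Phi _{g,h}^f(x_3)$ play the roles of $g_1$ and $g_2$ of that section.

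Next I would identify $H_i$ with the relevant tame intersection over $R$. Because $T_{g,h}(x_1)=0$ we have $\Phi _{g,h}^f\in \Aut (\kx /k[x_1])$, and $k[x_1]\subseteq k[x_1,\Phi _{g,h}^f(x_i)]$, so Theorem~\ref{thm:tame23} gives
\[
H_i=\Aut (\kx /k[x_1,\Phi _{g,h}^f(x_i)])\cap \T (k[x_1],\{ x_2,x_3\} )=H(\Phi _{g,h}^f(x_i)),
\]
the tame intersection over $R=k[x_1]$ in the variables $\{ x_2,x_3\} $. The same theorem shows $\Phi _{g,h}^f\notin \T (R,\{ x_2,x_3\} )$, since it is wild over $k$, so the hypothesis of Theorem~\ref{prop:at2} holds. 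Conjugating by the tame automorphism $\tau $ used in Section~\ref{sect:tame intersect appl} to define $g_i=\tau (\phi (x_i))$ makes $H(\Phi _{g,h}^f(x_2))$ and $H(\Phi _{g,h}^f(x_3))$ conjugate to $H(g_1)$ and $H(g_2)$ respectively; hence $H_2\neq \{ \id _{\kx }\} $ iff $H(g_1)\neq \{ \id _{\kx }\} $, and $H_3\neq \{ \id _{\kx }\} $ iff $H(g_2)\neq \{ \id _{\kx }\} $.

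Finally I would feed in the special features of $R=k[x_1]$. Since $k[x_1]$ is a PID, Lemma~\ref{lem:V(R)} gives $V(R)=K^{\times }$, so by the remarks following Definition~\ref{def:invariant coord} no element of $R[x_2,x_3]$ is of type II, III, IV, or V; type I is the only surviving possibility. For $i=3$, Theorem~\ref{prop:at2} (iii) says any nontrivial $H(g_2)$ would force $g_2$ to be of type IV or V, which is impossible, so $H_3=\{ \id _{\kx }\} $ unconditionally. For $i=2$, Theorem~\ref{prop:at2} (iii) forces any nontrivial $H(g_1)$ to make $g_1$ of type I, and by Theorem~\ref{prop:at2'} (i) this happens exactly when $\deg _{x_3}f=1$ and $I\cap \{ 1,\ldots ,l\} \neq \emptyset $. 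The second condition is automatic here: wildness of $\phi $ combined with $V(R)=K^{\times }$ rules out condition (W2) (for $b_0/a\in K^{\times }=V(R)$ whenever $0\in I$), leaving (W1), i.e.\ $I\cap \{ 1,\ldots ,l\} \neq \emptyset $; and conversely, when $\deg _{x_3}f=1$ this makes $g_1$ of type I, whence $H(g_1)$ is infinite by Proposition~\ref{prop:H_i infinite}. Thus $H_2\neq \{ \id _{\kx }\} $ precisely when $\deg _{x_3}f=1$, giving the corollary. The only delicate point is the bookkeeping of the variable relabeling and the correct invocation of Theorem~\ref{thm:tame23} to pass between $\T (k,\x )$ and $\T (k[x_1],\{ x_2,x_3\} )$; once that is in place, the statement is a routine specialization of Theorems~\ref{prop:at2} and~\ref{prop:at2'}.
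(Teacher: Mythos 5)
Your proof is correct and follows essentially the same route as the paper: relabel the variables so that $T_{g,h}$ becomes a triangular derivation over $R=k[x_1]$, use Theorem~\ref{thm:tame23} to identify $H_i$ with the tame intersection over $R$, note that $\Phi_{g,h}^f$ is wild over $R$, and then apply the classification results of Section~\ref{sect:tame intersect appl}. The only difference is one of packaging: the paper cites Corollary~\ref{cor:at:nagata} (the totally-wild characterization valid when $V(R)=K^{\times }$), whereas you re-derive its content directly from Theorems~\ref{prop:at2} and~\ref{prop:at2'}, Proposition~\ref{prop:H_i infinite}, and the exclusion of types II--V over a $\Q $-domain with $V(R)=K^{\times }$ --- the same chain of reasoning, one level unpacked.
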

\begin{proof}
Let $R=k[x_1]$, $y_i=x_{i+1}$ for $i=1,2$ 
and $\y =\{ y_1,y_2\} $, 
and put $\phi =\Phi _{g,h}^f$. 
Then, 
we have 
$H_{i}=\Aut (R[\y ]/R[\phi (y_{i-1})])\cap \T (R,\y )$ 
for $i=2,3$ on account of Theorem~\ref{thm:tame23}. 
Hence, 
we have $H_{i}\neq \{ \id _{\kx }\} $ 
if and only if $\phi (y_{i-1})$ 
is not a totally wild element 
of $R[\y ]$ over $R$. 
Note that $T_{g,h}$ 
is a triangular derivation of $\Ry $ over $R$ 
with $T_{g,h}(y_j)=T_{g,h}(x_{j+1})\neq 0$ for $j=1,2$, 
and $\phi $ is an element of $\Aut (R[\y ]/R)$ 
not belonging to $\T (R,\y )$. 
Hence, 
$\phi (y_1)$ is totally wild 
if and only if $\deg _{y_2}f\geq 2$ 
by Corollary~\ref{cor:at:nagata} (i), 
and $\phi (y_2)$ is always totally wild 
by Corollary~\ref{cor:at:nagata} (ii). 
Since $f$ is not an element of $k[x_1]$, 
we have $\deg _{y_2}f=\deg _{x_3}f\geq 1$. 
Thus, 
$\phi (y_{i-1})$ is not totally wild 
if and only if $i=2$ and $\deg _{y_2}f=1$. 
Therefore, 
we have 
$H_{i}\neq \{ \id _{\kx }\} $ 
if and only if $i=2$ and 
$\deg _{x_3}f=1$. 
\end{proof}

In Chapter~\ref{chapter:atcoord}, 
we will give coordinates of $\kx $ over $k$ 
some of which are totally wild, 
and others are quasi-totally wild, 
but not totally wild.

\part{Applications of 
the generalized Shestakov-Umirbaev theory}
\label{chap:GSU}

\chapter{Generalized Shestakov-Umirbaev theory}
\label{sect:GSU}
\setcounter{equation}{0}

\section{Shestakov-Umirbaev reductions}
\setcounter{equation}{0}
\label{sect:criterion}

Part~\ref{chap:GSU} is devoted to applications of 
the generalized Shestakov-Umirbaev theory. 
Throughout, 
a {\it tame} (resp.\ {\it wild}) automorphism of $\kx $ 
will always mean an element of 
$\T (k,\x )$ (resp.\ $\Aut (\kx /k)\sm \T (k,\x )$). 
In this chapter, 
we briefly review the generalized Shestakov-Umirbaev theory, 
and derive some consequences needed later.

Let $\Gamma $ be a totally ordered additive group, 
and let $F=(f_1,f_2,f_3)$ and $G=(g_1,g_2,g_3)$ be 
triples of elements of $\kx $ 
such that $f_1$, $f_2$, $f_3$ and $g_1$, $g_2$, $g_3$ 
are algebraically independent over $k$, 
respectively. 
Here, $n\in \N $ may be arbitrary for the moment. 
We denote by $\Gammap $ the set of positive elements of $\Gamma $. 
For $\w \in (\Gammap )^n$, 
we say that the pair $(F,G)$ satisfies the 
{\it Shestakov-Umirbaev condition} for the weight $\w $ 
if the following conditions hold (cf.~\cite{SU2}): 

\medskip

\noindent
(SU1) $g_1=f_1+af_3^2+cf_3$ and $g_2=f_2+bf_3$ 
for some $a,b,c\in k$, 
and $g_3-f_3$ belongs to $k[g_1,g_2]$; 

\noindent
(SU2) $\deg _{\w }f_1\leq \deg _{\w }g_1$ 
and $\deg _{\w }f_2=\deg _{\w }g_2$; 

\noindent
(SU3) $(g_1^{\w })^2\approx (g_2^{\w })^s$ 
for some odd number $s\geq 3$; 

\noindent
(SU4) $\deg _{\w }f_3\leq \deg _{\w }g_1$, 
and $f_3^{\w }$ does not belong to 
$k[g_1^{\w }, g_2^{\w }]$; 

\noindent
(SU5) 
$\deg _{\w }g_3<\deg _{\w }f_3$; 

\noindent
(SU6) 
$\deg _{\w }g_3<\deg _{\w }g_1-\deg _{\w }g_2 
+\deg _{\w }dg_1\wedge dg_2$. 

\medskip

Here, 
$h_1\approx h_2$ 
(resp.\ $h_1\not\approx h_2$) denotes that 
$h_1$ and $h_2$ are linearly dependent 
(resp.\ linearly independent) over $k$ 
for each $h_1,h_2\in \kx \sm \zs $. 
We say that $(F,G)$ 
satisfies the {\it weak Shestakov-Umirbaev condition} 
for the weight $\w $ if (SU4), (SU5), (SU6) and 
the following conditions are satisfied (cf.~\cite{SU2}):

\medskip 

\noindent
(SU$1'$) $g_1-f_1$, 
$g_2-f_2$ and $g_3-f_3$ belong to $k[f_2,f_3]$, 
$k[f_3]$ and $k[g_1,g_2]$, respectively; 

\noindent
(SU$2'$) $\deg f_i\leq \deg g_i$ for $i=1,2$; 

\noindent
(SU$3'$) $\deg g_2<\deg g_1$, and 
$g_1^{\w }$ does not belong to $k[g_2^{\w }]$. 

\medskip 

\noindent
It is easy to check that 
(SU1), (SU2) and (SU3) imply 
(SU$1'$), (SU$2'$) and (SU$3'$), 
respectively. 
Hence, 
the Shestakov-Umirbaev condition 
implies the weak Shestakov-Umirbaev condition. 
As listed in \cite[Theorem 4.2]{SU2}, 
if $(F,G)$ satisfies the weak Shestakov-Umirbaev condition 
for the weight $\w $, 
then $(F,G)$ has certain special properties such as 

\medskip 

\noindent
{\rm (P1)} $(g_1^{\w })^2\approx (g_2^{\w })^s$ 
for some odd number $s\geq 3$, 
and so $\delta :=(1/2)\degw g_2$ belongs to $\Gamma $. 

\smallskip 

\noindent
{\rm (P2)} $\degw f_3\geq (s-2)\delta +\degw dg_1\wedge dg_2$. 

\smallskip 

\noindent
{\rm (P5)} If $\degw f_1<\degw g_1$, 
then $s=3$, $g_1^{\w }\approx (f_3^{\w })^2$, 
$\degw f_3=(3/2)\delta $ and 
$$
\degw f_1\geq \frac{5}{2}\delta +\degw dg_1\wedge dg_2. 
$$

\smallskip 

\noindent
{\rm (P7)} $\degw f_2<\degw f_1$, $\degw f_3\leq \degw f_1$, 
and $\delta <\degw f_i\leq s\delta $ for $i=1,2,3$. 

\medskip

In what follows, 
we simply say that elements of $\Gamma $ 
are linearly dependent (resp.\ linearly independent) 
if they are linearly dependent 
(resp.\ linearly independent) over $\Z $.

\begin{lem}\label{lem:SUdependent}
Assume that 
$(F,G)$ satisfies the Shestakov-Umirbaev condition 
for the weight $\w $. 
Then, $\degw f_i$ and $\degw f_2$ 
are linearly dependent for 
$i=1$ or $i=3$. 
\end{lem}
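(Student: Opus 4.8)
The plan is to read off the numerical constraints forced by the Shestakov-Umirbaev condition and then split into the two cases permitted by the inequality $\degw f_1\leq \degw g_1$ in (SU2). First I would record, from (SU3) (equivalently from (P1)), that $(g_1^{\w })^2\approx (g_2^{\w })^s$ for an odd number $s\geq 3$; writing $\delta =(1/2)\degw g_2$ as in (P1), this yields $\degw g_2=2\delta $ and $2\degw g_1=s\degw g_2$, hence $\degw g_1=s\delta $. Since $\w \in (\Gammap )^n$ and $g_2$ is non-constant, we have $\degw g_2>0$, so $\delta >0$. From the equality in (SU2) I would then get $\degw f_2=\degw g_2=2\delta $. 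Thus the target reduces to showing that $\degw f_1$ or $\degw f_3$ is a $\Q $-multiple of $\delta $, since $\degw f_2=2\delta $ is one.

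Next I would distinguish the two cases coming from $\degw f_1\leq \degw g_1$. If $\degw f_1=\degw g_1$, then $\degw f_1=s\delta $, and the integer relation $2(s\delta )-s(2\delta )=0$ shows that $\degw f_1$ and $\degw f_2=2\delta $ are linearly dependent, so the lemma holds with $i=1$. If instead $\degw f_1<\degw g_1$, then property (P5) applies directly and gives $\degw f_3=(3/2)\delta $; now the relation $4\cdot (3/2)\delta -3\cdot (2\delta )=0$ shows that $\degw f_3$ and $\degw f_2=2\delta $ are linearly dependent, so the lemma holds with $i=3$. This exhausts both cases.

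The proof is therefore short once the right packaging is in place: the real content is already contained in the dichotomy of (SU2) together with property (P5), which pins down $\degw f_3$ exactly in the hard case $\degw f_1<\degw g_1$. I do not expect a genuine obstacle here. The only points requiring care are confirming $\delta \neq 0$ (so that the exhibited dependences are given by a nonzero integer vector, hence nontrivial) and checking that the identities $\degw f_1=s\delta $ and $\degw f_3=(3/2)\delta $ really do produce integer relations with $\degw f_2=2\delta $ after clearing the denominator $2$. Both are routine in view of the torsion-freeness of $\Gamma $ and the fact that the linear dependence can be verified inside $\Q \otimes _{\Z }\Gamma $.
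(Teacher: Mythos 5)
Your proof is correct and takes essentially the same route as the paper's: both split on the dichotomy $\degw f_1=\degw g_1$ versus $\degw f_1<\degw g_1$ from (SU2), using (SU3)/(SU2) to get $2\degw f_1=s\degw f_2$ in the first case and (P5) together with $\degw f_2=\degw g_2$ to get $4\degw f_3=3\degw f_2$ in the second. The only superfluous point is the worry about $\delta\neq 0$: the integer vectors $(2,-s)$ and $(4,-3)$ are nonzero in any case, so the exhibited relations witness linear dependence over $\Z$ regardless.
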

\begin{proof}
If $\degw f_1=\degw g_1$, 
then we have 
$$
2\degw f_1=2\degw g_1=s\degw g_2=s\degw f_2
$$ 
for some odd number $s\geq 3$ 
by (SU3) and (SU2). 
Hence, 
$\degw f_1$ and $\degw f_2$ are linearly dependent. 
If $\degw f_1\neq \degw g_1$, 
then we have $\degw f_1<\degw g_1$ by (SU2), 
and hence 
$\degw f_3=(3/2)\delta =(3/4)\degw g_2$ by (P5). 
Since $\degw g_2=\degw f_2$ by (SU2), 
it follows that $\degw f_3$ and $\degw f_2$ 
are linearly dependent. 
\end{proof}

We define the {\it rank} $\rank \w $ of $\w =(w_1,\ldots ,w_n)$ 
as the rank of the $\Z $-submodule of $\Gamma $ 
generated by $w_1,\ldots ,w_n$. 
If $\rank \w =n$, then 
$x_1^{a_1}\cdots x_n^{a_n}$'s 
have the different $\w $-degrees 
for different $(a_1,\ldots ,a_n)$'s. 
Hence, 
$f^{\w }$ and $g^{\w }$ 
are monomials for each $f,g\in \kx \sm \zs $. 
When this is the case, 
$f^{\w }$ and $g^{\w }$ 
are algebraically independent over $k$ 
if and only if $\degw f$ and $\degw g$ 
are linearly independent.

Now, assume that $n=3$. 
Then, we may identify 
$F\in \Aut (\kx /k)$ with the triple $(f_1,f_2,f_3)$, 
where $f_i:=F(x_i)$ for $i=1,2,3$. 
For a permutation $\sigma $ of $\{ 1,2,3\} $, 
we define 
$F_{\sigma }=
(f_{\sigma (1)},f_{\sigma (2)},f_{\sigma (3)})$. 
We say that $F$ admits a 
{\it Shestakov-Umirbaev reduction} 
for the weight $\w$ 
if there exist a permutation $\sigma $ of $\{ 1,2,3\} $ 
and $G\in \Aut (\kx /k)$ 
such that $(F_{\sigma },G_{\sigma })$ 
satisfies the Shestakov-Umirbaev condition 
for the weight $\w $. 
If this is the case, 
$\degw f_i$ and $\degw f_j$ must be linearly dependent 
for some $i\neq j$ by virtue of Lemma~\ref{lem:SUdependent}. 
If $\rank \w =3$, 
then this implies that 
$f_i^{\w }$ and $f_j^{\w }$ 
are algebraically dependent over $k$ for some $i\neq j$. 
Therefore, 
if $\rank \w =3$, 
and $f_1^{\w }$, $f_2^{\w }$ and $f_3^{\w }$  
are pairwise algebraically independent over $k$, 
then $F$ admits no Shestakov-Umirbaev reduction 
for the weight $\w $.

\begin{lem}\label{lem:sulem}
Assume that $\degw f_1>\degw f_2>\degw f_3$. 
If $F$ admits a Shestakov-Umirbaev reduction for the weight $\w $, 
then we have $3\degw f_2=4\degw f_3$, 
or $2\degw f_1=s\degw f_i$ for some odd number $s\geq 3$ 
and $i\in \{ 2,3\} $. 
\end{lem}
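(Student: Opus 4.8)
The plan is to unwind the definition of a Shestakov-Umirbaev reduction and then read off the degree relations from the structural properties (P1), (P5), (P7) recorded above, essentially repeating the degree bookkeeping in the proof of Lemma~\ref{lem:SUdependent} but now tracked through the fixed ordering $\degw f_1>\degw f_2>\degw f_3$. Suppose $F$ admits a Shestakov-Umirbaev reduction for the weight $\w$, so that there are a permutation $\sigma$ of $\{1,2,3\}$ and $G\in\Aut(\kx/k)$ for which $(F_{\sigma},G_{\sigma})$ satisfies the Shestakov-Umirbaev condition. I would write $f_i'=f_{\sigma(i)}$ and $g_i'=G(x_{\sigma(i)})$, so that $(f_1',f_2',f_3')$ and $(g_1',g_2',g_3')$ play the roles of $(f_1,f_2,f_3)$ and $(g_1,g_2,g_3)$ in (SU1)--(SU6).

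First I would identify the permutation. By property (P7) we have $\degw f_2'<\degw f_1'$ and $\degw f_3'\le\degw f_1'$. Since the three numbers $\degw f_1,\degw f_2,\degw f_3$ are pairwise distinct by hypothesis and $\{\degw f_1',\degw f_2',\degw f_3'\}=\{\degw f_1,\degw f_2,\degw f_3\}$, this forces $\degw f_1'$ to be the strict maximum, hence $f_1'=f_1$. Thus $\{f_2',f_3'\}=\{f_2,f_3\}$, and only their order remains to be pinned down in the relevant case.

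Next I would split according to (SU2), which gives $\degw f_1'\le\degw g_1'$. If $\degw f_1'=\degw g_1'$, then (SU3) yields $(g_1'^{\w})^2\approx(g_2'^{\w})^s$ for an odd $s\ge 3$, and comparing the $\w$-degrees of these two $\w$-homogeneous forms gives $2\degw g_1'=s\degw g_2'$; combined with $\degw g_2'=\degw f_2'$ from (SU2) and with $f_1'=f_1$, this is exactly $2\degw f_1=s\degw f_i$ for the index $i\in\{2,3\}$ with $f_i=f_2'$. If instead $\degw f_1'<\degw g_1'$, then property (P5) applies: $s=3$ and $\degw f_3'=(3/2)\delta$ with $\delta=(1/2)\degw g_2'$ by (P1), whence, using $\degw g_2'=\degw f_2'$ again, $\degw f_3'=(3/4)\degw f_2'$, that is $4\degw f_3'=3\degw f_2'$. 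Since $\degw f_2'>0$ (all weights are positive and $f_2'$ is nonconstant), this gives $\degw f_3'<\degw f_2'$, so among $\{f_2,f_3\}$ we must have $f_2'=f_2$ and $f_3'=f_3$, and the relation becomes $3\degw f_2=4\degw f_3$.

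The computations are short; the main obstacle is purely organizational, namely keeping track of the effect of $\sigma$ and checking that the ordering in (P7) really forces $f_1'=f_1$ and that, in the second case, the inequality $\degw f_3'<\degw f_2'$ forces $f_2'=f_2$ and $f_3'=f_3$. Once this is in place, the two cases match the two alternatives in the conclusion, completing the argument.
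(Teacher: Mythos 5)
Your proposal is correct and follows essentially the same route as the paper's proof: identify $\sigma(1)=1$ from (P7) and the strict ordering, then split on whether $\degw f_1=\degw g_1$, using (SU3) with (SU2) in the first case and (P5), (P1), (SU2) in the second, where the inequality $3\degw f_{\sigma(2)}=4\degw f_{\sigma(3)}$ forces $\sigma$ to be the identity. No gaps.
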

\begin{proof}
By definition, 
there exist $\sigma $ and $G$ such that 
$(F_{\sigma },G_{\sigma })$ 
satisfies the Shestakov-Umirbaev condition for the weight $\w $. 
Since $\degw f_1>\degw f_i$ for $i=2,3$ by assumption, 
we know that $\sigma (1)=1$ in view of (P7). 
If $\degw f_1=\degw g_1$, 
then we have $2\degw f_1=s\degw f_{\sigma (2)}$ 
for some odd number $s\geq 3$ by (SU3) and (SU2). 
Since $\sigma (2)$ must be 2 or 3, 
we get the last statement. 
If $\degw f_1\neq \degw g_1$, 
then we have 
$\degw f_{\sigma (3)}=(3/2)(1/2)\degw f_{\sigma (2)}$ 
by (P5) and (SU2). 
Hence, 
we get $3\degw f_{\sigma (2)}=4\degw f_{\sigma (3)}$. 
Since $\deg f_2>\degw f_3$ by assumption, 
it follows that $\sigma $ is the identity permutation. 
Therefore, 
we obtain $3\degw f_2=4\degw f_3$. 
\end{proof}

The following theorem is a generalization of 
the main result of Shestakov-Umirbaev~\cite{SU}.

\begin{thm}[{\cite[Theorem 2.1]{SU2}}]\label{thm:SUcriterion}
Assume that $n=3$. 
If $\degw \phi >|\w |$ for 
$\phi \in \T (k,\x )$ and $\w \in (\Gammap )^3$, 
then $\phi $ admits an elementary reduction 
for the weight $\w $, 
or a Shestakov-Umirbaev reduction for the weight $\w $. 
\end{thm}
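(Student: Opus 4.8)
The plan is to localize the problem to the leading forms $\phi(x_1)^{\w},\phi(x_2)^{\w},\phi(x_3)^{\w}$ and to argue in contrapositive form: assuming that $\phi$ admits no elementary reduction for $\w$, I would show that it must admit a Shestakov--Umirbaev reduction. Since $\degw\phi>|\w|$, Lemma~\ref{lem:minimal autom} gives immediately that the three leading forms are algebraically dependent over $k$, while the hypothesis that no elementary reduction exists says that no $\phi(x_i)^{\w}$ belongs to $k[\{\phi(x_j)^{\w}:j\ne i\}]$; in particular the three leading forms are pairwise algebraically independent. Thus they span a field of transcendence degree exactly two, and the whole difficulty is to recognize the very rigid shape such a pairwise-independent but jointly-dependent triple of $\w$-homogeneous elements must have.

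To bring in tameness I would induct on the length of a decomposition $\phi=\sigma_1\circ\cdots\circ\sigma_m$ into affine and elementary generators (over the field $k$ this uses $\Aff(k,\x)\subseteq\E(k,\x)$, so tame means elementary). Peeling off $\sigma_m$ and comparing $\w$-degrees, either some single generator already lowers $\degw\phi$ -- which, unwound, furnishes the sought reduction -- or none does, and the induction hypothesis applies to the shorter factor. The task in the rigid case is to locate the elementary move that inflated the $\w$-degree and to read off from it the partner $G$ together with the constants $a,b,c$ in (SU1): concretely, $G$ is obtained from $F$ by the correction $g_1=f_1+af_3^2+cf_3$, $g_2=f_2+bf_3$ and a $k[g_1,g_2]$-adjustment of $f_3$, and verifying that this $G$ is again an automorphism satisfying (SU2)--(SU6) is what must be checked.

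The technical heart, and the step I expect to be the main obstacle, is the degree estimate for two-generated subalgebras. For $\w$-homogeneous $g_1,g_2$ whose $\w$-degrees are linearly dependent, so that $(g_1^{\w})^2\approx(g_2^{\w})^s$ for some odd $s\ge 3$, one needs sharp lower bounds on $\degw h$ for $h\in k[g_1,g_2]$ in terms of $\degw g_1$, $\degw g_2$ and the wedge degree $\degw dg_1\wedge dg_2$. These are the Shestakov--Umirbaev ``parachute'' inequalities, and it is exactly they that pin down the numerology in (SU3), (SU5) and (SU6) and in the derived properties (P1), (P2), (P5) and (P7). I would establish them first in isolation, building on the wedge-degree estimates of Section~\ref{sect:grading} -- in particular \eqref{eq:inequomega} and the strict inequality $\degw\omega<\sum_i\degw f_i$ that holds when the leading forms are algebraically dependent -- and only then feed them into the case analysis.

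Finally I would assemble the two possible outcomes. After normalizing by an affine automorphism so that $\degw\phi(x_1)\ge\degw\phi(x_2)\ge\degw\phi(x_3)$, I would split according to the linear-dependence pattern among these three degrees, guided by Lemmas~\ref{lem:SUdependent} and~\ref{lem:sulem}. In each branch the degree estimate of the previous paragraph should force either a contradiction with the pairwise independence (so that an elementary reduction was in fact available) or the precise equalities that certify a Shestakov--Umirbaev reduction. The delicate part will be confirming that this case analysis is exhaustive, with no configuration escaping both conclusions; the positivity $\w\in(\Gammap)^3$ is what keeps all the $\w$-degrees strictly positive and the inductive bookkeeping finite.
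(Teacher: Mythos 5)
Your proposal founders on a single but decisive inference in the first paragraph. From the hypothesis that $\phi$ admits no elementary reduction you correctly deduce that no $\phi(x_i)^{\w}$ lies in $k[\{\phi(x_j)^{\w}\mid j\neq i\}]$, but the ``in particular the three leading forms are pairwise algebraically independent'' that follows is a non sequitur: $f\notin k[g]$ does not imply that $f$ and $g$ are algebraically independent. The basic counterexample is $f=x_1^3$, $g=x_1^2$, and this is not a peripheral pathology — the resonant configuration $(g_1^{\w})^2\approx (g_2^{\w})^s$ with $s\geq 3$ odd, which your inference discards at the outset, is exactly what condition (SU3) encodes and exactly the situation the Shestakov--Umirbaev reduction exists to handle. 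The error also proves far too much: as the paper observes right after Theorem~\ref{thm:SUcriterion}, Lemma~\ref{lem:SUdependent} shows that when $\rank \w =3$ pairwise algebraic independence of the leading forms rules out \emph{any} Shestakov--Umirbaev reduction, so your argument would yield that every tame $\phi$ with $\degw \phi >|\w |$ admits an elementary reduction (for rank-three weights) — a vastly stronger claim that the whole apparatus of non-elementary reduction types was invented because it could not be established, and one that would render vacuous the care taken elsewhere in this very paper (e.g.\ Lemma~\ref{lem:sulem} and condition (1) in the wildness criterion following the theorem) to exclude SU reductions separately from elementary ones. Consequently the case analysis you describe in your final paragraph is organized around a dichotomy that removes the hard case instead of confronting it.

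Even setting this aside, the proposal defers essentially all of the substance. The induction on word length is not routine: if peeling off $\sigma_m$ does not lower $\degw \phi$, the inductive hypothesis gives a reduction of the \emph{shorter} automorphism $\phi\circ\sigma_m^{-1}$, and transferring that reduction back to $\phi$ across composition with one elementary map is precisely where the theory lives — it is what forces the introduction of the partner $G$, the conditions (SU1)--(SU6), and the parachute estimates (Theorem~\ref{thm:SUineq} here), none of which you actually derive or verify (in particular, you never argue that the corrected triple $G$ is again an automorphism, nor that the case analysis is exhaustive). What you have is a program whose every hard step is flagged rather than executed. Note finally that the paper itself does not prove Theorem~\ref{thm:SUcriterion}; it imports it from \cite{SU2}, so there is no in-paper argument for your sketch to shadow — it would have to stand alone, and in its present form it does not.
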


We mention that $\phi \in \Aut (\kx /k)$ is tame 
if $\degw \phi =|\w |$ 
(cf.~\cite[Lemma~6.1]{SU2}). 
By Lemma~\ref{lem:minimal autom}, 
we have 
$\degw \phi =|\w |$ if and only if 
$\phi (x_1)^{\w }$, 
$\phi (x_2)^{\w }$ and 
$\phi (x_3)^{\w }$ are algebraically 
independent over $k$.

By Theorem~\ref{thm:SUcriterion}, 
it follows that $F=(f_1,f_2,f_3)\in \Aut (\kx /k)$ is wild 
if there exists $\w \in (\Gammap )^3$ with $\rank \w =3$ 
as follows: 

\smallskip 

\noindent
(1) $f_1^{\w }$, $f_2^{\w }$ and $f_3^{\w }$ 
are algebraically dependent over $k$, 
and are pairwise algebraically independent over $k$; 

\smallskip \noindent
(2) $f_i^{\w }$ does not belong to $k[\{ f_j^{\w }\mid j\neq i\} ]$ 
for $i=1,2,3$. 

\smallskip 

\noindent
In fact, 
the former part of (1) implies that $\degw F>|\w |$. 
Since $\rank \w =3$, 
the latter part of (1) implies that 
$F$ admits no Shestakov-Umirbaev reduction for the weight $\w $ 
as mentioned. 
The latter part of (1) also implies that 
$k[f_i,f_j]^{\w }=k[f_i^{\w },f_j^{\w }]$ for each $i\neq j$ 
by the discussion before Lemma~\ref{lem:minimal autom}. 
Hence, 
(2) implies that $F$ admits no elementary reduction 
for the weight $\w $.

\begin{definition}\label{defn:W-test}
We call $P\in \kx $ a {\it W-test polynomial} if 
there do not exist $\phi \in \T (k,\x )$, 
totally ordered additive group $\Gamma $ 
and $\w \in (\Gamma _{>0})^3$ with $\rank \w =3$ 
which satisfy the following conditions:

\smallskip 

\noindent{\rm (a)} 
$\degw \phi (P)<\degw \phi (x_{i_1})$ for some $i_1\in \{ 1,2,3\} $$;$ 

\smallskip 

\noindent{\rm (b)} 
$\degw \phi (x_{i_2})$ and $\degw \phi (x_{i_3})$ 
are linearly independent for some $i_2,i_3\in \{ 1,2,3 \} $. 
\end{definition}

Since $\rank \w =3$, 
the condition (b) is equivalent to the condition that 
$\phi (x_{i_2})^{\w }$ and $\phi (x_{i_3})^{\w }$ 
are algebraically independent over $k$ 
for some $i_2,i_3\in \{ 1,2,3 \} $. 
Note that $P$ is a W-test polynomial 
if and only if 
the following condition holds: 

\smallskip 

\noindent($\dag $) 
If $\phi \in \Aut (\kx /k)$ 
satisfies (a) and (b) for some 
totally ordered additive group $\Gamma $ 
and $\w \in (\Gamma _{>0})^3$ with $\rank \w =3$, 
then $\phi $ does not belong to $\T (k,\x )$.

\smallskip

For $P\in \kx $, 
we define 
$$
\mathcal{F}(P)=\{ 
P^{\vv }\mid \vv \in (\Lambda _{>0})^3,\ 
\Lambda \text{ is a totally ordered additive group}
\} . 
$$
The following result will be used in Chapter 7 
to prove the wildness of certain exponential automorphisms.

\begin{prop}\label{prop:criterion}
Assume that $P\in \kx $ does not belong to 
$k[\x \sm \{ x_i\} ]$ for $i=1,2,3$. 
If the following conditions hold for each $f\in \mathcal{F}(P)$, 
then $P$ is a W-test polynomial$:$ 

\noindent{\rm (i)} 
$f$ is not divisible by $x_i-g$ for any 
$i\in \{ 1,2,3\} $ and 
$g\in k[\x \sm \{ x_i\} ]\sm k$.

\noindent{\rm (ii)} 
$f$ is not divisible by $x_i^{s_i}-cx_j^{s_j}$ 
for any $i,j\in \{ 1,2,3\} $ with $i\neq j$, 
$s_i,s_j\in \N $ and 
$c\in k^{\times }$. 
\end{prop}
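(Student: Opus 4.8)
The plan is to verify the equivalent condition $(\dag)$: assuming that $\phi\in\T(k,\x)$ together with a totally ordered additive group $\Gamma$ and $\w\in(\Gammap)^3$ with $\rank\w=3$ satisfy (a) and (b), I will derive a contradiction. Write $f_i=\phi(x_i)$ and set $v_i=\degw f_i$; since $\w$ is positive and $f_i$ is non-constant, $v_i>0$, so $\vv=(v_1,v_2,v_3)\in(\Gammap)^3$. The first step is the leading-term estimate $\degw\phi(P)\le\degv P$, with equality precisely when $P^{\vv}(f_1^{\w},f_2^{\w},f_3^{\w})\ne 0$: each monomial $x^a$ of $P$ satisfies $\degw\phi(x^a)=\sum_i a_iv_i=\degv x^a$, and the top part of $\phi(P)$ is obtained by substituting the $f_i^{\w}$ into $P^{\vv}$. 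Since $P\notin k[\x\sm\{x_{i_1}\}]$, the variable $x_{i_1}$ occurs in $P$, whence $\degv P\ge v_{i_1}$; combined with (a) this gives $\degw\phi(P)<v_{i_1}\le\degv P$, so the top part cancels and $f:=P^{\vv}$ satisfies $f(f_1^{\w},f_2^{\w},f_3^{\w})=0$. Here $f\in\mathcal{F}(P)$, and this relation shows $f_1^{\w},f_2^{\w},f_3^{\w}$ are algebraically dependent over $k$, so $\degw\phi>|\w|$ by Lemma~\ref{lem:minimal autom}.

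Because $\rank\w=3$, each $f_i^{\w}$ is a monomial $\alpha_ix^{b_i}$ with exponent vector $b_i\in(\Zn)^3\sm\zs$ and $\alpha_i\in k^{\times}$. Let $L=\{c\in\Z^3\mid\sum_ic_ib_i=0\}$ be the lattice of relations among the $b_i$. By (b) two of the $b_i$ are $\Z$-linearly independent, so $L$ has rank at most one; the vanishing relation just found shows $L\ne\zs$, so $L=\Z c^{*}$ for a primitive $c^{*}$. The next step is toric: the substitution $y_i\mapsto f_i^{\w}$ maps $k[y_1,y_2,y_3]$ onto a domain of transcendence degree $\rank\{b_1,b_2,b_3\}=2$, so its kernel is a height-one prime; it contains the binomial $B=y^{(c^{*})^{+}}-\mu y^{(c^{*})^{-}}$ with $\mu\in k^{\times}$ absorbing the $\alpha_i$, which is irreducible exactly because $c^{*}$ is primitive, so the kernel equals $(B)$ in the UFD $k[\x]$ and $B$ divides $f$. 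If the support of $c^{*}$ consists of two indices $i,j$, then $B=x_i^{s_i}-cx_j^{s_j}$ with $s_i,s_j\in\N$ and $c\in k^{\times}$, contradicting (ii) for $f\in\mathcal{F}(P)$.

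It remains to treat the case in which $c^{*}$ has full support; here no two of the $b_i$ are dependent, since any support-two relation would lie in $\Z c^{*}$ and force $c^{*}$ to have support two. As $\phi\in\T(k,\x)$ and $\degw\phi>|\w|$, Theorem~\ref{thm:SUcriterion} says $\phi$ admits an elementary reduction or a Shestakov--Umirbaev reduction for $\w$. A Shestakov--Umirbaev reduction is impossible: by Lemma~\ref{lem:SUdependent}, applied to the suitably permuted triple, two of the $\degw f_i$ would be linearly dependent, yielding a support-two vector in $L$, against the full support of $c^{*}$. Hence an elementary reduction occurs, i.e.\ $f_i^{\w}\in k[\{f_j\mid j\ne i\}]^{\w}$ for some $i$; since the other two leading terms are algebraically independent this subalgebra equals $k[f_j^{\w},f_l^{\w}]$, and a monomial lies in it only if $b_i=pb_j+qb_l$ with $p,q\in\Zn$. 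Then $\e_i-p\e_j-q\e_l$ is a primitive element of $L$, hence equals $\pm c^{*}$; as $c^{*}$ has full support, $p,q\ge 1$, so $B=x_i-g$ with $g=\mu x_j^{p}x_l^{q}\in k[\x\sm\{x_i\}]\sm k$ divides $f$, contradicting (i). This exhausts the cases and proves that $P$ is a W-test polynomial. The main obstacle is the toric step---identifying the kernel of the monomial substitution as the principal ideal $(B)$ and matching $B$ against the two forbidden shapes---together with the bookkeeping that converts each reduction type supplied by Theorem~\ref{thm:SUcriterion} into a relation of the correct support in $L$.
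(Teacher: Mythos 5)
Your proof is correct, and it shares the paper's overall strategy: the same opening step (from (a) and $P\notin k[\x \sm \{ x_{i_1}\} ]$ one gets $\degv P>\degw \phi (P)$, forcing $P^{\vv }$ to vanish under the substitution $x_i\mapsto f_i^{\w }$), the same appeal to Theorem~\ref{thm:SUcriterion} together with Lemma~\ref{lem:SUdependent}, and the same matching of the two failure modes against conditions (ii) and (i). Where you genuinely differ is in how the core verification is organized. The paper first proves that $f_1^{\w },f_2^{\w },f_3^{\w }$ are pairwise algebraically independent: supposing $(f_3^{\w })^s\approx (f_2^{\w })^t$ with $\gcd (s,t)=1$, it identifies $z^s-c(f_2^{\w })^t$ as the minimal polynomial of $f_3^{\w }$ over $k(f_1^{\w },f_2^{\w })$ and transfers divisibility back through $k[x_1,x_2]\cong k[f_1^{\w },f_2^{\w }]$ to get $x_3^s-cx_2^t$ dividing $P^{\vv }$, contradicting (ii); only afterwards does it use that the kernel of the substitution is a principal height-one prime, converting an elementary reduction into a divisor $x_1-g$ of $P^{\vv }$ and contradicting (i). You instead compute that kernel once and for all, as the binomial ideal $(B)$ attached to the primitive generator $c^{*}$ of the relation lattice of the exponent vectors, and then branch on the support of $c^{*}$: support two yields the (ii)-contradiction immediately, while full support both kills Shestakov-Umirbaev reductions (a dependence between two of the $\degw f_i$ would give a support-two relation) and pins the elementary-reduction witness down to a monomial, so that $B=x_i-\mu x_j^{p}x_l^{q}$ contradicts (i). Your packaging buys a single unified kernel computation and a sharper divisor in the elementary case ($g$ a monomial rather than an arbitrary element of $k[\x \sm \{ x_i\} ]\sm k$); the paper's route is slightly more elementary in its inputs, needing irreducibility only for two-variable binomials with coprime exponents, whereas you also rely on the facts that the relation lattice is saturated (clear, since $\Z ^3/L$ embeds in $\Z ^3$) and that a full-support binomial with primitive exponent vector is irreducible in characteristic zero. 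That last assertion is true but stated without proof; either justify it, or defer the identification of the kernel with $(B)$ until inside each branch, where the explicit shape of $B$ (a two-variable binomial with coprime exponents, or a polynomial linear in $x_i$) makes irreducibility immediate.
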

\begin{proof}
Let $P$ be as in the proposition. 
We verify that $P$ satisfies ($\dag $). 
Assume that $\phi \in \Aut (\kx /k)$ satisfies (a) and (b) 
for some totally ordered additive group $\Gamma $ 
and $\w \in (\Gamma _{>0})^3$ with $\rank \w =3$. 
Then, we show that $\phi $ is wild. 
Since $\rank \w =3$, 
it suffices to check the conditions (1) and (2) 
after Theorem~\ref{thm:SUcriterion}. 
Set $f_i=\phi (x_i)$ and $v_i=\degw f_i$ for $i=1,2,3$. 
Then, 
$\vv :=(v_1,v_2,v_3)$ belongs to $(\Gamma _{>0})^3$, 
since so does $\w $, 
and $f_1$, $f_2$ and $f_3$ are not constants. 
Because $P$ does not belong to $k[\x \sm \{ x_{i}\} ]$ 
for $i=1,2,3$ by assumption, 
there appears in $P$ a monomial involving $x_{i_1}$. 
Hence, 
we have $\degv P\geq v_{i_1}$. 
By (a), 
$v_{i_1}=\degw \phi (x_{i_1})$ 
is greater than $\degw \phi (P)$. 
Thus, 
we get $\degv P>\degw \phi (P)$. 
This implies that 
$\psi (P^{\vv })=0$, 
where $\psi $ is the endomorphism of the $k$-algebra $\kx $ 
defined by $\psi (x_i)=f_i^{\w }$ for $i=1,2,3$. 
Consequently, 
we know that 
$f_1^{\w }$, $f_2^{\w }$ and $f_3^{\w }$ 
are algebraically dependent over $k$.

We show that $f_1^{\w }$, $f_2^{\w }$ and $f_3^{\w }$ 
are pairwise algebraically independent over $k$ 
by contradiction. 
Suppose that $f_i^{\w }$ and $f_j^{\w }$ 
are algebraically dependent over $k$ for some $i\neq j$, 
say $i=2$ and $j=3$. 
Since $\rank \w =3$ by assumption, 
$f_2^{\w }$ and $f_3^{\w }$ are monomials. 
Hence, 
we have $(f_3^{\w })^s=c(f_2^{\w })^t$ 
for some $c\in k^{\times }$ and 
$s,t\in \N $ with $\gcd (s,t)=1$. 
Then, 
$x_3^s-cx_2^t$ is an irreducible element of $\kx $. 
Since $x_3^s-cx_2^t$ 
is a monic polynomial in $x_3$, 
it follows that $x_3^s-cx_2^t$ 
is an irreducible polynomial 
in $x_3$ over $k[x_1,x_2]$, 
and hence over $k(x_1,x_2)$. 
Note that $f_1^{\w }$ and $f_2^{\w }$ 
are algebraically independent over $k$ by (b), 
since $f_2^{\w }$ and $f_3^{\w }$ 
are algebraically dependent over $k$ by supposition. 
Let $z$ be an indeterminate over $\kx $. 
Then, $f_1^{\w }$, $f_2^{\w }$ and $z$ 
are algebraically independent over $k$. 
Hence, 
$p(z):=z^s-c(f_2^{\w })^t$ 
is an irreducible polynomial in $z$ 
over $k(f_1^{\w },f_2^{\w })$. 
Since $p(f_3^{\w })=(f_3^{\w })^s-c(f_2^{\w })^t=0$, 
it follows that 
$p(z)$ is the minimal polynomial 
of $f_3^{\w }$ over $k(f_1^{\w },f_2^{\w })$. 
Let $q(z)$ be the element of $k[f_1^{\w },f_2^{\w }][z]$ 
obtained from 
$P^{\vv }$ by the substitution 
$x_i\mapsto f_i^{\w }$ for $i=1,2$ 
and $x_3\mapsto z$. 
Then, we have $q(f_3^{\w })=\psi (P^{\vv })=0$. 
Hence, 
$q(z)$ is divisible by $p(z)$. 
Accordingly, 
$P^{\vv }$ is divisible by $x_3^s-cx_2^t$, 
a contradiction to (ii). 
Thus, 
$f_1^{\w }$, $f_2^{\w }$ and $f_3^{\w }$ 
are pairwise algebraically independent over $k$, 
proving (1). 
As a consequence, 
we know that 
$\ker \psi $ is a prime ideal of $\kx $ of height one, 
and hence a principal ideal of $\kx $.

Finally, 
we show (2) by contradiction. 
Suppose to the contrary that 
$f_i^{\w }$ belongs to 
$k[\{ f_j^{\w }\mid j\neq i\} ]$ 
for some $i$, say $i=1$. 
Then, there exists 
$g\in k[x_2,x_3]\sm k$ 
such that $f_1^{\w }=\psi (g)$. 
Note that 
$x_1-g$ is an irreducible element of $\kx $ 
such that $\psi (x_1-g)=f_1^{\w }-\psi (g)=0$. 
Since $\ker \psi $ is a principal prime ideal of $\kx $ 
as mentioned, 
this implies that $\ker \psi $ is generated by $x_1-g$. 
Since $P^{\vv }$ belongs to $\ker \psi $, 
it follows that $P^{\vv }$ is divisible by $x_1-g$. 
This contradicts (i). 
Therefore, 
$\phi $ satisfies (2). 
This proves that $\phi $ is wild, 
and thereby proving that 
$P$ is a W-test polynomial. 
\end{proof}

\section{Shestakov-Umirbaev inequality}
\label{sect:SUineq}
\setcounter{equation}{0}

In this section, 
we review the generalized 
Shestakov-Umirbaev inequality \cite{SU1}. 
Consider a nonzero polynomial 
$\Phi $ in one variable $z$ over $\kx $. 
Take $\w \in (\Gamma _{>0})^n$ 
and $g\in \kx \sm \zs $, 
and set $\w _g=(\w ,\degw g)$. 
Regard $\Phi $ as a polynomial in 
$n+1$ variables over $k$ with $x_{n+1}=z$. 
Then, we have 
$$
\degw ^g\Phi :=\deg _{\w _g}\Phi 
\geq \degw \Phi (g),\quad 
\degw ^g\Phi \geq (\deg _z\Phi )\degw g. 
$$
We denote by $\Phi ^{(i)}$ 
the $i$-th order derivative of $\Phi $ in $z$ 
for each $i\geq 0$. 
Then, we have 
$\degw ^g\Phi ^{(i)}=\degw \Phi ^{(i)}(g)$ 
for sufficiently large $i\geq 0$. 
We define $m_\w ^g(\Phi )$ 
to be the minimal $i\in \Zn$ 
such that 
$\degw ^g\Phi ^{(i)}=\degw \Phi ^{(i)}(g)$. 
We mention that $m_{\w }^g(\Phi )$ 
is equal to the minimal $i\in \Zn $ such that 
$(\Phi ^{\w _g})^{(i)}(g^{\w })\neq 0$ 
(see \cite[Lemma 3.1(ii)]{SU1}).

With this notation, 
we have the following theorem. 
This is a generalization of 
Shestakov-Umirbaev~\cite[Theorem 3]{SU'}.

\begin{thm}[{\cite[Theorem 2.1]{SU1}}]\label{thm:SUineq}
Let $f_1,\ldots ,f_r$ be elements of $\kx $ 
which are algebraically independent over $k$, 
and $\omega :=df_1\wedge \cdots \wedge df_r$, 
where $r\in \N $. 
If $\Phi $ belongs to $k[f_1,\ldots ,f_r][z]\sm \zs $, 
then we have 
$$
\degw \Phi (g)\geq \degw ^g\Phi 
+m_\w ^g(\Phi )(\degw \omega \wedge dg-\degw \omega -\degw g). 
$$
\end{thm}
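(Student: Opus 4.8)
The plan is to induct on the nonnegative integer $m:=m_{\w }^g(\Phi )$, using that differentiation in $z$ decreases it by one. In the base case $m=0$ the defining property of $m_{\w }^g$ gives $\degw ^g\Phi =\degw \Phi (g)$, so the asserted inequality is the equality $\degw \Phi (g)=\degw ^g\Phi $ and there is nothing further to prove. For the inductive step I assume $m\geq 1$ and that the theorem holds for $\Phi ^{(1)}:=\partial \Phi /\partial z$. Note $\Phi ^{(1)}$ again lies in $k[f_1,\ldots ,f_r][z]$; moreover $m\geq 1$ forces $\deg _z\Phi \geq 1$, so $\Phi ^{(1)}\neq 0$, and the definition of $m_{\w }^g$ yields at once $m_{\w }^g(\Phi ^{(1)})=m-1$.

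The key device is the differential identity $\omega \wedge d\Phi (g)=\Phi ^{(1)}(g)\,\omega \wedge dg$, where $\omega =df_1\wedge \cdots \wedge df_r$. It follows by expanding $d\Phi (g)$ through the chain rule: since $f_1,\ldots ,f_r$ are algebraically independent, the partial derivatives of $\Phi $ with respect to the $f_j$ are well defined, and every resulting term containing a factor $df_j$ dies when wedged against $\omega $, leaving only the $dg$-term. Taking $\w $-degrees and invoking the multiplicativity $\degw h\eta =\degw h+\degw \eta $, the submultiplicativity $\degw (\omega \wedge \eta )\leq \degw \omega +\degw \eta $, and the bound $\degw d\Phi (g)\leq \degw \Phi (g)$, I obtain
$$
\degw \Phi (g)\geq \degw \Phi ^{(1)}(g)+\degw (\omega \wedge dg)-\degw \omega .
$$
When $\omega \wedge dg=0$, that is when $f_1,\ldots ,f_r,g$ are algebraically dependent, the right-hand side of the theorem is $-\infty $ and the claim is vacuous; so I may assume $\omega \wedge dg\neq 0$, whence also $\Phi ^{(1)}(g)\neq 0$.

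Setting $\Delta :=\degw (\omega \wedge dg)-\degw \omega -\degw g$ and feeding in the induction hypothesis $\degw \Phi ^{(1)}(g)\geq \degw ^g\Phi ^{(1)}+(m-1)\Delta $, together with the rewriting $\degw (\omega \wedge dg)-\degw \omega =\Delta +\degw g$, the displayed inequality gives $\degw \Phi (g)\geq \degw ^g\Phi ^{(1)}+\degw g+m\Delta $. It therefore remains to establish the exact degree relation $\degw ^g\Phi ^{(1)}+\degw g=\degw ^g\Phi $, which I expect to be the delicate point. Here I would use the leading-form description of $m_{\w }^g$: the hypothesis $m\geq 1$ means the $\w _g$-leading form $\Phi ^{\w _g}$ vanishes under $z\mapsto g^{\w }$, so $\Phi ^{\w _g}$ must genuinely involve $z$, i.e. $\deg _z\Phi ^{\w _g}\geq 1$. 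Because the characteristic is zero, $\partial /\partial z$ does not annihilate this leading form, so passing to $\Phi ^{(1)}$ lowers the $\w _g$-degree by exactly $\degw g$ rather than more. Substituting $\degw ^g\Phi ^{(1)}=\degw ^g\Phi -\degw g$ then collapses the bound to $\degw \Phi (g)\geq \degw ^g\Phi +m\Delta $, closing the induction.
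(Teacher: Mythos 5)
Your proof is correct. Note, however, that this monograph does not contain its own proof of Theorem~\ref{thm:SUineq}: the result is imported verbatim from \cite[Theorem 2.1]{SU1}, so there is no internal argument to compare yours against. What you give is the natural mechanism behind the cited inequality: induction on $m:=m_{\w}^{g}(\Phi )$, driven by the identity $\omega \wedge d\Phi (g)=\Phi ^{(1)}(g)\,\omega \wedge dg$ (chain rule plus $\omega \wedge df_j=0$), the degree estimates $\degw \omega \wedge \eta \leq \degw \omega +\degw \eta $ and $\degw d\Phi (g)\leq \degw \Phi (g)$, and the exactness statement $\degw ^g\Phi ^{(1)}=\degw ^g\Phi -\degw g$ for $m\geq 1$. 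All the individual steps check out: $m\geq 1$ indeed forces $\deg _z\Phi \geq 1$ and $m_{\w }^{g}(\Phi ^{(1)})=m-1$; the degenerate case $\omega \wedge dg=0$ makes the right-hand side $-\infty $ and is correctly discarded, after which $\Phi ^{(1)}(g)\neq 0$ follows from algebraic independence of $f_1,\ldots ,f_r,g$. One small remark: your appeal to the leading-form characterization of $m_{\w }^{g}$ (quoted in the paper from \cite[Lemma 3.1(ii)]{SU1}) is legitimate but not even needed for the direction you use. If $\Phi ^{\w _g}$ did not involve $z$, then substituting $z\mapsto g$ would leave $\Phi ^{\w _g}$ as the $\w $-leading form of $\Phi (g)$ while strictly lowering all other homogeneous components, giving $\degw \Phi (g)=\degw ^g\Phi $ and hence $m=0$; this two-line contrapositive makes your argument self-contained.
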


In the situation of Theorem~\ref{thm:SUineq}, 
$\Phi ^{\w _g}$ belongs to 
$k[f_1,\ldots ,f_r]^{\w }[z]\sm \zs $. 
Let $K$ be the field 
of fractions of $k[f_1,\ldots ,f_r]^{\w }$, 
$\psi (z)$ the minimal polynomial 
of $g^{\w }$ over $K$, 
and $m:=m_{\w }^g(\Phi )$. 
Then, 
$\Phi ^{\w _g}$ is divisible by $\psi (z)^m$, 
since $m$ is equal to the minimal number such that 
$(\Phi ^{\w _g})^{(m)}(g^{\w })\neq 0$ as mentioned. 
Since $\deg _z\Phi ^{\w _g}\leq \deg _z\Phi $, 
it follows that 
$m$ is at most the quotient of $\deg _z\Phi $ 
divided by $[K(g^{\w }):K]$.

Now, 
let $S=\{ f,g\} \subset \kx $ 
be such that $f$ and $g$ are algebraically independent over $k$, 
and $\phi $ a nonzero element of $k[f,g]$. 
Then, 
we may uniquely write $\phi =\sum _{i,j}c_{i,j}f^ig^j$, 
where $c_{i,j}\in k$ for each $i,j\in \Zn $. 
We define $\degw ^S\phi $ to be the maximum among 
$\degw f^ig^j$ for $i,j\in \Zn $ with $c_{i,j}\neq 0$. 
We remark that, 
if $f^{\w }$ and $g^{\w }$ 
are algebraically independent over $k$, 
then $\degw ^S\phi $ is equal to $\degw \phi $. 
Take $\Phi \in k[f][y]$ such that $\Phi (g)=\phi $. 
Then, we have $\degw ^g\Phi =\degw ^S\phi $. 
Hence, 
it follows that $\degw \phi <\degw ^S\phi $ 
if and only if $m_{\w }^g(\Phi )\geq 1$.

\begin{lem}\label{lem:SUineq1}
If $\degw \phi <\degw ^S\phi $ for $\phi \in k[f,g]\sm \zs $, 
then the following assertions hold$:$

\noindent{\rm (i)} 
There exist $p,q\in \N $ with $\gcd (p,q)=1$ 
such that $(g^\w )^p\approx (f^\w )^q$. 

\noindent{\rm (ii)} 
$\degw \phi \geq q\degw f+\degw df\wedge dg-\degw f-\degw g$. 

\noindent{\rm (iii)} 
Assume that $\degw f<\degw g$, $\degw \phi \leq \degw g$ 
and $g^\w$ does not belong to $k[f^\w]$. 
Then, we have $p=2$, and $q\geq 3$ is an odd number. 
Moreover, 
$\delta :=(1/2)\degw f$ belongs to $\Gamma $, 
and 
\begin{equation*}
\degw \phi \geq 
(q-2)\delta +\degw df\wedge dg>\degw g-\degw f. 
\end{equation*}
If furthermore $\degw \phi \leq \degw f$, 
then we have $q=3$. 

\noindent{\rm (iv)} 
Let $\Phi \in \kx [z]$ be such that $\Phi (g)=\phi $. 
Then, 
$m_{\w }^g(\Phi )$ 
is at most the quotient of $\deg _z\Phi $ divided by $p$. 
\end{lem}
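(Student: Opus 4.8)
The plan is to handle the four assertions in the order (i), (iv), (ii), (iii): part (iv) supplies the multiplicity data needed for (ii), and (iii) follows formally from (ii) together with the degree hypotheses.

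For (i), the hypothesis $\degw \phi <\degw ^S\phi $ forces, by the remark preceding the lemma, that $f^{\w }$ and $g^{\w }$ are algebraically dependent over $k$. Both are $\w $-homogeneous, so I would take a nonzero polynomial relation among them and pass to one of its $\w $-homogeneous components. Since a single monomial $(f^{\w })^i(g^{\w })^j$ cannot vanish in the domain $\kx $, any nonzero component contains at least two monomials of equal $\w $-degree, yielding $(i_1,j_1)\neq (i_2,j_2)$ with $(i_1-i_2)\degw f=(j_2-j_1)\degw g$. As $\degw f,\degw g>0$, this produces coprime $p,q\in \N $ with $p\degw g=q\degw f$, so $(g^{\w })^p$ and $(f^{\w })^q$ are $\w $-homogeneous of the same degree. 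Factoring the common monomial out of that homogeneous relation exhibits it as a nonzero polynomial over $k$ in $(f^{\w })^q(g^{\w })^{-p}$; hence this ratio is algebraic over $k$, and since $k$ is algebraically closed in $\kxr $ it lies in $k^{\times }$, giving $(g^{\w })^p\approx (f^{\w })^q$.

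For (iv), let $K$ be the field of fractions of $k[f^{\w }]$ and $\psi (z)$ the minimal polynomial of $g^{\w }$ over $K$. Writing $(g^{\w })^p=\lambda (f^{\w })^q$ with $\lambda \in k^{\times }$ from (i), $g^{\w }$ is a root of $z^p-\lambda (f^{\w })^q$, and I would check this is irreducible over $K=k(f^{\w })$: since $\gcd (p,q)=1$ and $f^{\w }$ is transcendental over $k$, comparing orders at the place $f^{\w }=0$ shows $\lambda (f^{\w })^q$ is not an $\ell $-th power for any prime $\ell \mid p$, so the radical-extension criterion applies. Thus $\psi (z)=z^p-\lambda (f^{\w })^q$ and $[K(g^{\w }):K]=p$. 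As recalled after Theorem~\ref{thm:SUineq}, $m_{\w }^g(\Phi )$ is at most $\deg _z\Phi $ divided by $[K(g^{\w }):K]=p$, which is (iv).

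For (ii), the clean idea is to argue with differential forms. Since $\phi \notin k[f]$ (otherwise $\phi ^{\w }$ would be a power of $f^{\w }$, forcing $\degw \phi =\degw ^S\phi $), the derivative $\partial _g\phi $ of $\phi $ in $g$ (taking $f,g$ as independent) is nonzero and $d\phi \wedge df=-(\partial _g\phi )\,df\wedge dg$. Combining $\degw (d\phi \wedge df)\leq \degw d\phi +\degw df$ with $\degw \phi \geq \degw d\phi $ and $\degw df\leq \degw f$ (both from (\ref{eq:deg df = deg f})), I would obtain
$$
\degw \phi \geq \degw \partial _g\phi +\degw df\wedge dg-\degw f,
$$
so it remains to show $\degw \partial _g\phi \geq (p-1)\degw g=q\degw f-\degw g$. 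When $p=1$ this is automatic, and the bound follows at once. The expected main obstacle is the case $p\geq 2$: there I would instead invoke Theorem~\ref{thm:SUineq} with $r=1$ and $\omega =df$, using that $\Phi ^{\w _g}$ is divisible by $\psi ^m$ (so $\degw ^g\Phi \geq mp\degw g=mq\degw f$) and that $m=m_{\w }^g(\Phi )\geq 1$, and then reconcile the negative multiplicity term $m(\degw df\wedge dg-\degw df-\degw g)$ against the linear growth of $\degw ^g\Phi $ in $m$, keeping $m$ controlled via (iv). This sign bookkeeping is the technical heart of the argument.

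For (iii), granting (ii), the hypothesis $g^{\w }\notin k[f^{\w }]$ rules out $p=1$, so $p\geq 2$. Feeding $\degw \phi \leq \degw g$ into (ii) gives $(p-2)\degw g\leq \degw f-\degw df\wedge dg$; since $\degw f<\degw g$ and $\degw df\wedge dg>0$, the right-hand side is strictly less than $\degw g$, forcing $p\leq 2$, hence $p=2$ and $q\geq 3$ odd. Writing $1=aq+2b$ (possible as $q$ is odd) and using $q\degw f=2\degw g$ shows $\degw f\in 2\Gamma $, so $\delta :=\tfrac 12\degw f\in \Gamma $ and $\degw g=q\delta $. Substituting $\degw f=2\delta $ into the inequality of (ii) turns it into $\degw \phi \geq (q-2)\delta +\degw df\wedge dg$, and $\degw df\wedge dg>0$ upgrades this to $\degw \phi >(q-2)\delta =\degw g-\degw f$. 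Finally, if $\degw \phi \leq \degw f=2\delta $, then $(q-2)\delta <2\delta $ forces $q=3$.
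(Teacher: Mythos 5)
Your proposal attempts considerably more than the paper's own proof does: the paper derives (i) and (ii) from Lemma 3.2 of \cite{SU2} and (iii) from Lemma 3.3 of \cite{SU2}, and only proves (iv) directly, by exactly the minimal-polynomial computation you give. Your self-contained argument for (i) is correct (the homogeneous-component and cancellation analysis, plus the fact that $k$ is algebraically closed in $\kxr $, does give $(g^{\w })^p\approx (f^{\w })^q$), and your (iii) is a correct formal deduction from (ii), including the Bezout step $1=aq+2b$ showing $\delta \in \Gamma $. The genuine gap is in (ii): for $p\geq 2$ you list the right ingredients --- Theorem~\ref{thm:SUineq} with $\omega =df$, the bound $\degw ^g\Phi \geq mp\degw g$, and control of $m:=m_{\w }^g(\Phi )$ via (iv) --- but you explicitly defer the estimate you yourself call the ``technical heart'', so the case $p\geq 2$ is never actually proved. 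Since your (iii) consumes (ii) precisely in the case $p=2$, this omission is load-bearing, not cosmetic.

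The omitted bookkeeping is short, and your plan does work; here is how it closes. By the remark before the lemma, the hypothesis $\degw \phi <\degw ^S\phi $ gives $m\geq 1$; by (iv), $\deg _z\Phi \geq mp$, hence $\degw ^g\Phi \geq (\deg _z\Phi )\degw g\geq mp\degw g$, and Theorem~\ref{thm:SUineq} yields
$$
\degw \phi \geq mp\degw g+m(\degw df\wedge dg-\degw df-\degw g)
=m\bigl( (p-1)\degw g+\degw df\wedge dg-\degw df\bigr) .
$$
Write $C$ for the bracketed quantity and $B:=(p-1)\degw g+\degw df\wedge dg-\degw f$. Since $q\degw f=p\degw g$, assertion (ii) reads exactly $\degw \phi \geq B$, and $C\geq B$ because $\degw df\leq \degw f$. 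If $B\leq 0$, then (ii) is trivial, since $\phi \neq 0$ and $\w \in (\Gammap )^n$ give $\degw \phi \geq 0$; if $B>0$, then $C\geq B>0$, so $\degw \phi \geq mC\geq C\geq B$. Note that this covers every $p\geq 1$ uniformly, so your separate differential-form reduction to $\degw \partial _g\phi \geq (p-1)\degw g$ (your $p=1$ case) is not needed at all. With this paragraph supplied, your proposal becomes a complete proof of (i)--(iv), genuinely more self-contained than the paper's, which outsources (i)--(iii) to \cite{SU2}.
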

\begin{proof}
(i) and (ii), and (iii) follow 
from Lemmas 3.2 and 3.3 of \cite{SU2}, 
respectively. 
We prove (iv). 
Since $k[f]^{\w }=k[f^{\w }]$, 
the field of fractions of $k[f]^{\w }$ 
is equal to $k(f^{\w })$. 
Hence, 
$m_\w ^g(\Phi )$ is at most the quotient of 
$\deg _z\Phi $ divided by 
$[k(f^{\w })(g^{\w }):k(f^{\w })]$ as remarked above. 
Since $(g^\w )^p\approx (f^\w )^q$, 
there exists $c\in k^{\times }$ 
such that $(g^\w )^p=c(f^\w )^q$. 
Then, 
$z^p-c(f^{\w })^q$ is the minimal polynomial of $g^{\w }$ 
over $k(f^{\w })$, 
since $\gcd (p,q)=1$. 
Hence, 
we have $[k(f^{\w })(g^{\w }):k(f^{\w })]=p$. 
Therefore, 
$m_\w ^g(\Phi )$ is at most the quotient of 
$\deg _z\Phi $ divided by $p$. 
\end{proof}

Using the results above, 
we prove a technical lemma which will be used in 
Chapters 6 and 7. 
Assume that $f,g\in \kx $ 
satisfy the following conditions: 

\smallskip 

\noindent{\rm (1)} $\degw f<\degw g$; 

\noindent{\rm (2)} $g^\w$ does not belong to $k[f^\w ]$; 

\noindent{\rm (3)} 
$f$ and $g$ are algebraically independent over $k$. 

\smallskip 

\noindent
Then, we define 
$$
\eta _1=\degw f+\frac{3}{2}\degw g\quad \text{and}\quad 
\eta _2=2\degw f+\degw g. 
$$
Take any $\theta (z)\in k[z]$ with 
$d:=\deg _z\theta (z)\geq 1$. 
Then, there exists 
\begin{gather*}\eta (\theta ;f,g):=
\min \{ \degw (\theta (g)+fh)\mid h\in k[f,g]\} ,
\end{gather*}
since $\{ \degw h\mid h\in \kx \sm \zs \} $ 
is a well-ordered subset of $\Gamma $ 
by the assumption that 
$\w $ is an element of $(\Gammap )^3$ 
(cf.~\cite[Lemma 6.1]{SU2}).

With the notation and assumption above, 
we have the following lemma.

\begin{lem}\label{lem:91113}
If one of the following three conditions is satisfied, 
then we have 
$\eta (\theta ;f,g)>\eta _i$ for $i=1,2$$:$

\noindent{\rm (i)} $d=2$, 
$\degw df\wedge dg>\degw g$ 
and $(2l+1)\degw f=l\degw g$ 
for some integer $l\geq 3$. 

\noindent{\rm (ii)} 
$d\geq 3$ and $\degw df\wedge dg>(d-1)\degw f$.

\noindent{\rm (iii)} 
$d\geq 9$ and $d\neq 10,12$. 
\end{lem}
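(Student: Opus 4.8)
The plan is to estimate, for an arbitrary representative $\phi =\theta (g)+fh$ of the coset $\theta (g)+fk[f,g]$, the quantity $\degw \phi $ from below, and then to specialize to the minimizer so as to bound $\eta (\theta ;f,g)$. First I would write $\phi =\Phi (g)$ with $\Phi \in k[f][z]$ the unique polynomial with coefficients in $k[f]$ satisfying $\Phi (g)=\phi $; since $\phi \equiv \theta (g)$ modulo $fk[f,g]$, the reduction of $\Phi $ modulo $f$ is exactly $\theta $, so the coefficient of $z^d$ in $\Phi $ reduces to the leading coefficient of $\theta $ and is in particular nonzero. Hence $\deg _z\Phi \geq d$ and $\degw ^g\Phi \geq (\deg _z\Phi )\degw g\geq d\degw g$. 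I would then split on $m:=m_{\w }^g(\Phi )$. If $m=0$ then $\degw \phi =\degw ^g\Phi \geq d\degw g$; since $\eta _1<\tfrac 52\degw g$ and $\eta _2<3\degw g$ (using $\degw f<\degw g$), this already gives $\degw \phi >\eta _1,\eta _2$ whenever $d\geq 3$, and, in the case $d=2$ of hypothesis (i), whenever $\degw f<\tfrac 12\degw g$ — which holds there because the resonance $(2l+1)\degw f=l\degw g$ forces $\degw f/\degw g=l/(2l+1)<\tfrac 12$.

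The heart of the argument is the cancellation case $m\geq 1$. Here $\degw \phi <\degw ^S\phi $, so Lemma~\ref{lem:SUineq1}(i) provides $p,q\in \N $ with $\gcd (p,q)=1$ and $(g^{\w })^p\approx (f^{\w })^q$; writing $a=\degw f$, $b=\degw g$ and $W=\degw df\wedge dg$, this means $pb=qa$, and $a<b$ forces $p<q$. Crucially, hypothesis (2) (that $g^{\w }$ does not lie in $k[f^{\w }]$) rules out $p=1$, so $p\geq 2$; and $W>0$ since $df\wedge dg\neq 0$ has all weights positive. I would then deploy two complementary lower bounds. Lemma~\ref{lem:SUineq1}(ii) gives
\[
\degw \phi \geq q\degw f+W-a-b=(p-1)b+W-a,
\]
which is strong when $p$ is large. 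Applying Theorem~\ref{thm:SUineq} with $r=1$, $f_1=f$ and $\omega =df$, together with the multiplicity bound $m\leq (\deg _z\Phi )/p$ of Lemma~\ref{lem:SUineq1}(iv) and $\deg _z\Phi \geq d$, and noting $W-\degw df-b\leq 0$, gives
\[
\degw \phi \geq \frac{d}{p}\bigl((p-1)b+W-\degw df\bigr)\geq \frac{d}{p}\bigl((p-1)b+W-a\bigr),
\]
which is strong when $p$ is small.

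With these in hand the problem becomes a finite arithmetic of the parameters $p,q,d$ (recalling $a=(p/q)b$). For $p\geq 5$ the first bound already yields $\degw \phi >(p-2)b\geq 3b>\eta _1,\eta _2$. For $p\in \{2,3,4\}$ I would use the second bound, supplemented in the borderline small-$q$ configurations by the sharp form Lemma~\ref{lem:SUineq1}(iii) — which applies precisely when $\degw \phi \leq \degw g$, forces $p=2$ with $q\geq 3$ odd, and yields $\degw \phi \geq (q-2)\tfrac a2+W$ — after first splitting on whether $\degw \phi \leq b$ or $\degw \phi >b$. In hypothesis (i), where $d=2$, $W>b$ and $(p,q)=(l,2l+1)$, the first bound gives $\degw \phi >2l^2b/(2l+1)$, and one checks $2l^2b/(2l+1)>\eta _1,\eta _2$ for $l\geq 3$. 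In hypothesis (ii), where $d\geq 3$ and $W>(d-1)a$, the second bound reduces to the elementary inequalities $d\bigl(\tfrac{p-1}{p}+\tfrac{d-2}{q}\bigr)>\tfrac pq+\tfrac 32$ and $>\tfrac{2p}{q}+1$, valid for all admissible $p\geq 2$, $q>p$. In hypothesis (iii) the same bounds must be pushed through with no control on $W$, which is exactly why the degree must be large: the estimates survive for $d\geq 9$ but break for the two residual small-resonance configurations occurring at $d=10$ and $d=12$, which is the reason those degrees are excluded.

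The step I expect to be the main obstacle is precisely this last one: the cancellation case with a small resonance exponent ($p=2$, $q=3$) and a small value of $W$, under hypothesis (iii) where $W$ is not bounded below. There the crude multiplicity bound $m\leq (\deg _z\Phi )/p$ is too lossy, and one must instead track the exact order to which the minimal polynomial $z^p-c(f^{\w })^q$ of $g^{\w }$ divides $\Phi ^{\w _g}$, using that $\Phi $ reduces to the fixed polynomial $\theta $ modulo $f$ in order to limit how far the prescribed residues $\theta _0,\dots ,\theta _d$ can telescope under the resonance. Carrying out this bookkeeping is what pins down the exact list of admissible $d$ and produces the exceptional values $d=10,12$.
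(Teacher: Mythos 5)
Your setup, your case $m_{\w }^g(\Psi )=0$, and your treatments of hypotheses (i) and (ii) are correct and essentially coincide with the paper's proof: the paper also works with $\Psi =\theta +f\Phi $, notes $\deg _z\Psi \geq d$ because the leading $z$-coefficient of $\theta $ lies in $k^{\times }$ while that of $f\Phi $ is a multiple of $f$, and in the cancellation case combines Theorem~\ref{thm:SUineq} (with $\omega =df$) with Lemma~\ref{lem:SUineq1}; your elementary inequalities in case (ii) do hold for all $d\geq 3$ and $2\leq p<q$.

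The genuine gap is hypothesis (iii), and your own repair plan does not close it. Write $\deg _z\Psi =ap+b$ with $0\leq b<p$, and $\delta =p^{-1}\degw f$, so that $\degw f=p\delta $ and $\degw g=q\delta $. Your ``second bound'' replaces the integer bound $m_{\w }^g(\Psi )\leq a$ by the rational bound $\deg _z\Psi /p$, and this loss is fatal when $p=2$: for $(p,q)=(2,3)$ and $\deg _z\Psi =9$ (possible when $d=9$) it yields only $\eta (\theta ;f,g)>\tfrac 92(q-2)\delta =4.5\delta $, whereas $\eta _1=6.5\delta $ and $\eta _2=7\delta $. The paper keeps the integer structure: $m_{\w }^g(\Psi )\leq a$ and $\degw df\wedge dg>0$ give $\eta (\theta ;f,g)>\bigl(a(q-2)+bq\bigr)\delta $, and it is the remainder term $bq\delta $ --- not any sharpened multiplicity count --- that saves all odd degrees ($a\geq 4$, $b=1$ gives $\geq (5q-8)\delta \geq \eta _i$), while even $\deg _z\Psi \geq 14$ is saved by $a\geq 7$. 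The only residual configurations are $\deg _z\Psi \in \{ 10,12\} $ (the degree of $\Psi $, not of $\theta $), and there the hypothesis $d\neq 10,12$ enters for a purpose quite different from the one you describe: it forces $\deg _z\Psi \neq \deg _z\theta $, hence $\deg _z\theta <\deg _zf\Phi $, hence the top $z$-coefficient of $\Psi $ is a multiple of $f$, which improves the degree estimate to $\degw ^g\Psi \geq \degw f+(\deg _z\Psi )\degw g$; the extra $\degw f=2\delta $ upgrades $a(q-2)\delta \geq (5q-10)\delta $ to $\bigl(a(q-2)+2\bigr)\delta \geq (5q-8)\delta $ and finishes the proof. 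Your proposed fix --- tracking the exact order to which $z^p-c(f^{\w })^q$ divides $\Psi ^{\w _g}$ via the reduction of $\Psi $ modulo $f$ --- aims at the multiplicity, where there is in fact no slack (the multiplicity can genuinely equal $a$), and is left as unexecuted bookkeeping; likewise your fallback to Lemma~\ref{lem:SUineq1}(iii) cannot rescue this case, since with no lower bound on $\degw df\wedge dg$ its conclusion $(q-2)\delta +\degw df\wedge dg$ stays below $\eta _1$. So cases (i) and (ii) stand, but case (iii) --- the one that actually produces the exceptional values $10$ and $12$ --- is not proved by your argument.
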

\begin{proof}
Take $h\in k[f,g]$ and $\Phi \in k[f][z]$ 
such that 
$\eta (\theta ;f,g)=\degw (\theta (g)+fh)$ 
and $h=\Phi (g)$, 
and put $\Psi =\theta +f\Phi $. 
Then, 
we have $\eta (\theta ;f,g)=\degw \Psi (g)$. 
Note that 
\begin{equation}\label{eq:pfstart:91113}
\deg _z\Psi 
=\max \{ \deg _z\theta ,\deg _zf\Phi \} 
\geq \deg _z\theta =d, 
\end{equation}
since the leading coefficient of $\theta $ 
is an element of $k^{\times }$, 
while that of $f\Phi $ is a multiple of $f$. 
Hence, 
we know that 
\begin{equation}\label{eq:SUineqpf0}
\degw ^g\Psi =\deg _{\w _g}\Psi 
\geq (\deg _z\Psi )\degw g\geq d\degw g. 
\end{equation}

First, 
assume that $m_{\w }^g(\Psi )=0$. 
Then, we have $\degw ^g\Psi =\degw \Psi (g)$. 
Since $\eta (\theta ;f,g)=\degw \Psi (g)$, 
we get 
$\eta (\theta ;f,g)\geq d\degw g$ by (\ref{eq:SUineqpf0}). 
Assume that (i) is satisfied. 
Then, 
it follows that $\eta (\theta ;f,g)\geq 2\degw g$, 
since $d=2$. 
Because 
$$
\degw f=\frac{l}{2l+1}\degw g
$$
for some $l\geq 3$, 
we know that $\degw f$ is less than $(1/2)\degw g$. 
Hence, 
we see that $\eta _i<2\degw g$ for $i=1,2$. 
Therefore, 
we get $\eta (\theta ;f,g)>\eta _i$ for $i=1,2$. 
If (ii) or (iii) is satisfied, 
then we have 
$\eta (\theta ;f,g)\geq 3\degw g$ for $i=1,2$, 
since $d\geq 3$. 
Because $\degw f<\degw g$ by (1), 
we see that $\eta _i<3\degw g$ for $i=1,2$. 
Therefore, 
we get $\eta (\theta ;f,g)>\eta _i$ for $i=1,2$.

Next, 
assume that $m_\w ^g(\Psi )\geq 1$. 
Then, 
$\degw \Psi (g)$ is less than 
$\degw ^g\Psi =\degw ^S\Psi (g)$, 
where $S:=\{ f,g\} $. 
By Lemma~\ref{lem:SUineq1} (i), 
there exist $p,q\in \N $ with $\gcd (p,q)=1$ 
such that $(g^\w )^p\approx (f^\w )^q$. 
Then, we have $2\leq p<q$ by (1) and (2). 
Let $a$ and $b$ 
be the quotient and remainder of $\deg _z\Psi $ 
divided by $p$. 
Then, 
we have 
$a\geq m_\w ^g(\Psi )$ by Lemma~\ref{lem:SUineq1} (iv). 
Since $m_\w ^g(\Psi )\geq 1$ by assumption, 
it follows that $a\geq 1$. 
Set $\delta =p^{-1}\degw f$. 
Then, 
we have $\degw f=p\delta $, $\degw g=q\delta $ 
and 
\begin{equation}\label{eq:eta pq pf}
\eta _1=\left(p+\frac{3}{2}q\right)\delta ,\quad 
\eta _2=(2p+q)\delta . 
\end{equation}
Since $f$ does not belong to $k$ by (3), 
and $\w $ is an element of $(\Gammap )^3$, 
we have $\degw df=\degw f$ 
by (\ref{eq:deg df = deg f}) 
and the note following it. 
Hence, we get 
\begin{align*}
\eta (\theta ;f,g)
=\degw \Psi (g)
\geq \degw ^g\Psi +m_{\w }^g(\Psi )
(\degw df\wedge dg-\degw f-\degw g)
\end{align*}
by Theorem~\ref{thm:SUineq}. 
Since $m_{\w }^g(\Psi )\leq a$, 
and $\degw df\wedge dg\leq \degw f+\degw g$ 
by (\ref{eq:inequomega}), 
we have 
\begin{align*}
m_{\w }^g(\Psi )
(\degw df\wedge dg-\degw f-\degw g)
&\geq a(\degw df\wedge dg-\degw f-\degw g) \\
&=a(\degw df\wedge dg-p\delta -q\delta ). 
\end{align*}
By (\ref{eq:SUineqpf0}), 
we know that $\degw ^g\Psi \geq 
(ap+b)q\delta $. 
Therefore, we get 
\begin{equation}\label{eq:SUineqpf1}
\begin{aligned}
\eta (\theta ;f,g)
&\geq 
\degw ^g\Psi +a(\degw df\wedge dg-p\delta -q\delta ) \\
&\geq (ap+b)q\delta +a(\degw df\wedge dg-p\delta -q\delta ). 
\end{aligned}
\end{equation}

First, 
assume that (i) is satisfied. 
Then, we have $(p,q)=(l,2l+1)$. 
Since 
$\degw df\wedge dg>\degw g=q\delta $ by assumption, 
and $a\geq 1$ and $b\geq 0$, it follows that 
\begin{align*}
\eta (\theta ;f,g)
>(ap)q\delta +a(q-p-q)\delta 
=ap(q-1)\delta\geq 2l^2\delta 
\end{align*}
by (\ref{eq:SUineqpf1}). 
By (\ref{eq:eta pq pf}), 
we have 
$\eta _1=(4l+3/2)\delta $ 
and $\eta _2=(4l+1)\delta $. 
Since $l\geq 3$ by assumption, 
we know that $\eta _i<2l^2\delta $ for $i=1,2$. 
Therefore, we conclude that 
$\eta (\theta ;f,g)>\eta _i$ for $i=1,2$.

Next, 
assume that (ii) is satisfied. 
Then, 
we have $\degw df\wedge dg>2\degw f=2p\delta $. 
Hence, we get 
$$
\eta (\theta ;f,g)
>(ap+b)q\delta +a\bigl(2p-p-q\bigr)\delta 
=\Bigl(a\bigl(p+(p-1)q\bigr)+bq\Bigr)\delta =:\alpha  
$$
by (\ref{eq:SUineqpf1}). 
We show that 
$\alpha \geq (p+2q)\delta $. 
If $p\geq 3$, then this is clear, 
since 
$a\geq 1$ and $b\geq 0$. 
Assume that $p=2$. 
Then, we have 
$2a+b=\deg _z\Psi \geq d\geq 3$ 
with $0\leq b\leq 1$. 
Hence, 
we get $a\geq 2$ or $(a,b)=(1,1)$. 
Thus, we know that 
$\alpha \geq (p+2q)\delta $. 
Since $p<q$, 
we see from (\ref{eq:eta pq pf}) that 
$\eta _i<(p+2q)\delta $ for $i=1,2$. 
Therefore, 
we conclude that 
$\eta (\theta ;f,g)>\eta _i$ for $i=1,2$.

Finally, 
assume that (iii) is satisfied. 
By (3), we have $df\wedge dg\neq 0$. 
Since $\w $ is an element of $(\Gammap )^3$, 
it follows that $\degw df\wedge dg>0$. 
Hence, (\ref{eq:SUineqpf1}) gives that 
\begin{align}\label{eq:SUineqpf11}
\eta (\theta ;f,g)&> 
\degw ^g\Psi -a(p+q)\delta \\
&\geq (ap+b)q\delta -a(p+q)\delta 
=\Bigl(a\bigl((p-1)q-p\bigr)+bq\Bigr)\delta =:\beta . \notag
\end{align}
Note that $\beta >a(p-2)q\delta $, 
since $a\geq 1$, $b\geq 0$ and $p<q$. 
First, assume that $p\geq 3$. 
We show that $a(p-2)\geq 3$. 
Since $a\geq 1$, 
this is clear if $p\geq 5$. 
If $p=4$, 
then we have $a\geq 2$, 
since $4a+b=\deg _z\Psi \geq d\geq 9$ 
with $0\leq b\leq 3$. 
Hence, 
we get $a(p-2)\geq 4$. 
If $p=3$, 
then we have $a\geq 3$, 
since $3a+b=\deg _z\Psi \geq d\geq 9$ 
with $0\leq b\leq 2$. 
Hence, we get $a(p-2)\geq 3$. 
Thus, 
we know that $\beta >3q\delta $. 
Because $\eta _i<3q\delta $ for $i=1,2$, 
we conclude that 
$\eta (\theta ;f,g)>\eta _i$ for $i=1,2$. 
Next, assume that $p=2$. 
Then, we have 
$\eta _1=((3/2)q+2)\delta $ and 
$\eta _2=(q+4)\delta $ by (\ref{eq:eta pq pf}). 
Since $q\geq p+1=3$, 
it follows that 
$\eta _i\leq (5q-8)\delta $ for $i=1,2$. 
First, 
consider the case where $\deg _z\Psi \neq 10,12$. 
Since $2a+b=\deg _z\Psi \geq d\geq 9$ 
with $0\leq b\leq 1$, 
we know that $a\geq 7$, 
or $4\leq a\leq 6$ and $b=1$. 
If $a\geq 7$, 
then we have 
$$
\beta \geq 7(q-2)\delta 
=\bigl( 5q+(2q-14)\bigr) \delta 
\geq (5q-8)\delta , 
$$
since $p=2$ and $q\geq 3$. 
If $4\leq a\leq 6$ and $b=1$, 
then we have 
$$
\beta \geq (4(q-2)+q)\delta =(5q-8)\delta . 
$$
Therefore, 
we conclude that 
$\eta (\theta ;f,g)>\eta _i$ for $i=1,2$. 
Next, 
consider the case where 
$\deg _z\Psi =10$ or $\deg _z\Psi =12$. 
Since $d\neq 10,12$ by assumption, 
$\deg _z\Psi$ is not equal to $d=\deg _z\theta $. 
Hence, we have 
$\deg _z\theta <\deg _zf\Phi $ 
by (\ref{eq:pfstart:91113}). 
Since $\theta $ is an element of $k[z]$, 
we get 
$$
\deg _{\w _g}\theta 
=(\deg _z\theta )\degw g
<(\deg _zf\Phi )\degw g
\leq \deg _{\w _g}f\Phi . 
$$
Thus, we obtain 
\begin{align*}
&\degw ^g\Psi 
=\deg _{\w _g}(\theta +f\Phi )=\deg _{\w _g}f\Phi  
=\degw f+\deg _{\w _g}\Phi \\
&\quad \geq \degw f+(\deg _z\Phi )\degw g 
=\bigl(2+(\deg _z\Phi )q\bigr)\delta . 
\end{align*}
Since $\deg _z\theta <\deg _zf\Phi $, 
we have 
$\deg _z\Phi =\deg _zf\Phi =\deg _z \Psi $ 
by (\ref{eq:pfstart:91113}). 
Because $\deg _z \Psi =2a+b\geq 2a$, 
it follows that 
$\degw ^g\Psi \geq (2+2aq)\delta $ 
by the preceding inequality. 
Hence, 
the first inequality of (\ref{eq:SUineqpf11}) 
gives that 
\begin{align*}
\eta (\theta ;f,g)>
\degw ^g\Psi -a(2+q)\delta 
\geq (2+2aq) \delta -a(2+q)\delta 
=\bigl(a(q-2)+2\bigr)\delta . 
\end{align*}
Since $2a+b=\deg _z\Psi \geq 10$ 
with $0\leq b\leq 1$, 
we have $a\geq 5$. 
Hence, we know that $a(q-2)+2\geq 5(q-2)+2=5q-8$, 
since $q\geq 3$. 
Therefore, we get 
$\eta (\theta ;f,g)>\eta _i$ 
for $i=1,2$. 
\end{proof}

The following 
criterion for wildness 
will be used in Section~\ref{sect:proof:tacoord}.

\begin{lem}\label{lem:SUcriterion}
$F=(f_1,f_2,f_3)\in \Aut (\kx /k)$ is wild 
if the following conditions hold$:$ 

\noindent{\rm (a)} 
$f_1^{\w }$, $f_2^{\w }$ and $f_3^{\w }$ 
are algebraically dependent over $k$. 

\noindent{\rm (b)} 
$f_1^{\w }$ and $f_2^{\w }$ do not belong to 
$k[f_2,f_3]^{\w }$ and $k[f_3^{\w }]$, respectively. 

\noindent{\rm (c)} 
$\degw f_1\geq \degw f_2+\degw f_3$. 

\noindent{\rm (d)} 
$\degw f_2>\degw f_3$. 

\noindent{\rm (e)} 
$2\degw f_1\neq 3\degw f_2$. 
\end{lem}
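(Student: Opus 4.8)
The plan is to argue by contradiction: suppose $F$ belongs to $\T(k,\x)$. By condition (a) and Lemma~\ref{lem:minimal autom}, the algebraic dependence of $f_1^{\w},f_2^{\w},f_3^{\w}$ forces $\degw F>|\w|$, so Theorem~\ref{thm:SUcriterion} applies: $F$ admits an elementary reduction or a Shestakov-Umirbaev reduction for the weight $\w$. I would then exclude both possibilities, which yields the contradiction. Throughout I would use the strict chain $\degw f_1>\degw f_2>\degw f_3>0$, which follows at once from (c), (d) and the positivity of the weights.

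To rule out an elementary reduction I would treat the three positions separately. For $i=1$ the condition $f_1^{\w}\in k[f_2,f_3]^{\w}$ is exactly forbidden by the first part of (b). For $i=2,3$ I would pass to the leading-term description: an elementary reduction at position $i$ produces a polynomial $p$ in the remaining two variables with $(p(\dots))^{\w}=-\alpha f_i^{\w}$ and $\degw p(\dots)=\degw f_i$. When the two relevant leading forms are algebraically independent there is no cancellation among the top monomials, so for $i=2$ only pure powers of $f_3$ survive, giving $f_2^{\w}\in k[f_3^{\w}]$ against the second part of (b), while for $i=3$ every nonconstant contribution has $\w$-degree at least $\degw f_2>\degw f_3$, which is impossible. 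The remaining case, in which the relevant pair of leading forms is algebraically dependent, is the delicate one and would be handled by writing the dependence as $(f_i^{\w})^p\approx(f_j^{\w})^q$ and combining it with (a) and (b).

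The Shestakov-Umirbaev reduction is excluded by the refined degree properties (P2), (P7) of the (weak) Shestakov-Umirbaev condition together with Lemma~\ref{lem:sulem}. Since $\degw f_1$ is strictly largest, property (P7) forces $\sigma(1)=1$ in any reducing pair $(F_{\sigma},G_{\sigma})$. If $\degw f_1<\degw g_1$, then the leading term of $g_1=f_1+af_{\sigma(3)}^2+cf_{\sigma(3)}$ must come from $af_3^2$, so $\degw g_1=2\degw f_3$; but (c) and (d) give $\degw f_1\geq\degw f_2+\degw f_3>2\degw f_3$, contradicting $\degw g_1>\degw f_1$. If $\degw f_1=\degw g_1$, then (SU2)--(SU3) give $2\degw f_1=s\degw f_{\sigma(2)}$ for some odd $s\geq3$. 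When $\sigma(2)=2$ I would use (e) to exclude $s=3$ and property (P2) (which bounds $\degw f_3$ below by $(s-2)\delta$ with $\delta=\tfrac12\degw f_2$) to exclude $s\geq5$, since then $\degw f_3\geq\tfrac32\degw f_2>\degw f_2$ contradicts (d). When $\sigma(2)=3$, property (P2) gives $\degw f_2\geq(s-2)\tfrac12\degw f_3+\degw dg_1\wedge dg_2$; feeding this into (c) together with $\degw f_1=\tfrac{s}{2}\degw f_3$ collapses every inequality to an equality and forces $\degw dg_1\wedge dg_2\leq0$, which is impossible because $g_1,g_2$ are algebraically independent and the weights are positive.

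The main obstacle I anticipate is the algebraically dependent sub-case of the elementary reductions at $i=2,3$: there the top monomials of $p(\dots)$ can genuinely cancel, so the membership of $f_i^{\w}$ in the subalgebra is not settled by a bare degree count and must be extracted from the explicit dependence relation $(f_i^{\w})^p\approx(f_j^{\w})^q$ and condition (b). The Shestakov-Umirbaev part, by contrast, is essentially bookkeeping once the relabeling $\sigma(1)=1$ is fixed and properties (P2), (P7) are invoked; the only point requiring care there is keeping the indices $\sigma(2),\sigma(3)$ and the value of $\delta$ straight across the two cases.
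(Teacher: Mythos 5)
There is a genuine gap, and it sits exactly where you flag the ``delicate'' case. Your global strategy (contradiction via Lemma~\ref{lem:minimal autom} and Theorem~\ref{thm:SUcriterion}, then exclusion of both kinds of reductions) and your treatment of the algebraically independent sub-case of elementary reductions agree with the paper. But the algebraically dependent sub-case at positions $2$ and $3$ is the actual content of the lemma, and your proposed resolution --- write the dependence as $(f_i^{\w })^p\approx (f_j^{\w })^q$ and ``combine it with (a) and (b)'' --- cannot succeed: nothing in (a)--(e) forbids such a dependence, so no contradiction can come from the relation itself. What is needed, and what the paper uses here, is the generalized Shestakov--Umirbaev inequality (Theorem~\ref{thm:SUineq}, packaged as Lemma~\ref{lem:SUineq1}), which converts cancellation into a quantitative degree bound. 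Concretely, for position $2$: if $h\in k[f_1,f_3]$ satisfies $h^{\w }=f_2^{\w }$, then (b) forces $h\notin k[f_3]$, hence $\degw ^{S}h\geq \degw f_1>\degw f_2=\degw h$ for $S=\{ f_1,f_3\} $, so Lemma~\ref{lem:SUineq1} (iii) (with $f=f_3$, $g=f_1$) applies and yields $\degw h>\degw f_1-\degw f_3$, contradicting (c). For position $3$: Lemma~\ref{lem:SUineq1} (i) together with the last part of (iii) (with $f=f_2$, $g=f_1$, using $\degw h=\degw f_3\leq \degw f_2$ from (d)) forces $(f_1^{\w })^2\approx (f_2^{\w })^3$, i.e.\ $2\degw f_1=3\degw f_2$, contradicting (e). A telling symptom of the gap is that in your outline condition (e) is never used in the elementary-reduction analysis, although that is precisely where it is indispensable.

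A secondary remark on your Shestakov--Umirbaev-reduction part: it is workable but has a hole and is more elaborate than necessary. In the case $\degw f_1<\degw g_1$ you conclude $\degw g_1=2\degw f_3$, which tacitly assumes $\sigma (3)=3$; if $\sigma (3)=2$ your inequality $\degw f_1>2\degw f_3$ gives nothing, and one must instead invoke (P5), which gives $\degw f_{\sigma (3)}=(3/4)\degw f_{\sigma (2)}$ and hence, by (d), rules out $\sigma (3)=2$. The paper avoids the dichotomy altogether: by (P7) one has $\sigma (1)=1$; then (P1), (P2) and the positivity of $\w $ give $\degw f_{\sigma (3)}>(s-2)\delta =\degw g_1-\degw g_{\sigma (2)}\geq \degw f_1-\degw f_{\sigma (2)}$ using (SU2), so $\degw f_2+\degw f_3>\degw f_1$, contradicting (c) directly --- with no case split and no use of (e) in this part.
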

\begin{proof}
By Lemma~\ref{lem:minimal autom}, 
(a) implies that $\degw \phi >|\w |$. 
Hence, 
it suffices to check that $\phi $ admits 
no elementary reduction for the weight $\w $, 
and no Shestakov-Umirbaev reduction for the weight $\w $ 
by virtue of Theorem~\ref{thm:SUcriterion}. 
Suppose to the contrary that 
$\phi $ admits a Shestakov-Umirbaev reduction 
for the weight $\w $. 
By definition, 
there exist $\sigma $ and $G$ such that 
$(F_{\sigma },G_{\sigma })$ satisfies 
the Shestakov-Umirbaev condition for the weight $\w $. 
Then, $(F_{\sigma },G_{\sigma })$ has the properties 
listed before Lemma~\ref{lem:SUdependent}. 
By (P7), 
we know that  
$f_{\sigma (1)}>f_{\sigma (2)}$ and 
$f_{\sigma (1)}\geq f_{\sigma (3)}$. 
Since $\degw f_1>\degw f_2>\degw f_3$ 
by (c) and (d), 
it follows that $\sigma (1)=1$. 
Hence, we have 
$\degw g_1=s\delta $ and 
$\degw g_{\sigma (2)}=2\delta $ 
for some $s\geq 3$ by (P1), 
and $\degw g_1\geq \degw f_1$ 
and $\degw g_{\sigma (2)}=\degw f_{\sigma (2)}$ by (SU2). 
Thus, 
we get 
$$
\degw f_{\sigma (3)}>(s-2)\delta 
=\degw g_1-\degw g_{\sigma (2)}\geq 
\degw f_1-\degw f_{\sigma (2)} 
$$
by (P2). 
This contradicts (c). 
Therefore, 
$\phi $ admits no Shestakov-Umirbaev reduction 
for the weight $\w $.

Next, we show that $\phi $ admits 
no elementary reduction for the weight $\w $. 
Since $f_1^{\w }$ does not belong to 
$k[f_2,f_3]^{\w }$ by (b), 
we check that $f_2^{\w }$ and $f_3^{\w }$ 
do not belong to $k[f_1,f_3]^{\w }$ and 
$k[f_1,f_2]^{\w }$, 
respectively.

First, 
suppose to the contrary that 
$f_2^{\w }$ belongs to $k[f_1,f_3]^{\w }$. 
Then, 
there exists $h\in k[f_1,f_3]$ 
such that $h^{\w }=f_2^{\w }$. 
We show that $\degw f_2=\degw h$ 
is greater than $\degw f_1-\degw f_3$ 
by applying Lemma~\ref{lem:SUineq1} (iii) 
with $f=f_3$, $g=f_1$ and $\phi =h$. 
Then, we get a contradiction to (c). 
Since $f_2^{\w }$ 
does not belong to $k[f_3^{\w }]$ by (b), 
we know that $h$ belongs to 
$k[f_1,f_3]\sm k[f_3]$. 
Since 
$\degw h=\degw f_2$ is less than $\degw f_1$ by (c), 
this implies that $\degw h<\degw ^{S_2}h$, 
where $S_2:=\{ f_1,f_3\} $. 
By (c), 
we have $\degw f_1>\degw f_3$, 
and $\degw f_1>\degw f_2=\degw h$. 
By (b), 
$f_1^{\w }$ does not belong to $k[f_2^{\w },f_3^{\w }]$, 
and hence does not belong to $k[f_3^{\w }]$. 
Thus, 
we conclude from Lemma~\ref{lem:SUineq1} (iii) that 
$\degw h>\degw f_1-\degw f_3$. 
This proves that 
$f_2^{\w }$ does not belong to $k[f_1,f_3]^{\w }$.

Next, 
suppose to the contrary that 
$f_3^{\w }$ belongs to 
$k[f_1,f_2]^{\w }$. 
Then, 
there exists $h\in k[f_1,f_2]$ 
such that $h^{\w }=f_3^{\w }$. 
We show that 
$(f_1^{\w })^2\approx (f_2^{\w })^3$ 
by applying Lemma~\ref{lem:SUineq1} (i) and 
the last part of 
Lemma~\ref{lem:SUineq1} (iii) 
with $f=f_2$, $g=f_1$ and $\phi =h$. 
Then, we get a contradiction to (e). 
By (c) and (d), 
we have $\degw h=\degw f_3<\degw f_i$ for $i=1,2$. 
Hence, 
we get $\degw h<\degw ^{S_3}h$, 
where $S_3:=\{ f_1,f_2\} $. 
By (c) and (d), 
we have $\degw f_1>\degw f_2$ 
and $\degw f_2>\degw f_3=\degw h$. 
By (b), 
$f_1^{\w }$ does not belong to $k[f_2^{\w },f_3^{\w }]$, 
and hence does not belong to $k[f_2^{\w }]$. 
Thus, 
we conclude from Lemma~\ref{lem:SUineq1} (i) and the last part 
of Lemma~\ref{lem:SUineq1} (iii) 
that $(f_1^{\w })^2\approx (f_2^{\w })^3$. 
This proves that  
$f_3^{\w }$ does not belong to $k[f_1,f_2]^{\w }$. 
\end{proof}

\chapter{Totally wild coordinates} 
\label{chapter:atcoord}

\section{Main result}
\setcounter{equation}{0}
\label{sect:wild coordinate}

In what follows, 
we always assume that $n=3$ unless otherwise stated. 
The purpose of this chapter is to 
give coordinates of $\kx $ over $k$ 
some of which are totally wild, 
and others are quasi-totally wild, 
but not totally wild.

Let $\theta (z)$ be an element of $k[z]$ 
with $d:=\deg _z\theta (z)\geq 1$. 
Define $D_{\theta }\in \Der_k\kx $ by 
$$
D_{\theta }(x_1)=-\theta '(x_2),\quad 
D_{\theta }(x_2)=x_3,\quad D_{\theta }(x_3)=0. 
$$
Then, $D_{\theta }$ is locally nilpotent, 
since $D_{\theta }$ is triangular if $x_1$ and $x_3$ 
are interchanged. 
Since 
$f_{\theta }:=x_1x_3+\theta (x_2)$ 
belongs to $\ker D_{\theta }$, 
it follows that 
$f_{\theta }D_{\theta }$ is locally nilpotent. 
Set $\sigma _{\theta }=\exp f_{\theta }D_{\theta }$ 
and $y_i=\sigma _{\theta }(x_i)$ for $i=1,2,3$. 
Then, 
we consider the tame intersection 
$$
G_{\theta }:=\Aut (\kx /k[y_1])\cap \T (k,\x ). 
$$

Let us describe $y_1$ concretely. 
Since $D_{\theta }(f_{\theta })=D_{\theta }(x_3)=0$, 
we have 
$\sigma _{\theta }(f_{\theta })=f_{\theta }$ and $y_3=x_3$, 
and so 
$$
y_1x_3+\theta (y_2)
=\sigma _{\theta }(x_1x_3+\theta (x_2))
=\sigma _{\theta }(f_{\theta })=f_{\theta }=
x_1x_3+\theta (x_2). 
$$
Hence, we get 
\begin{equation}\label{eq:y_11}
y_1=x_1+\frac{\theta (x_2)-\theta (y_2)}{x_3}. 
\end{equation}
Note that 
$y_2=x_2+f_{\theta }x_3$, 
since 
$(f_{\theta }D_{\theta })(x_2)=f_{\theta }x_3$ 
and $(f_{\theta }D_{\theta })^2(x_2)=0$. 
Therefore, 
(\ref{eq:y_11}) gives that 
\begin{equation}\label{eq:y_1}
y_1=x_1-\frac{\theta (x_2+f_{\theta }x_3)-\theta (x_2)}{x_3}\\
=x_1-\sum _{i=1}^d
\frac{1}{i!}\theta ^{(i)}(x_2)
f_{\theta }^ix_3^{i-1}, 
\end{equation}
where $\theta ^{(i)}(z)$ denotes the 
$i$-th order derivative of $\theta (z)$.

Now, 
let $c$ and $c'$ be the coefficients of 
$z^d$ and $z^{d-1}$ in $\theta (z)$, 
respectively. 
Put $\kappa =-c'/(cd)$ 
and write 
$$
\theta (z)=\sum _{i=0}^du_i(z-\kappa )^i, 
$$ 
where $u_i\in k$ for each $i$. 
Then, 
we have 
$u_d=c$, $u_{d-1}=0$ and $u_0=\theta (\kappa )$. 
Let $e\in \N $ be the 
greatest common divisor of 
$U:=\{ 2i-1\mid i=1,\ldots ,d\text{ with }u_i\neq 0\} $, 
i.e., 
the positive generator of the ideal $(U)$ of $\Z $. 
Then, we define 
$$
T_{\theta }=\{ \zeta \in k^{\times }
\mid \zeta ^e=1\} . 
$$
For each $\zeta \in T_{\theta }$, 
we define an element $\phi _{\zeta }$ of 
$J(k;x_3,x_2,x_1)$ by 
\begin{align*}
\phi _{\zeta }(x_1)&=x_1+g_{\zeta } \\
\phi _{\zeta }(x_2)
&=\zeta ^2(x_2-\kappa )+\zeta (\zeta -1)\theta (\kappa )x_3
+\kappa \\
\phi _{\zeta }(x_3)&=\zeta x_3, 
\end{align*}
where 
$$
g_{\zeta }:=
\frac{\zeta \theta (x_2)-
\phi _{\zeta }(\theta (x_2))
+(1-\zeta )\theta (\kappa )}
{\zeta x_3}. 
$$
Here, 
we note that 
$g_{\zeta }$ belongs to $\kx $ 
if and only if $\zeta $ belongs to $T_{\theta }$ 
for $\zeta \in k^{\times }$. 
To see this, observe 
that the numerator of $g_{\zeta }$ 
is congruent to 
\begin{align*}
\zeta \sum _{i=0}^du_i(x_2-\kappa )^i
-\sum _{i=0}^du_i\bigl(
\zeta ^2(x_2-\kappa )\bigr)^i
+(1-\zeta )\theta (\kappa ) 
=\sum _{i=1}^du_i\zeta (1-\zeta ^{2i-1})
(x_2-\kappa )^i
\end{align*}
modulo $x_3\kx $, 
since $\theta (\kappa )=u_0$. 
Then, the right-hand side of this equality belongs to $x_3\kx $ 
if and only if it is equal to zero. 
This condition is satisfied if and only if 
$\zeta ^j=1$ for each $j\in U$, 
and hence if and only if 
$\zeta $ belongs to $T_{\theta }$.

The following is the main result.

\begin{thm}\label{thm:G_{theta}}
In the notation above, 
$\phi _{\zeta }$ belongs to $G_{\theta }$ 
for each $\zeta \in T_{\theta }$, 
and 
$$
\iota :T_{\theta }\ni \zeta \mapsto \phi _{\zeta }\in G_{\theta }
$$ 
is an injective homomorphism of groups. 
If $d\geq 9$ and $d\neq 10,12$, 
then $\iota $ is surjective. 
\end{thm}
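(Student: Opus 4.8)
The plan is to prove the three assertions in order: first that $\phi_\zeta$ belongs to $G_\theta$, second that $\iota$ is an injective group homomorphism, and third (the substantive claim) that $\iota$ is surjective when $d\geq 9$ and $d\neq 10,12$. For membership, I would verify directly that $\phi_\zeta$ fixes $y_1$; the definition of $g_\zeta$ was arranged precisely so that $\phi_\zeta(y_1)=y_1$, and since $\phi_\zeta\in J(k;x_3,x_2,x_1)\subset\E(k,\x)\subset\T(k,\x)$, this gives $\phi_\zeta\in G_\theta$. The homomorphism property $\phi_{\zeta}\circ\phi_{\zeta'}=\phi_{\zeta\zeta'}$ should follow from a direct computation on $x_3$, $x_2$, $x_1$ using the explicit formulas, and injectivity is immediate since $\phi_\zeta(x_3)=\zeta x_3$ recovers $\zeta$. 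The map $T_\theta\to k^\times$, $\zeta\mapsto\zeta$ being a homomorphism into a group of $e$-th roots of unity makes these first two parts essentially bookkeeping.

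The heart of the matter is surjectivity, i.e.\ showing every $\phi\in G_\theta$ equals some $\phi_\zeta$. The natural strategy is to exhibit a suitable weight $\w\in(\Gammap)^3$ and show that, via the Shestakov-Umirbaev machinery of Chapter~\ref{sect:GSU}, any $\phi\in G_\theta$ must be essentially triangular of the prescribed form. Concretely, I would take $\Gamma$ and $\w$ adapted to the leading behavior of $y_1$: from (\ref{eq:y_1}) the top-degree part of $y_1$ is governed by $f_\theta^d x_3^{d-1}$ up to the factor $\theta^{(d)}(x_2)/d!$, so I want $\w$ making $x_1$, $x_2$, $x_3$ carry weights whose induced degrees separate the relevant monomials and force $\rank\w=3$. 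The key leverage is Lemma~\ref{lem:91113}: its hypothesis (iii), which requires exactly $d\geq 9$ and $d\neq 10,12$, is what guarantees $\eta(\theta;f,g)>\eta_i$, and this is surely the reason those numerical constraints appear in the theorem. So I expect the proof to apply Lemma~\ref{lem:91113}(iii) with $f$ and $g$ chosen from $\{f_\theta, x_3\}$ or their images under $\phi$, to control the $\w$-degree of the discrepancy $\phi(y_1)-y_1$ or of comparison polynomials.

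Having set up the weight, I would argue that for $\phi\in G_\theta$ the automorphism cannot admit any elementary or Shestakov-Umirbaev reduction beyond what the $\phi_\zeta$ structure allows, using Theorem~\ref{thm:SUcriterion} together with Lemma~\ref{lem:sulem} and the rank-three obstruction discussed after it. The degree bounds from Lemma~\ref{lem:91113} should show that $\degw\phi=|\w|$ is forced (or that any reduction collapses to the diagonal action $x_3\mapsto\zeta x_3$), so that $\phi$ is tame of minimal $\w$-degree and hence $\phi(x_i)^\w$ are the expected monomials. From the minimality one reads off that $\phi(x_3)=\zeta x_3$ for some $\zeta\in k^\times$, that $\phi(x_2)$ has the quadratic-in-$\zeta$ form, and finally that fixing $y_1$ forces $g_\zeta\in\kx$, whence $\zeta\in T_\theta$ and $\phi=\phi_\zeta$.

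The main obstacle will be the surjectivity step, specifically translating the analytic $\w$-degree inequalities of Lemma~\ref{lem:91113} into the rigid algebraic conclusion that $\phi$ has precisely the triangular shape of $\phi_\zeta$. The delicate point is choosing the right family of weights (possibly several weights, or a weight of rank three built from an auxiliary ordered group) so that the leading forms $y_1^\w$, $(\phi y_i)^\w$ are simultaneously controlled, and then ruling out Shestakov-Umirbaev reductions via the linear-dependence criterion of Lemma~\ref{lem:SUdependent}. I would expect the bookkeeping around the constant $\kappa$ (the recentering that kills the $z^{d-1}$ term) and the set $U$ defining $e$ to require care, since these determine exactly which roots of unity survive; the condition $d\neq 10,12$ excluded in Lemma~\ref{lem:91113}(iii) is precisely where the argument would otherwise break, so I would make sure the weight choice invokes that lemma rather than its weaker cases (i), (ii).
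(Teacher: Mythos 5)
Your first two assertions are handled essentially as in the paper: one checks $\phi_\zeta(y_1)=y_1$ directly (the paper's Lemma~\ref{lem:commute} does this, computing the action on $y_1$, $y_2-\kappa$, $y_3$ rather than on the $x_i$, which makes the homomorphism property immediate), tameness is clear from $\phi_\zeta\in J(k;x_3,x_2,x_1)$, and injectivity follows from $\phi_\zeta(x_3)=\zeta x_3$. You also correctly located Lemma~\ref{lem:91113}(iii) as the sole source of the hypotheses $d\geq 9$, $d\neq 10,12$. But the core mechanism you propose for surjectivity has a genuine gap: you want to show that $\phi\in G_{\theta}$ ``cannot admit any elementary or Shestakov-Umirbaev reduction,'' deduce $\degw \phi=|\w|$, and read off the triangular shape from monomiality of the leading forms. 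This cannot work, because $G_{\theta}$ itself contains elements of non-minimal degree: whenever $\theta (\kappa )\neq 0$ and $T_{\theta }\neq \{ 1\} $ (e.g.\ $\theta (z)=z^d+1$ over a field containing a primitive $(2d-1)$-th root of unity), the element $\phi _{\zeta }$ with $\zeta \neq 1$ satisfies $\deg \phi _{\zeta }(x_1)=\deg g_{\zeta }=d-1$, hence $\deg \phi _{\zeta }=d+1>3=|\w |$ for the standard weight. Since every element of $G_{\theta }$ is tame, Theorem~\ref{thm:SUcriterion} guarantees that any element of $G_{\theta }$ of $\w $-degree greater than $|\w |$ \emph{does} admit an elementary or Shestakov-Umirbaev reduction; reductions cannot be excluded, they have to be analyzed. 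So ``minimal degree is forced'' is false, and the final readoff of $\phi (x_3)=\zeta x_3$ etc.\ from monomial leading forms collapses with it.

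The paper's actual route runs the Shestakov-Umirbaev machinery only in contradiction form, and only to obtain an \emph{inequality}, not minimality: assuming $\degw \phi (y_2)\leq \degw \phi (x_3)$ for some $\phi \in G_{\theta }$, Lemmas~\ref{lem:taiguu}, \ref{lem:awc1}(iii) (this is where Lemma~\ref{lem:91113}(iii) and hence $d\geq 9$, $d\neq 10,12$ enter) and \ref{lem:awc} show that $\phi $ would be wild, contradicting tameness; this proves $\degw \phi (y_2)>\degw \phi (x_3)$ for every positive weight (Theorem~\ref{thm:tawild}(i)). The passage from this inequality to $\phi =\phi _{\zeta }$ is then the bulk of the work, and it does not use a rank-three weight as your sketch requires: Theorem~\ref{thm:tawild}(ii) works with the rank-\emph{two} weight $\vv =(\e _1,\e _2,\e _1)$, applies a Hadas--Makar-Limanov-type leading-form lemma (Lemma~\ref{lem:newton}) to conclude that $\phi (y_2)^{\vv }$ and $\phi (x_3)^{\vv }$ lie in $k[x_1,x_3]$, needs further wildness-by-contradiction steps (Proposition~\ref{prop:tawild key}), and finally the long elimination of Section~\ref{sect:tawildpf3}: $\phi (x_3)=\alpha _3x_3+g_3$, then $\phi (x_2)$, then $\phi (x_1)$ via Lemma~\ref{lem:useful}, then $g_3=0$, $\alpha _1=1$, the recentering identity $\kappa '=\kappa $, and only then $\phi =\phi _{\alpha _3}$ with $\alpha _3\in T_{\theta }$. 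None of that is subsumed by the ``read off from minimality'' step in your plan, which is precisely the part that does not exist.
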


Theorem~\ref{thm:G_{theta}} 
immediately implies the following corollary.

\begin{cor}\label{cor:aw}
Assume that $d\geq 9$ and $d\neq 10,12$. 
Then, the following assertions hold$:$ 

\noindent{\rm (i)} 
$y_1$ is a quasi-totally wild coordinate of $\kx $ over $k$. 

\noindent{\rm (ii)} 
$y_1$ is a totally wild coordinate of $\kx $ over $k$ 
if and only if $T_{\theta }=\{ 1\} $. 
\end{cor}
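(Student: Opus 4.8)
The plan is to derive Corollary~\ref{cor:aw} directly from Theorem~\ref{thm:G_{theta}} by unwinding the definitions of quasi-totally wild and totally wild coordinates given in Definition~\ref{defin:wild coordinates}. Throughout I take $R=k$, $n=3$, and $\x =\{ x_1,x_2,x_3\} $, so that $K=k$ is a field of characteristic zero. The key observation is that Theorem~\ref{thm:G_{theta}}, under the hypothesis $d\geq 9$ and $d\neq 10,12$, asserts that $\iota $ is a group isomorphism, so that
\[
G_{\theta }=\Aut (\kx /k[y_1])\cap \T (k,\x )\cong T_{\theta }.
\]
Since $T_{\theta }=\{ \zeta \in k^{\times }\mid \zeta ^e=1\} $ is the finite group of $e$-th roots of unity in $k$ (which is finite because $k$ is a field), the group $G_{\theta }$ is finite. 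But $G_{\theta }$ is exactly the tame intersection $H(y_1)$ appearing in Definition~\ref{defin:wild coordinates}~(ii) with $f=y_1$. Hence $H(y_1)=\Aut (\kx /k[y_1])\cap \T (k,\x )$ is a finite group, which is precisely the definition of $y_1$ being quasi-totally wild. This proves (i), provided I first check that $y_1$ really is a coordinate of $\kx $ over $k$: this is immediate because $y_1=\sigma _{\theta }(x_1)$ with $\sigma _{\theta }=\exp f_{\theta }D_{\theta }\in \Aut (\kx /k)$, so $y_1=\phi (x_1)$ for an automorphism $\phi $, which is the definition of coordinate.

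For (ii), I would again use the isomorphism $\iota :T_{\theta }\xrightarrow{\sim }G_{\theta }$. By Definition~\ref{defin:wild coordinates}~(i), $y_1$ is totally wild if and only if $\Aut (\kx /k[y_1])\cap \T (k,\x )=\{ \id _{\kx }\} $, i.e.\ if and only if $G_{\theta }=\{ \id _{\kx }\} $. Since $\iota $ is an isomorphism of groups, $G_{\theta }$ is trivial if and only if $T_{\theta }$ is trivial, i.e.\ if and only if $T_{\theta }=\{ 1\} $. The one point requiring care is that the isomorphism must send the identity element $1\in T_{\theta }$ to $\id _{\kx }\in G_{\theta }$; I would verify this by noting $\phi _1(x_i)=x_i$ for $i=1,2,3$ directly from the defining formulas for $\phi _{\zeta }$ (when $\zeta =1$, one has $\phi _1(x_2)=x_2$, $\phi _1(x_3)=x_3$, and $g_1=0$, so $\phi _1(x_1)=x_1$). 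Thus $|G_{\theta }|=|T_{\theta }|$, and $G_{\theta }=\{\id_{\kx}\}$ exactly when $|T_{\theta }|=1$, which is the condition $T_{\theta }=\{ 1\} $.

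I do not anticipate a serious obstacle here, since both assertions are formal consequences of the isomorphism statement in Theorem~\ref{thm:G_{theta}}. The only genuinely substantive inputs are already packaged: the surjectivity of $\iota $ (which is where the hypotheses $d\geq 9$, $d\neq 10,12$ are consumed, via the Shestakov-Umirbaev machinery and Lemma~\ref{lem:91113}) and the finiteness of $T_{\theta }$. The mildest care-point is purely bookkeeping: confirming that quasi-total wildness requires only \emph{finiteness} of $H(y_1)$ rather than triviality, so that (i) holds unconditionally (given the isomorphism) whereas (ii) is the sharper statement detecting triviality. I would therefore structure the write-up as: first establish that $y_1$ is a coordinate; then invoke Theorem~\ref{thm:G_{theta}} to identify $H(y_1)=G_{\theta }\cong T_{\theta }$; then read off (i) from finiteness of $T_{\theta }$ and (ii) from the equivalence $G_{\theta }=\{ \id _{\kx }\} \iff T_{\theta }=\{ 1\} $.
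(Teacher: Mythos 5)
Your proposal is correct and matches the paper's own (implicit) argument: the paper derives Corollary~\ref{cor:aw} immediately from Theorem~\ref{thm:G_{theta}} by identifying $H(y_1)=G_{\theta }\cong T_{\theta }$, reading off (i) from finiteness of $T_{\theta }$ (at most $e$ roots of $z^e-1$ in $k$) and (ii) from triviality of $G_{\theta }$ being equivalent to $T_{\theta }=\{ 1\} $. Your extra verifications (that $y_1=\sigma _{\theta }(x_1)$ is a coordinate, and that $\phi _1=\id _{\kx }$ — the latter being automatic since any group isomorphism preserves identities) are fine, just bookkeeping the paper leaves unstated.
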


Recall that $k$ is said to be {\it real} 
if $-1$ is not a sum of squares in $k$. 
We remark that the roots of unity 
in a real field are only 1 and $-1$. 
Actually, 
real fields are ordered fields 
(cf.~\cite[Chapter XI, Section 2]{Lang}). 
Hence, if $\zeta ^2\neq 1$, 
then we have $\zeta ^2>1$ or $0\leq \zeta ^2<1$. 
If $\zeta ^e=1$ for some $e\geq 1$, 
then it follows that $1=(\zeta ^2)^e>1^e=1$ 
or $1=(\zeta ^2)^e<1^e=1$, a contradiction.

By the following proposition, 
we see that there exist a number of totally wild coordinates.

\begin{prop}
If one of the following conditions is satisfied, 
then we have $T_{\theta }=\{ 1\} $$:$ 

\noindent{\rm (1)} $k$ is a real field. 

\noindent{\rm (2)} $u_{d-i}\neq 0$ 
for some $i\geq 2$ such that $\gcd (i,2d-1)=1$. 
\end{prop}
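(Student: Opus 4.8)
The plan is to reduce both cases to a single statement about the integer $e=\gcd U$, using that $T_{\theta}$ is by definition the group of $e$-th roots of unity lying in $k$. The first thing I would record is that $U$ is a nonempty set of odd positive integers. Indeed, $u_d=c$ is the leading coefficient of $\theta (z)$ and $d\geq 1$, so $u_d\neq 0$ and hence $2d-1\in U$; moreover every element of $U$ has the form $2i-1$ and is therefore odd. Consequently the positive generator $e$ of the ideal $(U)$ of $\Z$ divides the odd number $2d-1$, so that $e$ is odd. This observation is the hinge for both parts.

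For part (1), I would invoke the remark recalled just before the proposition that the only roots of unity in a real field are $1$ and $-1$. Since the elements of $T_{\theta}$ are $e$-th roots of unity in $k$, when $k$ is real we automatically have $T_{\theta}\subseteq\{1,-1\}$. It remains only to rule out $-1$, and this is immediate from $e$ being odd: $(-1)^e=-1\neq 1$, so $-1\notin T_{\theta}$. Hence $T_{\theta}=\{1\}$.

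For part (2), I would argue purely by divisibility. Let $i\geq 2$ satisfy $u_{d-i}\neq 0$ and $\gcd(i,2d-1)=1$. Since $u_{d-i}$ is one of the coefficients $u_1,\dots,u_d$ generating $U$, its index $d-i$ lies in $\{1,\dots,d\}$, and therefore $2(d-i)-1\in U$. Thus $e$ divides both $2d-1$ and $2(d-i)-1$, hence also their difference $2i$. Because $e$ is odd, $\gcd(e,2)=1$, and so in fact $e\mid i$. Combining $e\mid i$ with $e\mid 2d-1$ gives $e\mid\gcd(i,2d-1)=1$, whence $e=1$ and $T_{\theta}=\{1\}$.

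I do not anticipate a genuine difficulty, as the substance is elementary once the reduction $T_{\theta}=\{\zeta\in k^{\times}\mid\zeta^e=1\}$ and the oddness of $e$ are in hand. The only point demanding care is the index bookkeeping in part (2): one must ensure that the nonzero coefficient $u_{d-i}$ actually contributes an element to $U$, i.e.\ that $1\leq d-i\leq d$. The hypothesis $i\geq 2$ gives $d-i\leq d-2$, and the requirement $d-i\geq 1$ is what confines $u_{d-i}$ to the generating range $u_1,\dots,u_d$; it is precisely the oddness of $e$ that then upgrades $e\mid 2i$ to $e\mid i$ and simultaneously excludes $\zeta=-1$ in part (1).
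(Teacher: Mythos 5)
Your proof is correct and follows essentially the same route as the paper: both deduce from $u_d\neq 0$ that $e$ divides the odd number $2d-1$, settle case (1) by the remark that a real field contains no roots of unity other than $\pm 1$ together with the oddness of $e$, and settle case (2) by the divisibility computation $e\mid\gcd(2(d-i)-1,2d-1)=\gcd(i,2d-1)=1$ (your passage through $e\mid 2i$ and oddness of $e$ is just a rearrangement of the paper's gcd calculation, which instead uses oddness of $2d-1$). The index caveat $d-i\geq 1$ that you flag is implicitly assumed in the paper's proof as well, so there is no discrepancy on that point.
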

\begin{proof}
Since $u_d$ is nonzero, 
$e$ must be a divisor of $2d-1$. 
Hence, $e$ is an odd number. 
This implies that $T_{\theta }=\{ 1\} $ 
if $k$ is a real field.

If $u_{d-i}\neq 0$, 
then $e$ divides $2(d-i)-1$. 
Hence, $e$ 
divides 
$$
\gcd (2(d-i)-1,2d-1)=
\gcd (2i,2d-1)=
\gcd (i,2d-1)=1. 
$$
Thus, we have $e=1$. 
Therefore, 
we get $T_{\theta }=\{ 1\} $. 
\end{proof}

We remark that $\sigma _{z^2}$ 
is equal to Nagata's automorphism 
defined in (\ref{eq:Nagataautom}). 
Clearly, 
$y_3=x_3$ is a tame coordinate, 
while $y_1$ and $y_2$ are wild coordinates 
due to Umirbaev-Yu~\cite{UY}. 
Observe that 
$y_2=x_1x_3^2+(x_2+x_2^2x_3)$ 
is killed by $D\in \Der _k\kx $ defined by 
\begin{equation}\label{eq:quasi-absolutely wild}
D(x_1)=1+2x_2x_3,\quad 
D(x_2)=-x_3^2,\quad 
D(x_3)=0. 
\end{equation}
Since $D$ is triangular if $x_1$ and $x_3$ 
are interchanged, 
we know that $\exp D$ is tame. 
Hence, 
$y_2$ is not exponentially wild. 
Thus, 
a wild coordinate of $\kx $ is not always exponentially wild. 
Since $\theta (z)=z^2$, we have $e=2d-1=3$. 
Hence, we get $T_{z^2}=\{ \zeta \in k\mid \zeta ^3=1\} $. 
Therefore, $y_1$ is not totally wild 
if $k$ contains a primitive third root of unity. 
The author believes that $y_1$ is quasi-totally wild, 
but it remains open.

In the rest of this section, 
we prove the first part of Theorem~\ref{thm:G_{theta}}.

\begin{lem}\label{lem:commute}
For each $\zeta \in T_{\theta}$, 
we have 
$$
\phi _{\zeta }(y_1)=y_1,\quad 
\phi _{\zeta }(y_2-\kappa )=\zeta ^2(y_2-\kappa ),\quad 
\phi _{\zeta }(y_3)=\zeta y_3. 
$$
If furthermore $\theta (\kappa )=0$, 
then we have 
$$
\phi _{\zeta }(x_1)=x_1,\quad 
\phi _{\zeta }(x_2-\kappa )=\zeta ^2(x_2-\kappa ),\quad 
\phi _{\zeta }(x_3)=\zeta x_3, 
$$
and so 
$\sigma _{\theta }\circ \phi _{\zeta }
=\phi _{\zeta }\circ \sigma _{\theta }$. 
\end{lem}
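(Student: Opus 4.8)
The plan is to verify each claimed identity by direct computation, exploiting the explicit formulas for $\phi_\zeta$ and the $y_i$. First I would establish the three identities for $\phi_\zeta$ acting on $y_1, y_2-\kappa, y_3$. The relations $\phi_\zeta(y_3)=\phi_\zeta(x_3)=\zeta x_3=\zeta y_3$ are immediate since $y_3=x_3$ and $\phi_\zeta(x_3)=\zeta x_3$. For $\phi_\zeta(y_2-\kappa)$, I would recall that $y_2=x_2+f_\theta x_3$ where $f_\theta=x_1x_3+\theta(x_2)$; since $\phi_\zeta$ fixes $f_\theta$ up to the appropriate scalar (which I must check), the computation reduces to combining $\phi_\zeta(x_2-\kappa)$ and $\phi_\zeta(x_3)$. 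The identity $\phi_\zeta(y_1)=y_1$ is the most structural: using (\ref{eq:y_11}), $y_1x_3=f_\theta-\theta(y_2)$, I would show $\phi_\zeta(y_1)\phi_\zeta(x_3)=\phi_\zeta(f_\theta)-\theta(\phi_\zeta(y_2))$ and check that dividing through by $\phi_\zeta(x_3)=\zeta x_3$ recovers $y_1$. The definition of $g_\zeta$ is precisely engineered to make this work, so the key is to unwind the defining relation of $g_\zeta$ and confirm consistency.

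The crux is verifying that $\phi_\zeta$ acts correctly on $f_\theta$ and on $\theta(y_2)$. I would compute $\phi_\zeta(f_\theta)=\phi_\zeta(x_1)\phi_\zeta(x_3)+\theta(\phi_\zeta(x_2))$, substitute the explicit $\phi_\zeta(x_1)=x_1+g_\zeta$, $\phi_\zeta(x_3)=\zeta x_3$, and expand $\theta(\phi_\zeta(x_2))$ using $\phi_\zeta(x_2)=\zeta^2(x_2-\kappa)+\zeta(\zeta-1)\theta(\kappa)x_3+\kappa$. Here the membership $\zeta\in T_\theta$ (i.e. $\zeta^e=1$, so $\zeta^{2i-1}=1$ whenever $u_i\neq 0$) is what makes the numerator of $g_\zeta$ divisible by $x_3$ and makes the $\theta$-terms collapse correctly. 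This divisibility check — already carried out in the discussion preceding the theorem for showing $g_\zeta\in\kx$ — is the computational heart, and I expect the main obstacle to be bookkeeping in the expansion of $\theta$ evaluated at the scaled-and-shifted argument, keeping track of the $\theta(\kappa)$ correction terms.

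For the second statement, I would assume $\theta(\kappa)=0$, which simplifies $\phi_\zeta$ considerably: the $\zeta(\zeta-1)\theta(\kappa)x_3$ term vanishes, so $\phi_\zeta(x_2-\kappa)=\zeta^2(x_2-\kappa)$ directly, and the $(1-\zeta)\theta(\kappa)$ term in the numerator of $g_\zeta$ also drops. The claims $\phi_\zeta(x_2-\kappa)=\zeta^2(x_2-\kappa)$ and $\phi_\zeta(x_3)=\zeta x_3$ are then read off from the definition. For $\phi_\zeta(x_1)=x_1$, I would show $g_\zeta=0$ when $\theta(\kappa)=0$: the numerator $\zeta\theta(x_2)-\phi_\zeta(\theta(x_2))$ equals $\sum_{i=1}^d u_i\zeta(1-\zeta^{2i-1})(x_2-\kappa)^i$ (from the preceding computation, now with $\theta(\kappa)=u_0=0$), and each term vanishes because $u_i\neq 0$ forces $2i-1\in U$, hence $\zeta^{2i-1}=1$.

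Finally, the commutation $\sigma_\theta\circ\phi_\zeta=\phi_\zeta\circ\sigma_\theta$ follows once I know $\phi_\zeta$ fixes $x_1$, scales $x_2-\kappa$ by $\zeta^2$, and scales $x_3$ by $\zeta$. The plan is to check the two composites agree on each generator $x_1,x_2,x_3$. Since $\sigma_\theta=\exp f_\theta D_\theta$ is determined by its action producing $y_i$ from $x_i$, and $\phi_\zeta$ is a homogeneity-type scaling adapted to the weights $(\deg x_1,\deg x_2,\deg x_3)=(0,2,1)$ on the relevant pieces, I would verify that $\phi_\zeta$ commutes with $D_\theta$ up to the scalar $\zeta$ and fixes $f_\theta$, so that it commutes with the exponential. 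Concretely, comparing $\phi_\zeta(\sigma_\theta(x_i))=\phi_\zeta(y_i)$ against $\sigma_\theta(\phi_\zeta(x_i))$ using the identities just proved reduces everything to the already-verified relations, completing the argument.
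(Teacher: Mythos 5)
Your proposal is correct and follows essentially the same route as the paper's proof: compute $\phi_\zeta(f_\theta)=\zeta f_\theta+(1-\zeta)\theta(\kappa)$ from the defining relation of $g_\zeta$, deduce $\phi_\zeta(y_2-\kappa)=\zeta^2(y_2-\kappa)$ from $y_2=x_2+f_\theta x_3$, use $\zeta\in T_\theta$ to collapse $\theta$ evaluated at the scaled argument so that (\ref{eq:y_11}) yields $\phi_\zeta(y_1)=y_1$, and when $\theta(\kappa)=0$ observe $g_\zeta=0$ and check the commutation on the generators $x_1,x_2,x_3$. The only caveat is that $\phi_\zeta$ does not fix $f_\theta$ up to a scalar in general but only up to the affine correction $(1-\zeta)\theta(\kappa)$, which you implicitly account for by tracking the $\theta(\kappa)$ terms.
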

\begin{proof}
Write $\phi =\phi _{\zeta }$ for simplicity. 
Since $y_3=x_3$, 
we have $\phi (y_3)=\zeta y_3$ 
by the definition of $\phi _{\zeta }$. 
A direct computation shows that 
\begin{align*}
&\phi (f_{\theta })=\phi \bigl(x_1x_3+\theta (x_2)\bigr) 
=(x_1+g_{\zeta })(\zeta x_3)+\theta \bigl(\phi (x_2)\bigr)\\ 
&\quad =\zeta \bigl( x_1x_3+\theta (x_2)\bigr) 
-\zeta \theta (x_2)+g_{\zeta }(\zeta x_3)
+\theta \bigl(\phi (x_2)\bigr)
=\zeta f_{\theta }+(1-\zeta )\theta (\kappa ). 
\end{align*}
Since $y_2=x_2+f_{\theta }x_3$, 
it follows that 
\begin{align*}
&\phi (y_2-\kappa )
=\phi (x_2-\kappa )+\phi (f_{\theta })\phi (x_3)\\
&\quad =\zeta ^2(x_2-\kappa )
+\zeta (\zeta -1)\theta (\kappa )x_3+
\bigl(\zeta f_{\theta }+(1-\zeta )\theta (\kappa )\bigr)
(\zeta x_3) \\
&\quad =\zeta ^2(x_2-\kappa +f_{\theta }x_3)=\zeta ^2(y_2-\kappa ). 
\end{align*}
Hence, we have
\begin{equation}\label{eq:pf:commute1}
\phi \bigl(\theta (y_2)\bigr)=\sum _{i=0}^du_i
\phi (y_2-\kappa )^i
=\zeta \sum _{i=1}^du_i\zeta ^{2i-1}(y_2-\kappa )^i+u_0. 
\end{equation}
Since $\zeta $ is an element of $T_{\theta }$, 
we have $\zeta ^{2i-1}=1$ for each $i\geq 1$ with $u_i\neq 0$. 
Thus, 
the right-hand side of (\ref{eq:pf:commute1}) 
is equal to 
\begin{equation}\label{eq:pf:commute2}
\zeta \sum _{i=1}^du_i(y_2-\kappa )^i+u_0
=\zeta \theta (y_2)+(1-\zeta )u_0. 
\end{equation}
Therefore, 
it follows from (\ref{eq:y_11}) that 
\begin{align*}
\phi (y_1)
&=\phi \left( 
x_1+\frac{\theta (x_2)-\theta (y_2)}{x_3}
\right) \\
&=x_1+g_{\zeta }
+\frac{\phi \bigl(\theta (x_2)\bigr)-
\bigl(
\zeta \theta (y_2)+(1-\zeta )u_0\bigr)}{\zeta x_3}\\
&=x_1+\frac{\zeta \theta (x_2)
-\zeta \theta (y_2)}{\zeta x_3}=y_1. 
\end{align*}
Next, 
assume that $\theta (\kappa )=0$. 
Then, 
we have $\phi (x_2-\kappa )=\zeta ^2(x_2-\kappa )$. 
Hence, 
we obtain 
$\phi (\theta (x_2))=
\zeta \theta (x_2)+(1-\zeta )\theta (\kappa )$ 
from (\ref{eq:pf:commute1}) and (\ref{eq:pf:commute2}) 
with $y_2$ replaced by $x_2$. 
This implies that $g_{\zeta }=0$. 
Thus, 
we get $\phi (x_1)=x_1$. 
By definition, 
we have $\phi (x_3)=\zeta x_3$. 
Since $\sigma _{\theta }(x_i)=y_i$ for $i=1,2,3$, 
it is easy to see that 
$\sigma _{\theta }\circ \phi =\phi \circ \sigma _{\theta }$. 
\end{proof}

Since $\phi _{\zeta }$ is tame, 
and $\phi _{\zeta }(y_1)=y_1$ by 
Lemma~\ref{lem:commute}, 
we know that 
$\phi _{\zeta }$ belongs to $G_{\theta }$ 
for each $\zeta \in T_{\theta }$. 
By Lemma~\ref{lem:commute}, 
we have 
\begin{gather*}
(\phi _{\alpha }\circ \phi _{\beta })(y_1)
=y_1=\phi _{\alpha \beta }(y_1)\\
(\phi _{\alpha }\circ \phi _{\beta })(y_2-\kappa )
=\alpha ^2\beta ^2(y_2-\kappa )
=\phi _{\alpha \beta }(y_2-\kappa )\\
(\phi _{\alpha }\circ \phi _{\beta })(y_3)
=\alpha \beta y_3=\phi _{\alpha \beta }(y_3)
\end{gather*}
for each $\alpha ,\beta \in T_{\theta }$. 
Hence, 
$\iota $ is a homomorphism of groups. 
If $\phi _{\zeta }=\id _{\kx }$ for $\zeta \in T_{\theta }$, 
then we have $\zeta x_3=\phi _{\zeta }(x_3)=x_3$, 
and hence $\zeta =1$. 
Therefore, 
$\iota $ is injective.

Sections~\ref{sect:proof:tacoord}, 
\ref{sect:wild3pf2} and \ref{sect:tawildpf3} 
are devoted to proving the following theorem. 
Let $\Lambda $ be the totally ordered 
additive group $\Z ^2$ equipped with 
the lexicographic order such that $\e _1>\e _2$. 
We consider the element 
$\vv :=(\e _1,\e _2,\e _1)$ of $\Lambda ^3$.

\begin{thm}\label{thm:tawild}
{\rm (i)} 
Let $\Gamma $ be a 
totally ordered additive group, 
and $\w \in (\Gammap )^3$. 
If $d\geq 9$ and $d\neq 10,12$, 
then we have $\degw \phi (y_2)>\degw \phi (x_3)$ 
for each $\phi \in G_{\theta }$.

\noindent
{\rm (ii)} 
Let $\vv $ be as above, 
and assume that $d\geq 9$. 
If $\degv \phi (y_2)>\degv \phi (x_3)$ 
for $\phi \in G_{\theta }$, 
then there exists $\zeta \in T_{\theta }$ 
such that $\phi =\phi _{\zeta }$. 
\end{thm}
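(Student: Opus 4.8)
The plan is to prove Theorem~\ref{thm:tawild} in two parts, treating (i) and (ii) separately, since they are genuinely different in character: (i) is an inequality that must hold for \emph{every} element of $G_\theta$ and is where the heavy Shestakov-Umirbaev machinery of Chapter~\ref{sect:GSU} is deployed, while (ii) is a structural classification for the single specific monomial weight $\vv=(\e_1,\e_2,\e_1)$.

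For part (i), I would argue by contradiction. Suppose $\phi\in G_\theta$ satisfies $\degw\phi(y_2)\leq\degw\phi(x_3)$ for some $\w\in(\Gammap)^3$. The strategy is to produce a W-test polynomial (Definition~\ref{defn:W-test}) out of the data of $\theta$ and show that $\phi$ violates it, forcing $\phi$ to be wild and contradicting $\phi\in G_\theta\subseteq\T(k,\x)$. The natural candidate is the polynomial $P:=f_\theta=x_1x_3+\theta(x_2)$, or rather an expression built from $y_1$ and the explicit formula (\ref{eq:y_1}); since $\phi$ fixes $y_1=\sigma_\theta(x_1)$, I would exploit that $\phi(y_1)=y_1$ together with the presumed degree collapse to exhibit condition~(a) of Definition~\ref{defn:W-test}. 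The key analytic input is Lemma~\ref{lem:91113}, whose hypothesis ``$d\geq 9$ and $d\neq 10,12$'' is precisely the restriction appearing in the statement; that lemma governs $\eta(\theta;f,g)$ and is exactly what rules out the degenerate degree relations that would otherwise permit $\degw\phi(y_2)\leq\degw\phi(x_3)$. I would verify conditions (i) and (ii) of Proposition~\ref{prop:criterion} for the relevant members of $\mathcal{F}(P)$, using that $\theta$ has degree $d$ with the stated lower bound, so the leading behavior of $P^{\vv'}$ cannot factor through a linear form $x_i-g$ or a binomial $x_i^{s_i}-cx_j^{s_j}$.

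For part (ii), I would work concretely with the lexicographic weight $\vv$. Given $\phi\in G_\theta$ with $\degv\phi(y_2)>\degv\phi(x_3)$, the assumption $d\geq 9$ (here without the exclusions $10,12$) lets me apply part~(i)'s mechanism, or directly Theorem~\ref{thm:SUcriterion} and the tameness of $\phi$, to pin down the $\vv$-leading forms of $\phi(x_1),\phi(x_2),\phi(x_3)$. The plan is to show that the degree inequality forces $\phi$ to be $\vv$-homogeneous of a very rigid type: $\phi(x_3)=\zeta x_3$ for some $\zeta\in k^\times$, $\phi(x_2-\kappa)$ has leading term $\zeta^2(x_2-\kappa)$, and $\phi(y_1)=y_1$. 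Then I would recover the precise coefficients by comparing against the explicit formulas defining $\phi_\zeta$ and the constraint that $g_\zeta\in\kx$, which by the computation preceding Theorem~\ref{thm:G_{theta}} holds exactly when $\zeta\in T_\theta$ (i.e.\ $\zeta^{2i-1}=1$ for all $i$ with $u_i\neq 0$). Matching these relations term by term identifies $\phi$ with $\phi_\zeta$ and simultaneously verifies $\zeta\in T_\theta$.

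The main obstacle I anticipate is part~(i): establishing the degree inequality uniformly over all of $G_\theta$ requires showing that the presumed collapse $\degw\phi(y_2)\leq\degw\phi(x_3)$ genuinely triggers the wildness criterion, and this hinges on feeding the correct polynomial and the correct derived weights into Lemma~\ref{lem:91113} and Proposition~\ref{prop:criterion}. The delicate point is controlling the $\w$-leading forms of the iterated compositions so that the algebraic (in)dependence hypotheses of those results are met; in particular the exclusion of $d=10,12$ must be tracked carefully, since those are exactly the values where Lemma~\ref{lem:91113}(iii) fails and the argument for (i) would break down. Part~(ii) should be more routine once the rigidity of the $\vv$-leading forms is in hand, amounting to bookkeeping with the explicit formulas for $\phi_\zeta$.
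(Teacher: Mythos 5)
Your route for (i) --- exhibiting $f_{\theta}$ as a W-test polynomial via Proposition~\ref{prop:criterion} and showing $\phi$ violates Definition~\ref{defn:W-test} --- is not the paper's route, and it breaks down exactly at the step you leave vague (``exploit that $\phi(y_1)=y_1$ together with the presumed degree collapse to exhibit condition~(a)''). Condition~(a) of Definition~\ref{defn:W-test} requires $\degw \phi (P)<\degw \phi (x_{i_1})$ for a weight of \emph{rank three}, but the weight $\w $ in (i) is an arbitrary element of $(\Gammap )^3$; worse, writing $z_i=\phi (x_i)$, for $P=f_{\theta}$ one has $\phi (f_{\theta })=y_1z_3+\theta (\phi (y_2))$, whose $\w $-degree is large (the term $y_1z_3$ alone has degree $\degw y_1+\degw z_3$) unless cancellation occurs --- and whether cancellation occurs is precisely the open question. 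The collapse $\degw \phi (y_2)\leq \degw \phi (x_3)$ hands you neither a rank-three weight nor an index $i_1$ for which (a) holds. What the paper does instead (Section~\ref{sect:proof:tacoord}) is introduce the minima $\gamma _0^{\w },\gamma _1^{\w },\gamma _2^{\w }$ obtained by correcting $\phi (f_{\theta })$, $\theta (z_2)$ and $z_2$ by elements of $k[z_3]$, pass from $\phi $ to the auxiliary automorphism $\psi $ with $\psi (x_1)=z_1+h_0z_3^{-1}-h_1$, $\psi (x_2)=z_2+h_2$, $\psi (x_3)=z_3$, and apply the wildness criterion Lemma~\ref{lem:SUcriterion} to $\psi $ via Lemma~\ref{lem:awc}; Lemma~\ref{lem:taiguu} is what converts the collapse into $\gamma _0^{\w },\gamma _3^{\w }<\gamma _2^{\w }$. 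Your key tool Lemma~\ref{lem:91113} cannot be ``fed the correct polynomial'' without this apparatus: in the paper it serves to bound $\gamma _1^{\w }=\eta (\theta ;z_3,\psi (x_2))$ in Lemma~\ref{lem:awc1}(iii), a quantity that does not exist before the corrections $h_0,h_1,h_2$ are introduced. You correctly locate the source of the restriction $d\geq 9$, $d\neq 10,12$ in Lemma~\ref{lem:91113}(iii), but there is no way to bring that lemma to bear on $\phi $ directly; W-test polynomials are used in the paper only in Chapter~\ref{chapter:rank3}, not here.

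For (ii) your outline has the right general shape (rigidity for the weight $\vv $, then coefficient matching against $\phi _{\zeta }$), but it rests on a false step: you claim $d\geq 9$ ``lets me apply part (i)'s mechanism,'' yet part (i)'s mechanism is exactly the branch of Lemma~\ref{lem:awc1} that requires $d\neq 10,12$, which (ii) does not assume. The paper must therefore argue differently (Section~\ref{sect:wild3pf2}): Lemma~\ref{lem:newton} (a Hadas--Makar-Limanov initial-form argument applied to $\phi \circ \sigma _{\theta }$, using that $\phi (y_1)^{\vv }=y_1^{\vv }\approx x_1^dx_3^{2d-1}$ is divisible by $x_1$ and $x_3$) places $\phi (y_2)^{\vv }$ and $z_3^{\vv }$ in $k[x_1,x_3]$; then one splits on whether $(y_1z_3)^{\vv }\approx (\phi (y_2)^{\vv })^d$, handling the two cases by Proposition~\ref{prop:awcpf1} and by the wedge-degree estimates of Proposition~\ref{prop:awcpf2} combined with branches (i) and (ii) of Lemma~\ref{lem:awc1} --- the branches free of the exclusion. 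Even then the output (Proposition~\ref{prop:tawild key}) is far weaker than the $\vv $-homogeneous rigidity you posit: one gets only $\degv \phi (y_2)=3\e _1$, $\phi (x_3)=\alpha _3x_3+g_3$ with $g_3\in k[x_2]$, and $\gamma _2^{\vv }<\e _1$; the successive eliminations $g_3=0$, $\alpha _1=1$, the reduction of $\phi (y_2)=\beta _2y_2+h$ to $h\in k$ with $\beta _2\neq 1$ and $\kappa '=\kappa $, and the verification $\alpha _3\in T_{\theta }$ occupy all of Section~\ref{sect:tawildpf3} and are not routine bookkeeping.
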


By this theorem, 
it follows that $\iota $ is surjective 
if $d\geq 9$ and $d\neq 10,12$. 
Thus, 
the proof of Theorem~\ref{thm:G_{theta}} is completed.

\section{Proof (I)}\label{sect:proof:tacoord}
\setcounter{equation}{0}

Let $\phi$ be an element of $\Aut (\kx /k[y_1])$. 
First, 
we investigate when $\phi $ 
is wild in general. 
Set $z_i=\phi (x_i)$ for $i=1,2,3$. 
Then, 
we have 
\begin{equation}\label{eq:f_1f_2}
\begin{aligned}
z_1&
=\bigl(\phi (f_{\theta })-\theta (z_2)\bigr)z_3^{-1}\\ 
z_2&
=\phi (y_2-f_{\theta }x_3)\\
z_3&
=\phi (y_3), 
\end{aligned}
\end{equation}
since 
$z_1z_3+\theta (z_2)
=\phi (f_{\theta })$, 
$x_2=y_2-f_{\theta }x_3$, 
and $x_3=y_3$. 
Let $\w =(w_1,w_2,w_3)$ be an element of 
$(\Gammap )^3$. 
Then, 
there exist
\begin{align*}
\gamma ^{\w }_0&
:=\min \{ \degw \bigl(\phi (f_{\theta })+h_0\bigr)\mid 
h_0\in k[z_3]\} \\
\gamma ^{\w }_1&:=\min \{ 
\degw \bigl(\theta (z_2)+h_1z_3\bigr)\mid 
h_1\in k[z_2,z_3]\} \\
\gamma ^{\w }_2&:=\min \{ \degw (z_2+h_2)\mid h_2\in k[z_3]\} 
\end{align*}
by the well-orderedness of $\{ \degw h\mid h\in \kx \sm \zs \} $.

Take $h_0,h_2\in k[z_3]$ and $h_1\in k[z_2,z_3]$ 
such that 
$$
\gamma ^{\w }_0=\degw \bigl(\phi (f_{\theta })+h_0\bigr),\quad 
\gamma ^{\w }_1=\degw \bigl(\theta (z_2)+h_1z_3\bigr),\quad 
\gamma ^{\w }_2=\degw (z_2+h_2). 
$$
Then, 
$\phi (f_{\theta })+h_0$ and $z_2+h_2$ 
do not belong to $k$, 
for otherwise 
$\phi (f_{\theta })$ or $z_2$ belongs to $k[z_3]$, 
and so $f_{\theta }$ or $x_2$ 
belongs to $k[x_3]$, 
a contradiction. 
Similarly, 
$\theta (z_2)+h_1z_3$ 
does not belong to $k$, 
since $z_2$ and $z_3$ 
are algebraically independent over $k$. 
Hence, 
we have $\gamma _i^{\w }>0$ for $i=0,1,2$ 
by the choice of $\w $. 
For this reason, 
the $\w $-degrees of 
$\phi (f_{\theta })+h_0$ and $z_2+h_2$ 
are not changed if we subtract 
the constant terms from $h_0$ and $h_2$. 
Therefore, 
we may assume that $h_0$ and $h_2$ are elements of $z_3k[z_3]$. 
We remark that 
$\bigl(\phi (f_{\theta })+h_0\bigr)^{\w }$ and $(z_2+h_2)^{\w }$ 
do not belong to $k[z_3]^{\w }=k[z_3^{\w }]$ 
by the minimality of $\gamma ^{\w }_0$ and $\gamma ^{\w }_2$.

Set $\gamma _3^{\w }=\degw z_3>0$. 
Then, we have the following lemma.

\begin{lem}\label{lem:taiguu}
If $\degw \phi (y_2)<\gamma ^{\w}_0+\gamma ^{\w }_3$, 
then we have $\gamma ^{\w }_i<\gamma ^{\w }_2$ for $i=0,3$. 
\end{lem}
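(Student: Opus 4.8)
The plan is to derive everything from the single sharp identity $\gamma_0^{\w}+\gamma_3^{\w}=\gamma_2^{\w}$, since both desired strict inequalities then fall out immediately from the positivity of $\gamma_0^{\w}$ and $\gamma_3^{\w}$ established just before the lemma. First I would record the explicit shape of $\phi(y_2)$. Since $y_2=x_2+f_{\theta}x_3$, applying $\phi$ gives $\phi(y_2)=z_2+\phi(f_{\theta})z_3$. Choosing $h_0\in z_3k[z_3]$ to be a minimizer for $\gamma_0^{\w}$ and setting $B:=\phi(f_{\theta})+h_0$, I have $\degw B=\gamma_0^{\w}$ and, by the minimality remark preceding the lemma, $B^{\w}\notin k[z_3^{\w}]$. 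Substituting $\phi(f_{\theta})=B-h_0$ lets me rewrite
\[
\phi(y_2)=(z_2-h_0z_3)+Bz_3,
\]
where $-h_0z_3\in k[z_3]$ and $\degw(Bz_3)=\gamma_0^{\w}+\gamma_3^{\w}$.

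Next I would exploit the hypothesis $\degw\phi(y_2)<\gamma_0^{\w}+\gamma_3^{\w}=\degw(Bz_3)$. Because $\phi(y_2)$ is the sum of the two displayed terms and its $\w$-degree drops strictly below that of $Bz_3$, a routine comparison of degrees forces the two terms to have equal $\w$-degree with cancelling leading forms, namely
\[
\degw(z_2-h_0z_3)=\gamma_0^{\w}+\gamma_3^{\w}\qquad\text{and}\qquad(z_2-h_0z_3)^{\w}=-B^{\w}z_3^{\w}.
\]
The crux is then to show that $z_2-h_0z_3$ is already reduced modulo $k[z_3]$, i.e. that its $\w$-degree actually equals $\gamma_2^{\w}$ rather than exceeding it.

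This last step is the one I expect to be the main obstacle, and it rests on the characterization that, for $h_2\in k[z_3]$, one has $\degw(z_2+h_2)=\gamma_2^{\w}$ if and only if $(z_2+h_2)^{\w}\notin k[z_3^{\w}]$: the ``only if'' is the minimality remark already noted, and for the ``if'' I would observe that were the degree to exceed $\gamma_2^{\w}$, the difference between $z_2+h_2$ and a minimizer $z_2+h_2^{*}$ would be an element of $k[z_3]$ whose leading form is $(z_2+h_2)^{\w}$, placing the latter in $k[z_3]^{\w}=k[z_3^{\w}]$, a contradiction. So I must verify $(z_2-h_0z_3)^{\w}=-B^{\w}z_3^{\w}\notin k[z_3^{\w}]$; were $B^{\w}z_3^{\w}\in k[z_3^{\w}]$, cancelling the non-unit $z_3^{\w}$ in the domain $\kx$ would give $B^{\w}\in k[z_3^{\w}]$, contradicting the choice of $B$. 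Hence $\degw(z_2-h_0z_3)=\gamma_2^{\w}$, and comparing with the cancellation identity above yields $\gamma_2^{\w}=\gamma_0^{\w}+\gamma_3^{\w}$. Since $\gamma_0^{\w}>0$ and $\gamma_3^{\w}>0$, this gives $\gamma_0^{\w}<\gamma_2^{\w}$ and $\gamma_3^{\w}<\gamma_2^{\w}$, which is exactly the claim for $i=0,3$.
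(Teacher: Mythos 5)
Your proof is correct, and it reaches the lemma by a genuinely different mechanism than the paper, even though both rest on the same identity $z_2=\phi (y_2)-\phi (f_{\theta })z_3$ and the positivity $\gamma ^{\w }_0,\gamma ^{\w }_3>0$. The paper anchors the argument at the minimizer $h_2$ for $\gamma ^{\w }_2$: it rewrites $z_2+h_2=\phi (y_2)-\bigl(\phi (f_{\theta })-h_2z_3^{-1}\bigr)z_3$, bounds $\degw \bigl(\phi (f_{\theta })-h_2z_3^{-1}\bigr)\geq \gamma ^{\w }_0$ by the minimality of $\gamma ^{\w }_0$ (since $-h_2z_3^{-1}\in k[z_3]$), and then the hypothesis forces $\gamma ^{\w }_2=\degw \bigl(\bigl(\phi (f_{\theta })-h_2z_3^{-1}\bigr)z_3\bigr)\geq \gamma ^{\w }_0+\gamma ^{\w }_3$; this is a pure degree-inequality argument that never inspects leading forms. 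You instead anchor at the minimizer $h_0$ for $\gamma ^{\w }_0$, extract the exact leading-form cancellation in $\phi (y_2)=(z_2-h_0z_3)+Bz_3$, and then certify that $z_2-h_0z_3$ is itself a $\gamma ^{\w }_2$-minimizer via the characterization that $\degw (z_2+h)=\gamma ^{\w }_2$ if and only if $(z_2+h)^{\w }\notin k[z_3^{\w }]$; this needs the two extra facts that $B^{\w }\notin k[z_3^{\w }]$ (the remark preceding the lemma) and that $z_3^{\w }$, being homogeneous of positive degree and hence a non-unit, can be cancelled in the domain $\kx $. What your route buys is the sharper conclusion $\gamma ^{\w }_2=\gamma ^{\w }_0+\gamma ^{\w }_3$ rather than merely $\gamma ^{\w }_2\geq \gamma ^{\w }_0+\gamma ^{\w }_3$ (the paper needs and proves only the inequality); what it costs is the auxiliary if-and-only-if criterion for minimizers, whose "if" direction you correctly supply and which the paper's shorter argument avoids entirely.
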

\begin{proof}
Since $\gamma _i^{\w }>0$ for $i=0,3$, 
it suffices to show that 
$\gamma _0^{\w }+\gamma _3^{\w }\leq \gamma _2^{\w }$. 
Since $h_2z_3^{-1}$ belongs to $k[z_3]$ 
by the choice of $h_2$, 
we have 
$\degw (\phi (f_{\theta })-h_2z_3^{-1})\geq \gamma ^{\w }_0$ 
by the minimality of $\gamma ^{\w }_0$. 
Hence, 
we get 
\begin{equation}\label{eq:pf:taiguu}
\degw (\phi (f_{\theta })-h_2z_3^{-1})z_3\geq 
\gamma ^{\w }_0+\gamma ^{\w }_3>\degw \phi (y_2) 
\end{equation}
by assumption. 
Since $z_2=\phi (y_2)-\phi (f_{\theta })z_3$ 
by (\ref{eq:f_1f_2}), 
we have 
$$
\gamma _2^{\w }=\degw (z_2+h_2)=
\degw \bigl(\phi (y_2)-(\phi (f_{\theta })-h_2z_3^{-1})z_3\bigr). 
$$
Thus, 
we see from (\ref{eq:pf:taiguu}) that 
$\gamma _2^{\w }$ is equal to the left-hand side of 
(\ref{eq:pf:taiguu}), 
and is at least $\gamma ^{\w}_0+\gamma ^{\w }_3$. 
Therefore, 
we conclude that $\gamma ^{\w }_i<\gamma ^{\w }_2$ for $i=0,3$. 
\end{proof}

We define $\psi \in \Aut (\kx /k)$ by 
$$
\psi (x_1)=z_1+h_0z_3^{-1}-h_1,\quad 
\psi (x_2)=z_2+h_2,\quad 
\psi (x_3)=z_3. 
$$
Then, $\phi ^{-1}\circ \psi $ 
belongs to $J(k[x_3];x_2,x_1)$. 
Hence, 
$\phi $ is tame if and only if so is $\psi $. 
We note that 
$\degw \psi (x_i)=\gamma _i^{\w }$ for $i=2,3$.

\begin{lem}\label{lem:awc}
Assume that 
$\gamma ^{\w }_0<\gamma ^{\w }_1$, 
$\gamma ^{\w }_2>\gamma ^{\w }_3$ and 
\begin{equation}\label{eq:awc}
2\gamma ^{\w }_1>3\gamma ^{\w }_2+2\gamma ^{\w }_3,\quad 
\gamma ^{\w }_1\geq \gamma ^{\w }_2+2\gamma ^{\w }_3
\end{equation}
for some $\w \in (\Gammap )^3$. 
Then, 
$\phi $ is wild. 
\end{lem}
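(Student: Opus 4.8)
The plan is to deduce the wildness of $\phi$ from that of the auxiliary automorphism $\psi$ defined just before the lemma. Since $\phi^{-1}\circ\psi$ belongs to $J(k[x_3];x_2,x_1)\subseteq\T(k,\x)$, the automorphism $\phi$ is tame if and only if $\psi$ is, so it suffices to prove that $\psi$ is wild. For this I would apply the criterion of Lemma~\ref{lem:SUcriterion} to the triple $(\psi(x_1),\psi(x_2),\psi(x_3))$ for the weight $\w$, checking its five hypotheses (a)--(e).

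First I would pin down the three $\w$-degrees. By construction $\degw\psi(x_i)=\gamma^{\w}_i$ for $i=2,3$, and a short computation using (\ref{eq:f_1f_2}) gives
$$
\psi(x_1)\,z_3=\bigl(\phi(f_{\theta})+h_0\bigr)-\bigl(\theta(z_2)+h_1z_3\bigr).
$$
Because $\gamma^{\w}_0<\gamma^{\w}_1$ by hypothesis and the two right-hand terms have $\w$-degrees $\gamma^{\w}_0$ and $\gamma^{\w}_1$, there is no cancellation at the top degree, so $\degw\psi(x_1)=\gamma^{\w}_1-\gamma^{\w}_3$ and, using $(fg)^{\w}=f^{\w}g^{\w}$,
$$
\bigl(\psi(x_1)\bigr)^{\w}\bigl(\psi(x_3)\bigr)^{\w}=-\bigl(\theta(z_2)+h_1z_3\bigr)^{\w}.
$$
Writing $\delta_i:=\degw\psi(x_i)$ this yields $\delta_1=\gamma^{\w}_1-\gamma^{\w}_3$, $\delta_2=\gamma^{\w}_2$, $\delta_3=\gamma^{\w}_3$, and the three degree conditions of Lemma~\ref{lem:SUcriterion} follow at once from the hypotheses: $\delta_1\geq\delta_2+\delta_3$ is $\gamma^{\w}_1\geq\gamma^{\w}_2+2\gamma^{\w}_3$, giving (c); $\delta_2>\delta_3$ is $\gamma^{\w}_2>\gamma^{\w}_3$, giving (d); and $2\delta_1\neq3\delta_2$ follows from the strict inequality $2\gamma^{\w}_1>3\gamma^{\w}_2+2\gamma^{\w}_3$, giving (e).

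The substance of the argument is the verification of (a) and of the first half of (b). For the second half of (b), namely $\psi(x_2)^{\w}\notin k[\psi(x_3)^{\w}]$, I would simply invoke the remark recorded before Lemma~\ref{lem:taiguu} that $(z_2+h_2)^{\w}\notin k[z_3^{\w}]$ by minimality of $\gamma^{\w}_2$, together with $\psi(x_3)=z_3$. For the first half, $\psi(x_1)^{\w}\notin k[\psi(x_2),\psi(x_3)]^{\w}=k[z_2,z_3]^{\w}$, I would argue by contradiction: if $\psi(x_1)^{\w}$ lay in $k[z_2,z_3]^{\w}$ one could choose $q\in k[z_2,z_3]$ with $q^{\w}=\psi(x_1)^{\w}$ and $\degw q=\delta_1$, whence $\degw(\psi(x_1)-q)<\delta_1$; substituting $\psi(x_1)=q+(\psi(x_1)-q)$ into the first displayed identity rewrites it as $\theta(z_2)+(h_1+q)z_3=\bigl(\phi(f_{\theta})+h_0\bigr)-(\psi(x_1)-q)z_3$, whose right-hand side has $\w$-degree strictly below $\gamma^{\w}_1$, contradicting the minimality defining $\gamma^{\w}_1$. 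Finally, for (a) I would show $\degw\psi>|\w|$, so that the leading forms are algebraically dependent by Lemma~\ref{lem:minimal autom}: were $\degw\psi=|\w|$, the three leading forms would be algebraically independent, hence $k[z_2,z_3]^{\w}=k[z_2^{\w},z_3^{\w}]$ by the remark preceding Lemma~\ref{lem:minimal autom}, and then the second displayed identity would force $\psi(x_1)^{\w}=-\bigl(\theta(z_2)+h_1z_3\bigr)^{\w}/z_3^{\w}\in k(\psi(x_2)^{\w},\psi(x_3)^{\w})$, contradicting that very independence.

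I expect the main obstacle to lie in (a) and the first part of (b): both hinge on correctly exploiting the minimality built into the definitions of $\gamma^{\w}_0,\gamma^{\w}_1,\gamma^{\w}_2$ and the leading-form identity for $\psi(x_1)z_3$, whereas once $\delta_1,\delta_2,\delta_3$ are computed the inequalities (c)--(e) are purely formal. With (a)--(e) in hand, Lemma~\ref{lem:SUcriterion} shows that $\psi$ is wild, and therefore $\phi$ is wild as well.
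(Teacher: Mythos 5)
Your proof is correct and follows essentially the same route as the paper: reduce wildness of $\phi$ to that of $\psi$, derive the leading-form identity $\psi (x_1)^{\w }z_3^{\w }=-\bigl(\theta (z_2)+h_1z_3\bigr)^{\w }$ from $\gamma ^{\w }_0<\gamma ^{\w }_1$, and verify conditions (a)--(e) of Lemma~\ref{lem:SUcriterion}, with both parts of (b) handled by the same minimality arguments for $\gamma ^{\w }_1$ and $\gamma ^{\w }_2$ and with (c)--(e) following formally from (\ref{eq:awc}). The only difference is cosmetic: for (a) the paper observes directly that all three leading forms lie in the field of fractions of $k[z_2,z_3]^{\w }$, which has transcendence degree at most two over $k$, whereas you obtain the same algebraic dependence by contradiction via Lemma~\ref{lem:minimal autom}.
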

\begin{proof}
It suffices to prove that $\psi $ is wild. 
We check that $\psi $ satisfies the conditions 
(a) through (e) of Proposition~\ref{lem:SUcriterion}. 
Since 
$\degw \psi (x_i)=\gamma _i^{\w }$ for $i=2,3$, 
and $\gamma ^{\w }_2>\gamma ^{\w }_3$ by assumption, 
we see that $\psi $ satisfies (d). 
By (\ref{eq:f_1f_2}), we have 
$$
\psi (x_1)
=\bigl(\phi (f_{\theta })-\theta (z_2)\bigr)z_3^{-1}
+h_0z_3^{-1}-h_1
=\bigl(
(\phi (f_{\theta })+h_0)-(\theta (z_2)+h_1z_3)
\bigr)z_3^{-1}. 
$$
Since $\degw \bigl(\phi (f_{\theta })+h_0\bigr)
=\gamma ^{\w }_0
<\gamma ^{\w }_1
=\degw \bigl(\theta (z_2)+h_1z_3\bigr)$ 
by assumption, 
it follows that 
\begin{equation}\label{eq:awcpf1}
\psi (x_1)^{\w }
=-\bigl(\theta (z_2)+h_1z_3\bigr)^{\w }(z_3^{\w })^{-1}. 
\end{equation}
Hence, 
$\psi (x_1)^{\w }$ 
belongs to the field of fractions of $k[z_2,z_3]^{\w }$. 
Since $\psi (x_2)$ and $\psi (x_3)$ belong to $k[z_2,z_3]$, 
we know that $\psi (x_2)^{\w }$ 
and $\psi (x_3)^{\w }$ also belong to $k[z_2,z_3]^{\w }$. 
Thus, 
$\psi (x_1)^{\w }$, $\psi(x_2)^{\w }$ and $\psi (x_3)^{\w }$ 
are algebraically dependent over $k$, 
since the field of fractions of $k[z_2,z_3]^{\w }$ 
has transcendence degree at most two over $k$. 
Therefore, 
$\psi $ satisfies (a). 
Since 
$\degw \psi (x_1)=\gamma ^{\w }_1-\gamma ^{\w }_3$ 
by (\ref{eq:awcpf1}), 
we have 
$$
2\degw \psi (x_1)=2\gamma ^{\w }_1-2\gamma ^{\w }_3
>3\gamma ^{\w }_2=3\degw \psi (x_2)
$$
by the first part of (\ref{eq:awc}). 
This proves (e). 
Similarly, 
we have 
$$
\degw \psi (x_1)=\gamma ^{\w }_1
-\gamma ^{\w }_3\geq \gamma ^{\w }_2+\gamma ^{\w }_3
=\degw \psi (x_2)+\degw \psi (x_3)
$$
by the second part of (\ref{eq:awc}). 
This proves (c). 
Since 
$\psi (x_2)^{\w }=(z_2+h_2)^{\w }$ 
does not belong to 
$k[\psi (x_3)^{\w }]=k[z_3^{\w }]$ 
as mentioned, 
$\psi $ satisfies the second part of (b). 
We prove the first part of (b) by contradiction. 
Suppose that 
$\psi (x_1)^{\w }$ belongs to 
$k[\psi (x_2),\psi (x_3)]^{\w }$. 
Then, 
there exists $h\in k[\psi (x_2),\psi (x_3)]$ 
such that $h^{\w }=\psi (x_1)^{\w }$. 
By (\ref{eq:awcpf1}), 
it follows that 
$$
(hz_3)^{\w }+(\theta (z_2)+h_1z_3)^{\w }
=h^{\w }z_3^{\w }-\psi (x_1)^{\w }z_3^{\w }=0. 
$$ 
Hence, the $\w $-degree of 
$$
\theta (z_2)+(h_1+h)z_3
=hz_3+\bigl(\theta (z_2)+h_1z_3\bigr)
$$ 
is less than 
$\degw (\theta (z_2)+h_1z_3)=\gamma ^{\w }_1$. 
By the minimality of $\gamma _1^{\w }$, 
this implies that 
$h_1+h$ does not belong to $k[z_2,z_3]$. 
However, 
$h_1$ is an element of 
$k[z_2,z_3]$, 
and $h$ is an element of 
$k[\psi (x_2),\psi (x_3)]=
k[z_2+h_2,z_3]=k[z_2,z_3]$. 
Hence, 
$h_1+h$ belongs to $k[z_2,z_3]$, 
a contradiction. 
This proves the first part of (b). 
Thus, 
$\psi $ satisfies (a) through (e) 
of Proposition~\ref{lem:SUcriterion}. 
Therefore, 
we conclude that $\psi $ is wild. 
Consequently, 
$\phi $ is wild. 
\end{proof}

Since $h_2$ belongs to $z_3k[x_3]$, 
we see that 
$\theta \bigl(\psi (x_2)\bigr)=\theta (z_2+h_2)$ 
has the form 
$\theta (z_2)+hz_3$ for some $h\in k[z_2,z_3]$. 
Similarly, 
$\theta (z_2)=\theta \bigl(\psi (x_2)-h_2\bigr)$ 
has the form 
$\theta \bigl(\psi (x_2)\bigr)+h'z_3$ 
for some $h'\in k[\psi (x_2),z_3]$. 
Since $k[\psi (x_2),z_3]=k[z_2,z_3]$, 
it follows that 
$$
\{\theta \bigl(\psi (x_2)\bigr)+hz_3\mid 
h\in k[\psi (x_2),z_3]\} 
=\{\theta (z_2)+hz_3\mid 
h\in k[z_2,z_3]\} . 
$$
Hence, we have 
\begin{equation}\label{eq:gamma1}
\gamma _1^{\w }=
\min \{ \degw \bigl(\theta (\psi (x_2))+hz_3\bigr)\mid 
h\in k[\psi (x_2),z_3]\} . 
\end{equation}
In the notation of Lemma~\ref{lem:91113}, 
we may write 
$\gamma _1^{\w }=\eta (\theta ;z_3,\psi (x_2))$. 
We note that the conditions 
(2) and (3) before Lemma~\ref{lem:91113} 
are fulfilled for $f=z_3$ and $g=\psi (x_2)$, 
since $\psi (x_2)^{\w }=(z_2+h_2)^{\w }$ 
does not belong to $k[z_3^{\w }]$, 
and $\psi (x_2)$ and $z_3=\psi (x_3)$ 
are algebraically independent over $k$. 
Since $\degw \psi (x_2)=\gamma _2^{\w }$ 
and $\degw z_3=\gamma _3^{\w }$, 
the condition (1) is equivalent to 
$\gamma ^{\w }_2>\gamma ^{\w }_3$.

\begin{lem}\label{lem:awc1}
Assume that $\gamma ^{\w }_2>\gamma ^{\w }_3$. 
Then, 
$\phi $ satisfies $(\ref{eq:awc})$ 
if one of the following conditions holds$:$

\noindent{\rm (i)} $d\geq 3$ and 
$\psi (x_2)^{\w }$ 
and $z_3^{\w }$ are algebraically independent over $k$. 

\noindent{\rm (ii)}
$d\geq 3$ and 
$\degw dz_2\wedge dz_3>(d-1)\degw z_3$. 

\noindent{\rm (iii)}
$d\geq 9$ and $d\neq 10,12$. 
\end{lem}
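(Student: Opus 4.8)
The plan is to reduce the whole statement to two lower bounds on $\gamma _1^{\w }$, obtained by applying Lemma~\ref{lem:91113} with $f=z_3$ and $g=\psi (x_2)$. By (\ref{eq:gamma1}) we have $\gamma _1^{\w }=\eta (\theta ;z_3,\psi (x_2))$, and the hypotheses (1), (2), (3) preceding Lemma~\ref{lem:91113} are already in force for this $f$ and $g$ (condition (1) being exactly the standing assumption $\gamma _2^{\w }>\gamma _3^{\w }$, while (2) and (3) were recorded just before (\ref{eq:gamma1})). In this notation the two bounds of Lemma~\ref{lem:91113} read
$$\eta _1=\degw z_3+\frac{3}{2}\degw \psi (x_2)=\gamma _3^{\w }+\frac{3}{2}\gamma _2^{\w },\qquad \eta _2=2\degw z_3+\degw \psi (x_2)=2\gamma _3^{\w }+\gamma _2^{\w }.$$
Thus $\gamma _1^{\w }>\eta _1$ yields $2\gamma _1^{\w }>3\gamma _2^{\w }+2\gamma _3^{\w }$, the first inequality of (\ref{eq:awc}), and $\gamma _1^{\w }>\eta _2$ yields $\gamma _1^{\w }>\gamma _2^{\w }+2\gamma _3^{\w }$, which is stronger than the second inequality of (\ref{eq:awc}). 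So it suffices, in each case, to produce $\gamma _1^{\w }>\eta _1$ and $\gamma _1^{\w }>\eta _2$.

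For cases (ii) and (iii) I would invoke Lemma~\ref{lem:91113} directly. Case (iii) matches condition (iii) of that lemma verbatim. For case (ii) the one point to check is a wedge identity: since $\psi (x_2)=z_2+h_2$ with $h_2\in k[z_3]$, one has $dz_3\wedge d\psi (x_2)=dz_3\wedge dz_2$, so the hypothesis $\degw dz_2\wedge dz_3>(d-1)\degw z_3$ becomes $\degw df\wedge dg>(d-1)\degw f$, which is precisely condition (ii) of Lemma~\ref{lem:91113}. In either case that lemma gives $\gamma _1^{\w }>\eta _i$ for $i=1,2$, and the reduction above closes these cases.

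Case (i) requires a separate argument, since condition (i) of Lemma~\ref{lem:91113} is pinned to $d=2$ and cannot be cited here. Instead I would argue straight from the algebraic independence of $\psi (x_2)^{\w }$ and $z_3^{\w }$. In the minimization (\ref{eq:gamma1}) each summand $z_3h$ consists only of monomials of positive $z_3$-degree, whereas $\theta (\psi (x_2))$ consists only of monomials of $z_3$-degree zero; its $\w $-leading form is $c_d(\psi (x_2)^{\w })^d$ (a pure power of $\psi (x_2)^{\w }$), of $\w $-degree $d\,\degw \psi (x_2)=d\gamma _2^{\w }$ because $\gamma _2^{\w }>0$. Since $z_3^{\w }$ and $\psi (x_2)^{\w }$ are algebraically independent, the $\w $-leading form of $z_3h$ (a nonzero multiple of $z_3^{\w }$) can never cancel this pure power of $\psi (x_2)^{\w }$; equivalently the relevant $m_{\w }^g$ vanishes. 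Hence $\degw (\theta (\psi (x_2))+z_3h)\geq d\gamma _2^{\w }$ for every $h$, and taking the minimum gives $\gamma _1^{\w }\geq d\gamma _2^{\w }$. With $d\geq 3$ and $\gamma _2^{\w }>\gamma _3^{\w }$ this is more than enough: $2\gamma _1^{\w }\geq 2d\gamma _2^{\w }\geq 6\gamma _2^{\w }>5\gamma _2^{\w }>3\gamma _2^{\w }+2\gamma _3^{\w }$, and $\gamma _1^{\w }\geq d\gamma _2^{\w }\geq 3\gamma _2^{\w }>\gamma _2^{\w }+2\gamma _3^{\w }$, which is (\ref{eq:awc}).

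The routine parts are the translation of everything into the quantities $\gamma _i^{\w }$ and the bookkeeping that lines the three listed conditions up with the hypotheses of Lemma~\ref{lem:91113}. The one genuinely delicate step is case (i): one has to notice that Lemma~\ref{lem:91113} does not apply (its condition (i) forces $d=2$) and instead control $\gamma _1^{\w }$ by hand, using algebraic independence together with the separation of $\theta (\psi (x_2))$ and $z_3h$ by $z_3$-degree. The wedge identity in case (ii) is the other small point where it is worth verifying explicitly that replacing $z_2$ by $\psi (x_2)=z_2+h_2$ leaves the relevant $\w $-degree unchanged.
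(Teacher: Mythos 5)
Your proposal is correct and takes essentially the same route as the paper: cases (ii) and (iii) are reduced to Lemma~\ref{lem:91113} via $\gamma _1^{\w }=\eta (\theta ;z_3,\psi (x_2))$ and the identity $d\psi (x_2)\wedge dz_3=dz_2\wedge dz_3$, and case (i) is handled by a direct argument from algebraic independence, exactly as in the paper. The one step worth making explicit in case (i) is that the non-cancellation claim relies on $h^{\w }$ lying in $k[\psi (x_2)^{\w },z_3^{\w }]$ (i.e., on $k[\psi (x_2),z_3]^{\w }=k[\psi (x_2)^{\w },z_3^{\w }]$, which is where algebraic independence actually enters), since in $\kx $ a pure power of $\psi (x_2)^{\w }$ could in general be divisible by $z_3^{\w }$.
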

\begin{proof}
First, assume that (i) is satisfied. 
Then, 
we have 
$k[\psi (x_2),z_3]^{\w }=k[\psi (x_2)^{\w },z_3^{\w }]$, 
since $\psi (x_2)^{\w }$ and $z_3^{\w }$ 
are algebraically independent over $k$. 
Take any $h\in k[\psi (x_2),z_3]$. 
Then, 
$h^{\w }$ belongs to $k[\psi (x_2)^{\w },z_3^{\w }]$. 
Hence, 
we know that 
$\theta (\psi (x_2))^{\w }\approx 
(\psi (x_2)^{\w })^d\not\approx h^{\w }z_3^{\w }=(hz_3)^{\w }$. 
This implies that 
$$
\degw \bigl(\theta (\psi (x_2))+hz_3\bigr) 
=\max \{ \degw \theta (\psi (x_2)),\degw hz_3\} 
\geq \degw \theta (\psi (x_2))
=d\gamma _2^{\w }. 
$$ 
Thus, we obtain
$\gamma _1^{\w }\geq d\gamma _2^{\w }$ 
in view of (\ref{eq:gamma1}). 
Since $d\geq 3$ and $\gamma ^{\w }_2>\gamma ^{\w }_3$ 
by assumption, 
it follows that 
$$
2\gamma _1^{\w }\geq 2d\gamma ^{\w }_2
>5\gamma ^{\w }_2>3\gamma ^{\w }_2+2\gamma ^{\w }_3,\quad 
\gamma _1^{\w }\geq d\gamma ^{\w }_2\geq 3\gamma ^{\w }_2
>\gamma ^{\w }_2+2\gamma ^{\w }_3. 
$$ 
Therefore, 
$\phi $ satisfies $(\ref{eq:awc})$.

Next, 
assume that (ii) or (iii) is satisfied. 
Since 
$d\psi (x_2)\wedge dz_3=dz_2\wedge dz_3$, 
we see that (ii) is equivalent to 
(ii) of Lemma~\ref{lem:91113}. 
Clearly, 
(iii) is the same as (iii) of Lemma~\ref{lem:91113}. 
Since $\gamma ^{\w }_2>\gamma ^{\w }_3$ by assumption, 
the conditions (1), (2) and (3) listed before Lemma~\ref{lem:91113} 
are fulfilled for $f=z_3$ and $g=\psi (x_2)$ as mentioned. 
Hence, we know by Lemma~\ref{lem:91113} that 
$\gamma _1^{\w }=\eta (\theta ;z_3,\psi (x_2))$ 
is greater than 
$\eta _1=\gamma ^{\w }_3+(3/2)\gamma ^{\w }_2$ 
and 
$\eta _2=2\gamma ^{\w }_3+\gamma ^{\w }_2$. 
Therefore, we get (\ref{eq:awc}). 
\end{proof}

Now, 
we prove Theorem~\ref{thm:tawild} (i). 
Assume that $d\geq 9$ and $d\neq 10,12$. 
Suppose to the contrary that 
$\degw \phi (y_2)\leq \degw \phi (x_3)$ 
for some $\phi \in G_{\theta }$. 
Since $\degw \phi (x_3)=\degw z_3=\gamma _3^{\w }$, 
and $\gamma _0^{\w }>0$, 
we have 
$\degw \phi (y_2)\leq \gamma _3^{\w }
<\gamma _0^{\w }+\gamma _3^{\w }$. 
By Lemma~\ref{lem:taiguu}, 
it follows that 
$\gamma _3^{\w }<\gamma _2^{\w }$ 
and $\gamma _0^{\w }<\gamma _2^{\w }$. 
Hence, 
$\phi $ satisfies the assumption of Lemma~\ref{lem:awc1}. 
Since $d\geq 9$ and $d\neq 10,12$, 
we know by (iii) 
that $\phi $ satisfies (\ref{eq:awc}). 
Since $\gamma ^{\w }_0<\gamma ^{\w }_2$, 
(\ref{eq:awc}) implies that $\gamma ^{\w }_0<\gamma ^{\w }_1$. 
Thus, 
we conclude from Lemma~\ref{lem:awc} that $\phi $ is wild, 
a contradiction. 
Therefore, 
we have $\degw \phi (y_2)>\degw \phi (x_3)$ for each $\phi \in G_{\theta }$. 
This completes the proof of Theorem~\ref{thm:tawild} (i).

\section{Proof (II)} 
\setcounter{equation}{0}
\label{sect:wild3pf2}

The goal of this section 
is to prove the following proposition, 
which is a key to the proof of 
Theorem~\ref{thm:tawild} (ii).

\begin{prop}\label{prop:tawild key}
Assume that $d\geq 9$. 
Let $\phi \in G_{\theta }$ 
be such that $\degv \phi (y_2)>\degv \phi (x_3)$. 
Then, the following assertions hold$:$ 	

\noindent{\rm (i)} $\degv \phi (y_2)=3\e _1$. 

\noindent{\rm (ii)} $\phi (x_3)=z_3=\alpha _3x_3+g_3$ for some 
$\alpha _3\in k^{\times }$ and $g_3\in k[x_2]$. 

\noindent{\rm (iii)} $\gamma _2^{\vv }<\e _1$. 
\end{prop}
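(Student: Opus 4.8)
The plan is to exploit the very explicit shape of the weight $\vv $ together with the structural identities relating $z_i=\phi (x_i)$, $w:=\phi (y_2)$ and the fixed polynomial $y_1$, and to use the wildness criteria of the previous section to force a single cancellation pattern. First I would record the basic identities. Since $\phi $ fixes $y_1$ and $\phi (x_3)=z_3$, applying $\phi $ to $f_{\theta }=y_1x_3+\theta (y_2)$ gives $\phi (f_{\theta })=y_1z_3+\theta (w)$, and then (\ref{eq:f_1f_2}) yields the exact identity $z_2=w-y_1z_3^2-\theta (w)z_3$. From (\ref{eq:y_1}) I would compute the $\vv $-leading form of $y_1$: the summand of index $i$ has $\vv $-degree $(3i-1)\e _1+(d-i)\e _2$, which is maximized at $i=d$, so $\degv y_1=(3d-1)\e _1$ with $y_1^{\vv }$ a nonzero constant multiple of $x_1^dx_3^{2d-1}$.

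The heart of the argument is the resulting identity $z_2+y_1z_3^2+\theta (w)z_3=w$. Here $\degv (\theta (w)z_3)=d\degv w+\degv z_3$ exceeds $\degv w$, so the term $\theta (w)z_3$ cannot survive and must be cancelled; the only terms available to cancel it are $y_1z_3^2$ and $z_2$. I would rule out cancellation against $z_2$: if the top of $\theta (w)z_3$ were absorbed into $z_2$, then $\degv z_2$, and hence $\gamma _2^{\vv }$, would be large, in particular $\gamma _2^{\vv }>\gamma _3^{\vv }$, and this large part (essentially $\theta (w)z_3$, which involves $w$ and so is not a polynomial in $z_3$ alone) could not be removed modulo $k[z_3]$. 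But $\gamma _2^{\vv }>\gamma _3^{\vv }$ is exactly the configuration excluded by tameness: under the hypothesis $\degv w>\gamma _3^{\vv }$ I would invoke Lemma~\ref{lem:taiguu}, Lemma~\ref{lem:awc1} and Lemma~\ref{lem:awc} (applied to the auxiliary tame automorphism $\psi $) to see that $\gamma _2^{\vv }>\gamma _3^{\vv }$ together with (\ref{eq:awc}) would make $\phi $ wild, contradicting $\phi \in G_{\theta }$. This forces the cancellation to take place between the two large terms, i.e.\ $\degv (y_1z_3^2)=\degv (\theta (w)z_3)$ and $(y_1z_3^2)^{\vv }+(\theta (w)z_3)^{\vv }=0$.

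Reading off leading forms from this cancellation, and using $\theta (w)^{\vv }=c(w^{\vv })^d$ (with $c\neq 0$ the leading coefficient of $\theta $) and $y_1^{\vv }\approx x_1^dx_3^{2d-1}$, I obtain a relation $(w^{\vv })^d\approx x_1^dx_3^{2d-1}z_3^{\vv }$ of $\vv $-homogeneous polynomials. Since $z_3$ is a coordinate of $\kx $ over $k$, its leading form $z_3^{\vv }$ is a nonzero $\vv $-homogeneous polynomial, and the only degree-$\e _1$ monomials are $x_1$ and $x_3$; I would argue that for $x_1^dx_3^{2d-1}z_3^{\vv }$ to be a perfect $d$-th power up to a constant, no extra $\vv $-homogeneous factor can occur in $z_3^{\vv }$, whence $z_3^{\vv }=\beta x_3$ for some $\beta \in k^{\times }$. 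This gives $\gamma _3^{\vv }=\e _1$ and, since the coefficient of $x_1$ in $z_3^{\vv }$ vanishes, the shape $z_3=\alpha _3x_3+g_3$ with $g_3\in k[x_2]$, proving (ii); substituting back gives $w^{\vv }\approx x_1x_3^2$, hence $\degv w=3\e _1$, proving (i). Finally, with $z_3^{\vv }=\beta x_3$ in hand I would reduce $z_2=w-y_1z_3^2-\theta (w)z_3$ modulo $k[z_3]$: after removing the cancelled top, the remainder has $\vv $-degree below $\e _1$, and since every element of $\vv $-degree less than $\e _1$ lies in $k[x_2]$, this gives $\gamma _2^{\vv }<\e _1$, proving (iii).

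The hard part will be the forcing step in the second paragraph. The hypothesis is only $d\geq 9$, whereas the sharpest criterion Lemma~\ref{lem:awc1} (iii) excludes $d=10,12$; for those two values I would instead have to verify hypothesis (i) or (ii) of Lemma~\ref{lem:awc1} for the weight $\vv $ by hand, i.e.\ show that in the would-be bad configuration $\psi (x_2)^{\vv }$ and $z_3^{\vv }$ are algebraically independent over $k$, or that $\degv (dz_2\wedge dz_3)>(d-1)\degv z_3$. A secondary difficulty is the combinatorial step of excluding extra $\vv $-homogeneous factors of $z_3^{\vv }$ in the $d$-th power relation, and making the modular reduction for (iii) precise; both rest on the rank-two structure of $\vv $ and on the fact that $z_3$ is a coordinate.
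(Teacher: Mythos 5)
Your overall architecture is the right one in spirit: the identity $z_2=\phi (y_2)-\phi (f_{\theta })z_3$ with $\phi (f_{\theta })=y_1z_3+\theta (\phi (y_2))$ (the paper's (\ref{eq:f_1f_2}) and (\ref{eq:core:tawild})), a dichotomy between cancellation of the two big terms and absorption into $z_2$, the wildness lemmas of Section~\ref{sect:proof:tacoord} to kill the bad branch, and a $d$-th power factorization to pin down leading forms. But the three steps you treat as routine are exactly where the substance lies, and as written each is a gap. First, your factorization step cannot be closed with the tools you invoke: from $(\phi (y_2)^{\vv })^d\approx x_1^dx_3^{2d-1}z_3^{\vv }$ alone, unique factorization only forces $z_3^{\vv }=x_3g^d$ for some $\vv $-homogeneous $g$, and $g$ may even involve $x_2$; ``$z_3$ is a coordinate'' does not obviously exclude this. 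The paper closes it with Lemma~\ref{lem:newton} (a Hadas--Makar-Limanov-type Newton polytope argument, applied to $\phi \circ \sigma _{\theta }$ using $\phi (y_1)=y_1$ and the fact that $y_1^{\vv }\approx x_1^dx_3^{2d-1}$ is divisible by $x_1$ and $x_3$): it yields $\phi (y_2)^{\vv },z_3^{\vv }\in k[x_1,x_3]$, hence $\degv \phi (y_2)=a\e _1$ and $\degv z_3=b\e _1$ with integers $a>b\geq 1$; the arithmetic $3d-1+b=da$ then forces $(a,b)=(3,1)$ (Lemma~\ref{lem:(a,b)=(3,1)}), so $z_3^{\vv }$ is a \emph{linear form} in $x_1,x_3$, and only then does the $d$-th power argument give $z_3^{\vv }\approx x_3$ (Proposition~\ref{prop:awcpf1}). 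Second, in your forcing step you conflate $\degv z_2$ with $\gamma _2^{\vv }$. The parenthetical ``involves $w$ and so is not a polynomial in $z_3$ alone'' is not a proof: $\gamma _2^{\vv }$ is a minimum over corrections from $k[z_3]$, and $(\phi (y_2)^{\vv })^dz_3^{\vv }$ can lie in $k[z_3]^{\vv }=k[z_3^{\vv }]$ whenever $\phi (y_2)^{\vv }$ and $z_3^{\vv }$ are algebraically dependent in a suitable way. The paper avoids this by working with quantities insensitive to adding elements of $k[z_3]$, namely the wedge-product inequalities of Proposition~\ref{prop:awcpf2}: inequality (iii) bounds $\gamma _0^{\vv }+\gamma _3^{\vv }$ below, Lemma~\ref{lem:taiguu} then gives $\gamma _2^{\vv }>\gamma _0^{\vv },\gamma _3^{\vv }$, and inequality (ii) verifies hypothesis (ii) of Lemma~\ref{lem:awc1} --- which is also precisely what handles $d=10,12$, the point you flag but leave unresolved. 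Proving Proposition~\ref{prop:awcpf2} (a three-case analysis on $\degv y_1z_3$ versus $d\degv \phi (y_2)$, including a ``no binomial is a proper power'' argument) is the technical core of the whole proof, and your proposal defers it entirely.

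Third, your derivation of (iii) is incorrect as stated. Exact cancellation of the top forms of $y_1z_3^2$ and $\theta (\phi (y_2))z_3$ says nothing about the sub-leading terms of $z_2=\phi (y_2)-y_1z_3^2-\theta (\phi (y_2))z_3$, which are generically of very large $\vv $-degree; so ``after removing the cancelled top, the remainder has $\vv $-degree below $\e _1$'' does not follow. In the paper, (iii) needs two further arguments after (i) and (ii) are in hand: a second pass through the wildness machinery (Lemma~\ref{lem:awc1}~(i), using that the minimizer $(z_2+h_2)^{\vv }$ does not lie in $k[z_3^{\vv }]=k[x_3]$, together with Lemma~\ref{lem:awc}) to get $\gamma _2^{\vv }\leq \e _1$, and then a separate $\deg _{x_1}$ comparison to kill the linear part: if $(z_2+h_2)^{\vv }=\alpha x_1+\alpha 'x_3$ with $\alpha \neq 0$, then $\deg _{x_1}\phi (y_1)=d\deg _{x_1}\phi (y_2)\geq d^2$, whereas $\deg _{x_1}y_1=d$, contradicting $\phi (y_1)=y_1$. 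None of these three points is cosmetic; each is where the hypotheses $\phi \in G_{\theta }$ (tameness plus $\phi (y_1)=y_1$) actually enter, so the proposal as it stands has genuine gaps at all of (i), the forcing step, and (iii).
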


We begin with the following lemma, 
which is proved by a technique similar to 
Hadas--Makar-Limanov~\cite[Corollary 3.3]{Hadas}.

\begin{lem}\label{lem:newton}
Let $\tau $ be an element of $\Aut (\kx /k)$, 
and $\w $ an element of $\Gamma^n$, 
where $n\geq 3$ may be arbitrary. 
Assume that there exist $i_1,i_2\in \{ 1,\ldots ,n\} $ 
such that $\tau (x_{i_1})^{\w }$ 
is divisible by $x_i$ for each $i\neq i_2$. 
Then, 
$\tau (x_j)^{\w }$ 
belongs to $k[\x \sm \{ x_{i_2}\} ]$ 
for $j=1,\ldots ,n$. 
\end{lem}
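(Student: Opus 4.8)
The plan is to pass to the $\w$-leading forms and reduce the statement to a vanishing assertion for a single partial derivative. After relabelling the variables I may assume $i_1=1$ and $i_2=n$; set $f_j=\tau(x_j)^{\w}$, which are $\w$-homogeneous. Since $k$ has characteristic zero, $f_j\in k[\x\sm\{x_n\}]$ is equivalent to $\partial f_j/\partial x_n=0$, so it suffices to establish these $n$ vanishings. In this notation the hypothesis reads: every monomial of $f_1$ is divisible by each of $x_1,\ldots,x_{n-1}$, i.e. $f_1=x_1\cdots x_{n-1}h$ for some $\w$-homogeneous $h$. The only input from the automorphism property I would use is that $\tau(x_1),\ldots,\tau(x_n)$ generate $\kx$ over $k$, so that whenever $f_1,\ldots,f_n$ happen to be algebraically independent they too generate $\kx$ (by the remark preceding Lemma~\ref{lem:minimal autom}) and $\det(\partial f_i/\partial x_j)\in k^{\times}$.

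First I would rule out the case that $f_1,\ldots,f_n$ are algebraically independent over $k$. In that case $\det(\partial f_i/\partial x_j)$ is a nonzero constant. For $j\neq 1$ the entry $\partial f_1/\partial x_j$ is divisible by $x_1$, so specializing $x_1\mapsto 0$ collapses the first row of the Jacobian matrix to $\bigl((x_2\cdots x_{n-1}h)|_{x_1=0},0,\ldots,0\bigr)$. Expanding the determinant along this row shows that the nonzero constant $\det(\partial f_i/\partial x_j)$ is divisible by $x_2\cdots x_{n-1}$; as $n\geq 3$ this is a nonconstant polynomial, which is absurd. Hence $f_1,\ldots,f_n$ are always algebraically dependent over $k$.

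It remains to treat the algebraically dependent case, which is the real content, and here I would follow the Newton-polytope technique of Hadas--Makar-Limanov~\cite{Hadas}. By convexity it suffices to prove the claim after refining $\w$ to a full-rank weight: for every weight $\w'$ (over a possibly larger totally ordered group) lying in the relative interior of the cone that selects the present $\w$-faces, each $\tau(x_j)^{\w'}$ is a single monomial occurring in $f_j$, and the divisibility of $\tau(x_1)^{\w'}$ by $x_1\cdots x_{n-1}$ persists; moreover, if for every such $\w'$ the monomial $\tau(x_j)^{\w'}$ avoids $x_n$, then every vertex, and hence every monomial, of $f_j$ avoids $x_n$. Thus I reduce to the case $\rank\w=n$, where each $f_j=\lambda_j x^{a^{(j)}}$ is a monomial with $a^{(1)}_i\geq 1$ for $i<n$. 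The core of the argument is then to show $a^{(j)}_n=0$ for all $j$, which I would carry out by induction on $n$: contracting the identity for $d\tau(x_1)\wedge\cdots\wedge d\tau(x_n)$ with $\partial/\partial x_n$ and using both the algebraic dependence forced above and the divisibility of $f_1$ by the remaining $n-1$ variables, one isolates the $x_n$-exponents and shows they cannot be positive without violating invertibility.

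The main obstacle is exactly this last step: the passage from ``$f_1$ is divisible by all variables but $x_n$'' to ``$x_n$ is absent from every leading form'' uses the automorphism hypothesis in an essential, global way (mere nonvanishing of the leading Jacobian is insufficient in the degenerate case), and it is here that the inductive monomial bookkeeping of the Hadas--Makar-Limanov method must be set up carefully.
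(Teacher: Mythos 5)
There is a genuine gap, and it sits exactly where you locate it yourself: the ``core'' step is never carried out, and the mechanism you sketch for it would not work as stated. Your first two moves are sound: excluding the case where the $\tau (x_j)^{\w }$ are algebraically independent (via the remark before Lemma~\ref{lem:minimal autom}, the leading forms would then generate $\kx $ and have unit Jacobian, and your specialization $x_1\mapsto 0$ does yield a contradiction), and the convexity reduction to a weight of rank $n$, where every $\tau (x_j)^{\w }$ is a single monomial, is a legitimate standard device. But after these reductions the entire content of the lemma remains: you must show that the exponent vector of each leading monomial has vanishing $x_{i_2}$-entry. At that point the wedge identity $d\tau (x_1)\wedge \cdots \wedge d\tau (x_n)=c\, dx_1\wedge \cdots \wedge dx_n$ gives you nothing directly on leading forms, because in the (now forced) algebraically dependent case one has $df_1\wedge \cdots \wedge df_n=0$: the top terms cancel, and ``contracting with $\partial /\partial x_n$'' does not isolate anything. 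Nor can the monomial case be settled from the data you retain: linear dependence of the exponent vectors together with positivity of the first $n-1$ entries of $a^{(1)}$ is compatible with $a^{(j)}_n>0$ (e.g.\ $a^{(1)}=a^{(2)}=(1,1,1)$, $a^{(3)}=(0,0,1)$ for $n=3$), so some global consequence of $\tau $ being an automorphism must be injected, and your proposal never specifies which one.

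The tool the paper uses — and which is the actual Hadas--Makar-Limanov technique, rather than bare Newton-polytope bookkeeping — is the leading-form derivation of a locally nilpotent derivation. For $i_0\neq i_1$, set $D=\tau \circ (\partial /\partial x_{i_0})\circ \tau ^{-1}\in \lnd _k\kx $, and define $D^{\w }$ by keeping, for each $i$, the leading form $D(x_i)^{\w }$ exactly when $\degw D(x_i)x_i^{-1}$ attains its maximum over $i$ (and $0$ otherwise). One checks that $D^{\w }(f^{\w })\neq 0$ forces $D(f)^{\w }=D^{\w }(f^{\w })$, whence $D^{\w }$ is again locally nilpotent and its kernel is factorially closed. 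Since $D$ kills $\tau (x_{i_1})^{\w }$ (as $i_1\neq i_0$), factorial closedness absorbs the divisibility hypothesis: every $x_i$ with $i\neq i_2$ lies in $\ker D^{\w }$, and a transcendence-degree count gives $\ker D^{\w }=k[\x \sm \{ x_{i_2}\} ]$ exactly; hence $\tau (x_j)^{\w }\in k[\x \sm \{ x_{i_2}\} ]$ for all $j\neq i_0$, and varying $i_0$ (possible since $n\geq 3$) covers the remaining index. This is the step your ``inductive monomial bookkeeping'' would have to replace, and without the factorially closed kernel of $D^{\w }$ there is no visible substitute; as written, your argument proves the easy exclusions but not the lemma.
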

\begin{proof}
For each $0\neq D\in \Der _k\kx $, 
we define 
$$
\gamma _D=\max \{ \degw D(x_i)x_i^{-1}\mid 
i=1,\ldots ,n\} , 
$$
and $D^{\w }\in \Der _k\kx $ by 
\begin{equation}\label{eq:D^w}
D^{\w }(x_i)=\left\{ 
\begin{array}{ccl}
D(x_i)^{\w } &\text{ if }& \degw D(x_i)x_i^{-1}=\gamma _D \\
0 & \text{ if } & \degw D(x_i)x_i^{-1}<\gamma _D
\end{array}
\right. 
\end{equation}
for $i=1,\ldots ,n$. 
We show that $D^{\w }(f^{\w })\neq 0$ implies 
$D(f)^{\w }=D^{\w }(f^{\w })$ 
for each $f\in \kx \sm \zs $. 
For each $h\in \kx $ and $i=1,\ldots ,n$, 
we denote $h_{x_i}=\partial h/\partial x_i$ 
for simplicity. 
Then, 
we have 
$\degw f_{x_i}\leq \degw fx_i^{-1}$, 
and $\degw f_{x_i}=\degw fx_i^{-1}$ 
if and only if $f^{\w }$ 
does not belong to $k[\x \sm \{ x_i\} ]$. 
Hence, we know that 
\begin{equation}\label{eq:f^w_x}
(f^{\w })_{x_i}
=\left\{ 
\begin{array}{ccl}
(f_{x_i})^{\w } 
 &\text{ if }& \degw f_{x_i}=\degw fx_i^{-1} \\
0  &\text{ if }& \degw f_{x_i}<\degw fx_i^{-1}
\end{array}
\right. 
\end{equation}
for $i=1,\ldots ,n$. 
Let $I$ be the set of $i\in \{ 1,\ldots ,n\} $ such that 
$\degw D(x_i)x_i^{-1}=\gamma _D$ and 
$\degw f_{x_i}=\degw fx_i^{-1}$. 
Then, we have 
\begin{equation}\label{eq:HMLM}
D^{\w }(f^{\w })
=\sum _{i=1}^nD^{\w }(x_i)(f^{\w })_{x_i}
=\sum _{i\in I}D(x_i)^{\w }(f_{x_i})^{\w }
=\sum _{i\in I}\bigl(D(x_i)f_{x_i}\bigr)^{\w }. 
\end{equation}
Note that 
$$
\degw 
D(x_i)f_{x_i}
\leq (\gamma _D+\degw x_i)+(\degw f-\degw x_i)
=\gamma _D+\degw f 
$$
for $i=1,\ldots ,n$, 
in which the equality holds 
if and only if $i$ belongs to $I$. 
Since $D^{\w }(f^{\w })\neq 0$ by assumption, 
this implies that 
$$
\sum _{i\in I}\bigl(D(x_i)f_{x_i}\bigr)^{\w }
=\left( 
\sum _{i=1}^nD(x_i)f_{x_i}
\right)^{\w } =D(f)^{\w }. 
$$
Thus, 
we get 
$D(f)^{\w }=D^{\w }(f^{\w })$. 
Using this, 
we can prove by induction on $l$ 
that $(D^{\w })^l(f^{\w })\neq 0$ implies 
$D^l(f)^{\w }=(D^{\w })^l(f^{\w })$ 
for each $l\in \N $ and $f\in \kx \sm \zs $. 
Therefore, 
if $D$ is locally nilpotent, 
then $D^{\w }$ is also locally nilpotent.

Now, 
take any $i_0\in \{ 1,\ldots ,n\} \sm \{ i_1\} $, 
and put 
$D=\tau \circ (\partial /\partial x_{i_0})\circ \tau ^{-1}$. 
Then, $D$ is locally nilpotent. 
Hence, $D^{\w }$ is also locally nilpotent. 
Thus, 
$\ker D^{\w }$ is factorially closed in $\kx $, 
i.e., if $fg$ belongs to $\ker D^{\w }$ 
for $f,g\in \kx \sm \zs $, then 
$f$ and $g$ belong to $\ker D^{\w }$ 
(cf.~\cite[Lemma 1.3.1]{Miyanishi}). 
Since $i_1\neq i_0$, 
we have $D\bigl(\tau (x_{i_1})\bigr)=0$. 
This implies that $D^{\w }(\tau (x_{i_1})^{\w })=0$, 
for otherwise 
$D(\tau (x_{i_1}))^{\w }=D^{\w }(\tau (x_{i_1})^{\w })\neq 0$, 
a contradiction. 
Hence, 
$\tau (x_{i_1})^{\w }$ belongs to $\ker D^{\w }$. 
Since 
$x_i$ is a factor of $\tau (x_{i_1})^{\w }$ 
for each $i\neq i_2$ by assumption, 
it follows that 
$x_i$ belongs to $\ker D^{\w }$ for each $i\neq i_2$ 
by the factorially closedness of $\ker D^{\w }$. 
Hence, 
$k[\x \sm \{ x_{i_2}\} ]$ is contained in $\ker D^{\w }$. 
Since $D$ is nonzero, 
so is $D^{\w }$. 
Hence, 
the transcendence degree of $\ker D^{\w }$ over $k$ 
is less than $n$. 
Thus, 
we conclude that 
$\ker D^{\w }=k[\x \sm \{ x_{i_2}\} ]$. 
If $j\neq i_0$, 
then we have $D\bigl( \tau (x_j)\bigr) =0$. 
This implies that 
$D^{\w }\bigl( \tau (x_j)^{\w }\bigr) =0$ as mentioned. 
Hence, 
$\tau (x_j)^{\w }$ belongs to 
$\ker D^{\w }=k[\x \sm \{ x_{i_2}\} ]$. 
Since $n\geq 3$ by assumption, 
we may take 
$i_0'\in \{ 1,\ldots ,n\} \sm \{ i_0,i_1\} $. 
Then, 
by a similar argument with $i_0$ replaced by $i_0'$, 
we can verify that 
$\tau (x_{j})^{\w }$ belongs to $k[\x \sm \{ x_{i_2}\} ]$ 
for each $j\neq i_0'$. 
Thus, 
$\tau (x_{i_0})^{\w }$ also belongs to $k[\x \sm \{ x_{i_2}\} ]$. 
Therefore, 
$\tau (x_j)^{\w }$ belongs to $k[\x \sm \{ x_{i_2}\} ]$ 
for $j=1,\ldots ,n$. 
\end{proof}

Recall that $f_{\theta }=x_1x_3+\theta (x_2)$, 
$y_2=x_2+f_{\theta }x_3$ 
and $y_3=x_3$. 
Since $\vv =(\e _1,\e _2,\e _1)$, 
we have 
\begin{equation}\label{eq:y highest}
f_{\theta }^{\vv }=x_1x_3,\quad 
y_2^{\vv }
=x_1x_3^2,\quad 
y_3^{\vv }=x_3
\end{equation}
by the definition of $\Lambda $. 
Hence, 
we see from (\ref{eq:y_11}) that 
\begin{equation}\label{eq:y highesty_1}
y_1^{\vv }
=-\theta (y_2)^{\vv }x_3^{-1}
\approx x_1^dx_3^{2d-1}. 
\end{equation}
To prove Proposition~\ref{prop:tawild key}, 
assume that $d\geq 9$, 
and take $\phi \in G_{\theta }$ 
such that $\degv \phi (y_2)$ 
is greater than $\degv \phi (x_3)=\degv z_3$. 
Then, 
we have $\phi (y_1)=y_1$. 
Hence, 
$\phi (y_1)^{\vv }=y_1^{\vv }$ 
is divisible by $x_1$ and $x_3$ by (\ref{eq:y highesty_1}). 
Since $\phi (y_1)=(\phi \circ \sigma _{\theta })(x_1)$, 
it follows that 
$\phi (y_2)^{\vv }=(\phi \circ \sigma _{\theta })(x_2)^{\vv }$ 
and $z_3^{\vv }=(\phi \circ \sigma _{\theta })(x_3)^{\vv }$ 
belong to $k[x_1,x_3]$ by Lemma~\ref{lem:newton}. 
Hence, we know that 
$\degv \phi (y_2)=a\e _1$ and 
$\degv z_3=b\e _1$ for some $a,b\in \N $. 
Since $\degv \phi (y_2)>\degv z_3$ by assumption, 
we get $a\geq b+1$.

\begin{lem}\label{lem:(a,b)=(3,1)}
If $\degv y_1z_3=d\degv \phi (y_2)$, 
then we have 
$$
(a,b)=(3,1),\quad \degv \phi (y_2)=3\degv z_3
$$
and $z_3^{\vv}$ is a linear form 
in $x_1$ and $x_3$ over $k$. 
\end{lem}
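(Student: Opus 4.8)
The plan is to reduce the lemma to an elementary arithmetic argument about the two exponents $a$ and $b$ determined by $\degv \phi(y_2)=a\e_1$ and $\degv z_3=b\e_1$, which are already furnished by the discussion preceding the lemma: by Lemma~\ref{lem:newton} the initial forms $\phi(y_2)^{\vv}$ and $z_3^{\vv}$ lie in $k[x_1,x_3]$, so both $\vv$-degrees are multiples of $\e_1$, and the assumption $\degv\phi(y_2)>\degv\phi(x_3)$ gives $a\geq b+1$. Since $a,b\in\N$ we also have $b\geq 1$. All the input I need is thus in place, and the only genuine task is to extract $(a,b)=(3,1)$ from the hypothesis $\degv y_1z_3=d\degv\phi(y_2)$.

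First I would record that $\degv y_1=(3d-1)\e_1$. This is immediate from (\ref{eq:y highesty_1}): there $y_1^{\vv}\approx x_1^dx_3^{2d-1}$, and since both $x_1$ and $x_3$ carry $\vv$-weight $\e_1$, the weight of this monomial is $\bigl(d+(2d-1)\bigr)\e_1=(3d-1)\e_1$. Using additivity of $\degv$ on products, $\degv y_1z_3=(3d-1+b)\e_1$, whereas $d\degv\phi(y_2)=da\e_1$. The hypothesis therefore becomes the single equation $3d-1+b=da$, equivalently $b=d(a-3)+1$.

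The heart of the argument is to show this equation, together with $a>b\geq 1$ and $d\geq 9$, forces $a=3$ and $b=1$. If $a\leq 2$, then $b=d(a-3)+1\leq 1-d<0$, contradicting $b\geq 1$; so $a\geq 3$. If $a\geq 4$, then $a-3\geq 1$ and $d\geq 9$ give $b=d(a-3)+1\geq 9(a-3)+1$, so the requirement $a>b$ would force $a-1>9(a-3)$, i.e.\ $8a<26$, which fails for every $a\geq 4$. Hence $a=3$, and then $b=1$. This yields $\degv\phi(y_2)=3\e_1$ and $\degv z_3=\e_1$, so in particular $\degv\phi(y_2)=3\degv z_3$. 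I expect this sign/inequality bookkeeping to be the only step demanding care, since it is exactly where the standing hypothesis $d\geq 9$ is used; everything else is formal.

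Finally I would identify $z_3^{\vv}$. It is $\vv$-homogeneous, lies in $k[x_1,x_3]$, and has $\vv$-degree $\e_1$; a monomial $x_1^ix_3^j$ has $\vv$-degree $(i+j)\e_1$, which equals $\e_1$ only for $i+j=1$. Thus only the monomials $x_1$ and $x_3$ can appear, so $z_3^{\vv}=\alpha x_1+\beta x_3$ for some $\alpha,\beta\in k$, that is, a linear form in $x_1$ and $x_3$ over $k$. This delivers all three conclusions of the lemma.
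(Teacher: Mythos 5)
Your proof is correct and follows essentially the same route as the paper: both derive $\degv y_1=(3d-1)\e_1$ from (\ref{eq:y highesty_1}), reduce the hypothesis to the equation $3d-1+b=da$, and combine it with $1\leq b\leq a-1$ and $d\geq 9$ to force $(a,b)=(3,1)$ (the paper via the inequality chain $3\leq a\leq (3d-2)/(d-1)<4$, you via an equivalent case split on $a\leq 2$ and $a\geq 4$). The final identification of $z_3^{\vv}$ as a linear form in $x_1$ and $x_3$ is also the same degree-counting observation the paper makes.
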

\begin{proof}
By (\ref{eq:y highesty_1}), 
we see that $\degv y_1=(3d-1)\e _1$. 
Since $\degv \phi (y_2)=a\e _1$, $\degv z_3=b\e _1$, 
and $\degv y_1z_3=d\degv \phi (y_2)$ by assumption, 
we know that $3d-1+b=da$. 
Because $1\leq b\leq a-1$, 
it follows that 
$1\leq da-3d+1\leq a-1$. 
Hence, we have 
$$
3\leq a\leq \frac{3d-2}{d-1}=3+\frac{1}{d-1}<4, 
$$
since $d\geq 9$. 
Thus, we get $a=3$. 
Since $3d-1+b=da=3d$, 
it follows that $b=1$. 
Therefore, 
we have $\degv \phi (y_2)=3\e _1=3\degv z_3$. 
Since $\degv z_3=\e _1$ and $\vv =(\e _1,\e _2,\e _1)$, 
we know that $z_3^{\vv}$ is a linear form 
in $x_1$ and $x_3$ over $k$. 
\end{proof}

Using this lemma, 
we can prove the following proposition.

\begin{prop}\label{prop:awcpf1}
If $(y_1z_3)^{\vv }\approx (\phi (y_2)^{\vv })^d$, 
then we have $\phi (y_2)^{\vv }\approx x_1x_3^2$ 
and $z_3^{\vv }\approx x_3$. 
\end{prop}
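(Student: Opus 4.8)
The plan is to start from the hypothesis $(y_1z_3)^{\vv }\approx (\phi (y_2)^{\vv })^d$ and feed it into Lemma~\ref{lem:(a,b)=(3,1)}. Observe that the hypothesis says precisely $\degv y_1z_3=d\,\degv \phi (y_2)$, so the lemma applies verbatim and yields $(a,b)=(3,1)$, $\degv \phi (y_2)=3\e _1$, $\degv z_3=\e _1$, and that $z_3^{\vv }$ is a linear form in $x_1$ and $x_3$. This already pins down the $\vv$-degrees, so the remaining work is to promote the degree information into the stated forms of the leading parts.

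First I would exploit the divisibility argument from Lemma~\ref{lem:newton} once more. Since $\phi \in G_{\theta }$, we have $\phi (y_1)=y_1$, and from (\ref{eq:y highesty_1}) the leading form $y_1^{\vv }\approx x_1^dx_3^{2d-1}$ is divisible by every variable except $x_2$. Applying Lemma~\ref{lem:newton} to $\tau =\phi \circ \sigma _{\theta }$ with $i_1$ the index of $x_1$ and $i_2$ the index of $x_2$, we get that $\phi (y_2)^{\vv }=(\phi \circ \sigma _{\theta })(x_2)^{\vv }$ and $z_3^{\vv }$ both belong to $k[x_1,x_3]$. Combined with $\degv \phi (y_2)=3\e _1$ and the fact that $\vv =(\e _1,\e _2,\e _1)$ assigns $x_1$ and $x_3$ the same weight $\e _1$ while $x_2$ gets $\e _2<\e _1$, the form $\phi (y_2)^{\vv }$ is a $\vv$-homogeneous polynomial in $x_1,x_3$ of total $\e _1$-degree $3$, i.e. a binary cubic form $\sum _{i=0}^3 c_i x_1^i x_3^{3-i}$.

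The crux is to rule out all such cubics except $x_1x_3^2$ (up to scalar), and similarly to show $z_3^{\vv }\approx x_3$ rather than an arbitrary linear combination $\alpha x_1+\beta x_3$. The key constraint I would press on is the Jacobian/algebraic-independence structure: $y_1$, $\phi (y_2)=(\phi\circ\sigma_\theta)(x_2)$ and $z_3=(\phi\circ\sigma_\theta)(x_3)$ are the images of a coordinate system under an automorphism, so $d\phi(y_2)\wedge dz_3\wedge dy_1$ is a nonzero constant multiple of $dx_1\wedge dx_2\wedge dx_3$. Passing to leading forms, if the three leading forms $y_1^{\vv }$, $\phi (y_2)^{\vv }$, $z_3^{\vv }$ were algebraically dependent the $\vv$-degree of $\phi\circ\sigma_\theta$ would strictly exceed $|\vv|$; I would use this together with the hypothesis $(y_1z_3)^{\vv }\approx (\phi(y_2)^{\vv })^d$ to force a rigid relation. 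Concretely, since $y_1^{\vv }\approx x_1^dx_3^{2d-1}$, the relation $(y_1z_3)^{\vv }\approx (\phi(y_2)^{\vv })^d$ reads $x_1^dx_3^{2d-1}z_3^{\vv }\approx (\phi(y_2)^{\vv })^d$, so $(\phi(y_2)^{\vv })^d$ is divisible by $x_1^d$, whence $x_1\mid \phi(y_2)^{\vv }$; writing $\phi(y_2)^{\vv }=x_1\,Q(x_1,x_3)$ with $Q$ a binary quadratic and substituting back gives $x_3^{2d-1}z_3^{\vv }\approx x_3^{d}\cdot$ (something)...

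forcing $Q\approx x_3^2$ and $z_3^{\vv }\approx x_3$ by comparing the exact power of $x_3$ and the absence of any $x_1$-factor in $z_3^{\vv }$. This elementary divisibility bookkeeping in the unique factorization domain $k[x_1,x_3]$ is where the real content sits, but it is routine once the degrees are fixed; the main obstacle is organizing it so that the cubic $\phi(y_2)^{\vv }$ is shown to have no lower-order terms, which I expect to follow from the $\vv$-homogeneity (all monomials of a given $\vv$-degree in $x_1,x_3$ have the same bidegree only because $x_1$ and $x_3$ share the weight $\e _1$, so homogeneity alone does not kill mixed terms) combined with the exact matching of $x_1$- and $x_3$-exponents dictated by the relation. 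I would conclude $\phi (y_2)^{\vv }\approx x_1x_3^2$ and $z_3^{\vv }\approx x_3$, which is exactly the assertion of the proposition.
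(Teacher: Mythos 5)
Your proposal is correct and follows essentially the same route as the paper: apply Lemma~\ref{lem:(a,b)=(3,1)} to get $\degv \phi (y_2)=3\e _1$, $\degv z_3=\e _1$ and $z_3^{\vv }=\alpha x_1+\beta x_3$, then use $y_1^{\vv }\approx x_1^dx_3^{2d-1}$ to rewrite the hypothesis as $(\phi (y_2)^{\vv })^d\approx x_1^dx_3^{2d-1}(\alpha x_1+\beta x_3)$ and finish by comparing multiplicities of $x_1$ and $x_3$ in the UFD $k[x_1,x_3]$, exactly as the paper does. The middle digression about Jacobian/algebraic-independence is never actually used and can be deleted; the divisibility bookkeeping you sketch at the end (the exponent of $x_3$ on the right must be divisible by $d$, forcing $\alpha =0$, $Q\approx x_3^2$) is precisely the paper's closing step.
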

\begin{proof}
Since $(y_1z_3)^{\vv }\approx (\phi (y_2)^{\vv })^d$, 
we have $\degv y_1z_3=d\degv \phi (y_2)$. 
Hence, 
we may write 
$z_3^{\vv }=\alpha x_1+\beta x_3$ 
by Lemma~\ref{lem:(a,b)=(3,1)}, 
where $\alpha ,\beta \in k$ 
are such that $\alpha \neq 0$ or $\beta \neq 0$. 
In view of (\ref{eq:y highesty_1}), 
we have 
$$
(\phi (y_2)^{\vv })^d\approx (y_1z_3)^{\vv }
\approx x_1^dx_3^{2d-1}(\alpha x_1+\beta x_3). 
$$
Since $d\geq 9$, 
this implies that $\alpha =0$, $\beta \neq 0$, 
$\phi (y_2)^{\vv }\approx x_1x_3^2$ 
and $z_3^{\vv }=\beta x_3\approx x_3$. 
\end{proof}

Since $\sigma _{\theta }(f_{\theta })=f_{\theta }$, 
$\sigma _{\theta }(x_3)=x_3$ and $\phi (y_1)=y_1$, 
we have 
\begin{equation}\label{eq:core:tawild}
\phi (f_{\theta })=\phi (\sigma _{\theta }(f_{\theta }))
=\phi (y_1x_3+\theta (y_2))
=y_1z_3+\theta (\phi (y_2)). 
\end{equation}
The following proposition forms the core of 
the proof of Proposition~\ref{prop:tawild key}.

\begin{prop}\label{prop:awcpf2}
If $(y_1z_3)^{\vv }\not\approx (\phi (y_2)^{\vv })^d$, 
then the following inequalities hold$:$

\noindent{\rm (i)} 
$\degv d\phi (f_{\theta })\wedge dz_3>(d-1)\degv \phi (y_2)$.

\noindent{\rm (ii)} 
$\degv dz_2\wedge dz_3>(d-1)\degv \phi (y_2)
+\degv z_3$.

\noindent{\rm (iii)} 
$\gamma _0^{\vv }+\gamma ^{\vv }_3>(d-1)\degv \phi (y_2)$. 
\end{prop}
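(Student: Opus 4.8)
The plan is to reduce the two inequalities (ii) and (iii) to inequality (i), and then to attack (i) by a leading-form analysis of the $2$-form $d\phi(f_{\theta})\wedge dz_3$. Throughout I write $z_i=\phi(x_i)$, $p=\degv\phi(y_2)=a\e_1$ and $r=\degv z_3=b\e_1$, and I set $P:=\phi(y_2)^{\vv}$, $Q:=z_3^{\vv}$, which by Lemma~\ref{lem:newton} both lie in $k[x_1,x_3]$ (of homogeneous degrees $a,b$ with $a\ge b+1\ge 2$). Recall $\phi(y_1)=y_1$.

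First I would record two identities. From $y_2=x_2+f_{\theta}x_3$ and $\sigma_{\theta}(f_{\theta})=f_{\theta}$ one gets $\phi(f_{\theta})z_3=\phi(y_2)-z_2$; differentiating and wedging with $dz_3$ gives
\[
z_3\,d\phi(f_{\theta})\wedge dz_3=d\phi(y_2)\wedge dz_3-dz_2\wedge dz_3 .
\]
Since $\degv(d\phi(y_2)\wedge dz_3)\le p+r$ by (\ref{eq:inequomega}) and $d\ge 9$ forces $(d-1)p+r>p+r$, inequality (i) makes the left-hand term strictly dominate, so $\degv(dz_2\wedge dz_3)=r+\degv(d\phi(f_{\theta})\wedge dz_3)>(d-1)p+r$, which is (ii). For (iii) I would note that adding $h_0\in k[z_3]$ leaves $d\phi(f_{\theta})\wedge dz_3$ unchanged (the extra term is $h_0'(z_3)\,dz_3\wedge dz_3=0$), so by (\ref{eq:deg df = deg f}) one has $\degv(d\phi(f_{\theta})\wedge dz_3)\le\degv(\phi(f_{\theta})+h_0)+\degv z_3$ for every such $h_0$; minimizing over $h_0$ yields $\degv(d\phi(f_{\theta})\wedge dz_3)\le\gamma_0^{\vv}+\gamma_3^{\vv}$, and (i) then gives (iii). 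Thus everything reduces to (i).

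To prove (i) I would first establish $\degv\phi(f_{\theta})>(d-1)p$ unconditionally. Using (\ref{eq:core:tawild}), $\phi(f_{\theta})=y_1z_3+\theta(\phi(y_2))$ with $\degv(y_1z_3)=(3d-1+b)\e_1$ and $\degv\theta(\phi(y_2))=d\,p$, whose leading form is $\approx P^d$. The two summands can cancel at top degree only when they have equal $\vv$-degree, in which case Lemma~\ref{lem:(a,b)=(3,1)} pins down $(a,b)=(3,1)$ and the hypothesis $(y_1z_3)^{\vv}\not\approx(\phi(y_2)^{\vv})^d$ forbids the cancellation; hence $\degv\phi(f_{\theta})\ge d\,p>(d-1)p$ in all cases. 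Next I would look at the Jacobian $J(\phi(f_{\theta})^{\vv},Q)$ in $k[x_1,x_3]$: when it is nonzero, the leading form of $d\phi(f_{\theta})\wedge dz_3$ is $d(\phi(f_{\theta})^{\vv})\wedge dQ\neq 0$, so $\degv(d\phi(f_{\theta})\wedge dz_3)=\degv\phi(f_{\theta})+r>(d-1)p$ and (i) follows immediately. A short degree-and-divisibility computation using $d\ge 9$ shows this Jacobian is nonzero whenever $\degv(y_1z_3)\ge\degv\theta(\phi(y_2))$ (where Lemma~\ref{lem:(a,b)=(3,1)} and the case analysis force $(a,b)\in\{(2,1),(3,2),(3,1)\}$ and rule out the degenerate monomial configurations), and more generally whenever $P$ and $Q$ are algebraically independent.

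The hard part will be the remaining case: $\degv(y_1z_3)<\degv\theta(\phi(y_2))$ together with $P$ and $Q$ algebraically dependent. Then $\phi(f_{\theta})^{\vv}\approx P^d$, the leading forms in both terms of
\[
d\phi(f_{\theta})\wedge dz_3=z_3\,dy_1\wedge dz_3+\theta'(\phi(y_2))\,d\phi(y_2)\wedge dz_3
\]
collapse simultaneously, and a naive estimate can drop below $(d-1)p$. Here I would have to control the sub-leading $\vv$-degree of $d\phi(y_2)\wedge dz_3$ — equivalently, to bound how far $\phi(y_2)$ can be reduced by $z_2+z_3k[z_3]$, in view of $\phi(f_{\theta})z_3=\phi(y_2)-z_2$ — by invoking the Shestakov--Umirbaev inequality (Theorem~\ref{thm:SUineq}) for the pair $\{z_3,\phi(y_2)\}$, whose initial forms satisfy $Q^{a'}\approx P^{b'}$ with $\gcd(a',b')=1$. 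The numerical hypothesis $d\ge 9$ and the hypothesis $(y_1z_3)^{\vv}\not\approx(\phi(y_2)^{\vv})^d$ should force the surviving contribution to stay strictly above $(d-1)p$; verifying that the Shestakov--Umirbaev estimate is sharp enough to defeat the possible cancellation (which occurs precisely when $b'\mid d$) in this degenerate configuration is the main obstacle.
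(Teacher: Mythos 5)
Your reductions of (ii) and (iii) to (i) are correct and coincide with the paper's own, and your treatment of (i) when the relevant leading forms are algebraically independent --- in particular the two subcases $\degv y_1z_3=d\degv \phi (y_2)$ and $\degv y_1z_3>d\degv \phi (y_2)$, where you invoke Lemma~\ref{lem:(a,b)=(3,1)}, the divisibility argument, and the fact that in characteristic zero a binomial is never a proper power --- is essentially the paper's argument. The genuine gap is the case you yourself flag as ``the main obstacle'': $\degv y_1z_3<d\degv \phi (y_2)$ with $\phi (y_2)^{\vv }$ and $z_3^{\vv }$ algebraically dependent. There you only sketch a strategy (bounding the degree drop of $d\phi (y_2)\wedge dz_3$ via Theorem~\ref{thm:SUineq}) and explicitly admit you cannot verify that it defeats the feared cancellation; so the proof is incomplete exactly where the hypothesis $(y_1z_3)^{\vv }\not\approx (\phi (y_2)^{\vv })^d$ is no longer available to help.

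In fact this remaining case needs no Shestakov--Umirbaev inequality, and your worry about a simultaneous collapse of both terms is unfounded. In the decomposition
$$
d\phi (f_{\theta })\wedge dz_3
=z_3\, dy_1\wedge dz_3+\theta '(\phi (y_2))\, d\phi (y_2)\wedge dz_3 ,
$$
the $\vv $-degree of a polynomial times a form is \emph{exactly} additive, so
$\degv \bigl( \theta '(\phi (y_2))\, d\phi (y_2)\wedge dz_3\bigr)
=(d-1)\degv \phi (y_2)+\degv \bigl( d\phi (y_2)\wedge dz_3\bigr) $;
moreover $d\phi (y_2)\wedge dz_3\neq 0$ because $\phi (y_2)$ and $z_3$ are the images of $x_2$ and $x_3$ under the automorphism $\phi \circ \sigma _{\theta }$, and every nonzero form has strictly positive $\vv $-degree. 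Hence this term exceeds $(d-1)\degv \phi (y_2)$ no matter how degenerate the leading forms are; no lower bound on $\degv d\phi (y_2)\wedge dz_3$ beyond positivity is needed. What your proposal never states is the required upper bound on the \emph{other} term: writing $\degv \phi (y_2)=a\e _1$ and $\degv z_3=b\e _1$, the case hypothesis $3d-1+b<da$ together with $b\geq 1$ forces $a\geq 4$, and if one had $\degv y_1z_3^2>(d-1)\degv \phi (y_2)$, i.e.\ $(d-1)a\leq 3d-2+2b\leq 3d+2a-4$, then $a\leq (3d-4)/(d-3)<4$ for $d\geq 9$, a contradiction; therefore
$\degv z_3\, dy_1\wedge dz_3\leq \degv y_1z_3^2\leq (d-1)\degv \phi (y_2)$ by (\ref{eq:inequomega}). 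So the second term strictly dominates the first, no cancellation can occur, and (i) follows. This is the paper's argument in this case, and it works whether or not $\phi (y_2)^{\vv }$ and $z_3^{\vv }$ are dependent; your SU-inequality detour is thus not only unverified but unnecessary.
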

\begin{proof}
(i) 
First, 
assume that $\degv y_1z_3=d\degv \phi (y_2)$. 
Then, 
we have 
\begin{equation}\label{eq:dFdz_31}
\phi (f_{\theta })^{\vv }
=\bigl(y_1z_3+\theta (\phi (y_2))\bigr)^{\vv }
=(y_1z_3)^{\vv }+c(\phi (y_2)^{\vv })^d 
\end{equation}
by (\ref{eq:core:tawild}), 
since 
$(y_1z_3)^{\vv }\not\approx (\phi (y_2)^{\vv })^d$ 
by assumption. 
Hence, we get 
\begin{equation}\label{eq:pf1:awcpf2}
\degv \phi (f_{\theta })=d\degv \phi (y_2). 
\end{equation}
Now, 
suppose that (i) is false. 
Then, we have 
\begin{align*}
\degv d\phi (f_{\theta })\wedge dz_3
\leq (d-1)\degv \phi (y_2)
<\degv \phi (f_{\theta })+\degv z_3 
\end{align*}
by (\ref{eq:pf1:awcpf2}). 
This implies that 
$\phi (f_{\theta })^{\vv }$ and $z_3^{\vv }$ 
are algebraically dependent over $k$. 
Since $\phi (f_{\theta })^{\vv }$ and $z_3^{\vv }$ 
are $\vv $-homogeneous polynomials of positive $\vv $-degrees, 
it follows that 
$(\phi (f_{\theta })^{\vv })^l=c_1(z_3^{\vv })^{m}$ 
for some $l,m\in \N $ with $\gcd (l,m)=1$ 
and $c_1\in k^{\times }$. 
By (\ref{eq:pf1:awcpf2}) and Lemma~\ref{lem:(a,b)=(3,1)}, 
we know that $\degv \phi (f_{\theta })=3d\degv z_3$. 
Hence, we get $(l,m)=(1,3d)$. 
By (\ref{eq:dFdz_31}), 
it follows that 
$c(\phi (y_2)^{\vv })^d=c_1(z_3^{\vv })^{3d}-y_1^{\vv }z_3^{\vv }$. 
From this, 
we see that 
$(\phi (y_2)^{\vv })^d$ is divisible by $z_3^{\vv }$. 
By Lemma~\ref{lem:(a,b)=(3,1)}, 
$z_3^{\vv }$ is a linear form, and so irreducible. 
Hence, $z_3^{\vv }$ divides $\phi (y_2)^{\vv }$. 
Since $d\geq 9$ by assumption, 
it follows that $z_3^{\vv }$ divides $y_1^{\vv }$. 
By (\ref{eq:y highesty_1}), 
we see that $z_3^{\vv }=c_2x_i$ 
for some $c_2\in k^{\times }$ and $i\in \{ 1,3\} $. 
Thus, 
$c(\phi (y_2)^{\vv })^d
=c_1(c_2x_i)^{3d}-c_2x_iy_1^{\vv }$ 
is a binomial. 
Since $k$ is of characteristic zero, 
no binomial is a proper power of a polynomial. 
Therefore, we get $d=1$, a contradiction.

Next, 
assume that $\degv y_1z_3<d\degv \phi (y_2)$. 
Then, we have $3d-1+b<da$. 
This yields that $a>3+(b-1)/d\geq 3$. 
Hence, we get $a\geq 4$. 
First, we prove that 
\begin{equation}\label{eq:pf:case2:awcpf2}
\degv y_1z_3^2\leq (d-1)\degv \phi (y_2)
\end{equation}
by contradiction. 
Suppose that (\ref{eq:pf:case2:awcpf2}) is false. 
Then, 
we have 
$$
(d-1)a\leq (3d-1+2b)-1. 
$$
Since $b\leq a-1$, 
it follows that $(d-1)a\leq 3d+2a-4$. 
Since $d\geq 9$ by assumption, 
this yields that 
$$
a\leq \frac{3d-4}{d-3}=3+\frac{5}{d-3}<4, 
$$
a contradiction. 
Therefore, 
(\ref{eq:pf:case2:awcpf2}) is true. 
By (\ref{eq:core:tawild}), 
we get 
\begin{equation}\label{eq:phi(F)}
d\phi (f_{\theta })\wedge dz_3
=d(y_1z_3)\wedge dz_3+
d\theta (\phi (y_2))\wedge dz_3. 
\end{equation}
Since $\phi (y_2)=\phi (\sigma _{\theta }(x_2))$ 
and $z_3=\phi (\sigma _{\theta }(x_3))$ 
are algebraically independent over $k$, 
we know that $d\phi (y_2)\wedge dz_3$ is nonzero. 
Hence, the $\vv $-degree of 
$d\theta (\phi (y_2))\wedge dz_3 =
\theta '(\phi (y_2))d\phi (y_2)\wedge dz_3$ 
is greater than 
$\degv \theta '(\phi (y_2))=(d-1)\degv \phi (y_2)$. 
Thus, 
it follows from (\ref{eq:pf:case2:awcpf2}) that
$$
\degv d\theta (\phi (y_2))\wedge dz_3 
>(d-1)\degv \phi (y_2)\geq \degv y_1z_3^2
\geq \degv d(y_1z_3)\wedge dz_3. 
$$
In view of (\ref{eq:phi(F)}), 
this implies that 
$\degv d\phi (f_{\theta })\wedge dz_3>(d-1)\degv \phi (y_2)$.

Finally, assume that 
$\degv y_1z_3>d\degv \phi (y_2)$. 
Then, we have 
\begin{align*}
&\degv d\theta (\phi (y_2))\wedge dz_3
\leq \degv \theta (\phi (y_2))+\degv z_3
=d\degv \phi (y_2)+\degv z_3 \\
&\quad <\degv y_1z_3+\degv z_3=\degv y_1z_3^2. 
\end{align*}
We show that 
\begin{equation}\label{eq:phi (F)!}
\degv y_1z_3^2=\degv d(y_1z_3)\wedge dz_3. 
\end{equation}
Then, 
it follows that 
$\degv d\theta (\phi (y_2))\wedge dz_3
<\degv d(y_1z_3)\wedge dz_3$. 
In view of (\ref{eq:phi(F)}), 
this implies that 
$\degv d\phi (f_{\theta })\wedge dz_3
=\degv d(y_1z_3)\wedge dz_3$. 
By (\ref{eq:phi (F)!}), 
this is equal to $\degv y_1z_3^2$, 
and is greater than $(d-1)\degv \phi (y_2)$ 
by the assumption that $\degv y_1z_3>d\degv \phi (y_2)$. 
Therefore, 
we get 
$\degv d\phi (f_{\theta })\wedge dz_3>(d-1)\degv \phi (y_2)$.

Since $d(y_1z_3)\wedge dz_3=z_3dy_1\wedge dz_3$, 
it suffices to verify that 
$y_1^{\vv }$ and $z_3^{\vv }$ 
are algebraically independent over $k$. 
Suppose to the contrary that $y_1^{\vv }$ and $z_3^{\vv }$ 
are algebraically dependent over $k$. 
Then, 
we have $(y_1^{\vv })^q\approx (z_3^{\vv })^r$ 
for some $q,r\in \N $ with $\gcd (q,r)=1$, 
since $y_1^{\vv }$ and $z_3^{\vv }$ 
are $\vv $-homogeneous polynomials of positive 
$\vv $-degrees. 
Because $\gcd (d,2d-1)=1$, 
we see that 
$y_1^{\vv }\approx x_1^dx_2^{2d-1}$ 
is not a proper power of a polynomial. 
Hence, 
we know that $r=1$. 
Thus, we get $q\degv y_1=\degv z_3$. 
Since $\degv y_1z_3>d\degv \phi (y_2)$ by assumption, 
and $\degv \phi (y_2)>\degv z_3$ by the choice of $\phi $, 
it follows that 
$$
2\degv z_3=q\degv y_1+\degv z_3
\geq \degv y_1z_3>d\degv \phi (y_2)>d\degv z_3. 
$$
This contradicts that $d\geq 9$. 
Therefore, 
$y_1^{\vv }$ and $z_3^{\vv }$ 
are algebraically independent over $k$, 
proving that 
$\degv d\phi (f_{\theta })\wedge dz_3>(d-1)\degv \phi (y_2)$.

(ii) 
By (i), 
we know that 
\begin{align*}
&\degv z_3d\phi (f_{\theta })\wedge dz_3
=\degv d\phi (f_{\theta })\wedge dz_3+\degv z_3 \\
&\quad >(d-1)\degv \phi (y_2)+\degv z_3 
>\degv d\phi (y_2)\wedge dz_3, 
\end{align*}
since $d\geq 9$. 
Because $z_2=\phi (y_2)-\phi (f_{\theta })z_3$ 
by (\ref{eq:f_1f_2}), 
we have 
$$
dz_2\wedge dz_3=d\phi(y_2)\wedge dz_3
-z_3d\phi (f_{\theta })\wedge dz_3. 
$$
Therefore, we get 
$$
\degv dz_2\wedge dz_3=
\degv z_3d\phi (f_{\theta })\wedge dz_3
>(d-1)\degv \phi (y_2)+\degv z_3. 
$$

(iii) 
Take $h_0\in k[z_3]$ such that 
$\gamma _0^{\vv }=\degv (\phi(f_{\theta })+h_0)$. 
Then, we have 
$$
\gamma _0^{\vv }+\gamma _3^{\vv }
=\degv (\phi(f_{\theta })+h_0)+\degv z_3
\geq \degv d(\phi(f_{\theta })+h_0)\wedge dz_3
=\degv d\phi(f_{\theta })\wedge dz_3. 
$$
Therefore, 
we get 
$\gamma _0^{\vv }+\gamma _3^{\vv }>(d-1)\degv \phi (y_2)$ 
by (i). 
\end{proof}

Now, 
let us complete the proof of 
Proposition~\ref{prop:tawild key}. 
First, 
we prove that 
$(y_1z_3)^{\vv }\approx (\phi (y_2)^{\vv })^d$. 
Suppose to the contrary that 
$(y_1z_3)^{\vv }\not\approx (\phi (y_2)^{\vv })^d$. 
Then, 
we have the three inequalities 
(i), (ii) and (iii) of Proposition~\ref{prop:awcpf2}. 
We deduce that $\phi $ is wild by means of Lemma~\ref{lem:awc}. 
By the inequality (iii), 
we have 
$$
\gamma _0^{\vv }+\gamma _3^{\vv }
>(d-1)\degv \phi (y_2)
>\degv \phi (y_2), 
$$
since $d\geq 9$. 
By Lemma~\ref{lem:taiguu}, 
it follows that 
$\gamma _3^{\w }<\gamma _2^{\w }$ 
and $\gamma _0^{\w }<\gamma _2^{\w }$. 
Hence, 
$\phi $ satisfies the assumption of Lemma~\ref{lem:awc1}. 
Since $\degv \phi (y_2)>\degv z_3$ 
by the choice of $\phi $, 
the inequality (ii) yields that 
$$
\degv dz_2\wedge dz_3
>(d-1)\degv \phi (y_2)+\degv z_3
>(d-1)\degv z_3. 
$$ 
Since $d\geq 9$, 
this implies that $\phi $ satisfies (\ref{eq:awc}) 
because of Lemma~\ref{lem:awc1} (ii). 
The second part of (\ref{eq:awc}) implies 
$\gamma _1^{\vv }>\gamma _2^{\vv }$. 
Since $\gamma _0^{\vv }<\gamma _2^{\vv }$, 
it follows that $\gamma _0^{\vv }<\gamma _1^{\vv }$. 
Thus, 
we conclude from 
Lemma~\ref{lem:awc} that $\phi $ is wild, 
a contradiction. 
This proves that 
$(y_1z_3)^{\vv }\approx (\phi (y_2)^{\vv })^d$. 
Therefore, 
we get $\phi (y_2)^{\vv }\approx x_1x_3^2$ 
and $z_3^{\vv }\approx x_3$ 
thanks to Proposition~\ref{prop:awcpf1}. 
Hence, 
we have 
$\degv \phi (y_2)=3\e _1$, 
proving Proposition~\ref{prop:tawild key} (i). 
Since $\vv =(\e _1,\e _2,\e _1)$, 
we know that 
$z_3=\alpha _3x_3+g_3$ 
for some $\alpha _3\in k^{\times }$ and $g_3\in k[x_2]$. 
This proves Proposition~\ref{prop:tawild key} (ii).

Next, 
we prove Proposition~\ref{prop:tawild key} (iii). 
First, 
we show that 
$\gamma _2^{\vv }\leq \e _1$. 
Suppose to the contrary that 
$\gamma _2^{\vv }>\e _1$. 
We deduce that $\phi $ is wild 
by means of Lemma~\ref{lem:awc}. 
Since $\gamma _3^{\vv }=\degv z_3=\e _1$, 
we have 
$\gamma _2^{\vv }>\gamma _3^{\vv }$. 
Hence, 
$\phi $ satisfies the assumption of Lemma~\ref{lem:awc1}. 
Define $\psi \in \Aut (\kx /k)$ as before Lemma~\ref{lem:awc}. 
Then, 
$\psi (x_2)^{\vv }=(z_2+h_2)^{\vv }$ 
does not belong to $k[z_3^{\vv }]=k[x_3]$ 
by the minimality of $\gamma _2^{\vv }$. 
Hence, 
$\psi (x_2)^{\vv }$ and $z_3^{\vv }\approx x_3$ 
are algebraically independent over $k$. 
Since $d\geq 9$, 
we know by Lemma~\ref{lem:awc1} (i) 
that $\phi $ satisfies (\ref{eq:awc}). 
Finally, 
we show that $\gamma _0 ^{\vv }<\gamma _1^{\vv }$. 
The second part of (\ref{eq:awc}) 
implies $\gamma _1^{\vv }>\gamma _2^{\vv }$. 
If $\gamma _0^{\vv }\leq \gamma _2^{\vv }$, 
then we get $\gamma _0^{\vv }<\gamma _1^{\vv }$. 
So assume that $\gamma _0^{\vv }>\gamma _2^{\vv }$. 
Then, we have 
$\degv \phi (y_2)\geq \gamma _0^{\vv }+\gamma _3^{\vv }$ 
by the contraposition of Lemma~\ref{lem:taiguu}. 
Since $\degv \phi (y_2)=3\e _1$ and $\gamma _3^{\vv }=\e _1$, 
it follows that $2\e _1\geq \gamma _0^{\vv }$. 
Hence, 
we get 
$$
\gamma _1^{\vv }\geq \gamma _2^{\vv }+2\gamma _3^{\vv }
>2\gamma _3^{\vv }=2\e _1\geq \gamma _0^{\vv }
$$ 
by the second part of (\ref{eq:awc}). 
Thus, 
we conclude from Lemma~\ref{lem:awc} that 
$\phi $ is wild, 
a contradiction. 
This proves that $\gamma _2^{\vv }\leq \e _1$. 
Therefore, 
we may write 
$$
z_2+h_2=\alpha x_1+\alpha 'x_3+g, 
$$
where $\alpha ,\alpha '\in k$ 
and $g\in k[x_2]$. 
In order to conclude that $\gamma _2^{\vv }<\e _1$, 
it suffices to show that 
$(\alpha ,\alpha ')=(0,0)$. 
Suppose to the contrary that 
$(\alpha ,\alpha ')\neq (0,0)$. 
Then, 
we have $\alpha \neq 0$, 
since $(z_2+h_2)^{\vv }=\alpha x_1+\alpha 'x_3$ 
does not belong to $k[z_3^{\vv }]=k[x_3]$. 
Since $h_2$ is an element of $k[z_3]$, 
and $z_3=\alpha _3x_3+g_3$ belongs to $k[x_2,x_3]$, 
we know that $h_2$ belongs to $k[x_2,x_3]$. 
Hence, $z_2=\alpha x_1+\alpha 'x_3+g-h_2$ 
is a linear polynomial in $x_1$ over $k[x_2,x_3]$ 
with leading coefficient $\alpha $. 
Regard 
$z_1z_3$ and $\theta (z_2)$ 
as polynomials in $x_1$ over $k[x_2,x_3]$. 
Then, 
the leading coefficient of $z_1z_3$ 
is a multiple of $z_3$, 
while that of $\theta (z_2)$ is an element of $k^{\times }$. 
Since $z_3$ does not belong to $k$, 
it follows that 
\begin{align*}
&\deg _{x_1}\phi (f_{\theta })
=\deg _{x_1}\bigl(z_1z_3+\theta (z_2) \bigr) 
=\max \{ \deg _{x_1}z_1z_3,\deg _{x_1}\theta (z_2)\}  . 
\end{align*}
Since $\deg _{x_1}z_3=0$ and $\deg _{x_1}z_2=1$, 
this gives that 
$\deg _{x_1}\phi (f_{\theta })=\max \{ \deg _{x_1}z_1,d\} $. 
Hence, we get 
$$
\deg _{x_1}\phi (y_2)=
\deg _{x_1}\bigr(z_2+\phi (f_{\theta })z_3\bigr) 
=\deg _{x_1}\phi (f_{\theta })
=\max \{ \deg _{x_1}z_1,d\} . 
$$ 
Consequently, 
we see from (\ref{eq:y_11}) that 
\begin{align*}
&\deg _{x_1}\phi (y_1)
=\deg _{x_1}\Bigl( 
z_1+\bigl( 
\theta (z_2)-\theta (\phi (y_2))
\bigr)z_3^{-1}
\Bigr) \\
&\quad =\deg _{x_1}\theta (\phi (y_2))
=d\deg _{x_1}\phi (y_2)
\geq d^2. 
\end{align*}
On the other hand, 
we have 
$\deg _{x_1}y_2=\deg _{x_1}(x_2+f_{\theta }x_3)=1$. 
Hence, we get 
$$
\deg _{x_1}y_1
=\deg _{x_1}\Bigl( 
x_1+\bigl( 
\theta (x_2)-\theta (y_2)
\bigr)x_3^{-1}
\Bigr) =\deg _{x_1}\theta (y_2)=d. 
$$
This contradicts that $\phi (y_1)=y_1$, 
thus proving that $(\alpha ,\alpha ')=(0,0)$. 
Therefore, 
we conclude that $\gamma _2^{\vv }<\e _1$. 
This completes the proof of 
Proposition~\ref{prop:tawild key} (iii).

\section{Proof (III)}
\setcounter{equation}{0}
\label{sect:tawildpf3}

In this section, 
we complete the proof of Theorem~\ref{thm:tawild} (ii). 
Assume that $d\geq 9$. 
Take $\phi \in G_{\theta }$ 
such that $\degv \phi (y_2)>\degv \phi (x_3)$. 
Then, 
we have 
\begin{equation}\label{eq:rabbit31}
\phi (x_3)=z_3=\alpha _3x_3+g_3
\end{equation}
for some $\alpha _3\in k^{\times }$ and $g_3\in k[x_2]$ 
by Proposition~\ref{prop:tawild key} (ii). 
We establish that $\phi =\phi _{\alpha _3}$ 
and $\alpha _3$ belongs to $T_{\theta }$. 
If $\phi =\id _{\kx }$, 
then we have $\alpha _3=1$. 
Since $\iota :T_{\theta }\to G_{\theta }$ 
is a homomorphism of groups, 
it follows that $\phi _{\alpha _3}=\rho (1)=\id _{\kx }$. 
Hence, 
the assertion is true. 
In what follows, 
we assume that $\phi \neq \id _{\kx }$.

By Proposition~\ref{prop:tawild key} (iii), 
$\degv (z_2+h_2)=\gamma _2^{\vv }$ 
is less than $\e _1$. 
Hence, $z_2+h_2$ belongs to $k[x_2]$. 
Since $z_2+h_2=\psi (x_2)$ 
is a coordinate of $\kx $ over $k$, 
it follows that $z_2+h_2$ 
is a linear polynomial in $x_2$ over $k$. 
Hence, we have 
\begin{equation}\label{eq:rabbit21}
\phi (x_2)=z_2=\alpha _2x_2+g_2 
\end{equation}
for some $\alpha _2\in k^{\times }$ 
and $g_2\in k[z_3]=k[\alpha _3x_3+g_3]$, 
since $h_2$ is an element of $k[z_3]$. 
From this and (\ref{eq:rabbit31}), 
we see that $\phi $ induces an automorphism of $k[x_2,x_3]$. 
Thus, 
we know that $k[x_1,\phi (x_2),\phi (x_3)]=\kx $. 
This implies that 
\begin{equation}\label{eq:rabbit11}
\phi (x_1)=\alpha _1x_1+g_1
\end{equation}
for some $\alpha _1\in k^{\times }$ and 
$g_1\in k[\phi (x_2),\phi (x_3)]=k[x_2,x_3]$ 
by the following lemma.

\begin{lem}\label{lem:useful}
Let $\tau \in \Aut (\kx /k)$ be such that 
$k[x_i,\tau (x_2),\tau (x_3)]=\kx $ 
for some $i\in \{ 1,2,3\} $. 
Then, 
we have $\tau (x_1)=\alpha x_i+g$ 
for some $\alpha \in k^{\times }$ and 
$g\in k[\tau (x_2),\tau (x_3)]$. 
\end{lem}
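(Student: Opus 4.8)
Let $\tau \in \Aut (\kx /k)$ be such that $k[x_i,\tau (x_2),\tau (x_3)]=\kx $ for some $i\in \{ 1,2,3\} $. Then $\tau (x_1)=\alpha x_i+g$ for some $\alpha \in k^{\times }$ and $g\in k[\tau (x_2),\tau (x_3)]$.

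The plan is to exploit the hypothesis $k[x_i,\tau (x_2),\tau (x_3)]=\kx $ to set up a change of variables under which $\tau (x_1)$ becomes visibly elementary. First I would introduce the endomorphism $\rho $ of the $k$-algebra $\kx $ determined by $\rho (x_1)=x_i$, $\rho (x_2)=\tau (x_2)$ and $\rho (x_3)=\tau (x_3)$; the hypothesis says precisely that $\rho $ is surjective, hence (as $\kx $ is a polynomial ring and a surjective endomorphism of an affine domain of the same dimension is bijective, or by the standard Jacobian/degree argument in characteristic zero) $\rho \in \Aut (\kx /k)$. Then I would form $\sigma :=\rho ^{-1}\circ \tau $, which satisfies $\sigma (x_2)=\rho ^{-1}(\tau (x_2))=x_2$ and $\sigma (x_3)=\rho ^{-1}(\tau (x_3))=x_3$. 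Thus $\sigma $ fixes $x_2$ and $x_3$, so $\sigma $ belongs to $\Aut (\kx /k[x_2,x_3])=\Aut (k[x_2,x_3][x_1]/k[x_2,x_3])$.

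The key step is then the one-variable structure of automorphisms over a coefficient ring, which is exactly the elementary remark recorded in the introduction: since $\sigma \in \Aut (\kx /k[x_2,x_3])$, we must have $\sigma (x_1)=\alpha x_1+g_0$ for some $\alpha \in k[x_2,x_3]^{\times }=k^{\times }$ and $g_0\in k[x_2,x_3]$. Applying $\rho $ to this relation gives $\tau (x_1)=\rho (\sigma (x_1))=\alpha \rho (x_1)+\rho (g_0)=\alpha x_i+\rho (g_0)$. Since $g_0\in k[x_2,x_3]$ and $\rho (x_2)=\tau (x_2)$, $\rho (x_3)=\tau (x_3)$, the element $g:=\rho (g_0)$ lies in $k[\tau (x_2),\tau (x_3)]$, which is the desired conclusion. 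Note $\alpha $ is unchanged because $\rho $ is a $k$-algebra map and $\alpha \in k^{\times }$.

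The main obstacle is justifying that $\rho $ is genuinely an automorphism rather than merely a surjection, i.e.\ that the generating hypothesis upgrades to bijectivity. The cleanest route avoids any deep input: define $\rho $ as above, observe it is surjective by hypothesis, and note that a surjective $k$-algebra endomorphism of $\kx $ is automatically injective because it induces a surjection on the graded pieces that cannot drop dimension (equivalently, $\rho $ has a polynomial inverse since $\det J\rho \in k^{\times }$ by Lemma~\ref{lem:Essen:Lem1.1.8}-type reasoning once one checks $K[x_i,\tau (x_2),\tau (x_3)]=\Kx $). Alternatively, and perhaps most economically in the present context, one can simply invoke that $\rho $ sends a generating set of $\kx $ to a generating set of the same cardinality, so it is an automorphism; this sidesteps any Jacobian computation. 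Everything after establishing $\rho \in \Aut (\kx /k)$ is the routine transport-of-structure argument sketched above.
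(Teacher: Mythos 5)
Your proposal is correct and is essentially the paper's own proof: the paper likewise defines $\rho \in \Aut (\kx /k)$ by $\rho (x_1)=x_i$, $\rho (x_j)=\tau (x_j)$ for $j=2,3$, notes that $\rho ^{-1}\circ \tau $ lies in $\Aut (\kx /k[x_2,x_3])$ so that $(\rho ^{-1}\circ \tau )(x_1)=\alpha x_1+g'$ with $\alpha \in k^{\times }$ and $g'\in k[x_2,x_3]$, and applies $\rho $ to conclude. The only difference is that you spell out why the surjective endomorphism $\rho $ is an automorphism, a standard fact the paper leaves implicit.
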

\begin{proof}
Since $k[x_i,\tau (x_2),\tau (x_3)]=\kx $, 
we can define $\rho \in \Aut (\kx /k)$ by 
$$
\rho (x_1)=x_i\quad \text{and}\quad 
\rho (x_j)=\tau (x_j)\quad \text{for}\quad j=2,3. 
$$
Then, 
$\rho ^{-1}\circ \tau $ belongs to $\Aut (\kx /k[x_2,x_3])$. 
Hence, 
we may write 
$(\rho ^{-1}\circ \tau )(x_1)=\alpha x_1+g'$, 
where $\alpha \in k^{\times }$ and $g'\in k[x_2,x_3]$. 
Then, 
we have 
$$
\tau (x_1)
=\rho ((\rho ^{-1}\circ \tau )(x_1))=\rho (\alpha x_1+g')
=\alpha x_i+\rho (g'), 
$$
in which $\rho (g')$ is an element of 
$\rho (k[x_2,x_3])=k[\tau (x_2),\tau (x_3)]$. 
\end{proof}

Since 
$y_2=x_2+f_{\theta }x_3=x_2+\bigl(x_1x_3+\theta (x_2)\bigr)x_3$ 
is a linear polynomial in 
$x_1$ with leading coefficient $x_3^2$, 
we see from (\ref{eq:y_11}) that 
$$
y_1=-cx_1^dx_3^{2d-1}+(\text{terms of lower degree in }x_1).
$$
In view of (\ref{eq:rabbit31}), 
(\ref{eq:rabbit21}) and (\ref{eq:rabbit11}), 
we have 
$$
\phi (y_1)=
-c(\alpha _1x_1)^d(\alpha _3x_3+g_3)^{2d-1}
+(\text{terms of lower degree in }x_1). 
$$ 
Since $\phi (y_1)=y_1$, 
we get 
$\alpha _1^d(\alpha _3x_3+g_3)^{2d-1}=x_3^{2d-1}$ 
by comparing the coefficients of $x_1^d$. 
This implies that $g_3=0$. 
Hence, 
we have 
\begin{equation}\label{eq:rabbit32}
\phi (x_3)=z_3=\alpha _3x_3=\phi _{\alpha _3}(x_3). 
\end{equation}
From this, we know that 
$\phi $ commutes with the substitution $x_3\mapsto 0$. 
By (\ref{eq:y_1}), 
we see that $y_1$ is sent to 
$x_1-\theta '(x_2)\theta (x_2)$ 
by the substitution $x_3\mapsto 0$. 
Since $y_1=\phi (y_1)$, 
it follows that 
$$
x_1-\theta '(x_2)\theta (x_2)
=\phi (x_1)-\phi (\theta '(x_2)\theta (x_2))
=\alpha _1x_1+(\text{an element of }k[x_2,x_3]) 
$$
by (\ref{eq:rabbit11}) and (\ref{eq:rabbit21}). 
This gives that $\alpha _1=1$. 
Therefore, 
we conclude that 
$$
\phi (x_1)=x_1+g_1. 
$$

Since $y_3=x_3$, 
we have $\phi (y_3)=\alpha _3y_3$ 
by (\ref{eq:rabbit32}). 
Hence, 
we get 
$$
k[y_1,y_2,y_3]=k[\phi (y_1),y_2,\phi (y_3)]. 
$$
By Lemma~\ref{lem:useful}, 
this implies that 
$$
\phi (y_2)=\beta _2y_2+h
$$
for some $\beta _2\in k^{\times }$ 
and $h\in k[y_1,y_3]=k[y_1,x_3]$. 
We note that $\degv h\leq 3\e _1$, 
since $\degv \phi (y_2)=3\e _1$ 
by Proposition~\ref{prop:tawild key} (i), 
and $\degv y_2=\degv x_1x_3^2=3\e _1$ by (\ref{eq:y highest}). 
We prove that $h$ belongs to $k[x_3]$. 
Suppose to the contrary that 
$h$ does not belong to $k[x_3]$. 
Then, 
we have $\degv ^{S}h\geq \degv y_1$, 
where $S:=\{ y_1,x_3\} $. 
Since $y_1^{\vv }\approx x_1^dx_3^{2d-1}$ and $x_3$ 
are algebraically independent over $k$, 
we have $\degv h=\degv ^Sh$. 
Hence, 
we get $\degv h\geq \degv y_1=(3d-1)\e _1>3\e _1$, 
a contradiction. 
This proves that $h$ belongs to $k[x_3]$. 
From (\ref{eq:y_11}), 
we see that 
\begin{align*}
0&=\alpha _3x_3\bigl(y_1-\phi (y_1)\bigr) \\
&=\alpha _3\bigl(x_1x_3+\theta (x_2)-\theta (y_2)\bigr) 
-\bigl(\alpha _3(x_1+g_1)x_3
+\theta (\alpha _2x_2+g_2)-\theta (\beta _2y_2+h)
\bigr) \\
&=\bigl( 
\theta (\beta _2y_2+h)-\alpha _3\theta (y_2)
\bigr) 
-\bigl( 
\alpha _3g_1x_3+\theta (\alpha _2x_2+g_2)
-\alpha _3\theta (x_2)
\bigr) . 
\end{align*}
Hence, we have 
\begin{equation}\label{eq:awcpf:p}
p:=
\theta (\beta _2y_2+h)-\alpha _3\theta (y_2)
=\alpha _3g_1x_3+\theta (\alpha _2x_2+g_2)
-\alpha _3\theta (x_2). 
\end{equation}
Since $h$ belongs to $k[x_3]=k[y_3]$, 
we see that $p$ belongs to $k[y_2,y_3]$. 
As an element of $k[y_2,y_3]$, 
we may consider the partial derivative 
$\partial p/\partial y_i$ for $i=2,3$. 
Then, 
we have 
$$
\frac{\partial p}{\partial x_1}
=\frac{\partial y_2}{\partial x_1}
\frac{\partial p}{\partial y_2}
+\frac{\partial y_3}{\partial x_1}
\frac{\partial p}{\partial y_3}
=x_3^2\frac{\partial p}{\partial y_2} 
$$
by chain rule. 
Since $g_1$ and $g_2$ 
are elements of $k[x_2,x_3]$, 
we know that $p$ belongs to $k[x_2,x_3]$ 
by (\ref{eq:awcpf:p}). 
Hence, 
we have $\partial p/\partial x_1=0$. 
Thus, 
we get 
$$
0=\frac{\partial p}{\partial y_2}
=\beta _2\theta '(\beta _2y_2+h)-
\alpha _3\theta '(y_2) 
$$
by the preceding equality. 
This implies that 
$h$ is algebraic over $k(y_2)$. 
Since $h$ is an element of $k[y_3]$, 
it follows that $h$ belongs to $k$. 
Consequently, 
$p$ belongs to $k[y_2]$. 
Since $\partial p/\partial y_2=0$, 
we conclude that $p$ belongs to $k$.

We prove that $\beta _2\neq 1$. 
Suppose to the contrary that $\beta _2=1$. 
Then, 
the coefficient of $y_2^d$ in $p$ 
is equal to $c(1-\alpha _3)$. 
Since $p$ belongs to $k$, 
it follows that $\alpha _3=1$. 
Hence, we get $\phi (y_3)=y_3$ by (\ref{eq:rabbit32}). 
Since $\beta _2=\alpha _3=1$, 
we have 
$$
p=\theta (y_2+h)-\theta (y_2)=\sum _{i=1}^d
\frac{\theta ^{(i)}(y_2)}{i!}h^i. 
$$ 
Since $p$ belongs to $k$, 
this implies that $h=0$. 
Thus, 
we get $\phi (y_2)=\beta _2y_2+h=y_2$. 
Since $\phi (y_1)=y_1$, 
we conclude that 
$\phi =\id _{\kx }$, 
a contradiction. 
This proves that $\beta _2\neq 1$. 
Set $\kappa '=h/(1-\beta _2)$, 
and write 
$$
\theta (z)=\sum _{i=0}^du_i'(z-\kappa ')^i,
$$
where $u_i'\in k$ for $i=0,\ldots ,d$. 
Then, we have 
$$
\phi (y_2-\kappa ')=
(\beta _2y_2+h)-\frac{h}{1-\beta _2}
=\beta _2\left(
y_2-\frac{h}{1-\beta _2}
\right) 
=\beta _2(y_2-\kappa '). 
$$
Hence, 
we know that
\begin{align*}
&p=\theta (\beta _2y_2+h)-\alpha _3\theta (y_2)
=\theta (\phi (y_2))-\alpha _3\theta (y_2) \\
&\quad 
=\sum _{i=0}^du_i'\phi (y_2-\kappa ')^i
-\alpha _3\sum _{i=0}^du_i'(y_2-\kappa ')^i
=\sum _{i=0}^du_i'(\beta _2^i-\alpha _3)(y_2-\kappa ')^i. 
\end{align*}
Since $p$ belongs to $k$, 
it follows that $p=u_0'(1-\alpha _3)$, 
and $u_i'=0$ or $\beta _2^i=\alpha _3$ 
for $i=1,\ldots ,d$.

We show that $\kappa '=\kappa $ by contradiction. 
Suppose that $\kappa '':=\kappa '-\kappa $ is nonzero. 
Since $u_{d-1}=0$ by the definition of $\kappa $, 
we may write 
$$
\theta (z)
=u_d(z-\kappa '+\kappa '')^d
+\sum _{i=0}^{d-2}u_i(z-\kappa )^i
=u_d(z-\kappa ')^d+d\kappa ''u_d(z-\kappa ')^{d-1}+\cdots . 
$$
Hence, we get 
$u_d'=u_d$ and $u_{d-1}'=d\kappa ''u_d$. 
Since $u_d\neq 0$, 
and $\kappa ''\neq 0$ by supposition, 
we know that 
$u_d'$ and $u_{d-1}'$ are nonzero. 
Hence, we have 
$\beta _2^i=\alpha _3$ for $i=d,d-1$. 
Thus, 
we get 
$\beta _2=1$, 
a contradiction. 
This proves that $\kappa '=\kappa $. 
Therefore, 
we have $u_i'=u_i$ for $i=0,\ldots ,d$. 
Consequently, we get 
\begin{equation}\label{eq:rabbit:p}
p=u_0'(1-\alpha _3)=u_0(1-\alpha _3)=(1-\alpha _3)\theta (\kappa ). 
\end{equation}

To complete the proof, 
we verify that 
$\phi (x_i)=\phi _{\alpha _3}(x_i)$ for $i=1,2$. 
Since $\sigma _{\theta }(f_{\theta })=f_{\theta }$, 
$\phi (y_1)=y_1$ and $\phi (y_3)=\alpha _3y_3$, 
we have 
\begin{align*}
&\phi (f_{\theta })=\phi (\sigma _{\theta }(f_{\theta }))=
\phi \bigl(y_1y_3+\theta (y_2)\bigr)
=y_1(\alpha _3y_3)+\theta (\phi (y_2)) \\
&\quad =\alpha _3f_{\theta }
-\alpha _3\theta (y_2)+\theta (\phi (y_2))
=\alpha _3f_{\theta }+p. 
\end{align*}
Hence, we get 
$$
\phi (y_2)
=\phi (x_2+f_{\theta }x_3)
=(\alpha _2x_2+g_2)+(\alpha _3f_{\theta }+p)(\alpha _3x_3). 
$$
On the other hand, 
we have 
$$
\phi (y_2)=\beta _2y_2+h
=\beta _2(x_2+f_{\theta }x_3)+h. 
$$
Since $g_2$ and $h$ are elements of $k[x_3]$ and $k$, 
respectively, 
the monomial $x_2$ does not appear in 
$g_2$ and $h$. 
Clearly, 
the monomial $x_2$ does not appear in 
a multiple of $x_3$. 
Thus, 
we get $\alpha _2=\beta _2$ 
by comparing the coefficients of $x_2$. 
Equating the two expressions of $\phi (y_2)$, 
we obtain 
$$
g_2-h
=\bigl(\beta _2f_{\theta }-
\alpha _3(\alpha _3f_{\theta }+p)\bigr)x_3
=\bigl((\alpha _2-\alpha _3^2)f_{\theta }
-\alpha _3p\bigr)x_3. 
$$
Note that $g_2-h$ belongs to $k[x_3]$, 
while $f_{\theta }$ does not belong to $k[x_3]$. 
Since $p$ is a constant, 
it follows that $\alpha _2=\alpha _3^2$ 
and $g_2=h-\alpha _3px_3$. 
Therefore, we have 
\begin{align*}
&\phi (x_2-\kappa )=\phi (x_2-\kappa ')
=\alpha _2x_2+g_2-\frac{h}{1-\beta _2}
=\alpha _2x_2+(h-\alpha _3px_3)-\frac{h}{1-\alpha _2} \\
&\quad 
=\alpha _2\left( x_2-\frac{h}{1-\alpha _2}\right) -\alpha _3px_3 
=\alpha _3^2(x_2-\kappa )+\alpha _3(\alpha _3-1)\theta (\kappa )x_3
\end{align*}
because of (\ref{eq:rabbit:p}). 
This proves that 
$\phi (x_2)=\phi _{\alpha _3}(x_2)$. 
By (\ref{eq:awcpf:p}) and (\ref{eq:rabbit:p}), 
we have 
$$
g_1=\frac{\alpha _3\theta (x_2)
-\theta (\alpha _2x_2+g_2)+p}{\alpha _3x_3}
=\frac{\alpha _3\theta (x_2)
-\phi (\theta (x_2))+
(1-\alpha _3)\theta (\kappa )}{\alpha _3x_3}
=g_{\alpha _3}. 
$$
Since $\phi (x_1)=x_1+g_1$, 
this proves that 
$\phi (x_1)=\phi _{\alpha _3}(x_1)$. 
Therefore, 
we conclude that $\phi =\phi _{\alpha _3}$. 
Since $g_{\alpha _3}=g_1$ belongs to $\kx $, 
we know that $\alpha _3$ belongs to $T_{\theta }$ 
as noted before Theorem~\ref{thm:G_{theta}}. 
This completes the proof of 
Theorem~\ref{thm:tawild} (ii), 
and thereby completing the proof of Theorem~\ref{thm:G_{theta}}.

\chapter{Locally nilpotent derivations of rank three}
\label{chapter:rank3}

\section{Main result}
\setcounter{equation}{0}
\label{sect:maxrank}

Recall that the {\it rank} of $D\in \Der _k\kx $ 
is defined to be the minimal number $r\geq 0$ 
for which there exists $\sigma \in \Aut (\kx /k)$ 
such that $D(\sigma (x_i))\neq 0$ for $i=1,\ldots ,r$. 
It is difficult to handle $D\in \lnd _k\kx $ 
with $\rank D=3$. 

In this chapter, 
we construct 
families of elements of $\lnd _k\kx $ 
using 
``local slice construction" due to Freudenburg~\cite{Flsc}. 
Then, we prove that most of the members 
of the families are of rank three, 
for which the exponential automorphisms are wild.

For $i=0,1$, 
take $t_i\in \N $ 
and $\alpha _j^i\in k$ for $j=1,\ldots ,t_i-1$. 
First, 
we define a sequence $(b_i)_{i=0}^{\infty }$ 
of integers by 
$$
b_0=b_1=0
\text{ \ and \ }
b_{i+1}=t_ib_i-b_{i-1}+\xi _i
\text{ \ for \ }
i\geq 1, 
$$
where $t_i:=t_0$ if $i$ is an even number, 
and $t_i:=t_1$ otherwise, 
and where $\xi _i:=1$ if $i\equiv 0,1\pmod{4}$, 
and $\xi _i:=-1$ otherwise. 
Next, 
for each $i\geq 0$, 
we define a polynomial $\eta _i(y,z)\in k[y,z]$ by 
\begin{align*}
\eta _i(y,z)&=
z^{t_ib_i+1}+\sum _{j=1}^{t_i-1}
\alpha _j^iy^jz^{(t_i-j)b_i}+y^{t_i}
&
\text{ if }
i\equiv 0,1\pmod{4}\\
\eta_i(y,z)&=
y^{t_i}+\sum _{j=1}^{t_i-1}\alpha _j^iz^{jb_i-1}y^{t_i-j}
+z^{t_ib_i-1}
&
\text{otherwise,\qquad \ }
\end{align*}
where 
$\alpha _j^i:=\alpha _j^0$ if $i$ is an even number, 
and $\alpha _j^i:=\alpha _j^1$ otherwise. 
Set 
$$
r=x_1x_2x_3-\sum _{i=1}^{t_0}\alpha _i^0x_2^i
-\sum _{j=1}^{t_1}\alpha _j^1x_1^j, 
$$
where $\alpha _{t_0}^0:=\alpha _{t_1}^1:=1$. 
Then, we define a sequence $(f_i)_{i=0}^{\infty }$ 
of rational functions by 
$$
f_0=x_2,\quad 
f_1=x_1\quad 
\text{and}\quad 
f_{i+1}=\eta _i(f_i,r)f_{i-1}^{-1}\quad
\text{for}\quad i\geq 1. 
$$
Set $q_i=\eta _i(f_i,r)$ for each $i\geq 1$. 
Then, we have 
\begin{equation}\label{eq:q_1}
q_1=\eta _1(x_1,r)
=r+\sum _{j=1}^{t_1}\alpha _j^1x_1^j
=x_1x_2x_3-\sum _{i=1}^{t_0}\alpha _i^0x_2^{i}. 
\end{equation}
Hence, we get 
\begin{equation}\label{eq:f_2}
f_2=q_1x_2^{-1}=x_1x_3-\theta (x_2),\quad 
r=x_2f_2-\sum _{j=1}^{t_1}\alpha _j^1x_1^j, 
\end{equation}
where $\theta (z):=\sum _{i=1}^{t_0}\alpha _i^0z^{i-1}$. 
When $t_0=2$, 
we have $f_2=x_1x_3-x_2-\alpha _1^0$. 
In this case, 
we can define $\tau _2\in \T (k,\x )$ 
by 
\begin{equation}\label{eq:tau_2}
\tau _2(x_1)=f_2,\quad 
\tau _2(x_2)=x_1\quad\text{and}\quad 
\tau _2(x_3)=x_3. 
\end{equation}

For each $g_1,g_2\in \kx $, 
we define 
$\Delta _{(g_1,g_2)}\in \Der _k\kx $ 
by 
$$
dg_1\wedge dg_2\wedge dg=\Delta _{(g_1,g_2)}(g)dx_1\wedge dx_2\wedge dx_3
$$ 
for $g\in \kx $. 
Then, $k[g_1,g_2]$ 
is always contained in $\ker \Delta _{(g_1,g_2)}$. 
When $f_i$ and $f_{i+1}$ belong to $\kx $, 
we consider 
the derivation 
$$
D_i:=\Delta _{(f_{i+1},f_i)}
$$ 
of $\kx $. 
For example, 
since $f_0=x_2$ and $f_1=x_1$ by definition, 
$D_0=\Delta _{(x_1,x_2)}$ 
is the partial derivation of $\kx $ in $x_3$. 
By (\ref{eq:f_2}), 
we see that $D_1=\Delta _{(f_2,x_1)}$ 
is the triangular derivation of $\kx $ defined by 
\begin{equation}\label{eq:D_1}
D_1(x_1)=0,\ \ D_1(x_2)=x_1\ \text{ and }\ 
D_1(x_3)=\theta '(x_2)
=\sum _{i=2}^{t_0}(i-1)\alpha _i^0x_2^{i-2}, 
\end{equation}
since 
$$
df_2\wedge dx_1\wedge dx_2=
\frac{\partial f_2}{\partial x_3}
dx_1\wedge dx_2\wedge dx_3,\ \ 
df_2\wedge dx_1\wedge dx_3=
-\frac{\partial f_2}{\partial x_2}
dx_1\wedge dx_2\wedge dx_3. 
$$

We say that $D\in \Der _k\kx $ 
is {\it irreducible} 
if $D(\kx )$ is contained in no proper principal ideal of $\kx $, 
or equivalently $D(x_1)$, $D(x_2)$ and $D(x_3)$ 
have no common factor. 
Here, 
``no common factor" means 
``no non-constant common factor". 
Then, $D_0$ is clearly irreducible, 
and $D_1$ is irreducible if and only if $t_0\geq 2$. 
We mention that, 
if $t_0=1$, 
then $\ker D_1=k[x_1,x_3]$ 
is not equal to $k[f_1,f_2]=k[x_1,x_1x_3-1]$.

We can similarly construct the sequence 
of rational functions from the same data 
\begin{equation}\label{eq:data}
t_0,\quad t_1,\quad (\alpha _j^0)_{j=1}^{t_0-1}\quad 
\text{and}\quad (\alpha _j^1)_{j=1}^{t_1-1}
\end{equation}
by interchanging the roles of $t_0$ and $t_1$, 
and $(\alpha _j^0)_{j=1}^{t_0-1}$ and $(\alpha _j^1)_{j=1}^{t_1-1}$, 
respectively. 
To distinguish it from the original one, 
we denote it by $(f_i')_{i=0}^{\infty }$. 
If $t_1=2$, 
then we can define $\tau _2'\in \T (k,\x )$ 
by $\tau _2'(x_1)=f_2'$, 
$\tau _2'(x_2)=x_1$ and $\tau _2'(x_3)=x_3$. 
When $f_i'$ and $f_{i+1}'$ belong to $\kx $, 
we define $D_i'=\Delta _{(f_{i+1}',f_i')}$.

Let $I$ be the set of $i\in \N $ such that 
$$
a_j:=t_jb_j+\xi _j>0
$$ 
for $j=1,\ldots ,i$. 
Then, 
we have 
\begin{align}\begin{split}\label{eq:I}
I=\left\{ 
\begin{array}{cl}
\{ 1\} & \text{if } t_0=1\\
\{ 1,2\} & \text{if }(t_0,t_1)=(2,1)\\ 
\{ 1,2,3,4 \} & \text{if }(t_0,t_1)=(3,1)\\
\N & \text{otherwise}
\end{array}
\right. 
\end{split}\end{align}
as will be shown in Section~\ref{sect:sequence}. 
We note that $a_i=\xi _i=1$ for $i=0,1$, 
since $b_0=b_1=0$. 
For each $i\geq 1$, 
we have 
\begin{equation}\label{eq:a_i zenkasiki}
a_{i+1}=t_{i+1}a_i-a_{i-1}, 
\end{equation}
since 
\begin{align*}
a_{i+1}-t_{i+1}a_i+a_{i-1} 
&=(t_{i+1}b_{i+1}+\xi _{i+1})
-t_{i+1}(t_ib_i+\xi _i)+(t_{i-1}b_{i-1}+\xi _{i-1}) \\ 
&=t_{i+1}(b_{i+1}-t_ib_i+b_{i-1}-\xi _i)
+\xi _{i+1}+\xi _{i-1}=0. 
\end{align*}

Now, 
we are ready to state the main results of this chapter.

\begin{thm}\label{thm:lsc1}
In the notation above, 
the following assertions hold for each $i\in I$$:$

\noindent{\rm (i)} 
$f_i$ and $f_{i+1}$ belong to $\kx $, 
$D_i$ belongs to $\lnd _k\kx $ 
and $D_i(r)=f_if_{i+1}$. 
Moreover, 
we have the following$:$

\noindent{\rm \ (a)} 
If $i$ is the maximum of $I$, 
then $D_i$ is not irreducible and 
$\ker D_i\neq k[f_i,f_{i+1}]$. 

\noindent
{\rm \ (b)} 
If $i$ is not the maximum of $I$, then 
$D_i$ is irreducible and $\ker D_i=k[f_i,f_{i+1}]$. 

\noindent{\rm (ii)} 
If $t_0=2$, 
then we have 
$\tau _2^{-1}\circ D_i\circ \tau _2 =D_{i-1}'$. 
Hence, 
$D_2$ is tamely triangularizable. 
If $t_0=t_1=2$, 
then we have 
$\tau ^{-1}\circ D_i\circ \tau =D_0$, 
where 
$$
\tau :=\left\{ 
\begin{array}{cl}
(\tau _2\circ \tau _2')^{i/2} & \text{ if $i$ is an even number} \\
(\tau _2\circ \tau _2')^{(i-1)/2}\circ \tau _2 & 
\text{ otherwise. }
\end{array}
\right. 
$$

\noindent{\rm (iii)} 
If $t_0=2$, $t_1\geq 3$ and $i\geq 3$, 
or if $t_0\geq 3$ and $i\geq 2$, 
then $\exp hD_i$ is wild for each $h\in \ker D_i\sm \zs $. 
\end{thm}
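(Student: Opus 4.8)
The plan is to isolate the case $t_0\geq 3$, $i\geq 2$ as the essential one and to deduce the case $t_0=2$, $t_1\geq 3$, $i\geq 3$ from it by a tame conjugation. When $t_0=2$, part (ii) gives $\tau_2^{-1}\circ D_i\circ\tau_2=D_{i-1}'$, where $(f_j')_{j\geq 0}$ is the sequence obtained by interchanging $(t_0,t_1)$; its first parameter is $t_1\geq 3$ and its index $i-1$ satisfies $i-1\geq 2$. For $h\in\ker D_i\setminus\zs$ one has $\tau_2^{-1}\circ(hD_i)\circ\tau_2=\tau_2^{-1}(h)\,D_{i-1}'$ with $\tau_2^{-1}(h)\in\ker D_{i-1}'\setminus\zs$, and the conjugation identity behind (\ref{eq:exp normal}) gives $\exp hD_i=\tau_2\circ(\exp\tau_2^{-1}(h)D_{i-1}')\circ\tau_2^{-1}$. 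Since $\tau_2\in\T(k,\x)$, the automorphism $\exp hD_i$ is wild if and only if $\exp\tau_2^{-1}(h)D_{i-1}'$ is; so it suffices to treat the case $t_0\geq 3$, $i\geq 2$, which we now do.

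Assume $t_0\geq 3$, $i\geq 2$, fix $h\in\ker D_i\setminus\zs$, and put $\phi=\exp hD_i$. Since $D_i$ kills $f_i,f_{i+1}$ and $D_i(r)=f_if_{i+1}$ by part (i), and $D_i(h)=0$, we get $\phi(f_i)=f_i$, $\phi(f_{i+1})=f_{i+1}$, and, because $(hD_i)^2(r)=hD_i(hf_if_{i+1})=0$, also $\phi(r)=r+hf_if_{i+1}$. The idea is to exhibit a polynomial $P$ that is fixed by $\phi$ and is a W-test polynomial in the sense of Definition~\ref{defn:W-test}, together with a rank-three weight $\w$ for which $\phi$ satisfies conditions (a) and (b); the property ($\dag$) of a W-test polynomial then forces $\phi\notin\T(k,\x)$. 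For $P$ I would take a generator of $\ker D_i=k[f_i,f_{i+1}]$. When $i=2$ this is $P=f_2=x_1x_3-\theta(x_2)$ with $\deg_z\theta=t_0-1\geq 2$, and Proposition~\ref{prop:criterion} shows $f_2$ is a W-test polynomial precisely because $\deg_z\theta\geq 2$: every $\w$-leading form of $f_2$ is either a monomial or the irreducible edge $x_1x_3-cx_2^{t_0-1}$, and none of these is divisible by a factor $x_j-g$ with $g\notin k$ nor by a binomial $x_j^{s}-cx_l^{t}$. (For $t_0=2$ the edge $x_1x_3-x_2$ violates the first condition, which matches the exclusion of that case.) For general $i$ I would use the analogous kernel generator and verify the same two conditions from the recursive description of $f_i$.

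To obtain (a) and (b) I would use a weight $\w\in(\Gammap)^3$ with $\rank\w=3$ coming from the rank computation (Proposition~\ref{prop:rank3criterion} and Theorem~\ref{prop:rank}), chosen so that $D_i$ raises $\w$-degrees and $f_i^{\w},f_{i+1}^{\w},r^{\w}$ are monomials. Since $\phi(P)=P$, condition (a) becomes $\degw P<\degw\phi(x_{i_1})$ for some $i_1$. Writing $m$ for the nilpotency index of $D_i$ on $x_{i_1}$ and using $(hD_i)^l(x_{i_1})=h^lD_i^l(x_{i_1})$, the top iterate dominates, so $\degw\phi(x_{i_1})=m\degw h+\degw D_i^{m}(x_{i_1})\geq\degw D_i^{m}(x_{i_1})$; hence (a) follows from the single, $h$-free inequality $\degw P<\degw D_i^{m}(x_{i_1})$ for a suitable $i_1$. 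Establishing this inequality is exactly where $t_0\geq 3$ and $i\geq 2$ enter: they guarantee that $D_i$ genuinely raises $\w$-degrees along a long enough chain of iterates, whereas in the excluded tame case $t_0=t_1=2$ one has $D_i$ conjugate to $\partial/\partial x_3$, every $m\leq 1$, and the inequality fails. Condition (b), that two of the $\degw\phi(x_j)$ be linearly independent, follows from the same leading-form data, since the three monomials $\phi(x_j)^{\w}$ cannot all be powers of one monomial for an automorphism with $\rank\w=3$.

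The main obstacle is the explicit degree bookkeeping: producing the rank-three weight $\w$ adapted to $D_i$, computing the leading forms $f_i^{\w}$, $f_{i+1}^{\w}$, $r^{\w}$ and the iterates $D_i^{m}(x_{i_1})^{\w}$ through the recursions for $b_i$ and $a_i=t_ib_i+\xi_i$, and thereby pinning down $m$ and proving $\degw P<\degw D_i^{m}(x_{i_1})$ uniformly in $h$. A second, independent difficulty is confirming that the chosen kernel generator $P$ is genuinely a W-test polynomial for every $i\geq 2$, not merely for $i=2$, which requires controlling all of $\mathcal F(P)$ through Proposition~\ref{prop:criterion} rather than a single leading form.
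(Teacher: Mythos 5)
Your reduction of the case $t_0=2$, $t_1\geq 3$, $i\geq 3$ to the case $t_0\geq 3$ via part (ii) and conjugation is exactly the paper's first step, and your observation that $\exp hD_i$ fixes $\ker D_i$ is correct. Beyond that, however, the proposal has serious gaps. (It also silently assumes parts (i) and (ii), which carry most of the paper's weight --- the local-slice-construction argument of Proposition~\ref{prop:lsc1} and Lemma~\ref{lem:t_0=2} --- and your argument for (iii) uses (i) as input; but let me focus on (iii).) The central problem is your choice of W-test polynomial. You take $P$ to be a generator of $\ker D_i=k[f_i,f_{i+1}]$, insisting that $P$ be fixed by $\phi$. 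The paper never does this: in the generic case it uses $f_2$ as the W-test polynomial for \emph{every} $i$ --- even though $\phi$ does not fix $f_2$ when $i\geq 3$ --- and controls $\degw \phi (f_2)$ through the chain of recurrences in Proposition~\ref{prop:wild general}. This is precisely to avoid what your plan requires, namely verifying Proposition~\ref{prop:criterion} for the polynomials $f_i$ with $i\geq 3$ (all leading forms, over all rank-three weights), a verification you do not supply and which is not feasible in any direct way. Worse, the hypothesis ``$t_0\geq 3$ and $i\geq 2$'' includes $(t_0,t_1,i)=(3,1,4)$, where $i=4=\max I$ and, by part (i)(a) of the very theorem being proved, $\ker D_4\neq k[f_4,f_5]$; in fact $\ker D_4$ contains the tame coordinate $x_3$, since $D_4=f_4\Delta _{(x_3,f_4)}$. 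So your recipe ``take a generator of $k[f_i,f_{i+1}]$'' does not even parse there, and no generator of the actual kernel can serve: a tame coordinate such as $x_3$ is never a W-test polynomial (an elementary $\phi$ with $\phi (x_1)=x_1+x_3^2$ and a suitable rank-three weight satisfies (a) and (b) of Definition~\ref{defn:W-test}). The paper handles this case by an entirely different argument, via Theorem~\ref{thm:triangular3}.

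Second, your claim that condition (b) ``follows since the three monomials $\phi (x_j)^{\w}$ cannot all be powers of one monomial for an automorphism'' is false. For the tame automorphism $\phi (x_1)=x_1$, $\phi (x_2)=x_2+x_1$, $\phi (x_3)=x_3+x_1$ and a rank-three weight with $w_1$ dominant, all three leading forms are $x_1$ up to scalars. This failure is not hypothetical in the present setting: in the paper's treatment of $(t_0,t_1,i)=(3,1,3)$, when $h^{\w}$ lies in $k[y_1^{\w}]$ one gets $z_1^{\w}\approx (y_1^{\w})^{3l+2}$, $z_2^{\w}\approx (y_1^{\w})^{2l+1}$, $z_3^{\w}\approx (y_1^{\w})^{l}$ (formula (\ref{eq:z head3})), so all three leading forms are powers of a single monomial, condition (b) fails for the natural weight, and the paper is forced into a long case analysis on $h$ using direct elementary/Shestakov--Umirbaev reduction arguments and Lemma~\ref{lem:awc} instead of any W-test shortcut. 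Finally, your degree formula $\degw \phi (x_{i_1})=m\degw h+\degw D_i^{m}(x_{i_1})$ presumes that the top iterate dominates the exponential sum, which is unjustified in general and vacuous when $h\in k^{\times }$ (a case the paper must treat separately). In sum, the proposal would close neither the exceptional cases $(t_0,t_1)=(3,1)$, $i=3,4$, nor --- without something playing the role of the recurrence machinery of Propositions~\ref{prop:wild general} and \ref{thm:wild general} --- the generic case.
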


By means of this theorem, 
we can completely determine when 
$\exp hD_i$ is tame or wild 
for $h\in \ker D_i\sm \zs $ and $i\in I$ as follows. 
First, consider the case of $i=1$. 
Since $D_1$ is triangular and $D_1(x_1)=0$ 
by (\ref{eq:D_1}), 
we see that $hD_1$ is triangular if $h$ belongs to $k[x_1]$. 
In this case, 
$\exp hD_1$ is tame. 
Assume that $h$ does not belong to $k[x_1]$. 
If $t_0=1$, 
then we have $D_1(x_3)=0$. 
Hence, 
$\exp hD_1$ is elementary, 
and so tame. 
When $t_0\geq 2$, 
we have $D_1(x_j)\neq 0$ for $j=2,3$. 
By Theorem~\ref{thm:triangular3}, 
we know 
that $\exp hD_1$ is tame if and only if 
$$
\frac{\partial D_1(x_3)}{\partial x_2}
=\sum _{i=3}^{t_0}(i-1)(i-2)\alpha _i^0x_2^{i-3}
\quad \text{belongs to}\quad D(x_2)k[x_1,x_2]=x_1k[x_1,x_2], 
$$ 
and hence if and only if $t_0=2$. 
Therefore, 
$\exp hD_1$ is tame 
if and only if $h$ belongs to $k[x_1]$ or $t_0\leq 2$. 
When $t_0=1$, 
we have $I=\{ 1\} $. 
Hence, 
this case is completed. 
Assume that $t_0=2$. 
Then, we have 
$$
hD_2=\tau _2\circ (\tau _2^{-1}(h)D_1')\circ \tau _2^{-1}
$$ 
by (ii). 
Since $\tau _2$ is tame, 
$\exp hD_2$ is tame if and only if so is 
$\exp \tau _2^{-1}(h)D_1'$. 
From the preceding discussion, 
it follows that $\exp \tau _2^{-1}(h)D_1'$ 
is tame if and only if 
$\tau _2^{-1}(h)$ belongs to $k[x_1]$ or $t_1\leq 2$. 
Since $\tau _2(x_1)=f_2$ by definition, 
$\tau _2^{-1}(h)$ belongs to $k[x_1]$ 
if and only if $h$ belongs to $k[f_2]$. 
Therefore, 
we conclude that $\exp hD_2$ 
is tame if and only if 
$h$ belongs to $k[f_2]$ or $t_1\leq 2$ when $t_0=2$. 
If $(t_0,t_1)=(2,1)$, 
then we have $I=\{ 1,2\} $. 
Hence, 
this case is completed. 
If $t_0=t_1=2$, 
then we have 
$hD_i=\tau \circ (\tau ^{-1}(h)D_0)\circ \tau ^{-1}$ 
for each $i\in I$ by (ii). 
Since $D_0(x_j)=0$ for $j=1,2$, 
we see that 
$\exp \tau ^{-1}(h)D_0$ is elementary. 
Hence, 
it follows that $\exp hD_i$ is tame 
by the tameness of $\tau $. 
If $t_0=2$, $t_1\geq 3$ and $i\geq 3$, 
or if $t_0\geq 3$ and $i\geq 2$, 
then $\exp hD_i$ is wild due to (iii). 
Therefore, 
we have completely determined when $\exp hD_i$ 
is tame or wild for $h\in \ker D_i\sm \zs $ and $i\in I$.

By the discussion above, 
we see that $\exp hD_i$ is tame only if 
$i=1$ or $(i,t_0)=(2,2)$ or $(t_0,t_1)=(2,2)$. 
In this case, 
$D_i$ is tamely triangularizable, 
and hence kills a tame coordinate of $\kx $ over $k$. 
Thanks to Theorem~\ref{thm:anstoF}, 
this implies that 
$hD_i$ is tamely triangularizable if $\exp hD_i$ is tame. 
Clearly, 
$\exp hD_i$ is tame if $hD_i$ 
is tamely triangularizable. 
Therefore, 
we get the following corollary to Theorem~\ref{thm:lsc1}.

\begin{cor}\label{cor:tametriangular}
For $h\in \ker D_i\sm \zs $ and $i\in I$, 
it holds that $\exp hD_i$ is tame 
if and only if $hD_i$ is tamely triangularizable. 
\end{cor}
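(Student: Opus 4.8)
The plan is to deduce the corollary directly from the complete determination of tameness of $\exp hD_i$ carried out in the discussion following Theorem~\ref{thm:lsc1}, together with Theorem~\ref{thm:anstoF}. The ``if'' part requires no work: if $hD_i$ is tamely triangularizable, then $\tau ^{-1}\circ (hD_i)\circ \tau $ is triangular for some $\tau \in \T (k,\x )$, so $\exp hD_i$ belongs to $\T (k,\x )$ by the observation made in the introduction that exponentials of tamely triangularizable derivations are tame. Thus the whole content of the corollary lies in the ``only if'' direction.

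For the ``only if'' part, I would first invoke the tameness classification: combining the triangularity of $D_1$ recorded in $(\ref{eq:D_1})$, the criterion of Theorem~\ref{thm:triangular3}, and parts (ii) and (iii) of Theorem~\ref{thm:lsc1}, one sees that $\exp hD_i$ can be tame only when $i=1$, or $(i,t_0)=(2,2)$, or $(t_0,t_1)=(2,2)$. The next step is to check that in each of these three situations $D_i$ is already tamely triangularizable: for $i=1$ this is $(\ref{eq:D_1})$ itself; for $(i,t_0)=(2,2)$ it is the conjugacy $\tau _2^{-1}\circ D_2\circ \tau _2=D_1'$ of Theorem~\ref{thm:lsc1}(ii) together with the triangularity of $D_1'$; and for $(t_0,t_1)=(2,2)$ it is the conjugacy $\tau ^{-1}\circ D_i\circ \tau =D_0$ of the same part, $D_0=\partial /\partial x_3$ being triangular.

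Once $D_i$ is known to be tamely triangularizable, say $D'=\tau ^{-1}\circ D_i\circ \tau $ is triangular with $\tau \in \T (k,\x )$, the remark in the introduction that a triangular derivation in three variables kills a tame coordinate gives such a coordinate $g'$ for $D'$, and then $g:=\tau (g')$ is a tame coordinate of $\kx $ over $k$ killed by $D_i$. I would then observe that $hD_i$ also kills $g$, since $(hD_i)(g)=h\,D_i(g)=0$, and that $hD_i$ lies in $\lnd _k\kx $ because $h\in \ker D_i$ and $D_i\in \lnd _k\kx $. Applying Theorem~\ref{thm:anstoF}(i) to the derivation $hD_i$, which kills the tame coordinate $g$, the assumed tameness of $\exp hD_i$ forces $hD_i$ to be tamely triangularizable, which is exactly the assertion.

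The logical skeleton is short, so the main obstacle is to make the first step genuinely exhaustive: one must collate all sub-cases of the preceding discussion and of Theorem~\ref{thm:lsc1} to be sure that the three listed cases really are the only ones in which $\exp hD_i$ is tame, across all admissible $(i,t_0,t_1)$ with $i\in I$. The remaining verifications---transporting the tame coordinate along $\tau $, and checking that $hD_i$ inherits both local nilpotency and the kernel element $g$---are routine.
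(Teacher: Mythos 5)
Your proposal is correct and follows essentially the same route as the paper: the discussion preceding the corollary establishes exactly the same case analysis (tameness of $\exp hD_i$ only when $i=1$, $(i,t_0)=(2,2)$, or $(t_0,t_1)=(2,2)$), notes that in these cases $D_i$ is tamely triangularizable and hence kills a tame coordinate, and then applies Theorem~\ref{thm:anstoF}~(i) to $hD_i$ just as you do. Your added verifications (transporting the tame coordinate along $\tau$, and checking that $hD_i$ kills it and is locally nilpotent) are the same routine steps the paper leaves implicit.
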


If $D$ is a derivation of $\kx $, 
then 
$$
\pl D:=D(\kx )\cap \ker D
$$ 
forms an ideal of $\ker D$, 
and is called the {\it plinth ideal} of $D$. 
Assume that $D$ is locally nilpotent. 
Then, 
$\pl D$ is not a zero ideal unless $D=0$. 
Moreover, 
$\pl D$ is always a principal ideal of $\ker D$ 
by Daigle-Kaliman~\cite[Theorem 1]{Kaliman}. 
Hence, 
the notation $\pl D=(p)$ 
will mean that $\pl D=p\ker D$. 
The plinth ideals are important 
in the study of the ranks of locally nilpotent derivations.

In the following theorem, 
we completely determine 
$\pl D_i$ and $\rank D_i$ for $i\in I$.

\begin{thm}\label{prop:rank}
The following assertions hold for each $i\in I $$:$

\noindent{\rm (i)} 
If $t_0=1$ or $(t_0,t_1,i)=(2,1,2)$, 
then we have $\pl D_i=(f_i)$ and $\rank D_i=1$. 

\noindent{\rm (ii)} 
If $(t_0,i)=(2,1)$ 
or $t_0=t_1=2$, 
then we have 
$\pl D_i=\ker D_i$ and $\rank D_i=1$.

\noindent{\rm (iii)} If 
$t_0=2$, $t_1\geq 3$ and $i=2$, 
or if $t_0\geq 3$ and $i=1$, 
then we have $\pl D_i=(f_i)$ and $\rank D_i=2$. 

\noindent{\rm (iv)} 
If $t_0=2$, $t_1\geq 3$ and $i\geq 3$, 
or if $t_0\geq 3$, $(t_0,t_1)\neq (3,1)$ and $i\geq 2$, 
then we have $\pl D_i=(f_if_{i+1})$ 
and $\rank D_i=3$. 

\noindent{\rm (v)} 
If $(t_0,t_1)=(3,1)$, 
then we have $\pl D_2=(f_3)$ and $\rank D_2=2$, 
$\pl D_3=\ker D_3$ and $\rank D_3=1$, 
and $\pl D_4=(f_4)$ and $\rank D_4=1$. 
\end{thm}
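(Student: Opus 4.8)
The plan is to read off the basic structure from Theorem~\ref{thm:lsc1} and then treat the five cases in order of increasing rank, using conjugation to collapse the cases with $t_0=2$. The uniform input is that, for every $i\in I$, the relation $D_i(r)=f_if_{i+1}$ of Theorem~\ref{thm:lsc1}~(i) exhibits $f_if_{i+1}$ as an element of $D_i(\kx)$, while $f_i$ and $f_{i+1}$ always lie in $k[f_i,f_{i+1}]\subseteq\ker D_i$; hence $f_if_{i+1}\in\pl D_i=D_i(\kx)\cap\ker D_i$. Since $\pl D_i$ is a principal ideal of $\ker D_i$ (Daigle--Kaliman), in each case it then suffices to decide whether this ideal has a strictly smaller generator and to pin down the rank. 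I will use throughout that both invariants transform predictably under conjugation: for $\sigma\in\Aut(\kx/k)$ one has $\rank(\sigma^{-1}\circ D\circ\sigma)=\rank D$ and $\pl(\sigma^{-1}\circ D\circ\sigma)=\sigma^{-1}(\pl D)$, the latter because conjugation carries $D(\kx)$ and $\ker D$ to $\sigma^{-1}(D(\kx))$ and $\sigma^{-1}(\ker D)$. Combined with the conjugacies $\tau_2^{-1}\circ D_i\circ\tau_2=D_{i-1}'$ and $\tau^{-1}\circ D_i\circ\tau=D_0$ of Theorem~\ref{thm:lsc1}~(ii), this reduces every statement with $t_0=2$ either to the base derivation $D_0=\partial/\partial x_3$ or to the corresponding statement for the companion sequence $(f_i')$ with the roles of $t_0$ and $t_1$ exchanged; in particular the case $(t_0,t_1,i)=(2,1,2)$ of (i) reduces, via $\tau_2(x_1)=f_2$, to the $t_0=1$ case for $(f_i')$.

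For the rank-one assertions I would exhibit a slice or a conjugacy to $D_0$. When $t_0=t_1=2$, Theorem~\ref{thm:lsc1}~(ii) gives $\tau^{-1}\circ D_i\circ\tau=D_0$, and since $\pl D_0=\ker D_0=k[x_1,x_2]$ and $\rank D_0=1$, conjugation invariance yields $\pl D_i=\ker D_i$ and $\rank D_i=1$, which is case (ii). When $(t_0,i)=(2,1)$, formula (\ref{eq:D_1}) gives $D_1(x_3)=\theta'(x_2)=1$, so $x_3$ is a slice and again $\pl D_1=\ker D_1$, $\rank D_1=1$. The remaining rank-one case in (i) with $t_0=1$ is immediate from $D_1=x_1\,\partial/\partial x_2$, which kills the coordinate pair $x_1,x_3$ and has $x_1=D_1(x_2)\in\pl D_1$, forcing $\pl D_1=(x_1)=(f_1)$. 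For the rank-two base case $t_0\ge3,\ i=1$ of (iii), equation (\ref{eq:D_1}) together with the invariant $f_2=x_1x_3-\theta(x_2)$ gives $\ker D_1=k[x_1,f_2]$ with $x_1=D_1(x_2)$, so $\pl D_1=(x_1)=(f_1)$; the bound $\rank D_1\le2$ is clear from $D_1(x_1)=0$, while $\rank D_1\ge2$ follows because $\{x_1,f_2\}$ cannot extend to a coordinate system of $\kx$ (as $f_2$ is linear in $x_3$ with the non-constant leading coefficient $x_1$). The companion case $t_0=2,\ t_1\ge3,\ i=2$ of (iii) then follows by $\tau_2^{-1}\circ D_2\circ\tau_2=D_1'$, giving $\pl D_2=\tau_2((f_1'))=(f_2)$ and $\rank D_2=2$.

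The heart of the theorem is the rank-three case (iv), and this is where I expect the main obstacle. Here $i$ is never the maximum of $I$, so Theorem~\ref{thm:lsc1}~(i)(b) gives $\ker D_i=k[f_i,f_{i+1}]$ with $D_i$ irreducible. I must prove the sharp identity $\pl D_i=(f_if_{i+1})$, that is, that no proper divisor of $f_if_{i+1}$ in $k[f_i,f_{i+1}]$ lies in $D_i(\kx)$; this amounts to showing that $r$ is a \emph{minimal} local slice of $D_i$, and I would establish it using the weighted-degree filtration attached to the integer sequences $(b_i)$ and $(a_i)$ -- the same combinatorial data governing the $f_i$ -- to control the degrees of any preimage $D_i(g)$ and rule out $D_i(g)$ being associated to $f_i$, to $f_{i+1}$, or to a unit of $\ker D_i$. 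Once $\pl D_i=(f_if_{i+1})$ is in hand, I would deduce $\rank D_i=3$ from the plinth-based rank criterion Proposition~\ref{prop:rank3criterion}, verifying its hypotheses from the fact that the generator $f_if_{i+1}$ factors into two algebraically independent non-coordinate invariants, so that $D_i$ admits no conjugate inside $\lnd_{k[x_3]}\kx$.

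Finally, the boundary case (v) with $(t_0,t_1)=(3,1)$ has $I=\{1,2,3,4\}$ and must be handled by direct computation of $f_2,\dots,f_5$ and the associated kernels, since $D_4$ is the non-irreducible, maximal-index derivation of Theorem~\ref{thm:lsc1}~(i)(a), whence $\ker D_4\ne k[f_4,f_5]$. I would compute that $D_3$ acquires a slice, giving $\pl D_3=\ker D_3$ and $\rank D_3=1$, that $\pl D_2=(f_3)$ with $\rank D_2=2$, and that $\pl D_4=(f_4)$ with $\rank D_4=1$; these smaller-than-expected plinth generators reflect the collapse caused by the extra common factor of $D_4$ and by the special shape of $\eta_i$ when $t_1=1$. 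The recurring difficulty across (iii)--(v) is the exact determination of the plinth ideal in the rank-three range: the lower bound $f_if_{i+1}\in\pl D_i$ is free from $D_i(r)=f_if_{i+1}$, but the matching upper bound -- the minimality of the local slice $r$, proved through the degree estimates -- is the delicate step, and it is exactly this minimality that feeds Proposition~\ref{prop:rank3criterion} to give $\rank D_i=3$.
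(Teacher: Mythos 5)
Your handling of (i) and (ii) matches the paper: the slice $D_1(x_3)=1$ when $t_0=2$, the reduction of $(t_0,t_1,i)=(2,1,2)$ to the $t_0=1$ case via $\tau _2$, the conjugation to $D_0$ when $t_0=t_1=2$, and the transformation rules $\rank (\sigma ^{-1}\circ D\circ \sigma )=\rank D$ and $\pl (\sigma ^{-1}\circ D\circ \sigma )=\sigma ^{-1}(\pl D)$ are exactly the paper's tools. The genuine gap is in (iii). There you deduce $\pl D_1=(f_1)$ from $f_1=D_1(x_2)\in \pl D_1$ alone; but that only gives $(f_1)\subseteq \pl D_1$, and the entire content is the reverse inclusion, equivalently that $D_1$ has no slice. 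Likewise your justification of $\rank D_1\geq 2$ --- that $\{ x_1,f_2\} $ cannot extend to a coordinate system ``as $f_2$ is linear in $x_3$ with non-constant leading coefficient $x_1$'' --- is not a valid criterion: $x_2+x_1x_3$ has exactly this shape, yet $(x_1,\,x_2+x_1x_3,\,x_3)$ is a coordinate system of $\kx $. Indeed, for $t_0=2$ the polynomial $f_2=x_1x_3-x_2-\alpha _1^0$ also has this shape, and there $\rank D_1=1$ and $\pl D_1=\ker D_1$; so any correct argument must use $t_0\geq 3$, i.e.\ $\deg _{x_2}\theta (x_2)=t_0-1\geq 2$, which your case (iii) never invokes. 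The paper supplies precisely this: if $\pl D_1=\ker D_1$, then $(x_2+\ker D_1)\cap x_1\kx \neq \emptyset $ by Lemma~\ref{lem:rank criterion2}, so writing $x_2+\nu (x_1,f_2)=x_1g_2$ and setting $x_1=0$ gives $x_2=-\nu (0,-\theta (x_2))$, forcing $1\geq \deg _{x_2}\theta (x_2)\geq 2$, a contradiction; then $\rank D_1=2$ follows from the irreducibility of $D_1$ together with Lemma~\ref{lem:slice irred}. Either order (rank first or plinth first) can work, but one of the two requires an argument of this kind, and your proposal contains neither.

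Parts (iv) and (v) are plans rather than proofs, though your plan for (iv) is structurally the paper's: the lower bound is free from $D_i(r)=f_if_{i+1}$, the issue is minimality of the local slice $r$, and rank three then follows from Proposition~\ref{prop:rank3criterion} via the algebraic independence of $f_i$ and $f_{i+1}$ (their being non-coordinates is not needed). What is missing is the mechanism for minimality: the paper proves $v_{f_l}(q)=1$ for $l=i,i+1$ by combining the valuation identity of Lemma~\ref{lem:rank criterion2} with $(r+k[f_j])\cap f_l\kx =\emptyset $, which in turn rests on the computation $k[f_j,r]\cap f_l\kx =q_jk[f_j,r]$ (Lemma~\ref{lem:local slice}) and on $a_j\geq 2$ (Lemma~\ref{lem:a_i}); your ``weighted-degree filtration'' gestures in this direction but is not carried out. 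For (v), ``direct computation'' omits the one idea that makes the case short: the automorphism $\sigma _3$ of (\ref{eq:sigma _3}), which fixes $f_2$ and sends $f_1\mapsto f_3$, $f_0\mapsto f_4$, hence conjugates $D_2$ to $D_1$ and $D_3$ to $D_0$ up to constants, while $D_4=f_4\Delta _{(x_3,f_4)}$ with $\Delta _{(x_3,f_4)}$ conjugate to $-\partial /\partial x_1$; without this (or a concrete substitute) the claims $\pl D_2=(f_3)$ and $\rank D_2=2$ for $(t_0,t_1)=(3,1)$ are no easier than case (iii) itself.
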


On the homogeneity of $f_i$'s, 
we have the following result. 
Assume that $\alpha _j^i=0$ 
for $i=0,1$ and $j=1,\ldots ,t_i-1$. 
Then, 
we have 
$r=x_1x_2x_3-x_2^{t_0}-x_1^{t_1}$ 
and $f_2=x_1x_3-x_2^{t_0-1}$. 
Put 
$$
\bt :=(t_0,t_1,t_0t_1-t_0-t_1). 
$$
Then, it is easy to check that 
$r$ and $f_2$ 
are $\bt $-homogeneous. 
Moreover, 
we have the following proposition.

\begin{prop}\label{prop:homogeneous}
If $\alpha _j^i=0$ for $i=0,1$ and $j=1,\ldots ,t_i-1$, then 
$f_i$ and $f_{i+1}$ are $\bt $-homogeneous 
for each $i\in I$. 
\end{prop}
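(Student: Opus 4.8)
The plan is to prove, by induction on the index, that each $f_j$ (for $0\le j\le \max I+1$, read as all $j$ when $I=\N$) is $\bt$-homogeneous of degree $d_j$, where $d_j$ is defined by $d_0=t_1$, $d_1=t_0$ and $d_{j+1}=t_jd_j-d_{j-1}$. The proposition then follows because $I$ is an initial segment of $\N$, so the indices $i$ and $i+1$ occurring for $i\in I$ are all covered, and Theorem~\ref{thm:lsc1}(i) guarantees that the relevant $f_j$ are genuinely elements of $\kx$.

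First I would record the simplification of the data. Under the hypothesis $\alpha _j^i=0$, both cases of the definition of $\eta _i$ collapse to $\eta _i(y,z)=y^{t_i}+z^{a_i}$, where $a_i=t_ib_i+\xi _i$ is the quantity appearing in the definition of $I$; hence $q_i=\eta _i(f_i,r)=f_i^{t_i}+r^{a_i}$. For the base of the induction, $f_0=x_2$ and $f_1=x_1$ are $\bt$-homogeneous of degrees $t_1$ and $t_0$ by the definition of $\bt$, and $r=x_1x_2x_3-x_2^{t_0}-x_1^{t_1}$ is $\bt$-homogeneous of degree $t_0t_1$, since each of its three monomials has $\bt$-degree $t_0+t_1+(t_0t_1-t_0-t_1)=t_0t_1$; the case $j=2$ is the $\bt$-homogeneity of $f_2=x_1x_3-x_2^{t_0-1}$, of degree $t_0t_1-t_1$, already noted before the proposition.

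For the inductive step I would use the identity $f_{i+1}=(f_i^{t_i}+r^{a_i})f_{i-1}^{-1}$, which holds in $\kx$ (not merely in the fraction field) for $i\in I$ by Theorem~\ref{thm:lsc1}(i). Assuming $f_{i-1}$ and $f_i$ are $\bt$-homogeneous of degrees $d_{i-1}$ and $d_i$, the two summands $f_i^{t_i}$ and $r^{a_i}$ have $\bt$-degrees $t_id_i$ and $a_it_0t_1$; these coincide exactly when $t_id_i=a_it_0t_1$, in which case $q_i$ is $\bt$-homogeneous of degree $t_id_i$. Since $\kx$ is a domain and the nonzero $\bt$-homogeneous polynomial $f_{i-1}$ divides the $\bt$-homogeneous $q_i$, the quotient $f_{i+1}$ is then forced to be $\bt$-homogeneous, of degree $t_id_i-d_{i-1}=d_{i+1}$.

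Thus the proposition reduces to the numerical identity $t_id_i=a_it_0t_1$ for $1\le i\le\max I$, and proving this cleanly is the main point. I would establish it by a separate induction: it holds for $i=1,2$ by direct computation ($t_1d_1=t_0t_1=a_1t_0t_1$, and $t_0d_2=t_0(t_0t_1-t_1)=(t_0-1)t_0t_1=a_2t_0t_1$), and the step follows from the recursion $a_{i+1}=t_{i+1}a_i-a_{i-1}$ of $(\ref{eq:a_i zenkasiki})$ together with the parity identity $t_{i+1}=t_{i-1}$, via
\[
t_{i+1}d_{i+1}=t_{i+1}(t_id_i-d_{i-1})=t_{i+1}a_it_0t_1-a_{i-1}t_0t_1=(t_{i+1}a_i-a_{i-1})t_0t_1=a_{i+1}t_0t_1.
\]
The delicate part is precisely this interlocking of the two differently shifted recursions (one for $d_i$, one for $a_i$) through the parity-dependent coefficients $t_i$; once it is in hand, the transfer of homogeneity across the division is purely formal.
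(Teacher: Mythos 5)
Your proof is correct and follows essentially the same route as the paper: both hinge on the identity $t_id_i=t_0t_1a_i$, the observation that $f_i^{t_i}$ and $r^{a_i}$ then have equal $\bt $-degree so that $q_i$ is $\bt $-homogeneous, and the fact that a quotient of homogeneous elements in a domain is homogeneous, with the recursions $d_{i+1}=t_id_i-d_{i-1}$ and $a_{i+1}=t_{i+1}a_i-a_{i-1}$ interlocked through the parity relation $t_{i+1}=t_{i-1}$. The only difference is organizational: you define $d_j$ by the recursion and verify the numerical identity in a separate preliminary induction, whereas the paper sets $d_i=\deg _{\bt }f_i$ and runs a single simultaneous induction establishing the homogeneity and the identity together.
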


Next, 
we construct another family of elements of $\lnd _k\kx $ 
by making use of $(f_i)_{i=0}^{\infty }$. 
Take any 
$$
\lambda (y)\in k[y]\sm \zs \text{ \ and \ }
\mu (y,z)=\sum _{j\geq 1}\mu _j(y)z^j\in zk[y,z],
$$
where $\mu _j(y)\in k[y]$ for each $j$. 
For $i\geq 2$, 
we set 
$$
r_i=\lambda (f_i)\tilde{r}-\mu (f_i,f_{i-1}),
\text{ \ where \ } 
\tilde{r}:=\left\{\begin{array}{cll}
x_2 & \text{if}& i=2\\
r & \text{if}& i\geq 3. 
\end{array}\right.
$$
Then, we define 
$$
\tilde{f}_{i+1}=
\tilde{\eta }_i\left(
f_i,r_i\lambda (f_i)^{-1}\right) 
f_{i-1}^{-1}\lambda (f_i)^{a_i}, 
\text{ \ where \ } 
\tilde{\eta }_i(y,z):=
\left\{\begin{array}{cll}
y+\theta (z) & \text{if}& i=2\\
\eta _i(y,z) & \text{if}& i\geq 3. 
\end{array}\right.
$$ 
When $\tilde{f}_{i+1}$ belongs to $\kx $, 
we consider the derivation 
$\tilde{D}_i:=\Delta _{(\tilde{f}_{i+1},f_i)}$ 
of $\kx $.

To study $\tilde{D}_i$, 
we may assume that $\lambda (y)$ and $\mu (y,z)$ 
have no common factor for the following reason. 
Let $\nu (y)$ be a common factor of 
$\lambda (y)$ and $\mu (y,z)$, 
and let 
$$
\lambda _0(y):=\lambda (y)\nu (y)^{-1}\text{ \ and \ } 
\mu _0(y,z):=\mu (y,z)\nu (y)^{-1}. 
$$
Then, 
we have 
\begin{align*}
g:&=\tilde{\eta }_i
\Bigl(
f_i,
\bigl(\lambda _0(f_i)\tilde{r}-\mu _0(f_i,f_{i-1})\bigr) 
\lambda _0(f_i)^{-1}\Bigr) 
\lambda _0(f_i)^{a_i}f_{i-1}^{-1} \\
&=\tilde{\eta }_i\left(
f_i,r_i\lambda (f_i)^{-1}\right)
\lambda (f_i)^{a_i}
f_{i-1}^{-1}\nu (f_i)^{-a_i}
=\nu (f_i)^{-a_i}\tilde{f}_{i+1}. 
\end{align*}
Hence, we get 
$\tilde{D}_i=\Delta _{(\tilde{f}_{i+1},f_i)}
=\nu (f_i)^{a_i}\Delta _{(g,f_i)}$. 
Therefore, 
the study of $\tilde{D}_i$ 
is reduced to the study of $\Delta _{(g,f_i)}$.

If $\mu (y,z)=0$, 
then we have $r_i\lambda (f_i)^{-1}=\tilde{r}$. 
In this case, 
we cannot obtain a new derivation as $\tilde{D}_i$ 
for the following reason. 
Since $\mu (y,z)=0$, 
we may assume that 
$\lambda (y)=c$ for some $c\in k^{\times }$ 
by the preceding discussion. 
Then, 
we have 
\begin{equation}\label{eq:mu=0}
\begin{aligned}
&\tilde{f}_3
=\tilde{\eta }_2(f_2,c^{-1}r_2)f_1^{-1}c^{a_2}
=\tilde{\eta }_2(f_2,x_2)f_1^{-1}c^{a_2} \\
&\quad =c^{a_2}\bigl(f_2+\theta (x_2)\bigr) f_1^{-1}
=c^{a_2}x_1x_3x_1^{-1}=c^{a_2}x_3 
\end{aligned}
\end{equation}
when $i=2$. 
This gives that $\tilde{D}_2=c^{a_2}\Delta _{(x_3,f_2)}$. 
Since $f_2$ is a symmetric polynomial 
in $x_1$ and $x_3$ over $k[x_2]$, 
we may regard $\tilde{D}_2$ as 
$c^{a_2}\Delta _{(x_1,f_2)}=-c^{a_2}D_1$ 
by interchanging $x_1$ and $x_3$. 
When $i\geq 3$, 
we have $\tilde{r}=r$ 
and $\tilde{\eta }_i(y,z)=\eta _i(y,z)$ 
by definition. 
Since $r_i\lambda (f_i)^{-1}=\tilde{r}$ 
and $\lambda (y)=c$, 
it follows that $\tilde{f}_{i+1}=c^{a_i}f_{i+1}$. 
Thus, we get $\tilde{D}_i=c^{a_i}D_i$. 
Therefore, 
we may assume that $\mu (y,z)\neq 0$.

In the notation above, 
we have the following theorem.

\begin{thm}\label{thm:lsc2}
Assume that $t_0\geq 3$ and $i=2$, 
or $t_0\geq 3$, $(t_0,t_1)\neq (3,1)$ and $i\geq 3$. 
If $\lambda (y)\in k[y]\sm \zs $ 
and $\mu (y,z)\in zk[y,z]\sm \zs $ have no common factor, 
then the following assertions hold$:$

\noindent{\rm (i)} 
$\tilde{f}_{i+1}$ belongs to $\kx $, 
and $\tilde{D}_i$ is irreducible and locally nilpotent. 
Moreover, we have $\ker \tilde{D}_i=k[f_i,\tilde{f}_{i+1}]$, 
and 
\begin{equation}\label{eq:tilde{D}_i(r_i)}
\tilde{D}_i(r_i)=
\left\{ 
\begin{array}{cl}
\lambda (f_2)\tilde{f}_{3} & \text{ if }i=2 \\
\lambda (f_i)f_i\tilde{f}_{i+1}& \text{ if }i\geq 3. 
\end{array}
\right. 
\end{equation}

\noindent{\rm (ii)} 
For $h\in \ker \tilde{D}_i\sm \zs $, 
it holds that $\exp h\tilde{D}_i$ is tame 
if and only if 
$i=2$, 
$\lambda (y)$ belongs to $k^{\times }$, 
$\mu (y,z)$ belongs to $zk[z]\sm \zs $, 
and $h$ belongs to $k[\tilde{f}_3]\sm \zs $. 
If this is the case, 
then $h\tilde{D}_i$ is tamely triangularizable. 
\end{thm}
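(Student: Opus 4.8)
The strategy is to realize $\tilde D_i$ as the output of a local slice construction applied to $D_i$, so that polynomiality, irreducibility, local nilpotence and the kernel description all follow from the general machinery together with the properties of $D_i$ already established in Theorem~\ref{thm:lsc1}. First I would show $\tilde f_{i+1}\in\kx$ by checking that the numerator $\tilde\eta_i(f_i,r_i\lambda(f_i)^{-1})\lambda(f_i)^{a_i}$ is divisible by $f_{i-1}$; this is a congruence computation modulo $f_{i-1}$ paralleling the divisibility argument for $f_{i+1}$ in Theorem~\ref{thm:lsc1}(i), and it is precisely here that the hypothesis that $\lambda$ and $\mu$ have no common factor is used, to guarantee that $\lambda(f_i)$ behaves as a unit modulo the relevant prime. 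Once $\tilde f_{i+1}\in\kx$, the derivation $\tilde D_i=\Delta_{(\tilde f_{i+1},f_i)}$ is well defined, and I would verify the formula (\ref{eq:tilde{D}_i(r_i)}) for $\tilde D_i(r_i)$ by a direct Jacobian computation from the definitions, thereby exhibiting $r_i$ as a local slice. Finally, irreducibility of $\tilde D_i$ and the equality $\ker\tilde D_i=k[f_i,\tilde f_{i+1}]$ would be deduced from the ``kernel criterion'' of \cite[Proposition~5.12]{Fbook}, exactly as in the proof of case (b) of Theorem~\ref{thm:lsc1}(i); local nilpotence then follows since the kernel has transcendence degree two over $k$ and admits a local slice for $\tilde D_i$.

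\textbf{Part (ii), the ``if'' direction.} Assume $i=2$, $\lambda=c\in k^{\times}$, $\mu(y,z)=\mu(z)\in zk[z]\sm\zs$ and $h\in k[\tilde f_3]\sm\zs$. Then $r_2=cx_2-\mu(x_1)$, and since every term of $\mu$ has positive $z$-degree, $\mu(x_1)$ is divisible by $x_1=f_1$; a short computation gives $\tilde f_3=c^{a_2}\bigl(x_3+g\bigr)$ with $g\in k[x_1,x_2]$. The plan is to un-twist: I would produce an elementary automorphism of the form $x_2\mapsto x_2-c^{-1}\mu(x_1)$, fixing $x_1$ and $x_3$, which together with the transposition of $x_1$ and $x_3$ (under which $f_2$ is fixed) carries $\tilde D_2$ to a nonzero scalar multiple of the triangular derivation $D_1$ of (\ref{eq:D_1}); the degenerate case $\mu=0$ recorded in (\ref{eq:mu=0}) is the model for this identification. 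Thus $\tilde D_2$ is tamely triangularizable and $\tilde f_3$ is carried to $f_1=x_1$, so $k[\tilde f_3]$ corresponds to $k[x_1]$. Since $t_0\ge 3$, the analysis following Theorem~\ref{thm:lsc1} shows that $\exp h'D_1$ is tame precisely when $h'\in k[x_1]$, in which case $h'D_1$ is triangular; transporting back, $\exp h\tilde D_2$ is tame and $h\tilde D_2$ is tamely triangularizable.

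\textbf{Part (ii), the ``only if'' direction.} By contraposition I must prove $\exp h\tilde D_i$ wild whenever one of the four conditions fails. The case $h\notin k[\tilde f_3]$ (with $i=2$, $\lambda\in k^{\times}$, $\mu\in zk[z]$) is settled by the same un-twisting: $h$ corresponds to an element of $\ker D_1=k[x_1,f_2]$ not in $k[x_1]$, and Theorem~\ref{thm:triangular3}, as applied to $D_1$ in the discussion after Theorem~\ref{thm:lsc1}, yields wildness. The remaining cases — $i\ge 3$; or $i=2$ with $\lambda\notin k^{\times}$; or $i=2$, $\lambda\in k^{\times}$ but $\mu$ depending genuinely on $y$ — are the essential ones and would be handled by the generalized Shestakov-Umirbaev theory. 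The plan is to choose a totally ordered group $\Gamma$ and a weight $\w\in(\Gammap)^3$ with $\rank\w=3$ adapted to the $\bt$-homogeneous leading forms of $f_i$ and $\tilde f_{i+1}$ furnished by Proposition~\ref{prop:homogeneous}, to exhibit a W-test polynomial $P$ (I expect $P=r$ or $P=r_i$, verified through Proposition~\ref{prop:criterion} by checking that its initial forms are divisible neither by any $x_j-g$ with $g\in k[\x\sm\{x_j\}]\sm k$ nor by any $x_i^{s_i}-cx_j^{s_j}$), and then to show that $\phi:=\exp h\tilde D_i$ satisfies condition (a) of Definition~\ref{defn:W-test}, namely $\degw\phi(P)<\degw\phi(x_{i_1})$ for some $i_1$, while condition (b) holds because two of the $\degw\phi(x_j)$ are linearly independent. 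By condition~($\dag$) this forces $\phi$ to be wild.

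\textbf{Main obstacle.} The hard part will be the last paragraph: controlling the leading behaviour of $\phi=\exp h\tilde D_i$ under the twisted construction. For the untwisted $D_i$ this is exactly Theorem~\ref{thm:lsc1}(iii), but the factors $\lambda(f_i)$ and $\mu(f_i,f_{i-1})$ shift the relevant $\w$-initial forms, so I would have to check that the initial forms of $\tilde f_{i+1}$ and of $\phi(r_i)$ still have the shape needed for $r$ (or $r_i$) to be a W-test polynomial and for (a) and (b) to hold. The degree bookkeeping — verifying the strict inequality $\degw\phi(r_i)<\degw\phi(x_{i_1})$ after the twist, and that the two irreducibility hypotheses of Proposition~\ref{prop:criterion} survive the presence of $\lambda$ and $\mu$ — is the delicate point, and is where the hypotheses $t_0\ge 3$, $(t_0,t_1)\ne(3,1)$ and the no-common-factor condition on $(\lambda,\mu)$ enter decisively.
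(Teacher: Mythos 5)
Your framework for part (i) --- realizing $\tilde{D}_i$ as a local slice construction applied to $D_{i-1}=\Delta _{(f_i,f_{i-1})}$, with polynomiality of $\tilde{f}_{i+1}$ proved by a congruence modulo $f_{i-1}$ --- is indeed the paper's, but two points leave a real gap. First, the no-common-factor hypothesis on $(\lambda ,\mu )$ is not what makes the congruence work: that $\lambda (f_i)\not\equiv 0\pmod{f_{i-1}\kx }$ is automatic from $k[f_i]\cap f_{i-1}\kx =\zs $ (Lemma~\ref{lem:minimal}~(i)). Second, and crucially, irreducibility of $\tilde{D}_i$ cannot be ``deduced from the kernel criterion'': the kernel criterion, like Theorem~\ref{thm:fbook}~(c), takes irreducibility as input and returns the kernel, and local nilpotence is the content of Theorem~\ref{thm:fbook}~(a), not a formal consequence of the kernel having transcendence degree two (a derivation such as $x_1\partial /\partial x_1$ also has such a kernel). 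In the paper, irreducibility is the technical core of (i): by Proposition~\ref{prop:lsc:F:rem} one must show that $F$ (which equals $\lambda (f_2)$ for $i=2$ and $\lambda (f_i)f_i$ for $i\geq 3$) and $\tilde{D}_i(f_{i-2})$ have no common factor; this is Lemma~\ref{lem:D(f_{i-2})} --- the place where the no-common-factor hypothesis is actually used --- together with, for $i\geq 3$, a separate case analysis (Claim~\ref{lem:D(f_{i-2})'}) showing that $f_i$ does not divide $\tilde{D}_i(f_{i-2})$. Your proposal contains no substitute for this step.

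For part (ii), your ``if'' direction and the case $h\notin k[\tilde{f}_3]$ agree with the paper (conjugate by an elementary map over $k[x_1]$ so that $\tilde{D}_2$ becomes $\pm c^{a_2}D_1$ after interchanging $x_1$ and $x_3$, then apply Theorem~\ref{thm:triangular3}). The wildness cases, however, have concrete defects beyond delicate bookkeeping. First, the test polynomial: $r$ need not satisfy the hypotheses of Proposition~\ref{prop:criterion} --- for instance, with $t_0=t_1=3$ and all $\alpha _j^i=0$, the initial form of $r$ for the weight $(2,2,1)$ is $-x_1^3-x_2^3$, which is divisible by $x_1+x_2$, violating condition (ii) --- and for $P=r_i$ condition (a) of Definition~\ref{defn:W-test} points the wrong way, since $\phi (r_i)=r_i+h\tilde{D}_i(r_i)$ has large $\w $-degree. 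The paper instead uses $f_2$, whose only possible initial forms are $x_1x_3$, $-x_2^{t_0-1}$ and $x_1x_3-x_2^{t_0-1}$. Second, a weight adapted to $\bt $-homogeneity cannot have rank three as Definition~\ref{defn:W-test} requires, and Proposition~\ref{prop:homogeneous} applies only when all $\alpha _j^i=0$; the paper works with the lexicographic weight $\w =(\e _1,\e _2,\e _3)$ on $\Z ^3$. Third, and most seriously, you give no mechanism for bounding $\degw \phi (x_1)$ and $\degw \phi (x_2)$ from below: $\phi $ fixes only $f_i$, and the paper recovers the degrees of $\phi (f_l)$ for $l<i$ by the descending recursion $\epsilon _{l-1}=t_l\epsilon _l-\epsilon _{l+1}$ coming from $f_{l-1}f_{l+1}=\eta _l(f_l,r)$ and Lemma~\ref{lem:rhq}; this is exactly Theorem~\ref{thm:wild general}, with its conditions (a)--(e), and Proposition~\ref{prop:wild general}, which constitute the bulk of the proof of (ii). Without that descent, conditions (a) and (b) of Definition~\ref{defn:W-test} cannot be verified, so the wildness argument does not get started.
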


In the statement (iii) of the following theorem, 
$\sqrt{(\lambda (y))}$ 
denotes the radical of the ideal 
$(\lambda (y))$ of $k[y]$.

\begin{thm}\label{prop:rank2}
Under the assumption of 
Theorem~{\rm \ref{thm:lsc2}}, 
the following assertions hold$:$

\noindent{\rm (i)} 
If $i\geq 3$, 
then we have $\rank \tilde{D}_i=3$.

\noindent{\rm (ii)} 
If $\lambda (y)$ belongs to $k^{\times }$, 
then we have 
$\rank \tilde{D}_2=2$ and 
$\pl \tilde{D}_2=(\tilde{f}_3)$. 
Moreover, 
$\tilde{f}_3$ is a coordinate of $\kx $ over $k$.

\noindent{\rm (iii)} 
Assume that $\lambda (y)$ does not belong to $k^{\times }$. 
If $t_0\geq 4$, 
or $t_0=3$ and 
$\mu _j(y)$ does not belong to $\sqrt{(\lambda (y))}$ 
for some $j\geq 2$, 
then we have $\rank \tilde{D}_2=3$. 
If $t_0=3$ 
and $\mu _j(y)$ belongs to $\sqrt{(\lambda (y))}$ 
for every $j\geq 2$, 
then we have $\pl \tilde{D}_2=(\tilde{f}_3)$. 
\end{thm}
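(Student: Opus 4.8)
The common starting point is Theorem~\ref{thm:lsc2}(i), which supplies $\ker\tilde{D}_i=k[f_i,\tilde{f}_{i+1}]$, the irreducibility and local nilpotence of $\tilde{D}_i$, and the explicit formula for $\tilde{D}_i(r_i)$. Since $\tilde{D}_i(r_i)$ lies in $\ker\tilde{D}_i$, it belongs to the plinth ideal $\pl\tilde{D}_i=\tilde{D}_i(\kx)\cap\ker\tilde{D}_i$. The plan is therefore to first pin down $\pl\tilde{D}_i$ as a principal ideal of the two-variable polynomial ring $\ker\tilde{D}_i$, generated by $\tilde{D}_i(r_i)$ up to factors, and then to read off the rank from this generator by invoking the plinth-based criterion Proposition~\ref{prop:rank3criterion}. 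To show that $r_i$ is a minimal local slice, so that $\pl\tilde{D}_i$ is generated by $\tilde{D}_i(r_i)$, I would run a weight-degree estimate: any local slice of smaller degree would produce, via $\tilde{D}_i$, an element of $\ker\tilde{D}_i$ of too small a degree, which I would exclude using the $\bt$-homogeneity of $f_i$ and $\tilde{f}_{i+1}$ (Proposition~\ref{prop:homogeneous}, in the homogeneous case) together with the Shestakov--Umirbaev inequality Theorem~\ref{thm:SUineq} to control degrees in the general case.

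For assertion (i) and for the rank-three half of (iii), the generator of the plinth ideal carries a factor that blocks the existence of a coordinate in the kernel. When $i\geq 3$ the formula gives $\tilde{D}_i(r_i)=\lambda(f_i)f_i\tilde{f}_{i+1}$, so the generator is divisible by the two non-associate kernel generators $f_i$ and $\tilde{f}_{i+1}$, neither of which is a coordinate of $\kx$ over $k$; this is exactly the configuration treated in Theorem~\ref{prop:rank}(iv), and I would apply Proposition~\ref{prop:rank3criterion} in the same way to conclude $\rank\tilde{D}_i=3$. When $i=2$ and $\lambda(y)\notin k^{\times}$, the generator is $\lambda(f_2)\tilde{f}_3$; here I would use that $f_2=x_1x_3-\theta(x_2)$ is not a coordinate of $\kx$ over $k$ (otherwise $\rank D_1\leq 1$, contradicting Theorem~\ref{prop:rank}(iii), since $t_0\geq 3$), so that $\lambda(f_2)$ is a non-constant, non-coordinate element of $k[f_2]$. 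The remaining content of the rank-three part of (iii) is then the combinatorial check that, under ``$t_0\geq 4$, or $t_0=3$ and some $\mu_j(y)\notin\sqrt{(\lambda(y))}$ with $j\geq 2$'', this factor cannot be absorbed into a smaller local slice, so that the generator genuinely retains a non-coordinate factor and Proposition~\ref{prop:rank3criterion} applies.

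The remaining cases produce the reduced plinth ideal $\pl\tilde{D}_2=(\tilde{f}_3)$. In part (ii), where $\lambda(y)\in k^{\times}$, the formula collapses to $\tilde{D}_2(r_2)\approx\tilde{f}_3$, so once minimality of $r_2$ is established we obtain $\pl\tilde{D}_2=(\tilde{f}_3)$ at once. To get $\rank\tilde{D}_2=2$ I would prove the ``moreover'' statement that $\tilde{f}_3$ is a coordinate: using $\mu(y,z)\neq 0$ and $\lambda\in k^{\times}$, I would exhibit explicit polynomials completing $\tilde{f}_3$ to a coordinate system (with $r_2$ in the role of a local slice), which forces $\rank\tilde{D}_2\leq 2$; the reverse inequality holds because $\tilde{D}_2\neq 0$ and its kernel $k[f_2,\tilde{f}_3]$ cannot contain two independent coordinates, as $f_2$ is a non-coordinate. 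In the exceptional subcase of (iii), namely $t_0=3$ with every $\mu_j(y)$ for $j\geq 2$ lying in $\sqrt{(\lambda(y))}$, the divisibility hypothesis lets me construct a local slice smaller than $r_2$ whose $\tilde{D}_2$-image is associate to $\tilde{f}_3$ alone; this absorption of the factor $\lambda(f_2)$ is precisely what the radical condition encodes, and it yields $\pl\tilde{D}_2=(\tilde{f}_3)$.

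The main obstacle I anticipate is the fine analysis underlying part (iii) when $t_0=3$: deciding, from the arithmetic of $\lambda(y)$ and the coefficients $\mu_j(y)$, whether the factor $\lambda(f_2)$ in the plinth generator can be removed by passing to a different local slice. Proving the reduction in the radical case requires an explicit construction of the smaller slice, whereas proving rank three in the non-radical case requires showing that no such slice exists---that is, that the kernel contains no coordinate---which is the delicate direction and where Proposition~\ref{prop:rank3criterion} must be applied most carefully. A secondary obstacle is establishing the minimality of $r_i$ that underlies every plinth computation, since this is what upgrades the obvious containment $\tilde{D}_i(r_i)\in\pl\tilde{D}_i$ to an equality; here the degree bookkeeping, controlled by the weights $\bt$ and the Shestakov--Umirbaev inequality, is routine but technical.
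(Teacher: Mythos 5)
Your overall architecture (the plinth generator is a factor of $\tilde{D}_i(r_i)$; rank three follows from Proposition~\ref{prop:rank3criterion} once the generator retains two algebraically independent factors; an explicit coordinate construction settles (ii); the radical hypothesis in (iii) lets the factor $\lambda (f_2)$ be ``absorbed'') matches the paper's skeleton, but the technical engine you propose for the crucial step does not work, and it is exactly there that all the difficulty sits. You want to prove that $r_i$ is a \emph{minimal} local slice, so that $\pl \tilde{D}_i=(\tilde{D}_i(r_i))$, by a weight-degree estimate using the $\bt $-homogeneity of Proposition~\ref{prop:homogeneous} and the inequality of Theorem~\ref{thm:SUineq}. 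First, this minimality is false in general: in the last case of (iii) the theorem itself asserts $\pl \tilde{D}_2=(\tilde{f}_3)$, which strictly contains $(\lambda (f_2)\tilde{f}_3)=(\tilde{D}_2(r_2))$ when $\lambda (y)\notin k^{\times }$, so no uniform minimality argument can exist. Second, the proposed tools are inapposite: Proposition~\ref{prop:homogeneous} only applies when all $\alpha _j^i=0$, and $r_i$, $\tilde{f}_{i+1}$ involve $\lambda $ and $\mu $, which destroy homogeneity; the Shestakov--Umirbaev inequality controls $\w $-degrees in the wildness arguments and gives no handle on plinth ideals. What the paper uses instead is the valuation-theoretic Lemma~\ref{lem:rank criterion2}: for an irreducible factor $p$ of $\tilde{D}_i(r_i)$ one has $v_p(\tilde{g}_i)=v_p(\tilde{D}_i(r_i))-j$, where $j$ measures how deeply the translates $r_i+\ker \tilde{D}_i$ can be pushed into powers of $p$. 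Thus every divisibility question about the generator $\tilde{g}_i$ becomes the question of whether $(r_i+\ker \tilde{D}_i)\cap p\kx $ (or $\cap \, p^a\kx $) is empty, and deciding this, prime by prime, is the content of Lemmas~\ref{lem:rank 1}, \ref{lem:rank 2} and \ref{lem:rank alpha} and of the two long closing arguments.

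Concretely, the steps your proposal treats as automatic are the hard ones. For (i), divisibility of $\tilde{g}_i$ by $\tilde{f}_{i+1}$ (Lemma~\ref{lem:rank 1}) and by $f_i$ (Lemma~\ref{lem:rank 2}, which needs $\lambda (0)\neq 0$) each require a contradiction argument reducing modulo $f_i\kx $ or $\tilde{f}_{i+1}\kx $ and exploiting that $k[f_i,r_i]\cap \tilde{f}_{i+1}\kx $ is generated by $\tilde{q}_i$ with $\deg _{r_i}\tilde{q}_i=a_i\geq 2$; and the exceptional case $\lambda (0)=0$, $(t_0,t_1,i)=(3+1,1,3)$, i.e.\ $(4,1,3)$ where $a_3=2$, needs a separate page-long analysis that your blanket claim silently skips. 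For the rank-three half of (iii), the hypotheses ``$t_0\geq 4$, or $t_0=3$ and some $\mu _j(y)\notin \sqrt{(\lambda (y))}$'' enter precisely through Lemma~\ref{lem:rank alpha} (the conditions $a_2\geq 3$ or $\deg _z\mu (\alpha ,z)\geq 2$ for a root $\alpha $ of $\lambda $), via a functional-equation degree count; calling this a ``combinatorial check'' names the gap rather than filling it. Also note that Proposition~\ref{prop:rank3criterion} asks for two algebraically \emph{independent} factors of the plinth generator, not for non-coordinate factors, so your appeals to $f_2$ being a non-coordinate are beside the point there (they are, however, the right ingredient for $\rank \tilde{D}_2\geq 2$ in (ii), and your direct construction of a smaller slice in the radical case of (iii) is essentially equivalent to the paper's Claim~\ref{claim:plinth2calS}, run forwards instead of by contradiction).
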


In the last case of (iii), 
we do not know the rank of $\tilde{D}_2$ in general. 
However, 
if $\tilde{f}_3$ is 
a coordinate of $\kx $ over $k$, 
then we may conclude that 
$\rank \tilde{D}_2=2$ as follows. 
Since $\tilde{D}_2$ kills $\tilde{f}_3$, 
we have $1\leq \rank \tilde{D}_2\leq 2$ 
by the assumption that $\tilde{f}_3$ is 
a coordinate of $\kx $ over $k$. 
Suppose that $\rank \tilde{D}_2=1$. 
Then, 
we have $\ker \tilde{D}_2=\sigma (k[x_2,x_3])$ 
for some $\sigma \in \Aut (\kx /k)$. 
Since 
$\ker \tilde{D}_2=k[f_2,\tilde{f}_3]$ 
by Theorem~\ref{thm:lsc2} (i), 
it follows that 
$$
k[f_2,\tilde{f}_3][\sigma (x_1)]
=k[\sigma (x_2),\sigma (x_3)][\sigma (x_1)]
=\sigma (\kx )=\kx . 
$$
This implies that $f_2$ is a coordinate of $\kx $ over $k$. 
Since $t_0=3$, 
we have 
$f_2=x_1x_3-(\alpha _1^0+\alpha _2^0x_2+x_2^2)$. 
Hence, 
$f_2$ is changed to a polynomial with no linear monomial 
by the substitution $x_2\mapsto x_2-\alpha _2^0/2$. 
This implies that $f_2$ 
is not a coordinate of $\kx $ over $k$, 
a contradiction. 
Therefore, 
if $\tilde{f}_3$ is a coordinate of $\kx $ over $k$, 
then we have $\rank \tilde{D}_2=2$. 
Possibly, 
$\tilde{f}_3$ is always a coordinate of $\kx $ over $k$.

Finally, 
we discuss the locally nilpotent derivations 
of rank three given by Freudenburg~\cite[Section 4]{Flsc} 
(see also~\cite[Sections 5.5.2 and 5.5.3]{Fbook}). 
Assume that 
$t_0=t_1=3$ and $\alpha _j^i=0$ 
for $i=0,1$ and $j=1,2$. 
Then, 
we have $r=x_2f_2-x_1^3$ and 
$f_2=x_1x_3-x_2^2$ by (\ref{eq:f_2}). 
Moreover, 
we have $f_0=x_2$ and $f_1=x_1$, 
and 
$$
f_{i-1}f_{i+1}=f_i^3+r^{a_i}\text{ \ for each \ }i\geq 1 
$$ 
by the definition of $\eta _i(y,z)$'s. 
As mentioned, 
we have $a_0=a_1=1$, 
and $a_{i+1}=3a_i-a_{i-1}$ for $i\geq 1$ 
by (\ref{eq:a_i zenkasiki}). 
From these conditions, 
we know that $-D_i=\Delta _{(f_i,f_{i+1})}$ 
is the same as the locally nilpotent derivation of 
``Fibonacci type" by Freudenburg for each $i\geq 1$, 
where we regard $x_1$, $x_2$ and $x_3$ as 
$x$, $-y$ and $z$, respectively. 
It is previously known that 
$\Delta _{(f_i,f_{i+1})}$ has rank three 
if $i\geq 2$. 
In this case, 
$\exp h\Delta _{(f_i,f_{i+1})}$ 
is wild for each $h\in \ker \Delta _{(f_i,f_{i+1})}\sm \zs $ 
by Theorem~\ref{thm:lsc1} (iii).

Next, 
let $\lambda (y)=y^l$ and $\mu (y,z)=-z^{m}$ 
for $l\geq 1$ and $m\geq 1$. 
Then, 
we have $r_2=f_2^lx_2+x_1^{m}$. 
Since $\tilde{\eta }_2(y,z)=y+z^2$ 
and $a_2=t_2a_1-a_0=2$ by (\ref{eq:a_i zenkasiki}), 
it follows that 
\begin{align*}
&f_2\tilde{f}_3=\tilde{\eta }_2\left(
f_2,r_2\lambda (f_2)^{-1}
\right) \lambda (f_2)^{a_2}
=\left(f_2+
(r_2f_2^{-l})^2
\right)
f_2^{2l}=f_2^{2l+1}+r_2^2 \\
&\quad =f_2^{2l}(x_1x_3-x_2^2)+(f_2^lx_2+x_1^{m})^2 
=x_1(f_2^{2l}x_3+2f_2^lx_1^{m-1}x_2+x_1^{2m-1}). 
\end{align*}
Hence, we get 
$$
\tilde{f}_3
=f_2^{2l}x_3+2f_2^lx_1^{m-1}x_2+x_1^{2m-1}. 
$$
This shows that, 
if $m=2l+1$, then 
$-\tilde{D}_2=\Delta _{(f_2,\tilde{f}_3)}$ 
is the same as the 
rank three homogeneous locally nilpotent derivation 
of ``type $(2,4l+1)$" due to Freudenburg, 
where we regard $x_1$, $x_2$ and $x_3$ as 
$x$, $y$ and $z$, respectively. 
If $l\geq 1$ and $m\geq 1$, 
then $\exp h\Delta _{(f_2,\tilde{f}_3)}$ 
is wild for each $h\in \ker \Delta _{(f_2,\tilde{f}_3)}\sm \zs $ 
by Theorem~\ref{thm:lsc2} (iii), 
since $\lambda (y)=y^l$ does not belong to $k^{\times }$. 
If $m\geq 2$, 
then we have $\rank \Delta _{(f_2,\tilde{f}_3)}=3$ 
by Theorem~\ref{prop:rank2} (iii), 
since 
$t_0=3$ and $\mu _m(y)=-1$ does not belong to 
$\sqrt{(\lambda (y))}=(y)$. 

\bigskip 

\noindent
Note: Recently, 
Prof.\ Freudenburg kindly informed the author 
that he independently realized that 
Kara\'s-Zygad\l o~\cite[Theorem 2.1]{KZ} 
implies the wildness of $\exp D$ 
for his homogeneous locally nilpotent derivation of type 
$(2,5)$ as follows: 
In this case, 
it holds that 
$$
\deg {(\exp D)(x_1)}=9,\quad 
\deg {(\exp D)(x_2)}=25,\quad 
\deg {(\exp D)(x_3)}=41. 
$$
On the other hand, 
the above-mentioned result of Kara\'s-Zygad\l o 
implies that $\phi \in \Aut _{\C }\C [x_1,x_2,x_3]$ 
is wild if the following conditions hold: 

\noindent{\rm (i)} 
$\deg \phi (x_3)\geq \deg \phi (x_2)>\deg \phi (x_1)\geq 3$. 

\noindent{\rm (ii)} $\deg \phi (x_1)$ and $\deg \phi (x_2)$ 
are mutually prime odd numbers. 

\noindent{\rm (iii)} $\deg \phi (x_3)$ does not belong to 
$\Zn \deg \phi (x_1)+\Zn \deg \phi (x_2)$. 

It is easy to check that $\exp D$ satisfies (i), (ii) and (iii). 
The author would like to thank Prof.\ Freudenburg 
for informing him the remark.

\section{A reduction lemma}
\setcounter{equation}{0}
\label{sect:lscpf1}

Recall that we defined elements 
$\tau _2$ and $\tau _2'$ of $\T (k[x_3],\{ x_1,x_2\} )$ 
when $t_0=2$ and $t_1=2$, respectively. 
Let $\kxr $ be the field 
of fractions of $\kx $. 
Then, 
$\tau _2$ and $\tau _2'$ uniquely extend to 
automorphisms of $\kxr $. 
By abuse of notation, 
we denote them 
by the same symbols $\tau _2$ and $\tau _2'$. 
The purpose of this section 
is to prove the following lemma, 
and Theorem~\ref{thm:lsc1} (ii) 
on the assumption that $f_i$ and $f_{i+1}$ 
belong to $\kx $ for each $i\in I$.

\begin{lem}\label{lem:t_0=2}
\noindent{\rm (i)} 
If $t_0=2$, 
then we have $\tau _2(f_i')=f_{i+1}$ 
for each $i\geq 0$. 

\noindent{\rm (ii)} 
If $t_1=2$, 
then we have $\tau _2'(f_i)=f_{i+1}'$ for each $i\geq 0$.

\noindent{\rm (iii)} 
If $t_0=t_1=2$, then we have 
$(\tau _2\circ \tau _2')^{j}(f_i)=f_{i+2j}$ 
for each $i,j\geq 0$. 
\end{lem}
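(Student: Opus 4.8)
The plan is to prove (i) and (ii) by induction on $i$, working throughout in the field of fractions $\kxr$, on which $\tau _2$ and $\tau _2'$ act as field automorphisms, so that all the identities become equalities of rational functions; part (iii) then follows formally from (i) and (ii). First I would record the bookkeeping for the interchanged sequence: since interchanging the data swaps $t_0\leftrightarrow t_1$ and $(\alpha _j^0)\leftrightarrow (\alpha _j^1)$ while leaving $\xi _i$ untouched, the parameters of the primed construction satisfy $t_i'=t_{i+1}$, its coefficients in position $i$ are the $\alpha _j^{i+1}$, and $b_i'$ is governed by $b_{i+1}'=t_{i+1}b_i'-b_{i-1}'+\xi _i$.

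For (i), assume $t_0=2$. The base cases are immediate: $\tau _2(f_0')=\tau _2(x_2)=x_1=f_1$ and $\tau _2(f_1')=\tau _2(x_1)=f_2$. For the inductive step at $i\ge 1$ I would apply $\tau _2$ to the defining relation $f_{i+1}'=\eta _i'(f_i',r')(f_{i-1}')^{-1}$ and insert the inductive hypotheses $\tau _2(f_i')=f_{i+1}$ and $\tau _2(f_{i-1}')=f_i$, obtaining $\tau _2(f_{i+1}')=\eta _i'(f_{i+1},\tau _2(r'))\,f_i^{-1}$. Comparing this with $f_{i+2}=\eta _{i+1}(f_{i+1},r)\,f_i^{-1}$, the step reduces to two facts:
\begin{equation*}
\text{(a)}\quad \tau _2(r')=r,\qquad\qquad
\text{(b)}\quad \eta _i'(y,r)=\eta _{i+1}(y,r).
\end{equation*}

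Fact (a) is a direct computation: using $r=x_2f_2-\sum _{j=1}^{t_1}\alpha _j^1x_1^j$ and, for $t_0=2$, $f_2=x_1x_3-x_2-\alpha _1^0$ together with $x_1x_3=f_2+x_2+\alpha _1^0$, one expands $\tau _2(r')=f_2x_1x_3-\sum _{j=1}^{t_1}\alpha _j^1x_1^j-\alpha _1^0f_2-f_2^2$, whereupon the $f_2^2$ and $\alpha _1^0f_2$ terms cancel and one is left with $x_2f_2-\sum _{j=1}^{t_1}\alpha _j^1x_1^j=r$. Fact (b) is the heart of the matter. Since $\eta _i'$ and $\eta _{i+1}$ share their $t$-parameter ($t_i'=t_{i+1}$) and their $\alpha $-coefficients ($\alpha _j^{i+1}$), (b) becomes a statement about the exponents, i.e.\ about $b_i'$ versus $b_{i+1}$, which I would settle by an auxiliary induction establishing
\begin{equation*}
b_i'=b_{i+1}+\tfrac{1}{2}(\xi _{i+1}-\xi _i).
\end{equation*}
Combined with the period-$4$ behaviour of $\xi _i$, this makes $\eta _i'$ and $\eta _{i+1}$ literally equal as polynomials in $(y,z)$. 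The delicate point, and the place where $t_0=2$ is genuinely used, is that for odd $i$ the form-selector $\xi $ differs between $\eta _i'$ and $\eta _{i+1}$ (one uses the ``first'' shape, the other the ``second''); there one invokes $t_{i+1}=t_0=2$, for which the two shapes coincide term-by-term once the exponent shift $b_i'=b_{i+1}\mp 1$ is inserted. The hypothesis $t_0=2$ is likewise exactly what drives the induction for the $b$-identity, because the term carrying $t_i$ survives only at even $i$, where $t_i=t_0=2$.

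Part (ii) is the mirror image of (i): interchanging the two data sets turns the desired statement $\tau _2'(f_i)=f_{i+1}'$ into precisely the assertion proved in (i), so no new computation is needed. Finally, for (iii), assume $t_0=t_1=2$; then (ii) followed by (i) gives $(\tau _2\circ \tau _2')(f_i)=\tau _2(f_{i+1}')=f_{i+2}$, and a one-line induction on $j$ yields $(\tau _2\circ \tau _2')^j(f_i)=f_{i+2j}$. The main obstacle throughout is Fact (b), namely the combinatorial identity for $b_i'$ and, in the odd case, the coincidence of the two shapes of $\eta $ under $t_0=2$; everything else is either a base-case verification or a purely formal deduction.
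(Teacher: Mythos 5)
Your proposal is correct and follows essentially the same route as the paper: the same base cases, the same induction driven by the two facts $\tau_2(r')=r$ and $\eta_i'(y,z)=\eta_{i+1}(y,z)$, with the latter resting on the exponent identity $b_i'=b_{i+1}+\tfrac{1}{2}(\xi_{i+1}-\xi_i)$ (the paper's Lemma 7.8) and on the coincidence of the two shapes of $\eta$ when $t=2$, which is exactly where the hypothesis $t_0=2$ enters in the paper as well. Parts (ii) and (iii) are likewise disposed of in the paper just as you do, by symmetry of the two data sets and a one-line induction on $j$.
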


From the data (\ref{eq:data}), 
we can similarly define 
$(b_i)_{i=0}^{\infty }$, 
$r$, $(\eta _i(y,z))_{i=0}^{\infty }$, 
and $(a_i)_{i=0}^{\infty }$ 
by interchanging the roles of 
$t_0$ and $t_1$, 
and $(\alpha _j^0)_{j=1}^{t_0-1}$ 
and $(\alpha _j^1)_{j=1}^{t_1-1}$, 
respectively. 
To distinguish them from the original ones, 
we denote them by 
$$
(b_i')_{i=0}^{\infty },\quad 
r',\quad 
(\eta _i'(y,z))_{i=0}^{\infty } 
\quad \text{ and }\quad (a_i')_{i=0}^{\infty }, 
$$ 
respectively. 
We note that $b_0'=b_1'=0$ 
and $b_{i+1}'=t_{i+1}b_i'-b_{i-1}'+\xi _i$ 
for each $i\geq 1$.

In the rest of this section, 
we always assume that $t_0=2$.

\begin{lem}\label{lem:reduction1}
For each $i\geq 0$, 
we have 
$$
b_i'=b_{i+1}+\frac{\xi _{i+1}-\xi _i}{2}=
\left\{ 
\begin{array}{cl}
b_{i+1}-1 & \text{ if }i\equiv 1\pmod{4} \\
b_{i+1}+1 & \text{ if }i\equiv 3\pmod{4} \\
b_{i+1} & \text{ otherwise. }
\end{array}
\right. 
$$
\end{lem}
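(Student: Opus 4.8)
The goal is to prove Lemma~\ref{lem:reduction1}, an identity comparing the integer sequence $(b_i')_{i=0}^\infty$ arising from the swapped data with the original sequence $(b_i)_{i=0}^\infty$, under the standing assumption $t_0=2$. The plan is to proceed by induction on $i$, but first I would carefully unwind the definitions so that the recurrences for both sequences are explicit. Recall $b_0=b_1=0$ and $b_{i+1}=t_ib_i-b_{i-1}+\xi_i$, where $t_i=t_0=2$ for even $i$ and $t_i=t_1$ for odd $i$, and $\xi_i=1$ if $i\equiv 0,1\pmod 4$ and $\xi_i=-1$ otherwise. Since the swap exchanges $t_0$ and $t_1$, the coefficient $t_{i+1}'$ appearing in the recurrence for $b'$ equals $t_0=2$ when $i+1$ is odd (i.e. $i$ even) in the swapped labelling — I would write out $b_{i+1}'=t_{i+1}b_i'-b_{i-1}'+\xi_i$ as stated, being scrupulous about which of $t_0,t_1$ sits in each slot, since the whole content of the lemma is this bookkeeping.

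The natural approach is to define the candidate value $c_i:=b_{i+1}+\tfrac{1}{2}(\xi_{i+1}-\xi_i)$ and verify that $(c_i)$ satisfies the same recurrence and initial conditions as $(b_i')$. First I would check the base cases: for $i=0$, $c_0=b_1+\tfrac12(\xi_1-\xi_0)=0+\tfrac12(1-1)=0=b_0'$, and for $i=1$, $c_1=b_2+\tfrac12(\xi_2-\xi_1)=b_2+\tfrac12(-1-1)=b_2-1$; since $b_2=t_1b_1-b_0+\xi_1=1$, this gives $c_1=0=b_1'$, matching. The second displayed form of the lemma is just the evaluation of $\tfrac12(\xi_{i+1}-\xi_i)$ according to the residue of $i$ modulo $4$, which I would confirm by tabulating the four cases of $(\xi_i,\xi_{i+1})$; this is the routine part and I would not belabour it.

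For the inductive step I would assume $b_j'=c_j$ for all $j<i+1$ and compute $b_{i+1}'=t_{i+1}b_i'-b_{i-1}'+\xi_i=t_{i+1}c_i-c_{i-1}+\xi_i$, then substitute $c_i=b_{i+1}+\tfrac12(\xi_{i+1}-\xi_i)$ and $c_{i-1}=b_i+\tfrac12(\xi_i-\xi_{i-1})$. The key algebraic identity I will invoke is the original recurrence in the form $b_{i+2}=t_{i+1}b_{i+1}-b_i+\xi_{i+1}$ (valid since, under $t_0=2$, the coefficient multiplying $b_{i+1}$ in the step from $i+1$ to $i+2$ is $t_{i+1}$ exactly when the parity lines up, which must be checked). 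The aim is to show $t_{i+1}c_i-c_{i-1}+\xi_i$ collapses to $b_{i+2}+\tfrac12(\xi_{i+2}-\xi_{i+1})=c_{i+1}$. This reduces to a purely numerical identity among the $\xi$'s, namely that $t_{i+1}\cdot\tfrac12(\xi_{i+1}-\xi_i)-\tfrac12(\xi_i-\xi_{i-1})+\xi_i-\xi_{i+1}=\tfrac12(\xi_{i+2}-\xi_{i+1})$, which I would verify case by case on $i\bmod 4$ using the period-$4$ pattern of $\xi$ together with the fact that $t_{i+1}$ equals $t_0=2$ or $t_1$ depending on parity.

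The main obstacle I anticipate is not any deep idea but the parity/coefficient bookkeeping: the recurrences for $b$ and $b'$ have position-dependent coefficients ($t_0$ versus $t_1$), and the swap shifts which parity receives which coefficient, so I must be certain that the coefficient $t_{i+1}$ appearing when I apply the original recurrence to produce $b_{i+2}$ genuinely matches the $t_{i+1}$ multiplying $c_i$ in the $b'$-recurrence. This coincidence works precisely because $t_0=2$ forces the relevant index shift to preserve parity in the right way, and the offset $\tfrac12(\xi_{i+1}-\xi_i)$ is designed to absorb the discrepancy in the $\xi$ terms. Once the parities are pinned down, the verification is a finite check over the four residue classes modulo $4$, and I would organize the write-up so that this case analysis is done once and cleanly rather than scattered through the induction.
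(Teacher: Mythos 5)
Your proposal is correct and is essentially the paper's own proof: the same induction with base cases $i=0,1$, substitution of the inductive hypothesis into the recurrence $b_{i+1}'=t_{i+1}b_i'-b_{i-1}'+\xi_i$, application of the original recurrence for $b$, and the residual $\xi$-identity collapsing to $(t_{i+1}-2)\frac{\xi_{i+1}-\xi_i}{2}=0$, which holds because either $t_{i+1}=t_0=2$ or $\xi_{i+1}=\xi_i$ according to parity. The paper merely phrases the inductive step as computing $b_i'$ from $b_{i-1}',b_{i-2}'$ instead of $b_{i+1}'$ from $b_i',b_{i-1}'$, and settles the $\xi$-identity by that single parity observation rather than a four-case check modulo $4$; these are cosmetic differences only.
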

\begin{proof}
We prove the first equality by induction on $i$. 
Since $b_0'=b_1'=0$, and 
$$
b_1+\frac{\xi _1-\xi _0}{2}=
\frac{1-1}{2}=0\quad \text{and}\quad 
b_2+\frac{\xi _2-\xi _1}{2}=
1+\frac{(-1)-1}{2}=0, 
$$
the equality holds for $i=0,1$. 
Assume that $i\geq 2$. 
Then, 
we have 
$b_j'=b_{j+1}+(\xi _{j+1}-\xi _j)/2$ 
for $j=i-2,i-1$ by induction assumption. 
Hence, we get 
\begin{align*}
b_{i}'=t_{i}b_{i-1}'-b_{i-2}'+\xi _{i-1} 
=t_{i}\left( 
b_{i}+\frac{\xi _{i}-\xi _{i-1}}{2}
\right)
-\left( 
b_{i-1}+\frac{\xi _{i-1}-\xi _{i-2}}{2}
\right) +\xi _{i-1}. 
\end{align*}
Since 
$t_{i}b_{i}-b_{i-1}=b_{i+1}-\xi _{i}$ 
and $-(\xi _{i-1}-\xi _{i-2})=\xi _{i+1}-\xi _i$, 
the right-hand side is equal to 
\begin{align*}
b_{i+1}-\xi _i+t_{i}\frac{\xi _{i}-\xi _{i-1}}{2}
+\frac{\xi _{i+1}-\xi _{i}}{2}
+\xi _{i-1}
=b_{i+1}+(t_{i}-2)\frac{\xi _{i}-\xi _{i-1}}{2}
+\frac{\xi _{i+1}-\xi _{i}}{2}. 
\end{align*}
Since $t_{i}=2$ if $i$ is an even number, 
and $\xi _{i}-\xi _{i-1}=0$ otherwise, 
the right-hand side is equal to 
$b_{i+1}+(\xi _{i+1}-\xi _i)/2$. 
This proves that the first equality holds for every $i\geq 0$. 
The second equality is readily verified. 
\end{proof}

Since $t_0=2$ by assumption, 
we know by Lemma~\ref{lem:reduction1} 
that $\eta _i'(y,z)$ is equal to 
\begin{align*}
&z^{t_{1}b_{i}'+1}+\sum _{j=1}^{t_{1}}
\alpha _j^{1}y^jz^{(t_{1}-j)b_{i}'}
=z^{t_1b_{i+1}+1}+\sum _{j=1}^{t_1}
\alpha _j^1y^jz^{(t_1-j)b_{i+1}}
&\text{if}\quad & 
i\equiv 0\pmod{4}\\
&z^{2b_i'+1}+\alpha _1^0yz^{b_i'}+y^2
=y^{2}+\alpha _1^0yz^{b_{i+1}-1}+z^{2b_{i+1}-1}
&\text{if}\quad & 
i\equiv 1\pmod{4}\\
&y^{t_{1}}+\sum _{j=1}^{t_{1}}
\alpha _j^{1}y^{t_{1}-j}z^{jb_{i}'-1}
=y^{t_1}+\sum _{j=1}^{t_1}
\alpha _j^1y^{t_1-j}z^{jb_{i+1}-1}
&\text{if}\quad & 
i\equiv 2\pmod{4}\\
&y^2+\alpha _1^0yz^{b_i'-1}+z^{2b_i'-1}
=z^{2b_{i+1}+1}+\alpha _1^0yz^{b_{i+1}}+y^{2}
&\text{if}\quad & 
i\equiv 3\pmod{4} 
\end{align*}
for $i\geq 0$. 
In each case, 
the right-hand side is equal to 
$\eta _{i+1}(y,z)$. 
Hence, we get 
$$
\eta _i'(y,z)=\eta _{i+1}(y,z)
$$ 
for $i\geq 0$. 
Since $a_i'=\deg _z\eta '_i(y,z)$ 
and $a_{i+1}=\deg _z\eta _{i+1}(y,z)$, 
this implies that 
$a_i'=a_{i+1}$ for each $i\geq 0$.

Now, 
let us prove Lemma~\ref{lem:t_0=2}. 
First, 
we prove (i) by induction on $i$. 
Since 
$f_0'=x_2$ and $f_1'=x_1$, 
we get 
$$
\tau _2(f_0')=\tau _2(x_2)=x_1=f_1
\quad \text{and}\quad 
\tau _2(f_1')=\tau _2(x_1)=f_2 
$$ 
by (\ref{eq:tau_2}). 
Hence, 
the statement holds for $i=0,1$. 
Assume that $i\geq 2$. 
Since $t_0=2$, 
we see that 
$$
r'=x_1x_2x_3-\sum _{i=1}^{t_1}
\alpha _i^1x_2^i-
\sum _{j=1}^{2}\alpha _j^0x_1^j
=x_1(x_2x_3-\alpha _1^0-x_1)
-\sum _{i=1}^{t_1}
\alpha _i^1x_2^i. 
$$
Hence, we get 
$$
\tau _2(r')
=f_2(x_1x_3-\alpha _1^0-f_2)-\sum _{i=1}^{t_1}
\alpha _i^1x_1^i 
=f_2x_2-\sum _{i=1}^{t_1}\alpha _i^1x_1^i=r
$$
in view of (\ref{eq:f_2}). 
Since $\tau _2(f_{j}')=f_{j+1}$  
for $j=i-2,i-1$ 
by induction assumption, 
and $\eta _{i-1}'(y,z)=\eta _i(y,z)$ as shown above, 
it follows that 
$$
\tau _2(f_{i}')
=\tau _2\left( 
\eta _{i-1}'(f_{i-1}',r')(f_{i-2}')^{-1}
\right) 
=\eta _{i}(f_{i},r)f_{i-1}^{-1}
=f_{i+1}. 
$$
This proves (i). 
We can prove (ii) similarly. 
To prove (iii), 
assume that $t_0=t_1=2$. 
Then, 
we have $\tau _2(f_i')=f_{i+1}$ and 
$\tau _2'(f_i)=f_{i+1}'$ for each $i\geq 0$ 
by (i) and (ii). 
Hence, 
know that 
$$
(\tau _2\circ \tau _2')(f_i)=
\tau _2(f_{i+1}')=f_{i+2}. 
$$ 
Therefore, 
we get $(\tau _2\circ \tau _2')^{j}(f_i)=f_{i+2j}$ 
for each $i,j\geq 0$. 
This proves (iii), 
and thus completes the proof of Lemma~\ref{lem:t_0=2}.

For $g_1,g_2,g_3\in \kx $, 
define an endomorphism $\psi $ of the $k$-algebra $\kx $ 
by $\psi (x_i)=g_i$ for $i=1,2,3$. 
Then, 
we have $\Delta _{(g_1,g_2)}(g_3)=\det J\psi $. 
From this, 
it follows that 
\begin{equation}\label{eq:Jacobian}
\phi ^{-1}\circ \Delta _{(\phi (g_1),\phi (g_2))}\circ \phi 
=(\det J\phi )\Delta _{(g_1,g_2)}. 
\end{equation}
Actually, 
since 
$J(\phi \circ \psi )=\phi (J\psi )\cdot J\phi $, 
and $\det J\phi $ is a constant, 
we have 
\begin{align*}
&(\phi ^{-1}\circ \Delta _{(\phi (g_1),\phi (g_2))}\circ \phi )(g_3)
=\phi ^{-1}(\det J(\phi \circ \psi ))
=\phi ^{-1}\bigl(
\det \bigl(\phi (J\psi )\cdot J\phi \bigr)\bigr) 
\\ &\quad 
=\phi ^{-1}(\phi (\det J\psi )\det J\phi ) 
=(\det J\psi )\det J\phi 
=(\det J\phi )\Delta _{(g_1,g_2)}(g_3). 
\end{align*}

Now, 
we prove Theorem~\ref{thm:lsc1} (ii) 
on the assumption that $f_i$ and $f_{i+1}$ 
belong to $\kx $ for each $i\in I$. 
Assume that $t_0=2$, 
and take any $i\in I$. 
Then, 
it follows that $f_{j-1}'=\tau _2^{-1}(f_j)$ 
belongs to $\kx $ for $j=i,i+1$ 
by Lemma~\ref{lem:t_0=2} (i), 
since $i\geq 1$. 
Hence, we may consider $D_{i-1}'$. 
By definition, 
we see that $\det J\tau _2=1$. 
Thus, we know by (\ref{eq:Jacobian}) that 
\begin{equation}\label{eq:jacob}
\tau _2^{-1}\circ D_i\circ \tau _2
=\tau _2^{-1}\circ 
\Delta _{(\tau _2(f_{i}'),\tau _2(f_{i-1}'))}
\circ \tau _2=(\det J\tau _2)\Delta _{(f_{i}',f_{i-1}')}
=D_{i-1}'. 
\end{equation}
This proves the first part of Theorem~\ref{thm:lsc1} (ii). 
Since $D_1'$ is triangular, 
and $\tau _2$ is tame, 
the second part follows immediately.

To prove the last part, 
assume that $t_0=t_1=2$, 
and define $\tau $ as in the theorem. 
Then, 
we know by Lemma~\ref{lem:t_0=2} (iii) that 
$$
\tau (x_2)=(\tau _2\circ \tau _2')^{i/2}(f_0)=f_i,\quad 
\tau (x_1)=(\tau _2\circ \tau _2')^{i/2}(f_1)=f_{i+1}
$$
if $i$ is an even number. 
Since $\tau _2(f_0')=f_1$ and $\tau _2(f_1')=f_2$ 
by Lemma~\ref{lem:t_0=2} (i), 
we have 
$$
\tau (x_2)=
((\tau _2\circ \tau _2')^{(i-1)/2}\circ \tau _2)(f_0')=f_i,\quad 
\tau (x_1)=
((\tau _2\circ \tau _2')^{(i-1)/2}\circ \tau _2)(f_1')=f_{i+1}
$$
if $i$ is an odd number. 
Hence, 
$\tau (x_2)=f_i$ and $\tau (x_1)=f_{i+1}$ 
hold in either case. 
Because $\det J\tau _2=\det J\tau _2'=1$, 
we get $\det J\tau =1$. 
Thus, we conclude that 
$$
\tau ^{-1}\circ D_i\circ \tau 
=\tau ^{-1}\circ 
\Delta _{(\tau (x_1),\tau (x_2))}
\circ \tau =(\det J\tau )\Delta _{(x_1,x_2)}=D_{0}
$$
by the formula (\ref{eq:Jacobian}). 
This proves the last part of Theorem~\ref{thm:lsc1} (ii), 
and thereby completing the proof of Theorem~\ref{thm:lsc1} (ii).

\section{Properties of $(a_i)_{i=0}^{\infty }$}
\setcounter{equation}{0}
\label{sect:sequence}

Let us prove (\ref{eq:I}). 
By (\ref{eq:a_i zenkasiki}), 
we can check the first three cases immediately. 
Actually, we have 
$a_1=1$ and $a_2=0$ if $t_0=1$, 
$a_1=a_2=1$ and $a_3=0$ if $(t_0,t_1)=(2,1)$, 
and $a_1=a_3=a_4=1$, $a_2=2$ and $a_5=0$ if $(t_0,t_1)=(3,1)$. 
If $t_0=t_1=2$, 
then we have $a_{i+1}-a_i=a_i-a_{i-1}$ for each $i\geq 1$ 
by (\ref{eq:a_i zenkasiki}). 
Since $a_0=a_1=1$, 
it follows that $a_i=1$ for every $i\in \N $. 
Hence, we get $I=\N $.

By (ii) of the following lemma, 
we have $I=\N $ 
if $t_0\geq 3$ and $(t_0,t_1)\neq (3,1)$. 
Then, 
it follows that $I=\N $ 
if $t_0=2$ and $t_1\geq 3$, 
since $a_{i+1}=a_i'$ for each $i\geq 0$ 
as mentioned after Lemma~\ref{lem:reduction1}. 
Thus, 
(\ref{eq:I}) is proved in all cases.

\begin{lem}\label{lem:a_i}
If $t_0\geq 3$ and $(t_0,t_1)\neq (3,1)$, 
then the following assertions hold$:$ 

\noindent{\rm (i)} 
For each $i\geq 1$, 
we have $a_{i+2}-a_i\geq 2$ 
if $i$ is an even number, 
and $a_{i+2}-a_i\geq 1$ otherwise.

\noindent{\rm (ii)} 
For each $i\geq 2$, 
we have $a_i\geq 2$ and $b_i\geq 1$.

\noindent{\rm (iii)} 
For each $i\geq 3$, we have $a_i>t_i$.

\noindent{\rm (iv)} 
For each $i\geq 3$, 
we have $b_i\geq 1$ if $i\equiv 0,1\pmod{4}$, 
and $b_i\geq 2$ otherwise.

\noindent{\rm (v)} 
For each $i\geq 2$ and $l\in \N $, 
we have $la_i\neq t_i$. 
\end{lem}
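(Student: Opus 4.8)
The plan is to reduce everything to the behaviour of two auxiliary constant-coefficient subsequences. First I would record the base values, read off directly from the recurrences $b_{i+1}=t_ib_i-b_{i-1}+\xi_i$ and $a_{i+1}=t_{i+1}a_i-a_{i-1}$: namely $a_0=a_1=1$, $b_2=1$, $a_2=t_0-1$, $b_3=t_0-1$ and $a_3=t_1(t_0-1)-1$. Under the hypotheses $t_0\ge 3$ and $(t_0,t_1)\ne(3,1)$ one checks at once that $a_2\ge 2$ and $a_3\ge 2$, splitting off the case $t_1=1$ (which forces $t_0\ge 4$) from the case $t_1\ge 2$.

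The key structural observation is that the two one-step transfer matrices $\left(\begin{smallmatrix} t_0 & -1\\ 1 & 0\end{smallmatrix}\right)$ and $\left(\begin{smallmatrix} t_1 & -1\\ 1 & 0\end{smallmatrix}\right)$ have product (in either order) of determinant $1$ and trace $P:=t_0t_1-2$. Hence, by the Cayley--Hamilton relation, both the even subsequence $(a_{2j})_j$ and the odd subsequence $(a_{2j+1})_j$ satisfy the single constant-coefficient recurrence $u_{j+1}=Pu_j-u_{j-1}$, and the hypotheses guarantee $P\ge 2$ (with $P\ge 4$ as soon as $t_1\ge 2$). For such a recurrence with $P\ge 2$ and $u_1\ge u_0\ge 0$, an immediate induction shows that $u_j$ is non-decreasing and that its consecutive differences $u_{j+1}-u_j$ are non-decreasing as well.

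Applying this to the two subsequences, whose initial data $a_0=1<a_2$ and $a_1=1<a_3$ were checked above, will give part (i): for even $i$ the difference $a_{i+2}-a_i$ is bounded below by $a_4-a_2$, and for odd $i$ by $a_3-a_1$, and these base differences are $\ge 2$, respectively $\ge 1$, by a routine computation. Monotonicity of the subsequences together with $a_2,a_3\ge 2$ yields $a_i\ge 2$ for $i\ge 2$; since $a_i=t_ib_i+\xi_i$ with $\xi_i=\pm1$ this forces $b_i\ge 1$, proving part (ii). For part (iv) I would restate the desired bounds $b_i\ge 1$ (when $\xi_i=1$) and $b_i\ge 2$ (when $\xi_i=-1$) as the inequalities $a_i\ge t_i+1$, respectively $a_i\ge 2t_i-1$; using the monotonicity of each subsequence these reduce to the four base cases $a_3\ge 2t_1-1$, $a_4\ge t_0+1$, $a_5\ge t_1+1$ and $a_6\ge 2t_0-1$, all elementary (again with the split $t_1=1$ versus $t_1\ge 2$). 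Part (iii) then follows: when $\xi_i=1$ we get $a_i\ge t_i+1>t_i$, and when $\xi_i=-1$ either $t_i\ge 2$ and $a_i\ge 2t_i-1>t_i$, or $t_i=1$ and $a_i\ge 2>t_i$ by part (ii).

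Finally part (v) drops out. For $i\ge 3$ part (iii) gives $a_i>t_i$, so $la_i\ge a_i>t_i$ for every $l\ge 1$. For $i=2$ we have $a_2=t_0-1$ and $t_2=t_0$, and since $t_0\ge 3$ the ratio $t_0/(t_0-1)=1+\tfrac{1}{t_0-1}$ is not an integer, so $la_2\ne t_2$ for all $l\ge 1$. I expect the main obstacle to be organising the base-case verifications behind part (iv)---in particular $a_6\ge 2t_0-1$, whose naive lower bound degrades for large $t_0$ and so must be extracted through the recurrence $a_6=Pa_4-a_2$ with the sharper estimate $P\ge 4$ available when $t_1\ge 2$---and keeping the period-four bookkeeping of $\xi_i$ straight, so that the correct bound ($\ge 1$ or $\ge 2$) is matched to the correct residue class of $i$ modulo $4$.
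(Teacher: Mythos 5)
Your proof is correct, and its engine is the same as the paper's: the transfer-matrix/Cayley--Hamilton observation that the even and odd subsequences satisfy $u_{j+1}=(t_0t_1-2)u_j-u_{j-1}$ is precisely the paper's Lemma~\ref{lem:skip}, and both arguments then run the same induction (non-negativity plus non-decreasing consecutive differences, with base computations at $a_2,a_3,a_4$) to get (i), and handle (ii) and (v) identically. The one genuine divergence is the order of (iii) and (iv). The paper proves (iii) first: by (i)-monotonicity it reduces to the two base cases $a_3=t_1(t_0-1)-1>t_1$ and $a_4\geq a_2+2=t_0+1>t_0$, and then (iv) falls out in one line from integrality, since $t_ib_i=a_i-\xi_i\geq t_i+1-\xi_i$ gives $b_i\geq 1+(1-\xi_i)t_i^{-1}$, which already forces $b_i\geq 2$ when $\xi_i=-1$. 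You instead prove (iv) directly, which commits you to the inequality $a_i\geq 2t_i-1$ on the residue classes with $\xi_i=-1$ and hence to the four base cases $a_3\geq 2t_1-1$, $a_4\geq t_0+1$, $a_5\geq t_1+1$, $a_6\geq 2t_0-1$, the last of which (as you note) needs the two-step recurrence and a case split on $t_1$. That extra work is avoidable: since $a_i\equiv -\,1\pmod{t_i}$ when $\xi_i=-1$, the bounds $a_i\geq t_i+1$ and $a_i\geq 2t_i-1$ are equivalent, so once you have the cheap half $a_i\geq t_i+1$ (which is just (iii)) the delicate $a_6$ estimate is never needed. Both routes are valid; the paper's buys economy by exploiting the congruence, yours buys a self-contained explicit bound at the cost of the heavier base-case bookkeeping you flagged.
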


To prove Lemma~\ref{lem:a_i}, 
we use the following lemma.

\begin{lem}\label{lem:skip}
Let $\gamma _{i-2},\ldots ,\gamma _{i+2}$ 
be five elements of 
an abelian group for some $i\in \Z $. 
Assume that there exists $l\in \Z $ such that 
$\gamma _{j+1}=t_{j+l}\gamma _j-\gamma _{j-1}$ 
for $j=i-1,i,i+1$. 
Then, 
we have 
$\gamma _{i+2}-\gamma _i
=(t_0t_1-4)\gamma _i+(\gamma _i-\gamma _{i-2})$. 
\end{lem}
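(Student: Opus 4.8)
*Let $\gamma _{i-2},\ldots ,\gamma _{i+2}$ be five elements of an abelian group, and suppose there exists $l\in \Z$ such that $\gamma _{j+1}=t_{j+l}\gamma _j-\gamma _{j-1}$ for $j=i-1,i,i+1$. Then $\gamma _{i+2}-\gamma _i=(t_0t_1-4)\gamma _i+(\gamma _i-\gamma _{i-2})$.*

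The plan is to prove the identity by unfolding the three given recurrences and exploiting the fact that $t_m$ depends only on the parity of $m$: by definition $t_m=t_0$ when $m$ is even and $t_m=t_1$ when $m$ is odd. The two consequences I will need are that $t_{i-1+l}=t_{i+1+l}$ (these indices differ by $2$, hence have the same parity), and that $t_{i+l}\cdot t_{i+1+l}=t_0t_1$ (these indices are consecutive, hence of opposite parity, so one equals $t_0$ and the other $t_1$). I would abbreviate $s:=t_{i+l}$ and $s':=t_{i+1+l}=t_{i-1+l}$, so that $ss'=t_0t_1$.

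First I would write out the three hypothesized relations explicitly:
\begin{align*}
\gamma _i&=s'\gamma _{i-1}-\gamma _{i-2},\\
\gamma _{i+1}&=s\gamma _i-\gamma _{i-1},\\
\gamma _{i+2}&=s'\gamma _{i+1}-\gamma _i,
\end{align*}
where the first uses $t_{i-1+l}=s'$, the second $t_{i+l}=s$, and the third $t_{i+1+l}=s'$. Next I would substitute the middle relation into the last one to obtain
\[
\gamma _{i+2}=s'(s\gamma _i-\gamma _{i-1})-\gamma _i
=ss'\gamma _i-s'\gamma _{i-1}-\gamma _i
=t_0t_1\gamma _i-s'\gamma _{i-1}-\gamma _i.
\]
Then I would use the first relation in the rearranged form $s'\gamma _{i-1}=\gamma _i+\gamma _{i-2}$ to eliminate $s'\gamma _{i-1}$, giving
\[
\gamma _{i+2}=t_0t_1\gamma _i-(\gamma _i+\gamma _{i-2})-\gamma _i
=(t_0t_1-2)\gamma _i-\gamma _{i-2}.
\]
Subtracting $\gamma _i$ from both sides and regrouping yields
\[
\gamma _{i+2}-\gamma _i=(t_0t_1-3)\gamma _i-\gamma _{i-2}
=(t_0t_1-4)\gamma _i+(\gamma _i-\gamma _{i-2}),
\]
which is the asserted identity.

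This argument is purely formal and carries out in any abelian group, so there is no real analytic or combinatorial difficulty; the only place demanding care is the parity bookkeeping that justifies $s'=t_{i-1+l}=t_{i+1+l}$ and $ss'=t_0t_1$. That is precisely the step I expect to be the main obstacle to state cleanly, since it depends on reading off the parities of $i-1+l$, $i+l$, and $i+1+l$ rather than on any computation; once those two facts are in hand, the substitutions above are entirely routine. I would therefore foreground the parity observation at the start and treat the rest as a direct chain of substitutions.
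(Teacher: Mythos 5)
Your proof is correct and follows essentially the same route as the paper: both eliminate $\gamma_{i+1}$ and $\gamma_{i-1}$ via the three recurrences, using the parity facts $t_{i+l-1}=t_{i+l+1}$ and $t_{i+l}t_{i+l\pm 1}=t_0t_1$. The paper merely organizes the computation symmetrically (multiplying $t_{i+l}\gamma_i=\gamma_{i+1}+\gamma_{i-1}$ by $t_{i+l-1}$ and substituting both outer relations at once), whereas you substitute sequentially; the algebra is identical.
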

\begin{proof}
Since $t_{j+l}\gamma _j=\gamma _{j+1}+\gamma _{j-1}$ 
for $j=i-1,i,i+1$, 
we get 
\begin{align*}
&t_{i+l-1}t_{i+l}\gamma _i
=t_{i+l-1}(\gamma _{i+1}+\gamma _{i-1}) \\
&\quad =t_{(i+1)+l}\gamma _{i+1}+t_{(i-1)+l}\gamma _{i-1}
=(\gamma _{i+2}+\gamma _{i})+(\gamma _i+\gamma _{i-2}). 
\end{align*}
This gives that 
$\gamma _{i+2}-\gamma _i
=(t_0t_1-4)\gamma _i+(\gamma _i-\gamma _{i-2})$. 
\end{proof}

Now, let us prove Lemma~\ref{lem:a_i}. 
To prove (i), 
we demonstrate that 
$$
a_{2i-1}\geq 0,\quad 
a_{2i}\geq 0,\quad 
a_{2i+1}-a_{2i-1}\geq 1,\quad 
a_{2i+2}-a_{2i}\geq 2
$$
for each $i\geq 1$ 
by induction on $i$. 
Since $t_0\geq 3$ and $(t_0,t_1)\neq (3,1)$ by assumption, 
we have $t_0\geq 4$ and $t_1=1$, 
or $t_0\geq 3$ and $t_1\geq 2$. 
Assume that $i=1$. 
Then, 
we have $a_1=1$ and $a_2=t_0-1\geq 2$ 
by the definition of $a_i$. 
From (\ref{eq:a_i zenkasiki}), 
it follows that 
\begin{align*}
a_3-a_1=t_3a_2-2a_1
&=t_1(t_0-1)-2\geq 1,\\
a_4-a_2=t_4a_3-2a_2
&=t_0(t_1(t_0-1)-1)-2(t_0-1) \\
&=t_0(t_1(t_0-1)-3)+2\geq 2. 
\end{align*}
Thus, 
the statement holds when $i=1$. 
Assume that $i\geq 2$. 
Then, 
we have 
$$
a_{2i-3}\geq 0,\quad 
a_{2i-2}\geq 0,\quad 
a_{2i-1}-a_{2i-3}\geq 1,\quad 
a_{2i}-a_{2i-2}\geq 2 
$$ 
by induction assumption. 
This implies that 
$a_{2i-1}\geq 0$ and $a_{2i}\geq 0$. 
By (\ref{eq:a_i zenkasiki}), 
we know that 
$a_{j+1}=t_{j+1}a_j-a_{j-1}$ 
for $j=2i-2,2i-1,2i$, 
since $j\geq 2i-2\geq 2$. 
Hence, we get 
$$
a_{2i+1}-a_{2i-1}
=(t_0t_1-4)a_{2i-1}+(a_{2i-1}-a_{2i-3})
$$
by Lemma~\ref{lem:skip}. 
Since 
$t_0\geq 4$ and $t_1=1$, 
or $t_0\geq 3$ and $t_1\geq 2$, 
we have $t_0t_1\geq 4$. 
Because $a_{2i-1}\geq 0$, 
we conclude that 
$a_{2i+1}-a_{2i-1}\geq a_{2i-1}-a_{2i-3}\geq 1$. 
Similarly, 
since $a_{j+1}=t_{i+1}a_j-a_{j-1}$ 
for $j=2i-1,2i,2i+1$ by (\ref{eq:a_i zenkasiki}), 
and since $a_{2i}\geq 0$ and $a_{2i}-a_{2i-2}\geq 2$,
we get 
$$
a_{2i+2}-a_{2i}
=(t_0t_1-4)a_{2i}+(a_{2i}-a_{2i-2})
\geq a_{2i}-a_{2i-2}\geq 2 
$$ 
by Lemma~\ref{lem:skip}. 
Therefore, 
the statement holds for every $i\geq 1$. 
This proves (i).

By (i), 
we know that $a_i$ is at least $a_2$ or $a_3$ 
for each $i\geq 2$. 
Since $a_2=t_0-1\geq 2$ 
and $a_3=t_1(t_0-1)-1\geq 2$, 
it follows that $a_i\geq 2$ for each $i\geq 2$. 
This implies that $b_i\geq 1$ for each $i\geq 2$, 
for otherwise $a_i=t_{i+1}b_i+\xi _i\leq 1$. 
Therefore, we get (ii).

To prove (iii), 
it suffices to check that $a_i>t_i$ for $i=3,4$, 
since $a_{i+2j}\geq a_i$ by (i) 
and $t_i=t_{i+2j}$ for each $j\geq 0$. 
Since $t_0\geq 4$ and $t_1=1$, 
or $t_0\geq 3$ and $t_1\geq 2$, 
we see that 
$a_3=t_1(t_0-1)-1$ is greater than $t_1=t_3$. 
By (i), 
we have $a_4\geq a_2+2=t_0+1>t_4$. 
Therefore, 
we get (iii).

By (iii), 
it follows that 
$t_ib_i+\xi _i=a_i\geq t_i+1$ for each $i\geq 3$. 
Hence, 
we have $b_i\geq 1+(1-\xi _i)t_i^{-1}$. 
Since $\xi _i=1$ if $i\equiv 0,1\pmod{4}$, 
and $\xi _i=-1$ otherwise, 
we get $b_i\geq 1$ if $i\equiv 0,1\pmod{4}$, 
and $b_i>1$ otherwise. 
This proves (iv).

By (iii), 
we have $la_i>t_i$ for any $l\in \N $ if $i\geq 3$. 
Hence, 
(v) holds when $i\geq 3$. 
Suppose that 
$la_2=t_2$ for some $l\in \N $. 
Then, we have $l(t_0-1)=t_0$. 
Hence, 
$t_0/(t_0-1)=l$ must be an integer. 
This contradicts that $t_0\geq 3$. 
Therefore, we get (v). 
This completes the proof of Lemma~\ref{lem:a_i}.

\section{Theory of local slice construction}
\setcounter{equation}{0}
\label{sect:lsc}

The main part of Theorem~\ref{thm:lsc1} (i), 
and Theorem~\ref{thm:lsc2} (i) 
are proved by means of {\it local slice construction} 
due to Freudenburg~\cite{Flsc}. 
In this section, 
we briefly review 
basic facts about locally nilpotent derivations, 
and summarize the theory of local slice construction.

Let $f,g\in \kx $ be such that 
$\ker D=k[f,g]$ for some $D\in \lnd _k\kx $. 
Then, 
$\ker D$ has transcendence degree two over $k$ 
(cf.~\cite[1.4]{Miyanishi}). 
Hence, 
$f$ and $g$ are algebraically independent over $k$. 
Since $k[f,g]$ is the polynomial ring in $f$ and $g$ over $k$, 
it is clear that $f$ is an irreducible element of $k[f,g]$, 
and $k[f]$ is factorially closed in $k[f,g]$, 
i.e., 
if $pq$ belongs to $k[f]$ for $p,q\in k[f,g]\sm \zs $, 
then $p$ and $q$ belong to $k[f]$. 
Recall that 
$\ker D$ is {\it factorially closed} in $\kx $ 
(cf.~\cite[1.3.1]{Miyanishi}). 
Thus, 
it follows that 
$f$ is an irreducible element of $\kx $, 
and $k[f]$ is factorially closed in $\kx $. 
In particular, 
$f\kx $ is a prime ideal of $\kx $. 
Since $g\not\approx f$, 
and $g$ is also an irreducible element of $\kx $, 
we know that $g$ does not belong to $f\kx $. 
Here, 
$p\approx q$ 
(resp.\ $p\not\approx q$) denotes that 
$p$ and $q$ are linearly dependent 
(resp.\ linearly independent) over $k$ 
for $p,q\in \kx $.

We summarize these facts 
in the following lemma.

\begin{lem}\label{lem:facts}
Let $f,g\in \kx $ be such that 
$\ker D=k[f,g]$ for some $D\in \lnd _k\kx $. 
Then, $f$ and $g$ are algebraically independent over $k$, 
$f$ is an irreducible element of $\kx $, 
$k[f]$ and $k[f,g]$ are factorially closed in $\kx $, 
and $f\kx $ is a prime ideal of $\kx $ 
to which $g$ does not belong. 
\end{lem}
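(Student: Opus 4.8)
The plan is to transfer the structural properties of the two-variable polynomial ring $k[f,g]$ up to $\kx$ by repeatedly invoking the factorial closedness of $\ker D$. First I would note that $D\neq 0$ (otherwise $\ker D=\kx$ would have transcendence degree three), so $\ker D=k[f,g]$ has transcendence degree two over $k$ as recalled before the lemma. If $f$ and $g$ were algebraically dependent over $k$, then $k[f,g]$ would have transcendence degree at most one; hence $f$ and $g$ are algebraically independent, and $k[f,g]$ is a genuine polynomial ring in these two elements.

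Working inside this polynomial ring is then purely formal: $f$ plays the role of one of the two variables, so it is an irreducible element of $k[f,g]$ (whose units are exactly $k^{\times}$), and $k[f]$ is factorially closed in $k[f,g]$, since a product $pq$ of nonzero elements of $k[f,g]$ lands in $k[f]$ only if neither factor involves $g$. Next I would push these facts into $\kx$ using that $\ker D=k[f,g]$ is factorially closed in $\kx$. For irreducibility: if $f=pq$ with $p,q\in \kx$, factorial closedness of $\ker D$ forces $p,q\in k[f,g]$, and then irreducibility of $f$ in $k[f,g]$ makes one of $p,q$ a unit of $k[f,g]$, i.e.\ an element of $k^{\times}$. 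For factorial closedness of $k[f]$ in $\kx$: if $pq\in k[f]\sm \zs$ with $p,q\in \kx$, then first $p,q\in \ker D=k[f,g]$, and then $p,q\in k[f]$ by factorial closedness of $k[f]$ in $k[f,g]$; the factorial closedness of $k[f,g]$ in $\kx$ itself is nothing but the recalled property of $\ker D$.

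Finally, since $\kx$ is a unique factorization domain and $f$ is irreducible, $f$ is prime, so $f\kx$ is a prime ideal of $\kx$. Applying the same irreducibility argument to $g$ shows that $g$ is irreducible in $\kx$ as well; as $f$ and $g$ are algebraically independent we have $g\not\approx f$, so $f$ cannot divide the irreducible element $g$, whence $g$ does not belong to $f\kx$. The whole argument is essentially bookkeeping and I expect no serious obstacle: the only point needing a little care is the two-stage transfer of factorial closedness in the correct order, i.e.\ descending from $\kx$ to $k[f,g]$ before descending from $k[f,g]$ to $k[f]$, which is immediate from the definition.
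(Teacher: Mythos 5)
Your proof is correct and takes essentially the same route as the paper's: algebraic independence from the transcendence-degree-two fact for kernels of nonzero locally nilpotent derivations, the elementary observations inside the polynomial ring $k[f,g]$, the two-stage transfer to $\kx $ via factorial closedness of $\ker D$, and then primality of $f\kx $ from irreducibility in the UFD $\kx $ together with $g\not\approx f$ to get $g\notin f\kx $. The only addition is your explicit (and harmless) remark that $D\neq 0$ is automatic, which the paper leaves implicit.
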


In the situation of the lemma above, 
assume that $K$ is an extension field of $k$. 
Then, 
$D$ naturally extends to 
a locally nilpotent derivation 
$\bar{D}:=\id _K\otimes D$ of $\Kx :=K\otimes _k\kx $. 
Since $K$ is flat over $k$, 
we have $\ker \bar{D}=K\otimes _k\ker D=K[f,g]$. 
Hence, 
$f=1\otimes f$ is an irreducible element of $\Kx $ 
(see also~\cite[Corollary 1.7]{Daigleh} 
for a stronger statement). 
Similarly, 
$f+\alpha $ is an irreducible element of $\Kx $ 
for each $\alpha \in K$, 
since $K[f+\alpha ,g]=K[f,g]$.

\begin{lem}\label{lem:minimal}
\noindent{\rm (i)} 
Assume that $f,g\in \kx $ are algebraically 
independent over $k$. 
If $\ker D=k[f,g]$ holds for some $D\in \Der _k\kx $, 
then we have $k[f]\cap g\kx =\zs $. 

\noindent{\rm (ii)} 
If $f,s\in \kx \sm k$ are such that 
$D(f)=0$ and $D(s)\neq 0$ 
for some $D\in \Der _k\kx $, 
then $f$ and $s$ are algebraically independent over $k$. 

\noindent{\rm (iii)} 
Let $f,g,s\in \kx $ be such that 
$\ker D=k[f,g]$ and $D(s)\neq 0$ 
for some $D\in \lnd _k\kx $. 
Then, 
$P:=k[f,s]\cap g\kx $ is a principal prime ideal of $k[f,s]$. 
If $\eta (y,z)$ is an irreducible element of $k[y,z]$ 
such that $\eta (f,s)$ belongs to $g\kx $, 
then $P$ is generated by $\eta (f,s)$. 
\end{lem}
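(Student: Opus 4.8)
The statement is Lemma~\ref{lem:minimal}~(iii). I want to show that $P:=k[f,s]\cap g\kx$ is a principal prime ideal of $k[f,s]$, and that if $\eta(y,z)\in k[y,z]$ is irreducible with $\eta(f,s)\in g\kx$, then $\eta(f,s)$ generates $P$.

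The plan is first to establish that $P$ is prime, then that it has height one, then exhibit a generator. Since $g\kx$ is a prime ideal of $\kx$ by Lemma~\ref{lem:facts} (applied with the given $D$, as $\ker D=k[f,g]$), and $k[f,s]\to \kx$ is a ring homomorphism, the contraction $P=k[f,s]\cap g\kx$ is automatically prime. Next I would check that $P$ is nonzero: by Lemma~\ref{lem:minimal}~(ii), $f$ and $s$ are algebraically independent over $k$ (since $D(f)=0$ and $D(s)\neq 0$), so $k[f,s]$ is a polynomial ring in two variables. By Lemma~\ref{lem:facts}, $g$ does not belong to $f\kx$, so $g^{\w}$ type arguments are not needed; rather, I want that $g\kx$ meets $k[f,s]$ nontrivially. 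The cleanest route is to produce an element of $P$ directly, which is exactly what the hypothesis about $\eta$ is designed to supply once we know such an $\eta$ exists; but more fundamentally, since $f$ and $g$ are algebraically dependent with $s$ (the three lie in a three-dimensional ring), some algebraic relation forces a nonzero element of $k[f,s]\cap g\kx$.

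The key step is the height-one claim, which I would obtain from a dimension count. The inclusion $k[f,s]\hookrightarrow \kx$ and the quotient $\kx/g\kx$ give a map $k[f,s]/P \hookrightarrow \kx/g\kx$. Since $\kx/g\kx$ is a domain of Krull dimension two (as $g$ is irreducible, so $g\kx$ is height one in the three-dimensional ring $\kx$), the image $k[f,s]/P$ has dimension at most two. To show $P$ has height exactly one, I would argue that $f$ maps to a nonzero, non-unit element modulo $g\kx$, so $\dim k[f,s]/P\geq 1$; combined with $P\neq(0)$ this pins the height. For the generator: given irreducible $\eta(y,z)$ with $\eta(f,s)\in g\kx$, the element $\eta(f,s)$ lies in $P$ and is a prime element of $k[f,s]$ (because $\eta$ is irreducible in $k[y,z]$ and $y,z\mapsto f,s$ is an isomorphism onto $k[f,s]$). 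So $(\eta(f,s))$ is a nonzero prime ideal contained in the height-one prime $P$, forcing $P=(\eta(f,s))$.

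The main obstacle I anticipate is establishing that $P$ has height exactly one rather than two, i.e.\ ruling out $P$ being a maximal-type prime with $k[f,s]/P$ zero-dimensional. The crucial input is that $f$ survives modulo $g\kx$ as a transcendental element: since $f\kx$ and $g\kx$ are distinct height-one primes (as $f\not\approx g$ and both are irreducible by Lemma~\ref{lem:facts}), the image of $f$ in $\kx/g\kx$ is nonzero and not algebraic over $k$, so $k[f]\cap g\kx=\zs$; this gives $k[f]\hookrightarrow k[f,s]/P$ and hence $\dim k[f,s]/P\geq 1$. Once the height is one, the existence of the prime generator $\eta(f,s)$ together with the fact that $k[f,s]$ is a UFD makes the identification $P=(\eta(f,s))$ routine. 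I would also remark that the hypothesis guarantees $\eta(f,s)\neq 0$, since $\eta$ irreducible and $f,s$ algebraically independent force $\eta(f,s)$ to be a nonzero non-unit.
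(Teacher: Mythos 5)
There is a genuine error at the center of your argument: the claim that $P\neq\zs$. Under the hypotheses of the lemma, $f$, $g$ and $s$ are in fact algebraically \emph{independent} over $k$: the kernel $k[f,g]$ of a nonzero locally nilpotent derivation has transcendence degree two, and $D(s)\neq 0$ forces $s$ to be transcendental over the field of fractions of $\ker D$ (both facts are cited in the paper just before Lemma~\ref{lem:e-n}). So your ``fundamental'' reason --- ``some algebraic relation forces a nonzero element of $k[f,s]\cap g\kx$'' --- rests on a relation that does not exist, and indeed $P$ can be the zero ideal: take $D=\partial /\partial x_3$, $f=x_1$, $g=x_2$, $s=x_3$, where $\ker D=k[x_1,x_2]$, $D(s)=1\neq 0$, and $P=k[x_1,x_3]\cap x_2\kx =\zs $. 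Consequently your ``height exactly one'' step collapses in general. The statement survives only because the zero ideal is itself a principal prime ideal; accordingly the paper proves merely that $P$ has height \emph{at most} one (the image $\bar{f}$ of $f$ in $\kx /g\kx $ is transcendental over $k$, so $k[f,s]/P$ has transcendence degree at least one) and then invokes the fact that $k[f,s]$ is a noetherian UFD, in which every prime of height $\leq 1$ is principal. Nonvanishing of $P$ is established only later, in the local slice construction, and there it genuinely uses the \emph{irreducibility} of $D$ (so that the induced derivation of $\kx /g\kx $ is nonzero, forcing $\bar{f}$ and $\bar{s}$ to be algebraically dependent); you have imported that conclusion into the lemma, where it is unavailable and false.

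A second gap: you justify the transcendence of $\bar{f}$ over $k$ (equivalently $k[f]\cap g\kx =\zs $) by saying $f\kx $ and $g\kx $ are distinct height-one primes. Distinctness only yields $\bar{f}\neq 0$; it does not exclude $p(f)\in g\kx $ for some nonconstant $p$ (compare $f=x_1$, $g=x_1-1$: distinct irreducibles with $f-1\in g\kx $). The correct source is part (i) of the same lemma, whose proof really uses the kernel hypothesis: if $\lambda (f)=gg'$ then $D(g')=0$, so $g'\in k[f,g]$, and the resulting identity contradicts the algebraic independence of $f$ and $g$. Note also that the statement under review comprises parts (i)--(iii), and you prove only (iii) while invoking (ii) and (implicitly) (i); those parts need arguments too. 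Your concluding step is sound as far as it goes: once some $\eta (f,s)\in g\kx $ is given, $P$ contains the nonzero prime element $\eta (f,s)$ of $k[f,s]$, so $P\neq \zs $, and together with height $\leq 1$ and the UFD property one gets $P=(\eta (f,s))$ exactly as in the paper --- but this conditional part cannot repair the unconditional first assertion.
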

\begin{proof}
(i) Suppose to the contrary that $k[f]\cap g\kx \neq \zs $. 
Then, 
we may find 
$\lambda (y)\in k[y]\sm \zs $ and $g'\in \kx \sm \zs $ 
such that $\lambda (f)=gg'$. 
Since $D(f)=D(g)=0$, 
it follows that $gD(g')=D(\lambda (f))=0$. 
Hence, we get $D(g')=0$. 
Thus, 
$g'$ belongs to $\ker D$. 
Since $\ker D=k[f,g]$, 
we may write $g'=\mu (f,g)$, 
where $\mu (y,z)\in k[y,z]\sm \zs $. 
Then, 
we have $\lambda (f)=gg'=g\mu (f,g)$. 
Since $\lambda (y)$ and $\mu (y,z)$ are nonzero, 
this contradicts that $f$ and $g$ 
are algebraically independent over $k$. 
Therefore, 
we get $k[f]\cap g\kx =\zs $.

(ii)
Since $k$ is of characteristic zero, 
$k(f,s)$ is a separable extension of $k(f)$. 
Since $D(f)=0$ and $D(s)\neq 0$, 
it follows that $k(f,s)$ 
is not a finite extension of $k(f)$ 
(cf.~\cite[Proposition 5.2]{Lang}). 
Hence, 
$s$ is transcendental over $k(f)$. 
Since $f$ does not belong to $k$ by assumption, 
$k(f)$ is a transcendence extension of $k$. 
Therefore, 
$f$ and $s$ are algebraically independent over $k$.

(iii) 
Since $D$ is locally nilpotent 
and $\ker D=k[f,g]$ by assumption, 
$f$ and $g$ are algebraically independent over $k$ 
by Lemma~\ref{lem:facts}. 
Hence, we get $k[f]\cap g\kx =\zs $ by (i). 
By Lemma~\ref{lem:facts}, 
$g\kx $ is a prime ideal of $\kx $ 
to which $f$ does not belong. 
Hence, $P=k[f,s]\cap g\kx $ 
is a prime ideal of $k[f,s]$. 
Let $\bar{f}$ be the image of $f$ 
in the $k$-domain $\kx /g\kx $. 
Then, 
$\bar{f}$ is transcendental over $k$, 
since $k[f]\cap g\kx =\zs $. 
Hence, 
$k[f,s]/P$ has transcendence degree 
at least one over $k$. 
Accordingly, 
$P$ is of height at most one. 
Since $k[f,s]$ is the polynomial ring 
in $f$ and $s$ over $k$ by (ii), 
we see that $k[f,s]$ is a noetherian UFD. 
Therefore, 
$P$ is a principal ideal of $k[f,s]$ 
(cf.~\cite[Theorem 20.1]{Matsumura}).

Assume that $\eta (y,z)$ is 
an irreducible element of $k[y,z]$ 
for which $q:=\eta (f,s)$ belongs to $g\kx $. 
Then, 
$q$ belongs to $P$, 
and is an irreducible element of $k[f,s]$. 
Since $P$ is a principal prime ideal of $k[f,s]$, 
it follows that $P$ is generated by $q$. 
\end{proof}

Now, 
we briefly 
summarize the theory of local slice construction. 
Assume that $D\in \lnd _k\kx $ is irreducible 
and satisfies the following conditions:  

\medskip 

\noindent(LSC1) There exist $f,g\in \kx $ such that 
$D=\Delta _{(f,g)}$ and $\ker D=k[f,g]$; 

\noindent(LSC2) There exist $s\in \kx \sm g\kx $ 
and $F\in k[f]\sm \zs $ such that $D(s)=gF$.

\medskip

Since $\ker D=k[f,g]$ by (LSC1), 
we know that $f$ and $g$ are algebraically independent over $k$ 
by Lemma~\ref{lem:facts}. 
In particular, 
we have $g\neq 0$. 
Hence, we get $D(s)=gF\neq 0$ due to (LSC2). 
Thus, 
$f$ and $s$ are algebraically independent over $k$ 
by Lemma~\ref{lem:minimal} (ii), 
and $P:=k[f,s]\cap g\kx $ 
is a principal prime ideal of $k[f,s]$ 
by Lemma~\ref{lem:minimal} (iii).

We show that $P$ is not the zero ideal. 
Since $D(g)=0$, 
we see that $D$ induces a derivation $\bar{D}$ of 
$R:=\kx /g\kx $ over $k$. 
Then, 
$\bar{D}$ is nonzero, 
since $D(\kx )$ is not contained in $g\kx $ 
by the irreducibility of $D$. 
Because $g\kx $ is a prime ideal of $\kx $ 
by Lemma~\ref{lem:facts}, 
and is of height one, 
we know that $R$ is a domain 
having transcendence degree two over $k$. 
Accordingly, 
$\ker \bar{D}$ has transcendence degree 
at most one over $k$. 
On the other hand, 
the images 
$\bar{f}$ and $\bar{s}$ of $f$ and $s$ in $R$ 
belong to $\ker \bar{D}$, 
since $D(f)=0$ and $D(s)=gF$. 
Thus, 
$\bar{f}$ and $\bar{s}$ are algebraically dependent over $k$. 
Consequently, 
some element of $k[f,s]\sm \zs $ belongs to $g\kx $. 
Therefore, $P$ is not the zero ideal.

Since $P$ is a principal prime ideal of $k[f,s]$, 
we may find an irreducible element $q$ of $k[f,s]$ 
such that $P=qk[f,s]$. 
Then, $h:=g^{-1}q$ belongs to $\kx $. 
We note that $q$ does not belong to $k[f]$. 
In fact, 
$q=gh$ belongs to $g\kx \sm \zs $, 
and $k[f]\cap g\kx =\zs $ by Lemma~\ref{lem:minimal} (i).

The following theorem is due to 
Freudenburg~\cite{Flsc} 
(see also~\cite[Theorem 5.24]{Fbook}).

\begin{thm}[Freudenburg]\label{thm:fbook}
In the notation above, 
we have the following assertions$:$

\noindent{\rm (a)} 
$\Delta _{(f,h)}$ belongs to $\lnd _k\kx $.

\noindent{\rm (b)} 
$\Delta _{(f,h)}(s)=-hF$.

\noindent{\rm (c)} 
If $\Delta _{(f,h)}$ is irreducible, 
then $\ker \Delta _{(f,h)}=k[f,h]$. 
\end{thm}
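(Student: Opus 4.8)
given the local slice construction data, $\Delta_{(f,h)}$ is locally nilpotent, $\Delta_{(f,h)}(s)=-hF$, and if $\Delta_{(f,h)}$ is irreducible then $\ker\Delta_{(f,h)}=k[f,h]$.

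Let me think about how I would prove this.

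The setup: $D = \Delta_{(f,g)}$ is irreducible locally nilpotent with $\ker D = k[f,g]$, there's a local slice $s$ with $D(s) = gF$ where $F \in k[f]\setminus\{0\}$, and $q = gh$ generates the principal prime ideal $P = k[f,s]\cap g\kx$. I want to understand $\Delta_{(f,h)}$.

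Let me work out what I'd need.

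For part (b), the computation $\Delta_{(f,h)}(s) = -hF$ should be the anchor. Using the defining property $df \wedge dh \wedge ds = \Delta_{(f,h)}(s)\, dx_1\wedge dx_2\wedge dx_3$, and knowing $q = gh$ so $dq = h\,dg + g\,dh$, I'd compute $df\wedge dq\wedge ds = h\, df\wedge dg\wedge ds + g\, df\wedge dh\wedge ds$. The first term on the right is $h\,D(s)\,dx_1\wedge dx_2\wedge dx_3 = hgF\,(\cdots)$. For the left side, since $q \in k[f,s]$, I have $q = \eta(f,s)$ for some polynomial, so $dq = \eta_f\,df + \eta_s\,ds$, whence $df\wedge dq\wedge ds = 0$. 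Therefore $0 = hgF + g\,\Delta_{(f,h)}(s)$, giving $\Delta_{(f,h)}(s) = -hF$ after dividing by $g$. That's the cleanest route.

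For part (a), local nilpotence of $\Delta_{(f,h)}$, this is the main obstacle and where I expect the real work to be. Let me think about the strategy. The standard approach in the local slice construction theory is to show that $\Delta_{(f,h)}$ agrees, up to the localization at $f$, with a known locally nilpotent derivation, then use a factorial-closedness or degree argument to descend. Concretely, since $\ker D = k[f,g]$ and $s$ is a local slice, the derivation $D$ becomes a "slice" situation after inverting $F \in k[f]$: one has $D(s)/(gF) $ giving a slice over $k[f,g]_{f}$-type rings. I would try to exhibit $\Delta_{(f,h)}$ as proportional to $D$ on an appropriate localization—more precisely, both $D = \Delta_{(f,g)}$ and $\Delta_{(f,h)}$ kill $f$, and on the generic level (localizing at $f$) the kernels $k(f)[g]$ and $k(f)[h]$ are related by $h = g^{-1}q$ with $q$ the minimal polynomial-type generator, so $\Delta_{(f,h)}$ and $D$ are both $k(f)$-derivations with the same "vertical" structure. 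The key point is that $h \in \ker\Delta_{(f,h)}$ automatically and $f \in \ker\Delta_{(f,h)}$, and one shows $\Delta_{(f,h)}^{m}$ annihilates a generating set by tracking how $\Delta_{(f,h)}$ acts on $s$ and using $\Delta_{(f,h)}(s) = -hF \in k[f,h]$, iterating via the fact that $\Delta_{(f,h)}$ maps $k[f,h,s]$ into itself with $\Delta_{(f,h)}(s)$ of controlled degree. Since $\kx$ is generated over $k[f,h]$ by $s$ (generically), local nilpotence on $s$ plus the derivation property should give it everywhere—this last descent from the localization to $\kx$ is the delicate step, relying on irreducibility of $f$ and factorial closedness of $k[f]$ in $\kx$ (Lemma~\ref{lem:facts}).

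For part (c), assuming irreducibility of $\Delta_{(f,h)}$, I would invoke the ``kernel criterion'' already used in the proof of Proposition~\ref{prop:triangular family}: since $f$ and $h$ are algebraically independent over $k$ (they generate $P$-related data and $h \notin k[f]$), $k[f,h] \subseteq \ker\Delta_{(f,h)}$ with the right transcendence degree, and irreducibility of $\Delta_{(f,h)}$ forces equality by the criterion. The hard part throughout is part (a); parts (b) and (c) are essentially formal given the machinery, whereas establishing local nilpotence genuinely requires the local slice construction argument of Freudenburg, controlling the action on $s$ and descending from the localization at $f$ back to $\kx$.
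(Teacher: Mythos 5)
First, a point of comparison: the paper itself does not prove this statement --- Theorem~\ref{thm:fbook} is imported from Freudenburg~\cite{Flsc} (see also \cite[Theorem 5.24]{Fbook}) and used as a black box --- so your attempt has to be judged against what a complete proof requires, not against a proof in the text. Your argument for (b) is correct and complete: since $q=gh$ lies in $k[f,s]$ one has $df\wedge dq\wedge ds=0$, and expanding $dq=h\,dg+g\,dh$ gives $0=h\,D(s)+g\,\Delta _{(f,h)}(s)=hgF+g\,\Delta _{(f,h)}(s)$, whence $\Delta _{(f,h)}(s)=-hF$ after cancelling $g$.

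The genuine gap is in (a), which you yourself flag as ``the delicate step'' but never carry out, and neither of the two ideas you offer for it can work. First, $\Delta _{(f,h)}$ is not proportional to $D$ over any localization: proportional derivations have the same kernel inside $\kxr $, whereas $h\in \ker \Delta _{(f,h)}$ while $D(h)=(\partial q/\partial s)F\neq 0$ (and symmetrically $g\in \ker D$ while $\Delta _{(f,h)}(g)=-D(h)\neq 0$). Second, the descent you propose --- $\Delta _{(f,h)}$ restricts to a triangular, hence locally nilpotent, derivation of $T:=k[f,h,s]$, the ring $T$ has the same field of fractions as $\kx $, and $\Delta _{(f,h)}$ preserves $\kx $, so local nilpotence ``should'' pass to $\kx $ --- is false as a general principle, even with your stated ingredients (irreducibility of $f$, factorial closedness of $k[f]$). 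Counterexample: in $B=k[x_1,x_2,x_3]$ put $\delta =x_1x_2\,\partial /\partial x_1-x_2^2\,\partial /\partial x_2$ and $T=k[x_3,\,x_1x_2,\,x_1]$; then $\delta (x_3)=\delta (x_1x_2)=0$ and $\delta (x_1)=x_1x_2$, so $\delta |_T$ is triangular on the polynomial ring $T$, the field of fractions of $T$ is $\kxr $, $\delta (B)\subseteq B$, $x_3$ is irreducible and $k[x_3]$ is factorially closed in $B$; yet $\delta ^n(x_2)=(-1)^nn!\,x_2^{n+1}\neq 0$ for all $n$, so $\delta $ is not locally nilpotent on $B$. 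What excludes this pathology in the local slice construction is exactly the data your sketch never uses: the irreducibility of $D$, the primeness of $P=k[f,s]\cap g\kx $ with generator $q$, and the slice structure $\kx _{gF}=k[f,g]_{gF}[s]$; any correct proof of (a) must exploit these, and that is where the real content of Freudenburg's theorem lies. Part (c) has a smaller gap of the same kind: irreducibility plus ``the right transcendence degree'' does not force $\ker \Delta _{(f,h)}=k[f,h]$ (compare $k[a,b^2]\subsetneq k[a,b]=\ker D$ for an irreducible $D$); the kernel criterion requires the containment $\ker \Delta _{(f,h)}\subseteq k(f,h)$, which must be derived from $\kxr =k(f,g,s)=k(f,h,s)$ --- again the slice structure of $D$.
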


The element 
$\Delta _{(f,h)}$ of $\lnd _k\kx $ is called 
a locally nilpotent derivation obtained by 
{\it local slice construction} from the data $(f,g,s)$.

\section{Irreducibility criteria}
\setcounter{equation}{0}

In the situation of Theorem~\ref{thm:fbook}, 
the following proposition is useful to check 
the irreducibility of $\Delta _{(f,h)}$.

\begin{prop}\label{prop:lsc:F:rem}
If $F$ and $\Delta _{(f,h)}(g_0)$ have no common factor 
for some $g_0\in \kx $, 
then $\Delta _{(f,h)}$ is irreducible. 
\end{prop}
\begin{proof}
Suppose to the contrary that 
$E:=\Delta _{(f,h)}$ is not irreducible. 
Then, 
there exists $p\in \kx \sm k$ such that 
$E(\kx )$ is contained in $p\kx $. 
Without loss of generality, 
we may assume that $p$ 
is an irreducible element of $\kx $. 
Then, $F$ does not belong to $p\kx $, 
since $F$ and $E(g_0)$ have no common factor by assumption, 
and $E(g_0)$ belongs to $p\kx $ by supposition. 
By Theorem~\ref{thm:fbook} (b), 
we have $-hF=E(s)$. 
Since $E(s)$ belongs to $p\kx $, 
it follows that $h$ belongs to $p\kx $. 
Hence, $q=gh$ belongs to $p\kx $. 
Recall that $q$ is an element of $k[f,s]$, 
and $f$ and $s$ are algebraically independent over $k$. 
Hence, 
we may consider the partial derivatives 
$\partial q/\partial f$ and $\partial q/\partial s$. 
Since $E(h)=E(f)=0$ and $E(s)=-hF$, 
we get 
$$
E(g)=
E(qh^{-1})
=E(q)h^{-1}
=\left(\frac{\partial q}{\partial f}E(f)
+\frac{\partial q}{\partial s}E(s)
\right)
h^{-1}
=-\frac{\partial q}{\partial s}F 
$$
by chain rule. 
Because $E(g)$ belongs to $p\kx $, 
and $F$ does not belong to $p\kx $ as mentioned, 
we conclude that 
$\partial q/\partial s$ belongs to $p\kx $.

Now, take $\phi (y,z)\in k[y,z]$ such that $\phi (f,s)=q$. 
Then, 
$\phi (y,z)$ is an irreducible element of $k[y,z]$ 
by the irreducibility of $q$ in $k[f,s]$. 
Since $q$ does not belong to $k[f]$, 
it follows that 
$\phi (y,z)$ does not belong to $k[y]$. 
Let $\bar{f}$ and $\bar{s}$ 
denote the images of $f$ and $s$ in $\kx /p\kx $, 
respectively. 
We show that $\bar{f}$ is algebraic over $k$. 
Define elements of the polynomial ring $(\kx /p\kx )[z]$ by 
$\psi (z):=\phi (\bar{f},z)$ and $\psi '(z):=d\psi (z)/dz$. 
Then, 
we have $\psi (\bar{s})=\psi '(\bar{s})=0$, 
since $\phi (f,s)=q$ and 
$(\partial \phi /\partial z)(f,s)=\partial q/\partial s$ 
belong to $p\kx$ as mentioned. 
Now, 
suppose to the contrary that $\bar{f}$ is transcendental over $k$. 
Then, 
$\bar{f}$ and $z$ are algebraically independent over $k$, 
since $z$ is an indeterminate over $\kx /p\kx $. 
Because $\phi (y,z)$ is an irreducible element of $k[y,z]$ 
not belonging to $k[y]$, 
it follows that 
$\psi (z)$ is an irreducible element of $k[\bar{f},z]$ 
not belonging to $k[\bar{f}]$. 
Consequently, 
$\psi (z)$ is an irreducible polynomial in 
$z$ over $k[\bar{f}]$, 
and hence over $k(\bar{f})$. 
Since $k$ is of characteristic zero, 
this contradicts that 
$\psi (\bar{s})=\psi '(\bar{s})=0$. 
Therefore, 
$\bar{f}$ is algebraic over $k$.

Let $\mu _1(y)$ be the minimal polynomial 
of $\bar{f}$ over $k$. 
Then, 
we have $\mu _1(\bar{f})=0$. 
Hence, 
$\mu _1(f)$ belongs to $p\kx $. 
On the other hand, 
$\mu _1(f)$ is an irreducible element of $k[f]$, 
since $f$ is not a constant. 
By Lemma~\ref{lem:facts}, 
$k[f]$ is factorially closed in $\kx $. 
Hence, it follows that 
$\mu _1(f)$ is an irreducible element of $\kx $. 
Therefore, 
we conclude that $\mu _1(f)\approx p$.

By definition, 
$\psi (z)$ belongs to $k[\bar{f}][z]$. 
We claim that $\psi (z)$ is nonzero. 
In fact, if $\psi (z)=0$, 
then $\phi (y,z)$ is divisible by $\mu _1(y)$. 
By the irreducibility of $\phi (y,z)$, 
it follows that 
$\phi (y,z)\approx \mu _1(y)$. 
Hence, $\phi (y,z)$ belongs to $k[y]$, 
a contradiction. 
Thus, 
$\psi (z)$ belongs to $k[\bar{f}][z]\sm \zs $. 
Since $\psi (\bar{s})=0$ 
and $\bar{f}$ is algebraic over $k$, 
this implies that $\bar{s}$ is algebraic over $k$. 
Let $\mu _2(z)$ be the minimal polynomial of $\bar{s}$ over $k$. 
Then, 
we have $\mu _2(s)=ph_0$ 
for some $h_0\in \kx $, 
while $\mu _2'(s)$ does not belong to $p\kx $, 
where $\mu _2'(z):=d\mu _2(z)/dz$. 
Since $p\approx \mu _1(f)$ is killed by $D$, 
and $D(s)=gF$ by (LSC2), 
we get 
$$
pD(h_0)=D(ph_0)=D(\mu _2(s))=\mu _2'(s)D(s)=\mu _2'(s)gF. 
$$
Because 
$\mu _2'(s)$ and $F$ do not belong to $p\kx $, 
this implies that $g$ belongs to $p\kx $. 
By the irreducibility of $g$, 
it follows that $g\approx p$. 
Since $p\approx \mu _1(f)$, 
we conclude that 
$f$ and $g$ 
are algebraically independent over $k$, 
a contradiction. 
Therefore, 
$E$ must be irreducible. 
\end{proof}

To prove the irreducibility of polynomials 
in two variables, 
we use the following lemma.

\begin{lem}\label{lem:x^a+y^b}
Let $q\in k[x_1,x_2]$ be such that 
$q^{\vv }=x_1^a+\alpha x_2^b$ 
for some $\vv \in \N ^2$, 
$a,b\in \N $ with $\gcd (a,b)=1$ 
and $\alpha \in k^{\times }$. 
Then, 
$q$ is an irreducible element of $k[x_1,x_2]$. 
\end{lem}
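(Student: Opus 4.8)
Let $q\in k[x_1,x_2]$ be such that $q^{\vv}=x_1^a+\alpha x_2^b$ for some $\vv\in\N^2$, $a,b\in\N$ with $\gcd(a,b)=1$ and $\alpha\in k^\times$. Then $q$ is an irreducible element of $k[x_1,x_2]$.

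The plan is to argue by contradiction, exploiting the interaction between factorization and the $\vv$-graded structure developed in Section~\ref{sect:grading}. Suppose $q=q_1q_2$ with $q_1,q_2\in k[x_1,x_2]\setminus k$. Taking $\vv$-leading forms and using the multiplicativity $(fg)^{\vv}=f^{\vv}g^{\vv}$ (established in Section~\ref{sect:grading}), I would obtain $q^{\vv}=q_1^{\vv}q_2^{\vv}$, so that $x_1^a+\alpha x_2^b$ factors nontrivially in $k[x_1,x_2]$ as a product of two $\vv$-homogeneous polynomials. Thus the whole problem reduces to showing that the binomial $x_1^a+\alpha x_2^b$ is irreducible in $k[x_1,x_2]$ when $\gcd(a,b)=1$; the passage to leading forms is the clean first step that converts an arbitrary $q$ into a single explicit binomial.

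The core step is then the irreducibility of $x_1^a+\alpha x_2^b$ over $k$ under the coprimality hypothesis. Here I would regard $x_1^a+\alpha x_2^b$ as a polynomial in $x_1$ over the field $K:=k(x_2)$, of degree $a$. Over an algebraic closure $\bar K$ its roots are $\zeta\beta x_2^{b/a}$, where $\beta$ is a fixed $a$-th root of $-\alpha x_2^b$ and $\zeta$ ranges over $a$-th roots of unity; since $k$ has characteristic zero the polynomial is separable. The irreducibility of such a binomial over $K$ is governed by a classical criterion: $x_1^a-c$ is irreducible over a field $K$ (char $0$) provided $c$ is not a $p$-th power in $K$ for any prime $p\mid a$. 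Taking $c=-\alpha x_2^b$, I would verify this non-$p$-th-power condition using $\gcd(a,b)=1$: if $-\alpha x_2^b=w^p$ for some $w\in k(x_2)$ and prime $p\mid a$, then comparing the $x_2$-adic valuation forces $p\mid b$, contradicting $\gcd(a,b)=1$ since $p\mid a$. Hence $x_1^a+\alpha x_2^b$ is irreducible over $K=k(x_2)$, and because it is primitive as a polynomial in $x_1$ over $k[x_2]$ (its $x_1$-leading coefficient is $1$), Gauss's lemma upgrades this to irreducibility in $k[x_1,x_2]$.

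The main obstacle I anticipate is the binomial irreducibility criterion itself: one must be careful that $a,b$ need not be prime, so a direct root-counting argument is not immediate, and the reduction to the ``not a $p$-th power'' condition for each prime $p\mid a$ is where the coprimality $\gcd(a,b)=1$ does its real work. A secondary subtlety is the symmetry between the two variables: if some prime divides $a$ the argument runs over $k(x_2)$, but one should confirm the degenerate cases (for instance $a=1$ or $b=1$, where the binomial is already linear in one variable and irreducibility is trivial) are not accidentally excluded. Once the binomial $q^{\vv}$ is shown irreducible, the factorization $q^{\vv}=q_1^{\vv}q_2^{\vv}$ forces one factor, say $q_2^{\vv}$, to lie in $k$; since $q_2^{\vv}$ is the $\vv$-leading form of $q_2$ and $\vv\in\N^2$ has positive entries, this yields $\deg q_2=0$, i.e.\ $q_2\in k$, contradicting the assumption that the factorization was nontrivial. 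Therefore $q$ is irreducible.
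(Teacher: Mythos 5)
Your argument follows essentially the same route as the paper's proof: suppose $q=q_1q_2$ with $q_1,q_2\in k[x_1,x_2]\sm k$, pass to $\vv$-leading forms using $(fg)^{\vv }=f^{\vv }g^{\vv }$, note that both $q_1^{\vv }$ and $q_2^{\vv }$ are non-constant because the entries of $\vv $ are positive, and derive a contradiction with the irreducibility of the binomial $x_1^a+\alpha x_2^b$. The only real difference is that the paper simply asserts this binomial irreducibility from $\gcd (a,b)=1$ and $\alpha \neq 0$, whereas you supply a proof of it via a binomial irreducibility criterion over $k(x_2)$ together with Gauss's lemma.

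One caution about that added step: the criterion you cite --- ``$x_1^a-c$ is irreducible over a field $K$ of characteristic zero provided $c$ is not a $p$-th power in $K$ for any prime $p\mid a$'' --- is false as stated; the classical theorem (Capelli--Vahlen) also requires that, when $4\mid a$, one has $c\notin -4K^4$. For instance, $x^4+4=(x^2-2x+2)(x^2+2x+2)$ over $\Q $, even though $-4$ is not a square in $\Q $. In your application the omission is harmless: if $4\mid a$ and $-\alpha x_2^b=-4w^4$ for some $w\in k(x_2)$, then comparing $x_2$-adic valuations gives $4\mid b$, which contradicts $\gcd (a,b)=1$ exactly as in your prime case. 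So your proof is correct once this extra clause of the criterion is stated and checked by the same valuation argument; as written, however, the core step rests on an incompletely quoted theorem, and you should repair that sentence.
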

\begin{proof}
Since $\gcd (a,b)=1$ and $\alpha \neq 0$ by assumption, 
we see that $x_1^a+\alpha x_2^b$ 
is an irreducible element of $k[x_1,x_2]$. 
Suppose to the contrary that $q=q_1q_2$ 
for some $q_1,q_2\in k[x_1,x_2]\sm k$. 
Then, 
$q_1^{\vv }$ and $q_2^{\vv }$ 
belong to $k[x_1,x_2]\sm k$ 
by the choice of $\vv $. 
Since $x_1^a+\alpha x_2^b=q^{\vv }=q_1^{\vv }q_2^{\vv }$, 
this contradicts the irreducibility of 
$x_1^a+\alpha x_2^b$. 
Therefore, 
$q$ is an irreducible element of $k[x_1,x_2]$. 
\end{proof}

Let $\Gamma $ be a totally ordered additive group. 
Then, 
we have the following lemma.

\begin{lem}\label{lem:initial}
Let $\vv =(a,t)\in \Gamma ^2$ be such that 
$a>0$ or $t>0$. 
Then, 
for each $i\geq 0$, 
we have 
$$
\eta _i(x_1,x_2)^{\vv }=\left\{ 
\begin{array}{ccl}
x_1^{t_i} & \text{ if }&t_ia>a_it\\
x_1^{t_i}+x_2^{a_i} & \text{ if }&t_ia=a_it\\
x_2^{a_i} & \text{ if }&t_ia<a_it. 
\end{array}
\right. 
$$
\end{lem}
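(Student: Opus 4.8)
The plan is to compute $\eta_i(x_1,x_2)^{\vv}$ directly from the explicit formulas defining $\eta_i(y,z)$, treating separately the two congruence regimes $i \equiv 0,1 \pmod 4$ and $i \equiv 2,3 \pmod 4$, since the polynomial has a genuinely different shape in each case. Recall that $a_i = t_i b_i + \xi_i$ by definition, so the exponent $a_i$ appearing in the conclusion is exactly the $\vv$-degree I expect to see attached to the ``pure $z$'' term. The first step is to record, for a monomial $x_1^p x_2^q$, that $\deg_{\vv} x_1^p x_2^q = pa + qt$, and to identify which of the three terms of the competition $t_i a$ versus $a_i t$ governs the outcome.

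First I would handle the case $i \equiv 0,1 \pmod 4$, where $\eta_i(x_1,x_2) = x_2^{t_i b_i + 1} + \sum_{j=1}^{t_i-1} \alpha_j^i x_1^j x_2^{(t_i-j)b_i} + x_1^{t_i}$. Here I would compute the $\vv$-degrees of the three types of terms: the leading $z$-term $x_2^{t_ib_i+1}$ has $\vv$-degree $(t_ib_i+1)t = a_i t$ (using $\xi_i = 1$ in this regime, so $a_i = t_ib_i+1$); the term $x_1^{t_i}$ has $\vv$-degree $t_i a$; and each middle term $x_1^j x_2^{(t_i-j)b_i}$ has $\vv$-degree $ja + (t_i-j)b_i t$. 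The key subclaim to verify is that every middle term has $\vv$-degree strictly less than $\max\{t_i a, a_i t\}$ whenever $a>0$ or $t>0$, so that only the two extreme terms can contribute to the highest part. This is a convexity/interpolation observation: the middle exponents interpolate linearly between the two endpoints, and since $a_i t = (t_i b_i + 1)t > t_i b_i t$ (as $t>0$ would give strict inequality, and the boundary cases require a short check), the middle terms are dominated. With that subclaim, the three-way comparison of $t_i a$ against $a_i t$ yields exactly the stated trichotomy. I would then carry out the analogous computation for $i \equiv 2,3 \pmod 4$, where $\eta_i = x_1^{t_i} + \sum_{j=1}^{t_i-1}\alpha_j^i x_2^{jb_i-1}x_1^{t_i-j} + x_2^{t_ib_i-1}$ and $\xi_i = -1$, so that $a_i = t_ib_i - 1$ and the pure $z$-term $x_2^{t_ib_i-1}$ again has $\vv$-degree $a_i t$, while $x_1^{t_i}$ contributes $t_i a$.

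The main obstacle I anticipate is the domination subclaim for the middle terms, specifically confirming it at the degenerate boundaries of the hypothesis $a>0$ or $t>0$ (for instance when exactly one of $a,t$ vanishes, or when $a,t$ have opposite signs in the ambient ordered group $\Gamma$). In those cases the linear-interpolation argument needs the precise inequality $a_i t$ versus $t_i b_i t$ and the corresponding statement on the $x_1$ side, and one must be careful that the intermediate exponents $(t_i - j)b_i$ for $1 \le j \le t_i - 1$ really do produce strictly smaller $\vv$-degree rather than a tie. A clean way to dispatch this is to note that when $t_i a = a_i t$ the two endpoint terms share the common highest $\vv$-degree $t_i a = a_i t$, and then to show each middle term is strictly below this common value by writing its $\vv$-degree as a strict convex combination of the two endpoint degrees (the strictness coming from $1 \le j \le t_i-1$ together with $\xi_i = \pm 1$ shifting the pure-$z$ exponent off the lattice line through the middle terms). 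I would present the $i \equiv 0,1$ computation in full and remark that the $i \equiv 2,3$ case is entirely symmetric after the obvious relabelling, so that $\eta_i(x_1,x_2)^{\vv}$ has the claimed form in all three subcases of the trichotomy for every $i \ge 0$.
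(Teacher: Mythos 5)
Your proposal is correct and takes essentially the same route as the paper: both split into the two congruence regimes, compute the $\vv$-degree of every monomial of $\eta _i$, and show the interior terms are dominated by the two extreme ones, with the paper packaging your linear-interpolation observation as the explicit identity $\deg _{\vv }x_1^jx_2^{(t_i-j)b_i}=a_it+\frac{t}{t_i}(j-t_i)+\frac{j}{t_i}(t_ia-a_it)$. The only presentational difference is that the paper dispatches the boundary case $t\leq 0$ at the outset (there $a>0$ and monicity of $\eta _i$ in $x_1$ give $\eta _i(x_1,x_2)^{\vv }=x_1^{t_i}$ immediately), rather than folding it into the interpolation argument as you propose.
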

\begin{proof}
Assume that $t\leq 0$. 
Then, 
we have $a>0$ by assumption. 
Hence, we get $t_ia>0\geq a_it$. 
By definition, 
$\eta _i(x_1,x_2)$ is a monic polynomial 
in $x_1$ over $k[x_2]$ of degree $t_i\geq 1$. 
Since $a>0$ and $t\leq 0$, 
it follows that 
$\eta _i(x_1,x_2)^{\vv }=x_1^{t_i}$. 
Therefore, 
the assertion is true.

Assume that $t>0$. 
If $i\equiv 0,1\pmod{4}$, 
then we have $a_i=t_ib_i+1$. 
Hence, 
we know that 
\begin{align*}
&\deg _{\vv }x_1^jx_2^{(t_i-j)b_i}
=ja+(t_i-j)b_it
=ja+(t_i-j)\left(\frac{a_i-1}{t_i}\right)t \\
&\quad =a_it+\frac{t}{t_i}(j-t_i)
+\frac{j}{t_i}(t_ia-a_it)
\end{align*}
for $j=1,\ldots ,t_i$. 
If $t_ia\geq a_it$, 
then $\deg _{\vv }x_1^jx_2^{(t_i-j)b_i}$ 
has the maximum value 
$\deg _{\vv }x_1^{t_i}=t_ia$ when $j=t_i$. 
Hence, 
we have 
$$
\bigl( 
\eta _i(x_1,x_2)-x_2^{t_ib_i+1}
\bigr) ^{\vv }
=\left( 
\sum _{j=1}^{t_i}
\alpha _j^ix_1^jx_2^{(t_i-j)b_i}
\right) ^{\vv }
=x_1^{t_i}. 
$$
Since $x_2^{t_ib_i+1}=x_2^{a_i}$, 
it follows that 
$\eta _i(y,z)^{\vv }=x_1^{t_i}$ if $t_ia>a_it$, 
and $\eta _i(y,z)^{\vv }=x_1^{t_i}+x_2^{a_i}$ 
if $t_ia=a_it$. 
If $t_ia<a_it$, 
then 
$\deg _{\vv }x_1^jx_2^{(t_i-j)b_i}$ 
is less than 	$a_it=\deg _{\vv }x_2^{a_i}$ 
for $j=1,\ldots ,t_i$. 
Hence, 
we get $\eta _i(y,z)^{\vv }=x_2^{a_i}$. 
Therefore, 
the lemma is true when $i\equiv 0,1\pmod{4}$.

If $i\equiv 2,3\pmod{4}$, 
then we have $a_i=t_ib_i-1$. 
Hence, we know that 
\begin{align*}
&\deg _{\vv }x_1^{t_i-j}x_2^{jb_i-1}
=(t_i-j)a+(jb_i-1)t
=(t_i-j)a+j\frac{a_i+1}{t_i}t-t\\
&\quad =t_ia+\frac{t}{t_i}(j-t_i)
+\frac{j}{t_i}(a_it-t_ia) 
\end{align*}
for $j=1,\ldots ,t_i$. 
If $t_ia\leq a_it$, 
then $\deg _{\vv }x_1^{t_i-j}x_2^{jb_i-1}$ 
has the maximum value 
$\degv x_2^{t_ib_i-1}=a_it$
when $j=t_i$. 
This implies that 
$$
\bigl( 
\eta _i(x_1,x_2)-x_1^{t_i}
\bigr) ^{\vv }
=\left( 
\sum _{j=1}^{t_i}\alpha _j^ix_1^{t_i-j}x_2^{jb_i-1}
\right) ^{\vv }
=x_2^{t_ib_i-1}=x_2^{a_i}. 
$$
Hence, 
we have 
$\eta _i(y,z)^{\vv }=x_2^{a_i}$ if $t_ia<a_it$, 
and $\eta _i(y,z)^{\vv }=x_1^{t_i}+x_2^{a_i}$ 
if $t_ia=a_it$. 
If $t_ia>a_it$, 
then $\deg _{\vv }x_1^{t_i-j}x_2^{jb_i-1}$ 
is less than 
$t_ia=\degv x_1^{t_i}$ for $j=1,\ldots ,t_i$. 
Hence, we get 
$\eta _i(y,z)^{\vv }=x_1^{t_i}$. 
Therefore, 
the lemma is true when $i\equiv 2,3\pmod{4}$. 
\end{proof}

By Lemma~\ref{lem:initial}, 
we have $\eta _i(x_1,x_2)^{\vv _i}=x_1^{t_i}+x_2^{a_i}$ 
for $\vv _i:=(a_i,t_i)$ for each $i\geq 0$. 
Moreover, 
$t_i$ and $a_i=t_ib_i+\xi _i$ 
are mutually prime, 
since $\xi _i=1,-1$. 
Therefore, 
we conclude 
that $\eta _i(x_1,x_2)$ 
is an irreducible element of $k[x_1,x_2]$ 
by Lemma~\ref{lem:x^a+y^b}.

For each $i\geq 3$, 
we define 
$$
\tilde{h}_i=
\eta _i(x_1,x_2\lambda (x_1)^{-1})\lambda (x_1)^{a_i}. 
$$ 
Then, 
$\tilde{h}_i$ belongs to $k[x_1,x_2]$, 
since $\eta _i(x_1,x_2)$ 
is a monic polynomial in $x_2$ over $k[x_1]$ 
of degree $a_i$. 
We show that $\tilde{h}_i$ 
is an irreducible element of $k[x_1,x_2]$. 
Let $\tilde{\vv }_i=(a_{i},ua_i+t_{i})$, 
where $u:=\deg _y\lambda (y)$. 
Then, 
we have $\deg _{\tilde{\vv }_i}x_1=a_i=\deg _{\vv _i}x_1$ 
and 
$\deg _{\tilde{\vv }_i}x_2\lambda (x_1)^{-1}
=t_i=\deg _{\vv _i}x_2$. 
Since 
$\eta _i(x_1,x_2)^{\vv _i}=x_1^{t_i}+x_2^{a_i}$, 
we see that 
$$
\eta _i\left(
x_1,x_2\lambda (x_1)^{-1}
\right)^{\tilde{\vv }_i}
=x_1^{t_i}+\left( \left( x_2\lambda (x_1)^{-1}
\right) ^{\tilde{\vv }_i}\right)^{a_i} 
=x_1^{t_i}+\left(x_2(cx_1^u)^{-1}\right)^{a_i}, 
$$
where $c$ is the leading coefficient of $\lambda (y)$. 
Hence, we get 
$\tilde{h}_i^{\tilde{\vv }_i}=c^{a_i}x_1^{ua_i+t_i}+x_2^{a_{i}}$. 
Since $t_i$ and $a_i$ are mutually prime, 
it follows that $ua_i+t_i$ and $a_i$ are mutually prime. 
Therefore, 
$\tilde{h}_i$ is an irreducible element of $k[x_1,x_2]$ 
thanks to Lemma~\ref{lem:x^a+y^b}.

Since $\deg _z\theta (z)=t_0-1=a_2$, 
we see that 
$$
\tilde{h}_2:=
\tilde{\eta }_2\left(
x_1,x_2\lambda (x_1)^{-1}\right) 
\lambda (x_1)^{a_2}
=\Bigl( x_1+
\theta \bigl( x_2\lambda (x_1)^{-1}\bigr) 
\Bigr) 
\lambda (x_1)^{a_2}
$$
belongs to $k[x_1,x_2]$. 
We show that $\tilde{h}_2$ 
is an irreducible element of $k[x_1,x_2]$. 
Let $\tilde{\vv }_2=(a_2,ua_2+1)$. 
Then, 
we have $\deg _{\tilde{\vv }_2}x_1=a_2$ and 
$\deg _{\tilde{\vv }_2}x_2\lambda (x_1)^{-1}=1$. 
Hence, 
we get 
$$
\tilde{h}_2^{\tilde{\vv }_2}
=\left( x_1+\left(x_2(cx_1^u)^{-1}\right)^{a_2} 
\right) (cx_1^u)^{a_2}=c^{a_2}x_1^{ua_2+1}+x_2^{a_2}. 
$$
Since $ua_2+1$ and $a_2$ are mutually prime, 
we conclude that $\tilde{h}_2$ 
is an irreducible element of $k[x_1,x_2]$ 
thanks to Lemma~\ref{lem:x^a+y^b}.

The following lemma is also a consequence of 
Lemma~\ref{lem:initial}.

\begin{lem}\label{lem:rhq}
Let $f,p\in \kxr \sm \zs $ and $\w \in \Gamma ^3$ 
be such that $\degw f>0$ or $\degw p>0$. 
If $t_i\degw f\neq a_i\degw p$ for $i\in \Zn $, 
then we have 
$$
\degw \eta _i(f,p)=\max \{ t_i\degw f,a_i\degw p\} . 
$$ 
\end{lem}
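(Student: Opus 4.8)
The plan is to prove Lemma~\ref{lem:rhq} by reducing it to the homogeneous computation already carried out in Lemma~\ref{lem:initial}. The statement asserts that when $t_i\degw f\neq a_i\degw p$, the $\w$-degree of $\eta_i(f,p)$ equals the maximum of the $\w$-degrees of its two ``extreme'' monomials $f^{t_i}$ and $p^{a_i}$; the hypothesis $t_i\degw f\neq a_i\degw p$ is precisely what guarantees these two terms have distinct $\w$-degrees and hence cannot cancel.

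First I would set $\vv=(\degw f,\degw p)\in\Gamma^2$ and observe that the hypothesis $\degw f>0$ or $\degw p>0$ translates into $v_1>0$ or $v_2>0$, so that $\vv$ satisfies the assumption of Lemma~\ref{lem:initial}. The key structural fact is that $\eta_i$ is a polynomial in two variables with coefficients in $k$, so $\eta_i(f,p)$ is obtained from $\eta_i(x_1,x_2)$ by the substitution $x_1\mapsto f$, $x_2\mapsto p$. Since $\w$-degree is multiplicative and each monomial $x_1^ax_2^b$ of $\eta_i$ is sent to $f^ap^b$ with $\degw f^ap^b=a\degw f+b\degw p=\deg_{\vv}x_1^ax_2^b$, the $\w$-degree of each individual term of $\eta_i(f,p)$ matches the $\vv$-degree of the corresponding term of $\eta_i(x_1,x_2)$.

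The main step is then to control cancellation. By Lemma~\ref{lem:initial}, under the hypothesis $t_i\degw f\neq a_i\degw p$ (i.e.\ $t_iv_1\neq a_iv_2$), the $\vv$-leading part $\eta_i(x_1,x_2)^{\vv}$ is a single monomial, either $x_1^{t_i}$ (when $t_iv_1>a_iv_2$) or $x_2^{a_i}$ (when $t_iv_1<a_iv_2$); in either case exactly one of the two extreme monomials strictly dominates all other terms in $\vv$-degree. I would argue that the unique $\vv$-maximal monomial of $\eta_i(x_1,x_2)$, call it $M$, has strictly larger $\vv$-degree than every other monomial appearing in $\eta_i$, so after substitution the term $M(f,p)$ has strictly larger $\w$-degree than every other term of $\eta_i(f,p)$. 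Consequently no cancellation can occur at the top $\w$-degree, and $\degw\eta_i(f,p)=\degw M(f,p)=\deg_{\vv}M=\max\{t_i\degw f,a_i\degw p\}$, which is the claim.

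The mild obstacle will be justifying that a single term cannot be cancelled, since $f$ and $p$ are arbitrary elements of the field of fractions $\kxr$ rather than algebraically independent polynomials; in principle the top-degree parts $(f^{t_i})^{\w}$ and other $(f^ap^b)^{\w}$ could coincide and interfere. However, because the $\vv$-leading part of $\eta_i$ is a \emph{single} monomial with strictly larger $\vv$-degree than all the others (which is exactly the content of Lemma~\ref{lem:initial} in the two non-tie cases), its image under substitution is the unique term of top $\w$-degree, so its $\w$-leading form $M^{\w}(f,p)=\pm(f^{\w})^{t_i}$ or $\pm(p^{\w})^{a_i}$ is nonzero and survives. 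Thus the argument goes through cleanly once the strict domination of the single leading monomial is recorded, and no genuine algebraic independence of $f$ and $p$ is needed.
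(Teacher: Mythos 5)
Your proposal is correct and follows essentially the same route as the paper: set $\vv=(\degw f,\degw p)$, invoke Lemma~\ref{lem:initial} to see that in the two non-tie cases the $\vv$-leading form of $\eta_i(x_1,x_2)$ is the single monomial $x_1^{t_i}$ or $x_2^{a_i}$, and conclude that $\eta_i(f,p)^{\w}$ equals $(f^{\w})^{t_i}$ or $(p^{\w})^{a_i}$. The only difference is that you spell out the no-cancellation justification that the paper leaves implicit in the phrase ``this implies,'' which is a harmless elaboration of the same argument.
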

\begin{proof}
Set $\vv =(\degw f,\degw p)$. 
First, 
assume that $t_i\degw f>a_i\degw p$. 
Then, 
we have 
$\eta _i(x_1,x_2)^{\vv }=x_1^{t_i}$ 
by Lemma~\ref{lem:initial}, 
since $\degw f>0$ or $\degw p>0$ by assumption. 
This implies that 
$\eta _i(f,p)^{\w }=(f^{\w })^{t_i}$. 
Hence, 
we get $\degw \eta _i(f,p)=t_i\degw f$. 
Thus, 
the lemma is true in the case 
where $t_i\degw f>a_i\degw p$. 
Next, assume that 
$t_i\degw f\leq a_i\degw p$. 
Then, 
we have $t_i\degw f<a_i\degw p$, 
since $t_i\degw f\neq a_i\degw p$ by assumption. 
By Lemma~\ref{lem:initial}, 
it follows that 
$\eta _i(x_1,x_2)^{\vv }=x_2^{a_i}$. 
This implies that 
$\eta _i(f,p)^{\w }=(p^{\w })^{a_i}$. 
Hence, 
we get $\degw \eta _i(f,p)=a_i\degw p$. 
Thus, 
the lemma is true in the case where $t_i\degw f\leq a_i\degw p$, 
and therefore in all cases. 
\end{proof}

\section{Local slice constructions (I)}
\setcounter{equation}{0}
\label{sect:lsc1}

The goal of this section is to prove 
Theorem~\ref{thm:lsc1} (i), 
except for (a) when $(t_0,t_1,i)=(3,1,4)$. 
The exceptional case 
is postponed to Section~\ref{sect:exceptional}. 
At the end of this section, 
we also prove Proposition~\ref{prop:homogeneous}.

Consider the following statements for $i\in \zs \cup I$: 

\smallskip

\noindent{\rm (1)} 
$f_i$ and $f_{i+1}$ belong to $\kx \sm \zs $, 
$D_i$ belongs to $\lnd _k\kx $, 
$D_i(r)=f_if_{i+1}$, 
and $r$ does not belong to $f_i\kx $. 
If $i\geq 1$, 
then $-D_i$ is obtained by a local slice construction 
from the data $(f_i,f_{i-1},r)$.

\noindent{\rm (2)} 
If $i\neq \max I$, 
then $D_i$ is irreducible 
and $\ker D_i=k[f_i,f_{i+1}]$.

\noindent{\rm (3)} 
If $i\neq \max I$, 
then $q_{i+1}$ is an irreducible element of $k[f_{i+1},r]$ 
belonging to $f_i\kx $. 

\noindent{\rm (4)} 
If $i\geq 2$ and $i\neq \max I$, 
then $f_i$ and $D_i(f_{i-2})$ have no common factor.

\smallskip

We note that, 
if $i\neq \max I$, 
then (1), (2) and (3) imply that 
\begin{equation}\label{eq:pre:lsc1a}
k[f_{i+1},r]\cap f_i\kx =q_{i+1}k[f_{i+1},r]. 
\end{equation}
To see this, 
it suffices to check that 
the assumptions of Lemma~\ref{lem:minimal} (iii) 
are fulfilled for $D=D_i$, $f=f_{i+1}$, $g=f_i$, 
$s=r$ and $\eta (y,z)=\eta _{i+1}(y,z)$. 
By (2), 
we have $\ker D_i=k[f_i,f_{i+1}]$. 
By (1), 
we have $D_i(r)=f_if_{i+1}\neq 0$, 
and $D_i$ belongs to $\lnd _k\kx $. 
Moreover, 
$\eta _{i+1}(y,z)$ 
is an irreducible element of $k[y,z]$ 
as mentioned after Lemma~\ref{lem:initial}, 
and $\eta _{i+1}(f_{i+1},r)=q_{i+1}$ 
belongs to $f_i\kx $ by (3). 
Thus, 
the assumptions of Lemma~\ref{lem:minimal} (iii) 
are fulfilled. 
Therefore, 
we get (\ref{eq:pre:lsc1a}). 
Similarly, 
if $i\neq \max I$ and $f_{i-1}$ belongs to $\kx $, 
then (1) and (2) imply that 
\begin{equation}\label{eq:pre:lsc1b}
k[f_{i},r]\cap f_{i+1}\kx =q_{i}k[f_{i},r]. 
\end{equation}
Actually, 
since $q_{i}=\eta _i(f_i,r)=f_{i-1}f_{i+1}$ 
belongs to $f_{i+1}\kx $, 
we obtain (\ref{eq:pre:lsc1b}) 
by applying Lemma~\ref{lem:minimal} (iii) 
with $D=D_i$, $f=f_i$, $g=f_{i+1}$, 
$s=r$ and $\eta (y,z)=\eta _i(y,z)$.

We prove the following proposition 
using the theory of local slice construction.

\begin{prop}\label{prop:lsc1}
The statements {\rm (1)} through {\rm (4)} 
hold for each $i\in \zs \cup I$. 
\end{prop}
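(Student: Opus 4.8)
The plan is to prove the four statements simultaneously by induction on $i\in \zs\cup I$. The base cases $i=0$ and $i=1$ are handled by the explicit descriptions of $D_0=\Delta_{(x_1,x_2)}=\partial/\partial x_3$ and of the triangular derivation $D_1$ recorded in (\ref{eq:D_1}): in both cases $f_i,f_{i+1}\in\kx$, $D_i\in\lnd_k\kx$, a direct computation gives $D_i(r)=f_if_{i+1}$, and $r\notin f_i\kx$ because the monomial $x_2^{t_0}$ (resp.\ $x_1^{t_1}$) occurring in $r$ is not divisible by $f_0=x_2$ (resp.\ $f_1=x_1$). When $0\neq\max I$ (resp.\ $1\neq\max I$, which forces $t_0\geq 2$) the irreducibility of $D_i$, the equality $\ker D_i=k[f_i,f_{i+1}]$, and statement (3) (using $q_1=x_2f_2$ for the membership and the irreducibility of $\eta_1$ for the rest) are checked by hand, while (4) is vacuous for $i\leq 1$. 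The observation organising the induction is that $D_{i-1}=\Delta_{(f_i,f_{i-1})}$, so passing from $D_{i-1}$ to $D_i$ is an instance of local slice construction; the step at index $i$ will consume (1), (2), (3) at index $i-1$, and since $I$ is a down-set this is legitimate.

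For the inductive step I would first establish (1) via Theorem~\ref{thm:fbook}, taking the data $(f,g,s)=(f_i,f_{i-1},r)$ and starting from $D_{i-1}=\Delta_{(f_i,f_{i-1})}$. Condition (LSC1) is exactly (2) at $i-1$ (valid since $i-1<\max I$), and (LSC2) follows from (1) at $i-1$: indeed $D_{i-1}(r)=f_{i-1}f_i=f_{i-1}F$ with $F=f_i\in k[f_i]\sm\zs$, and $r\notin f_{i-1}\kx$. By (3) at index $i-1$ together with Lemma~\ref{lem:minimal}(iii), i.e.\ by (\ref{eq:pre:lsc1a}) at index $i-1$, the principal prime ideal of the construction is $P=k[f_i,r]\cap f_{i-1}\kx=q_ik[f_i,r]$, so the constructed slice is $h=f_{i-1}^{-1}q_i=f_{i+1}$, which therefore lies in $\kx$, and the construction yields $\Delta_{(f_i,f_{i+1})}=-D_i$. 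Theorem~\ref{thm:fbook}(a) gives $D_i\in\lnd_k\kx$, and Theorem~\ref{thm:fbook}(b) gives $(-D_i)(r)=-f_{i+1}F=-f_if_{i+1}$, hence $D_i(r)=f_if_{i+1}$. Finally $r\notin f_i\kx$ holds because $f_i$ is irreducible (Lemma~\ref{lem:facts} applied to $\ker D_{i-1}=k[f_{i-1},f_i]$) and the image $\bar r$ of $r$ in $R:=\kx/f_i\kx$ is nonzero. This disposes of (1) for every $i\in\zs\cup I$, including $i=\max I$, where only (1) is required.

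Next I would prove (4) and then (2), which are needed only when $i\neq\max I$. Applying $D_i$ to the ladder identities $f_{i-1}f_{i+1}=\eta_i(f_i,r)$ and $f_{i-2}f_i=\eta_{i-1}(f_{i-1},r)$ and using $D_i(f_i)=D_i(f_{i+1})=0$, $D_i(r)=f_if_{i+1}$, I first get $D_i(f_{i-1})=f_i\frac{\partial\eta_i}{\partial z}(f_i,r)$ and then
\[
D_i(f_{i-2})=\frac{\partial\eta_{i-1}}{\partial y}(f_{i-1},r)\,\frac{\partial\eta_i}{\partial z}(f_i,r)+\frac{\partial\eta_{i-1}}{\partial z}(f_{i-1},r)\,f_{i+1}.
\]
Reducing modulo $f_i$ in the domain $R$, where $\frac{\partial\eta_i}{\partial z}(f_i,r)\equiv a_i\bar r^{a_i-1}$ and $\bar f_{i-1}\bar f_{i+1}=\bar r^{a_i}$, statement (4) amounts to showing this image is nonzero, which I would verify by casework on $i\bmod 4$ from the explicit monomials of $\eta_{i-1}$, the relation $\eta_{i-1}(\bar f_{i-1},\bar r)=0$ (coming from $f_{i-2}f_i=q_{i-1}$), and the recurrences for $a_i,b_i$ from Section~\ref{sect:sequence}. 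With (4) in hand, Proposition~\ref{prop:lsc:F:rem} applied to $\Delta_{(f_i,f_{i+1})}=-D_i$ with $F=f_i$ and $g_0=f_{i-2}$ shows $D_i$ is irreducible, and Theorem~\ref{thm:fbook}(c) then gives $\ker D_i=k[f_i,f_{i+1}]$, completing (2).

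Statement (3) remains. Since $D_i(f_{i+1})=0$ and $D_i(r)=f_if_{i+1}\neq 0$, Lemma~\ref{lem:minimal}(ii) shows $f_{i+1}$ and $r$ are algebraically independent, so $k[f_{i+1},r]$ is a polynomial ring and $q_{i+1}=\eta_{i+1}(f_{i+1},r)$ is irreducible there because $\eta_{i+1}(y,z)$ is irreducible in $k[y,z]$ (Lemma~\ref{lem:initial} gives initial form $y^{t_{i+1}}+z^{a_{i+1}}$ with $\gcd(t_{i+1},a_{i+1})=1$, whence Lemma~\ref{lem:x^a+y^b}). The membership $q_{i+1}\in f_i\kx$ cannot be read off from $f_{i+2}$, since $q_{i+1}\in f_i\kx$ is equivalent to $f_{i+2}\in\kx$, which is exactly what the next step is trying to establish; instead I would prove $\eta_{i+1}(\bar f_{i+1},\bar r)=0$ directly in $R$, substituting $\bar f_{i+1}=\bar r^{a_i}\bar f_{i-1}^{-1}$ and eliminating $\bar f_{i-1}$ through $\eta_{i-1}(\bar f_{i-1},\bar r)=0$. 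These two modulo-$f_i$ computations, the nonvanishing in (4) and the vanishing in (3), are the technical heart and the main obstacle: both unwind to identities among $\eta_{i-1}$, $\eta_i$ and $\eta_{i+1}$ governed by $i\bmod 4$ and the three-term recurrences, and they must be carried out uniformly for all $i\in\zs\cup I$ (the index $(t_0,t_1,i)=(3,1,4)$ being exceptional only for the separate non-irreducibility claim, not for Proposition~\ref{prop:lsc1} itself).
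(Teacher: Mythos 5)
Your architecture is the same as the paper's: induction on $i$, statement (1) from Theorem~\ref{thm:fbook} applied to the data $(f_i,f_{i-1},r)$ using (1)--(3) at $i-1$, statement (2) from (4) via Proposition~\ref{prop:lsc:F:rem} and Theorem~\ref{thm:fbook}~(c), and the same chain-rule identities for $D_i(f_{i-1})$ and $D_i(f_{i-2})$. But the proposal has a genuine gap precisely at what you yourself call ``the technical heart'': the non-vanishing of $D_i(f_{i-2})$ modulo $f_i\kx $ in (4), and the membership $q_{i+1}\in f_i\kx $ in (3), are announced rather than proved. For (4), the plan of verifying the non-vanishing ``by casework on $i\bmod 4$ from the explicit monomials of $\eta _{i-1}$'' cannot succeed as described, because the statement is not a formal identity among the $\eta _j$'s: it genuinely requires $i\neq \max I$, i.e.\ $a_{i+1}\neq 0$, and it actually fails at the maximum of $I$ (for $(t_0,t_1,i)=(3,1,4)$ one has $D_4=f_4\Delta _{(x_3,f_4)}$, so $D_4(f_2)$ \emph{does} lie in $f_4\kx $). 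The paper's mechanism is ideal-theoretic, not combinatorial: setting $q:=a_i\frac{\partial q_{i-1}}{\partial f_{i-1}}f_{i-1}+\frac{\partial q_{i-1}}{\partial r}r$, one supposes $q\in f_i\kx $, so that $q$ lies in $k[f_{i-1},r]\cap f_i\kx =q_{i-1}k[f_{i-1},r]$ (available by induction as in (\ref{eq:pre:lsc1b})); then $q-a_{i-1}q_{i-1}$ has $r$-degree strictly less than $a_{i-1}=\deg _rq_{i-1}$, hence must vanish, yet its $f_{i-1}$-degree is $t_{i-1}$ with leading coefficient $a_{i+1}\neq 0$ --- a contradiction. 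Nothing in your sketch plays the role of this degree comparison or shows where $a_{i+1}\neq 0$ enters.

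The same criticism applies to (3): the paper does not formally ``eliminate $\bar{f}_{i-1}$,'' but works in the localization $\kx _{\mathfrak{p}_i}$, uses the identity $a_i=b_{i-1}+b_{i+1}$ to convert $f_{i-1}f_{i+1}\equiv r^{a_i}$ into the substitution rules (\ref{eq:fraction equiv}), and transports the relation $\eta _{i-1}(f_{i-1},r)\equiv 0$ into $\eta _{i+1}(f_{i+1},r)\equiv 0$ through the rewriting (\ref{eq:irekae}) of $\eta _j$ in terms of $\theta _j$; carrying this out is the actual content, and it is missing from your text. A secondary flaw: your justification that $r\notin f_i\kx $ (``the image $\bar{r}$ of $r$ in $\kx /f_i\kx $ is nonzero'') is circular, since that is exactly the assertion to be proved. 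The paper instead observes that $r$, being a linear, primitive polynomial in $x_3$ over $k[x_1,x_2]$, is irreducible in $\kx $, so a factorization $r=f_ig'$ would force $g'\in k$ (impossible since $D_i(r)=f_if_{i+1}\neq 0$) or $f_i\in k$ (impossible by Lemma~\ref{lem:facts}).
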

\begin{proof}
We proceed by induction on $i$. 
First, 
assume that $i=0$. 
Recall that $f_0=x_2$, $f_1=x_1$ and $D_0=\partial /\partial x_3$. 
Since $r$ has the form $x_1x_2x_3+h$ for some 
$h\in k[x_1,x_2]\sm x_2\kx $, 
we have $D_0(r)=x_1x_2$, 
and $r$ does not belong to $x_2\kx $. 
From these conditions, 
we know that (1) and (2) are true. 
As for (3), 
we see from (\ref{eq:q_1}) 
that $q_1$ is an irreducible element of $k[x_1,r]$ 
belonging to $x_2\kx $. 
Since $i<2$, 
(4) is obvious. 
Therefore, 
(1) through (4) hold for $i=0$.

Next, 
take any $j\in I$, 
and assume that (1) through (4) hold for $i<j$. 
First, 
we prove that (1) holds for $i=j$. 
Put $l=j-1$ and $l'=j+1$. 
Then, 
(1) through (4) hold for $i=l$ by induction assumption. 
By (1), it follows that 
$f_{l}$ and $f_j$ belong to $\kx \sm \zs $, 
$D_{l}$ belongs to $\lnd _k\kx $, 
$D_{l}(r)=f_{l}f_j$, 
and $r$ does not belong to $f_{l}\kx $. 
Since $j$ is an element of $I$, 
we have $l\neq \max I$. 
Hence, 
$D_{l}$ is irreducible 
and $\ker D_{l}=k[f_{l},f_j]$ 
by (2). 
Since $D_{l}=\Delta _{(f_j,f_{l})}$ by definition, 
we know that $D_{l}$ satisfies 
(LSC1) for $f=f_j$ and $g=f_{l}$, 
and (LSC2) for $s=r$ and $F=f_j$. 
By (3), 
$q_j$ is an irreducible element of $k[f_j,r]$ 
belonging to $f_{l}\kx $. 
Hence, 
$f_{l'}=q_jf_{l}^{-1}$ belongs to $\kx \sm \zs $. 
This proves the first part of (1). 
By (a) and (b) of Theorem~\ref{thm:fbook}, 
we know that 
$D_j=\Delta _{(f_{l'},f_j)}=-\Delta _{(f_j,f_{l'})}$ 
belongs to $\lnd _k\kx $ 
and satisfies 
$$
D_j(r)=(-\Delta _{(f_j,f_{l'})})(r)
=-(-f_{l'}f_j)=f_jf_{l'}, 
$$ 
and $-D_j=\Delta _{(f_j,f_{l'})}$ 
is obtained by a local slice construction 
from the data $(f_j,f_{l},r)$. 
Finally, 
we prove that $r$ does not belong to $f_j\kx $. 
Suppose to the contrary that 
$r=f_jg'$ for some $g'\in \kx $. 
Then, 
$g'$ does not belong to $k$, 
for otherwise $f_jf_{l'}=D_j(r)=D_j(f_j)g'=0$. 
Since $\ker D_l=k[f_l,f_j]$, 
we see that $f_j$ also does not belong to $k$ 
by Lemma~\ref{lem:facts}. 
Hence, 
$r$ is not an irreducible element of $\kx $. 
On the other hand, 
$r$ is a linear and primitive polynomial 
in $x_3$ over $k[x_1,x_2]$. 
Hence, $r$ is an irreducible element of $\kx $, 
a contradiction. 
Thus, 
$r$ does not belong to $f_j\kx $. 
Therefore, (1) holds for $i=j$.

Next, 
we prove that (2) holds for $i=j$. 
So assume that $j\neq \max I$. 
Then, we have $t_0\geq 2$, 
since $I=\{ 1\} $ if $t_0=1$. 
In view of Theorem~\ref{thm:fbook} (c), 
it suffices to check that $D_j$ is irreducible. 
Since $t_0\geq 2$, 
we know that $D_1$ is irreducible 
as mentioned after (\ref{eq:D_1}). 
Hence, the assertion is true if $j=1$. 
So assume that $j\geq 2$. 
We prove that (4) holds for $i=j$. 
Then, 
it follows that $D_j$ is irreducible 
thanks to Proposition~\ref{prop:lsc:F:rem}, 
since $F=f_j$. 
Because $D_{l}$ is an element of $\lnd _k\kx $ 
with $\ker D_{l}=k[f_{l},f_j]$, 
we know that 
$\mathfrak{p}_j:=f_j\kx $ is a prime ideal of $\kx $ 
by Lemma~\ref{lem:facts}. 
Therefore, 
it suffices to prove that $D_j(f_{j-2})$ 
does not belong to $\mathfrak{p}_j$.

Since (1) holds for $i\leq j$, 
we have $D_i(r)=f_if_{i+1}\neq 0$ 
for each $i\leq j$. 
On the other hand, 
$D_i=\Delta _{(f_{i+1},f_i)}$ 
kills $f_i$ and $f_{i+1}$ for any $i\geq 0$. 
Hence, 
we know that $r$ and $f_i$ are algebraically independent over $k$ 
for $i\leq l'$ by Lemma~\ref{lem:minimal} (ii). 
Thus, 
we may regard $q_i$ as a polynomial in $r$ and $f_i$ 
over $k$ for each $i\leq l'$. 
First, 
we show that 
\begin{equation}\label{eq:q g_1}
q:=a_j\frac{\partial q_{l}}{\partial f_{l}}f_{l}
+\frac{\partial q_{l}}{\partial r}r
\end{equation}
does not belong to $\mathfrak{p}_j$. 
Suppose to the contrary that 
$q$ belongs to $\mathfrak{p}_j$. 
Then, 
$q$ belongs to $\mathfrak{p}':=\mathfrak{p}_j\cap k[f_{l},r]$. 
Since $j\geq 2$, 
we have $l-1=j-2\geq 0$. 
Hence, 
(1) holds for $i=l-1$, 
and so $f_{l-1}$ belongs to $\kx $. 
Since (1) and (2) hold for $i=l$, 
it follows that $\mathfrak{p}'=q_lk[f_{l},r]$ 
by (\ref{eq:pre:lsc1b}) with $i=l$. 
Thus, 
$q':=q-a_lq_l$ belongs to $q_lk[f_{l},r]$. 
Write $q_l=\eta _l(f_l,r)=f_l^{t_l}+r^{a_l}+h$, 
where $h\in k[f_l,r]$. 
Then, 
we can easily check that $\deg _{f_l}h<t_l$. 
When $l\equiv 0,1\pmod{4}$, 
we have $\deg _rh\leq (t_l-1)b_l<b_lt_l+1=a_l$. 
When $l\equiv 2,3\pmod{4}$, 
we have $l\geq 2$, 
and so $b_l\geq 1$ by Lemma~\ref{lem:a_i} (ii). 
Hence, we get 
$$
\deg _rh\leq (t_l-1)b_l-1<t_lb_l-1=a_l. 
$$
Thus, 
we may write 
\begin{align*}
a_j\frac{\partial q_{l}}{\partial f_{l}}f_l
&=a_j\left( 
t_lf_l^{t_l-1}+\frac{\partial h}{\partial f_l}
\right) f_l
=a_jt_lf_l^{t_l}+h_1 \\
\frac{\partial q_{l}}{\partial r}r
&=\left( 
a_lr^{a_l-1}+\frac{\partial h}{\partial r}
\right) r
=a_lr^{a_l}+h_2, 
\end{align*}
where $h_1,h_2\in k[f_{l},r]$ are such that 
$\deg _{f_l}h_i<t_l$ and $\deg _rh_i<a_l$ 
for $i=1,2$. 
Since $t_{l}a_j-a_{l}=a_{l'}$ 
by (\ref{eq:a_i zenkasiki}), 
it follows that  
\begin{equation}\label{eq:q'lsc1}
\begin{aligned}
&q'=q-a_lq_l
=(a_jt_lf_l^{t_l}+h_1)
+(a_lr^{a_l}+h_2)
-a_l(f_l^{t_l}+r^{a_l}+h) \\
&\quad =(t_{l}a_j-a_{l})f_l^{t_l}+h_1+h_2-a_lh
=a_{l'}f_l^{t_l}+h_1+h_2-a_lh. 
\end{aligned}
\end{equation}
From this, 
we see that 
$\deg _{r}q'<a_l=\deg _rq_l$. 
Since 
$q'$ belongs to $q_lk[f_{l},r]$ as mentioned, 
it follows that $q'=0$. 
On the other hand, 
we have $a_{l'}=a_{j+1}\neq 0$ 
by the assumption that $j\neq \max I$. 
Hence, 
we see from (\ref{eq:q'lsc1}) that 
$\deg _{f_l}q'=t_l$, a contradiction. 
Therefore, 
we conclude that 
$q$ does not belong to $\mathfrak{p}_j$.

By chain rule, 
we have 
$$
D_{j}(q_i)=\frac{\partial q_{i}}{\partial f_i}D_{j}(f_i)
+\frac{\partial q_{i}}{\partial r}D_{j}(r)
$$
for each $i$. 
Since 
$D_j=\Delta _{(f_{l'},f_j)}$ kills $f_{l'}$ and $f_j$, 
and since $D_j(r)=f_jf_{l'}$ by (1) with $i=j$, 
it follows that  
\begin{equation}\label{eq:D_i(f_{i-1})}
D_j(f_{l})=D_j(q_jf_{l'}^{-1})=
D_j(q_j)f_{l'}^{-1}=
\frac{\partial q_{j}}{\partial r}D_j(r)f_{l'}^{-1}
=\frac{\partial q_{j}}{\partial r}f_j. 
\end{equation}
Similarly, 
we have 
\begin{align}\begin{split}\label{eq:D_i(f_{i-2})}
&D_j(f_{j-2})=D_j(q_{l}f_j^{-1})
=D_j(q_{l})f_j^{-1} \\
&\quad =\left( 
\frac{\partial q_{l}}{\partial f_{l}}D_j(f_{l})+
\frac{\partial q_{l}}{\partial r}D_j(r)
\right) f_j^{-1}
=\frac{\partial q_{l}}{\partial f_{l}}
\frac{\partial q_{j}}{\partial r}+
\frac{\partial q_{l}}{\partial r}f_{l'}, 
\end{split}
\end{align}
where we use (\ref{eq:D_i(f_{i-1})}) 
for the last equality. 
Put $(y)=yk[y,z]$. 
Then, 
we have 
$\eta _j(y,z)\equiv z^{a_j}\pmod{(y)}$, 
and so 
$\partial \eta _j(y,z)/\partial z
\equiv a_jz^{a_j-1}\pmod{(y)}$. 
Since $f_jk[f_j,r]$ is contained in $\mathfrak{p}_j$, 
it follows that 
\begin{equation}\label{eq:lsc1cong}
f_{l}f_{l'}=q_j\equiv r^{a_j}
\pmod{\mathfrak{p}_j}\quad \text{and}\quad 
\frac{\partial q_{j}}{\partial r}
\equiv a_jr^{a_j-1}\pmod{\mathfrak{p}_j}. 
\end{equation}
Thus, 
we know by (\ref{eq:D_i(f_{i-2})}) that 
$f_{l}D_j(f_{j-2})$ is congruent to 
$$
\frac{\partial q_{l}}{\partial f_{l}}f_l
(a_jr^{a_j-1})+
\frac{\partial q_{l}}{\partial r}
r^{a_j}=qr^{a_j-1} 
$$
modulo $\mathfrak{p}_j$. 
Since $r$ does not belong to $\mathfrak{p}_j$ by (1) with $i=j$, 
and $q$ does not belong to $\mathfrak{p}_j$ as shown above, 
we conclude that $D_j(f_{j-2})$ 
does not belong to $\mathfrak{p}_j$. 
This proves that (4) holds for $i=j$, 
and thereby proving that (2) holds for $i=j$.

Finally, 
we prove that (3) holds for $i=j$. 
Since $f_{l'}$ and $r$ are algebraically independent over $k$ 
as mentioned, 
we know that $q_{l'}=\eta _{l'}(f_{l'},r)$ 
is an irreducible element of $k[f_{l'},r]$ 
by the irreducibility of $\eta _{l'}(y,z)$ in $k[y,z]$. 
We show that 
$q_{l'}$ belongs to $\mathfrak{p}_j$. 
By (1) with $i=j$, 
we see that $r$ does not belong to $\mathfrak{p}_j$. 
Since $\ker D_l=k[f_l,f_j]$ by (2) with $i=l$, 
we know that 
$f_l$ does not belong to $\mathfrak{p}_j$ 
in view of Lemma~\ref{lem:facts}. 
Since $j\neq \max I$ by the assumption of (3), 
we have $\ker D_j=k[f_j,f_{l'}]$ by (2) with $i=j$. 
Hence, 
$f_{l'}$ also does not belong to $\mathfrak{p}_j$ 
by Lemma~\ref{lem:facts}. 
Since $b_{l'}=t_jb_j-b_{l}+\xi _j$ by definition, 
we have 
$$
a_j=t_jb_j+\xi _j=b_l+b_{l'}. 
$$
Hence, 
we get 
$f_{l}f_{l'}\equiv r^{b_{l}+b_{l'}}\pmod{\mathfrak{p}_j}$ 
by the first part of (\ref{eq:lsc1cong}). 
This gives that 
\begin{equation}\label{eq:fraction equiv}
f_{l}r^{-b_{l}}\equiv 
f_{l'}^{-1}r^{b_{l'}},\quad
f_{l}^{-1}r^{b_{l}}\equiv 
f_{l'}r^{-b_{l'}}
\pmod{\mathfrak{p}_j\kx _{\mathfrak{p}_j}}. 
\end{equation}
Put $\theta _i(z)=\sum _{m=1}^{t_i}\alpha _m^{i}z^m$ 
for each $i\geq 0$. 
Then, 
we have 
\begin{equation}\label{eq:irekae}
\begin{aligned}
\eta _{i}(f_i,r)&=
r^{t_{i}b_{i}}(r+\theta _i(f_ir^{-b_{i}}))
& \text{ if }\ i\equiv 0,1\pmod{4}& \\
\eta _{i}(f_i,r)&=
f_i^{t_{i}}r^{-1}(r+\theta _i(f_i^{-1}r^{b_{i}}))
& \text{ if }\ i\equiv 2,3\pmod{4}&, 
\end{aligned}
\end{equation}
and $r=x_1x_2x_3-\theta _0(x_2)-\theta _1(x_1)$. 
We show that $\eta _{l}(f_l,r)$ 
belongs to $\mathfrak{p}_j$. 
First, 
assume that $j=1$. 
Then, we have $l=0$. 
Since $b_0=0$, 
we see from (\ref{eq:irekae}) that 
$$
\eta _0(f_0,r)=r+\theta _0(x_2)
=x_1x_2x_3-\theta _1(x_1). 
$$
Hence, $\eta _0(f_0,r)$ 
belongs to $\mathfrak{p}_1=x_1\kx $. 
Next, 
assume that $j\geq 2$. 
Then, $f_{j-2}$ belongs to $\kx $ by (1) with $i=j-2$. 
Hence, 
$\eta _l(f_l,r)=f_{j-2}f_j$ belongs to $\mathfrak{p}_j$. 
Thus, 
$\eta _l(f_l,r)$ belongs to $\mathfrak{p}_j$ 
in all cases. 
Thanks to (\ref{eq:irekae}), 
it follows that 
$r+\theta _l(f_lr^{-b_{l}})$ and 
$r+\theta _l(f_l^{-1}r^{b_{l}})$ 
belong to $\mathfrak{p}_j\kx _{\mathfrak{p}_j}$ 
if $l\equiv 0,1\pmod{4}$ and if 
$l\equiv 2,3\pmod{4}$, 
respectively. 
Assume that $l\equiv 0,1\pmod{4}$. 
Then, 
this implies that 
$r+\theta _l(f_{l'}^{-1}r^{b_{l'}})$ 
belongs to $\mathfrak{p}_j\kx _{\mathfrak{p}_j}$ 
by (\ref{eq:fraction equiv}). 
Since $l'=l+2$, 
we have $\theta _{l'}(z)=\theta _l(z)$ 
and $l'\equiv 2,3\pmod{4}$. 
Hence, 
we see from the second equality of 
(\ref{eq:irekae}) that 
$q_{l'}=\eta _{l'}(f_{l'},r)$ belongs to 
$\mathfrak{p}_j\kx _{\mathfrak{p}_j}$. 
Since 
$\mathfrak{p}_j\kx _{\mathfrak{p}_j}\cap \kx =\mathfrak{p}_j$, 
it follows that $q_{l'}$ belongs to $\mathfrak{p}_j$. 
Similarly, 
we can check that $q_{l'}$ 
belongs to $\mathfrak{p}_j$ when $l\equiv 2,3\pmod{4}$. 
Therefore, 
(3) holds for $i=j$. 
This proves that 
(1) through (4) hold for every $i\in \zs \cup I$. 
\end{proof}

As a consequence of Proposition~\ref{prop:lsc1}, 
we know that (\ref{eq:pre:lsc1a}) holds for each 
$i\in \zs \cup I$ with $i\neq \max I$, 
and (\ref{eq:pre:lsc1b}) holds for each 
$i\in I$ with $i\neq \max I$. 
This proves the first part of the following lemma.

\begin{lem}\label{lem:local slice}
Assume that $(j,l)=(i,i-1)$ for some $i\in I$, 
or $(j,l)=(i-1,i)$ for some $i\in I\sm \{ 1\} $. 
Then, we have 
$$
k[f_j,r]\cap f_{l}\kx =q_jk[f_j,r]. 
$$ 
If $a_j\geq 2$, 
then we have 
$(r+k[f_j])\cap f_l\kx =\emptyset $. 
\end{lem}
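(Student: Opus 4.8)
The first assertion requires no new work: it is precisely the content of the identities (\ref{eq:pre:lsc1a}) and (\ref{eq:pre:lsc1b}) obtained from Proposition~\ref{prop:lsc1}. Indeed, in the case $(j,l)=(i,i-1)$ it is (\ref{eq:pre:lsc1a}) read with index $i-1$ in place of $i$, and in the case $(j,l)=(i-1,i)$ it is (\ref{eq:pre:lsc1b}) read with index $i-1$ in place of $i$; in both cases the relevant index $i-1$ lies in $\zs \cup I$ and is strictly below $\max I$ because $i\in I$. So the plan concerns only the second assertion, which I would prove by contradiction. Supposing $(r+k[f_j])\cap f_l\kx $ to be nonempty, I would take $p\in k[f_j]$ with $r+p\in f_l\kx $; since $r+p$ also lies in $k[f_j,r]$, the first assertion would give $r+p\in k[f_j,r]\cap f_l\kx =q_jk[f_j,r]$, so I could write $r+p=q_jg$ for some $g\in k[f_j,r]$.

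The contradiction would then come from comparing degrees in $r$. First I would record that $f_j$ and $r$ are algebraically independent over $k$; this is already available from the proof of Proposition~\ref{prop:lsc1}, where it is deduced from Lemma~\ref{lem:minimal} (ii) (using $D_j(f_j)=0$ and $D_j(r)=f_jf_{j+1}\neq 0$) that $r$ and $f_i$ are algebraically independent for every index $i$ in play. Consequently $k[f_j,r]$ is a polynomial ring in the two algebraically independent elements $f_j$ and $r$, and I may regard its elements as polynomials in $r$ with coefficients in $k[f_j]$. A direct reading of the defining formula for $\eta _j(y,z)$ shows that, viewed as a polynomial in $z$, it has degree exactly $a_j=t_jb_j+\xi _j$ with top term $z^{a_j}$ of coefficient $1$; transporting this through the substitution isomorphism $y\mapsto f_j$, $z\mapsto r$ gives $\deg _r q_j=a_j$ with leading coefficient $1$. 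Now $r+p\neq 0$ (otherwise $r=-p$ would lie in $k[f_j]$, against the algebraic independence), so $g\neq 0$; since $q_j$ is monic in $r$, the $r$-degree is additive and $\deg _r(q_jg)=a_j+\deg _r g\geq a_j\geq 2$, whereas $\deg _r(r+p)=1$ because $p$ has $r$-degree $0$. This contradiction would establish $(r+k[f_j])\cap f_l\kx =\emptyset $.

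I expect the only delicate point to be the degree bookkeeping, namely the verification that $q_j$ has $r$-degree exactly $a_j$ with leading coefficient a unit. This is a straightforward inspection of the two cases in the definition of $\eta _j(y,z)$: for $j\equiv 0,1\pmod 4$ the unique term of top $z$-degree is $z^{t_jb_j+1}$, and for $j\equiv 2,3\pmod 4$ it is $z^{t_jb_j-1}$, all remaining monomials having strictly smaller $z$-degree once the hypothesis $a_j\geq 2$ is used to force $b_j\geq 1$. It is exactly this hypothesis $a_j\geq 2$ that opens the gap between the $r$-degree $a_j$ of $q_j$ and the $r$-degree $1$ of $r+p$, which is the crux of the argument.
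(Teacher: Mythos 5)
Your proof is correct and follows essentially the same route as the paper's: the first assertion is read off from (\ref{eq:pre:lsc1a}) and (\ref{eq:pre:lsc1b}) exactly as you indicate, and the second is the same contradiction by comparing $r$-degrees, namely $\deg _r(r+p)=1$ against $\deg _rq_j=a_j\geq 2$ once $r+p$ is forced into $q_jk[f_j,r]$. The details you supply (algebraic independence of $f_j$ and $r$, and the monicity of $q_j$ in $r$ of degree $a_j$) are precisely what the paper leaves implicit.
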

Here, 
for $s\in \kx $ 
and a $k$-vector subspace $A$ of $\kx $, 
we define 
$$
s+A=\{ s+f\mid f\in A\} . 
$$
The last part of the lemma is proved as follows. 
Suppose to the contrary that there exists 
$h\in (r+k[f_j])\cap f_l\kx $. 
Then, 
we have $\deg _rh=1$, 
and so $h\neq 0$. 
Since $h$ belongs to 
$k[f_j,r]\cap f_{l}\kx =q_jk[f_j,r]$, 
it follows that 
$\deg _rh\geq \deg _rq_j=a_j\geq 2$, 
a contradiction. 
Therefore, 
we get $(r+k[f_j])\cap f_l\kx =\emptyset $.

Now, 
let us complete the proof of Theorem~\ref{thm:lsc1} (i), 
except for (a) when $(t_0,t_1,i)\neq (3,1,4)$. 
Proposition~\ref{prop:lsc1}, 
we know that (1) through (4) hold for each $i\in \zs \cup I$. 
By (1), 
we get the first part of Theorem~\ref{thm:lsc1} (i). 
By (2), 
we get (b) of Theorem~\ref{thm:lsc1} (i). 
Hence, it remains only to check 
(a) of Theorem~\ref{thm:lsc1} (i) 
in the cases where 
$t_0=i=1$ and $(t_0,t_1,i)=(2,1,2)$. 
If $t_0=1$, 
then $D_1$ is not irreducible and $\ker D_1\neq k[f_1,f_2]$ 
as mentioned after (\ref{eq:D_1}). 
Hence, 
(a) holds when $t_0=i=1$. 
Assume that $(t_0,t_1)=(2,1)$. 
Then, 
we have $D_2=\tau _2\circ D_1'\circ \tau _2^{-1}$ 
by Theorem~\ref{thm:lsc1} (ii). 
Since $t_1=1$, 
we know that 
$D_1'$ is not irreducible 
and $\ker D_1'\neq k[f_1',f_2']$. 
Hence, 
it follows that 
$D_2$ is not irreducible 
and $\ker D_2\neq k[f_2,f_3]$. 
Thus, 
(a) holds when $(t_0,t_1,i)=(2,1,2)$. 
This proves Theorem~\ref{thm:lsc1} (i), 
except for (a) when $(t_0,t_1,i)\neq (3,1,4)$.

Finally, 
we prove Proposition~\ref{prop:homogeneous}. 
So assume that 
$\alpha _j^i=0$ for $i=0,1$ and $j=1,\ldots ,t_i-1$. 
Then, 
we have 
$q_i=\eta _i(f_i,r)=f_i^{t_i}+r^{a_i}$ 
for each $i\geq 1$. 
Set $d_i=\deg _{\bt }f_i$ for each $i$. 
We prove that 
$f_{i+1}$ is $\bt $-homogeneous 
and $t_{i+1}d_{i+1}=t_0t_1a_{i+1}$ 
for each $i\in \zs \cup I$ by induction on $i$. 
Since $f_1=x_1$, $d_1=t_0$ and $a_1=1$, 
we see that the statement holds for $i=0$. 
Since $f_2$ is $\bt $-homogeneous as mentioned, 
and $d_2=t_1(t_0-1)$ and $a_2=t_0-1$, 
we see that the statement also holds for $i=1$. 
So assume that $i\geq 2$. 
Then, 
$f_l$ is $\bt $-homogeneous 
and $t_{l}d_{l}=t_0t_1a_{l}$ for $l=i-1,i$ 
by induction assumption. 
Hence, we have 
$\deg _{\bt }f_i^{t_i}=t_id_i=t_0t_1a_i=\deg _{\bt }r^{a_i}$. 
Since $f_i$ and $r$ are $\bt $-homogeneous, 
this implies that 
$q_i=f_i^{t_i}+r^{a_i}$ is $\bt $-homogeneous. 
Because $f_{i-1}$ is $\bt $-homogeneous, 
it follows that 
$f_{i+1}=q_if_{i-1}^{-1}$ is $\bt $-homogeneous. 
Note that $\deg _{\bt }q_i=\deg _{\bt }f_i^{t_i}=t_id_i$. 
Since $q_i=f_{i-1}f_{i+1}$, 
we get $d_{i-1}+d_{i+1}=t_id_i$. 
Hence, we know that 
\begin{align*}
&t_{i+1}d_{i+1}-t_0t_1a_{i+1}
=t_{i+1}(t_id_i-d_{i-1})-t_0t_1(t_{i+1}a_i-a_{i-1}) \\
&\quad =t_{i+1}(t_{i}d_{i}-t_0t_1a_{i})
-(t_{i-1}d_{i-1}-t_0t_1a_{i-1})
\end{align*}
in view of (\ref{eq:a_i zenkasiki}). 
Since $t_{l}d_{l}=t_0t_1a_{l}$ for $l=i-1,i$, 
the right-hand side of the preceding equality is zero. 
Thus, 
we get $t_{i+1}d_{i+1}=t_0t_1a_{i+1}$. 
This proves that 
the statement holds for every $i\in \zs \cup I$. 
Therefore, 
$f_i$ and $f_{i+1}$ are $\bt $-homogeneous 
for each $i\in \zs \cup I$. 
This completes the proof of Proposition~\ref{prop:homogeneous}.

\section{Local slice constructions (II)}
\label{sect:lsc2}
\setcounter{equation}{0}

In this section, we prove Theorem~\ref{thm:lsc2} (i). 
So assume that $i=2$ and $t_0\geq 3$, 
or $i\geq 3$, $t_0\geq 3$ and $(t_0,t_1)\neq (3,1)$. 
Then, 
$l:=i-1$ belongs to $I$, 
and is not the maximum of $I$. 
By Theorem~\ref{thm:lsc1} (i), 
it follows that $D_l=\Delta _{(f_i,f_l)}$ 
is irreducible and locally nilpotent, 
and satisfies $\ker D_l=k[f_l,f_i]$. 
Hence, 
$D_l$ satisfies (LSC1) for $f=f_i$ and $g=f_l$.

We show that 
$D_l$ satisfies (LSC2) for $s=r_i$, 
and for $F=\lambda (f_2)$ if $i=2$, 
and $F=\lambda (f_i)f_i$ if $i\geq 3$. 
First, 
we check that $r_i$ does not belong to 
$\mathfrak{p}_l:=f_l\kx $. 
Suppose to the contrary that 
$r_i=\lambda (f_i)\tilde{r}-\mu (f_i,f_l)$ 
belongs to $\mathfrak{p}_l$. 
Then, 
$\lambda (f_i)\tilde{r}$ belongs to $\mathfrak{p}_l$, 
since $\mu (f_i,f_l)$ belongs to $f_lk[f_i,f_l]$ 
by the choice of $\mu (y,z)$. 
Since $\ker D_l=k[f_l,f_i]$, 
we know by Lemmas~\ref{lem:facts} and \ref{lem:minimal} (i) 
that $\mathfrak{p}_l$ 
is a prime ideal of $\kx $ with 
$k[f_i]\cap \mathfrak{p}_l=\zs $. 
Because $f_i$ is not a constant, 
we have $\lambda (f_i)\neq 0$ 
by the assumption that $\lambda (y)\neq 0$. 
Hence, 
$\lambda (f_i)$ does not belong to $\mathfrak{p}_l$. 
Thus, 
$\tilde{r}$ belongs to $\mathfrak{p}_l$. 
However, 
$\tilde{r}=x_2$ 
does not belong to $\mathfrak{p}_1=x_1\kx $ 
if $i=2$, 
and $\tilde{r}=r$ does not belong to 
$\mathfrak{p}_l$ if $i\geq 3$ 
by (1) of Proposition~\ref{prop:lsc1}. 
This is a contradiction. 
Therefore, 
$r_i$ does not belong to $\mathfrak{p}_l$. 
If $i=2$, 
then we have 
$D_l(\tilde{r})=D_1(x_2)=x_1=f_1$ by (\ref{eq:D_1}). 
If $i\geq 3$, 
then we have $D_l(\tilde{r})=D_l(r)=f_lf_i$ 
by Theorem~\ref{thm:lsc1} (i). 
Since 
$$
D_l(r_i)=D_l
\bigl(\lambda (f_i)\tilde{r}-\mu (f_i,f_l)\bigr)
=\lambda (f_i)D_l(\tilde{r}), 
$$
it follows that 
$D_1(r_2)=f_1\lambda (f_2)$ if $i=2$, 
and 
$D_l(r_i)=f_l\lambda (f_i)f_i$ 
if $i\geq 3$. 
Thus, 
we get $D_l(r_i)=f_lF$ for each $i\geq 2$ 
for the $F$ mentioned above. 
Therefore, 
$D_l$ satisfies (LSC2) for $s=r_i$ and this $F$.

Recall that $\tilde{h}_i$ is an irreducible element of $k[x_1,x_2]$ 
as shown after Lemma~\ref{lem:x^a+y^b}. 
Since $D_l(f_i)=0$ and $D_l(r_i)=f_lF\neq 0$, 
we know that 
$f_{i}$ and $r_i$ are algebraically independent over $k$ 
by Lemma~\ref{lem:minimal} (ii). 
Hence, 
it follows that 
\begin{equation}\label{eq:tilde{q}_i}
\tilde{q}_i:=\tilde{h}_i(f_i,r_i)=
\tilde{\eta }_i\left(f_i,
r_i\lambda (f_i)^{-1}
\right)
\lambda (f_i)^{a_i}
\end{equation}
is an irreducible element of $k[f_i,r_i]$. 
We show that $\tilde{q}_i$ belongs to $\mathfrak{p}_l$. 
Since 
$\mu (f_i,f_l)$ belongs to $\mathfrak{p}_l$, 
we have 
$r_i\equiv \lambda (f_i)\tilde{r}\pmod{\mathfrak{p}_l}$. 
Hence, we get 
$$
\tilde{q}_i
\equiv \tilde{\eta }_i(f_i,\tilde{r})\lambda (f_i)^{a_i}
\pmod{\mathfrak{p}_l}. 
$$ 
If $i=2$, 
then $\tilde{\eta }_2(f_2,x_2)=x_1x_3$ 
belongs to $\mathfrak{p}_{1}=x_1\kx $. 
If $i\geq 3$, 
then $\tilde{\eta }_i(f_i,\tilde{r})=\eta _i(f_i,r)=f_lf_{i+1}$ 
belongs to $\mathfrak{p}_l$. 
Therefore, 
it follows that 
$\tilde{q}_i$ belongs to $\mathfrak{p}_l$. 
Hence, 
$\tilde{f}_{i+1}=\tilde{q}_if_l^{-1}$ belongs to $\kx $. 
From (a) and (b) of Theorem~\ref{thm:fbook}, 
we conclude that 
$\tilde{D}_i=\Delta _{(\tilde{f}_{i+1},f_i)}$ 
belongs to $\lnd _k\kx $, 
and $\tilde{D}_i(r_i)=F\tilde{f}_{i+1}$ 
is as in Theorem~\ref{thm:lsc2}~(i).

In view of Theorem~\ref{thm:fbook} (c), 
it remains only to prove that $\tilde{D}_i$ is irreducible 
to complete the proof of Theorem~\ref{thm:lsc2} (i).

Under the assumption of Theorem~{\rm \ref{thm:lsc2}}, 
the following lemma holds.

\begin{lem}\label{lem:D(f_{i-2})}
$\lambda (f_i)$ and $\tilde{D}_i(f_{i-2})$ 
have no common factor. 
\end{lem}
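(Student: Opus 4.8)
The plan is to reduce the claim to a non-divisibility statement modulo a family of prime ideals, and then to run a chain-rule computation parallel to the proof of statement~(4) in Proposition~\ref{prop:lsc1}. Write $l=i-1$. Since $\ker D_l=k[f_l,f_i]$ by Theorem~\ref{thm:lsc1}, Lemma~\ref{lem:facts} shows that $k[f_i]$ is factorially closed in $\kx$ and that $f_i$ is irreducible; hence every irreducible common factor of $\lambda(f_i)$ and $\tilde D_i(f_{i-2})$ is an associate of $\pi(f_i)$ for some irreducible factor $\pi(y)$ of $\lambda(y)$, and each such $\pi(f_i)$ is irreducible in $\kx$. Thus it suffices to fix such a $\pi$, set $\mathfrak P:=\pi(f_i)\kx$ (a prime ideal), and prove $\tilde D_i(f_{i-2})\notin\mathfrak P$.

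First I would compute $\tilde D_i(f_{i-2})$. Using $\tilde D_i(f_i)=\tilde D_i(\tilde f_{i+1})=0$ and $\tilde D_i(r_i)=F\tilde f_{i+1}$, the relation $f_l=\tilde q_i\tilde f_{i+1}^{-1}$ gives $\tilde D_i(f_l)=(\partial\tilde q_i/\partial r_i)F$, and solving $r_i=\lambda(f_i)r-\mu(f_i,f_l)$ for $\tilde D_i(r)$ yields $\tilde D_i(r)=f_i\bigl(\tilde f_{i+1}+\tfrac{\partial\mu}{\partial z}(f_i,f_l)\tfrac{\partial\tilde q_i}{\partial r_i}\bigr)$ when $i\ge3$ (here $F=\lambda(f_i)f_i$). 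For $i=2$ the same bracket is $\tilde D_2(f_0)$ outright, while for $i\ge3$, differentiating $f_{i-2}=\eta_l(f_l,r)f_i^{-1}$ produces
\[
\tilde D_i(f_{i-2})=\frac{\partial\eta_l}{\partial y}(f_l,r)\frac{\partial\tilde q_i}{\partial r_i}\lambda(f_i)+\frac{\partial\eta_l}{\partial z}(f_l,r)\,B,\qquad B:=\tilde f_{i+1}+\frac{\partial\mu}{\partial z}(f_i,f_l)\frac{\partial\tilde q_i}{\partial r_i}.
\]
Modulo $\mathfrak P$ one has $\lambda(f_i)\equiv0$, $r_i\equiv-\mu(f_i,f_l)$, and, since $\eta_i$ (hence $\tilde h_i$) is monic of $z$-degree $a_i$, $\tilde q_i\equiv r_i^{a_i}$; so the first summand vanishes and $f_lB\equiv(-1)^{a_i-1}\mu(f_i,f_l)^{a_i-1}\bigl(a_i\tfrac{\partial\mu}{\partial z}(f_i,f_l)f_l-\mu(f_i,f_l)\bigr)$.

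The heart of the matter is the non-vanishing of $B$ modulo $\mathfrak P$. As $\mu(f_i,f_l)$ and $a_i\tfrac{\partial\mu}{\partial z}(f_i,f_l)f_l-\mu(f_i,f_l)=\sum_j(a_ij-1)\mu_j(f_i)f_l^j$ lie in $k[f_i,f_l]$, and $k[f_i,f_l]\cap\mathfrak P=\pi(f_i)k[f_i,f_l]$ by factorial closedness, these reduce to $\pi(y)\nmid\mu(y,z)$ and $\pi(y)\nmid\sum_j(a_ij-1)\mu_j(y)z^j$ in $k[y,z]$. The former holds because $\pi\mid\lambda$ while $\lambda,\mu$ have no common factor; the latter follows by choosing $j_0$ with $\pi\nmid\mu_{j_0}$, whereupon divisibility would force $\pi\mid(a_ij_0-1)$, impossible since $a_i\ge2$ (Lemma~\ref{lem:a_i}(ii) when $i\ge3$, and $a_2=t_0-1\ge2$). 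As $f_l$ is irreducible and not an associate of $\pi(f_i)$ we have $f_l\notin\mathfrak P$, so this gives $B\notin\mathfrak P$; for $i=2$ the proof is already complete here.

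The hard part will be the outer factor $\tfrac{\partial\eta_l}{\partial z}(f_l,r)$, present only for $i\ge3$, which must also be shown to avoid $\mathfrak P$. I plan to pass to $\bar R:=\kx/\mathfrak P$: the image $\bar f_l$ is transcendental over $k$ because $k[f_l,f_i]/(\pi(f_i))\cong(k[f_i]/(\pi))[f_l]$ embeds in $\bar R$. If $\pi(y)\neq cy$ then $\bar f_i\neq0$, and a descent on $z$-degree (applying $D_l$ and using $\pi\nmid f_if_l$) shows $k[f_l,r]\cap\mathfrak P=0$, so the nonzero polynomial $\tfrac{\partial\eta_l}{\partial z}(f_l,r)\in k[f_l,r]$ remains nonzero in $\bar R$. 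If $\pi(y)=cy$, i.e.\ $\mathfrak P=f_i\kx$, then $\eta_l(\bar f_l,\bar r)=\bar f_{i-2}\bar f_i=0$, so $\bar r$ is a root of $\eta_l(\bar f_l,z)$, which is separable over $k(\bar f_l)$ since $\eta_l(y,z)$ is irreducible in $k[y,z]$ (hence squarefree in $z$ in characteristic zero) and $\bar f_l$ is transcendental; thus $\bar r$ is a simple root and $\tfrac{\partial\eta_l}{\partial z}(\bar f_l,\bar r)\neq0$. Combining the two factors yields $\tilde D_i(f_{i-2})\not\equiv0\pmod{\mathfrak P}$, as required.
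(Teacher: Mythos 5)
Your proposal is correct, and its skeleton matches the paper's: both reduce the claim to non-membership of $\tilde{D}_i(f_{i-2})$ in primes lying over the irreducible factors of $\lambda (f_i)$ (the paper passes to $\bar{k}$ and works with $\mathfrak{p}=(f_i-\alpha )\bar{k}[\x ]$, $\lambda (\alpha )=0$, instead of $\pi (f_i)\kx $ over $k$), both exploit the congruence $\tilde{q}_i\equiv r_i^{a_i}$ and the coprimality of $\lambda $ and $\mu $, and both use that the prime is stable under $D_l$. The execution, however, differs at two genuine points. First, you expand $\tilde{D}_i(f_{i-2})$ by the chain rule and factor it modulo $\mathfrak{P}$ as $(\partial q_l/\partial r)\cdot B$, disposing of each factor separately; the paper instead argues by contradiction through the identity $\tilde{D}_i(f_{i-2})=\Delta _{(f_i,f_{i-2})}(\tilde{f}_{i+1})$ combined with $\tilde{f}_{i+1}\equiv \psi (f_l)$, $\psi (z)=z^{a_i-1}\nu (z)^{a_i}$, which funnels everything into the single statement $\partial q_l/\partial r\in \mathfrak{p}$. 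Your non-vanishing of $B$ plays exactly the role of the paper's $\psi '\neq 0$; both come down to $a_i\geq 2$ and the coprimality of $\lambda $ and $\mu $. Second, and more substantially, for $\partial q_l/\partial r\notin \mathfrak{P}$ (needed only when $i\geq 3$) the paper runs one uniform argument: iterating $D_l$ gives $a_l!(f_lf_i)^{a_l-1}\in \mathfrak{p}$, which forces $\alpha =0$, and then Lemma~\ref{lem:local slice} ($k[f_l,r]\cap f_i\kx =q_lk[f_l,r]$) yields a degree contradiction. You split into two cases: for $\pi $ not an associate of $y$ your descent proving $k[f_l,r]\cap \mathfrak{P}=0$ is the same $D_l$-iteration idea in different packaging, but for $\mathfrak{P}=f_i\kx $ your observation that $\bar{r}$ is a root of the irreducible, hence separable (characteristic zero), polynomial $\eta _l(\bar{f}_l,z)$, and therefore a simple root, genuinely replaces the paper's appeal to Lemma~\ref{lem:local slice}. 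The separability argument is shorter and more conceptual for that sub-case and avoids analyzing the ideal $k[f_l,r]\cap f_i\kx $ altogether; what the paper's route buys is that no case distinction is imposed up front (the dichotomy $\alpha =0$ versus $\alpha \neq 0$ falls out of the computation), and that $i=2$ and $i\geq 3$ are dispatched by the same final step via $a_1=1$, whereas in your setup $i=2$ terminates early because $\tilde{D}_2(f_0)=B$.
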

\begin{proof}
Suppose to the contrary that 
$\lambda (f_i)$ and $\tilde{D}_i(f_{i-2})$ 
have a common factor $p\in \kx \sm k$. 
By Lemma~\ref{lem:facts}, 
$k[f_i]$ is factorially closed in $\kx $, 
since $\ker D_l=k[f_l,f_i]$ 
by Theorem~\ref{thm:lsc1} (i). 
Hence, 
$p$ belongs to $k[f_i]$. 
Thus, 
$p$ is divisible by $f_i-\alpha $ 
for some $\alpha \in \bar{k}$, 
where $\bar{k}$ is an algebraic closure of $k$. 
Since $p$ is a factor of $\lambda (f_i)$, 
we have $\lambda (\alpha )=0$. 
By the assumption that 
$\lambda (y)$ and $\mu (y,z)$ have no common factor, 
it follows that $\mu (\alpha ,z)\neq 0$. 
Since $\mu (y,z)$ is an element of $zk[y,z]$, 
we may write $\mu (\alpha ,z)=-z\nu (z)$, 
where $\nu (z)\in k[z]\sm \zs $. 
Then, 
$r_i$ is congruent to 
$\lambda (\alpha )\tilde{r}-\mu (\alpha ,f_l)
=f_l\nu (f_l)$ 
modulo $\mathfrak{p}:=(f_i-\alpha )\bar{k}[\x ]$. 
Note that $\tilde{\eta }_i(y,z)$ 
is a monic polynomial in $z$ over $k[y]$ of degree $a_i$ 
for any $i\geq 2$. 
Hence, 
we see from (\ref{eq:tilde{q}_i}) that 
$\tilde{q}_i$ is congruent to $r_i^{a_i}$, 
and hence to $f_l^{a_i}\nu (f_l)^{a_i}$ 
modulo $\mathfrak{p}$. 
Put $\psi (z)=z^{a_i-1}\nu (z)^{a_i}$. 
Then, it follows that 
$$
\tilde{f}_{i+1}=\tilde{q}_if_l^{-1}\equiv 
\psi (f_l)
\pmod{\mathfrak{p}}. 
$$
If $i=2$, then we have $a_2=t_0-1\geq 2$, 
since $t_0\geq 3$. 
If $i\geq 3$, 
then we have $a_i\geq 2$ by Lemma~\ref{lem:a_i} (ii), 
since $t_0\geq 3$ and $(t_0,t_1)\neq (3,1)$. 
Hence, 
$\psi (z)$ is not a constant, 
and so 
the derivative $\psi'(z)$ is nonzero.

Now, 
regard $\Delta :=\Delta _{(f_i,f_{i-2})}$ 
as a derivation of $\bar{k}[\x ]$. 
Then, 
we have $\Delta (f_{i}-\alpha )=0$. 
Hence, 
$\Delta (\mathfrak{p})$ is contained in $\mathfrak{p}$. 
Since 
$\tilde{f}_{i+1}\equiv \psi (f_l)\pmod{\mathfrak{p}}$, 
it follows that 
$$
\Delta (\tilde{f}_{i+1})\equiv \Delta (\psi (f_l))
\pmod{\mathfrak{p}}. 
$$
Since 
$\Delta (\tilde{f}_{i+1})
=\Delta _{(f_i,f_{i-2})}(\tilde{f}_{i+1})
=\tilde{D}_i(f_{i-2})$, 
and $\tilde{D}_i(f_{i-2})$ is divisible by $p$ 
by supposition, 
this implies that 
$\Delta (\psi (f_l))$ belongs to $\mathfrak{p}$. 
By chain rule, 
we have 
$\Delta (\psi (f_l))=\psi '(f_l)\Delta (f_l)$, 
in which 
$$
\Delta (f_l)=
\Delta _{(f_i,f_{i-2})}(f_l)=-D_l(f_{i-2})
=-\frac{\partial q_l}{\partial r}f_l
$$ 
by (\ref{eq:D_i(f_{i-1})}). 
Hence, 
$\psi '(f_l)f_l(\partial q_l/\partial r)$ 
belongs to $\mathfrak{p}$. 
We show that 
$\partial q_l/\partial r$ 
belongs to $\mathfrak{p}$. 
As discussed after Lemma~\ref{lem:facts}, 
we may extend $D_l$ to a locally nilpotent derivation 
$\bar{D}_l$ of $\bar{k}[\x ]$ 
such that 
\begin{equation}\label{eq:ker f_l,f_i-alpha}
\ker \bar{D}_l
=\bar{k}[f_l,f_i]
=\bar{k}[f_l,f_i-\alpha ]. 
\end{equation}
Hence, 
we know by Lemmas~\ref{lem:facts} and \ref{lem:minimal} (i) 
that $\mathfrak{p}$ is a prime ideal of $\bar{k}[\x ]$ 
such that $\bar{k}[f_l]\cap \mathfrak{p}=\zs $. 
Since $\psi '(f_l)\neq 0$ as mentioned, 
it follows that $\psi '(f_l)f_l$ 
does not belong to $\mathfrak{p}$. 
Thus, 
we conclude that 
$\partial q_l/\partial r$ belongs to $\mathfrak{p}$. 
Note that 
$\bar{D}_l(\mathfrak{p})$ is contained in $\mathfrak{p}$, 
since $\bar{D}_l(f_i-\alpha )=0$. 
Hence, 
$D_l^j(\partial q_l/\partial r)$ belongs to $\mathfrak{p}$ 
for any $j\geq 0$. 
Since $q_l=\eta _l(f_l,r)$ 
is a monic polynomial in $r$ over $k[f_l]$ 
of degree $a_l$, 
we have $\partial ^{a_l}q_l/\partial r^{a_l}=a_l!$. 
Because $D_l(f_l)=D_l(f_i)=0$ and $D_l(r)=f_lf_i$, 
it follows that 
\begin{align*}
&D_l^{a_l-1}\left(
\frac{\partial q_l}{\partial r}\right)
=D_l^{a_l-2}\left(
\frac{\partial ^2q_l}{\partial r^2}
D_l(r)
\right)
=D_l^{a_l-2}\left(
\frac{\partial ^2q_l}{\partial r^2}f_lf_i
\right) 
\\
&\quad 
=D_l^{a_l-2}\left(
\frac{\partial ^2q_l}{\partial r^2}\right) f_lf_i
=\cdots 
=\frac{\partial ^{a_l}q_l}{\partial r^{a_l}}
(f_lf_i)^{a_l-1}
=a_l!(f_lf_i)^{a_l-1}
\end{align*}
by chain rule. 
Therefore, 
$a_l!(f_lf_i)^{a_l-1}$ 
belongs to $\mathfrak{p}$.

When $i=2$, 
we have $a_l!(f_lf_i)^{a_l-1}=1$, 
since $a_l=a_1=1$. 
This implies that $\mathfrak{p}=\bar{k}[\x ]$, 
a contradiction. 
Assume that $i\geq 3$. 
Then, 
we have $a_l=a_{i-1}\geq 2$ by Lemma~\ref{lem:a_i} (ii). 
Hence, 
$f_l$ or $f_i$ belongs to 
$\mathfrak{p}$. 
In view of (\ref{eq:ker f_l,f_i-alpha}), 
we know by Lemma~\ref{lem:facts} 
that $f_l$ does not belong to $\mathfrak{p}$. 
Hence, 
$f_i$ belongs to $\mathfrak{p}$. 
Thus, we get $\alpha =0$. 
Since $f_i$ is an element of $\kx $, 
it follows that 
$\mathfrak{p}\cap \kx =f_i\bar{k}[\x ]\cap \kx =f_i\kx $, 
to which 
$\partial q_l/\partial r$ belongs. 
Hence, 
$\partial q_l/\partial r$ 
belongs to $f_i\kx \cap k[f_l,r]$. 
By Lemma~\ref{lem:local slice}, 
we have $f_i\kx \cap k[f_l,r]=q_lk[f_l,r]$. 
Thus, 
$\partial q_l/\partial r$ is divisible by $q_l$. 
This implies that $\partial q_l/\partial r=0$. 
Hence, we get $a_l=\deg _rq_l=0$, 
a contradiction. 
Therefore, 
$\lambda (f_i)$ and $\tilde{D}_i(f_{i-2})$ 
have no common factor. 
\end{proof}

If $i=2$, 
then $\lambda (f_2)$ and $\tilde{D}_2(f_0)$ 
have no common factor by Lemma~\ref{lem:D(f_{i-2})}. 
Since $F=\lambda (f_2)$, 
it follows that $\tilde{D}_2$ is irreducible 
by Proposition~\ref{prop:lsc:F:rem}. 
Assume that $i\geq 3$. 
Then, 
we have $F=\lambda (f_i)f_i$. 
By Lemma~\ref{lem:D(f_{i-2})}, 
we know that 
$\lambda (f_i)$ and $\tilde{D}_i(f_{i-2})$ 
have no common factor. 
Hence, 
it suffices to prove that 
$f_i$ and $\tilde{D}_i(f_{i-2})$ 
have no common factor 
by virtue of Proposition~\ref{prop:lsc:F:rem}. 
Since $f_i$ is an irreducible element of $\kx $, 
we verify that $\tilde{D}_i(f_{i-2})$ 
does not belong to $\mathfrak{p}_i=f_i\kx $.

If $\lambda (0)=0$, 
then $f_i$ is a factor of $\lambda (f_i)$. 
Since $\lambda (f_i)$ and $\tilde{D}_i(f_{i-2})$ 
have no common factor, 
it follows that 
$\tilde{D}_i(f_{i-2})$ does not belong to $\mathfrak{p}_i$. 
So assume that $\lambda (0)\neq 0$. 
If $\mu (0,z)=0$, 
then $\mu (f_i,f_l)$ 
belongs to $\mathfrak{p}_i$. 
Since $\tilde{r}=r$, 
it follows that 
$r_i\equiv \lambda (f_i)r\pmod{\mathfrak{p}_i}$. 
Hence, 
we see from (\ref{eq:tilde{q}_i}) that 
$$
\tilde{q}_i
\equiv 
\eta _i(f_i,r)\lambda (f_i)^{a_i}
\equiv \lambda (0)^{a_i}q_i\pmod{\mathfrak{p}_i}. 
$$ 
Since $\tilde{q}_i=f_{l}\tilde{f}_{i+1}$ 
and $q_i=f_{l}f_{i+1}$, 
it follows that 
$f_{l}(\tilde{f}_{i+1}-\lambda (0)^{a_i}f_{i+1})$ 
belongs to $\mathfrak{p}_i$. 
Because $f_{l}$ does not belong to $\mathfrak{p}_i$ 
by Lemma~\ref{lem:facts}, 
we conclude that  
$\tilde{f}_{i+1}-\lambda (0)^{a_i}f_{i+1}$ 
belongs to $\mathfrak{p}_i$. 
Write 
$\tilde{f}_{i+1}-\lambda (0)^{a_i}f_{i+1}=f_ig$, 
where $g\in \kx $. 
Then, we have 
$$
\tilde{D}_i
=\Delta _{(\tilde{f}_{i+1},f_i)}
=\lambda (0)^{a_i}\Delta _{(f_{i+1},f_i)}+f_i\Delta _{(g,f_i)}
=\lambda (0)^{a_i}D_i+f_i\Delta _{(g,f_i)}. 
$$ 
Since $f_i\Delta _{(g,f_i)}(f_{i-2})$ 
belongs to $\mathfrak{p}_i$, 
we get 
$\tilde{D}_i(f_{i-2})\equiv \lambda (0)^{a_i}D_i(f_{i-2})
\pmod{\mathfrak{p}_i}$. 
By (4) of Proposition~\ref{prop:lsc1}, 
$f_i$ and $D_i(f_{i-2})$ have no common factor. 
Since $\lambda (0)\neq 0$ by assumption, 
it follows that 
$\lambda (0)^{a_i}D_i(f_{i-2})$ 
does not belong to $\mathfrak{p}_i$. 
Therefore, 
$\tilde{D}_i(f_{i-2})$ 
does not belong to $\mathfrak{p}_i$.

Finally, 
assume that 
$\lambda (0)\neq 0$ and $\mu (0,z)\neq 0$. 
Then, 
we have $\mu _j(0)\neq 0$ for some $j\geq 1$. 
Put 
$$
g'=a_i\mu _z(0,f_l)
f_l-\mu (0,f_l)
=\sum _{j\geq 1}\mu _j(0)(ja_i-1)f_l^j, 
$$
where 
$$
\mu _z(y,z):=\frac{\partial \mu (y,z)}{\partial z}
=\sum _{j\geq 1}j\mu _j(y)z^{j-1}. 
$$
Then, we have $g'\neq 0$, 
since $a_i\geq 2$ by Lemma~\ref{lem:a_i} (ii).

Now, we prove that 
$\tilde{D}_i(f_{i-2})$ 
does not belong to $\mathfrak{p}_i$ 
by contradiction. 
Suppose to the contrary that 
$\tilde{D}_i(f_{i-2})$ belongs to $\mathfrak{p}_{i}$. 
Then, 
we have the following claim. 
Here, 
we define 
$q\in k[f_l,r]$ as in (\ref{eq:q g_1}) with $j=i$, 
and $g=g'\partial q_{l}/\partial r$.

\begin{claim}\label{lem:D(f_{i-2})'}
$\lambda (0)q+g$ belongs to $\mathfrak{p}_{i}$. 
\end{claim}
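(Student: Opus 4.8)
The plan is to compute $\tilde D_i(f_{i-2})$ explicitly, reduce it modulo $\mathfrak{p}_i:=f_i\kx$ (working in the localization $\kx_{\mathfrak{p}_i}$, which is legitimate since $\lambda(0)\neq0$ forces $\lambda(f_i)\notin\mathfrak{p}_i$, hence $\lambda(f_i)$ is a unit there), and recognize the outcome as a unit multiple of $\lambda(0)q+g$. Writing $l=i-1$, $f_l=f_{i-1}$, $q_l=\eta_l(f_l,r)$, I would start from $f_{i-2}=q_lf_i^{-1}$ and $\tilde D_i(f_i)=0$, so that $f_i\tilde D_i(f_{i-2})=\tilde D_i(q_l)=\frac{\partial q_l}{\partial f_l}\tilde D_i(f_l)+\frac{\partial q_l}{\partial r}\tilde D_i(r)$ by the chain rule. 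The two derivatives come from $\tilde D_i=\Delta_{(\tilde f_{i+1},f_i)}$ and (\ref{eq:tilde{D}_i(r_i)}): using $f_l=\tilde q_i\tilde f_{i+1}^{-1}$ and $\tilde q_i$ from (\ref{eq:tilde{q}_i}) gives $\tilde D_i(f_l)=f_iA$ with $A=\lambda(f_i)(\partial\tilde h_i/\partial z)(f_i,r_i)$, while using $r=\lambda(f_i)^{-1}(r_i+\mu(f_i,f_l))$ together with $\tilde D_i(r_i)=\lambda(f_i)f_i\tilde f_{i+1}$ gives $\tilde D_i(r)=f_iB$ with $B=\tilde f_{i+1}+\mu_z(f_i,f_l)(\partial\tilde h_i/\partial z)(f_i,r_i)$ (the transient $\lambda(f_i)^{-1}$ cancels). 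Hence $\tilde D_i(f_{i-2})=\frac{\partial q_l}{\partial f_l}A+\frac{\partial q_l}{\partial r}B$, all terms lying in $\kx$.

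The reductions then rest on the single identity $\eta_i(0,z)=z^{a_i}$ (valid in both parity cases, since $a_i=t_ib_i\pm1$): it yields $\tilde q_i\equiv r_i^{a_i}$ and $(\partial\tilde h_i/\partial z)(f_i,r_i)\equiv a_ir_i^{a_i-1}$ modulo $\mathfrak{p}_i$, and since $\tilde f_{i+1}=\tilde q_if_l^{-1}$ also $\tilde f_{i+1}\equiv r_i^{a_i}f_l^{-1}$. Substituting, reducing $\lambda(f_i)\equiv\lambda(0)$ and $\mu_z(f_i,f_l)\equiv\mu_z(0,f_l)$, and multiplying by $f_l$, I expect to reach
\[
f_l\tilde D_i(f_{i-2})\equiv r_i^{a_i-1}\Bigl(a_if_l\bigl[\lambda(0)\tfrac{\partial q_l}{\partial f_l}+\mu_z(0,f_l)\tfrac{\partial q_l}{\partial r}\bigr]+r_i\tfrac{\partial q_l}{\partial r}\Bigr)\pmod{\mathfrak{p}_i}.
\]
On the other hand, expanding $q$ from (\ref{eq:q g_1}) with $j=i$ and $g=g'\frac{\partial q_l}{\partial r}$ with $g'=a_i\mu_z(0,f_l)f_l-\mu(0,f_l)$, and using $\lambda(0)r-\mu(0,f_l)\equiv r_i\pmod{\mathfrak{p}_i}$ (the reduction of $r_i=\lambda(f_i)r-\mu(f_i,f_l)$), the parenthesized factor above is precisely $\lambda(0)q+g$ modulo $\mathfrak{p}_i$. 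This gives $f_l\tilde D_i(f_{i-2})\equiv r_i^{a_i-1}(\lambda(0)q+g)\pmod{\mathfrak{p}_i}$.

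Granting the supposition $\tilde D_i(f_{i-2})\in\mathfrak{p}_i$, the left side lies in $\mathfrak{p}_i$, so $r_i^{a_i-1}(\lambda(0)q+g)\in\mathfrak{p}_i\kx_{\mathfrak{p}_i}$; to extract the claim I then cancel the units $f_l$ and $r_i^{a_i-1}$ and return to $\kx$ via $\mathfrak{p}_i\kx_{\mathfrak{p}_i}\cap\kx=\mathfrak{p}_i$ (primality of $\mathfrak{p}_i$). That $f_l\notin\mathfrak{p}_i$ is immediate from $\ker D_{i-1}=k[f_i,f_{i-1}]$ together with Lemma~\ref{lem:facts}. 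The genuinely delicate step, and the one I expect to be the main obstacle, is $r_i\notin\mathfrak{p}_i$, which is what allows cancelling $r_i^{a_i-1}$. I plan to argue this in the domain $R_i:=\kx/\mathfrak{p}_i$: reducing $\tilde q_i=f_{i-1}\tilde f_{i+1}$ gives $\bar r_i^{a_i}=\bar f_{i-1}\bar{\tilde f}_{i+1}$, so with $\bar f_{i-1}\neq0$ the vanishing of $\bar r_i$ would force $\tilde f_{i+1}\in\mathfrak{p}_i$, i.e.\ $f_i\mid\tilde f_{i+1}$; ruling this out is exactly where the hypothesis that $\lambda$ and $\mu$ have no common factor must be used, and I expect to do so through Lemma~\ref{lem:minimal} and the irreducibility data of Proposition~\ref{prop:lsc1} (or, failing that, through a leading-form argument based on Lemma~\ref{lem:rhq}).
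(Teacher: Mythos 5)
Your computational core coincides with the paper's proof almost verbatim: the chain-rule formulas $\tilde D_i(f_l)=\lambda(f_i)f_i\,\partial\tilde q_i/\partial r_i$ and $\tilde D_i(r)=f_i\bigl(\mu_z(f_i,f_l)\,\partial\tilde q_i/\partial r_i+\tilde f_{i+1}\bigr)$ are exactly (\ref{eq:claimpf:rank3-2}) and (\ref{eq:claimpf:rank3-3}), the reductions $\tilde q_i\equiv r_i^{a_i}$ and $\partial\tilde q_i/\partial r_i\equiv a_ir_i^{a_i-1}$ modulo $\mathfrak{p}_i$ come from $\eta_i(0,z)=z^{a_i}$ just as in the paper, and your parenthesized factor is the paper's $h_2$, identified with $\lambda(0)q+g$ via $r_i\equiv\lambda(0)r-\mu(0,f_l)$. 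The only difference is bookkeeping: the paper first deduces $h_1=f_l\tilde D_i(f_{i-2})\in\mathfrak{p}_i$ from the supposition and then cancels $r_i^{a_i-1}$, while you reduce first and cancel afterwards.

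The genuine gap is the step you flag yourself, $r_i\notin\mathfrak{p}_i$, and your plan for it misdiagnoses what is needed. Passing to ``$f_i\mid\tilde f_{i+1}$'' makes no progress: since $f_{i-1}\tilde f_{i+1}=\tilde q_i\equiv r_i^{a_i}\pmod{\mathfrak{p}_i}$ and $f_{i-1}\notin\mathfrak{p}_i$, the statements $r_i\in\mathfrak{p}_i$ and $\tilde f_{i+1}\in\mathfrak{p}_i$ are equivalent, so you have only restated the problem; worse, ruling out $f_i\mid\tilde f_{i+1}$ by appealing to irreducibility of $\tilde f_{i+1}$ or to $\ker\tilde D_i=k[f_i,\tilde f_{i+1}]$ would be circular, because those facts follow (via Lemma~\ref{lem:facts} and Theorem~\ref{thm:fbook}~(c)) from the irreducibility of $\tilde D_i$, which is precisely what the claim is being used to establish. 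Also, the coprimality of $\lambda(y)$ and $\mu(y,z)$ is not the relevant hypothesis here. What the paper uses is an $r$-degree obstruction: since $i\geq 3$, Lemma~\ref{lem:a_i}~(ii) gives $a_{i-1}\geq 2$, so the last part of Lemma~\ref{lem:local slice} (with $(j,l)=(i-1,i)$) yields $(r+k[f_{i-1}])\cap f_i\kx=\emptyset$; as $\lambda(0)\neq0$ and $r_i\equiv\lambda(0)r-\mu(0,f_{i-1})\pmod{\mathfrak{p}_i}$, the element $r-\lambda(0)^{-1}\mu(0,f_{i-1})$ lies in $r+k[f_{i-1}]$, hence outside $\mathfrak{p}_i$, and therefore $r_i\notin\mathfrak{p}_i$. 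Your pointer to Lemma~\ref{lem:minimal} and Proposition~\ref{prop:lsc1} can be made to work, but only if applied to $D_{i-1}$ (whose kernel $k[f_{i-1},f_i]$ is already known): then $k[f_{i-1},r]\cap f_i\kx$ is the principal prime ideal generated by $q_{i-1}$, whose $r$-degree is $a_{i-1}\geq2$, so it contains no polynomial of $r$-degree one --- which is exactly the proof of Lemma~\ref{lem:local slice}. With this step supplied, your argument closes and agrees with the paper's.
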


First, 
we assume this claim, 
and derive a contradiction. 
Since $\lambda (0)q+g$ 
is an element of $k[f_l,r]$, 
it follows from Claim~\ref{lem:D(f_{i-2})'} 
that $\lambda (0)q+g$ 
belongs to $\mathfrak{p}_i\cap k[f_l,r]$. 
By Lemma~\ref{lem:local slice}, 
we have $\mathfrak{p}_i\cap k[f_l,r]=q_lk[f_l,r]$. 
Hence, 
$$
g_1:=\lambda (0)(q-a_lq_l)+g
=(\lambda (0)q+g)-\lambda (0)a_lq_l
$$ 
belongs to $q_lk[f_l,r]$. 
This implies that $\deg _rg_1\geq \deg _rq_l$ or $g_1=0$. 
From (\ref{eq:q'lsc1}), 
we see that $\deg _r(q-a_lq_l)<a_l=\deg _rq_l$. 
Since $g'$ is an element of $k[f_l]\sm \zs $, 
we have 
$$
\deg _rg
=\deg _rg'\frac{\partial q_l}{\partial r}
=\deg _r\frac{\partial q_l}{\partial r}<\deg _rq_l. 
$$
Hence, we get $\deg _rg_1<\deg _rq_l$. 
Thus, we conclude that $g_1=0$. 
Therefore, 
we obtain $q-a_lq_l\approx g$.

First, assume that $i=3$. 
Then, 
we have $b_l=b_2=1$, 
and so 
$$
q_l=q_2=\eta _2(f_2,r)
=f_2^{t_0}+\sum _{j=1}^{t_0}\alpha _j^0f_2^{t_0-j}r^{j-1}. 
$$
Regard $q_l$ 
as a polynomial in $r$ over $k[f_2]$. 
Then, 
the coefficient of $r^{t_0-2}$ in $q_l$ 
is equal to $f_2$ multiplied by a constant. 
From (\ref{eq:q g_1}), 
we see that the same holds for 
the coefficient of $r^{t_0-2}$ in $q$, 
and hence for 
the coefficient of $r^{t_0-2}$ in 
$$
q-a_lq_l\approx 
g=g'\frac{\partial q_{l}}{\partial r}
=g'\sum _{j=2}^{t_0}\alpha _j^0(j-1)f_2^{t_0-j}r^{j-2}. 
$$
Because $\alpha _{t_0}^0(t_0-1)g'\neq 0$, 
it follows that $g'\approx f_2$. 
Hence, we get 
$\deg _{f_2}(q-a_lq_l)=t_0-1$. 
On the other hand, 
we have 
$\deg _{f_2}(q-a_lq_l)=\deg _{f_l}(q-a_lq_l)=t_l=t_0$ 
by (\ref{eq:q'lsc1}). 
This is a contradiction.

Next, assume that $i\geq 4$. 
Regard $q_l=\eta _l(f_l,r)$ 
as a polynomial in $r$ over $k[f_l]$. 
We show that $r^{a_l-1}$ does not appear in $q_l$. 
When $l\equiv 0,1\pmod{4}$, 
it is easy to see that 
$r^{a_l-1}=r^{t_lb_l}$ does not appear in $q_l$. 
When $l\equiv 2,3\pmod{4}$, 
we have $b_l\geq 2$ by Lemma~\ref{lem:a_i} (iv), 
since $l=i-1\geq 3$. 
From this, 
we know that $r^{a_l-1}=r^{t_lb_l-2}$ 
does not appear in $q_l$. 
By (\ref{eq:q g_1}), 
it follows that 
$r^{a_l-1}$ does not appear in $q$, 
and hence in $q-a_lq_l\approx g$. 
Since $\deg _rq_l=a_l$, 
however, 
we see that 
$r^{a_l-1}$ appears in $\partial q_l/\partial r$ 
with coefficient $a_l\neq 0$, 
and hence in $g$ with coefficient $a_lg'\neq 0$. 
This is a contradiction. 
Therefore, 
we conclude that $\tilde{D}_i(f_{i-2})$ 
does not belong to $\mathfrak{p}_{i}$.

Finally, 
we prove Claim~\ref{lem:D(f_{i-2})'}. 
Recall that $f_i$ and $r_i$ 
are algebraically independent over $k$. 
Hence, we may consider the partial derivatives of 
$\tilde{q}_i=\tilde{h}_i(f_i,r_i)$ in $f_i$ and $r_i$. 
Since $f_l=\tilde{q}_i\tilde{f}_{i+1}^{-1}$, 
$\tilde{D}_i(f_i)=\tilde{D}_i(\tilde{f}_{i+1})=0$ 
and $\tilde{D}_i(r_i)=\lambda (f_i)f_i\tilde{f}_{i+1}$, 
we have 
\begin{equation}\label{eq:claimpf:rank3-2}
\tilde{D}_i(f_l)
=\tilde{D}_i(\tilde{q}_i\tilde{f}_{i+1}^{-1})
=\tilde{D}_i(\tilde{q}_i)\tilde{f}_{i+1}^{-1}
=\frac{\partial \tilde{q}_i}{\partial r_i}
\tilde{D}_i(r_i)\tilde{f}_{i+1}^{-1}
=\frac{\partial \tilde{q}_i}{\partial r_i}
\lambda (f_i)f_i
\end{equation}
by chain rule. 
Since $r_i=\lambda (f_i)r-\mu (f_i,f_{l})$, 
it follows that 
\begin{align*}
&\lambda (f_i)f_i\tilde{f}_{i+1}=\tilde{D}_i(r_i)
=\tilde{D}_i\bigl(\lambda (f_i)r-\mu (f_i,f_{l})\bigr) \\
&\quad =\lambda (f_i)\tilde{D}_i(r)
-\mu _z(f_i,f_l)\tilde{D}_i(f_l)
=\lambda (f_i)\tilde{D}_i(r)
-\mu _z(f_i,f_l)\frac{\partial \tilde{q}_i}{\partial r_i}
\lambda (f_i)f_i. 
\end{align*}
Since $\lambda (f_i)\neq 0$, 
this gives that 
\begin{equation}\label{eq:claimpf:rank3-3}
\tilde{D}_i(r)
=f_i\left(
\mu _z(f_i,f_l)\frac{\partial \tilde{q}_i}{\partial r_i}
+\tilde{f}_{i+1}\right) . 
\end{equation}
From (\ref{eq:claimpf:rank3-2}) 
and (\ref{eq:claimpf:rank3-3}), 
we obtain 
\begin{align*}
\tilde{D}_i(q_l)
&=\frac{\partial q_l}{\partial f_l}\tilde{D}_i(f_l)
+\frac{\partial q_l}{\partial r}\tilde{D}_i(r) \\
&=f_i\left( 
\frac{\partial q_l}{\partial f_l}
\frac{\partial \tilde{q}_i}{\partial r_i}\lambda (f_i)
+\frac{\partial q_l}{\partial r}
\left(
\mu _z(f_i,f_l)\frac{\partial \tilde{q}_i}{\partial r_i}+
\tilde{f}_{i+1}\right)
\right) . 
\end{align*}
Since $\tilde{f}_{i+1}=\tilde{q}_if_l^{-1}$, 
the right-hand side 
of this equality is written as $f_if_l^{-1}h_1$, 
where 
$$
h_1:=f_l\frac{\partial \tilde{q}_i}{\partial r_i}
\left(
\lambda (f_i)
\frac{\partial q_l}{\partial f_l}
+\mu _z(f_i,f_l)\frac{\partial q_l}{\partial r}
\right)
+\tilde{q}_i\frac{\partial q_l}{\partial r}. 
$$
Because $\tilde{D}_i(f_{i-2})$ 
belongs to $\mathfrak{p}_i$ by assumption, 
$$
h_1=f_i^{-1}f_l\tilde{D}_i(q_l)
=f_i^{-1}f_l\tilde{D}_i(f_{i-2}f_i)
=f_l\tilde{D}_i(f_{i-2})
$$
belongs to $\mathfrak{p}_i$. 
Since $\eta _i(0,z)=z^{a_i}$, 
we see from (\ref{eq:tilde{q}_i}) that 
\begin{equation}\label{eq:(y)}
\tilde{q}_i\equiv 
\eta _i(0,\lambda (f_i)^{-1}r_i)\lambda (f_i)^{a_i}
=r_i^{a_i}
\pmod{f_ik[f_i,r_i]}. 
\end{equation}
This gives that 
$$
\frac{\partial \tilde{q}_i}{\partial r_i}
\equiv a_ir_i^{a_i-1}
\pmod{f_ik[f_i,r_i]}.
$$
Hence, we have 
$h_1\equiv r_i^{a_i-1}h_2\pmod{\mathfrak{p}_i}$, 
where 
$$
h_2:=a_if_l\left(
\lambda (0)\frac{\partial q_l}{\partial f_l}
+\mu _z(0,f_l)\frac{\partial q_l}{\partial r}\right)
+r_i\frac{\partial q_l}{\partial r}. 
$$
Since 
$a_l=a_{i-1}\geq 2$ by Lemma~\ref{lem:a_i} (ii), 
we have $(r+k[f_l])\cap \mathfrak{p}_i=\emptyset $ 
by Lemma~\ref{lem:local slice}. 
Since $\lambda (0)\neq 0$ by assumption, 
and $r_i\equiv \lambda (0)r-\mu (0,f_l)\pmod{\mathfrak{p}_i}$, 
it follows that $r_i$ does not belong to $\mathfrak{p}_i$. 
Thus, we know that $h_2$ belongs to $\mathfrak{p}_i$. 
Now, 
observe that 
\begin{align*}
\lambda (0)q+g&=\lambda (0)\left( 
a_if_l\frac{\partial q_l}{\partial f_l}
+r\frac{\partial q_l}{\partial r}\right) 
+\bigl(a_i\mu _z(0,f_l)f_l-\mu (0,f_l)\bigr)
\frac{\partial q_l}{\partial r}\\
&=a_if_l\left(
\lambda (0)\frac{\partial q_l}{\partial f_l}
+\mu _z(0,f_l)\frac{\partial q_l}{\partial r}\right)
+(\lambda (0)r-\mu (0,f_l))
\frac{\partial q_l}{\partial r}. 
\end{align*}
Since the right-hand side of this equality 
is congruent to $h_2$ modulo $\mathfrak{p}_i$, 
we conclude that 
$\lambda (0)q+g$ belongs to $\mathfrak{p}_i$. 
This proves Claim~\ref{lem:D(f_{i-2})'}, 
and thereby 
completing the proof of Theorem~\ref{thm:lsc2} (i).

\section{Recurrence equations}
\setcounter{equation}{0}

In what follows, 
let $\Gamma $ be the totally ordered additive group $\Z ^3$ 
equipped with the lexicographic order such that 
$\e _1<\e _2<\e _3$. 
From $\Gamma ^3$, 
we take the weight $\w :=(\e _1,\e _2,\e_ 3)$, 
and define 
$$
\delta _i=\degw f_i
$$ 
for each $i\geq 0$. 
Then, 
we have 
\begin{equation}\label{eq:delta vector}
\delta _0=(0,1,0),\quad 
\delta _1=(1,0,0),\quad 
\delta _2=(1,0,1),\quad 
\degw r=(1,1,1). 
\end{equation}

We remark that, 
if 
$$
\gamma _i:=t_i\delta _i-a_i\degw r>0
$$ 
for $i\in \N $, 
then the $\w $-degree of 
$q_i=\eta _i(f_i,r)$ is equal to 
$\max \{ t_i\delta _i,a_i\degw r\} =t_i\delta _i$ 
by Lemma~\ref{lem:rhq}, 
since $\degw r>0$. 
When this is the case, 
we have 
$$
\delta _{i+1}=\degw f_{i+1}=\degw q_if_{i-1}^{-1}
=t_i\delta _i-\delta _{i-1}. 
$$

\begin{prop}\label{prop:delta}
If $t_0\geq 3$, 
then $\delta _{i+1}=t_i\delta _i-\delta _{i-1}$ holds 
for each $i\in I\sm \{ 1\} $. 
\end{prop}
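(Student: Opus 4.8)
The plan is to reduce the assertion to a positivity statement about the vectors $\gamma_i:=t_i\delta_i-a_i\degw r$ in $\Gamma=\Z^3$. Indeed, the remark preceding the proposition shows that if $\gamma_i>0$ for some index $i$, then Lemma~\ref{lem:rhq} (applicable since $\degw r=(1,1,1)>0$ and $\gamma_i\neq0$) gives $\degw q_i=t_i\delta_i$, whence $\delta_{i+1}=\degw(q_if_{i-1}^{-1})=t_i\delta_i-\delta_{i-1}$; here $f_{i+1}\in\kx$ is well defined for $i\in I$ by Theorem~\ref{thm:lsc1}~(i), and $I$ is an initial segment of $\N$ by (\ref{eq:I}), so $i\in I$ forces $i-1\in I$. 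Thus it suffices to prove $\gamma_i>0$ for every $i\in I\setminus\{1\}$, and I would do this by induction on $i$, establishing $\gamma_i>0$ and the recurrence $\delta_{i+1}=t_i\delta_i-\delta_{i-1}$ together and in this order.

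First I would record that $(\gamma_i)$ obeys the same three-term recurrence as $(a_i)$. Once $\delta_i=t_{i-1}\delta_{i-1}-\delta_{i-2}$ has been established at index $i-1$, I combine it with $a_i=t_ia_{i-1}-a_{i-2}$ from (\ref{eq:a_i zenkasiki}) and with the parity identity $t_i=t_{i-2}$ to compute $t_i\gamma_{i-1}-\gamma_{i-2}=t_i(t_{i-1}\delta_{i-1}-\delta_{i-2})-a_i\degw r=t_i\delta_i-a_i\degw r=\gamma_i$, valid for $i\geq3$. For the two base indices I compute directly from (\ref{eq:delta vector}), using $b_2=1$, $a_2=t_0-1$ and $a_3=t_1(t_0-1)-1$: this yields $\gamma_2=(1,-(t_0-1),1)$ and, via $\gamma_3=t_1\gamma_2-\gamma_1$ with $\gamma_1=(t_1-1,-1,-1)$, the value $\gamma_3$ whose $\e_3$-coefficient equals $t_1+1$.

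The positivity is then governed entirely by the $\e_3$-coefficient, since $\e_3$ is the most significant basis vector in the lexicographic order, so $\gamma_i>0$ as soon as its $\e_3$-coefficient $d_i$ is positive. From the $\gamma$-recurrence one gets $d_i=t_id_{i-1}-d_{i-2}$ for $i\geq3$, with $d_2=1$ and $d_3=t_1+1>0$. To conclude $d_i>0$ for all $i\geq2$ I would mirror the proof of Lemma~\ref{lem:a_i}: apply Lemma~\ref{lem:skip} to $(d_i)$ to obtain $d_{i+2}-d_i=(t_0t_1-4)d_i+(d_i-d_{i-2})$, and run an induction showing the even- and odd-indexed subsequences are nondecreasing and positive. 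This argument uses $t_0t_1\geq4$, which holds whenever $t_0\geq3$ and $(t_0,t_1)\neq(3,1)$; in that case $I=\N$ by (\ref{eq:I}). The remaining case $(t_0,t_1)=(3,1)$ has $I=\{1,2,3,4\}$ finite, so there I only need $d_2,d_3,d_4>0$, which is an immediate numerical check ($d_4=t_0d_3-d_2=5$).

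The main obstacle is this positivity step rather than the algebra. The lower components of $\gamma_i$ are generally negative—already $\gamma_2=(1,-(t_0-1),1)$, and $\gamma_1$ even has negative $\e_3$-coefficient—so one cannot argue componentwise and must exploit that only the leading ($\e_3$) coefficient determines the sign. Moreover, when $t_1=1$ the odd steps have $t_i=1$, so $(d_i)$ is not monotone and a one-step estimate fails; this is precisely why the two-step skip identity of Lemma~\ref{lem:skip}, which brings in the product $t_0t_1$, is needed, and why the degenerate value $(t_0,t_1)=(3,1)$, where $t_0t_1=3<4$, must be separated off and disposed of by the finiteness of $I$.
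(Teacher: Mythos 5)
Your proposal is correct and follows essentially the same route as the paper's proof: reduce the statement to positivity of $\gamma _i=t_i\delta _i-a_i\degw r$, establish the recurrence $\gamma _{i+1}=t_{i+1}\gamma _i-\gamma _{i-1}$ hand in hand with that positivity by induction, and close the induction via Lemma~\ref{lem:skip} together with the case split $(t_0,t_1)=(3,1)$ (where $I$ is finite) versus $t_0t_1\geq 4$. The only cosmetic difference is that you project to the leading $\e _3$-coefficients $d_i$ and argue with integers, whereas the paper argues with the vectors $\gamma _i$ themselves in the ordered group $\Gamma $ (using $\gamma _2>0$ and $\gamma _1<0$); both are sound.
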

\begin{proof}
We prove that the following statements hold for 
each $i\in I$ by induction on $i$: 

\smallskip 

\noindent
(1) If $(t_0,t_1)=(3,1)$ and $i\neq \max I$, 
then we have $\gamma _{i+1}>0$.

\noindent
(2) 
If $(t_0,t_1)\neq (3,1)$, 
then we have $\gamma _{i+1}>0$. 
If furthermore $i\geq 2$, 
then we have $\gamma _{i+1}-\gamma _{i-1}>0$.

\noindent
(3) If $i\geq 2$, then we have 
$\delta _{i+1}=t_{i}\delta _i-\delta _{i-1}$ 
and $\gamma _{i+1}=t_{i+1}\gamma _i-\gamma _{i-1}$. 

\smallskip

Since 
$$
\gamma _2=
t_2\delta _2-a_2\degw r
=t_0(1,0,1)-(t_0-1)(1,1,1)=(1,1-t_0,1)>0, 
$$
we see that the statements hold when $i=1$. 
Take $i\in I$ with $i\geq 2$. 
Then, 
$i-1$ belongs to $I$, 
and is not the maximum of $I$. 
Hence, 
we have $\gamma _i>0$ 
by the induction assumption of (1) and (2). 
As remarked above, 
this implies that 
$\delta _{i+1}=t_i\delta _i-\delta _{i-1}$, 
proving the first part of (3). 
Since $a_{i+1}=t_{i+1}a_i-a_{i-1}$ 
by (\ref{eq:a_i zenkasiki}), 
and $t_{i+1}=t_{i-1}$, 
it follows that 
\begin{align*}
&\gamma _{i+1}=t_{i+1}\delta _{i+1}-a_{i+1}\degw r
=t_{i+1}(t_i\delta _i-\delta _{i-1})
-(t_{i+1}a_i-a_{i-1})\degw r \\
&\quad =t_{i+1}(t_i\delta _i-a_i\degw r)
-(t_{i-1}\delta _{i-1}-a_{i-1}\degw r)
=t_{i+1}\gamma _i-\gamma _{i-1}. 
\end{align*}
This proves the second part of (3). 
Hence, we have 
$\gamma _{j+1}=t_{j+1}\gamma _j-\gamma _{j-1}$ 
for $j=2,\ldots,i$, 
where the case $j<i$ is due to 
the induction assumption of (3). 
When $i=2$, 
we have 
$\gamma _3=t_1\gamma _2-\gamma _1$, 
and so 
$\gamma _3-\gamma _1=t_1\gamma _2-2\gamma _1$. 
Since $\gamma _2>0$ and 
$$
\gamma _1
=t_1\delta _1-a_1\degw r
=t_1(1,0,0)-(1,1,1)=(t_1-1,-1,-1)<0, 
$$
it follows that 
$\gamma _3$ and $\gamma _3-\gamma _1$ 
are positive. 
Hence, (1) and (2) are true if $i=2$. 
When $i=3$, 
we have 
$\gamma _{j+1}=t_{j+1}\gamma _j-\gamma _{j-1}$ 
for $j=2,3$. 
This gives that 
$$
\gamma _4-\gamma _2
=t_0\gamma _3-2\gamma _2
=t_0(t_1\gamma _2-\gamma _1)-2\gamma _2
=(t_0t_1-2)\gamma _2-t_0\gamma _1. 
$$
Since $t_0\geq 3$ by assumption, 
and $\gamma _2>0$ and $\gamma _1<0$, 
it follows that $\gamma _4-\gamma _2>0$, 
and so $\gamma _4>0$. 
Thus, 
(1) and (2) are true if $i=3$. 
When $(t_0,t_1)=(3,1)$, 
we have $I=\{ 1,\ldots ,4\} $. 
Hence, the proof is completed in this case. 
So 
assume that $(t_0,t_1)\neq (3,1)$ and $i\geq 4$. 
Then, 
we have 
$\gamma _{j+1}=t_{j+1}\gamma _j-\gamma _{j-1}$ 
for $j=i-2,i-1,i$, since $i-2\geq 2$. 
Hence, 
we get 
$$
\gamma _{i+1}-\gamma _{i-1}=
(t_0t_1-4)\gamma _{i-1}+(\gamma _{i-1}-\gamma _{i-3}) 
$$
by Lemma~\ref{lem:skip}. 
Since $(t_0,t_1)\neq (3,1)$, 
and $t_0\geq 3$ by assumption, 
we have $t_0t_1\geq 4$. 
Since $i-2\geq 2$, 
we know that 
$\gamma _{i-1}$ and $\gamma _{i-1}-\gamma _{i-3}$ 
are positive 
by the induction assumption of (2). 
Thus, 
we conclude that 
$\gamma _{i+1}-\gamma _{i-1}>0$, 
and so $\gamma _{i+1}>0$. 
Therefore, 
(2) is true if $i\geq 4$. 
This proves that 
(1), (2) and (3) hold for every $i\in I$. 
Consequently, 
we know by (3) that 
$\delta _{i+1}=t_{i}\delta _i-\delta _{i-1}$ 
holds for each $i\in I\sm \{ 1\} $. 
\end{proof}

We derive some consequences 
of Proposition~\ref{prop:delta}.  
When $(t_0,t_1)=(3,1)$, 
we have 
\begin{equation*}
\delta _3=3\delta _2-\delta _1=(2,0,3),\quad 
\delta _4=\delta _3-\delta _2=(1,0,2),\quad 
\delta _5=3\delta _4-\delta _3=(1,0,3) 
\end{equation*}
by Proposition~\ref{prop:delta} 
and (\ref{eq:delta vector}). 
From this and (\ref{eq:delta vector}), 
we can easily check that the following proposition holds 
when $(t_0,t_1)=(3,1)$, 
since $I=\{ 1,\ldots ,4\} $.

\begin{prop}\label{prop:deltap}
If $t_0\geq 3$, 
then the following assertions hold$:$

\noindent{\rm (i)} 
$\delta _{i+1}>0$ for each $i\in \zs \cup I$. 

\noindent{\rm (ii)} 
$\delta _i$ and $\delta _{i+1}$ are linearly independent 
for each $i\in I$. 

\noindent{\rm (iii)} 
If $t_1\geq 2$, 
then $\delta _{i+1}-\delta _i>0$ for each $i\geq 1$. 

\noindent{\rm (iv)} 
If $(t_0,t_1)\neq (3,1)$, 
then $\delta _{i+2}-\delta _i>0$ for each $i\geq 1$. 

\noindent{\rm (v)} 
The second component of $\delta _{i+1}$ is zero 
for each $i\in \zs \cup I$. 
\end{prop}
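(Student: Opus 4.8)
The plan is to dispose of the case $(t_0,t_1)=(3,1)$ by the explicit values of $\delta_0,\dots,\delta_5$ already recorded before the statement (there $I=\{1,\dots,4\}$ is finite, and (iii), (iv) are vacuous), and to treat the remaining case $t_0\geq 3$, $(t_0,t_1)\neq (3,1)$ by induction. In this case $I=\N $, so by Proposition~\ref{prop:delta} the recurrence
$$
\delta_{i+1}=t_i\delta_i-\delta_{i-1}
$$
holds for every $i\geq 2$; together with the base values $\delta_1=(1,0,0)$, $\delta_2=(1,0,1)$ from (\ref{eq:delta vector}) this determines the whole sequence. Throughout I use that the lexicographic order on $\Gamma =\Z ^3$ compares the third coordinate first, then the second, then the first.

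First I would dispose of (v) and (ii). For (v), the second coordinates of $\delta_1$ and $\delta_2$ vanish, and since the recurrence computes $\delta_{i+1}$ $\Z $-linearly from $\delta_i,\delta_{i-1}$, an immediate induction gives that the second coordinate of every $\delta_{i+1}$ is zero. For (ii), writing $p_i$ and $s_i$ for the first and third coordinates of $\delta_i$, both satisfy the scalar recurrence $x_{i+1}=t_ix_i-x_{i-1}$, so the Casoratian $W_i:=p_is_{i+1}-p_{i+1}s_i$ satisfies $W_i=W_{i-1}$ whenever the recurrence holds, i.e.\ for $i\in I\sm\{1\}$; since $W_1=p_1s_2-p_2s_1=1$, we get $W_i=1$ for all $i\in I$, and a nonvanishing $2\times 2$ minor forces $\delta_i,\delta_{i+1}$ to be linearly independent. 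This computation is valid in every case, including $(3,1)$.

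Next comes (i), the linchpin on which (iii) and (iv) rest. The key input is the positivity $\gamma_i>0$ for $i\geq 2$ established in the proof of Proposition~\ref{prop:delta}, where $\gamma_i=t_i\delta_i-a_i\,\degw r$: by (v) the second coordinate of $\gamma_i$ equals $-a_i<0$, so $\gamma_i>0$ forces its third coordinate $t_is_i-a_i$ to be positive, whence $s_i>0$ for all $i\geq 2$. As $s_j>0$ makes $\delta_j>0$ outright, and $\delta_1=(1,0,0)>0$ by inspection, this yields $\delta_{i+1}>0$ for every $i\in\{0\}\cup I$. With (i) in hand, (iii) and (iv) are parallel vector inductions. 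For (iii) (so $t_1\geq 2$, hence $t_i\geq 2$ for all $i$) the recurrence gives, for $i\geq 2$,
$$
\delta_{i+1}-\delta_i=(t_i-2)\delta_i+(\delta_i-\delta_{i-1}),
$$
and since $(t_i-2)\delta_i\geq 0$ by (i) and $\delta_i-\delta_{i-1}>0$ by induction (base $\delta_2-\delta_1=(0,0,1)>0$), the left side is positive. For (iv), Lemma~\ref{lem:skip} applied to $\delta $ with $l=0$ gives, for $i\geq 3$,
$$
\delta_{i+2}-\delta_i=(t_0t_1-4)\delta_i+(\delta_i-\delta_{i-2});
$$
here $t_0t_1\geq 4$ because $(t_0,t_1)\neq(3,1)$ and $t_0\geq 3$, so $(t_0t_1-4)\delta_i\geq 0$ by (i), and induction from the base cases $i=1,2$ (computed directly from $\delta_1,\dots,\delta_4$) completes the step.

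I expect the main obstacle to be organizational rather than deep: keeping the lexicographic comparisons straight, separating off the finite case $(t_0,t_1)=(3,1)$, and seeding every induction correctly, since the recurrence $\delta_{i+1}=t_i\delta_i-\delta_{i-1}$ fails at $i=1$ (indeed $\delta_2\neq t_1\delta_1-\delta_0$) and so must be started from the explicit values of $\delta_1,\delta_2$ (and sometimes $\delta_3,\delta_4$). The one genuinely load-bearing borrowed fact is the positivity of $\gamma_i$ from the proof of Proposition~\ref{prop:delta}, which is what rules out a negative third coordinate in (i) in the awkward subcase $t_1=1$, $t_0\geq 4$, where the scalar sequence $s_i$ is not monotone and a naive direct induction on $\delta_i>0$ would stall.
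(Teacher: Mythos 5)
Your proposal is correct, and it genuinely departs from the paper's proof at the two delicate points, (i) and (ii). The paper proves (i)--(iv) by a single simultaneous induction in which positivity at step $i$ is not obtained directly: it is extracted from (iv) one step back, via $\delta _{i+1}=\delta _{i-1}+(\delta _{i+1}-\delta _{i-1})>0$; that is precisely the paper's device for the non-monotone subcase $t_1=1$, $t_0\geq 4$ that you flag at the end. You instead get (i) in one stroke from the inequality $\gamma _i=t_i\delta _i-a_i\degw r>0$ combined with (v): the second coordinate of $\gamma _i$ is $-a_i<0$, so in the lexicographic order its third coordinate $t_is_i-a_i$ must be positive, forcing the third coordinate $s_i$ of $\delta _i$ to be positive and hence $\delta _i>0$. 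This decouples (i) from (iv), after which (iii) and (iv) become independent inductions identical to the paper's steps (Lemma~\ref{lem:skip} with the same base cases). The trade-off is that $\gamma _i>0$ is established only \emph{inside} the proof of Proposition~\ref{prop:delta}, not in its statement, so your argument is not self-contained relative to the paper's stated results; to splice it in, one would restate that positivity as a separate lemma, whereas the paper's own proof uses only the recurrence asserted in Proposition~\ref{prop:delta}, the values (\ref{eq:delta vector}), and Lemma~\ref{lem:skip}. Your treatment of (ii) via the Casoratian $W_i=p_is_{i+1}-p_{i+1}s_i$, constant (equal to $1$) along the recurrence because the transfer matrix has determinant one, is a quantitative sharpening of the paper's inductive step (linear independence of $\delta _{i-1},\delta _i$ implies that of $\delta _i,\ t_i\delta _i-\delta _{i-1}$), and, as you note, it covers the case $(t_0,t_1)=(3,1)$ uniformly, which the paper instead checks by inspection from the explicit values. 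Both routes are valid; the remaining parts --- (v), (iii), (iv), and the disposal of $(t_0,t_1)=(3,1)$ --- coincide with the paper's.
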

\begin{proof}
By the discussion above, 
we may assume that $(t_0,t_1)\neq (3,1)$. 
Then, we have $I=\N $, 
since $t_0\geq 3$ by assumption.

First, 
we prove (v) by induction on $i$. 
By (\ref{eq:delta vector}), 
the second components of 
$\delta _0$ and $\delta _1$ are zero. 
Assume that $i\geq 2$. 
Then, 
we have 
$\delta _{i+1}=t_{i}\delta _i-\delta _{i-1}$ 
by Proposition~\ref{prop:delta}. 
Hence, the second component of $\delta _{i+1}$ is zero, 
proving (v).

We prove (i) through (iv) 
simultaneously by induction on $i$. 
When $i=0$, 
(ii), (iii) and (iv) are obvious. 
Since $\delta _1>0$ by (\ref{eq:delta vector}), 
we get (i). 
When $i=1$, 
(i), (ii) and (iii) follow from 
(\ref{eq:delta vector}). 
By Proposition~\ref{prop:delta}, 
we have $\delta _3-\delta _1=t_0\delta _2-2\delta _1$. 
Since the third component of 
$t_0\delta _2-2\delta _1$ is $t_0>0$, 
we know that $\delta _3-\delta _1>0$. 
This proves (iv). 
Assume that $i\geq 2$. 
Then, 
we have $\delta _{i-1}>0$ 
and $\delta _{i+1}-\delta _{i-1}>0$ 
by the induction assumption of (i) and (iv), 
since $i-1\geq 1$. 
Hence, 
we get $\delta _{i+1}>0$, 
proving (i). 
By the induction assumption of (ii), 
$\delta _{i-1}$ and $\delta _i$ are linearly independent. 
Hence, 
$\delta _i$ and $\delta _{i+1}=t_i\delta _i-\delta _{i-1}$  
are linearly independent, 
proving (ii). 
To show (iii), 
assume that $t_1\geq 2$. 
Then, 
we have $t_i-2\geq 0$ 
independently of the parity of $i$. 
By the induction assumption of (i) and (iii), 
we have $\delta _i>0$ and $\delta _i-\delta _{i-1}>0$. 
Since $\delta _{i+1}=t_i\delta _i-\delta _{i-1}$, 
it follows that 
$$
\delta _{i+1}-\delta _i
=(t_i-2)\delta _i+(\delta _i-\delta _{i-1})
\geq \delta _i-\delta _{i-1}>0,  
$$
proving (iii). 
Finally, we show (iv). 
When $i=2$, 
we have 
\begin{equation}\label{eq:delta 3,4}
\begin{gathered}
\delta _4-\delta _2=t_1\delta _3-2\delta _2
=t_1(t_0\delta _2-\delta _1)-2\delta _2
=(t_0t_1-2)\delta _2-t_1\delta _1. 
\end{gathered}
\end{equation}
Since the third component of 
$(t_0t_1-2)\delta _2-t_1\delta _1$ 
is $t_0t_1-2>0$, 
we know that $\delta _4-\delta _2>0$. 
Thus, 
(iv) is true if $i=2$. 
Assume that $i\geq 3$. 
Then, 
we have $\delta _{j+1}=t_j\delta _j-\delta _{j-1}$ 
for $j=i-1,i,i+1$, 
since $i-1\geq 2$. 
Hence, we get 
$$
\delta _{i+2}-\delta _i
=(t_0t_1-4)\delta _i+(\delta _i-\delta _{i-2})
$$ 
by Lemma~\ref{lem:skip}. 
Since $t_0\geq 3$ and $(t_0,t_1)\neq (3,1)$ by assumption, 
we have $t_0t_1\geq 4$. 
By the induction assumption of (i) and (iv), 
we have $\delta _i>0$ and $\delta _i-\delta _{i-2}>0$, 
since $i\geq 3$. 
Thus, 
it follows that $\delta _{i+2}-\delta _i>0$, 
proving (iv). 
Therefore, 
(i) through (iv) hold for every $i$. 
\end{proof}

\section{Wildness (I)}
\label{sect:lscwildness1}
\setcounter{equation}{0}

In this section, 
we give a sufficient condition for wildness 
of certain exponential automorphisms. 
Assume that $i\geq 2$, 
and let $g,s\in \kx $ and $\nu (y)\in k[y]\sm \zs $ 
be such that $E:=\Delta _{(g,f_i)}$ is locally nilpotent, 
and 
$$
f_{i-1}g=q:=\tilde{\eta }_i(f_i,s')\nu (f_i)^{a_i},
\quad\text{where}\quad 
s':=s\nu (f_i)^{-1}. 
$$ 
Take any $h\in k[f_i,g]\sm \zs $, 
and set 
$$
\phi =\exp hE,\quad 
\alpha =\degw \phi (s)\quad\text{and}\quad 
\beta =\degw \phi (\tilde{r}). 
$$ 
Then, 
we have $\phi (f_i)=f_i$, 
since $E(f_i)=0$. 
For each $j\geq 0$, 
we define 
$$
\ep _j=\degw \phi (f_j)\quad
\text{and}\quad 
d(j)=t_j\ep _j-a_j\beta . 
$$
Then, 
we have $\ep _i=\delta _i$, 
since $\phi (f_i)=f_i$. 
Because $\phi (\tilde{r})$ and $\phi (f_j)$ 
are not constants, 
and all the components of $\w $ are positive, 
we have $\beta >0$, and $\ep _j>0$ for each $j\geq 0$.

In this situation, 
consider the following conditions: 

\smallskip 

\noindent(a) 
$\delta _i$ and $\delta :=\degw g$ 
are linearly independent. 

\noindent(b) 
Set $v=\deg _y\nu (y)$. 
Then, 
we have 
$\alpha \geq v\delta _2+\delta $ 
and $\delta >\delta _2$ if $i=2$, 
and $\alpha =(v+v_1)\delta _i+v_2\delta $ 
for some $v_1,v_2\in \N $ if $i\geq 3$.

\noindent(c) 
If $i=2$, then $\beta $ and $\ep _1$ are linearly independent. 
If $i\geq 3$, then we have $\beta \geq \alpha $. 

\noindent(d) 
If $i\geq 3$, then we have $d(i-1)\neq 0$. 

\noindent(e) 
If $i\geq 3$ and $d(i-1)<0$, 
then we have $\beta \geq \ep _{i-1}-v\delta _i$ 
and $\delta >a_iv\delta _i$, 
and $a_{i-1}\beta -\delta _i$ and $\ep _{i-1}$ 
are linearly independent. 

\smallskip 

We mention that (a) implies that $f_i$ and $g$ 
are algebraically independent over $k$, 
and hence implies that $E$ is nonzero.

In the notation above, 
we have the following theorem.

\begin{thm}\label{thm:wild general}
Assume that $i=2$ and $t_0\geq 3$, 
or $i\geq 3$, $t_0\geq 3$ and $(t_0,t_1)\neq (3,1)$. 
Let $g,s\in \kx $, $\nu (y)\in k[y]\sm \zs $ 
and $h\in k[f_i,g]\sm \zs $ be such that 
{\rm (a)} through {\rm (e)} are fulfilled. 
Then, $\phi =\exp hE$ is wild. 
\end{thm}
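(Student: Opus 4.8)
The plan is to exhibit, for the weight $\w=(\e_1,\e_2,\e_3)\in(\Gamma_{>0})^3$ fixed in the previous section (which has $\rank\w=3$), a triple $\Phi\in\Aut(\kx/k)$ that differs from $\phi=\exp hE$ by pre- and post-composition with tame automorphisms and that satisfies the five hypotheses of the wildness criterion Lemma~\ref{lem:SUcriterion}. Since $h$ lies in $k[f_i,g]\subseteq\ker E$, the derivation $hE$ is locally nilpotent and $\phi$ fixes both $f_i$ and $g$; because wildness is invariant under composition with tame automorphisms, the wildness of $\Phi$ will force that of $\phi$. The three coordinates of $\Phi$ will be chosen so that, up to nonzero scalars, two of their $\w$-leading forms are powers of the monomials $f_i^{\w}$ and $g^{\w}$, while the $\w$-leading form of the third lies in the field of fractions of $k[f_i^{\w},g^{\w}]$; this is the same mechanism used in Lemma~\ref{lem:awc}.

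First I would carry out the degree bookkeeping. Applying the $f_i$- and $g$-fixing automorphism $\phi$ to the defining identity $f_{i-1}g=\tilde\eta_i(f_i,s')\nu(f_i)^{a_i}$ with $s'=s\nu(f_i)^{-1}$, and using that $\tilde\eta_i(y,z)$ is monic of degree $a_i$ in $z$ together with the degree formula of Lemma~\ref{lem:rhq}, I would show under condition (b) that the $z^{a_i}$-term dominates, so that $\ep_{i-1}+\delta=a_i\alpha$, i.e.\ $\ep_{i-1}=a_i\alpha-\delta$. Feeding in the recurrences $\delta_{j+1}=t_j\delta_j-\delta_{j-1}$ from Proposition~\ref{prop:delta} and the positivity and linear-independence statements of Proposition~\ref{prop:deltap}, together with $\ep_i=\delta_i$ (as $\phi(f_i)=f_i$), one converts the hypotheses (b)--(e) on $\alpha,\beta,\ep_{i-1},d(i-1)$ into the numerical inequalities $\degw\Phi_1\geq\degw\Phi_2+\degw\Phi_3$, $\degw\Phi_2>\degw\Phi_3$ and $2\degw\Phi_1\neq3\degw\Phi_2$ required in (c), (d), (e) of Lemma~\ref{lem:SUcriterion}. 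The case split $i=2$ versus $i\geq3$ (where $\tilde r=x_2$ versus $\tilde r=r$) and the sign of $d(i-1)$ dictate which of the bounds in (c), (d), (e) are invoked.

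Next I would verify the algebraic conditions (a) and (b) of Lemma~\ref{lem:SUcriterion}. Condition (a) of the theorem, combined with $\rank\w=3$, makes $f_i^{\w}$ and $g^{\w}$ algebraically independent monomials, so $k[f_i,g]^{\w}=k[f_i^{\w},g^{\w}]$ and its field of fractions has transcendence degree two; since the leading form of the third coordinate of $\Phi$ lies in this field, the three leading forms are algebraically dependent, giving (a). For (b), the non-membership statements would be established exactly as in Lemma~\ref{lem:awc}: the minimality defining the relevant $\w$-degrees, and the fact that $f_i\kx$ is prime with $k[f_i]$ factorially closed (Lemmas~\ref{lem:facts} and \ref{lem:minimal}), prevent the dominant coordinate's leading form from lying in the subalgebra generated by the other two and prevent $g^{\w}$ from lying in $k[f_i^{\w}]$.

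The main obstacle will be the construction of $\Phi$ itself and the justification of the claimed $\w$-leading forms for the images of $\exp hE$: unlike in the triangular situations there is no closed formula for $\phi(x_1),\phi(x_2),\phi(x_3)$, so the leading-form control must be extracted indirectly from the identities relating $s$, $\tilde r$, $f_{i-1}$ and the fixed pair $(f_i,g)$, and the tame modifications used to pass from $\phi$ to $\Phi$ must be checked to lie in the appropriate $J$-type subgroups. Handling the two cases uniformly, and in particular the subcase $d(i-1)<0$ where condition (e) supplies the extra linear independence of $a_{i-1}\beta-\delta_i$ and $\ep_{i-1}$ needed to pin down $\beta$, is where the delicate bookkeeping concentrates.
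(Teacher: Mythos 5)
Your degree bookkeeping is the part of the plan that is sound, and it matches the paper: hypothesis (b) does yield $\ep _{i-1}=a_i\alpha -\delta $ (this is exactly (\ref{eq:b conseq})), and hypotheses (c)--(e) do get propagated down the recursion $f_{l-1}f_{l+1}=\eta _l(f_l,r)$ by Lemma~\ref{lem:rhq} and a descending induction (this is Proposition~\ref{prop:wild general}). The genuine gap is the other half: the tame modification $\Phi $ on which you want to invoke Lemma~\ref{lem:SUcriterion} is never constructed, and in this setting it cannot be constructed by the mechanism of Lemma~\ref{lem:awc}. That mechanism rests on the closed formulas (\ref{eq:f_1f_2}), which express $\phi (x_1),\phi (x_2),\phi (x_3)$ explicitly in terms of $\phi (f_{\theta })$, $\phi (y_2)$, $\phi (y_3)$; only because of them can one adjust coordinates inside $J(k[x_3];x_2,x_1)$ and compute the resulting $\w $-leading forms. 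Here $\phi =\exp hE$ fixes $f_i$ and $g$, but for $i\geq 3$ the only link between $(f_i,g)$ and the variables is the chain $f_{l-1}f_{l+1}=\eta _l(f_l,r)$ together with $f_{i-1}g=\tilde{\eta }_i(f_i,s')\nu (f_i)^{a_i}$. Applying $\phi $ to these relations and using Lemma~\ref{lem:rhq} transports only $\w $-degrees (the $\ep _l$, and only when $d(l)>0$, which is why Proposition~\ref{prop:wild general} needs its induction); it gives no control of leading forms, and no information whatsoever about $\phi (x_3)$. Since hypothesis (b) of Lemma~\ref{lem:SUcriterion} demands non-membership of leading forms in subalgebras generated by the other leading forms, and hypothesis (c) compares the degrees of all three coordinates, the conditions you need for $\Phi $ are not reachable from the purely degree-theoretic data (a)--(e). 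Your own caveat (``the main obstacle will be the construction of $\Phi $ itself'') marks the hole, and it sits at the center of the argument rather than at its periphery.

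The missing idea is the W-test polynomial technique, which is what the paper uses instead of any auxiliary $\Phi $. One first proves, via Proposition~\ref{prop:criterion}, that $f_2$ is a W-test polynomial whenever $t_0\geq 3$: for any rank-three weight, $f_2^{\uu }$ is one of $x_1x_3$, $-x_2^{t_0-1}$, $x_1x_3-x_2^{t_0-1}$, and these satisfy the divisibility conditions (i) and (ii) of that proposition. By Definition~\ref{defn:W-test}, wildness of $\phi $ then follows from precisely the data your bookkeeping produces: an inequality $\degw \phi (f_2)<\degw \phi (x_{i_1})$ for a single index $i_1$, plus linear independence of $\degw \phi (x_{i_2})$ and $\degw \phi (x_{i_3})$ --- no leading forms, no third coordinate, no modified automorphism. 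For $i=2$ these follow from (\ref{eq:b conseq}) together with (b) and (c), since $\phi (x_2)=\phi (\tilde{r})$ and $\phi (f_2)=f_2$; for $i\geq 3$ they are the conclusions of Proposition~\ref{prop:wild general}, namely $\ep _2<\ep _0$ and the linear independence of $\ep _0=\degw \phi (x_2)$ and $\ep _1=\degw \phi (x_1)$. Unless you re-prove something equivalent to this device that converts degree inequalities into a wildness certificate, the proposal as written cannot be completed.
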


First, 
we show that (b) implies 
\begin{equation}\label{eq:b conseq}
\ep _{i-1}=a_i\alpha -\delta 
\end{equation}
when $i=2$ and $t_0\geq 3$, 
or $i\geq 3$, $t_0\geq 3$ and $(t_0,t_1)\neq (3,1)$. 
Since $E(g)=0$, we have 
$\phi (q)=\phi (f_{i-1}g)=\phi (f_{i-1})g$. 
Hence, 
we get $\degw \phi (q)=\ep _{i-1}+\delta $. 
Thus, 
it suffices to prove that 
$\degw \phi (q)=a_i\alpha $.

Since $\phi (f_i)=f_i$ and 
$\deg _y\nu (y)=v$, 
we have 
$$
\alpha '
:=\degw \phi (s')
=\degw \phi (s\nu (f_i)^{-1})
=\degw \phi (s)\nu (f_i)^{-1}
=\alpha -v\delta _i. 
$$
First, 
assume that $i=2$ and $t_0\geq 3$. 
Then, 
(b) implies that 
\begin{equation}\label{eq:i=2 alpha'}
\alpha '\geq (v\delta _2+\delta )-v\delta _2
=\delta >\delta _2. 
\end{equation}
Hence, 
$\degw \theta (\phi (s'))=(t_0-1)\alpha '$ 
is greater than $\degw f_2=\delta _2$. 
Since $\phi (f_2)=f_2$ and $t_0-1=a_2$, 
it follows that 
\begin{align*}
&\degw \phi (q)
=\degw \tilde{\eta }_2(f_2,\phi (s'))\nu (f_2)^{a_2}
=\degw \bigl( 
f_2+\theta (\phi (s'))
\bigr) \nu (f_2)^{a_2} \\
&\quad =(t_0-1)\alpha '+a_2v\delta _2
=a_2(\alpha '+v\delta _2)
=a_2\alpha . 
\end{align*}
This proves (\ref{eq:b conseq}).

Next, 
assume that 
$i\geq 3$, $t_0\geq 3$ and $(t_0,t_1)\neq (3,1)$. 
Then, 
we have $a_i>t_i$ by Lemma~\ref{lem:a_i} (iii), 
and (b) implies that 
$$
\alpha '=\bigl(
(v+v_1)\delta _i+v_2\delta \bigr) -v\delta _i
=v_1\delta _i+v_2\delta \geq \delta _i+\delta . 
$$
Hence, 
it follows that  
\begin{equation}\label{eq:alpha _1}
a_i\alpha '-t_i\delta _i
\geq a_i(\delta _i+\delta )-t_i\delta _i
=(a_i-t_i)\delta _i+a_i\delta >0. 
\end{equation}
Thus, 
we get $\degw \eta _i(f_i,\phi (s'))=a_i\alpha '$ 
by applying Lemma~\ref{lem:rhq} with 
$f=f_i$ and $p=\phi (s')$. 
Since $\tilde{\eta }_i(y,z)=\eta _i(y,z)$, 
we know that 
$$
\degw \phi (q)
=\degw \eta _i(f_i,\phi (s'))\nu (f_i)^{a_i}
=a_i\alpha '+a_iv\delta _i=a_i\alpha, 
$$
proving (\ref{eq:b conseq}). 
As a consequence, it follows that 
\begin{equation}\label{eq:ep _{i-1}}
\ep _{i-1}
=a_i\alpha -\delta 
=a_i(v+v_1)\delta _i+(a_iv_2-1)\delta , 
\end{equation}
since $\alpha =(v+v_1)\delta _i+v_2\delta $ by (b).

Now, 
let us prove Theorem~\ref{thm:wild general}. 
Recall the notion of W-test polynomial 
introduced in Section~\ref{sect:criterion} 
(see Definition~\ref{defn:W-test}). 
We show that $f_2$ is a W-test polynomial if $t_0\geq 3$ 
by means of Proposition~\ref{prop:criterion}. 
Take any totally ordered additive group $\Lambda $, 
and $\uu \in (\Lambda _{>0})^3$. 
Then, 
we have $\deg _{\uu }x_2^i<\deg _{\uu }x_2^{t_0-1}$ 
for $i=0,\ldots ,t_0-2$. 
Hence, 
$f_2^{\uu }$ must be 
$x_1x_3$ or $-x_2^{t_0-1}$ or $x_1x_3-x_2^{t_0-1}$. 
Since $t_0\geq 3$, 
we see that 
these three polynomials are not divisible by 
$x_i-g$ for any $i\in \{ 1,2,3\} $ 
and $g\in k[\x \sm \{ x_i\} ]\sm k$, 
and by $x_i^{s_i}-cx_j^{s_j}$ 
for any $i,j\in \{ 1,2,3\} $ with $i\neq j$, 
$s_i,s_j\in \N $ and $c\in k^{\times }$. 
Therefore, 
$f_2$ is a W-test polynomial 
by Proposition~\ref{prop:criterion}.

First, 
assume that  $i=2$ and $t_0\geq 3$. 
Then, 
we have 
$$
\degw \phi (x_1)=
\ep _1=a_2\alpha -\delta  
=(t_0-1)\alpha -\delta 
\geq 2\alpha '-\delta >\delta _2=\ep _2=\degw \phi (f_2) 
$$
by (\ref{eq:b conseq}) and (\ref{eq:i=2 alpha'}). 
Since 
$\degw \phi (x_2)=\degw \phi (\tilde{r})=\beta $, 
we know by (c) that 
$\degw \phi (x_1)$ and $\degw \phi (x_2)$ 
are linearly independent. 
Thus, 
we conclude that $\phi $ is wild 
because $f_2$ is a W-test polynomial. 
This proves 
Theorem~\ref{thm:wild general} 
when $i=2$ and $t_0\geq 3$.

Next, assume that 
$i\geq 3$, $t_0\geq 3$ and $(t_0,t_1)\neq (3,1)$. 
Then, 
by Proposition~\ref{prop:wild general} to follow, 
we know that 
$\degw \phi (f_2)=\ep _2$ 
is less than $\degw \phi (x_2)=\ep _0$, 
and $\degw \phi (x_2)=\ep _0$ and 
$\degw \phi (x_1)=\ep _1$ are linearly independent. 
Hence, we conclude that $\phi $ is wild similarly. 
Thus, 
the proof of 
Theorem~\ref{thm:wild general} is completed.

\begin{prop}\label{prop:wild general}
In the situation of Theorem~$\ref{thm:wild general}$, 
assume that 
$t_0\geq 3$, $(t_0,t_1)\neq (3,1)$ and $i\geq 3$. 
Set $i'=i$ if $d(i-1)>0$, 
and $i'=i-1$ if $d(i-1)<0$. 
Then, the following statements hold 
for each $l\in \{ 1,\ldots ,i'\} $$:$

\noindent{\rm (i)} 
We have $d(l-1)>0$, and 
$\ep _{l-1}$ and $\ep _l$ are linearly independent.

\noindent{\rm (ii)} 
If $l\neq i'$, 
then we have $\ep _{l-1}=t_l\ep _l-\ep _{l+1}$ 
and $\ep _{l-1}>\ep _{l+1}$. 
\end{prop}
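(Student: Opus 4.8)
The plan is a single downward induction on $l$, from $l=i'$ to $l=1$, controlling the recurrence behaviour of $(\ep _j)$ and of the auxiliary sequence $(d(j))$ simultaneously. The engine is the following. Applying $\phi $ to the identity $f_{l-1}f_{l+1}=q_l=\eta _l(f_l,r)$ and using only that $\phi $ is a $k$-algebra homomorphism gives $\phi (f_{l-1})\phi (f_{l+1})=\eta _l(\phi (f_l),\phi (r))$, whence $\ep _{l-1}+\ep _{l+1}=\degw \eta _l(\phi (f_l),\phi (r))$. Whenever $d(l)\neq 0$, i.e. $t_l\ep _l\neq a_l\beta $, Lemma~\ref{lem:rhq} evaluates the right-hand side as $\max \{ t_l\ep _l,a_l\beta \} $; in particular $d(l)>0$ yields the two-term recurrence $\ep _{l-1}=t_l\ep _l-\ep _{l+1}$. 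Combining it with $a_{l+1}=t_{l+1}a_l-a_{l-1}$ from (\ref{eq:a_i zenkasiki}) and $t_{l-1}=t_{l+1}$, a short computation gives the induced recurrence $d(l-1)=t_{l+1}d(l)-d(l+1)$, valid for every $l\geq 1$ with $d(l)>0$. This is formally the recurrence $\gamma _{i+1}=t_{i+1}\gamma _i-\gamma _{i-1}$ from the proof of Proposition~\ref{prop:delta} read in the reverse direction, so Lemma~\ref{lem:skip} supplies $d(L-2)-d(L)=(t_0t_1-4)d(L)+(d(L)-d(L+2))$ along each parity class.

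Next I would set up the base cases according to the sign of $d(i-1)$, recalling $\ep _i=\delta _i$ and $\ep _{i-1}=a_i\alpha -\delta $ from (\ref{eq:b conseq}). If $d(i-1)>0$ (so $i'=i$), then (\ref{eq:ep _{i-1}}) writes $\ep _{i-1}=a_i(v+v_1)\delta _i+(a_iv_2-1)\delta $; since $a_i\geq 2$ by Lemma~\ref{lem:a_i}~(ii), $v_2\geq 1$, and $\delta ,\delta _i$ are independent by (a), the coefficient $a_iv_2-1$ is nonzero, giving the independence of $\ep _{i-1},\ep _i$ required by (i) at $l=i$. If $d(i-1)<0$ (so $i'=i-1$), the displayed identity at $l=i-1$ forces $\ep _{i-2}=a_{i-1}\beta -\delta _i$, whence $d(i-2)=a_i\beta -t_i\delta _i$ after $a_i=t_ia_{i-1}-a_{i-2}$ and $t_{i-2}=t_i$; this is positive because $\beta \geq \alpha \geq \delta _i+\delta >\delta _i$ by (c) and (b) and $a_i>t_i$ by Lemma~\ref{lem:a_i}~(iii), while (e) supplies the independence of $\ep _{i-2},\ep _{i-1}$, giving (i) at $l=i-1$. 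In both cases I also record $d(i)<0$, since $a_i\beta \geq a_i\alpha \geq a_i(\delta _i+\delta )>t_i\delta _i$. The topmost instances of the inequality in (ii) (e.g. $\ep _{i-2}>\ep _i$ when $i'=i$) I would check by direct computation from these explicit values.

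The inductive step then propagates three assertions. Given (i) at $l+1$, so $d(l)>0$ and $\ep _l,\ep _{l+1}$ independent, the recurrence $\ep _{l-1}=t_l\ep _l-\ep _{l+1}$ of (ii) holds and at once yields the independence of $\ep _{l-1},\ep _l$. The inequality $\ep _{l-1}>\ep _{l+1}$ of (ii) follows from the recurrences at $l$ and $l+1$ with the hypothesis $\ep _l>\ep _{l+2}$: substituting $\ep _{l+2}=t_{l+1}\ep _{l+1}-\ep _l$ gives $t_{l+1}\ep _{l+1}<2\ep _l$, hence $\ep _{l-1}-\ep _{l+1}=t_l\ep _l-2\ep _{l+1}>\ep _l(t_0t_1-4)/t_{l+1}\geq 0$, using $t_0t_1\geq 4$. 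Finally the positivity $d(l-1)>0$ is obtained from the two-step relation of the first paragraph: running Lemma~\ref{lem:skip} along each parity class and invoking $t_0t_1\geq 4$ shows that, once two seeds of the right sign are available, $d(\cdot )$ stays positive and increases as the index decreases.

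The main obstacle will be seeding both parity classes of this two-step positivity induction. The class of $i-2$ starts cleanly, since $d(i)<0<d(i-2)$ gives $d(i-2)-d(i)>0$ and hence $d(i-4)-d(i-2)>0$ via Lemma~\ref{lem:skip}; but the class of $i-1$ cannot be seeded the same way, because the natural two-step relation at $L=i-1$ would involve $d(i+1)$, which lies outside the controlled range. I would resolve this by computing $d(i-3)=t_{i-1}d(i-2)-d(i-1)$ directly from the explicit top values of $\ep _{i-1},\ep _{i-2}$ and verifying both $d(i-3)>0$ and $d(i-3)>d(i-1)$, so that the parity class of $i-1$ is correctly initialized; from there the propagation is mechanical and mirrors the sign analysis already done for $(\gamma _i)$ in Proposition~\ref{prop:delta}. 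Throughout, the hypotheses $t_0\geq 3$ and $(t_0,t_1)\neq (3,1)$ force $I=\N$, so every $f_j$ and every identity $f_{l-1}f_{l+1}=q_l$ is available and no boundary case intervenes.
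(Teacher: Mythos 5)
Your engine and base cases coincide with the paper's: the recurrence $\ep _{l-1}=t_l\ep _l-\ep _{l+1}$ obtained from Lemma~\ref{lem:rhq} when $d(l)>0$, the identity $d(l-1)+d(l+1)=t_{l+1}d(l)$, the top values $\ep _{i-1}=a_i\alpha -\delta $ from (\ref{eq:b conseq}) and $\ep _{i-2}=a_{i-1}\beta -\delta _i$ when $d(i-1)<0$, and the use of (\ref{eq:ep _{i-1}}) and of (e) for linear independence. Your uniform derivation of $\ep _{l-1}>\ep _{l+1}$ for $l\leq i'-2$ (substituting the recurrence at $l+1$ to get $t_{l+1}\ep _{l+1}<2\ep _l$) is correct and in fact merges the paper's two cases $l=i'-2$ and $l\leq i'-3$ into one. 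Where you genuinely differ is the positivity $d(l-1)>0$ for $l\leq i'-2$: you propagate signs of $d(\cdot )$ itself through Lemma~\ref{lem:skip} read in reverse, whereas the paper compares $a_{l-1}\beta <a_{l'-1}\beta <t_{l'-1}\ep _{l'-1}=t_{l-1}\ep _{l'-1}<t_{l-1}\ep _{l-1}$ for an anchor $l'\in \{ i'-1,i'\} $ of matching parity, using monotonicity of $(a_j)$ (Lemma~\ref{lem:a_i}~(i)) and of $(\ep _j)$ (the chain of inequalities from (ii)). Your route is viable, but note that each of your two-step relations needs the one-step recurrence at three consecutive indices, hence positivity of $d$ at indices of \emph{both} parities above $l-1$; the two parity classes cannot be run independently, and your induction must carry positivity and the monotonicity $d(j)>d(j+2)$ at all higher indices simultaneously. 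The paper's comparison argument avoids this bookkeeping.

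The genuine gap is the topmost instance of the inequality in (ii), which you defer as a ``direct computation from these explicit values.'' When $d(i-1)<0$ (so $i'=i-1$) this is $\ep _{i-3}>\ep _{i-1}$ at $l=i-2$, and it is the hardest estimate in the entire proof, not a routine check. The quantity to bound is $t_i\ep _{i-2}-2\ep _{i-1}=t_i(a_{i-1}\beta -\delta _i)-2\ep _{i-1}$, and the bounds you have invoked so far ($\beta \geq \alpha $ from (c) and $\alpha \geq \delta _i+\delta $ from (b)) are insufficient: writing $\ep _{i-1}=a_i\alpha -\delta $ and using $t_ia_{i-1}=a_{i-2}+a_i$, they give only $(a_{i-2}-a_i)\alpha +2\delta -t_i\delta _i$, whose coefficient of $\alpha $ is negative because $a_{i-2}<a_i$, so the estimate fails in general (already when $v_2\geq 2$). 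One must use hypothesis (e), namely $\beta \geq \ep _{i-1}-v\delta _i$, whose coefficient of $\alpha $ is $a_i$ rather than $1$, together with $\delta >a_iv\delta _i$ to convert (\ref{eq:ep _{i-1}}) into $\ep _{i-1}>a_i(2v+1)\delta _i$; and even then the verification splits into the case $a_i\geq 3$ and the degenerate case $a_i=2$, which by Lemma~\ref{lem:a_i} forces $(t_0,t_1,i)=(4,1,3)$ and requires a separate evaluation. Since your whole downward chain, both the inequalities of (ii) and, through the seeds, the positivity of $d$, is anchored at precisely this instance, the proof is incomplete until this step is carried out with (e).
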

\begin{proof}
First, 
we show that $d(l)>0$ implies 
$\ep _{l-1}=t_l\ep _l-\ep _{l+1}$ for $l\geq 1$. 
Since $i\geq 3$, 
we have $\tilde{r}=r$. 
Hence, we know that 
$t_l\degw \phi (f_l)>a_l\degw \phi (r)$ 
by the assumption that 
$d(l)=t_l\degw \phi (f_l)-a_l\degw \phi (\tilde{r})$ 
is positive. 
By applying Lemma~\ref{lem:rhq} 
with $f=\phi (f_l)$ and $p=\phi (r)$, 
we obtain 
$$
\degw \phi \bigl(\eta _l(f_l,r)\bigr)
=\degw \eta _l\bigl(\phi (f_l),\phi (r)\bigr)
=t_l\degw \phi (f_l)
=t_l\ep _l. 
$$ 
Since $\eta _l(f_l,r)=f_{l-1}f_{l+1}$, 
it follows that 
$$
\ep _{l-1}+\ep _{l+1}=\degw \phi (f_{l-1}f_{l+1})
=\degw \phi \bigl(\eta _l(f_l,r)\bigr)=t_l\ep _l. 
$$
Therefore, 
we get $\ep _{l-1}=t_l\ep _l-\ep _{l+1}$.

Similarly, 
if $d(i-1)<0$, then we have 
$$
\degw 
\phi \bigl(\eta _{i-1}(f_{i-1},r)\bigr)
=a_{i-1}\degw \phi (r)
=a_{i-1}\degw \phi (\tilde{r})
=a_{i-1}\beta
$$ 
by Lemma~\ref{lem:rhq}. 
Since $\degw 
\phi (\eta _{i-1}(f_{i-1},r))
=\phi (f_{i-2}f_i)=\ep _{i-2}+\ep _i$, 
this gives that 
$\ep _{i-2}
=a_{i-1}\beta -\ep _i
=a_{i-1}\beta -\delta _i$.

Now, we prove (i) and (ii) simultaneously 
by descending induction on $l$. 
When $l=i'$, 
we have only to check (i). 
Assume that $d(i-1)>0$ and $l=i'=i$. 
Then, the first part of (i) is obvious. 
Since 
$a_i\geq 2$ by Lemma~\ref{lem:a_i} (ii), 
and $v_2\geq 1$, 
we have $a_iv_2-1>0$. 
Hence, 
we see from (\ref{eq:ep _{i-1}}) that 
$\ep _{i-1}$ and $\ep _i=\delta _i$ 
are linearly independent by virtue of (a), 
proving the second part of (i). 
Thus, 
the statements hold. 
Assume that $d(i-1)<0$ and $l=i'=i-1$. 
Then, we have 
$\ep _{i-2}=a_{i-1}\beta -\delta _i$ 
as mentioned. 
Since $t_{i-2}=t_i$ and $t_ia_{i-1}-a_{i-2}=a_i$ 
by (\ref{eq:a_i zenkasiki}), 
it follows that 
\begin{align*}
&d(l-1)=d(i-2)=t_{i-2}\ep _{i-2}-a_{i-2}\beta 
=t_{i}(a_{i-1}\beta -\delta _i)-a_{i-2}\beta \\
&\quad =(t_{i}a_{i-1}-a_{i-2})\beta -t_{i}\delta _i
=a_i\beta -t_i\delta _i. 
\end{align*}
Since 
$\beta \geq \alpha $ by (c) 
and $\alpha \geq \alpha '$, 
we know that 
\begin{equation}\label{eq:inu}
a_i\beta -t_i\delta _i\geq a_{i}\alpha '-t_{i}\delta _{i}>0
\end{equation} 
by (\ref{eq:alpha _1}). 
Hence, we get $d(l-1)>0$. 
Since 
$\ep _{i-2}=a_{i-1}\beta -\delta _i$, 
it follows from (e) that 
$\ep _{l-1}=\ep _{i-2}$ and $\ep _l=\ep _{i-1}$ 
are linearly independent. 
Therefore, 
the statements are true.

Next, 
assume that $1\leq l\leq i'-1$. 
Then, 
we have $d(l)>0$ by induction assumption. 
This implies $\ep _{l-1}=t_l\ep _l-\ep _{l+1}$ as remarked. 
Hence, we get the first part of (ii). 
Since $\ep _{l}$ and $\ep _{l+1}$ 
are linearly independent by induction assumption, 
it follows that $\ep _{l-1}$ and $\ep _{l}$ 
are linearly independent, 
proving the latter part of (i).

We show that $\ep _{l-1}>\ep _{l+1}$. 
Note that 
\begin{equation}\label{eq:pf:wildgeneral:ep}
\ep _{j-1}=t_j\ep _j-\ep _{j+1}
\quad \text{for}\quad 
l\leq j\leq i'-1, 
\end{equation}
where the case $j=l$ is just mentioned above, 
and the case $l<j\leq i'-1$ 
is due to the induction assumption. 
Using this equality for $j=l$, we get 
\begin{equation}\label{eq:wild general ep 1}
\ep _{l-1}-\ep _{l+1}
=(t_l\ep _l-\ep _{l+1})-\ep _{l+1}
=t_{l}\ep _{l}-2\ep _{l+1}. 
\end{equation}

First, 
consider the case where $l=i'-1$. 
When $d(i-1)>0$, 
we have $l=i-1$. 
Since $a_i\geq 2$, $v_1\geq 1$ and $v_2\geq 1$, 
we know by (\ref{eq:ep _{i-1}}) that 
$\ep _{i-1}>2\delta _i=2\ep _i$. 
Hence, we have $\ep _l>2\ep _{l+1}$. 
Thus, 
we obtain $\ep _{l-1}>\ep _{l+1}$ 
from (\ref{eq:wild general ep 1}). 
When $d(i-1)<0$, 
we have $l=i-2$. 
Since 
$\ep _{i-2}=a_{i-1}\beta -\delta _i$ 
as remarked, 
it follows from (\ref{eq:wild general ep 1}) that 
\begin{equation}\label{eq:wild general ep 2}
\ep _{l-1}-\ep _{l+1}
=t_{i-2}\ep _{i-2}-2\ep _{i-1}
=t_{i}(a_{i-1}\beta -\delta _i)-2\ep _{i-1}. 
\end{equation}
Since $\beta \geq \ep _{i-1}-v\delta _i$ by (e), 
the right-hand side of this equality is at least 
$$
t_{i}\bigl(a_{i-1}(\ep _{i-1}-v\delta _i)-\delta _i\bigr)
-2\ep _{i-1}
=(t_{i}a_{i-1}-2)\ep _{i-1}-t_{i}(a_{i-1}v+1)\delta _i. 
$$
Note that (\ref{eq:ep _{i-1}}) 
implies 
$$
\ep _{i-1}
\geq a_i(v+1)\delta _i+\delta  
>a_i(v+1)\delta _i+a_iv\delta _i
=a_i(2v+1)\delta _i, 
$$
since $v_1\geq 1$, $a_iv_2-1\geq 1$, 
and $\delta >a_iv\delta _i$ by (e). 
Hence, 
the right-hand side of the preceding equality 
is greater than 
\begin{align}\begin{split}\label{eq:ep _{i-2}-ep _{i-1}}
&\bigl((t_{i}a_{i-1}-2)a_i(2v+1)
-t_{i}(a_{i-1}v+1)\bigr)\delta _i \\
&\quad =\bigl(
t_{i}a_{i-1}\bigl(a_i(2v+1)-v\bigr)
-2a_i(2v+1)-t_i
\bigr)\delta _i\\
&\quad =\bigl(
a_{i-2}\bigl(a_i(2v+1)-v\bigr)
+a_i\bigl((a_i-2)(2v+1)-v\bigr)
-t_i
\bigr)\delta _i, 
\end{split}\end{align}
where we use $t_ia_{i-1}=a_{i-2}+a_{i}$ 
for the last equality. 
When $a_i\geq 3$, 
we have $(a_i-2)(2v+1)-v\geq 1$. 
Since 
$a_i>t_i$ by Lemma~\ref{lem:a_i} (iii), 
and $a_i(2v+1)-v>0$, 
we see that the right-hand side of 
(\ref{eq:ep _{i-2}-ep _{i-1}}) is positive. 
When $a_i\leq 2$, 
we have $a_i=2$ and $i=3$ 
in view of (i) and (ii) of Lemma~\ref{lem:a_i}. 
Since 
$a_3=t_1(t_0-1)-1$, and 
$t_0\geq 3$ and $(t_0,t_1)\neq (3,1)$ by assumption, 
it follows that $(t_0,t_1)=(4,1)$. 
Hence, 
the right-hand side of 
(\ref{eq:ep _{i-2}-ep _{i-1}}) 
is equal to 
$$
\bigl(
\bigl(2(2v+1)-v\bigr)+2(-v)-t_3\bigr)
\delta _3=(v+1)\delta _3>0. 
$$
Thus, 
we know that (\ref{eq:wild general ep 2}) 
is positive. 
Therefore, 
we get $\ep _{l-1}>\ep _{l+1}$.

Next, 
consider the case where $l=i'-2$. 
In this case, 
we have 
$\ep _l>\ep _{l+2}$ by the induction assumption of (ii). 
By (\ref{eq:pf:wildgeneral:ep}), 
we have 
$\ep _{j-1}=t_{j}\ep _{j}-\ep _{j+1}$ 
for $j=l,l+1$. 
Hence, 
it follows from 
(\ref{eq:wild general ep 1}) that 
\begin{align*}
&\ep _{l-1}-\ep _{l+1}
=\frac{t_l}{2}\ep _{l}+\frac{t_l}{2}\ep _{l}-2\ep _{l+1}
=\frac{t_l}{2}\ep _l
+\frac{t_l}{2}
(t_{l+1}\ep _{l+1}-\ep _{l+2})-2\ep _{l+1} \\
&\quad =\frac{t_l}{2}(\ep _l-\ep _{l+2})
+\frac{1}{2}(t_lt_{l+1}-4)\ep _{l+1}
\geq \frac{t_l}{2}(\ep _l-\ep _{l+2})>0, 
\end{align*}
since $t_lt_{l+1}=t_0t_1\geq 4$. 
Therefore, 
we get $\ep _{l-1}>\ep _{l+1}$.

Finally, 
consider the case where $1\leq l\leq i'-3$. 
Since $l+2=i'-1$, 
we have $\ep _{j-1}=t_j\ep _j-\ep _{j+1}$ for $j=l,l+1,l+2$ 
by (\ref{eq:pf:wildgeneral:ep}), 
and $\ep _{l+1}>\ep _{l+3}$ by the induction assumption of (ii). 
By Lemma~\ref{lem:skip}, it follows that 
$$
\ep _{l-1}-\ep _{l+1}=(t_0t_1-4)\ep _{l+1}+(\ep _{l+1}-\ep _{l+3}) 
\geq \ep _{l+1}-\ep _{l+3}>0. 
$$
Therefore, 
we get $\ep _{l-1}>\ep _{l+1}$. 
This proves the second part of (ii).

It remains only to show that $d(l-1)>0$. 
Since $\ep _{l-1}=t_l\ep _l-\ep _{l+1}$ 
and $a_{l+1}=t_{l+1}a_l-a_{l-1}$, 
we have 
\begin{align*}
&d(l+1)+d(l-1)=
(t_{l+1}\ep _{l+1}-a_{l+1}\beta )
+(t_{l-1}\ep _{l-1}-a_{l-1}\beta )\\
&\quad =t_{l+1}(\ep _{l+1}+\ep _{l-1})-(a_{l+1}+a_{l-1})\beta  
=t_{l+1}t_{l}\ep _{l}-t_{l+1}a_{l}\beta
=t_{l+1}d(l). 
\end{align*}
Since $d(l)>0$ by induction assumption, 
it follows that $d(l-1)>-d(l+1)$. 
When $l=i'-1$ and $d(i-1)>0$, 
we have 
$$
d(l+1)=d(i')=d(i)=t_i\ep _i-a_i\beta =t_i\delta _i-a_i\beta <0
$$ 
by (\ref{eq:inu}). 
Hence, we get $d(l-1)>-d(l+1)>0$. 
When $l=i'-1$ and $d(i-1)<0$, 
we have $d(l+1)=d(i')=d(i-1)<0$. 
Hence, we get $d(l-1)>0$ similarly.

Assume that $1\leq l\leq i'-2$. 
Then, 
we have $d(l'-1)>0$ for $l'=i'-1,i'$ 
by induction assumption, 
and $l-1\leq i'-3<i'-1$ if $l\equiv i'\pmod{2}$, 
and $l-1\leq i'-4<i'-2$ otherwise. 
Note that $\ep _{j-1}>\ep _{j+1}$ for $l\leq j\leq i'-1$, 
where the case $j=l$ is just verified above, 
and the case $l<j\leq i'-1$ is due to the induction assumption. 
Hence, we know that 
$\ep _{l-1}>\ep _{i'-1}$ 
if $l\equiv i'\pmod{2}$, 
and $\ep _{l-1}>\ep _{i'-2}$ 
otherwise. 
On the other hand, 
we have $a_0=1<t_0-1=a_2$, 
and $a_{j-1}<a_{j+1}$ for $j\geq 2$ 
by Lemma~\ref{lem:a_i} (i). 
Hence, we similarly obtain that 
$a_{l-1}<a_{i'-1}$ if $l\equiv i'\pmod{2}$, 
and $a_{l-1}<a_{i'-2}$ otherwise. 
Since $d(l'-1)>0$ for $l'=i'-1,i'$, 
we have 
$a_{l'-1}\beta <t_{l'-1}\ep _{l'-1}$ 
for $l'=i'-1,i'$. 
Thus, 
it follows that 
$$
a_{l-1}\beta <a_{l'-1}\beta <t_{l'-1}\ep _{l'-1}
=t_{l-1}\ep _{l'-1}
<t_{l-1}\ep _{l-1}, 
$$
where $l':=i'$ if $l\equiv i'\pmod{2}$, 
and $l':=i'-1$ otherwise. 
Therefore, 
we get 
$d(l-1)=t_{l-1}\ep _{l-1}-a_{l-1}\beta >0$. 
This proves that (i) and (ii) 
hold for every $l$. 
\end{proof}

\section{Wildness (II)}
\label{sect:lscwildness2}
\setcounter{equation}{0}

Thanks to Theorem~\ref{thm:lsc1} (ii), 
the former case of 
Theorem~\ref{thm:lsc1} (iii) 
is reduced to the latter case. 
The latter case of Theorem~\ref{thm:lsc1} (iii) 
is divided into the following three cases: 

\smallskip 

\noindent
(w1) $t_0\geq 3$, $(t_0,t_1)\neq (3,1)$ and $i\geq 3$. 

\noindent
(w2) $(t_0,t_1,i)=(3,1,3), (3,1,4)$. 

\noindent
(w3) $t_0\geq 3$ and $i=2$. 

\smallskip 

We show that the case (w3) 
is contained in Theorem~\ref{thm:lsc2} (ii). 
Let $\lambda (y)=y$ 
and $\mu (y,z)=\sum _{j=1}^{t_1}\alpha _j^1z^j$. 
Then, we have 
$$
r_2=
\lambda (f_2)x_2-\mu (f_2,f_1)
=f_2x_2-\sum _{j=1}^{t_1}\alpha _j^1x_1^j=r
$$
by (\ref{eq:f_2}). 
Hence, we  get 
\begin{align*}
&f_1\tilde{f}_3
=\tilde{\eta }_2\left(
f_2,r_2f_2^{-1}
\right)f_2^{a_2}
=\tilde{\eta }_2\left(
f_2,rf_2^{-1}
\right)f_2^{a_2}
=\left(
f_2+\sum _{j=1}^{t_0}\alpha _j^0
\left(rf_2^{-1}
\right)^{j-1}\right) f_2^{t_0-1} \\
&\quad =f_2^{t_0}+\sum _{j=1}^{t_0}\alpha _j^0r^{j-1}f_2^{t_0-j}
=\eta _2(f_2,r)=f_1f_3. 
\end{align*}
Thus, 
it follows that $f_3=\tilde{f}_3$. 
Therefore, we conclude that $D_2=\tilde{D}_2$. 
Since $\lambda (y)=y$ does not belong to $k$, 
the wildness of $\exp hD_2$ follows from 
Theorem~\ref{thm:lsc2} (ii).

This section is devoted to proving 
Theorem~\ref{thm:lsc1} (iii) 
in the case of (w1), 
and Theorem~\ref{thm:lsc2} (ii). 
The case (w2) of Theorem~\ref{thm:lsc1} (iii) will be treated 
in Section~\ref{sect:exceptional}.

First, 
we prove Theorem~\ref{thm:lsc1} (iii) 
in the case of (w1). 
Take any $h\in \ker D_i\sm \zs $ 
and put $\phi =\exp hD_i$. 
By definition, 
we have $D_i=\Delta _{(f_{i+1},f_i)}$. 
Since $i\geq 3$, 
we have 
$$
f_{i-1}f_{i+1}=q_i=\eta _i(f_i,r)
=\tilde{\eta }_i(f_i,r). 
$$ 
Hence, 
$D_i$ is obtained by the construction 
stated before Theorem~\ref{thm:wild general} 
from the data $(g,s,\nu (y))=(f_{i+1},r,1)$. 
Since 
$\ker D_i=k[f_i,f_{i+1}]$ by Theorem~\ref{thm:lsc1} (i), 
$h$ belongs to $k[f_i,f_{i+1}]\sm \zs $. 
Therefore, 
it suffices to check (a) through (e) 
by virtue of Theorem~\ref{thm:wild general}.

Since $i\geq 3$, 
we have $s=r=\tilde{r}$. 
Hence, we get $\alpha =\beta $, 
proving (c). 
Since $\delta _i$ and $\delta _{i+1}$ 
are linearly independent by Proposition~\ref{prop:deltap} (ii), 
we get (a). 
Since $D_i(r)=f_if_{i+1}$ by Theorem~\ref{thm:lsc1} (i), 
we have 
$$
\phi (r)=r+hf_if_{i+1}. 
$$
We show that $\degw r<\degw hf_if_{i+1}$. 
Since $t_0\geq 3$ by assumption, 
$\degw r=(1,1,1)$ 
is less than $(t_0,0,t_0)=t_0\delta _2$. 
By Proposition~\ref{prop:delta}, 
we have 
$t_0\delta _2=\delta _1+\delta _3$. 
Since $\delta _1<\delta _2$ in view of (\ref{eq:delta vector}), 
we get $\delta _1+\delta _3<\delta _2+\delta _3$. 
By Proposition~\ref{prop:deltap} (iv), 
we know that 
$\delta _{j}+\delta _{j+1}<\delta _{j+1}+\delta _{j+2}$ 
for each $j\geq 1$. 
Hence, 
it follows that $\delta _2+\delta _3<
\delta _i+\delta _{i+1}$, 
since $i\geq 3$. 
Since $h\neq 0$, 
we have $\delta _i+\delta _{i+1}\leq \degw hf_if_{i+1}$. 
Thus, we conclude that 
$\degw r<\degw hf_if_{i+1}$. 
Therefore, we get 
$\alpha =\degw \phi (r)=\degw hf_if_{i+1}$.

Since $\degw f_i=\delta _i$ 
and $\degw f_{i+1}=\delta _{i+1}$ 
are linearly independent, 
we know that $f_i^{\w }$ and $f_{i+1}^{\w }$ 
are algebraically independent over $k$. 
Hence, 
we have 
$k[f_i,f_{i+1}]^{\w }=k[f_i^{\w },f_{i+1}^{\w }]$ 
by the discussion before Lemma~\ref{lem:minimal autom}. 
Since $h$ belongs to $k[f_i,f_{i+1}]\sm \zs $, 
it follows that 
$\degw h=v_1'\delta _i+v_2'\delta _{i+1}$ 
for some $v_1',v_2'\in \Zn $. 
Thus, we get 
$$
\alpha =\degw hf_if_{i+1}
=(v_1'+1)\delta _i+(v_2'+1)\delta _{i+1}. 
$$
Because $v=\deg _y\nu (y)=0$, 
we see that 
(b) holds for 
$v_j=v_j'+1$ for $j=1,2$. 
Consequently, 
we have 
$\ep _{i-1}=a_i\alpha -\delta _{i+1}$ 
by (\ref{eq:b conseq}). 
Since $t_{i+1}a_i=a_{i-1}+a_{i+1}$ 
by (\ref{eq:a_i zenkasiki}) 
and $\alpha >\delta _{i+1}$, 
it follows that 
$$
t_{i-1}\ep _{i-1}=t_{i+1}(a_i\alpha -\delta _{i+1})
=(a_{i-1}+a_{i+1})\alpha -t_{i+1}\delta _{i+1} 
>a_{i-1}\alpha +(a_{i+1}-t_{i+1})\delta _{i+1}. 
$$
Since 
$a_{i+1}>t_{i+1}$ by Lemma~\ref{lem:a_i} (iii), 
the right-hand side of this inequality is greater than 
$a_{i-1}\alpha=a_{i-1}\beta $. 
Thus, 
we get $d(i-1)=t_{i-1}\ep _{i-1}-a_{i-1}\beta >0$, 
proving (d) and (e). 
Therefore, (a) through (e) are fulfilled. 
This completes the proof of 
Theorem~\ref{thm:lsc1} (iii) 
in the case of (w1).

Next, 
we prove Theorem~\ref{thm:lsc2} (ii). 
First, 
we consider the case where $i=2$ and $t_0\geq 3$. 
Let $J$ be the set of $j\geq 1$ such that 
$u_j:=\deg _y\mu _j(y)$ 
is equal to $v:=\deg _y\lambda (y)$, 
and let $c$ and $c_j$ be 
the leading coefficients of $\lambda (y)$ 
and $\mu _j(y)$ for each $j\in J$, 
respectively. 
Since $\mu (y,z)$ is an element of $zk[y,z]$, 
and $\lambda (y)$ and $\mu (y,z)$ 
have no common factor by assumption, 
we see that 
$$
\bar{\mu }(y,z)
:=\mu (y,z)-c^{-1}\lambda (y)\sum _{j\in J}c_jz^j
$$ 
belongs to $zk[y,z]$, 
and $\lambda (y)$ and $\bar{\mu }(y,z)$ 
have no common factor. 
We show that $\bar{\mu }(y,z)=0$ if and only if
$\lambda (y)$ belongs to $k^{\times }$ 
and $\mu (y,z)$ belongs to $zk[z]$. 
If $\bar{\mu }(y,z)=0$, 
then we have 
$\mu (y,z)=
c^{-1}\lambda (y)\sum _{j\in J}c_jz^j$. 
Since $\lambda (y)$ and $\mu (y,z)$ 
have no common factor, 
it follows that $\lambda (y)$ belongs to $k^{\times }$, 
and so $\mu (y,z)$ belongs to $zk[z]$. 
Conversely, 
if $\lambda (y)$ belongs to $k^{\times }$ 
and $\mu (y,z)$ belongs to $zk[z]$, 
then we have $\lambda (y)=c$ 
and $\mu (y,z)=\sum _{j\in J}c_jz^j$. 
Hence, we get $\bar{\mu }(y,z)=0$.

Define $\tau \in J(k[x_1];x_2,x_3)$ by 
$$
\tau (x_2)=x_2+c^{-1}\sum _{j\in J}c_jx_1^{j}
\quad \text{and}\quad 
\tau (x_3)=x_3+
x_1^{-1}\bigl(\theta (\tau (x_2))-\theta (x_2)\bigr). 
$$ 
Then, 
we have 
$\tau (f_1)=\tau (x_1)=x_1=f_1$, 
$$
\tau (f_2)
=x_1\tau (x_3)-\theta (\tau (x_2))
=x_1x_3-\theta (x_2)=f_2
$$
and 
$$
\tau (r_2)=
\lambda (f_2)\tau (x_2)-\mu (f_2,x_1)
=\lambda (f_2)x_2-\bar{\mu }(f_2,x_1). 
$$
Hence, 
$\tau (\tilde{f}_3)$ is equal to the polynomial 
obtained similarly to $\tilde{f}_3$ 
from $\lambda (y)$ and $\bar{\mu }(y,z)$ 
instead of $\lambda (y)$ and $\mu (y,z)$. 
By the formula (\ref{eq:Jacobian}), 
we get 
\begin{equation}\label{eq:lsc2:wild:pf:jacobi}
D':=\Delta _{(\tau (\tilde{f}_3),f_2)}
=\Delta _{(\tau (\tilde{f}_3),\tau (f_2))}
=\tau \circ \tilde{D}_2\circ \tau ^{-1}, 
\end{equation}
since $\det J\tau =1$.

Now, 
take any $h\in \ker \tilde{D}_2\sm \zs $. 
Assume that $\lambda (y)$ belongs to $k^{\times }$ 
and $\mu (y,z)$ belongs to $zk[z]\sm \zs $. 
Then, 
we show that $\exp h\tilde{D}_2$ is tame if and only if 
$h$ belongs to $k[\tilde{f}_3]$. 
By assumption, 
it follows that $\lambda (y)=c$ and $\bar{\mu }(y,z)=0$. 
Hence, 
we have $\tau (\tilde{f}_3)=c^{a_2}x_3$ 
by (\ref{eq:mu=0}). 
Since $f_2$ is a symmetric polynomial 
in $x_1$ and $x_3$ over $k[x_2]$, 
we may regard $D'=c^{a_2}\Delta _{(x_3,f_2)}$ as 
$c^{a_2}\Delta _{(x_1,f_2)}=-c^{a_2}D_1$ 
by interchanging $x_1$ and $x_3$. 
Put $h'=\tau (h)$. 
Then, $h'$ belongs to $\ker D'\sm \zs $ 
by (\ref{eq:lsc2:wild:pf:jacobi}). 
Note that 
$\exp h\tilde{D}_2$ is tame if and only if 
$\exp h'D'$ is tame. 
By the discussion after Theorem~\ref{thm:lsc1}, 
it follows that 
$\exp h'D'$ is tame if and only if 
$h'$ belongs to $k[x_3]$ 
or $t_0\leq 2$, 
and hence if and only if 
$h'$ belongs to $k[x_3]$ 
by the assumption that $t_0\geq 3$. 
Since $k[x_3]=k[\tau (\tilde{f}_3)]$, 
we conclude that 
$\exp h\tilde{D}_2$ is tame if and only if 
$h$ belongs to $k[\tilde{f}_3]$. 
When this is the case, 
$h'D'$ is triangular if $x_1$ and $x_3$ 
are interchanged. 
Hence, 
$h\tilde{D}_2$ is tamely triangularizable, 
proving the last part of Theorem~\ref{thm:lsc2} (ii).

Take any $h\in \ker \tilde{D}_i\sm \zs $. 
To complete the proof of Theorem~\ref{thm:lsc2} (ii), 
it suffices to prove that 
$\exp h\tilde{D}_i$ is wild 
in the following cases: 

\noindent{\rm (1)} 
$i=2$, $t_0\geq 3$ and $\bar{\mu }(y,z)\neq 0$. 

\noindent{\rm (2)} 
$i\geq 3$, $t_0\geq 3$ and $(t_0,t_1)\neq (3,1)$. 

In the case of (1), 
we may replace $\mu (y,z)$ with $\bar{\mu }(y,z)$ 
because of (\ref{eq:lsc2:wild:pf:jacobi}). 
Therefore, 
we may assume that $u_j\neq v$ for each $j\geq 1$, 
since $u_j\neq v$ if $j$ does not belong to $J$, 
and $\deg _y(\mu _j(y)-c^{-1}c_j\lambda (y))<v$ 
otherwise.

By definition, 
we have 
$\tilde{D}_i=\Delta _{(\tilde{f}_{i+1},f_i)}$, 
and 
$$
f_{i-1}\tilde{f}_{i+1}=\tilde{q}_i
=\tilde{\eta _i}(f_i,r_i')\lambda (f_i)^{a_i}, 
$$
where $r_i':=r_i\lambda (f_i)^{-1}$. 
Hence, 
$\tilde{D}_i$ is obtained by the construction 
stated before Theorem~\ref{thm:wild general} from the data 
$(g,s,\nu (y))=(\tilde{f}_{i+1},r_i,\lambda (y))$. 
Since $\ker \tilde{D}_i=k[f_i,\tilde{f}_{i+1}]$ 
by Theorem~\ref{thm:lsc2} (i), 
$h$ belongs to $k[f_i,\tilde{f}_{i+1}]\sm \zs $. 
Therefore, 
it suffices to check (a) through (e) 
by virtue of Theorem~\ref{thm:wild general}.

Since $\mu (y,z)\neq 0$ by assumption, 
$J':=\{ j\geq 1\mid \mu _j(y)\neq 0\} $ 
is not empty. 
Since $\delta _{i-1}$ and $\delta _i$ 
are linearly independent by Proposition~\ref{prop:deltap} (ii), 
we see that 
$\degw \mu _j(f_i)f_{i-1}^j=u_j\delta _i+j\delta _{i-1}$'s 
are different for different elements $j$'s of $J'$. 
Hence, 
we may find $l_1\in J'$ such that 
$$
\degw \mu (f_i,f_{i-1})
=\degw \mu _{l_1}(f_i)f_{i-1}^{l_1}
=u_{l_1}\delta _i+l_1\delta _{i-1}. 
$$ 
By Proposition~\ref{prop:deltap} (v), 
the second components of 
$\delta _{i-1}$ and $\delta _i$ are zero, 
while $\degw \tilde{r}$ equals 
$(0,1,0)$ if $i=2$, 
and $(1,1,1)$ if $i\geq 3$. 
Because $\delta _{i-1}$ and $\delta _i$ 
are linearly independent, 
it follows that 
$\delta _{i-1}$, $\delta _i$ and $\degw \tilde{r}$ 
are linearly independent. 
Hence, 
$\degw \mu (f_i,f_{i-1})$ is not equal to 
$\degw \lambda (f_i)\tilde{r}=v\delta _i+\degw \tilde{r}$. 
Thus, we know that 
\begin{equation}\label{eq:r_i}
\degw r_i=
\degw \bigl( 
\lambda (f_i)\tilde{r}+\mu (f_i,f_{i-1})\bigr) 
=\max \{ v\delta _i+\degw \tilde{r},
u_{l_1}\delta _i+l_1\delta _{i-1}\} , 
\end{equation}
and so 
$$
\degw r_i'=\degw r_i-v\delta _i\geq \degw \tilde{r}>0. 
$$ 
Since 
$\delta _{i-1}$, $\delta _i$ and $\degw \tilde{r}$ 
are linearly independent, 
and $l_1\geq 1$, 
we see from (\ref{eq:r_i}) that 
$\delta _i$ and $\degw r_i$ are linearly independent. 
Therefore, 
$\delta _i$ and $\degw r_i'$ are linearly independent. 
In the case of (1), 
it follows that 
$$
\degw \tilde{\eta }_2(f_2,r_2')
=\degw \left( f_2+\theta (r_2')\right) 
=\max \{ \delta _2,a_2\degw r_2'\} , 
$$ 
since $\deg _z\theta (z)=t_0-1=a_2$. 
In the case of (2), 
we have $t_i\delta _i\neq a_i\degw r_i'$. 
Hence, 
we get 
$$
\degw \tilde{\eta }_i(f_i,r_i')
=\degw \eta _i(f_i,r_i')
=\max \{ t_i\delta _i,a_i\degw r_i'\} 
$$ 
by applying Lemma~\ref{lem:rhq} 
with $f=f_i$ and $p=r_i'$. 
Set $\tilde{t}_2=1$ in the case of (1), 
and $\tilde{t}_i=t_i$ in the case of (2). 
Then, 
we have 
\begin{align}\begin{split}\label{eq:tilde{delta}}
\delta :=\degw \tilde{f}_{i+1}
&=\degw \tilde{\eta }_i(f_i,r_i')
\lambda (f_i)^{a_i}f_{i-1}^{-1}\\
&=\max \{ \tilde{t}_i\delta _i,a_i\degw r_i'\} 
+a_iv\delta _i-\delta _{i-1}\\
&=\max \{ (\tilde{t}_i+a_iv)\delta _i,a_i\degw r_i\} 
-\delta _{i-1}. 
\end{split}\end{align}
Now, let us prove (a). 
From (\ref{eq:tilde{delta}}) and (\ref{eq:r_i}), 
we see that $\delta $ and $r_i$ 
have two possibilities. 
In the case where $\delta =a_i\degw r_i-\delta _{i-1}$ 
and $\degw r_i=u_{l_1}\delta _i+l_1\delta _{i-1}$, 
we have 
$$
\delta =a_iu_{l_1}\delta _i+(a_il_1-1)\delta _{i-1}. 
$$ 
Note that $a_2=t_0-1\geq 2$ in the case of (1), 
and $a_i\geq 2$ in the case of (2) by Lemma~\ref{lem:a_i} (ii). 
Hence, 
we have $a_il_1-1\geq a_i-1\geq 1$. 
Thus, we know that 
$\delta _i$ and $\delta $ are linearly independent, 
since so are $\delta _i$ and $\delta _{i-1}$. 
In the other cases, 
we can easily check that 
$\delta _i$ and $\delta $ are linearly independent 
because 
$\delta _{i-1}$, $\delta _i$ and $\degw \tilde{r}$ 
are linearly independent. 
Therefore, we get (a).

Recall that $u_j\neq v$ for each $j$ 
in the case of (1). 
Hence, 
we have $(u_{l_1},v)\neq (0,0)$. 
From (\ref{eq:r_i}), 
we see that 
$\degw r_2>\delta _2$. 
Since $a_2\geq 2$, 
and $\delta _1=(1,0,0)$ is less than $\delta _2=(1,0,1)$, 
it follows from (\ref{eq:tilde{delta}}) that 
$$
\delta \geq a_2\degw r_2-\delta _1
\geq \degw r_2+(\degw r_2-\delta _2)+(\delta _2-\delta _1)
>\degw r_2>\delta _2. 
$$
Therefore, 
we get the second part of (b) for $i=2$. 
Since $\tilde{D}_2(r_2)=\lambda (f_2)\tilde{f}_3$ 
by Theorem~\ref{thm:lsc2} (i), 
we have $\phi (r_2)=r_2+h\lambda (f_2)\tilde{f}_3$. 
By the preceding inequality, 
we know that 
$\degw h\lambda (f_2)\tilde{f}_3\geq \delta >\degw r_2$. 
Hence, we get 
$$
\alpha =\degw \phi (r_2)=\degw h\lambda (f_2)\tilde{f}_3
\geq \degw \lambda (f_2)\tilde{f}_3=v\delta _2+\delta , 
$$
proving the first part of (b) for $i=2$.

In the case of (2), 
it follows from (\ref{eq:tilde{delta}}) that 
$$
\delta \geq (t_i+a_iv)\delta _i-\delta _{i-1}
=(t_i\delta _i-\delta _{i-1})+a_iv\delta _i
=\delta _{i+1}+a_iv\delta _i>a_iv\delta _i, 
$$
since $t_i\delta _i-\delta _{i-1}=\delta _{i+1}$ 
by Proposition~\ref{prop:delta}. 
This proves the second part of (e).

Since $\degw f_i=\delta _i$ and $\degw \tilde{f}_{i+1}=\delta $ 
are linearly independent by (a), 
we have 
$k[f_i,\tilde{f}_{i+1}]^{\w }
=k[f_i^{\w },\tilde{f}_{i+1}^{\w }]$. 
Hence, 
we may write 
$$
\degw h=v_1'\delta _i+v_2'\delta , 
$$
where $v_1',v_2'\in \Zn $. 
In the case of (1), 
we have 
$\alpha =\degw h\lambda (f_2)\tilde{f}_3$ as mentioned. 
Hence, 
we get 
\begin{equation}\label{eq:alpha 2}
\alpha 
=\degw h\lambda (f_2)\tilde{f}_3
=(v_1'+v)\delta _2+(v_2'+1)\delta . 
\end{equation}

Consider the case of (2). 
Since 
$\tilde{D}_i(r_i)=\lambda (f_i)f_i\tilde{f}_{i+1}$ 
by Theorem~\ref{thm:lsc2} (i), 
we have 
$$
\phi (r_i)=r_i+h\lambda (f_i)f_i\tilde{f}_{i+1}. 
$$ 
We show that $(v+1)\delta _i+\delta $ 
is greater than $\degw r_i$. 
Then, 
it follows that 
$$
\degw h\lambda (f_i)f_i\tilde{f}_{i+1}\geq 
(v+1)\delta _i+\delta >\degw r_i. 
$$ 
Consequently, 
we obtain 
\begin{equation}\label{eq:alpha 3}
\alpha =\degw \phi (r_i)=\degw h\lambda (f_i)f_i\tilde{f}_{i+1}
=(v_1'+v+1)\delta _i+(v_2'+1)\delta , 
\end{equation}
and thereby proving that (b) for $i\geq 3$ 
holds with $v_1=v_1'+1$ and $v_2=v_2'+1$. 
First, 
assume that $a_i\degw r_i'<t_i\delta _i$. 
Then, 
we have 
$$
\degw r_i=\degw r_i'+v\delta _i
<\left(\frac{t_i}{a_i}+v\right) 
\delta _i<(1+v)\delta _i<(v+1)\delta _i+\delta , 
$$ 
since $t_i<a_i$ by Lemma~\ref{lem:a_i} (iii). 
Hence, the assertion holds. 
Next, 
assume that $a_i\degw r_i'\geq t_i\delta _i$. 
Since $\degw r_i'$ and $\delta _i$ are linearly independent, 
it follows that $a_i\degw r_i'>t_i\delta _i$, 
and so $a_i\degw r_i>(t_i+a_iv)\delta _i$. 
Hence, 
we know that 
$\delta =a_i\degw r_i-\delta _{i-1}$ 
by (\ref{eq:tilde{delta}}). 
Thus, 
we have 
\begin{align*}
&(v+1)\delta _i+\delta -\degw r_i 
=(v+1)\delta _i+(a_i-1)\degw r_i-\delta _{i-1}\\
&\quad >\left(
v+1+(a_i-1)\frac{t_i+a_iv}{a_i}\right)
\delta _i-\delta _{i-1}\\
&\quad =\left(1+a_iv-\frac{t_i}{a_i}\right)
\delta _i+t_i\delta _i-\delta _{i-1}
>t_i\delta _i-\delta _{i-1}=\delta _{i+1}>0. 
\end{align*}
Therefore, 
we get $(v+1)\delta _i+\delta >\degw r_i$. 
This proves (\ref{eq:alpha 3}), 
and (b) for $i\geq 3$.

Recall that (b) implies (\ref{eq:b conseq}). 
By (\ref{eq:alpha 2}) and (\ref{eq:alpha 3}), 
it follows that 
$\ep _{i-1}=a_i\alpha -\delta $ and $\delta _i$ are linearly independent, 
since $\delta $ and $\delta _i$ are linearly independent, 
and $a_i(v_2'+1)-1\geq 1$. 
Hence, 
$\degw \mu _j(f_i)\phi (f_{i-1})^j=u_j\delta _i+j\ep _{i-1}$'s 
are different for different elements $j$'s of $J'$. 
Thus, 
we may find $l_2\in J'$ such that 
$$
\gamma :=\degw \mu \bigl(f_i,\phi (f_{i-1})\bigr)
=\degw \mu _{l_2}(f_i)\phi (f_{i-1})^{l_2}
=u_{l_2}\delta _i+l_2\ep _{i-1}. 
$$ 
We show that $\gamma >\alpha $. 
If $\alpha >\delta $, 
then we have 
$$
\gamma \geq 
l_2\ep _{i-1}\geq \ep _{i-1}=a_i\alpha -\delta 
>(a_i-1)\alpha \geq \alpha . 
$$
In particular, 
we have $\gamma >\alpha $ in the case of (2), 
since $\alpha >\delta $ by (\ref{eq:alpha 3}). 
Assume that $\alpha \leq \delta $ in the case of (1). 
Then, we see from 
(\ref{eq:alpha 2}) that 
$v_1'=v_2'=v=0$ and $\alpha =\delta $. 
Since $u_{l_2}\neq v$ by assumption, 
it follows that $u_{l_2}>0$. 
Hence, we get $\gamma \geq \delta _2+\ep _{1}$. 
Since $\alpha =\delta $ and $a_2\geq 2$, 
we have $\ep _{1}=a_2\alpha -\delta \geq \alpha $. 
Thus, 
we conclude that $\gamma >\alpha $. 
Therefore, 
$\degw \mu \bigl(f_i,\phi (f_{i-1})\bigr)=\gamma $ 
is greater than $\degw \phi (r_i)=\alpha $. 
On the other hand, 
we have 
\begin{equation}\label{eq:wildII:pf:phi(r_i)}
\phi (r_i)=\phi \bigl(
\lambda (f_i)\tilde{r}-\mu (f_i,f_{i-1})\bigr)
=\lambda (f_i)\phi (\tilde{r})
-\mu \bigl(f_i,\phi (f_{i-1})\bigr), 
\end{equation}
since $\phi (f_i)=f_i$. 
This implies that 
$\degw \lambda (f_i)\phi (\tilde{r})=
\degw \mu \bigl(f_i,\phi (f_{i-1})\bigr)=\gamma $. 
Hence, we get 
\begin{equation}\label{eq:beta}
\beta =\degw \phi (\tilde{r})
=\gamma -\degw \lambda (f_i)
=(u_{l_2}-v)\delta _i+l_2\ep _{i-1}. 
\end{equation}
From this, 
we know that $\beta \geq \ep _{i-1}-v\delta _i$. 
This proves the first part of (e). 
In the case of (2), 
we have $a_{i-1}\geq 2$ by Lemma~\ref{lem:a_i} (ii), 
since $i-1\geq 2$. 
Hence, we get 
$a_{i-1}(u_{l_2}-v)-1\neq 0$. 
Because 
$$
a_{i-1}\beta -\delta _i
=\bigl(a_{i-1}(u_{l_2}-v)-1\bigr)\delta _i
+a_{i-1}l_2\ep _{i-1}
$$
by (\ref{eq:beta}), 
and $\delta _i$ and $\ep _{i-1}$ 
are linearly independent as mentioned, 
it follows that $a_{i-1}\beta -\delta _i$ 
and $\ep _{i-1}$ are linearly independent, 
proving 
the last part of (e).

In the case of (1), 
we have $u_{l_2}\neq v$. 
Since $\delta _2$ and $\ep _1$ are linearly independent, 
we know by (\ref{eq:beta}) 
that $\beta $ and $\ep _1$ are linearly independent, 
proving (c) for $i=2$. 
In the case of (2), 
it follows from 
(\ref{eq:beta}), (\ref{eq:b conseq}) and (\ref{eq:alpha 3}) 
that 
\begin{align*}
&\beta -\alpha 
\geq (\ep _{i-1}-v\delta _i)-\alpha 
=(a_i-1)\alpha -\delta -v\delta _i\geq 
\alpha -\delta -v\delta _i \\
&\quad 
\geq \bigl((v+1)\delta _i+\delta \bigr)
-\delta -v\delta _i=\delta _i>0. 
\end{align*}
This proves (c) for $i\geq 3$.

Finally, 
we prove (d) by contradiction. 
Suppose that $d(i-1)=0$. 
Then, 
we have 
$$
t_{i-1}\ep _{i-1}=a_{i-1}\beta 
=a_{i-1}\bigl((u_{l_2}-v)\delta _i+l_2\ep _{i-1}\bigr)
$$ 
by (\ref{eq:beta}). 
Since $\ep _{i-1}$ and $\delta _i$ are linearly 
independent, 
it follows that $t_{i-1}=a_{i-1}l_2$. 
Since $i-1\geq 2$, 
this contradicts Lemma~\ref{lem:a_i} (v), 
proving (d). 
Thus, 
we have verified (a) through (e). 
Therefore, 
$\phi $ is wild in the case of (1) and (2) 
by Theorem~\ref{thm:wild general}. 
This completes the proof of 
Theorem~\ref{thm:lsc2} (ii).

\section{Exceptional case}
\label{sect:exceptional}
\setcounter{equation}{0}

The goal of this section is to 
complete the proof of Theorem~\ref{thm:lsc1} 
by proving (a) of (i) in the case of $(t_0,t_1,i)=(3,1,4)$, 
and (ii) in the case of (w2). 
Assume that $(t_0,t_1)=(3,1)$. 
Then, 
we have 
$b_2=t_1b_1-b_0+\xi _1=1$, 
$b_3=t_2b_2-b_1+\xi _2=2$ 
and $b_4=t_3b_3-b_2+\xi _3=0$, 
since $b_0=b_1=0$. 
Hence, 
we see from (\ref{eq:irekae}) that 
\begin{equation}\label{eq:(t_0,t_1)=(3,1)}
\begin{aligned}
f_0f_2&=q_1=r+f_1\\
f_1f_3&=q_2=f_2^3r^{-1}\bigl( 
r+\theta _0(f_2^{-1}r)
\bigr) 
=f_2^3+r^2+\alpha _2^0f_2r+\alpha _1^0f_2^2\\
f_2f_4&=q_3=f_3r^{-1}\bigl( 
r+f_3^{-1}r^{2}
\bigr) 
=f_3+r\\
f_3f_5&=q_4=r+\theta (f_4)f_4, 
\end{aligned}
\end{equation}
since 
$\theta (z)=z^2+\alpha _2^0z+\alpha _1^0$, 
$\theta _0(z)=\theta (z)z$ 
and $\theta _1(z)=z$. 
By (\ref{eq:f_2}), 
we have $r=x_2f_2-x_1$. 
Hence, 
the second equality of 
(\ref{eq:(t_0,t_1)=(3,1)}) gives that 
\begin{align*}
f_1f_3&=f_2^3+(x_2f_2-x_1)^2+\alpha _2^0(x_2f_2-x_1)f_2
+\alpha _1^0f_2^2\\
&=x_1^2-x_1(2x_2+\alpha _2^0)f_2
+(f_2+x_2^2+\alpha _2^0x_2+\alpha _1^0)f_2^2. 
\end{align*}
Since $f_1=x_1$ and 
$f_2+x_2^2+\alpha _2^0x_2+\alpha _1^0
=f_2+\theta (x_2)
=x_1x_3$ 
by (\ref{eq:f_2}), 
it follows that 
\begin{equation}\label{eq:f_3}
f_3=x_1-(2x_2+\alpha _2^0)f_2+x_3f_2^2. 
\end{equation}
Thus, 
we know by the third equality of 
(\ref{eq:(t_0,t_1)=(3,1)}) that 
$$
f_2f_4=r+f_3
=(x_2f_2-x_1)+(x_1-(2x_2+\alpha _2^0)f_2+x_3f_2^2)
=f_2(x_3f_2-x_2-\alpha _2^0). 
$$
Therefore, 
we get 
$$
f_4=x_3f_2-x_2-\alpha _2^0. 
$$

With the notation of Chapter~\ref{chapter:atcoord}, 
define $\tilde{D}=D_{-\theta }$. 
Then, 
we have $\tilde{D}(f_2)=0$, 
since $f_2=x_1x_3-\theta (x_2)=f_{-\theta }$. 
Hence, 
$-f_2\tilde{D}$ belongs to $\lnd _k\kx $. 
Set $\tilde{\sigma }=\exp (-f_2\tilde{D})$ 
and $y_i=\tilde{\sigma }(x_i)$ for $i=1,2,3$. 
Then, 
we have 
\begin{equation}\label{eq:y_3y_2}
y_3=x_3\quad \text{and}\quad 
y_2=x_2-f_2x_3=-f_4-\alpha _2^0, 
\end{equation}
since $\tilde{D}(x_3)=0$, 
and $\tilde{D}(x_2)=x_3$ 
and $\tilde{D}(f_2)=0$. 
Since $f_2$ is fixed under $\tilde{\sigma }$, 
we see that 
$f_2=x_1x_3-\theta (x_2)$ 
is equal to 
$\tilde{\sigma }(f_2)=y_1x_3-\theta (x_2-f_2x_3)$. 
From this, 
we obtain 
$$
y_1=x_1+
\bigl( \theta (x_2-f_2x_3)-\theta (x_2)\bigr)x_3^{-1}. 
$$
Hence, we get 
\begin{equation}\label{eq:excep:y_1}
y_1=x_1-\theta '(x_2)f_2+\dfrac{1}{2}\theta ''(x_2)f_2^2x_3=f_3, 
\end{equation}
since 
$\theta '(x_2)=2x_2+\alpha _2^0$ and $\theta ''(x_2)=2$. 
Thus, 
we can define $\sigma _3\in \Aut (\kx /k)$ by 
\begin{equation}\label{eq:sigma _3}
\sigma _3(x_1)=y_1=f_3,\quad 
\sigma _3(x_2)=-y_2-\alpha _2^0=f_4,\quad 
\sigma _3(x_3)=y_3=x_3.
\end{equation}
Note that the sum of the two roots of 
$\theta (z)$ is equal to $-\alpha _2^0$. 
Hence, $\theta (z)$ and 
$\theta (-z-\alpha _2^0)$ have exactly the same roots. 
Since $\theta (z)$ and 
$\theta (-z-\alpha _2^0)$ are monic polynomials, 
we conclude that $\theta (-z-\alpha _2^0)=\theta (z)$. 
Thus, we have 
$$
\sigma _3(f_2)
=y_1y_3-\theta (-y_2-\alpha _2^0)
=y_1y_3-\theta (y_2)
=\tilde{\sigma }(f_2)=f_2. 
$$
Therefore, 
$\sigma _3$ belongs to $\Aut (\kx /k[f_2,x_3])$. 
Since $x_1x_3=f_2+\theta (x_2)$ by (\ref{eq:f_2}), 
we know that 
$$
f_3x_3=\sigma _3(x_1x_3)
=\sigma _3(f_2+\theta (x_2))
=f_2+\theta (f_4). 
$$
Hence, 
it follows that 
\begin{align*}
f_3(x_3f_4-1-f_5)
&=(f_3x_3)f_4-(f_3+r)-(f_3f_5-r)\\
&=(f_2+\theta (f_4))f_4
-f_2f_4-\theta (f_4)f_4 
=0 
\end{align*}
by the last two equalities of 
(\ref{eq:(t_0,t_1)=(3,1)}). 
Therefore, 
we get $f_5=x_3f_4-1$.

Now, 
let us prove (a) of Theorem~\ref{thm:lsc1} (i) 
in the case of $(t_0,t_1,i)=(3,1,4)$. 
Since 
\begin{equation}\label{eq:D_4}
D_4=\Delta _{(f_5,f_4)}=\Delta _{(x_3f_4-1,f_4)}
=f_4\Delta _{(x_3,f_4)}, 
\end{equation}
we see that 
$D_4$ is not irreducible, 
and $x_3$ belongs to $\ker D_4$. 
By (\ref{eq:sigma _3}), 
we have $\sigma _3^{-1}(x_3)=x_3$, 
$\sigma _3^{-1}(f_4)=x_2$ 
and $\sigma _3^{-1}(f_5)=\sigma _3^{-1}(x_3f_4-1)=x_2x_3-1$. 
Hence, we know that 
$\sigma _3^{-1}(x_3)=x_3$ does not belong to 
$\sigma _3^{-1}(k[f_4,f_5])=k[x_2,x_2x_3-1]$. 
Thus, 
$x_3$ does not belong to $k[f_4,f_5]$. 
Therefore, 
we conclude that $\ker D_4\neq k[f_4,f_5]$. 
This proves (a) of Theorem~\ref{thm:lsc1} (i) 
when $(t_0,t_1,i)=(3,1,4)$.

Next, 
we prove Theorem~\ref{thm:lsc1} (iii) 
in the case of $(t_0,t_1,i)=(3,1,4)$. 
Take any $h\in \ker D_4\sm \zs $. 
We show that 
$\exp hD_4=\exp hf_4\Delta _{(x_3,f_4)}$ is wild. 
Since 
\begin{gather*}
\Delta _{(x_3,f_4)}(x_1)
=-\frac{\partial f_4}{\partial x_2}
=1-x_3\frac{\partial f_2}{\partial x_2}
=1+\theta '(x_2)x_3 \\
\Delta _{(x_3,f_4)}(x_2)
=\frac{\partial f_4}{\partial x_1}
=x_3\frac{\partial f_2}{\partial x_1}
=x_3^2 \quad\text{and}\quad 
\Delta _{(x_3,f_4)}(x_3)=0, 
\end{gather*}
we see that 
$\Delta _{(x_3,f_4)}$ is triangular 
if $x_1$ and $x_3$ are interchanged. 
Since $hf_4$ belongs to $\ker \Delta _{(x_3,f_4)}\sm k[x_3]$, 
and 
$\partial (\Delta _{(x_3,f_4)}(x_1))/\partial x_2
=\theta ''(x_2)x_3=2x_3$ 
is not divisible by $\Delta _{(x_3,f_4)}(x_2)=x_3^2$, 
we conclude from Theorem~\ref{thm:triangular3} 
that $\exp hf_4\Delta _{(x_3,f_4)}$ is wild. 
Therefore, 
Theorem~\ref{thm:lsc1} (iii) 
is true when $(t_0,t_1,i)=(3,1,4)$.

The rest of this section is devoted to 
proving Theorem~\ref{thm:lsc1} (iii) 
in the case of $(t_0,t_1,i)=(3,1,3)$. 
Take any $h\in \ker D_3\sm \zs $ 
and put $\phi =\exp hD_3$. 
Since $q_3=r+f_3$ by (\ref{eq:(t_0,t_1)=(3,1)}), 
we have 
$D_3(f_2)=(\partial q_3/\partial r)f_3=f_3$ 
by (\ref{eq:D_i(f_{i-1})}) with $j=3$. 
Hence, 
we get 
$$
\phi (f_2)=f_2+hf_3. 
$$
First, 
we describe $z_i:=\phi (x_i)$ for $i=1,2,3$. 
By (\ref{eq:y_3y_2}) and (\ref{eq:excep:y_1}), 
we have 
$$
D_3=\Delta _{(f_4,f_3)}
=\Delta _{(-y_2-\alpha _2^0,y_1)}
=\Delta _{(y_1,y_2)}. 
$$
Since $x_3=y_3$, 
it follows that 
$D_3(x_3)
=\Delta _{(y_1,y_2)}(y_3) 
=\det J\tilde{\sigma }$.

Here, 
we remark that $\det J(\exp D)=1$ 
for any $D\in \lnd _k\kx $. 
This is verified as follows. 
Let $R=k[t]$ 
be the polynomial ring in one variable over $k$. 
Then, 
$D$ naturally extends to an element $\bar{D}$ of $\lnd _R\Rx $. 
Since 
$\Psi :=\exp t\bar{D}$ belongs to $\Aut (\Rx /R)$, 
we know that 
$\det J\Psi $ belongs to $R^{\times }=k^{\times }$. 
Put $R_0=R/(t)$ and $R_1=R/(t-1)$, 
and define an automorphism of 
$R_i\otimes _R\Rx =R_i[\x ]=\kx $ over $R_i=k$ 
by $\psi _i=\id _{R_i}\otimes \Psi $ for $i=0,1$. 
Then, 
we have $\psi _0=\id _{\kx }$ and $\psi _1=\exp D$. 
Hence, 
the images of $\det J\Psi $ in $R_0[\x ]$ 
and $R_1[\x ]$ are $\det J(\id _{\kx })=1$ 
and $\det J(\exp D)$, respectively. 
Since $\det J\Psi $ belongs to $k^{\times }$, 
it follows that $\det J(\exp D)=\det J\Psi =1$.

By the remark, 
we know that 
$\det J\tilde{\sigma }=\det J(\exp (-f_2\tilde{D}))=1$. 
Thus, we conclude that $D_3(x_3)=1$. 
Therefore, we get 
$$
z_3=x_3+h. 
$$
Since 
$D_3=\Delta _{(y_1,y_2)}$ kills $y_1$ and $y_2$, 
we have $\phi (y_i)=y_i$ for $i=1,2$. 
Hence, it follows from 
(\ref{eq:y_3y_2}) that 
$$
y_2=\phi (y_2)=\phi (x_2-f_2x_3)=z_2-\phi (f_2)z_3. 
$$
Thus, we get 
\begin{equation}\label{eq:excep:z_2}
z_2=y_2+\phi (f_2)z_3=y_2+(f_2+hy_1)(x_3+h).  
\end{equation}
Since $f_2=x_1x_3-\theta (x_2)$ is equal to 
$\tilde{\sigma }(f_2)=y_1x_3-\theta (y_2)$, 
we know that 
$\phi (f_2)=z_1z_3-\theta (z_2)$ is equal to 
$\phi (\tilde{\sigma }(f_2))=y_1z_3-\theta (y_2)$. 
Therefore, we have 
\begin{equation}\label{eq:excep:z_1}
\begin{aligned}
z_1
&=y_1+\bigl( \theta (z_2)-\theta (y_2)\bigr) z_3^{-1} \\
&=y_1+\Bigl( \theta \bigl(y_2+\phi (f_2)z_3\bigr)
-\theta (y_2)\Bigr) z_3^{-1}\\
&=y_1+(2y_2+\alpha _2^0)\phi (f_2)
+\phi (f_2)^2z_3 \\
&=y_1+(2y_2+\alpha _2^0)(f_2+hy_1)
+(f_2+hy_1)^2(x_3+h). 
\end{aligned}
\end{equation}

We show that $y_2$ is a W-test polynomial. 
Take any totally ordered additive group $\Lambda $ 
and $\vv \in (\Lambda _{>0})^3$. 
Then, 
$f_2^{\vv }$ is equal to 
$x_1x_3$ or $-x_2^{t_0-1}$ or $x_1x_3-x_2^{t_0-1}$ 
as mentioned before Proposition~\ref{prop:wild general}, 
since $t_0=3$. 
Hence, 
we have 
$$
y_2^{\vv }=(x_2-f_2x_3)^{\vv }=-f_2^{\vv }x_3, 
$$
and is equal to 
$-x_1x_3^2$ or $x_2^{t_0-1}x_3$ or $-(x_1x_3-x_2^{t_0-1})x_3$. 
Thus, we know that $y_2^{\vv }$ 
is not divisible by 
$x_i-g$ for any $i\in \{ 1,2,3\} $ and 
$g\in k[\x \sm \{ x_i\} ]\sm k$, 
and by $x_i^{s_i}-cx_j^{s_j}$ 
for any $i,j\in \{ 1,2,3\} $ with $i\neq j$, 
$s_i,s_j\in \N $ and $c\in k^{\times }$. 
Therefore, 
$y_2$ is a W-test polynomial 
due to Proposition~\ref{prop:criterion}.

Recall that $\Gamma =\Z ^3$ 
has the lexicographic order with 
$\e _1<\e _2<\e _3$, 
and $\w =(\e _1,\e _2,\e_ 3)$. 
Hence, 
we have $f_2^{\w }=x_1x_3$. 
By (\ref{eq:excep:y_1}), 
(\ref{eq:f_3}) and (\ref{eq:y_3y_2}), 
it follows that 
\begin{equation}\label{eq:y head}
y_1^{\w }
=f_3^{\w }
=(x_3f_2^2)^{\w }=x_1^2x_3^3
\quad \text{and}\quad 
y_2^{\w }=
(-f_2x_3)^{\w }=-x_1x_3^2. 
\end{equation}
Hence, 
we get $\degw y_1>\degw y_2>\degw f_2$. 
In view of (\ref{eq:excep:z_2}) and (\ref{eq:excep:z_1}), 
we see that  
\begin{equation}\label{eq:z head}
z_1^{\w}=(h^2y_1^2)^{\w }(x_3+h)^{\w }
\quad \text{and}\quad 
z_2^{\w }=(hy_1)^{\w }(x_3+h)^{\w }. 
\end{equation}
Thus, 
we have $\degw z_1>\degw y_1$. 
Since $\phi (y_2)=y_2$, 
it follows that 
$$
\degw \phi (x_1)=\degw z_1>\degw y_1>\degw y_2=\degw \phi (y_2). 
$$
Therefore, 
if $z_i^{\w }$ and $z_j^{\w }$ 
are algebraically independent over $k$ 
for some $i,j\in \{ 1,2,3\} $, 
then we may conclude that $\phi $ is wild, 
because $y_2$ is a W-test polynomial.

Assume that $h$ belongs to $k^{\times }$. 
Then, 
we have $(x_3+h)^{\w }=x_3$. 
Hence, 
we get $z_1^{\w }\approx (y_1^{\w })^2x_3$ 
and $z_2^{\w }\approx y_1^{\w }x_3$ 
by (\ref{eq:z head}). 
Since $y_1^{\w }$ and $x_3$ 
are algebraically independent over $k$ 
by (\ref{eq:y head}), 
it follows that $z_1^{\w }$ and $z_2^{\w }$ 
are algebraically independent over $k$. 
Therefore, 
we conclude that $\phi $ is wild.

Assume that $h$ does not belong to $k^{\times }$. 
Then, 
$h^{\w }$ does not belong to $k^{\times }$. 
Since $\max I=4>3$, 
we have $\ker D_3=k[f_3,f_4]$ 
by Theorem~\ref{thm:lsc1} (i). 
By (\ref{eq:y_3y_2}) and (\ref{eq:excep:y_1}), 
we see that $k[f_3,f_4]=k[y_1,y_2]$. 
Because $y_1^{\w }$ and $y_2^{\w }$ 
are algebraically independent over $k$ 
by (\ref{eq:y head}), 
we have $k[y_1,y_2]^{\w }=k[y_1^{\w },y_2^{\w }]$. 
Thus, 
$h^{\w }$ belongs to $k[y_1^{\w },y_2^{\w }]\sm k$. 
This implies that 
$\degw h\geq \degw y_i$ for some $i\in \{ 1,2\} $. 
Since $\degw y_1>\degw y_2>\degw x_3$, 
it follows that $(x_3+h)^{\w }=h^{\w }$. 
Therefore, we get 
$z_1^{\w }=(h^3y_1^2)^{\w }$ and 
$z_2^{\w }=(h^2y_1)^{\w }$ from (\ref{eq:z head}).

Assume that $h^{\w }$ belongs to 
$k[y_1^{\w },y_2^{\w }]\sm k[y_1^{\w }]$. 
Then, 
$h^{\w }$ and $y_1^{\w }$ 
are algebraically independent over $k$, 
since so are $y_1^{\w }$ and $y_2^{\w }$. 
This implies that 
$z_2^{\w }$ and $z_3^{\w }$ 
are algebraically independent over $k$. 
Therefore, we conclude that $\phi $ is wild.

In the rest of this section, 
we consider the case where 
$h^{\w }$ belongs to $k[y_1^{\w }]\sm k$. 
In this case, 
there exists $l\in \N $ such that $h^{\w }\approx (y_1^{\w })^l$. 
Then, 
we have 
\begin{equation}\label{eq:z head3}
z_1^{\w }\approx (y_1^{\w })^{3l+2},\ \ 
z_2^{\w }\approx (y_1^{\w })^{2l+1},\ \ 
z_3^{\w }\approx (y_1^{\w })^l,\ \ 
\phi (f_2)^{\w }\approx (y_1^{\w })^{l+1}. 
\end{equation}
Put $S=\{ y_1,y_2\} $. 
Then, 
we have $\degw ^Sh=\degw h=l\degw y_1$, 
since $y_1^{\w }$ and $y_2^{\w }$ 
are algebraically independent over $k$.

First, assume that $l=1$. 
Then, 
we have $\degw ^Sh=\degw y_1<\degw y_2^2$. 
Hence, 
we may write 
$$
h=a_{1,0}y_1+a_{0,1}y_2+a_{0,0}, 
$$ 
where $a_{1,0},a_{0,1},a_{0,0}\in k$ 
with $a_{1,0}\neq 0$. 
We define $\phi '\in \Aut (\kx /k)$ 
by 
$$
\phi '(x_1)=a_{1,0}^2z_1+(z_3^3-2a_{1,0}z_2)z_3^2,\quad 
\phi '(x_2)=a_{1,0}z_2-z_3^3,\quad 
\phi '(x_3)=z_3. 
$$
Then, 
$\phi ^{-1}\circ \phi '$ belongs to $J(k[x_3];x_2,x_1)$. 
Hence, 
$\phi $ is wild if and only if $\phi '$ is wild. 
Consider the polynomial 
\begin{align*}
g:=z_3^2-a_{1,0}\phi (f_2)
&=(x_3+h)^2-a_{1,0}(f_2+hy_1)\\
&=h(2x_3+h-a_{1,0}y_1)-a_{1,0}f_2+x_3^2 \\
&=h(2x_3+a_{0,1}y_2+a_{0,0})-a_{1,0}f_2+x_3^2. 
\end{align*}
Since $\degw h=\degw y_1$ is greater than 
$\degw f_2$ and $\degw x_3^2$, 
we see that 
$g^{\w }=(a_{0,1}hy_2)^{\w }\approx (y_1y_2)^{\w }$ 
if $a_{0,1}\neq 0$, 
and $g^{\w }=(2hx_3)^{\w }\approx (y_1x_3)^{\w }$ otherwise. 
In either case, 
we have $\degw g>\degw y_1$, 
and $g^{\w }$ and $z_3^{\w }\approx y_1^{\w }$ 
are algebraically independent over $k$. 
A direct computation shows that 
\begin{align*}
\phi '(x_1)
&=a_{1,0}^2z_1+
\bigl(z_3^3-2a_{1,0}\bigl(y_2+\phi (f_2)z_3\bigr)\bigr)z_3^2\\
&=a_{1,0}^2z_1+
\bigl(z_3^4-2a_{1,0}\phi (f_2)z_3^2-2a_{1,0}y_2z_3\bigr)z_3\\
&=a_{1,0}^2z_1+\bigl(
g^2-a_{1,0}^2\phi (f_2)^2-2a_{1,0}y_2z_3\bigr)z_3\\
&=(g^2-2a_{1,0}y_2z_3)z_3+
a_{1,0}^2\bigl(z_1-\phi (f_2)^2z_3
\bigr)\\
&=(g^2-2a_{1,0}y_2z_3)z_3+
a_{1,0}^2\bigl(
y_1+(2y_2+\alpha _2^0)(f_2+hy_1)
\bigr), 
\end{align*}
where we use 
(\ref{eq:excep:z_2}) and (\ref{eq:excep:z_1}) 
for the first and last equalities. 
Since $\degw g^2z_3>3\degw y_1>\degw hy_1y_2$, 
this gives that 
$\phi '(x_1)^{\w }=(g^2z_3)^{\w }$. 
By (\ref{eq:excep:z_2}), 
we have 
$$
\phi '(x_2)
=a_{1,0}(y_2+\phi (f_2)z_3)-z_3^3
=a_{1,0}y_2-(z_3^2-a_{1,0}\phi (f_2))z_3
=a_{1,0}y_2-gz_3. 
$$
Hence, 
we get $\phi '(x_2)^{\w }=(-gz_3)^{\w }$. 
Thus, 
$\phi '(x_1)^{\w }$, 
$\phi '(x_2)^{\w }$ and $\phi '(x_3)^{\w }=z_3^{\w }$ 
are algebraically dependent over $k$, 
and are pairwise algebraically independent over $k$, 
since $g^{\w }$ and $z_3^{\w }$ 
are algebraically independent over $k$. 
It is easy to check that 
$(g^2z_3)^{\w }$, $(gz_3)^{\w }$ and $z_3^{\w }$ 
do not belong to 
$k[(gz_3)^{\w },z_3^{\w }]$, 
$k[(g^2z_3)^{\w },z_3^{\w }]$ 
and $k[(g^2z_3)^{\w },(gz_3)^{\w }]$, 
respectively. 
Therefore, 
$\phi '$ satisfies the conditions (1) and (2) 
after Theorem~\ref{thm:SUcriterion}. 
This proves that $\phi '$ is wild, 
thereby proving that $\phi $ is wild.

For the case of $l\geq 2$, 
we need the following lemma.

\begin{lem}\label{lem:f_2wedge f_3}
We have 
$\degw dz_2\wedge dz_3\geq \degw z_2x_3$. 
\end{lem}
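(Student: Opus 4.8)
The plan is to compute $dz_2\wedge dz_3$ explicitly and isolate a dominant term of $\w$-degree exactly $\degw z_2x_3$. Recall that $\phi $ fixes $y_1$ and $y_2$ (as $D_3=\Delta _{(y_1,y_2)}$ kills them), that $z_3=x_3+h$ with $h\in \ker D_3=k[y_1,y_2]$, and that $\phi (f_2)=y_1z_3-\theta (y_2)$. Substituting the latter into $z_2=y_2+\phi (f_2)z_3$ gives $z_2=y_2-\theta (y_2)z_3+y_1z_3^2$, whence
\begin{equation*}
dz_2\wedge dz_3=z_3^2\,dy_1\wedge dz_3+(1-\theta '(y_2)z_3)\,dy_2\wedge dz_3.
\end{equation*}
Since $h\in k[y_1,y_2]$, writing $h_i=\partial h/\partial y_i$ (formal derivatives in the algebraically independent $y_1,y_2$) we have $dz_3=dx_3+h_1\,dy_1+h_2\,dy_2$. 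First I would expand the two terms above in the ordered basis $dy_1\wedge dx_3,\ dy_2\wedge dx_3,\ dy_1\wedge dy_2$, obtaining coefficients $c_{13}=z_3^2$, $c_{23}=1-\theta '(y_2)z_3$, and $c_{12}=z_3^2h_2-(1-\theta '(y_2)z_3)h_1$. This step is routine and exact.

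The essential point is that the first term $z_3^2\,dy_1\wedge dx_3$ already has the right $\w$-degree: using $z_3^{\w}=h^{\w}\approx (y_1^{\w})^l$ from (\ref{eq:z head3}) and the fact that $\partial (y_1^{\w})/\partial x_1=2x_1x_3^3\neq 0$ (so $\degw dy_1\wedge dx_3=\degw y_1+\e_3$), one gets $\degw z_3^2\,dy_1\wedge dx_3=(2l+1)\degw y_1+\e_3=\degw z_2x_3$. So the task reduces to showing this degree survives when $dz_2\wedge dz_3$ is re-expressed in the standard basis $dx_i\wedge dx_j$. Here I would use that $\{y_1,y_2,x_3\}$ is a coordinate system (as $\det J\tilde\sigma =1$, so $dy_1\wedge dy_2\wedge dx_3=dx_1\wedge dx_2\wedge dx_3$), together with the identities $\Delta _{(y_1,y_2)}(x_3)=D_3(x_3)=1$ and $\Delta _{(y_1,y_2)}(x_2)=-D_3(x_2)$, which give the $(1,2)$- and $(1,3)$-minors of the Jacobian of $(y_1,y_2)$. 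Thus the $dx_1\wedge dx_2$-coefficient is exactly $c_{12}$, while the $dx_1\wedge dx_3$-coefficient equals $z_3^2\,\partial y_1/\partial x_1+c_{23}\,\partial y_2/\partial x_1-c_{12}D_3(x_2)$. A short computation (which I would record) yields the leading forms $D_3(x_2)^{\w}=x_1^2x_3^4$ and $(\partial y_1/\partial x_1)^{\w}=2x_1x_3^3$, and shows $\degw c_{23}\,\partial y_2/\partial x_1<(2l+1)\degw y_1-\e_1$ always.

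The proof then splits according to $\degw c_{12}$. If $\degw c_{12}D_3(x_2)<(2l+1)\degw y_1-\e_1$, the main term $z_3^2\,\partial y_1/\partial x_1$ dominates the $dx_1\wedge dx_3$-coefficient and we are done; if $\degw c_{12}D_3(x_2)>(2l+1)\degw y_1-\e_1$ (which forces $\degw c_{12}$ large), then $-c_{12}D_3(x_2)$ is the strictly dominant, hence uncancelled, summand and the resulting bound exceeds $\degw z_2x_3$. The hard part will be the boundary case $\degw c_{12}D_3(x_2)=(2l+1)\degw y_1-\e_1$, where the two top-degree contributions could cancel. I expect to show this boundary occurs precisely when $h_2=0$, i.e.\ $h\in k[y_1]$ (since $h_2\neq 0$ forces $\degw c_{12}=\degw z_3^2h_2\geq 2l\degw y_1$, landing in the strict case), and then to compute leading forms directly: with $z_3^{\w}=c(y_1^{\w})^l$ and $h_1^{\w}=cl(y_1^{\w})^{l-1}$ one finds $c_{12}^{\w}=-2c^2l\,x_1^{4l-1}x_3^{6l-1}$, so the two top terms contribute $2c^2x_1^{4l+1}x_3^{6l+3}$ and $2c^2l\,x_1^{4l+1}x_3^{6l+3}$, summing to $2c^2(1+l)x_1^{4l+1}x_3^{6l+3}\neq 0$. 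The non-cancellation is thus guaranteed by $l+1\neq 0$ in characteristic zero, and the lemma follows.
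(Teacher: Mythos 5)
Your argument is correct, and it takes a genuinely different route from the paper's proof. The paper stays at the level of $2$-forms throughout: it writes $dz_2\wedge dz_3=dy_2\wedge dz_3+z_3\eta $ with $\eta :=d\phi (f_2)\wedge dz_3$, expands $\eta =df_2\wedge dz_3+d(hy_1)\wedge dx_3+h\,dy_1\wedge dh$, and in each of the two cases ($h\in k[y_1]$ or not) exhibits a single summand of strictly largest $\w $-degree, namely $d(hy_1)\wedge dx_3$ of degree $\degw hy_1x_3$, respectively $h\,dy_1\wedge dh$ of strictly larger degree; the only inputs are the algebraic independence of $(hy_1)^{\w }$ and $x_3$, respectively of $y_1^{\w }$ and $y_2^{\w }$. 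Since the dominant summand always has strictly bigger degree than all the others, leading forms never need to be added, so cancellation never arises. You instead substitute $\phi (f_2)=y_1z_3-\theta (y_2)$ first, expand in the basis $dy_1\wedge dx_3$, $dy_2\wedge dx_3$, $dy_1\wedge dy_2$, descend to the standard basis via the Jacobian minors of $(y_1,y_2)$, and track the scalar coefficient of $dx_1\wedge dx_3$; your case split ($\partial h/\partial y_2\neq 0$ versus $h\in k[y_1]$) coincides with the paper's, but in the boundary case you must confront an honest potential cancellation, which you resolve by computing leading monomials and checking they add rather than cancel: $2c^2(1+l)\neq 0$ in characteristic zero, where $c\in k^{\times }$ is given by $h^{\w }=c(y_1^{\w })^l$. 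I verified your intermediate claims and they are right, including $D_3(x_2)^{\w }=x_1^2x_3^4$ (which itself hides a near-cancellation: $D_3(x_2)=2y_1x_3-\theta (y_2)$, whose two leading forms $2x_1^2x_3^4$ and $x_1^2x_3^4$ subtract to $x_1^2x_3^4$), so the ``short computation'' you defer is genuinely needed but does come out as you predict. One slip: the displayed identity ``$\Delta _{(y_1,y_2)}(x_2)=-D_3(x_2)$'' cannot be what you mean, since $\Delta _{(y_1,y_2)}=D_3$; the correct statement, which is the one you actually apply, is that the $dx_1\wedge dx_3$-coefficient of $dy_1\wedge dy_2$ equals $-D_3(x_2)$. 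In sum, your route buys explicit information (the exact leading form of the $dx_1\wedge dx_3$-coefficient), at the price of delicate sign and minor bookkeeping; the paper's route buys brevity and robustness, needing neither $D_3(x_2)$ nor any minor nor any non-cancellation check.
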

\begin{proof}
Since 
$z_2=y_2+\phi (f_2)z_3$, 
we may write 
$dz_2\wedge dz_3=dy_2\wedge dz_3+z_3\eta $, 
where $\eta :=d\phi (f_2)\wedge dz_3$. 
We show that $\degw \eta \geq \degw hy_1x_3$ below. 
Then, 
it follows that 
$$
\degw z_3\eta \geq \degw \degw hy_1x_3z_3
>\degw y_2z_3\geq \degw dy_2\wedge dz_3. 
$$
This implies that 
$\degw dz_2\wedge dz_3=\degw  z_3\eta$, 
and is at least 
$$
\degw hy_1x_3z_3=(2l+1)\degw y_1+\degw x_3=\degw z_2x_3 
$$
by (\ref{eq:z head3}). 
Thus, the lemma is proved.

Now, we show that $\degw \eta \geq \degw hy_1x_3$. 
Since $\phi (f_2)=f_2+hy_1$ and $z_3=x_3+h$, 
we have 
$$
\eta =d(f_2+hy_1)\wedge dz_3
=df_2\wedge dz_3 +d(hy_1)\wedge d(x_3+h)
=\eta '
+hdy_1\wedge dh, 
$$
where $\eta ':=df_2\wedge dz_3 +d(hy_1)\wedge dx_3$. 
Since 
$(hy_1)^{\w }\approx (y_1^{\w })^{l+1}$ and $x_3$ 
are algebraically independent over $k$, 
we have 
$$
\degw d(hy_1)\wedge dx_3
=\degw hy_1x_3>\degw f_2z_3\geq \degw df_2\wedge dz_3. 
$$
Hence, 
we get $\degw \eta '=\degw hy_1x_3$. 
If $h$ belongs to $k[y_1]$, 
then we have $dy_1\wedge dh=0$, 
and so $\degw \eta =\degw \eta '=\degw hy_1x_3$. 
Thus, the assertion is true. 
Assume that $h$ does not belong to $k[y_1]$. 
Since $h$ is an element of $\ker D_3=k[y_1,y_2]$, 
we get 
$$
dy_1\wedge dh=\frac{\partial h}{\partial y_2}dy_1\wedge dy_2
\text{ \ and \ }\frac{\partial h}{\partial y_2}\neq 0. 
$$ 
Since $y_1^{\w }$ and $y_2^{\w }$ 
are algebraically independent over $k$, 
we have $\degw dy_1\wedge dy_2=\degw y_1y_2$. 
Hence, 
it follows that
$$
\degw hdy_1\wedge dh
=\degw h\frac{\partial h}{\partial y_2}dy_1\wedge dy_2
\geq \degw hy_1y_2>\degw hy_1x_3=\degw \eta '. 
$$
Since $\eta =\eta '+hdy_1\wedge dh$, 
this implies that $\degw \eta =\degw hdy_1\wedge dh$, 
and is greater than $\degw hy_1x_3$. 
Therefore, 
the assertion is true. 
\end{proof}

Now, 
assume that $l=2$. 
Then, we have $\degw ^Sh=\degw h=\degw y_1^2$. 
Since $\degw y_2^3<\degw y_1^2<\degw y_2^4$ 
by (\ref{eq:y head}), 
we may write 
$$
h=a_{2,0}y_1^2+a_{0,3}y_2^3+a_{1,1}y_1y_2
+a_{0,2}y_2^2+a_{1,0}y_1+a_{0,1}y_2+a_{0,0}, 
$$
where $a_{i,j}$'s are elements of $k$ 
such that $a_{2,0}\neq 0$. 
Define $\psi \in \Aut (\kx /k)$ by 
$$
\psi (x_1)=f:=a_{2,0}z_1+(a_{1,0}z_2-z_3^3)z_3
$$ 
and $\psi (x_i)=z_i$ for $i=2,3$. 
Then, $\phi ^{-1}\circ \psi $ is elementary. 
Hence, 
$\phi $ is wild if and only if $\psi $ is wild. 
So we prove that $\psi $ is wild.

In the notation above, 
we have the following lemma.

\begin{lem}\label{lem:l=2}
\noindent{\rm (i)} 
$\degw h^3x_3\leq \degw f<\degw h^4$. 

\noindent{\rm (ii)} 
$f^{\w }$ and $y_1^{\w }$ 
are algebraically independent over $k$. 
\end{lem}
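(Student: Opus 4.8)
The plan is to substitute the explicit expressions for $z_1,z_2,z_3$ into the definition $f=a_{2,0}z_1+(a_{1,0}z_2-z_3^3)z_3=a_{2,0}z_1+a_{1,0}z_2z_3-z_3^4$ and track the $\w$-leading behaviour, where $\w=(\e_1,\e_2,\e_3)$ has $\rank\w=3$. I would first record the relevant $\w$-degrees: from (\ref{eq:y head}) and $f_2^{\w}=x_1x_3$ we have $\degw y_1=2\e_1+3\e_3$, $\degw y_2=\e_1+2\e_3$, $\degw f_2=\e_1+\e_3$, $\degw h=2\degw y_1$, $\degw x_3=\e_3$, and $h^{\w}=a_{2,0}(y_1^{\w})^2$ (so $z_3^{\w}=h^{\w}$ and $\phi(f_2)^{\w}=(hy_1)^{\w}$, since $hy_1$ dominates $f_2$ in $\phi(f_2)=f_2+hy_1$). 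I would also write $h=a_{2,0}y_1^2+h_1$ with $\degw h_1<2\degw y_1$.

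For the upper bound in (i): using (\ref{eq:excep:z_1}) and $z_3=x_3+h$, the only terms of $a_{2,0}z_1=a_{2,0}y_1+a_{2,0}(2y_2+\alpha_2^0)\phi(f_2)+a_{2,0}\phi(f_2)^2z_3$ and of $-z_3^4$ having $\w$-degree $\geq\degw h^4=8\degw y_1$ are $a_{2,0}\phi(f_2)^2z_3$ and $-z_3^4$, whose $\w$-leading parts are $a_{2,0}(h^{\w})^3(y_1^{\w})^2=a_{2,0}^4(y_1^{\w})^8$ and $-(h^{\w})^4=-a_{2,0}^4(y_1^{\w})^8$; these cancel, which is exactly the cancellation the corrective summand $(a_{1,0}z_2-z_3^3)z_3$ is engineered to produce. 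As every remaining term has $\w$-degree strictly below $8\degw y_1$, I obtain $\degw f<\degw h^4$.

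For the lower bound in (i) I would factor $a_{2,0}\phi(f_2)^2z_3-z_3^4=z_3\bigl(a_{2,0}\phi(f_2)^2-z_3^3\bigr)$ and expand $a_{2,0}\phi(f_2)^2-z_3^3=(a_{2,0}h^2y_1^2-h^3)+2a_{2,0}hy_1f_2+a_{2,0}f_2^2-3h^2x_3-3hx_3^2-x_3^3$, noting the further cancellation $a_{2,0}h^2y_1^2-h^3=-a_{2,0}^2y_1^4h_1-2a_{2,0}y_1^2h_1^2-h_1^3$. Multiplying by $z_3$ (leading part $h$) and adding $a_{1,0}\phi(f_2)z_3^2$ shows that the candidate $\w$-leading terms of $f$ are $-3h^3x_3$ (of degree exactly $\degw h^3x_3=(12,0,19)$, dominant when $h_1\in k$), the piece $h(a_{2,0}h^2y_1^2-h^3)$ (of degree $6\degw y_1+\degw h_1$), and $a_{1,0}h^3y_1$ (of degree $7\degw y_1$). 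Since $\degw h_1<2\degw y_1$, all of these lie in the interval $[\degw h^3x_3,\ \degw h^4)$, giving $\degw f\geq\degw h^3x_3$.

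For (ii) I would reduce to showing $\degw f\neq 7\degw y_1$. Because $\rank\w=3$, both $f^{\w}$ and $y_1^{\w}$ are monomials, hence algebraically dependent over $k$ precisely when $\degw f$ and $\degw y_1$ are $\Z$-linearly dependent; writing $\degw f=t\,\degw y_1=(2t,0,3t)$ and imposing the range $(12,0,19)=\degw h^3x_3\leq\degw f<\degw h^4=(16,0,24)$ from (i) forces $t=7$ in the lexicographic order. Thus it remains to check that $f$ has no surviving term of $\w$-degree $7\degw y_1=(14,0,21)$: the only two contributions of that degree, namely $a_{1,0}h^3y_1$ and (when $\degw h_1=\degw y_1$, i.e. $h_1^{\w}=a_{1,0}y_1^{\w}$) the piece $-a_{2,0}^2(hy_1^4h_1)^{\w}$, carry opposite $\w$-leading coefficients $\pm a_{1,0}a_{2,0}^3(y_1^{\w})^7$ and cancel, so $\degw f\neq 7\degw y_1$ and $f^{\w},y_1^{\w}$ are algebraically independent. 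The main obstacle throughout is that the actual $\w$-leading form of $f$ is coefficient-dependent (determined by which $a_{i,j}$ vanish, equivalently by $\degw h_1$), so both the lower bound and the non-proportionality split into subcases and the two cancellations, at $\w$-degrees $8\degw y_1$ and $7\degw y_1$, must each be verified by hand; it is precisely the $\rank\w=3$ monomial reduction that makes (ii) tractable, since it lets me avoid pinning down $f^{\w}$ explicitly in every case.
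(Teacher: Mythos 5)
Your expansion, the two cancellations you locate, and the rank-three reduction of (ii) to $\degw f\neq 7\degw y_1$ are exactly the computations behind the paper's proof, but the way you close both parts has a genuine gap. A list of candidate leading terms lying in $[\degw h^3x_3,\degw h^4)$ does not yield $\degw f\geq \degw h^3x_3$: a lower bound requires a degree at which the \emph{full} homogeneous component of $f$ is nonzero, and your own observation shows that the two candidates in degree $7\degw y_1$ cancel identically. What survives that cancellation, namely $a_{1,0}\phi(f_2)z_3^2-z_3h^2h_1$ stripped of its leading forms, is not a single product, so its degree cannot be read off as a sum of degrees. Concretely, if $h_1=a_{1,0}y_1+a_{0,1}y_2+a_{0,0}$ with $a_{1,0}a_{0,1}\neq 0$, this remainder equals $z_3\bigl(a_{1,0}(f_2x_3+f_2h+hy_1x_3)-h^2(a_{0,1}y_2+a_{0,0})\bigr)$, which has degree $(13,0,20)>\degw h^3x_3=(12,0,19)$; hence $f^{\w}=-a_{0,1}(h^{\w})^3y_2^{\w}$, a term that is on neither of your candidate lists, and you cannot fall back on $-3h^3x_3$ being leading. (Also, when $h_1\in k^{\times}$ the candidate $-h^3h_1$ has degree $6\degw y_1<\degw h^3x_3$, so ``all of these lie in the interval'' is literally false.) The same issue infects (ii): when $\degw h_1>\degw y_1$ the piece $-z_3h^2h_1$ has degree above $7\degw y_1$, its non-leading homogeneous components may well land in degree $7\degw y_1$, and nothing in your argument rules this out; in that regime the correct reason for (ii) is that $\degw f=6\degw y_1+\degw h_1$ is not an integer multiple of $\degw y_1$, a case your write-up never separates out.

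The missing device, and the one the paper uses, is to keep the entire high part of $f$ as one honest product, for which degrees are additive and no cancellation can occur. Setting $g_2:=a_{2,0}\phi(f_2)^2+a_{1,0}z_2-z_3^3$, one has exactly $f=a_{2,0}\bigl(y_1+(2y_2+\alpha_2^0)\phi(f_2)\bigr)+g_2z_3$, and the first summand has degree $\degw hy_1y_2$, far below $\degw h^3x_3$; hence $\degw f=\degw g_2+\degw h$ and $f^{\w}=g_2^{\w}h^{\w}$, unconditionally. Your two cancellations then take place \emph{inside} $g_2$: with $\bar{h}:=h-a_{2,0}y_1^2-a_{1,0}y_1$ the paper rewrites $g_1:=a_{1,0}z_2-h^2(h-a_{2,0}y_1^2)=a_{1,0}\bigl(y_2+f_2(x_3+h)+hy_1x_3\bigr)-h^2\bar{h}$ and $g_2=g_1+a_{2,0}(f_2^2+2f_2hy_1)-(x_3^3+3hx_3^2+3h^2x_3)$, so that $g_2^{\w}\approx(h^2\bar{h})^{\w}$ if $\bar{h}\notin k$ and $g_2^{\w}\approx(h^2x_3)^{\w}$ otherwise. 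In both cases $\degw h^2x_3\leq\degw g_2<\degw h^3$, which gives (i), and $g_2^{\w}$ is a monomial algebraically independent of $y_1^{\w}$, which gives (ii) because $f^{\w}\approx g_2^{\w}(y_1^{\w})^2$. Patching your term-by-term argument case by case amounts to reconstructing precisely this factorization.
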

\begin{proof}
Set $\bar{h}=h-(a_{2,0}y_1^2+a_{1,0}y_1)$. 
Then, 
by (\ref{eq:excep:z_2}), 
we have 
\begin{align*}
g_1&:=a_{1,0}z_2-h^2(h-a_{2,0}y_1^2)\\
&=a_{1,0}\bigl(y_2+(f_2+hy_1)(x_3+h)\bigr)
-h^2(\bar{h}+a_{1,0}y_1)\\
&=a_{1,0}\bigl(y_2+f_2(x_3+h)+hy_1x_3\bigr)-h^2\bar{h}. 
\end{align*}
If $\bar{h}\neq a_{0,0}$, 
then $\bar{h}^{\w }$ is equal to one of 
$(y_2^3)^{\w }$, $(y_1y_2)^{\w }$, 
$(y_2^2)^{\w }$ and $y_2^{\w }$ 
up to nonzero constant multiples, 
since $y_1^{\w }$ and $y_2^{\w }$ 
are algebraically independent over $k$. 
If this is the case, then 
we have $\degw \bar{h}>\degw x_3$. 
Hence, we get 
$\degw h^2\bar{h}>\degw hy_1x_3$. 
Thus, 
we know that 
$g_1^{\w }\approx (h^2\bar{h})^{\w }$ 
and $\degw g_1>\degw h^2x_3$. 
If $\bar{h}=a_{0,0}$, 
then we have $\degw g_1\leq \degw h^2$. 
Since $z_3=x_3+h$, we have 
\begin{equation}\label{eq:excep:g_2}
\begin{aligned}
g_2&:=a_{2,0}(f_2+hy_1)^2+a_{1,0}z_2-z_3^3\\
&=a_{2,0}(f_2^2+2f_2hy_1)-(x_3^3+3hx_3^2+3h^2x_3)+g_1. 
\end{aligned}
\end{equation}
From this, 
we see that $(g_2-g_1)^{\w }\approx (h^2x_3)^{\w }$. 
Therefore, we get 
$$
g_2^{\w }\approx g_1^{\w }
\approx (h^2\bar{h})^{\w } \ 
\text{ if } \ \bar{h}\neq a_{0,0},\ 
\text{ and } \ 
g_2^{\w }=(g_2-g_1)^{\w }\approx (h^2x_3)^{\w }\ \text{ otherwise}.
$$
In either case, we have 
$\degw h^2x_3\leq \degw g_2<\degw h^3$. 
If $\bar{h}\neq a_{0,0}$, 
then $g_2^{\w }\approx (h^2\bar{h})^{\w }$ 
is equal to one of $(y_1^4y_2^3)^{\w }$, $(y_1^5y_2)^{\w }$, 
$(y_1^4y_2^2)^{\w }$ and $(y_1^4y_2)^{\w }$ 
up to nonzero constant multiples. 
Since $y_1^{\w }$ and $y_2^{\w }$ are 
algebraically independent over $k$, 
we know that $g_2^{\w }$ and $y_1^{\w }$ 
algebraically independent over $k$. 
The same holds when $\bar{h}=a_{0,0}$ 
since $y_1^{\w }$ and $x_3$ are 
algebraically independent over $k$. 
Thus, 
it suffices to show that 
$f^{\w }=(g_2h)^{\w }$. 
By (\ref{eq:excep:z_1}), 
we have 
$$
f=a_{2,0}z_1+g_2z_3-a_{2,0}(f_2+hy_1)^2z_3
=a_{2,0}\bigl( y_1+(2y_2+\alpha _2^0)(f_2+hy_1)
\bigr)+g_2z_3. 
$$
Since $\degw h^2x_3\leq \degw g_2$, 
it follows that 
$$
\degw (f-g_2z_3)=\degw hy_1y_2 <\degw h^3x_3
\leq \degw g_2h=\degw g_2z_3. 
$$
Therefore, 
we get $f^{\w }=(g_2z_3)^{\w }=(g_2h)^{\w }$. 
\end{proof}

We prove that 
$\psi $ admits no elementary reduction 
and no Shestakov-Umirbaev reduction 
for the weight $\w $. 
Then, 
it follows that $\psi $ 
is wild due to Theorem~\ref{thm:SUcriterion}. 
By (\ref{eq:z head3}) with $l=2$, 
we have $z_2^{\w }\approx (y_1^{\w })^5$ 
and $z_3^{\w }\approx (y_1^{\w })^2$. 
Hence, 
$f^{\w }$ and $z_i^{\w }$ 
are algebraically independent over $k$ 
for $i=2,3$ in view of Lemma~\ref{lem:l=2} (ii). 
Thus, 
we know that  
$k[f,z_i]^{\w }=k[f^{\w },z_i^{\w }]$ for $i=2,3$. 
Since $z_i^{\w }$ does not belong to $k[z_j^{\w }]$ 
for $(i,j)=(2,3),(3,2)$, 
we see that 
$z_i^{\w }$ does not belong to $k[f^{\w },z_j^{\w }]$ 
for $(i,j)=(2,3),(3,2)$. 
Therefore, 
$\psi (x_i)^{\w }$ does not belong to 
$k[\psi (x_1),\psi (x_j)]^{\w }$ for $(i,j)=(2,3),(3,2)$. 
To conclude that $\psi $ admits no 
elementary reduction for the weight $\w $, 
it suffices to check that 
$f^{\w }$ does not belong to $k[z_2,z_3]^{\w }$.

Suppose to the contrary that 
$f^{\w }$ belongs to $k[z_2,z_3]^{\w }$. 
Then, 
there exists $g\in k[z_2,z_3]$ such that 
$g^{\w }=f^{\w }$. 
By Lemma~\ref{lem:l=2} (ii), 
$f^{\w }$ cannot be an element of $k[y_1^{\w }]$. 
Hence, 
$g^{\w }$ does not belong to $k[z_2^{\w },z_3^{\w }]$. 
This implies that $\degw g<\degw ^Tg$, 
where $T:=\{ z_2,z_3\} $. 
By (i) and (ii) of Lemma~\ref{lem:SUineq1}, 
there exist $p,q\in \N $ with $\gcd (p,q)=1$ 
such that $(z_2^{\w })^p\approx (z_3^{\w })^q$ 
and 
$$
\degw f=\degw g\geq 
q\degw z_3+\degw dz_2\wedge dz_3-\degw z_2-\degw z_3. 
$$
Then, we have $(p,q)=(2,5)$. 
Since $\degw dz_2\wedge dz_3\geq \degw z_2x_3$ 
by Lemma~\ref{lem:f_2wedge f_3}, 
the right-hand side of the preceding inequality 
is at least 
\begin{align*}
5\degw z_3+\degw z_2x_3-\degw z_2-\degw z_3
>4\degw z_3=\degw h^4. 
\end{align*}
Hence, we get $\degw f>\degw h^4$. 
This contradicts Lemma~\ref{lem:l=2} (i). 
Thus, 
$f^{\w }$ does not belong to $k[z_2,z_3]^{\w }$. 
Therefore, 
$\psi $ admits no elementary reduction 
for the weight $\w $.

Next, 
we check that $\psi $ admits no Shestakov-Umirbaev 
reduction for the weight $\w $ 
by means of Lemma~\ref{lem:sulem}. 
By Lemma~\ref{lem:l=2} (i), 
we have $\degw f\geq \degw h^3x_3>6\degw y_1$. 
Since $\degw z_2=5\degw y_1$ 
and $\degw z_3=2\degw y_1$, 
it follows that 
$\degw \psi (x_1)>\degw \psi (x_2)>\degw \psi (x_3)$. 
Clearly, 
$3\degw \psi (x_2)=15\degw y_1$
is not equal to $4\degw \psi (x_3)=8\degw y_1$. 
Since $\rank \w =3$, 
we know by Lemma~\ref{lem:l=2} (ii) that 
$\degw f$ and $\degw y_1$ are linearly independent. 
Hence, 
$2\degw \psi (x_1)$ is not equal to 
$m\degw \psi (x_i)$ for any $m\in \N $ and $i=2,3$. 
Thus, 
we conclude from Lemma~\ref{lem:sulem} that 
$\psi $ admits no Shestakov-Umirbaev reduction 
for the weight $\w $. 
This proves that $\psi $ is wild, 
and thereby proving that $\phi $ is wild.

Finally, 
we prove that $\phi $ is wild when $l\geq 3$ 
using Lemma~\ref{lem:awc}. 
Define $\iota \in \Aut (\kx /k[x_2,x_3])$ 
by $\iota (x_1)=-x_1$. 
First, 
we show that 
$D':=
\iota ^{-1}\circ (f_{\theta }D_{\theta })\circ \iota $ 
is equal to $-f_2\tilde{D}$. 
Since 
$\iota ^{-1}(f_{\theta })=-x_1x_3+\theta (x_2)=-f_2$ 
and $D_{\theta }(-x_1)=\theta '(x_2)=\tilde{D}(x_1)$, 
we have 
$$
D'(x_1)
=\iota ^{-1}(f_{\theta })
\iota ^{-1}\bigl(D_{\theta }(-x_1)\bigr)
=-f_2\theta '(x_2)
=-f_2\tilde{D}(x_1). 
$$
Since $D_{\theta }(x_2)=\tilde{D}(x_2)=x_3$, 
$D_{\theta }(x_3)=\tilde{D}(x_3)=0$ 
and $\iota (x_i)=x_i$ for $i=2,3$, 
we have $D'(x_i)
=\iota ^{-1}(f_{\theta })
\iota ^{-1}\bigl(D_{\theta }(x_i)\bigr)
=-f_2\tilde{D}(x_i)$ 
for $i=2,3$. 
Hence, 
we get $D'=-f_2\tilde{D}$. 
Thus, 
it follows that 
$$
\iota ^{-1}\circ \sigma _{\theta }\circ \iota 
=\iota ^{-1}\circ (\exp f_{\theta }D_{\theta })\circ \iota 
=\exp D'
=\exp (-f_2\tilde{D})
=\tilde{\sigma }. 
$$
Since $\phi =\exp hD_3$ fixes $f_3=\tilde{\sigma }(x_1)$, 
we know that 
$\phi ':=\iota \circ \phi \circ \iota ^{-1}$ 
fixes 
$\iota (\tilde{\sigma }(x_1))
=\sigma _{\theta }(\iota (x_1))=-\sigma _{\theta }(x_1)$. 
Therefore, 
$\phi '$ belongs to $\Aut (\kx /k[\sigma _{\theta }(x_1)])$. 
So we prove that $\phi '$ is wild 
by means of Lemma~\ref{lem:awc}. 
Then, 
it follows that $\phi $ is wild.

Define $\gamma _0^{\w },\ldots ,\gamma _3^{\w }$ 
as in Section~\ref{sect:proof:tacoord} 
for $\phi '$. 
Thanks to Lemma~\ref{lem:awc}, 
it suffices to verify that 
\begin{equation}\label{eq:excep:awc}
\gamma ^{\w }_0<\gamma ^{\w }_1,\quad 
\gamma ^{\w }_2>\gamma ^{\w }_3,\quad 
2\gamma ^{\w }_1>3\gamma ^{\w }_2+2\gamma ^{\w }_3\quad \text{and}\quad 
\gamma ^{\w }_1\geq \gamma ^{\w }_2+2\gamma ^{\w }_3. 
\end{equation}
For $i=2,3$, 
we have $\phi '(x_i)=\iota (\phi (x_i))=\iota (z_i)$. 
Since $\iota $ preserves the $\w $-degree of 
each element of $\kx $, 
it follows that $\degw \phi '(x_i)=\degw z_i$ 
for $i=2,3$. 
Since 
$\phi '(f_{\theta })=\iota (\phi (-f_2))=-\iota (\phi (f_2))$, 
we have $\degw \phi '(f_{\theta })=\degw \phi (f_2)$ 
similarly. 
Put $\gamma =\degw y_1$. 
Then, 
it follows from (\ref{eq:z head3}) that 
\begin{equation}\label{eq:excep:ta:z'}
\degw \phi '(x_2)=(2l+1)\gamma ,\ \ 
\degw \phi '(x_3)=l\gamma ,\ \ 
\degw \phi '(f_{\theta })=(l+1)\gamma . 
\end{equation}
Since $l\geq 3$ by assumption, 
$2l+1$ is not a multiple of $l$. 
Hence, we know that 
$\phi '(x_2)^{\w }$ does not belong to $k[\phi '(x_3)]^{\w }$. 
By the definition of $\gamma _2^{\w }$, 
this implies that 
$\gamma _2^{\w }=\degw \phi '(x_2)$. 
Hence, 
$\gamma _2^{\w }$ 
is greater than $\gamma _3^{\w }=\degw \phi '(x_3)$ 
by (\ref{eq:excep:ta:z'}). 
This proves the second part of (\ref{eq:excep:awc}). 
Since $l+1$ is not a multiple of $l$, 
we know that 
$\phi '(f_{\theta })^{\w }$ 
does not belong to $k[\phi '(x_3)]^{\w }$. 
By the definition of $\gamma _0^{\w }$, 
this implies that 
$\gamma _0^{\w }=\degw \phi '(f_{\theta })$. 
Hence, $\gamma _0^{\w }$ 
is less than $\gamma _2^{\w }=\degw \phi '(x_2)$ 
by (\ref{eq:excep:ta:z'}). 
By the definition of $\gamma _1^{\w }$, 
we may write 
$\gamma _1^{\w }=\eta (\theta ;\phi '(x_3),\phi '(x_2))$ 
in the notation of Lemma~\ref{lem:91113}. 
Note that 
$$
\theta (\phi '(x_2))+\rho \bigl(\phi '(x_2),\phi '(x_3)\bigr)\phi '(x_3)
=\iota 
\bigl( 
\theta (z_2)+\rho (z_2,z_3)z_3
\bigr)
$$
for any $\rho (y,z)\in k[y,z]$. 
Since $\iota $ preserves the $\w $-degree, 
it follows that 
$$
\gamma _1^{\w }=
\eta (\theta ;\phi '(x_3),\phi '(x_2))=
\eta (\theta ;z_3,z_2). 
$$
By (\ref{eq:z head3}), 
we see that (1) and (2) 
before Lemma~\ref{lem:91113} are fulfilled 
for $f=z_3$ and $g=z_2$. 
Clearly, 
(3) is satisfied. 
Moreover, 
the condition (i) of Lemma~\ref{lem:91113} is satisfied, 
because $\deg _z\theta (z)=2$, 
$\degw dz_3\wedge dz_2>\degw z_2$ 
by Lemma~\ref{lem:f_2wedge f_3}, 
$(2l+1)\degw z_3=l\degw z_2$ 
by (\ref{eq:z head3}), 
and $l\geq 3$ by assumption. 
Thus, we conclude 
that $\gamma _1^{\w }=\eta (\theta ;z_3,z_2)$ 
is greater than 
$$
\degw z_3+\frac{3}{2}\degw z_2\quad\text{and}\quad 
2\degw z_3+\degw z_2. 
$$
Since $\degw z_i=\degw \phi '(x_i)=\gamma _i^{\w }$ 
for $i=2,3$, 
this implies the last two parts of (\ref{eq:excep:awc}). 
Since $\gamma _0^{\w }<\gamma _2^{\w }$ as mentioned, 
it follows that $\gamma _0^{\w }<\gamma _1^{\w }$. 
Therefore, 
we get the four inequalities of (\ref{eq:excep:awc}). 
This proves that $\phi '$ is wild, 
and thereby proving that $\phi $ is wild.

This completes the proof of Theorem~\ref{thm:lsc1} (iii) 
in the case of (w2), 
and thus completing the proof of Theorem~\ref{thm:lsc1}.

\section{Plinth ideals (I)} 
\setcounter{equation}{0}
\label{sect:plinth1}

The goal of this section 
is to prove Theorem~\ref{prop:rank}. 
First, we remark that 
$$
\pl \sigma \circ D\circ \sigma ^{-1}=\sigma (\pl D) 
$$
for each $\sigma \in \Aut (\kx /k)$ 
and $D\in \Der _k\kx $, 
since 
$$
\ker \sigma \circ D\circ \sigma ^{-1}=\sigma (\ker D)
\quad \text{ and }\quad 
(\sigma \circ D\circ \sigma ^{-1})(\kx )=\sigma (D(\kx )). 
$$
By definition, we have 
$$
\rank \sigma \circ D\circ \sigma ^{-1}=\rank D. 
$$

\begin{lem}\label{lem:slice irred}
For $D\in \lnd _k\kx $, 
the following conditions are equivalent$:$ 

\noindent{\rm (1)} $\pl D=\ker D$. 

\noindent{\rm (2)} $D(s)=1$ for some $s\in \kx $. 

\noindent{\rm (3)} 
$D$ is irreducible and $\rank D=1$. 
\end{lem}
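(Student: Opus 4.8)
The plan is to prove the formal equivalence (1)$\Leftrightarrow$(2) first, and then to establish (2)$\Leftrightarrow$(3); the passage between the algebraic condition of possessing a slice and the geometric condition of being irreducible of minimal rank is where the real content lies.

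First I would dispose of (1)$\Leftrightarrow$(2), which is immediate from the definition $\pl D=D(\kx)\cap \ker D$. This is an ideal of $\ker D$, and $1$ belongs to $\ker D$ since $D(1)=0$. Hence (1) holds if and only if the ideal $\pl D$ contains the unit $1$, which in turn holds if and only if $1$ belongs to $D(\kx)$, i.e. $D(s)=1$ for some $s\in \kx$. This yields both implications at once.

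Next, for (2)$\Rightarrow$(3) the irreducibility is formal: if $D(\kx)$ were contained in a proper principal ideal $p\kx$, then $1=D(s)$ would lie in $p\kx$, forcing $p\in k^{\times}$. For the rank I would invoke the Slice Theorem (see e.g.~\cite{Essen}): since $D(s)=1$ one has $\kx=(\ker D)[s]$ with $D=\partial /\partial s$. By Miyanishi's theorem (\cite{Miyanishi}) the kernel $\ker D$ is a polynomial ring $k[f,g]$ in two variables over $k$, whence $\kx=k[f,g,s]$. Moreover $f,g$ are algebraically independent, and since $D(s)\neq 0$ the element $s$ is transcendental over $Q(\ker D)=k(f,g)$ (cf.~\cite{Lang}), so $f,g,s$ are three algebraically independent generators of the polynomial ring $\kx$; therefore $(f,g,s)$ is a system of coordinates. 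Defining $\sigma \in \Aut (\kx /k)$ by $\sigma (x_1)=f$, $\sigma (x_2)=g$, $\sigma (x_3)=s$, we get $D(\sigma (x_1))=D(\sigma (x_2))=0$ and $D(\sigma (x_3))=1\neq 0$, so that $\rank D\leq 1$; as $D\neq 0$ this gives $\rank D=1$. I expect this rank computation to be the main obstacle, since it rests on the two external inputs above (equivalently, on the fact that an irreducible element of $\lnd _k\kx$ admitting a slice is conjugate to $\partial /\partial x_3$).

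Finally, for (3)$\Rightarrow$(2), I would argue directly from the structure of rank-one derivations. By the definition of rank, $\rank D=1$ provides $\sigma \in \Aut (\kx /k)$ annihilating two of the coordinates $\sigma (x_1),\sigma (x_2),\sigma (x_3)$; composing with a permutation of the variables if necessary, I may assume that $D_0:=\sigma ^{-1}\circ D\circ \sigma$ satisfies $D_0(x_2)=D_0(x_3)=0$ and $D_0(x_1)\neq 0$. Then $D_0$ is a $k[x_2,x_3]$-derivation, so $D_0=h\,\partial /\partial x_1$ with $h:=D_0(x_1)$, and local nilpotency of $D_0$ forces $\partial h/\partial x_1=0$ (cf.~\cite[Corollary 1.3.34]{Essen}), so $h\in k[x_2,x_3]\sm \zs$. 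Conjugation preserves irreducibility, because $D_0(\kx)=\sigma ^{-1}(D(\kx))$ and $\sigma$ carries proper principal ideals to proper principal ideals; hence the common factor of $D_0(x_1)=h$, $D_0(x_2)=0$, $D_0(x_3)=0$, namely $h$, must be a unit, i.e. $h\in k^{\times}$. Then $s_0:=h^{-1}x_1$ satisfies $D_0(s_0)=1$, and $s:=\sigma (s_0)$ satisfies $D(s)=\sigma (D_0(s_0))=1$, establishing (2) and closing the equivalences.
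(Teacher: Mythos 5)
Your proof is correct and follows essentially the same route as the paper's: (1)$\Leftrightarrow$(2) directly from the definition of $\pl D$; (2)$\Rightarrow$(3) via the slice theorem $\kx =(\ker D)[s]$ combined with Miyanishi's theorem $\ker D=k[f,g]$; and (3)$\Rightarrow$(2) by conjugating a rank-one irreducible derivation to the form $h\,\partial /\partial x_1$ with $h\in k[x_2,x_3]$ and using irreducibility to force $h\in k^{\times }$. The only difference is that you spell out details the paper leaves implicit (algebraic independence of $f,g,s$, and why conjugation preserves irreducibility), which is harmless.
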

\begin{proof}
By the definition of $\pl D$, 
we see that (1) and (2) are equivalent. 
Since $D(s)=1$ belongs to no proper ideal of $\kx $, 
(2) implies that $D$ is irreducible. 
Recall that, 
if $D(s)=1$ for $D\in \lnd _k\kx $ and $s\in \kx $, 
then we have 
$\kx =(\ker D)[s]$ 
(cf.~\cite[Proposition 1.3.21]{Essen}). 
By Miyanishi~\cite{MiyanishiD}, 
we have $\ker D=k[f,g]$ for some $f,g\in \kx $ 
for any $D\in \lnd _k\kx \sm \zs $. 
Hence, 
(2) implies that $k[f,g,s]=\kx $ for some $f,g\in \kx $, 
and so implies that $\rank D=1$. 
Thus, 
(2) implies (3). 
If $\rank D=1$ for $D\in \lnd _k\kx $, 
then we have 
$\sigma \circ D\circ \sigma ^{-1}=f\partial /\partial x_1$ 
for some $\sigma \in \Aut (\kx /k)$ and $f\in k[x_2,x_3]\sm \zs $. 
If furthermore $D$ is irreducible, 
then $f$ belongs to $k^{\times }$. 
When this is the case, $s:=f^{-1}\sigma (x_1)$ belongs to $\kx $, 
and satisfies $D(s)=1$. 
Thus, 
(3) implies (2). 
Therefore, 
(1), (2) and (3) are equivalent. 
\end{proof}

We begin with the proof of (i) and (ii) of Theorem~\ref{prop:rank}. 
If $t_0=1$, 
then we have $I=\{ 1\} $. 
From (\ref{eq:D_1}), 
we see that 
$\rank D_1=1$ and 
$$
\pl D_1=x_1\kx \cap k[x_1,x_3]=
x_1k[x_1,x_3]=x_1\ker D_1, 
$$ 
proving the first part of (i). 
Hence, we get 
$\rank D_1'=1$ and $\pl D_1'=x_1\ker D_1'$ if $t_1=1$. 
If furthermore $t_0=2$, 
then we have 
$D_2=\tau _2\circ D_1'\circ \tau _2^{-1}$ 
by Theorem~\ref{thm:lsc1} (ii). 
Thus, 
it follows that $\rank D_2=1$ and 
$$
\pl D_2=\tau _2(\pl D_1')=
\tau _2(x_1\ker D_1')=f_2\ker D_2, 
$$ 
proving the second part of (i). 
If $(t_0,i)=(2,1)$ or $i=0$, 
then we have $D_i(x_3)=1$. 
Hence, we get $\pl D_i=\ker D_i$ 
and $\rank D_i=1$ by Lemma~\ref{lem:slice irred}. 
By the last part of Theorem~\ref{thm:lsc1} (ii), 
the same holds for every $i\geq 1$ if $t_0=t_1=2$, 
proving (ii).

To analyze the detailed structure of plinth ideals, 
the following lemma is useful. 
For an irreducible element $p$ of $\kx $, 
we consider the $p$-{\it adic valuation} 
$v_p$ of $\kxr $, 
i.e., 
the map $v_p:\kxr \to \Z \cup \{ \infty \} $ 
defined by $v_p(f)=m$ for each $f\in \kxr \sm \zs $ 
and $v_p(0)=\infty $, 
where $m\in \Z $ is such that 
$f=p^mg/h$ for some $g,h\in \kx \sm p\kx $. 
Now, 
take $D\in \lnd _k\kx $ and $s\in \kx $ such that 
$D(s)\neq 0$ and $D^2(s)=0$. 
Then, 
$D(s)$ belongs to $\pl D$. 
Since $\pl D$ is a nonzero principal ideal of $\ker D$, 
and $D(s)\neq 0$ by assumption, 
we may find a factor $q$ of $D(s)$ 
(may be an element of $k^{\times }$) 
such that $\pl D=q\ker D$. 
Let $p\in \kx $ be any factor of $D(s)$ 
which is an irreducible element of $\kx $. 
Then, 
$p$ belongs to $\ker D$ 
by the factorially closedness of $\ker D$ in $\kx $. 
Note that $i:=v_p(q)$ is the maximal number 
such that $p^{-i}q$ belongs to $\kx $. 
Since $p^{-i}q$ belongs to $\ker D$, 
we see that $i$ is equal to 
the maximal number such that 
$q$ belongs to $p^i\ker D$. 
Thus, 
we know that 
$$
i=v_p(q)=\max \{ i\geq 0\mid \pl D\text{ is contained in }p^{i}\ker D\} . 
$$
We define 
$$
j=\max \{ j\geq 0\mid (s+\ker D)\cap p^{j}\kx \neq \emptyset \} . 
$$

In the notation above, 
we have the following lemma.

\begin{lem}\label{lem:rank criterion2}
It holds that $i+j=v_p(D(s))$. 
Hence, 
$\pl D$ is contained in $p\ker D$ 
if $(s+\ker D)\cap p\kx =\emptyset $. 
\end{lem}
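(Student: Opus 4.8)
The statement to prove is Lemma~\ref{lem:rank criterion2}: in the given notation, $i+j=v_p(D(s))$, where $i=v_p(q)$, $q$ is a generator of $\pl D$ dividing $D(s)$, and $j=\max\{j\ge 0\mid (s+\ker D)\cap p^j\kx\neq\emptyset\}$. The plan is to reduce everything to computing the $p$-adic valuation of $D(s)$ by exploiting the defining property of the plinth generator $q$ together with the way $s$ can be modified modulo $\ker D$. The final consequence is then immediate: if $(s+\ker D)\cap p\kx=\emptyset$ then $j=0$, so $i=v_p(D(s))\ge 1$ (as $p\mid D(s)$), whence $\pl D\subseteq p\ker D$.

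\textbf{The two inequalities.} I would prove $i+j=v_p(D(s))$ by establishing $i+j\le v_p(D(s))$ and $i+j\ge v_p(D(s))$ separately. For the $\ge$ direction, the key fact is that $q$ generates $\pl D$, so $D(s)=qc$ for some $c\in\ker D$ (since $D(s)\in\pl D$), and $v_p(D(s))=v_p(q)+v_p(c)=i+v_p(c)$. Thus I must show $v_p(c)\le j$, i.e.\ produce an element of $s+\ker D$ lying in $p^{v_p(c)}\kx$. For the $\le$ direction, I would take an element $s'=s+w\in(s+\ker D)\cap p^j\kx$ realizing the maximum $j$, note that $D(s')=D(s)$ (since $w\in\ker D$ and $D$ is $k$-linear), and analyze how applying $D$ to a polynomial divisible by $p^j$ behaves under $v_p$. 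Here the crucial input is that $p$, being a factor of $D(s)$, lies in $\ker D$ (by factorial closedness of $\ker D$, as recorded in the excerpt), so $D(p^j u)=p^j D(u)$ for any $u$, and therefore $v_p(D(s))=v_p(D(s'))\ge j+v_p(D(u))\ge j + i'$ where $i'$ tracks the plinth contribution.

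\textbf{Main obstacle.} The hard part will be showing the valuation contributed by $j$ and the valuation contributed by $i$ add up exactly, rather than merely bounding in one direction; concretely, the subtle point is establishing that after subtracting off the ``best'' $p^j$-divisible representative $s'$, the remaining factor $D(s')/p^j$ has $p$-adic valuation exactly $i$ and not more. I expect to argue this via the maximality defining $j$: if $v_p(D(s)) - j$ exceeded $i$, one could use $D(s')=qc$ and the structure of $\pl D=q\ker D$ to integrate back and find a representative of $s+\ker D$ divisible by a higher power of $p$, contradicting maximality of $j$. This reverse argument — recovering a more-divisible slice from excess valuation — is where the real content lies, and it should mirror the ``integration'' technique used elsewhere in the paper (e.g.\ the constructions of preimages under $D$ in Lemma~\ref{lem:e-n2} and the local slice computations). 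The remaining steps are routine valuation bookkeeping.

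\textbf{Deriving the corollary.} Once $i+j=v_p(D(s))$ is in hand, the final sentence follows at once: assume $(s+\ker D)\cap p\kx=\emptyset$. Then no representative of $s+\ker D$ is divisible by $p$, so $j=0$, giving $i=v_p(D(s))$. Since $p$ is by hypothesis a factor of $D(s)$ that is irreducible in $\kx$, we have $v_p(D(s))\ge 1$, hence $i\ge 1$, which by the identification $i=\max\{i\ge 0\mid \pl D\subseteq p^i\ker D\}$ means exactly that $\pl D\subseteq p\ker D$. This last implication requires no further computation.
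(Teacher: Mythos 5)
Your proposal is correct and in substance identical to the paper's proof: both arguments rest on the same two ingredients, namely that $D(u)$ lies in $\pl D$ (hence $v_p(D(u))\geq v_p(q)=i$) whenever $D^2(u)=0$, and that choosing $t\in\kx$ with $D(t)=q$ lets one manufacture an element of $s+\ker D$ of the form $p^jth'$ whose $p$-adic valuation exceeds $j$ when there is excess valuation, contradicting the maximality of $j$. The only difference is organizational: the paper sets $l:=v_p(D(s))-v_p(q)$ and proves $j\geq l$ by the direct construction $th_1$ and $j\leq l$ by contradiction, whereas you prove $i+j\leq v_p(D(s))$ directly and the reverse inequality by contradiction, which is the same pair of arguments with the roles of direct proof and contradiction swapped.
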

\begin{proof}
Put $l=v_p(D(s))-v_p(q)$. 
We show that 
$$
S_1:=(s+\ker D)\cap p^{l}\kx \neq \emptyset
\quad\text{and}\quad  
S_2:=(s+\ker D)\cap p^{l+1}\kx =\emptyset . 
$$
Then, 
it follows that $l=j$. 
Hence, we get $i+j=v_p(q)+l=v_p(D(s))$. 
Since $q\ker D=\pl D$, 
we have $q=D(t)$ for some $t\in \kx $. 
Since $D(s)$ belongs to $\pl D$, 
there exists $h_1\in \ker D$ such that 
$D(s)=qh_1=D(t)h_1$. 
Then, it follows that $D(s-th_1)=0$. 
Hence, 
$h_2:=s-th_1$ belongs to $\ker D$. 
Thus, 
$th_1=s-h_2$ belongs to $s+\ker D$. 
Since $h_1=D(s)q^{-1}$, 
we have 
$$
v_p(th_1)=v_p\bigl( tD(s)q^{-1}\bigr) 
\geq v_p(D(s))-v_p(q)=l. 
$$
Hence, $th_1$ belongs to $p^{l}\kx $. 
Thus, 
$th_1$ belongs to $S_1$. 
Therefore, $S_1$ is not empty. 
Next, 
suppose to the contrary that 
$S_2$ is not empty. 
Then, 
we have $s+h=p^{l+1}u$ 
for some $h\in \ker D$ and $u\in \kx $. 
Since $D(h)=D(p)=0$, 
it follows that $D(u)=D(s)p^{-l-1}$. 
Hence, we get 
$$
v_p(D(u))=v_p(D(s)p^{-l-1})=v_p(D(s))-l-1=v_p(q)-1. 
$$
On the other hand, 
$D(u)$ belongs to $\pl D$, 
since $D^2(u)=D^2(s)p^{-l-1}=0$. 
Hence, 
we have $D(u)=qh'$ for some $h'\in \ker D$. 
Thus, 
we get $v_p(D(u))\geq v_p(q)$, 
a contradiction. 
Therefore, $S_2$ is empty.

To prove the last part, 
assume that 
$(s+\ker D)\cap p\kx =\emptyset $. 
Then, we have $j=0$. 
Since $v_p(D(s))=i+j$, 
it follows that $i=v_p(D(s))$. 
Because $p$ is a factor of $D(s)$, 
we have $v_p(D(s))\geq 1$. 
Thus, we get $i\geq 1$. 
Therefore, 
$\pl D$ is contained in $p\ker D$. 
\end{proof}

We remark that the former case of 
Theorem~\ref{prop:rank} (iii) is reduced to the latter case 
thanks to 
Theorem~\ref{thm:lsc1} (ii). 
In fact, 
since $t_0=2$, 
we have 
$D_2=\tau _2\circ D_1'\circ \tau _2^{-1}$ 
by Theorem~\ref{thm:lsc1} (ii). 
Since $t_1\geq 3$, 
it follows that $\rank D_2=\rank D_1'=2$ 
by the latter case of Theorem~\ref{prop:rank} (iii). 
Since $\tau _2(f_1')=f_2$ by Lemma~\ref{lem:t_0=2} (i), 
the latter case of Theorem~\ref{prop:rank} (iii) 
implies that 
$$
\pl D_2
=\tau _2(\pl D_1')
=\tau _2(f_1'\ker D_1')
=\tau _2(f_1')\tau _2(\ker D_1')
=f_2\ker D_{2}. 
$$
Similarly, 
the former case of 
Theorem~\ref{prop:rank} (iv) 
is reduced to the latter case.

Now, 
we prove Theorem~\ref{prop:rank} (iii). 
By the remark, 
we may assume that $t_0\geq 3$ and $i=1$. 
Note that $D_1(x_2)=x_1$ and $D_1^2(x_2)=0$. 
Hence, we have $\pl D_1=(q)$ for some factor $q$ of $x_1$ 
by the discussion before Lemma~\ref{lem:rank criterion2}. 
We prove that $q\approx x_1$. 
Then, 
it follows that $\pl D_1=(x_1)=(f_1)$. 
Since $D_1$ is irreducible when $t_0\geq 3$ 
by (b) of Theorem~\ref{thm:lsc1} (i), 
this implies that $\rank D_1\geq 2$ 
because of Lemma~\ref{lem:slice irred}. 
Since $D_1(x_1)=0$, 
we have $\rank D_1\leq 2$. 
Hence, we get $\rank D_1=2$. 
Thus, 
the proof is completed. 
Now, 
suppose to the contrary that $q\not\approx x_1$. 
Then, 
$q$ must be an element of 
$k^{\times }$. 
Hence, 
we have $\pl D_1=\ker D_1$, 
and is not contained in $x_1\ker D_1$. 
Since $x_1$ is an irreducible factor of $D_1(x_2)$, 
it follows that $(x_2+\ker D_1)\cap x_1\kx \neq \emptyset $ 
by the last part of Lemma~\ref{lem:rank criterion2}. 
Take $g_1\in \ker D_1$ and $g_2\in \kx $ 
such that $x_2+g_1=x_1g_2$. 
Since $\ker D_1=k[x_1,f_2]$ by Theorem~\ref{thm:lsc1} (i), 
we may write $g_1=\nu (x_1,f_2)$, 
where $\nu (y,z)\in k[y,z]$. 
Then, 
we have $x_2=-\nu (x_1,f_2)+x_1g_2$. 
Substituting zero for $x_1$, 
we get 
$x_2=-\nu \bigl(0,-\theta (x_2)\bigr) $. 
Hence, 
$\nu (0,z)$ does not belong to $k$. 
Thus, 
we know that 
$$
1=\deg _{x_2}x_2=
(\deg _z\nu (0,z))\deg _{x_2}\theta (x_2)
\geq \deg _{x_2}\theta (x_2)=t_0-1\geq 2, 
$$ 
a contradiction. 
This proves that $q\approx x_1$, 
and thereby completing the proof of (iii).

To prove (iv), 
we need the following result.

\begin{lem}\label{lem:rank criterion}
Assume that 
$D\in \lnd _k\kx \sm \zs $ is irreducible. 
If $D(p)=0$ for 
a coordinate $p$ 
of $\kx $ over $k$, 
then we have $D(\kx )\cap k[p]\neq \zs $. 
Consequently, 
we have $\pl D=(q)$ for some $q\in k[p]\sm \zs $. 
\end{lem}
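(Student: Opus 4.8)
The plan is to reduce to the case $p=x_1$ and then to pass to the rational function field $K=k(x_1)$, where the two–variable Rentschler theorem applies.

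First I would choose $\sigma\in\Aut(\kx/k)$ with $\sigma(x_1)=p$ and replace $D$ by $D'=\sigma^{-1}\circ D\circ\sigma$. Conjugation preserves irreducibility and local nilpotence, and $D'(x_1)=\sigma^{-1}(D(p))=0$; moreover $\sigma$ carries $k[x_1]$ onto $k[p]$, $D'(\kx)$ onto $D(\kx)$, and $\pl D'$ onto $\pl D$. So it suffices to treat $p=x_1$ with $D(x_1)=0$, and I will first aim only at the assertion $D(\kx)\cap k[x_1]\neq\zs$. The "consequently" clause I would then deduce as follows: by Daigle--Kaliman \cite[Theorem~1]{Kaliman} the ideal $\pl D$ is principal, say $\pl D=(q_0)$ with $q_0\in\ker D$; any nonzero $c\in D(\kx)\cap k[x_1]$ lies in $\pl D$ (since $c\in k[x_1]\subseteq\ker D$), so $c=q_0h$ with $h\in\ker D\sm\zs$. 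Because $k[x_1]$ is factorially closed in $\kx$ (if $uv\in k[x_1]$ with $u,v\neq 0$, then $\deg_{x_2}u+\deg_{x_2}v=0$ and likewise in $x_3$, forcing $u,v\in k[x_1]$), this yields $q_0\in k[x_1]\sm\zs$, which is the required generator in $k[p]$ after applying $\sigma$.

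Next I would set $K=k(x_1)$ and extend $D$, which is a $k[x_1]$-derivation, to a nonzero $\tilde D\in\lnd_K K[x_2,x_3]$. The goal now is to show that $\tilde D$ admits a slice over $K$, i.e.\ $\tilde D(\tilde s)=1$ for some $\tilde s\in K[x_2,x_3]$. Granting this, I clear denominators: write $\tilde s=s/c$ with $s\in\kx$ and $c\in k[x_1]\sm\zs$. Since $c\in\ker\tilde D$, we get $D(s)=\tilde D(s)=c\,\tilde D(\tilde s)=c$, so $c\in D(\kx)\cap k[x_1]$ is the desired nonzero element, finishing the first assertion.

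The heart of the argument, and the main obstacle, is producing the slice of $\tilde D$. By Rentschler's theorem applied over the characteristic-zero field $K$ (Theorem~\ref{thm:Rentschler original} in the two variables $x_2,x_3$), one can write $\tilde D=f\,E$, where $E=\tau\circ(\partial/\partial x_3)\circ\tau^{-1}$ admits the slice $\tau(x_3)$ and $f\in\ker\tilde D$; if $\tilde D$ is irreducible over $K$ then $f\in K^{\times}$ and $f^{-1}\tau(x_3)$ is a slice. Thus I am reduced to proving that $\tilde D$ is irreducible over $K$, i.e.\ $\gcd_{K[x_2,x_3]}(D(x_2),D(x_3))=1$. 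Here is where the hypotheses combine: since $D(x_1)=0$ and $D$ is irreducible, $\gcd_{\kx}(D(x_2),D(x_3))=\gcd(D(x_1),D(x_2),D(x_3))=1$. A Gauss-lemma/content argument transfers this to $K$: any common factor of positive $(x_2,x_3)$-degree in $K[x_2,x_3]$ may, after clearing denominators and removing content, be taken primitive in $k[x_1][x_2,x_3]=\kx$, where it remains a common factor, contradicting coprimality over $\kx$; a common factor of $(x_2,x_3)$-degree zero is a unit of $K$. Hence $\gcd_K=1$, giving the slice and completing the proof. I expect the transfer of coprimality and the passage "irreducible two-variable LND over a field $\Rightarrow$ existence of a slice" to be the only genuinely non-formal points; everything else is conjugation, clearing denominators, and factorial closedness.
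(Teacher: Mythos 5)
Your proposal is correct and follows essentially the same route as the paper's proof: reduce to $p=x_1$, extend $D$ to $k(x_1)[x_2,x_3]$, transfer irreducibility via a Gauss-lemma/primitivity argument, invoke Rentschler's theorem over $k(x_1)$ (irreducibility forcing the factor $f$ into $k(x_1)^{\times}$) to produce a slice, clear denominators to land in $D(\kx)\cap k[x_1]\sm\zs$, and then combine Daigle--Kaliman principality of $\pl D$ with the factorial closedness of $k[x_1]$ for the final clause. The only differences are cosmetic (the paper proves irreducibility of $\tilde D$ before citing Rentschler and builds $s=\tau(x_3)g$ directly rather than normalizing to a slice over $k(x_1)$ first).
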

\begin{proof}
Without loss of generality, 
we may assume that $p=x_1$. 
Then, 
$D$ extends to a locally nilpotent derivation 
$\tilde{D}$ of $R:=k(x_1)[x_2,x_3]$ over $k(x_1)$. 
We show that $\tilde{D}$ is irreducible by contradiction. 
Suppose to the contrary that 
$\tilde{D}$ is not irreducible. 
Then, 
we may find $q\in R
\sm k(x_1)$ 
such that $\tilde{D}(R)$ 
is contained in $qR$. 
Multiplying by an element of $k(x_1)^{\times }$, 
we may assume that $q$ belongs to $\kx \sm k[x_1]$, 
and is not divisible by any element of $k[x_1]\sm k$. 
Then, 
$qh$ does not belong to $\kx $ for any $h\in R\sm \kx $. 
Hence, 
$qR\cap \kx $ is contained in $q\kx $. 
Since $D(\kx )=\tilde{D}(\kx )$ 
is contained in $qR\cap \kx $, 
it follows that $D(\kx )$ is contained in $q\kx $. 
This contradicts that $D$ is irreducible. 
Therefore, 
$\tilde{D}$ is irreducible.

Now, 
by Theorem~\ref{thm:Rentschler original}, 
there exist $\tau \in \Aut (R/k(x_1))$ 
and $f\in k(x_1)[x_2]\sm \zs $ such that 
$D':=\tau ^{-1}\circ \tilde{D}\circ \tau 
=f\partial /\partial x_3$. 
Since $\tilde{D}$ is irreducible, 
so is $D'$. 
Hence, 
$f$ belongs to $k(x_1)^{\times }$. 
Take $g\in k(x_1)^{\times }$ 
such that $fg$ and $s:=\tau (x_3)g$ 
belong to $\kx $. 
Then, 
$fg$ belongs to $k[x_1]\sm \zs $. 
Since $\tau ^{-1}(s)=x_3g$ and $D'(x_3g)=fg$, 
it follows that 
$$
D(s)=\tau \bigl(D'(\tau ^{-1}(s))\bigr)
=\tau \bigl(D'(x_3g)\bigr)=\tau (fg)=fg. 
$$ 
Thus, 
$D(s)$ belongs to $k[x_1]\sm \zs $. 
Therefore, 
we get $D(\kx )\cap k[x_1]\neq \zs $.

Since $D\neq 0$ by assumption, 
we have $\pl D=(q)$ for some $q\in \kx \sm \zs $. 
Take any 
$h\in D(\kx )\cap k[x_1]\sm \zs $. 
Then, 
$h$ belongs to $\pl D$. 
Hence, $q$ is a factor of $h$. 
Since $k[x_1]$ is factorially closed in $\kx $, 
it follows that $q$ belongs to $k[x_1]\sm \zs $. 
\end{proof}

Using Lemma~\ref{lem:rank criterion}, 
we get the following proposition.

\begin{prop}\label{prop:rank3criterion}
Let $D\in \lnd _k\kx \sm \zs $ and $p_1,p_2,q\in \kx $ 
be such that 
$\pl D=(q)$, and $p_1$ and $p_2$ are factors of $q$. 
If $D$ is irreducible, 
and 
$p_1$ and $p_2$ are algebraically independent over $k$, 
then we have $\rank D=3$. 
\end{prop}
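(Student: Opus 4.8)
The plan is to prove the contrapositive: I would show that if $\rank D\leq 2$, then every pair of factors of a generator $q$ of $\pl D$ must be algebraically dependent over $k$. Since $D\neq 0$ is locally nilpotent, we always have $\rank D\geq 1$, so the hypothesis $\rank D\leq 2$ splits into the cases $\rank D=1$ and $\rank D=2$. The case $\rank D=1$ is quickly disposed of: by Lemma~\ref{lem:slice irred}, an irreducible $D$ with $\rank D=1$ satisfies $\pl D=\ker D$, so $q$ may be taken to be a unit, whence it has no non-constant factors and the statement about $p_1,p_2$ is vacuous (any factors are constants, hence algebraically dependent). So the real content is the case $\rank D=2$.

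First I would use the definition of rank to find $\sigma \in \Aut (\kx /k)$ such that $D(\sigma (x_3))=0$, i.e.\ $\sigma (x_3)$ is a coordinate of $\kx $ over $k$ killed by $D$. Replacing $D$ by $D_0:=\sigma ^{-1}\circ D\circ \sigma $, I would work with $D_0$, which kills the coordinate $x_3$, is still irreducible (conjugation by an automorphism preserves irreducibility), and satisfies $\pl D_0=\sigma ^{-1}(\pl D)=(\sigma ^{-1}(q))$. The key tool is Lemma~\ref{lem:rank criterion}: since $D_0$ is irreducible, nonzero, and kills the coordinate $x_3$, its plinth ideal is generated by an element $q_0\in k[x_3]\sm \zs $. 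Thus I may take $\sigma ^{-1}(q)\approx q_0\in k[x_3]$.

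Now comes the main point. The factors $p_1,p_2$ of $q$ correspond, via $\sigma ^{-1}$, to factors $\sigma ^{-1}(p_1),\sigma ^{-1}(p_2)$ of $\sigma ^{-1}(q)\approx q_0$. Since $q_0$ belongs to $k[x_3]$ and $k[x_3]$ is factorially closed in $\kx $ (it is the polynomial ring in one variable; alternatively it is contained in $\ker D_0$, which is factorially closed by Lemma~\ref{lem:facts}), every factor of $q_0$ again lies in $k[x_3]$. Hence $\sigma ^{-1}(p_1)$ and $\sigma ^{-1}(p_2)$ are two elements of $k[x_3]$, and any two polynomials in the single variable $x_3$ are algebraically dependent over $k$. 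Applying the automorphism $\sigma $, which preserves algebraic dependence over $k$, I conclude that $p_1$ and $p_2$ are algebraically dependent over $k$. This is exactly the contrapositive of the assertion: if $p_1$ and $p_2$ are algebraically independent over $k$, then $\rank D=3$.

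The step I expect to require the most care is the passage from ``$\pl D_0$ is generated by some $q_0\in k[x_3]$'' to the conclusion about the specific generator $q$ in the hypothesis. The point is that $\pl D$ is principal (guaranteed by Daigle--Kaliman, as used in the definition of $\pl D$ in the excerpt), so any two generators are associates up to a factor in $(\ker D)^{\times }=k^{\times }$; therefore $q$ itself, not merely some generator, is associate to $\sigma (q_0)$, and so its factors $p_1,p_2$ are, up to constants, images under $\sigma $ of factors of $q_0\in k[x_3]$. I would make sure to invoke factorial closedness of $k[x_3]$ in $\kx $ explicitly to guarantee that the relevant irreducible factors stay inside $k[x_3]$ rather than merely dividing $q_0$ in $\kx $; this is the one place where a naive argument could slip.
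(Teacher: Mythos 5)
Your proposal is correct and takes essentially the same route as the paper's proof: both reduce $\rank D\leq 2$ to the existence of a coordinate $p$ of $\kx $ over $k$ killed by $D$, invoke Lemma~\ref{lem:rank criterion} to place a generator of $\pl D$ in $k[p]\sm \zs $ (hence $q\approx $ that generator, since $(\ker D)^{\times }=k^{\times }$), and then use factorial closedness of $k[p]$ in $\kx $ together with the algebraic dependence of any two elements of $k[p]$ to contradict the independence of $p_1$ and $p_2$. The only differences are cosmetic: the paper treats $\rank D\leq 2$ uniformly without your separate $\rank D=1$ case, and it works directly with $k[p]$ instead of conjugating $D$ by $\sigma $ so that the killed coordinate becomes $x_3$.
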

\begin{proof}
Suppose to the contrary that $\rank D\leq 2$. 
Then, 
we have $D(p)=0$ for some coordinate $p$ of $\kx $ over $k$. 
Since $D$ is irreducible by assumption, 
it follows that $\pl D=(q')$ for some $q'\in k[p]\sm \zs $ 
by Lemma~\ref{lem:rank criterion}. 
Then, we have $q\approx q'$. 
Hence, 
$q$ belongs to $k[p]\sm \zs $. 
Since $k[p]$ is factorially closed in $\kx $, 
we know that $p_1$ and $p_2$ belong to $k[p]$. 
Thus, 
$p_1$ and $p_2$ are algebraically dependent over $k$, 
a contradiction. 
Therefore, 
we conclude that $\rank D=3$. 
\end{proof}

Let $D\in \lnd _k\kx \sm \zs $ and $q\in \kx $ 
be such that $D$ is irreducible and $\pl D=(q)$, 
and let $K$ be the subfield of $\kxr $ 
generated over $k$ by all the factors of $q$. 
Then, 
we have $\trd _kK\leq 2$, 
since every factor of $q$ belongs to $\ker D$, 
and $\trd _k\ker D=2$. 
By Lemma~\ref{lem:slice irred}, 
we have $\trd _kK=0$ if and only if $\rank D=1$. 
Hence, 
we know by Proposition~\ref{prop:rank3criterion} 
that $\trd _kK=1$ if $\rank D=2$. 
We do not know whether $\rank D=2$ whenever $\trd _kK=1$.

Now, 
let us prove Theorem~\ref{prop:rank} (iv). 
Without loss of generality, 
we may assume that 
$t_0\geq 3$, $(t_0,t_1)\neq (3,1)$ and $i\geq 2$ 
as remarked. 
Since $I=\N $, 
we see that $i$ belongs to $I$, 
and is not the maximum of $I$. 
Hence, 
$D_i$ is irreducible, $\ker D_i=k[f_i,f_{i+1}]$ 
and $D_i(r)=f_if_{i+1}$ by Theorem~\ref{thm:lsc1} (i). 
Since $D_i^2(r)=0$, 
we have $\pl D_i=(q)$ for some factor 
$q$ of $D_i(r)=f_if_{i+1}$ 
by the discussion before Lemma~\ref{lem:rank criterion2}. 
We show that $q\approx f_if_{i+1}$. 
Since 
$f_i$ and $f_{i+1}$ are irreducible elements of $\kx $ 
with $f_i\not\approx f_{i+1}$ by Lemma~\ref{lem:facts}, 
it suffices to verify that $v_{f_l}(q)=1$ for $l=i,i+1$. 
Suppose to the contrary that 
$v_{f_l}(q)=0$ for some $l\in \{ i,i+1\} $. 
Then, 
$\pl D_i=(q)$ is not contained in $f_l\ker D_i$. 
Hence, 
$(r+\ker D_i)\cap f_l\kx $ is not empty 
due to the last part of Lemma~\ref{lem:rank criterion2}. 
Since $\ker D_i=k[f_i,f_{i+1}]$, 
it follows that 
$(r+k[f_j])\cap f_l\kx \neq \emptyset $, 
where $j\in \{ i,i+1\} $ with $j\neq l$. 
By the last part of Lemma~\ref{lem:local slice}, 
this implies that $a_j<2$. 
However, 
since $t_0\geq 3$, $(t_0,t_1)\neq (3,1)$ and 
$j\geq i\geq 2$, 
we have $a_j\geq 2$ by Lemma~\ref{lem:a_i} (ii), 
a contradiction. 
This proves that $q\approx f_if_{i+1}$. 
Therefore, we get $\pl D_i=(f_if_{i+1})$. 
Since $f_i$ and $f_{i+1}$ 
are algebraically independent over $k$, 
it follows that $\rank D_i=3$ 
by Proposition~\ref{prop:rank3criterion}. 
This completes the proof of (iv).

Finally, we prove Theorem~\ref{prop:rank} (v). 
Assume that $(t_0,t_1)=(3,1)$. 
Then, 
we can define 
$\sigma _3\in \Aut (\kx /k)$ as in (\ref{eq:sigma _3}). 
Since 
$f_2=\sigma _3(f_2)$, 
$f_3=\sigma _3(x_1)=\sigma _3(f_1)$ 
and $f_4=\sigma _3(x_2)=\sigma _3(f_0)$, 
we have 
\begin{gather*}
D_2=-\Delta _{(f_2,f_3)}
=-\Delta _{(\sigma _3(f_2),\sigma _3(f_1))}
=-(\det J\sigma _3)
(\sigma _3\circ D_1\circ \sigma _3^{-1})\\
D_3=-\Delta _{(f_3,f_4)}
=-\Delta _{(\sigma _3(f_1),\sigma _3(f_0))} 
=-(\det J\sigma _3)
(\sigma _3\circ D_0\circ \sigma _3^{-1})
\end{gather*}
by the formula (\ref{eq:Jacobian}). 
Since $t_0=3$, 
we have 
$\pl D_1=f_1\ker D_1$ and $\rank D_1=2$ 
by Theorem~\ref{prop:rank} (iii). 
Hence, 
it follows that 
$$
\pl D_2
=\sigma _3(f_1\ker D_1)
=\sigma _3(f_1)\sigma _3(\ker D_1)
=f_3\ker D_2
$$ 
and $\rank D_2=2$. 
Since $\pl D_0=\ker D_0$ and $\rank D_0=1$, 
we get $\pl D_3=\ker D_3$ and $\rank D_3=1$ 
similarly. 
Recall that 
$D_4=f_4\Delta _{(x_3,f_4)}$ by (\ref{eq:D_4}). 
Since $x_3=\sigma _3(x_3)$ 
and $f_4=\sigma _3(x_2)$, 
we have 
$$
\Delta :=\Delta _{(x_3,f_4)}
=\Delta _{(\sigma _3(x_3),\sigma _3(x_2))}
=(\det J\sigma _3)\bigl( \sigma _3\circ 
\Delta _{(x_3,x_2)}\circ \sigma _3^{-1}\bigr). 
$$
Since 
$\Delta _{(x_3,x_2)}=-\partial /\partial x_1$, 
it follows that 
$\pl \Delta =\ker \Delta $ and $\rank \Delta =1$. 
Therefore, 
we conclude that 
$\pl D_4=f_4\ker D_4$ and $\rank D_4=1$. 
This completes the proof of (v), 
and thus completing the proof of 
Theorem~\ref{prop:rank}.

\section{Plinth ideals (II)}
\label{sect:plinth2}
\setcounter{equation}{0}

In this section, 
we prove Theorem~\ref{prop:rank2}. 
Assume that $t_0\geq 3$ and $i=2$, 
or $t_0\geq 3$, $(t_0,t_1)\neq (3,1)$ 
and $i\geq 3$. 
By Theorem~\ref{thm:lsc2} (i), 
$\tilde{D}_i$ is irreducible 
and locally nilpotent, 
and satisfies $\ker \tilde{D}_i=k[f_i,\tilde{f}_{i+1}]$. 
Hence, 
$f_i$ and $\tilde{f}_{i+1}$ 
are irreducible elements of $\kx $ 
by Lemma~\ref{lem:facts}. 
From (\ref{eq:tilde{D}_i(r_i)}), 
we see that $\tilde{f}_{i+1}$ 
is a factor of $\tilde{D}_i(r_i)$. 
If $i\geq 3$, 
then $f_i$ is also a factor of 
$\tilde{D}_i(r_i)$. 
Since $\tilde{D}_i^2(r_i)=0$, 
we have $\pl \tilde{D}_i=(\tilde{g}_i)$ 
for some factor $\tilde{g}_i$ of $\tilde{D}_i(r_i)$ 
by the discussion before Lemma~\ref{lem:rank criterion2}.

We note that $a_i\geq 2$ under the assumption above. 
Actually, 
we have $a_2=t_0-1\geq 2$ if $i=2$, 
and $a_i\geq 2$ if $i\geq 3$ 
by Lemma~\ref{lem:a_i} (ii).

\begin{lem}\label{lem:rank 1}
$\tilde{f}_{i+1}$ is a factor of $\tilde{g}_i$. 
Hence, 
we have $\rank \tilde{D}_i\geq 2$. 
If $\tilde{g}_i$ and 
$\tilde{D}_i(r_i)\tilde{f}_{i+1}^{-1}$ 
have a non-constant common factor, 
then we have $\rank \tilde{D}_i=3$. 
\end{lem}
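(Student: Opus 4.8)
The plan is to prove the three assertions of Lemma~\ref{lem:rank 1} in turn, relying on the structure of the plinth ideal established in Theorem~\ref{thm:lsc2}~(i) and on the general machinery of Section~\ref{sect:plinth1}. First I would establish that $\tilde{f}_{i+1}$ is a factor of $\tilde{g}_i$, equivalently that $v_{\tilde{f}_{i+1}}(\tilde{g}_i)\geq 1$. Since $\pl \tilde{D}_i=(\tilde{g}_i)$ and $\tilde{D}_i^2(r_i)=0$, the polynomial $\tilde{D}_i(r_i)$ belongs to $\pl \tilde{D}_i$, so $\tilde{g}_i$ divides $\tilde{D}_i(r_i)$; this only gives that $\tilde{g}_i$ is a factor, not that $\tilde{f}_{i+1}$ divides $\tilde{g}_i$. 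To get the divisibility in the other direction I would invoke Lemma~\ref{lem:rank criterion2}, specifically its last assertion: taking $p=\tilde{f}_{i+1}$ (an irreducible element of $\kx$ by Lemma~\ref{lem:facts}, since $\ker \tilde{D}_i=k[f_i,\tilde{f}_{i+1}]$), it suffices to check that $(r_i+\ker \tilde{D}_i)\cap \tilde{f}_{i+1}\kx =\emptyset$. The key point is that $\deg_{r_i}$ of an element of $k[f_i,\tilde{f}_{i+1}]\cap \tilde{f}_{i+1}\kx$ must be at least $a_i\geq 2$, by the same argument as in the last part of Lemma~\ref{lem:local slice}, whereas a member of $r_i+\ker\tilde{D}_i$ has $r_i$-degree one; this contradiction forces the intersection to be empty, so $\pl \tilde{D}_i\subset \tilde{f}_{i+1}\ker\tilde{D}_i$ and hence $\tilde{f}_{i+1}$ divides $\tilde{g}_i$.

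For the second assertion, that $\rank \tilde{D}_i\geq 2$, I would argue by contradiction using Lemma~\ref{lem:rank criterion}. If $\rank \tilde{D}_i\leq 1$, then by Lemma~\ref{lem:slice irred} the irreducible derivation $\tilde{D}_i$ would satisfy $\pl \tilde{D}_i=\ker \tilde{D}_i$, i.e. $\tilde{g}_i$ would be a unit; but this contradicts the fact just proved that the non-constant irreducible polynomial $\tilde{f}_{i+1}$ divides $\tilde{g}_i$. Hence $\rank \tilde{D}_i\geq 2$. This step is short once the first assertion is in hand.

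For the third assertion I would apply Proposition~\ref{prop:rank3criterion} with $D=\tilde{D}_i$, $q=\tilde{g}_i$, and suitable factors $p_1,p_2$ of $\tilde{g}_i$. The hypothesis is that $\tilde{g}_i$ and $\tilde{D}_i(r_i)\tilde{f}_{i+1}^{-1}$ have a non-constant common factor $p_2$; and we already know $\tilde{f}_{i+1}=:p_1$ divides $\tilde{g}_i$. I would set $p_1=\tilde{f}_{i+1}$ and take $p_2$ to be an irreducible factor of the assumed common factor. Since $p_2$ divides $\tilde{D}_i(r_i)\tilde{f}_{i+1}^{-1}$ while $p_1=\tilde{f}_{i+1}$ is irreducible, the main work is to verify that $p_1$ and $p_2$ are algebraically independent over $k$. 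The expected obstacle, and the heart of the proof, lies here: one must rule out that $p_2$ is, up to scalar, a power expression in $p_1$, i.e. that the two factors lie in a common $k[g]$ for a coordinate $g$. I would exploit that $\tilde{D}_i(r_i)\tilde{f}_{i+1}^{-1}$ equals $\lambda(f_2)$ when $i=2$ and $\lambda(f_i)f_i$ when $i\geq 3$ by \eqref{eq:tilde{D}_i(r_i)}, so $p_2$ is a factor of $\lambda(f_i)$ or of $\lambda(f_i)f_i$; since every such factor lies in $k[f_i]$ by factorial closedness (Lemma~\ref{lem:facts}), while $p_1=\tilde{f}_{i+1}$ is algebraically independent of $f_i$ (the two generate $\ker\tilde{D}_i$ of transcendence degree two), the independence of $p_1$ and $p_2$ follows. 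Then Proposition~\ref{prop:rank3criterion} yields $\rank \tilde{D}_i=3$.
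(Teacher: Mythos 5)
Your proposal is correct and follows essentially the same route as the paper: the first assertion via the contrapositive of Lemma~\ref{lem:rank criterion2} together with the observation that $k[f_i,r_i]\cap \tilde{f}_{i+1}\kx$ is the principal prime ideal generated by $\tilde{q}_i$ (Lemma~\ref{lem:minimal} (iii) and the irreducibility of $\tilde{q}_i$), whose elements have $r_i$-degree at least $a_i\geq 2$; the second via Lemma~\ref{lem:slice irred}; and the third via factorial closedness of $k[f_i]$ and Proposition~\ref{prop:rank3criterion}. Only note that the set you invoke in the degree argument should be $k[f_i,r_i]\cap \tilde{f}_{i+1}\kx$ (after first replacing a general element of $r_i+\ker\tilde{D}_i$ by one of $r_i+k[f_i]$ modulo $\tilde{f}_{i+1}\kx$), not $k[f_i,\tilde{f}_{i+1}]\cap \tilde{f}_{i+1}\kx$, exactly as in the paper's proof.
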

\begin{proof}
Suppose to the contrary that 
$\tilde{f}_{i+1}$ is not a factor of $\tilde{g}_i$. 
Then, 
$\pl \tilde{D}_i$ is not contained in 
$\tilde{f}_{i+1}\ker \tilde{D}_i$. 
By the last part of Lemma~\ref{lem:rank criterion2}, 
it follows that 
$(r_i+\ker \tilde{D}_i)\cap \tilde{f}_{i+1}\kx \neq \emptyset $. 
Since $\ker \tilde{D}_i=k[f_i,\tilde{f}_{i+1}]$, 
this implies that 
$$
S:=(r_i+k[f_i])\cap \tilde{f}_{i+1}\kx \neq \emptyset . 
$$ 
Note that $S$ is contained in 
$\tilde{P}:=k[f_i,r_i]\cap \tilde{f}_{i+1}\kx $. 
Since $\ker \tilde{D}_i=k[f_i,\tilde{f}_{i+1}]$ 
and $\tilde{D}_i(r_i)\neq 0$, 
we know by Lemma~\ref{lem:minimal} (iii) that 
$\tilde{P}$ 
is a principal prime ideal of $k[f_i,r_i]$. 
As mentioned, 
$\tilde{q}_i$ is an irreducible element of $k[f_i,r_i]$ 
by (\ref{eq:tilde{q}_i}). 
Since $\tilde{q}_i=f_{i-1}\tilde{f}_{i+1}$ 
belongs to $\tilde{f}_{i+1}\kx $, 
we conclude that 
$\tilde{P}$ is generated by $\tilde{q}_i$. 
Because $S$ is contained in $\tilde{P}$, 
this implies that 
$\deg _{r_i}\tilde{q}_i\leq 1$. 
On the other hand, 
we see from (\ref{eq:tilde{q}_i}) that 
$$
\deg _{r_i}\tilde{q}_i=\deg _z\tilde{h}_i(y,z)
=\deg _z\tilde{\eta }_i(y,z)=a_i\geq 2, 
$$
a contradiction. 
Therefore, 
$\tilde{f}_{i+1}$ is a factor of $\tilde{g}_i$. 
In particular, 
we get $\pl \tilde{D}_i\neq \ker \tilde{D}_i$. 
Since $\tilde{D}_i$ is irreducible, 
this implies that $\rank \tilde{D}_i\geq 2$ 
by Lemma~\ref{lem:slice irred}.

Assume that 
$\tilde{g}_i$ and 
$\tilde{D}_i(r_i)\tilde{f}_{i+1}^{-1}$ 
have a common factor $p\in \kx \sm k$. 
Since $\tilde{D}_i(r_i)\tilde{f}_{i+1}^{-1}$ 
belongs to $k[f_i]$, 
and $k[f_i]$ is factorially closed in $\kx $ 
by Lemma~\ref{lem:facts}, 
it follows that $p$ belongs to $k[f_i]\sm k$. 
Because $f_i$ and $\tilde{f}_{i+1}$ 
are algebraically independent over $k$, 
we know that $p$ and $\tilde{f}_{i+1}$ are 
algebraically independent over $k$. 
Therefore, 
we get $\rank \tilde{D}_i=3$ by 
Proposition~\ref{prop:rank3criterion}. 
\end{proof}

Now, 
we prove Theorem~\ref{prop:rank2} (ii). 
Assume that $i=2$ and 
$c:=\lambda (y)$ belongs to $k^{\times }$. 
Then, 
we have $\tilde{D}_2(r_i)=c\tilde{f}_3$. 
Since $\tilde{g}_2$ is a factor of $\tilde{D}_2(r_i)$, 
and is divisible by 
$\tilde{f}_3$ by Lemma~\ref{lem:rank 1}, 
it follows that $\tilde{g}_2\approx \tilde{f}_3$. 
Hence, 
we get $\pl \tilde{D}_2=(\tilde{f}_3)$. 
This implies that $\pl \tilde{D}_2\neq \ker \tilde{D}_2$. 
Hence, 
we have $\rank \tilde{D}_2\geq 2$ 
by Lemma~\ref{lem:slice irred}. 
We show that $\tilde{f}_3$ 
is a coordinate of $\kx $ over $k$. 
Then, 
it follows that $\rank \tilde{D}_2=2$, 
and the proof is completed. 
Put $g=c^{-1}\mu (f_2,x_1)x_1^{-1}$. 
Then, 
$g$ belongs to $k[x_1,f_2]$, 
since $\mu (y,z)$ is an element of $zk[y,z]$. 
Let $\bar{D}$ be the triangular derivation of $\kx $ 
defined by $\bar{D}(x_1)=0$, 
$\bar{D}(x_2)=x_1$ and $\bar{D}(x_3)=\theta '(x_2)$. 
Then, 
we have $\bar{D}(f_2)=0$, 
and so $\bar{D}(g)=0$. 
Hence, 
$-g\bar{D}$ belongs to $\lnd _k\kx $. 
Define $\bar{\sigma }=\exp (-g\bar{D})$. 
Then, 
we have $\bar{\sigma }(x_1)=x_1$ and 
$$
\bar{\sigma }(x_2)
=x_2-gx_1
=x_2-c^{-1}\mu (f_2,x_1)
=c^{-1}r_2. 
$$ 
Since $f_2=x_1x_3-\theta (x_2)$ is equal to 
$\bar{\sigma }(f_2)=x_1\bar{\sigma }(x_3)
-\theta (c^{-1}r_2)$, 
we know that  
$$
\bar{\sigma }(x_3)=x_3+\bigl(
\theta (c^{-1}r_2)-\theta (x_2)\bigr)x_1^{-1}
=\bigl( 
x_1x_3-\theta (x_2)-\theta (c^{-1}r_2)
\bigr) x_1^{-1}
=c^{-a_2}\tilde{f}_3. 
$$
Therefore, 
$\tilde{f}_3$ is a coordinate of $\kx $ over $k$. 
This proves 
Theorem~\ref{prop:rank2} (ii).

\begin{lem}\label{lem:rank 2}
Assume that $i\geq 3$. 
If $\lambda (0)\neq 0$, 
then $f_i$ is a factor of $\tilde{g}_i$. 
\end{lem}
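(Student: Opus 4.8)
The plan is to apply the valuation criterion of Lemma~\ref{lem:rank criterion2} with $p=f_i$, $s=r_i$ and $D=\tilde{D}_i$. First I would record that $\tilde{D}_i(r_i)=\lambda(f_i)f_i\tilde{f}_{i+1}$ by Theorem~\ref{thm:lsc2}~(i) (the case $i\geq 3$), so $f_i$ is an irreducible factor of $\tilde{D}_i(r_i)$ by Lemma~\ref{lem:facts}. The key point is the computation $v_{f_i}(\tilde{D}_i(r_i))=1$: indeed $v_{f_i}(f_i)=1$, while $v_{f_i}(\tilde{f}_{i+1})=0$ because $\tilde{f}_{i+1}$ does not lie in the prime $f_i\kx$ (Lemma~\ref{lem:facts}), and $v_{f_i}(\lambda(f_i))=0$ precisely because $\lambda(0)\neq 0$. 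This is the only place the hypothesis $\lambda(0)\neq 0$ enters. Writing $\pl\tilde{D}_i=(\tilde{g}_i)$, Lemma~\ref{lem:rank criterion2} gives $v_{f_i}(\tilde{g}_i)+j=1$, where $j$ is the largest integer with $(r_i+\ker\tilde{D}_i)\cap f_i^{\,j}\kx\neq\emptyset$. Hence $f_i$ divides $\tilde{g}_i$ if and only if $j=0$, i.e. if and only if $(r_i+\ker\tilde{D}_i)\cap f_i\kx=\emptyset$, and the whole lemma reduces to this disjointness.

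Next I would reduce the disjointness to a degree statement, in parallel with the proof of Lemma~\ref{lem:rank 1}. Since $\ker\tilde{D}_i=k[f_i,\tilde{f}_{i+1}]$ by Theorem~\ref{thm:lsc2}~(i), any element of $(r_i+\ker\tilde{D}_i)\cap f_i\kx$ can be taken, modulo $f_i\kx$, to lie in $r_i+k[\tilde{f}_{i+1}]$, hence in $P:=k[\tilde{f}_{i+1},r_i]\cap f_i\kx$. Because $\tilde{f}_{i+1}$ and $r_i$ are algebraically independent over $k$ (Lemma~\ref{lem:minimal}~(ii), as $\tilde{D}_i(\tilde{f}_{i+1})=0\neq\tilde{D}_i(r_i)$) and $\ker\tilde{D}_i=k[\tilde{f}_{i+1},f_i]$, Lemma~\ref{lem:minimal}~(iii) shows $P$ is a principal prime ideal of $k[\tilde{f}_{i+1},r_i]$. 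Every element of $r_i+k[\tilde{f}_{i+1}]$ has $r_i$-degree exactly $1$, while every nonzero element of $P$ is a multiple of its generator; so it suffices to prove that the generator of $P$ has $r_i$-degree at least $2$. (Its degree is at least $1$ by Lemma~\ref{lem:minimal}~(i), since $k[\tilde{f}_{i+1}]\cap f_i\kx=\zs$.)

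To control this degree I would pass to $R:=\kx/f_i\kx$ and use the congruence $\tilde{q}_i\equiv r_i^{a_i}\pmod{f_i k[f_i,r_i]}$ from $(\ref{eq:(y)})$. Since $\tilde{q}_i=f_{i-1}\tilde{f}_{i+1}$, this yields in $R$ the relation $\bar{f}_{i-1}\,\bar{\tilde{f}}_{i+1}=\bar{r}_i^{\,a_i}$, where bars denote images. The $r_i$-degree of the generator of $P$ equals $[k(\bar{\tilde{f}}_{i+1})(\bar{r}_i):k(\bar{\tilde{f}}_{i+1})]$, and the plan is to show this is $\geq 2$, i.e. that $\bar{r}_i\notin k(\bar{\tilde{f}}_{i+1})$; then $a_i\geq 2$ (Lemma~\ref{lem:a_i}~(ii)) feeds in through the binomial relation.

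The main obstacle is exactly this last step. One would like to argue that $\bar{f}_{i-1}$ and $\bar{\tilde{f}}_{i+1}$ are algebraically independent over $k$, which would force $\bar{r}_i^{\,a_i}\notin k(\bar{\tilde{f}}_{i+1})$ and finish immediately; but this fails, since a short computation gives $\tilde{D}_i(f_{i-1})=(\partial\tilde{q}_i/\partial r_i)\lambda(f_i)f_i$, which is divisible by $f_i$, so $\bar{f}_{i-1}$ lies in $\ker\bar{\tilde{D}}_i$ and is therefore algebraic over $k(\bar{\tilde{f}}_{i+1})$. Thus one must instead rule out a degree-one relation directly: either $\bar{f}_{i-1}\notin k(\bar{\tilde{f}}_{i+1})$, in which case $[k(\bar{\tilde{f}}_{i+1})(\bar{r}_i):k(\bar{\tilde{f}}_{i+1})]\geq[k(\bar{\tilde{f}}_{i+1})(\bar{f}_{i-1}):k(\bar{\tilde{f}}_{i+1})]\geq 2$ since $\bar{f}_{i-1}=\bar{r}_i^{\,a_i}\bar{\tilde{f}}_{i+1}^{-1}$, or else $\bar{f}_{i-1}\in k(\bar{\tilde{f}}_{i+1})$, where I would analyze the binomial $X^{a_i}-\bar{f}_{i-1}\bar{\tilde{f}}_{i+1}$ over $k(\bar{\tilde{f}}_{i+1})$ and exploit the order of vanishing of $\bar{\tilde{f}}_{i+1}$ in the relation (together with the $\w$-degrees $\delta_{i-1}$, $\degw\tilde{f}_{i+1}$ and the fact that $f_i^{\w}$ is a monomial) to show it is irreducible. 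Establishing irreducibility in this second case is the technical heart of the argument.
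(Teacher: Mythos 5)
There is a genuine gap. Your opening reduction is correct and is in fact identical to the paper's first step: applying Lemma~\ref{lem:rank criterion2} with $p=f_i$, $s=r_i$, and using $v_{f_i}\bigl(\lambda (f_i)f_i\tilde{f}_{i+1}\bigr)=1$ (which is where $\lambda (0)\neq 0$ gives $v_{f_i}(\lambda (f_i))=0$), the lemma reduces to the disjointness $(r_i+\ker \tilde{D}_i)\cap f_i\kx =\emptyset $, equivalently to showing that the image $\bar{r}_i$ of $r_i$ in $\kx /f_i\kx $ is not a polynomial in $\bar{\tilde{f}}_{i+1}$. But this disjointness is exactly what you leave unproven (you say yourself that the irreducibility in your second case is ``the technical heart''), and the plan you sketch for it cannot be completed, because you assert that $\lambda (0)\neq 0$ enters \emph{only} in the valuation computation and then propose to finish using only data that are insensitive to that hypothesis: the relation $\bar{f}_{i-1}\bar{\tilde{f}}_{i+1}=\bar{r}_i^{\,a_i}$ from (\ref{eq:(y)}) and $\w $-degree information hold whether or not $\lambda (0)=0$, yet the disjointness itself \emph{fails} when $\lambda (0)=0$. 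Concretely, take $(t_0,t_1)=(4,1)$, $i=3$ (so $a_3=2$), $\lambda (y)=y$, $\mu (y,z)=z$; then $r_3=f_3r-f_2$, so modulo $f_3\kx $ one gets $\bar{r}_3=-\bar{f}_2$, and $\bar{f}_2\bar{\tilde{f}}_4=\bar{r}_3^{\,2}$ forces $\bar{\tilde{f}}_4=\bar{f}_2$; hence $r_3+\tilde{f}_4\in f_3\kx $, and your binomial $X^{a_i}-\bar{f}_{i-1}\bar{\tilde{f}}_{i+1}=X^2-\bar{f}_2^{\,2}$ is \emph{reducible}. So any correct completion of your second case must use $\lambda (0)\neq 0$ again, and no argument based only on the binomial relation and degrees can succeed.

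This is precisely what the paper's proof does after the common reduction. Writing $\bar{r}_i=\psi (\bar{\tilde{f}}_{i+1})$ with $\psi (z)\in k[z]$, it first uses $\lambda (0)\neq 0$ together with Lemma~\ref{lem:local slice} (via $(\lambda (0)r+k[f_{i-1}])\cap f_i\kx =\emptyset $, valid since $a_i\geq 2$) to conclude $\psi \notin k$; it then uses $\lambda (0)\neq 0$ a second time to solve $r\equiv \lambda (0)^{-1}\bigl(\mu (0,f_{i-1})+r_i\bigr)$ modulo $f_i\kx $, and, since $f_{i-1}\equiv \psi (\tilde{f}_{i+1})^{a_i}\tilde{f}_{i+1}^{-1}$, substitutes into $\eta _{i-1}(f_{i-1},r)=f_{i-2}f_i\equiv 0$ to obtain a Laurent-polynomial identity $\eta _{i-1}(\psi _1(z),\psi _2(z))=0$ with $\psi _1=\psi ^{a_i}z^{-1}$ and $\psi _2=\lambda (0)^{-1}\bigl(\mu (0,\psi _1)+\psi \bigr)$; Lemma~\ref{lem:rhq} then forces $t_{i-1}\deg _z\psi _1=a_{i-1}\deg _z\psi _2$, which is refuted case by case ($\mu (0,z)=0$; $i=3$; $i\geq 4$) using Lemma~\ref{lem:a_i}. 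Note also that the paper only needs to exclude a \emph{polynomial} relation $\bar{r}_i\in k[\bar{\tilde{f}}_{i+1}]$, whereas your route through the $r_i$-degree of the generator of $P:=k[\tilde{f}_{i+1},r_i]\cap f_i\kx $ demands the stronger statement $\bar{r}_i\notin k(\bar{\tilde{f}}_{i+1})$, which you never establish and which adds difficulty the lemma does not require.
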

\begin{proof}
Suppose to the contrary that 
$f_i$ is not a factor of $\tilde{g}_i$. 
Then, 
$\pl \tilde{D}_i$ is not contained in $f_i\ker \tilde{D}_i$. 
Set $\mathfrak{p}_i=f_i\kx $. 
Then, 
we know by the last part of Lemma~\ref{lem:rank criterion2} 
that $(r_i+\ker \tilde{D}_{i})\cap \mathfrak{p}_i$ is not empty. 
Since $\ker \tilde{D}_i=k[f_i,\tilde{f}_{i+1}]$, 
it follows that 
$(r_i+k[\tilde{f}_{i+1}])\cap \mathfrak{p}_i$ is not empty. 
Take $\psi (z)\in k[z]$ such that 
$r_i-\psi (\tilde{f}_{i+1})$ belongs to $\mathfrak{p}_i$. 
Then, we have 
\begin{equation}\label{eq:pf:lem:rank 2}
\lambda (0)r-\mu (0,f_{i-1})-\psi (\tilde{f}_{i+1})
\equiv r_i-\psi (\tilde{f}_{i+1})
\equiv 0\pmod{\mathfrak{p}_i}. 
\end{equation}
Note that 
$(\lambda (0)r+k[f_{i-1}])\cap \mathfrak{p}_i=\emptyset $ 
by the last part of Lemma~\ref{lem:local slice}, 
since $\lambda (0)\neq 0$ by assumption, and $a_i\geq 2$. 
Hence, 
we see from (\ref{eq:pf:lem:rank 2}) 
that $\psi (z)$ does not belong to $k$. 
Define 
$\psi _1(z),\psi _2(z)\in k[z,z^{-1}]$ by 
$$
\psi _1(z)=\psi (z)^{a_i}z^{-1}
\quad\text{and}\quad 
\psi _2(z)=\lambda (0)^{-1}\bigl(
\mu (0,\psi _1(z))+\psi (z)\bigr). 
$$
We show that $\eta _{i-1}(\psi _1(z),\psi _2(z))=0$. 
Since $k[\tilde{f}_{i+1}]\cap \mathfrak{p}_i=\zs $ 
by Lemma~\ref{lem:minimal} (i), 
the image of $\tilde{f}_{i+1}$ in $\kx /\mathfrak{p}_i$ 
is transcendental over $k$. 
Hence, it suffices to verify that 
$$
h:=\eta _{i-1}(\psi _1(\tilde{f}_{i+1}),\psi _2(\tilde{f}_{i+1}))
$$ 
belongs to 
$\mathfrak{p}_i\kx _{\mathfrak{p}_i}$. 
By (\ref{eq:(y)}), 
we know that $f_{i-1}\tilde{f}_{i+1}=\tilde{q}_i$ 
is congruent to $r_i^{a_i}$ modulo $\mathfrak{p}_i$. 
Since 
$r_i\equiv \psi (\tilde{f}_{i+1})\pmod{\mathfrak{p}_i}$ 
by the choice of $\psi (z)$, 
it follows that 
$f_{i-1}\tilde{f}_{i+1}\equiv 
\psi (\tilde{f}_{i+1})^{a_i}\pmod{\mathfrak{p}_i}$. 
Hence, 
$\psi _1(\tilde{f}_{i+1})
=\psi (\tilde{f}_{i+1})^{a_i}\tilde{f}_{i+1}^{-1}$ 
is congruent to $f_{i-1}$ 
modulo $\mathfrak{p}_i\kx _{\mathfrak{p}_i}$. 
This implies that 
$$
\psi _2(\tilde{f}_{i+1})
\equiv \lambda (0)^{-1}
\bigl(\mu (0,f_{i-1})+\psi (\tilde{f}_{i+1})\bigr)
\equiv r
\pmod{\mathfrak{p}_i\kx _{\mathfrak{p}_i}} 
$$
by (\ref{eq:pf:lem:rank 2}). 
Thus, 
$h$ is congruent to $\eta _{i-1}(f_{i-1},r)=f_{i-2}f_i$ 
modulo $\mathfrak{p}_i\kx _{\mathfrak{p}_i}$, 
and therefore belongs to $\mathfrak{p}_i\kx _{\mathfrak{p}_i}$. 
This proves that 
$\eta _{i-1}(\psi _1(z),\psi _2(z))=0$.

Put $l_j=\deg _z \psi _j(z)$ for $j=1,2$. 
Then, 
we claim that 
\begin{equation}\label{eq:t_{i-1}l_1=a_{i-1}l_2}
t_{i-1}l_1=a_{i-1}l_2. 
\end{equation}
In fact, 
if $t_{i-1}l_1\neq a_{i-1}l_2$, 
then it follows from Lemma~\ref{lem:rhq} that 
$$
\deg _z\eta _{i-1}(\psi _1(z),\psi _2(z))
=\max \{ t_{i-1}l_1,a_{i-1}l_2\} >-\infty , 
$$ 
a contradiction. 
Since $\psi (z)$ does not belong to $k$ 
as mentioned, 
we have $l:=\deg _z\psi (z)\geq 1$. 
Since $a_i\geq 2$, 
it follows that 
\begin{equation}\label{eq:plinth2:pf:l_2}
l_1=\deg _z\psi _1(z)=\psi (z)^{a_i}z^{-1}=a_il-1\geq 1. 
\end{equation}
Put $m=\deg _z\mu (0,z)$. 
Then, we have 
\begin{equation}\label{eq:plinth2:pf:l_22}
l_2=\deg _z\psi _2(z)=\deg _z\bigl( 
\mu (0,\psi _1(z))+\psi (z)
\bigr) \leq \max \{ l_1m,l\} , 
\end{equation}
in which the equality holds when 
$l_1m\neq l$.

First, 
assume that $\mu (0,z)=0$. 
Then, 
we have $\psi _2(z)\approx \psi (z)$, 
and so $l_2=l$. 
Hence, 
it follows from 
(\ref{eq:plinth2:pf:l_2}) and (\ref{eq:a_i zenkasiki}) 
that 
\begin{align*}
t_{i-1}l_1-a_{i-1}l_2
=t_{i-1}(a_il-1)-a_{i-1}l
=(t_{i-1}a_i-a_{i-1})l-t_{i-1}
=a_{i+1}l-t_{i-1}. 
\end{align*}
Since $l\geq 1$, 
and $a_{i+1}>t_{i+1}$ by Lemma~\ref{lem:a_i} (iii), 
we know that $a_{i+1}l-t_{i-1}>0$. 
Thus, 
we get a contradiction to 
(\ref{eq:t_{i-1}l_1=a_{i-1}l_2}).

Next, 
assume that $\mu (0,z)\neq 0$. 
Then, 
$\mu (0,z)$ belongs to $zk[z]\sm \zs $, 
since $\mu (y,z)$ is an element of $zk[y,z]$. 
Hence, 
we have $m=\deg _z\mu (0,z)\geq 1$.

Assume that $i=3$. 
If $l_2=l_1m$, 
then we have $t_2l_1=a_2l_2=a_2l_1m$ by 
(\ref{eq:t_{i-1}l_1=a_{i-1}l_2}). 
Hence, 
we get $m=t_2/a_2=t_0/(t_0-1)$. 
Since $t_0\geq 3$, 
it follows that $m$ is not an integer, 
a contradiction.

Assume that $l_2\neq l_1m$. 
Then, 
we have $l_1m\leq l$ 
in view of (\ref{eq:plinth2:pf:l_22}) 
and the note following it. 
By (\ref{eq:plinth2:pf:l_2}), 
it follows that 
$$
l\geq l_1m=(a_3l-1)m=\bigl( (a_3-1)l+(l-1)\bigr) m. 
$$
Since $a_3\geq 2$, $l\geq 1$ and $m\geq 1$, 
this implies that 
$l=m=l_1=1$ and $a_3=2$. 
Since $l_1m\leq l$, 
we have $l_2\leq l$ 
by (\ref{eq:plinth2:pf:l_22}), 
and so $l_2\leq 1$. 
By the assumption that $l_2\neq l_1m$, 
it follows that $l_2\leq 0$. 
Hence, 
we get $3\leq t_2=t_2l_1=a_2l_2\leq 0$ 
by (\ref{eq:t_{i-1}l_1=a_{i-1}l_2}), 
a contradiction.

Finally, 
assume that $i\geq 4$. 
Then, 
we have $a_i\geq 3$ by (i) and  (ii) of Lemma~\ref{lem:a_i}. 
Since $l\geq 1$, 
it follows that $l_1=a_il-1>l$ 
by (\ref{eq:plinth2:pf:l_2}). 
Hence, we get $l_1m>l$. 
This implies that $l_2=l_1m$ 
by (\ref{eq:plinth2:pf:l_22}) 
and the note following it. 
Since $i-1\geq 3$, 
we have $t_{i-1}<a_{i-1}$ by Lemma~\ref{lem:a_i} (iii). 
Thus, we know that  
$t_{i-1}l_1<a_{i-1}l_1\leq a_{i-1}l_1m=a_{i-1}l_2$, 
a contradiction to (\ref{eq:t_{i-1}l_1=a_{i-1}l_2}). 
Therefore, 
$f_i$ is a factor of $\tilde{g}_i$. 
\end{proof}

If $i\geq 3$, 
then $f_i$ is a factor of 
$\tilde{D}_i(r_i)$, 
and hence a factor of 
$\tilde{D}_i(r_i)\tilde{f}_{i+1}^{-1}$. 
By Lemma~\ref{lem:rank 2}, 
it follows that 
$f_i$ is a common factor of 
$\tilde{g}_i$ and  
$\tilde{D}_i(r_i)\tilde{f}_{i+1}^{-1}$ 
if 
$i\geq 3$ and $\lambda (0)\neq 0$. 
If this is the case, 
then we have $\rank \tilde{D}_i=3$ 
due to the last part of Lemma~\ref{lem:rank 1}. 
This proves Theorem~\ref{prop:rank2} (i) 
when $\lambda (0)\neq 0$. 
To complete the proof of Theorem~\ref{prop:rank2}, 
we have only to prove (i) 
when $\lambda (0)=0$, and (iii). 
Therefore, 
we assume that $\lambda (y)$ is not an element of 
$k$ in what follows.

Let $\alpha \in \bar{k}$ be a root of $\lambda (y)$, 
and $\lambda _{\alpha }(y)$ 
the minimal polynomial of $\alpha $ over $k$. 
We define $m\in \N $ to be the maximal number such that 
$\lambda _{\alpha }(y)^m$ is a factor of 
$\lambda (y)$ if $i=2$, 
and of $y\lambda (y)$ if $i\geq 3$. 
Then, we have the following lemma.

\begin{lem}\label{lem:rank alpha}
If $a_i\geq 3$ or $\deg _z\mu (\alpha ,z)\geq 2$, 
then $\lambda _{\alpha }(f_i)^m$ 
is a factor of $\tilde{g}_i$. 
\end{lem}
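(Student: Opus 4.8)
\textbf{Proof plan for Lemma~\ref{lem:rank alpha}.}
The goal is to show that under the hypothesis ($a_i\geq 3$ or $\deg _z\mu (\alpha ,z)\geq 2$) the minimal polynomial power $\lambda _{\alpha }(f_i)^m$ divides the generator $\tilde{g}_i$ of $\pl \tilde{D}_i$. The natural framework is the $p$-adic valuation criterion of Lemma~\ref{lem:rank criterion2}, applied with the irreducible element $p=\lambda _{\alpha }(f_i)$ of $\kx $ (it is irreducible because $f_i$ is a coordinate over the appropriate subfield and $\lambda _{\alpha }(y)$ is irreducible over $k$; more precisely $\lambda _{\alpha }(f_i)$ is an irreducible element of $\ker D_{i-1}=k[f_{i-1},f_i]$ and hence of $\kx $ by the factorial-closedness in Lemma~\ref{lem:facts}). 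Since $\tilde{D}_i(r_i)\neq 0$ and $\tilde{D}_i^2(r_i)=0$ by Theorem~\ref{thm:lsc2}~(i), taking $s=r_i$ and $D=\tilde{D}_i$ in Lemma~\ref{lem:rank criterion2} gives
$$
v_p(\tilde{g}_i)=v_p\bigl(\tilde{D}_i(r_i)\bigr)-j,\quad
j=\max \{ j\geq 0\mid (r_i+\ker \tilde{D}_i)\cap p^j\kx \neq \emptyset \} .
$$
So the plan reduces to two computations: a lower bound on $v_p(\tilde{D}_i(r_i))$, and an upper bound $j\leq v_p(\tilde{D}_i(r_i))-m$ on the ``correction'' index.

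First I would compute $v_p(\tilde{D}_i(r_i))$ from the explicit formula \eqref{eq:tilde{D}_i(r_i)}: it equals $v_p(\lambda (f_i))+v_p(\tilde{f}_{i+1})$ when $i=2$ and $v_p(\lambda (f_i))+v_p(f_i)+v_p(\tilde{f}_{i+1})$ when $i\geq 3$. Here $v_p(\lambda (f_i))=m$ by the definition of $m$ (the multiplicity of $\lambda _{\alpha }(y)^m$ dividing $\lambda (y)$, resp.\ $y\lambda (y)$, absorbs the factor $v_p(f_i)$ in the $i\geq 3$ case exactly when $\alpha =0$), and $v_p(\tilde{f}_{i+1})\geq 0$. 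The substance is the second bound. To estimate $j$, I would pass to the quotient by the prime $\mathfrak{p}=\lambda _{\alpha }(f_i)\bar{k}[\x ]$ and work in $\bar{k}[\x ]_{\mathfrak{p}}$, exactly as in the proof of Lemma~\ref{lem:rank 2}: an element of $(r_i+\ker \tilde{D}_i)\cap p^j\kx $ translates, modulo $\mathfrak p$, into a relation forcing $r_i\equiv \psi (\tilde{f}_{i+1})$ for some $\psi (z)\in k[z]$, and then into an identity of the shape $\eta _{i-1}(\psi _1(z),\psi _2(z))=0$ via the congruence $f_{i-1}\tilde{f}_{i+1}=\tilde{q}_i\equiv r_i^{a_i}$ coming from \eqref{eq:(y)} and \eqref{eq:tilde{q}_i}, where the reduction now uses $\lambda (\alpha )=0$ and $\mu (\alpha ,z)$ in place of $\lambda (0),\mu (0,z)$.

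The main obstacle, and the place where the two alternative hypotheses enter, is ruling out such an identity $\eta _{i-1}(\psi _1(z),\psi _2(z))=0$ beyond the admissible multiplicity. By Lemma~\ref{lem:rhq} this identity forces the degree balance $t_{i-1}l_1=a_{i-1}l_2$ with $l_1=\deg _z\psi _1$, $l_2=\deg _z\psi _2$, and the degree bookkeeping from \eqref{eq:plinth2:pf:l_2} and \eqref{eq:plinth2:pf:l_22} then has to be pushed through with $\deg _z\mu (\alpha ,z)$ replacing $m$ in the $\mu (0,z)$-analysis of Lemma~\ref{lem:rank 2}. The hypothesis $a_i\geq 3$ kicks the inequality $l_1=a_il-1>l$ into effect (forcing $l_2=l_1\deg _z\mu (\alpha ,z)$ and then contradicting $t_{i-1}<a_{i-1}$ via Lemma~\ref{lem:a_i}~(iii)), while the alternative $\deg _z\mu (\alpha ,z)\geq 2$ supplies the contradiction in the borderline low cases $a_i=2$, $i=3$ where $a_i\geq 3$ fails. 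Carrying out this case split carefully, and confirming that each branch yields $j\leq v_p(\tilde{D}_i(r_i))-m$ so that $v_p(\tilde{g}_i)\geq m$, is the crux; the rest is the routine valuation arithmetic already rehearsed in Section~\ref{sect:plinth2}. I would then invoke Lemma~\ref{lem:rank criterion2} to conclude $\lambda _{\alpha }(f_i)^m\mid \tilde{g}_i$.
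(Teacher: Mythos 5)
Your outer framework matches the paper's: take $p=\lambda _{\alpha }(f_i)$ (irreducible in $\kx $ because it is irreducible in $k[f_i]$ and $k[f_i]$ is factorially closed, Lemma~\ref{lem:facts}), note $v_p\bigl(\tilde{D}_i(r_i)\bigr)\geq m$ from (\ref{eq:tilde{D}_i(r_i)}) and the definition of $m$, and control the correction index $j$ of Lemma~\ref{lem:rank criterion2}; the paper in fact proves the stronger statement $j=0$, i.e.\ $(r_i+\ker \tilde{D}_i)\cap p\kx =\emptyset $, rather than your weaker target $j\leq v_p(\tilde{D}_i(r_i))-m$. (A secondary slip: $\lambda _{\alpha }(f_i)\bar{k}[\x ]$ is not prime when $\deg \lambda _{\alpha }>1$; the paper reduces modulo the prime $\mathfrak{p}=(f_i-\alpha )\bar{k}[\x ]$, which contains $p$.)

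The genuine gap is in your core step: you transplant the proof of Lemma~\ref{lem:rank 2}, aiming at an identity $\eta _{i-1}(\psi _1(z),\psi _2(z))=0$ ``with $\lambda (\alpha )=0$ and $\mu (\alpha ,z)$ in place of $\lambda (0),\mu (0,z)$''. That transplant cannot work, for two reasons, both caused by $\lambda (\alpha )=0$. First, in Lemma~\ref{lem:rank 2} the polynomial $\psi _2(z)=\lambda (0)^{-1}\bigl(\mu (0,\psi _1(z))+\psi (z)\bigr)$ exists only because $\lambda (0)\neq 0$: one solves the congruence $r_i\equiv \lambda (0)r-\mu (0,f_{i-1})$ for $r$. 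Here, modulo $(f_i-\alpha )$ one has $r_i\equiv \lambda (\alpha )\tilde{r}-\mu (\alpha ,f_{i-1})=-\mu (\alpha ,f_{i-1})$, so $r$ drops out of the congruence entirely and there is nothing to substitute into the second slot of $\eta _{i-1}$. Second, the vanishing relation that drives Lemma~\ref{lem:rank 2}, namely $\eta _{i-1}(f_{i-1},r)=f_{i-2}f_i\equiv 0$, holds modulo $f_i\kx $ but fails modulo $(f_i-\alpha )$ when $\alpha \neq 0$, where it reads $\equiv \alpha f_{i-2}\not\equiv 0$. The paper's proof exploits the degeneration instead of fighting it: modulo $\mathfrak{p}$ one gets $r_i\equiv \bar{\mu }(f_{i-1})$ with $\bar{\mu }(z):=-\mu (\alpha ,z)$, and combining this with $f_{i-1}\tilde{f}_{i+1}=\tilde{q}_i\equiv r_i^{a_i}$ (from (\ref{eq:tilde{q}_i}) and $\lambda (\alpha )=0$), a supposed $\nu (y,z)$ with $r_i+\nu (f_i,\tilde{f}_{i+1})\in p\kx $ yields the one-variable identity $\bar{\mu }(z)+\bar{\nu }\bigl(\bar{\mu }(z)^{a_i}z^{-1}\bigr)=0$, where $\bar{\nu }(z):=\nu (\alpha ,z)$, because the image of $f_{i-1}$ in $\bar{k}[\x ]/\mathfrak{p}$ is transcendental. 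The hypothesis then enters through the single degree equation $l=(a_il-1)\deg _z\bar{\nu }(z)$ with $l:=\deg _z\bar{\mu }(z)\geq 1$, which for $a_i\geq 2$ forces $a_i=2$ and $l=1$ --- precisely what ``$a_i\geq 3$ or $\deg _z\mu (\alpha ,z)\geq 2$'' excludes. No appeal to $\eta _{i-1}$, to Lemma~\ref{lem:rhq}, or to $t_{i-1}<a_{i-1}$ (Lemma~\ref{lem:a_i}~(iii), which anyway requires $i-1\geq 3$ and so would not cover the case $i=2$ that the lemma also asserts) occurs or is needed.
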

\begin{proof}
Since $p:=\lambda _{\alpha }(f_i)$  
is an irreducible element of $k[f_i]$, 
and $k[f_i]$ is factorially closed in $\kx $ 
by Lemma~\ref{lem:facts}, 
it follows that $p$ is an irreducible element of $\kx $. 
Since $\tilde{D}_i(r_i)$ is divisible by 
$\lambda (f_i)$ if $i=2$, 
and by $\lambda (f_i)f_i$ if $i\geq 3$, 
we have $v_p(\tilde{D}_i(r_i))\geq m$ 
by the definition of $m$. 
We show that 
$(r_i+\ker \tilde{D}_i)\cap p\kx =\emptyset $. 
Then, 
it follows that 
$v_p(\tilde{g}_i)=v_p(\tilde{D}_i(r_i))\geq m$ 
by Lemma~\ref{lem:rank criterion2}, 
and the lemma is proved.

Suppose to the contrary that 
$(r_i+\ker \tilde{D}_i)\cap p\kx $ 
is not empty. 
Since $\ker \tilde{D}_i=k[f_i,\tilde{f}_{i+1}]$, 
we may find $\nu (y,z)\in k[y,z]$ such that 
$r_i+\nu (f_i,\tilde{f}_{i+1})$ belongs to $p\kx $. 
Then, 
$r_i+\nu (f_i,\tilde{f}_{i+1})$ 
belongs to 
$\mathfrak{p}:=(f_i-\alpha )\bar{k}[\x ]$. 
Put $\bar{\mu }(z)=-\mu (\alpha ,z)$ 
and $\bar{\nu }(z)=\nu (\alpha ,z)$. 
Then, 
$\bar{\mu}(z)$ belongs to $z\bar{k}[z]$, 
since $\mu (y,z)$ is an element of $zk[y,z]$. 
We show that 
$\psi (z):=\bar{\mu }(z)
+\bar{\nu }\bigl(\bar{\mu}(z)^{a_i}z^{-1}\bigr)=0$. 
As remarked after Lemma~\ref{lem:facts}, 
$D_i$ extends 
to an element of 
$\lnd _{\bar{k}}\bar{k}[\x ]$ with kernel 
$\bar{k}[f_{i-1},f_i-\alpha ]$. 
Hence, 
$\mathfrak{p}$ is a prime ideal of $\bar{k}[\x ]$ 
by Lemma~\ref{lem:facts}. 
Since $\bar{k}[f_{i-1}]\cap \mathfrak{p}=\zs $ 
by Lemma~\ref{lem:minimal} (i), 
the image of $f_{i-1}$ in $\bar{k}[\x ]/\mathfrak{p}$ 
is transcendental over $\bar{k}$. 
Hence, 
it suffices to verify that 
$\psi (f_{i-1})\equiv 0\pmod{\mathfrak{p}}$. 
Note that 
$$
f_{i-1}\tilde{f}_{i+1}=
\tilde{\eta }_i(f_i,r_i\lambda (f_i)^{-1})\lambda (f_i)^{a_i}
$$
and $\tilde{\eta }_i(y,z)$ is a monic polynomial 
in $z$ of degree $a_i$ over $k[y]$. 
Since $\lambda (\alpha )=0$, 
we see that 
$\tilde{\eta }_i(f_i,r_i\lambda (f_i)^{-1})\lambda (f_i)^{a_i}
\equiv r_i^{a_i}\pmod{\mathfrak{p}}$, 
while 
$r_i=\lambda (f_i)\tilde{r}-\mu (f_i,f_{i-1})$ is congruent to 
$\lambda (\alpha )\tilde{r}-\mu (\alpha ,f_{i-1})
=\bar{\mu }(f_{i-1})$ 
modulo $\mathfrak{p}$. 
Hence, 
it follows that 
$f_{i-1}\tilde{f}_{i+1}\equiv \bar{\mu }(f_{i-1})^{a_i}
\pmod{\mathfrak{p}}$, 
and so 
$\tilde{f}_{i+1}\equiv \bar{\mu }(f_{i-1})^{a_i}f_{i-1}^{-1}
\pmod{\mathfrak{p}}$. 
Thus, we get 
\begin{align*}
&\psi (f_{i-1})
=\bar{\mu }(f_{i-1})
+\bar{\nu }(\bar{\mu}(f_{i-1})^{a_i}f_{i-1}^{-1}) \\ 
&\quad 
\equiv r_i+\bar{\nu }(\tilde{f}_{i+1})
\equiv r_i+\nu (f_i,\tilde{f}_{i+1})
\equiv 0\pmod{\mathfrak{p}}
\end{align*}
by the choice of $\nu (y,z)$. 
This proves that $\psi (z)=0$. 
Hence, we know that 
$\bar{\mu }(z)=-\bar{\nu }(\bar{\mu}(z)^{a_i}z^{-1})$. 
From this, 
we obtain 
\begin{equation}\label{eq:plinth2:alpha:pf}
l:=\deg _z\bar{\mu }(z)
=\deg _z\bar{\nu }(\bar{\mu}(z)^{a_i}z^{-1})
=(a_il-1)\deg _z\bar{\nu }(z). 
\end{equation}
Note that 
$a_i\geq 3$ or $l\geq 2$ by assumption. 
We claim that $l\geq 1$. 
In fact, 
$\bar{\mu }(z)=-\mu (\alpha ,z)$ belongs to $z\bar{k}[z]$, 
and is nonzero 
by the assumption that $\lambda (y)$ and $\mu (y,z)$ 
have no common factor. 
Since $a_i\geq 2$, 
it follows from (\ref{eq:plinth2:alpha:pf}) 
that $\deg _z\bar{\nu }(z)=1$, 
$a_i=2$ and $l=1$, a contradiction. 
Therefore, 
$(r_i+\ker \tilde{D}_i)\cap p\kx $ is empty. 
\end{proof}

In the situation of Lemma~\ref{lem:rank alpha}, 
we have $\rank \tilde{D}_i=3$ 
due to the last part of Lemma~\ref{lem:rank 1}. 
In particular, 
we have $\rank \tilde{D}_i=3$ if $a_i\geq 3$. 
We note that $a_i\geq 3$ in the following cases: 

\smallskip 

\noindent{\rm (I)} 
$i=2$ and $t_0\geq 4$. 

\smallskip 

\noindent{\rm (II)} 
$i\geq 3$ and $(t_0,t_1,i)\neq (4,1,3)$. 

\smallskip 

Actually, 
we have $a_2=t_0-1\geq 3$ in the case of (I). 
If $i\geq 3$, 
then we have $t_0\geq 3$ and $(t_0,t_1)\neq (3,1)$ by assumption. 
Hence, 
we know that 
$a_3=t_1(t_0-1)-1\geq 3$ 
if $(t_0,t_1)\neq (4,1)$. 
If $i\geq 4$, 
then it follows that $a_i\geq 3$ 
by (i) and (ii) of Lemma~\ref{lem:a_i}.

Similarly, 
we have $\rank \tilde{D}_i=3$ 
if $\deg _z\mu (\alpha ,z)\geq 2$ 
for some $\alpha \in \bar{k}$ with $\lambda (\alpha )=0$. 
This condition is equivalent to the following condition: 

\smallskip 

\noindent{\rm (III)} $\mu _j(y)$ does not belong to 
$\sqrt{(\lambda (y))}$ for some $j\geq 2$. 

\smallskip 

From (I) and (III), 
we get the first part of Theorem~\ref{prop:rank2} (iii). 
From (II), (III) and the discussion after Lemma~\ref{lem:rank 2}, 
we see that Theorem~\ref{prop:rank2} (i) is proved 
except for the case where 
$\lambda (0)=0$, $(t_0,t_1,i)=(4,1,3)$ 
and $\mu _j(y)$ belongs to $\sqrt{(\lambda (y))}$ 
for every $j\geq 2$. 
To complete the proof of Theorem~\ref{prop:rank2}, 
it suffices to prove 
(i) in this exceptional case, 
and the last part of (iii). 
Therefore, 
we are reduced to proving the following statements 
under the assumption that 
$\mu _j(y)$ belongs to $\sqrt{(\lambda (y))}$ 
for every $j\geq 2$: 

\smallskip 

\noindent(1) 
If $\lambda (0)=0$ and $(t_0,t_1)=(4,1)$, 
then we have $\rank \tilde{D}_3=3$. 

\smallskip 

\noindent(2) 
If 
$t_0=3$, 
then we have $\tilde{g}_2\approx \tilde{f}_3$. 

\smallskip

We remark that 
$\lambda (y)$ and $\mu (y,z)-\mu _1(y)z$ 
have a non-constant common factor 
under the assumption above, 
since we are assuming that 
$\lambda (y)$ is not an element of $k$. 
Hence, 
$\lambda (y)$ and $\mu _1(y)$ 
have no common factor, 
since so do $\lambda (y)$ and $\mu (y,z)$.

First, we prove (1). 
By Lemma~\ref{lem:facts}, 
$f_3$ and $\tilde{f}_4$ are irreducible elements of $\kx $ 
with $f_3\not\approx \tilde{f}_4$. 
Since $\tilde{D}_3(r_3)=\lambda (f_3)f_3\tilde{f}_4$, 
and $\lambda (0)=0$ by assumption, 
we have 
$$
a:=v_{f_3}(\tilde{D}_3(r_3))\geq 2
\quad\text{and}\quad 
v_{f_3}(\lambda (f_3))=a-1, 
$$
and so $v_{f_3}(\lambda (f_3)^2)=2(a-1)\geq a$. 
Hence, 
we get 
$\lambda (f_3)^2\equiv 0\pmod{f_3^a\kx }$. 
Set 
$$
f=\bigl( 
\mu (f_3,f_2)^2-2\lambda (f_3)\mu (f_3,f_2)r
\bigr)f_2^{-1}. 
$$
Then, 
$f$ belongs to $\kx $, 
since $\eta (y,z)$ is an element of  $zk[y,z]$. 
We show that 
$\tilde{f}_4\equiv f\pmod{f_3^a\kx }$. 
Since $(t_0,t_1)=(4,1)$, 
we have $a_3=t_1(t_0-1)-1=2$ 
and $\tilde{\eta }_3(y,z)=\eta _3(y,z)=y+z^2$. 
Hence, we get 
$$
f_2\tilde{f}_4
=\tilde{\eta }_3(f_3,r_3\lambda (f_3)^{-1})\lambda (f_3)^2
=f_3\lambda (f_3)^2+r_3^2. 
$$
From this, 
we see that $f_2\tilde{f}_4$ is congruent to 
$$
r_3^2=(\lambda (f_3)\tilde{r}-\mu (f_3,f_2))^2
=\lambda (f_3)^2r^2-2\lambda (f_3)r\mu (f_3,f_2)
+\mu (f_3,f_2)^2, 
$$
and hence to $f_2f$ modulo $f_3^a\kx $. 
Since $f_2$ and $f_3$ have no common factor, 
it follows that 
$\tilde{f}_4\equiv f\pmod{f_3^a\kx }$.

To conclude that $\rank D=3$, 
it suffices to show that 
$f_3$ is a factor of $\tilde{g}_3$ 
thanks to the last part of Lemma~\ref{lem:rank 1}, 
since $f_3$ is a factor of 
$\tilde{D}_3(r_3)\tilde{f}_4^{-1}=\lambda (f_3)f_3$. 
Suppose to the contrary that $v_{f_3}(\tilde{g}_3)=0$. 
Then, 
$(r_3+\ker \tilde{D}_3)\cap f_3^a\kx $ 
is not empty 
by Lemma~\ref{lem:rank criterion2}. 
Since $\ker \tilde{D}_3=k[f_3,\tilde{f}_4]$, 
we may find $\nu (y,z)\in k[y,z]$ such that 
$r_3+\nu (f_3,\tilde{f}_4)$ belongs to $f_3^a\kx $. 
Since $\tilde{f}_4\equiv f\pmod{f_3^a\kx }$, 
it follows that 
$r_3+\nu (f_3,f)$ belongs to $f_3^a\kx $. 
We note that 
$r_3$ and $f$ are linear polynomials in $r$ 
over $k[f_2,f_3]$ 
whose leading coefficients are multiples of $\lambda (f_3)$. 
Here, $r$, $f_2$ and $f_3$ 
are algebraically independent over $k$ 
because  
$$df_3\wedge df_2\wedge dr
=D_3(r)dx_1\wedge dx_2\wedge dx_3\neq 0. 
$$
Hence, 
we may uniquely write 
$$
r_3+\nu (f_3,f)
=\sum _{i,j}\psi _{i,j}(f_3)r^if_2^j, 
$$
where $\psi _{i,j}(y)$ 
is an element of $\lambda (y)^ik[y]$ 
for each $i$ and $j$. 
We show that 
$v_y(\psi _{1,0}(y))$ or $v_y(\psi _{0,1}(y))$ 
is less than $a$. 
Let $(r,f_2)$ be the ideal of $k[r,f_2,f_3]$ 
generated by $r$ and $f_2$. 
Then, 
we have $\mu (f_3,f_2)\equiv \mu _1(f_3)f_2\pmod{(r,f_2)^2}$. 
Hence, we get 
\begin{gather*}
r_3\equiv 
\lambda (f_3)r-\mu _1(f_3)f_2
\pmod{(r,f_2)^2} \\
f\equiv \mu _1(f_3)^2f_2-2\lambda (f_3)\mu _1(f_3)r
\pmod{(r,f_2)^2}. 
\end{gather*}
Write $\nu (y,z)=\sum _{j\geq 0}\nu _j(y)z^j$, 
where $\nu _j(y)\in k[y]$ for each $j$. 
Then, 
we have 
$$
\nu (f_3,f)\equiv \nu _1(f_3)f+\nu _0(f_3)
\pmod{(r,f_2)^2}, 
$$ 
since $f$ belongs to $(r,f_2)$. 
Hence, 
$r_3+\nu (f_3,f)-\nu _0(f_3)$ is congruent to 
\begin{align*}
&\bigl(\lambda (f_3)r-\mu _1(f_3)f_2\bigr)
+\nu _1(f_3)
\bigl(
\mu _1(f_3)^2f_2-2\lambda (f_3)\mu _1(f_3)r\bigr)\\
&\quad =
\bigl(1-2\nu _1(f_3)\mu _1(f_3)\bigr)\lambda (f_3)r
+\mu _1(f_3)\bigl(\nu _1(f_3)\mu _1(f_3)-1\bigr)f_2
\end{align*}
modulo $(r,f_2)^2$. 
From this, 
we see that  
$$
\psi _{1,0}(y)=\bigl(1-2\nu _1(y)\mu _1(y)\bigr)\lambda (y)
\quad \text{and}\quad 
\psi _{0,1}(y)=\mu _1(y)\bigl(\nu _1(y)\mu _1(y)-1\bigr). 
$$
Now, 
assume that $v(\psi _{1,0}(y))\geq a$. 
Then, 
$1-2\nu _1(y)\mu _1(y)$ belongs to $yk[y]$, 
since $v_y(\lambda (y))=v_{f_3}(\lambda (f_3))=a-1$. 
This implies that 
$\nu _1(y)\mu _1(y)-1$ does not belong to $yk[y]$. 
Since 
$\lambda (0)=0$ by assumption, 
and $\lambda (y)$ and $\mu _1(y)$ 
have no common factor as mentioned, 
we know that 
$\mu _1(y)$ does not belong to $yk[y]$. 
Hence, 
$\psi _{0,1}(y)$ does not belong to $yk[y]$. 
Thus, 
we get $v_y(\psi _{0,1}(y))=0<a$. 
Therefore, 
$v_y(\psi _{1,0}(y))$ or $v_y(\psi _{0,1}(y))$ 
is less than $a$. 
Consequently, 
we have 
$$
b:=\min \{ v_{y}(\psi _{i,j}(y))\mid i,j\} <a. 
$$ 
Since 
$r_3+\nu (f_3,f)$ belongs to $f_3^a\kx $, 
it follows that 
$g:=(r_3+\nu (f_3,f))f_3^{-b}$ belongs to $f_3\kx $. 
Set $\bar{\psi }_{i,j}(y)=\psi _{i,j}(y)y^{-b}$ 
for each $i$ and $j$. 
Then, $\bar{\psi }_{i,j}(y)$ belongs to $k[y]$ 
for every $i$ and $j$, 
and $\bar{\psi }_{i,j}(0)\neq 0$ 
for some $i$ and $j$. 
Hence, 
$h:=\sum _{i,j}\bar{\psi }_{i,j}(0)r^if_2^j$ 
is a nonzero element of $k[r,f_2]$. 
We note that $\deg _rh\leq 1$. 
In fact, 
we have 
$$
v_y(\psi _{i,j}(y))\geq v_{y}(\lambda (y)^2)
=v_{f_3}(\lambda (f_3)^2)\geq a 
$$
if $i\geq 2$, 
since $\psi _{i,j}(y)$ 
is an element of $\lambda (y)^ik[y]$. 
Since 
$g=\sum _{i,j}\bar{\psi }_{i,j}(f_3)r^if_2^j$, 
we have $h\equiv g\pmod{f_3\kx }$. 
Because $g$ belongs to $f_3\kx $, 
it follows that $h$ belongs to $f_3\kx $. 
Thus, 
$h$ belongs to $k[f_2,r]\cap f_3\kx \sm \zs $. 
By Lemma~\ref{lem:local slice}, 
we have $k[f_2,r]\cap f_3\kx =q_2k[f_2,r]$. 
Therefore, 
we get $\deg _rh\geq \deg _rq_2=a_2=t_0-1=3$, 
a contradiction. 
This proves that $\rank \tilde{D}_3=3$, 
and thereby completing the proof of (1).

Next, 
we prove (2). 
Suppose to the contrary that 
$\tilde{g}_2\not\approx \tilde{f}_3$. 
Then, 
$\tilde{g}_2\tilde{f}_3^{-1}$ 
belongs to $\kx \sm k$, 
since 
$\tilde{f}_3$ is a factor of $\tilde{g}_2$ 
by Lemma~\ref{lem:rank 1}. 
Let $p$ be a factor of $\tilde{g}_2\tilde{f}_3^{-1}$ 
which is an irreducible element of $\kx $. 
Since $\tilde{g}_2$ 
is a factor of $\lambda (f_2)\tilde{f}_3$, 
we know that 
$\tilde{g}_2\tilde{f}_3^{-1}$ 
is a factor of $\lambda (f_2)$. 
Hence, 
$p$ is a factor of $\lambda (f_2)$. 
Since $k[f_2]$ is factorially closed in $\kx $ 
by Lemma~\ref{lem:facts}, 
it follows that $p$ belongs to $k[f_2]$. 
By Lemma~\ref{lem:facts}, 
$\tilde{f}_3$ is an irreducible element of $\kx $, 
and does not belong to $k[f_2]$. 
Hence, 
$p$ does not divide $\tilde{f}_3$. 
Thus, we get 
\begin{equation}\label{eq:rank a}
a:=v_p\bigl(\tilde{D}_2(r_2)\bigr)
=v_p\bigl(\lambda (f_2)\tilde{f}_3\bigr)
=v_p\bigl(\lambda (f_2)\bigr)
\geq 1. 
\end{equation}
Set $\mathfrak{q}=p^a\kx $. 
Then, 
we have $(r_2+\ker \tilde{D}_2)\cap \mathfrak{q}=\emptyset $ 
by Lemma~\ref{lem:rank criterion2}, 
since $v_p(\tilde{g}_2)
=v_p(\tilde{g}_2\tilde{f}_3^{-1})\geq 1$ 
by the choice of $p$.

We define 
$\tilde{f}=\mu (f_2,x_1)^2x_1^{-1}$. 
Then, $\tilde{f}$ belongs to $x_1k[x_1,f_2]$. 
We show that 
$\tilde{f}_3\equiv \tilde{f}\pmod{\mathfrak{q}}$. 
By (\ref{eq:rank a}), 
we have 
$\lambda (f_2)\equiv 0\pmod{\mathfrak{q}}$. 
Since $\tilde{\eta }_2(y,z)=y+\alpha _1^0+\alpha _2^0z+z^2$, 
we see that 
$x_1\tilde{f}_3=
\tilde{\eta }_2(f_2,r_2\lambda (f_2)^{-1})\lambda (f_2)^{2}$ 
is congruent to $r_2^{2}$ modulo $\mathfrak{q}$. 
On the other hand, 
$r_2=\lambda (f_2)x_2-\mu (f_2,x_1)$ 
is congruent to $-\mu (f_2,x_1)$ 
modulo $\mathfrak{q}$. 
Hence, 
we get 
$x_1\tilde{f}_3\equiv \mu (f_2,x_1)^2\pmod{\mathfrak{q}}$. 
Since $p$ is an element of $k[f_2]$, 
we have $x_1\not\approx p$. 
Therefore, 
$\tilde{f}_3$ 
is congruent to 
$\mu (f_2,x_1)^2x_1^{-1}=\tilde{f}$ 
modulo $\mathfrak{q}$.

The following is a key claim.

\begin{claim}\label{claim:plinth2calS}
There exists 
$h\in \mu (f_2,x_1)+k[f_2,\tilde{f}]$ 
such that $v_p(h)\geq a$. 
\end{claim}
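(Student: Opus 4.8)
The plan is to construct $h$ explicitly by working modulo $\mathfrak{q}=p^a\kx $ and exploiting the congruence $\tilde{f}_3\equiv \tilde{f}\pmod{\mathfrak{q}}$ that was just established, together with the emptiness of $(r_2+\ker \tilde{D}_2)\cap \mathfrak{q}$. The key observation is that $\ker \tilde{D}_2=k[f_2,\tilde{f}_3]$ by Theorem~\ref{thm:lsc2}~(i), and since $\tilde{f}_3\equiv \tilde{f}\pmod{\mathfrak{q}}$, the ring $k[f_2,\tilde{f}_3]$ and $k[f_2,\tilde{f}]$ agree modulo $\mathfrak{q}$. So I would first observe that $r_2=\lambda (f_2)x_2-\mu (f_2,x_1)$, and the target element $\mu (f_2,x_1)$ is, up to sign and the $\lambda (f_2)x_2$ term, just $r_2$ itself. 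The idea is that $\mu (f_2,x_1)=\lambda (f_2)x_2-r_2$, so $\mu (f_2,x_1)\equiv -r_2\pmod{\lambda (f_2)\kx }$, and since $v_p(\lambda (f_2))=a$ by \eqref{eq:rank a}, this gives $\mu (f_2,x_1)\equiv -r_2\pmod{\mathfrak{q}}$.

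Next I would use the contrapositive structure already in place: because $(r_2+\ker \tilde{D}_2)\cap \mathfrak{q}=\emptyset $, one cannot cancel $r_2$ by an element of $\ker \tilde{D}_2=k[f_2,\tilde{f}_3]$ to land in $\mathfrak{q}$. The plan is to replace $\tilde{f}_3$ by $\tilde{f}$ in a candidate correction. Concretely, I would seek $\nu (y,z)\in k[y,z]$ so that $h:=\mu (f_2,x_1)+\nu (f_2,\tilde{f})$ satisfies $v_p(h)\geq a$; such $h$ automatically lies in $\mu (f_2,x_1)+k[f_2,\tilde{f}]$. The step of producing $\nu $ is where the congruence $\tilde{f}_3\equiv \tilde{f}\pmod{\mathfrak{q}}$ does the work: since $\tilde{f}_3$ is a factor of $\tilde{g}_2$, I expect to be able to solve $r_2\equiv -\nu (f_2,\tilde{f}_3)$ in some localization and then transport along $\tilde{f}_3\equiv \tilde{f}$. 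I would track $p$-adic valuations carefully, using that $p\in k[f_2]$ is coprime to $x_1$ and to $\tilde{f}_3$, so that the factorial-closedness of $k[f_2]$ (Lemma~\ref{lem:facts}) controls which factors can appear.

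The main obstacle will be the valuation bookkeeping: I must ensure that the correction term $\nu (f_2,\tilde{f})$ actually raises the $p$-adic valuation of $\mu (f_2,x_1)$ up to at least $a$, rather than merely to some smaller level, and simultaneously that $h$ genuinely lies in the additive coset $\mu (f_2,x_1)+k[f_2,\tilde{f}]$ rather than a localization. The subtlety is that $\tilde{f}$ is defined as $\mu (f_2,x_1)^2x_1^{-1}$, so it is a polynomial in $x_1$ and $f_2$, and I must verify that the identity $x_1\tilde{f}_3\equiv \mu (f_2,x_1)^2\pmod{\mathfrak{q}}$ can be leveraged to express $\mu (f_2,x_1)$ itself (not just its square) modulo $\mathfrak{q}$ in terms of $f_2$ and $\tilde{f}$; this likely requires the hypothesis $t_0=3$ so that $\tilde{\eta }_2(y,z)=y+\alpha _1^0+\alpha _2^0z+z^2$ has degree exactly $a_2=2$ in $z$, making the quadratic relation between $\tilde{f}_3$ and $r_2$ governing everything. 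Once Claim~\ref{claim:plinth2calS} is in hand, the intended payoff is that $h\in k[f_2,\tilde{f}]\subseteq k[f_2,x_1]$ with $v_p(h)\geq a$ produces, after reduction modulo $f_2$ or a similar substitution, a contradiction with the degree bound $\deg _rq_2=a_2=t_0-1$ coming from Lemma~\ref{lem:local slice}, exactly paralleling the contradiction derived in the proof of statement~(1). I expect the remainder after the claim to follow the template of that earlier argument.
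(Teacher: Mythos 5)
Your proposal does not reach the Claim, and the main tool you lean on points in the wrong logical direction. Given the two congruences already established before the Claim, namely $\mu (f_2,x_1)\equiv -r_2\pmod{\mathfrak{q}}$ and $\tilde{f}\equiv \tilde{f}_3\pmod{\mathfrak{q}}$, and since $\nu (f_2,\tilde{f})\equiv \nu (f_2,\tilde{f}_3)\pmod{\mathfrak{q}}$ for every $\nu (y,z)\in k[y,z]$ while $\ker \tilde{D}_2=k[f_2,\tilde{f}_3]$, the assertion of the Claim (that the coset $\mu (f_2,x_1)+k[f_2,\tilde{f}]$ meets $\mathfrak{q}=p^a\kx $) is \emph{equivalent} to $(r_2+\ker \tilde{D}_2)\cap \mathfrak{q}\neq \emptyset $. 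So the emptiness of $(r_2+\ker \tilde{D}_2)\cap \mathfrak{q}$, which you propose to use as an ingredient, is precisely the negation of what you must prove; it cannot help construct $h$. In the paper this equivalence is exactly the point of the whole argument: the Claim is proved independently, and only afterwards does the membership $r_2-\nu (f_2,\tilde{f}_3)\in \mathfrak{q}$ contradict the emptiness, refuting the supposition $\tilde{g}_2\not\approx \tilde{f}_3$. Your plan to ``solve $r_2\equiv -\nu (f_2,\tilde{f}_3)$ in some localization and then transport along $\tilde{f}_3\equiv \tilde{f}$'' is therefore circular: producing such a $\nu $ at the polynomial level \emph{is} the Claim, and no localization argument is offered that would return you to the polynomial coset.

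The second, decisive gap is that you never invoke the standing hypothesis under which statement (2) is being proved: $\mu _j(y)$ belongs to $\sqrt{(\lambda (y))}$ for every $j\geq 2$ (so that $p$ divides $\mu _j(f_2)$ for all $j\geq 2$), together with the fact that $\lambda (y)$ and $\mu _1(y)$ have no common factor (so that $v_p(\mu _1(f_2))=0$). This hypothesis is indispensable: if it fails (with $t_0=3$ and $\lambda (y)\notin k^{\times }$), Lemma~\ref{lem:rank alpha} shows $\tilde{g}_2$ is divisible by $\lambda _{\alpha }(f_2)^m$ as well as by $\tilde{f}_3$, so $\tilde{g}_2\not\approx \tilde{f}_3$ genuinely holds, the set $(r_2+\ker \tilde{D}_2)\cap \mathfrak{q}$ is genuinely empty, and by the equivalence above the Claim is then \emph{false}; hence any proof that never uses the hypothesis cannot be correct. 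The paper's actual proof uses it essentially: every element of $\mu (f_2,x_1)+\tilde{f}k[f_2,\tilde{f}]$ lies in $x_1k[x_1,f_2]$ and expands as $\sum _{j\geq 1}h_jx_1^j$ with $h_j\in k[f_2]$; one sets $b=\min \{ v_p(\mu _j(f_2))/(j-1)\mid j\geq 2\} >0$, restricts to the set ${\mathcal S}$ of coset elements with $v_p(h_j)\geq (j-1)b$ for all $j\geq 2$, and, assuming every $h\in {\mathcal S}$ has $v_p(h)<a$, takes $h$ maximizing the first index $l$ with $v_p(h_l)<a$. Writing $h_l=u(f_2)$, a B\'ezout identity $u_1(y)\lambda (y)+u_2(y)\mu _1(y)^{2l}=u(y)$ allows one to subtract $u_2(f_2)\tilde{f}^l$; the new element stays in the coset and in ${\mathcal S}$, its coefficients below $l$ are untouched, and its coefficient at $x_1^l$ becomes $u_1(f_2)\lambda (f_2)$, which has $v_p\geq a$, so the first bad index strictly increases, contradicting maximality. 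Nothing in your sketch performs this coefficientwise correction or explains where a polynomial-level solution comes from, so the proposal has no route to the Claim.
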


Assuming this claim, 
we can complete the proof of (2) as follows. 
By the claim, 
there exists $\nu (y,z)\in k[y,z]$ 
such that $\mu (f_2,x_1)+\nu (f_2,\tilde{f})$ 
belongs to $\mathfrak{q}$. 
Then, 
we have 
$$
r_2-\nu (f_2,\tilde{f}_3)
\equiv -\mu (f_2,x_1)-\nu (f_2,\tilde{f})\equiv 0
\pmod{\mathfrak{q}}, 
$$ 
since $r_2\equiv -\mu (f_2,x_1)\pmod{\mathfrak{q}}$ 
and $\tilde{f}_3\equiv \tilde{f}\pmod{\mathfrak{q}}$.  
Hence, 
$r_2-\nu (f_2,\tilde{f}_3)$ belongs to $\mathfrak{q}$. 
This contradicts that 
$(r_2+\ker \tilde{D}_2)\cap \mathfrak{q}=\emptyset $, 
and thereby proving 
$\tilde{g}_2\approx \tilde{f}_3$.

Let us prove Claim~\ref{claim:plinth2calS}. 
Note that 
$\mu (f_2,x_1)+\tilde{f}k[f_2,\tilde{f}]$ 
is contained in $x_1k[x_1,f_2]$, 
since $\mu (f_2,x_1)$ and $\tilde{f}$ belong to 
$x_1k[x_1,f_2]$. 
Hence, 
each element of $\mu (f_2,x_1)+\tilde{f}k[f_2,\tilde{f}]$ 
is written as 
$$
h=\sum _{j\geq 1}h_jx_1^j,
\text{ \ where \ }h_j\in k[f_2]\text{ \ for each \ 	}j\geq 1. 
$$
By assumption, 
$\mu_j(y)$ belongs to $\sqrt{(\lambda (y))}$ 
for each $j\geq 2$. 
Since $p$ is a factor of $\lambda (f_2)$, 
and is an irreducible element of $\kx $, 
this implies that 
$p$ divides $\mu _j(f_2)$ for each $j\geq 2$. 
Hence, 
we have 
$$
b:=\min \left\{ 
\frac{v_p(\mu _j(f_2))}{j-1}\Bigm| j\geq 2
\right\} >0. 
$$
Let ${\mathcal S}$ be the set of 
$h\in \mu (f_2,x_1)+\tilde{f}k[f_2,\tilde{f}]$ 
such that 
\begin{equation}\label{eq:plinth2:cal S}
v_p(h_j)\geq (j-1)b
\text{ \ for each \ }j\geq 2. 
\end{equation}
Then, 
$\mu (f_2,x_1)$ belongs to ${\mathcal S}$, 
since 
\begin{equation}\label{eq:by the definition of b}
v_p(\mu _j(f_2))\geq (j-1)b\text{ \ for each \ }j\geq 2
\end{equation}
by the definition of $b$. 
Hence, 
${\mathcal S}$ is not empty. 
To prove Claim~\ref{claim:plinth2calS}, 
it suffices to verify that 
$v_p(h)\geq a$ for some $h\in {\mathcal S}$. 
Suppose to the contrary that 
$v_p(h)<a$ for all $h\in {\mathcal S}$. 
Then, 
we can find 
$$
l(h):=\min \{ l\in \N \mid v_p(h_l)<a\} 
$$
for each $h\in {\mathcal S}$. 
Actually, 
if $v_p(h_l)\geq a$ for every $l\in \N $ 
for $f\in {\mathcal S}$, 
then we have $v_p(h)\geq a$. 
By (\ref{eq:plinth2:cal S}), 
we have 
\begin{equation}\label{eq:plinth2:cal S1}
\bigl( l(h)-1\bigr)b\leq v_p(h_{l(h)})<a
\end{equation}
for each $h\in {\mathcal S}$. 
Hence, 
we can find 
$l:=\max \{ l(h)\mid h\in {\mathcal S}\} $. 
Take $h\in {\mathcal S}$ and $u(y)\in k[y]$ 
such that $l=l(h)$ and $h_l=u(f_2)$. 
Since $\lambda (y)$ and $\mu _1(y)$ 
have no common factor, 
the same holds for 
$\lambda (y)$ and $\mu _1(y)^{2l}$. 
Hence, 
we may find $u_1(y),u_2(y)\in k[y]$ such that 
\begin{equation}\label{eq:Euclid}
u_1(y)\lambda (y)+u_2(y)\mu _1(y)^{2l}=u(y). 
\end{equation}
Since $k[f_2]$ is factorially closed in $\kx $, 
we see that 
$\lambda (f_2)$ and $\mu _1(f_2)$ have no common factor. 
Since $p$ is a factor of $\lambda (f_2)$, 
it follows that $v_p(\mu _1(f_2))=0$. 
Hence, we get 
\begin{equation}\label{eq:plinth2:cal S2}
\begin{aligned}
&v_p\bigl(u_2(f_2)\bigr)
=v_p\bigl(u_2(f_2)\mu _1(f_2)^{2l}\bigr)
=v_p\bigl(
u(f_2)-u_1(f_2)\lambda (f_2)
\bigr) \\
&\quad 
=v_p\bigl(
h_l-u_1(f_2)\lambda (f_2)
\bigr) 
\geq 
\min \{ v_p\bigl( h_l\bigr),
v_p\bigl(u_1(f_2)\lambda (f_2)\bigr)\} \\
&\quad \geq 
\min \{ (l-1)b,a\}=
(l-1)b 
\end{aligned}
\end{equation}
in view of (\ref{eq:rank a}) and 
(\ref{eq:plinth2:cal S1}).

Write 
$$
\mu (f_2,x_1)^{2l}=
\left( \sum _{j\geq 1}\mu _j(f_2)x_1^j
\right) ^{2l}=
\sum _{j\geq l}g_{j}x_1^{j+l}, 
$$ 
where 
$g_{j}$ is the sum of 
$\prod _{t=1}^{2l}\mu _{j_t}(f_2)$ 
for $j_1,\ldots ,j_{2l}\in \N $ 
such that $\sum _{t=1}^{2l}j_t=j+l$ 
for each $j\geq l$. 
Then, 
we have $g_l=\mu _1(f_2)^{2l}$, 
and $v_p(g_j)\geq (j-l)b$ for each $j\geq l$, 
since 
$$
v_p\left(\prod _{t=1}^{2l}\mu _{j_t}(f_2)\right)
=\sum _{t=1}^{2l}v_p\bigl(\mu _{j_t}(f_2)\bigr)
\geq \sum _{t=1}^{2l}(j_t-1)b=(j-l)b
$$
by (\ref{eq:by the definition of b}). 
Hence, 
it follows from (\ref{eq:plinth2:cal S2}) that 
\begin{equation}\label{eq:cal S}
v_p\bigl(u_2(f_2)g_{j}\bigr)=v\bigl(u_2(f_2)\bigr)+v_p(g_{j})
\geq (l-1)b+(j-l)b=(j-1)b 
\end{equation}
for each $j\geq l$.

Now, consider the polynomial 
$$
h':=h-u_2(f_2)\tilde{f}^l=h-u_2(f_2)\mu (f_2,x_1)^{2l}x_1^{-l}
=\sum _{j\geq 1}\bigl(h_j-u_2(f_2)g_{j}\bigr)x_1^{j}, 
$$
where $g_j:=0$ for $1\leq j<l$. 
Since $h$ is an element of 
$\mu (f_2,x_1)+\tilde{f}k[f_2,\tilde{f}]$, 
and $u_2(f_2)\tilde{f}^l$ belongs to $\tilde{f}k[f_2,\tilde{f}]$, 
it follows that $h'$ belongs to 
$\mu (f_2,x_1)+\tilde{f}k[f_2,\tilde{f}]$. 
By (\ref{eq:plinth2:cal S}) and (\ref{eq:cal S}), 
we have 
$$
v_p\bigl( 
h_j-u_2(f_2)g_{j}
\bigr) \geq \min \{ v_p(h_j),v_p\bigl( 
u_2(f_2)g_{j}
\bigr) \} \geq (j-1)b 
$$
for each $j\geq 1$. 
Hence, 
$h'$ belongs to ${\mathcal S}$. 
For each $1\leq j<l$, 
we have 
$$
v_p\bigl(h_j-u_2(f_2)g_{j}\bigr)=v_p(h_j)\geq a
$$ 
by the definition of $l=l(h)$. 
From (\ref{eq:rank a}) and (\ref{eq:Euclid}), 
we see that 
$$
v_p\bigl(h_l-u_2(f_2)g_l\bigr)
=v_p\bigl(u(f_2)-u_2(f_2)\mu _1(f_2)^{2l}\bigr)
=v_p\bigl(u_1(f_2)\lambda (f_2)\bigr)
\geq v_p(\lambda (f_2))=a. 
$$
Hence, 
we get $l(h')>l$. 
This contradicts the maximality of $l$. 
Thus, 
we have proved Claim~\ref{claim:plinth2calS}, 
and thereby proving (2). 
This completes the proof of Theorem~\ref{prop:rank2}.

\section{Further local slice constructions}
\setcounter{equation}{0}

In this section, 
we discuss how to get more examples of 
elements of $\lnd _k\kx $. 
First, we note that 
$r$ may be replaced with a polynomial 
of the more general form $x_1x_2x_3-\psi (x_1,x_2)$ 
in the construction of $(f_i)_{i=0}^{\infty }$. 
Here, 
$\psi (x_1,x_2)\in k[x_1,x_2]$ 
is such that 
$x_1x_2x_3-\psi (0,x_2)-\psi (x_1,0)=r$. 
In fact, 
since $\psi (0,0)=0$, 
we can define $\tau \in \Aut (\kx /k[x_1,x_2])$ by 
$$
\tau (x_3)=x_3+
(\psi (x_1,x_2)-\psi (0,x_2)-\psi (x_1,0))x_1^{-1}x_2^{-1}. 
$$ 
Then, we have $\tau (f_0)=f_0$, $\tau (f_1)=f_1$ and 
$\tau \bigl(x_1x_2x_3-\psi (x_1,x_2)\bigr)=r$. 
Therefore, 
we get 
$\bigl(\tau ^{-1}(f_i)\bigr)_{i=0}^{\infty }$ 
instead of $(f_i)_{i=0}^{\infty }$ 
by this construction.

In the construction of $\tilde{f}_{i+1}$ for $i\geq 3$, 
we may interchange the role of $f_{i-1}$ and $f_{i+1}$. 
Namely, 
when $i\neq \max I$, 
we consider 
$$
s_{i}:=\lambda (f_i)r-\mu (f_i,f_{i+1})
\quad\text{and}\quad 
\tilde{g}_{i-1}:=
\tilde{h}_i(f_i,s_i)\lambda (f_i)^{a_i}f_{i+1}^{-1}
$$
instead of $r_i$ and $\tilde{f}_{i+1}$. 
Since $i\neq \max I$, 
we have $\ker D_i=k[f_i,f_{i+1}]$, 
and $D_i$ is irreducible due to Theorem~\ref{thm:lsc1} (i). 
Hence, 
$-D_i=\Delta _{(f_i,f_{i+1})}$ satisfies 
(LSC1) for $f=f_{i}$ and $g=f_{i+1}$. 
By Lemma~\ref{lem:facts}, 
$\mathfrak{p}_{i+1}:=f_{i+1}\kx $ 
is a prime ideal of $\kx $. 
We claim that 
$s_{i}$ does not belong to $\mathfrak{p}_{i+1}$. 
Actually, 
$\lambda (f_i)$ and $r$ do not belong 
to $\mathfrak{p}_{i+1}$ by Lemma~\ref{lem:minimal} (i) 
and by (1) of Proposition~\ref{prop:lsc1}, 
while 
$\mu (f_i,f_{i+1})$ 
belongs to $\mathfrak{p}_{i+1}$ 
since $\mu (y,z)$ belongs to $zk[y,z]$. 
Since $D_i(f_i)=D_i(f_{i+1})=0$, 
we get 
$-D_i(s_i)=-\lambda (f_i)D_i(r)=-\lambda (f_i)f_if_{i+1}$. 
Thus, 
$-D_i$ satisfies (LSC2) for $s=s_i$ and $F=-\lambda (f_i)f_i$. 
By the irreducibility of $\tilde{h}_i(x_1,x_2)$, 
it follows that 
$f_{i+1}\tilde{g}_{i-1}=\tilde{h}_i(f_i,s_i)$ 
is an irreducible element of $k[f_i,s_i]$. 
Since $s_i\equiv \lambda (f_i)r\pmod{\mathfrak{p}_{i+1}}$, 
we see that $\tilde{h}_i(f_i,s_i)$ 
is congruent to 
$\eta _i(f_i,r)\lambda (f_i)^{a_i}=
f_{i-1}f_{i+1}\lambda (f_i)^{a_i}$ 
modulo $\mathfrak{p}_{i+1}$. 
Hence, 
$\tilde{g}_{i-1}$ belongs to $\kx $. 
Therefore, 
we know by (a) and (b) of Theorem~\ref{thm:fbook} that
$\Delta _{(f_i,\tilde{g}_{i-1})}$ is locally nilpotent 
and $\Delta _{(f_i,\tilde{g}_{i-1})}(s_i)
=\tilde{g}_{i-1}\lambda (f_i)f_i$.

Next, 
let $\Delta _{(f,h)}$ be an element of $\lnd _k\kx $ 
obtained from a data $(f,g,s)$ 
by a local slice construction. 
We consider when $\Delta _{(f,h)}$ has rank three. 
In view of Theorems~\ref{prop:rank} and \ref{prop:rank2}, 
we see that there exist many examples 
in which $\Delta _{(f,g)}$ and $\Delta _{(f,h)}$ 
both have rank three. 
We are interested in the case where 
$\rank \Delta _{(f,g)}\leq 2$ 
and $\rank \Delta _{(f,h)}=3$.

\begin{prop}\label{prop:further}
Assume that 
$\rank \Delta _{(f,g)}\leq 2$ 
and $\rank \Delta _{(f,h)}=3$. 
Then, 
we have $\rank \Delta _{(f,g)}=2$, 
$\pl \Delta _{(f,g)}=(g)$, 
and $g$ is a coordinate of $\kx $ over $k$. 
\end{prop}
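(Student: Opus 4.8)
The plan is to write $D=\Delta_{(f,g)}$ and $E=\Delta_{(f,h)}$ and to extract everything from the plinth ideal of $D$ together with the rank hypothesis on $E$. Recall from the local slice construction that $D$ is irreducible and locally nilpotent with $\ker D=k[f,g]$ (so $f,g$ are algebraically independent and $g$ is an irreducible element of $\kx$ by Lemma~\ref{lem:facts}), that $D(s)=gF$ for some $s\in\kx$ and $F\in k[f]\sm\zs$, and that $E(s)=-hF$ by Theorem~\ref{thm:fbook}(b). Since $\rank E=3$ and $E(f)=0$, the element $f$ cannot be a coordinate of $\kx$ over $k$; this simple remark will be used repeatedly. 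The three assertions to be proved are $\rank D=2$, $\pl D=(g)$, and that $g$ is a coordinate.

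First I would settle $\rank D=2$. As $\rank D\le 2$ by hypothesis, it suffices to exclude $\rank D=1$. If $\rank D=1$, then since $D$ is irreducible Lemma~\ref{lem:slice irred} gives $t\in\kx$ with $D(t)=1$, whence $\kx=(\ker D)[t]=k[f,g,t]$. The $k$-endomorphism of $\kx$ sending $x_1,x_2,x_3$ to $f,g,t$ is then surjective, hence an automorphism, so $f,g,t$ form a coordinate system; in particular $f$ is a coordinate, contradicting the remark above. Thus $\rank D=2$.

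The heart of the matter is $\pl D=(g)$. Since $F,g\in\ker D$ we have $gF=D(s)\in\pl D$, and as $\pl D$ is principal I can write $\pl D=(q_0)$ with $q_0\mid gF$. Because $F\in k[f]$ and $k[f]$ is factorially closed in $\kx$ (Lemma~\ref{lem:facts}), the irreducible $g$ does not divide $F$, so $v_g(q_0)\in\{0,1\}$, splitting the argument into two cases. If $v_g(q_0)=1$, then $q_0/g$ divides $F$ and lies in $k[f]$; were it non-constant it would be algebraically independent of $g$, and then $q_0$ would have two algebraically independent factors, forcing $\rank D=3$ by Proposition~\ref{prop:rank3criterion} — impossible. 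Hence $q_0/g\in k^\times$ and $\pl D=(g)$, as desired. The delicate case — and what I expect to be the main obstacle — is $v_g(q_0)=0$: then $q_0\mid F$, so $q_0\in k[f]$ is non-constant. Here I would invoke Lemma~\ref{lem:rank criterion}: as $\rank D=2$ the derivation $D$ kills a coordinate $p$, so $\pl D=(q_0)$ with $q_0\in k[p]$, and therefore $q_0\in k[f]\cap k[p]$ is non-constant. The rank-two structure of $D$ (conjugate $D$ into $\lnd_{k[x_3]}\kx$ by an automorphism carrying $x_3$ to $p$, then apply Theorem~\ref{thm:Rentschler original} over $k(x_3)$) shows that $p$ is a coordinate of the two-variable polynomial ring $\ker D=k[f,g]$, of which $f$ is visibly also a coordinate. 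Two coordinates of a polynomial ring in two variables that share a non-constant function must generate the same one-variable subring, so $k[f]=k[p]$; hence $f$ is a coordinate of $\kx$, again contradicting the remark. Therefore $v_g(q_0)=0$ cannot occur and $\pl D=(g)$.

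Finally, $g$ is a coordinate. Applying Lemma~\ref{lem:rank criterion} once more gives $\pl D=(q')$ with $q'\in k[p]$, so $g\approx q'\in k[p]$. Since $g$ is a coordinate of $\ker D=k[f,g]$ lying in $k[p]$, where $p$ is a coordinate of $\ker D$, a Jacobian (degree) computation forces $g$ to be affine in $p$; as $p$ is a coordinate of $\kx$, so is $g$. The two points requiring the most care are the structural claim that the coordinate $p$ killed by a rank-two irreducible $D$ is itself a coordinate of $\ker D$, and the elementary but essential fact that two coordinates of $k^{[2]}$ sharing a non-constant element span the same line; together these are exactly what rule out the exceptional case $v_g(q_0)=0$, and hence what drive the whole proof.
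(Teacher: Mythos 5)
Your first two steps are fine, and in places genuinely different from the paper: excluding $\rank \Delta _{(f,g)}=1$ via a slice $t$ with $D(t)=1$ and the Hopfian property of $\kx $ works (the paper instead writes $\ker \Delta _{(f,g)}=\sigma (k[x_2,x_3])$ directly, but both routes end with ``$f$ is a coordinate, contradicting $\rank \Delta _{(f,h)}=3$''), and your disposal of the case $v_g(q_0)=1$ by exhibiting two algebraically independent factors of $q_0$ and quoting Proposition~\ref{prop:rank3criterion} is a clean, valid substitute for the paper's algebraic-dependence argument. The genuine gap is exactly where you yourself point: the case $v_g(q_0)=0$. There everything rests on the claim that the coordinate $p$ of $\kx $ killed by $D=\Delta _{(f,g)}$ is a coordinate of the two-variable ring $\ker D=k[f,g]$, justified by ``conjugate $D$ into $\lnd _{k[x_3]}\kx $ and apply Theorem~\ref{thm:Rentschler original} over $k(x_3)$''. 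But Rentschler's theorem applied to the extension of $D$ to $k(x_3)[x_1,x_2]$ only says that the kernel of that \emph{extension} is $k(x_3)[w]$; it does not follow that $\ker D$ itself equals $k[x_3,w']$, i.e.\ that $p$ is a coordinate of $k[f,g]$. Descending from $k(x_3)[w]$ to $k[x_3][w']$ is precisely the content of the Daigle--Freudenburg structure theorem for rank-two derivations, which is neither proved in nor quotable from this monograph. Factorial closedness of $k[f,g]$ cannot substitute for it: it only shows that every fiber $p-\alpha $ is irreducible in $k[f,g]$, and a polynomial in two variables can have all fibers irreducible without being a coordinate (e.g.\ $u^3-v^2$). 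Your auxiliary claim that two coordinates of a polynomial ring in two variables sharing a nonconstant element generate the same subring is true but is also asserted without proof, and it carries real weight in your argument.

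The irony is that no such structure theory is needed. From the data you already have in this case --- $q_0\in k[f]\cap k[p]\sm k$ with $p$ a coordinate of $\kx $ --- the paper finishes in three lines: over $\bar{k}$ some factor $p-\alpha $ of $q_0$ divides $F$, hence lies in $\bar{k}[f]$ because $\bar{k}[f]$ is factorially closed in $\bar{k}[\x ]$ (Lemma~\ref{lem:facts} and the remark after it); since $p-\alpha $ is a coordinate of $\bar{k}[\x ]$ it is irreducible there, so as an element of $\bar{k}[f]$ it must be \emph{linear} in $f$; hence $f$ is a coordinate of $\kx $, contradicting $\rank \Delta _{(f,h)}=3$. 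The same device repairs your last step as well: once $\pl \Delta _{(f,g)}=(g)$ and $g\approx q'\in k[p]$, the irreducibility of $g$ in $\bar{k}[\x ]$ (Lemma~\ref{lem:facts}) forces $g$ to be linear in $p$, so $g$ is a coordinate --- no appeal to $p$ being a coordinate of $\ker D$, and no Jacobian computation. With those two replacements your proof closes; as written, both the $v_g(q_0)=0$ case and the final step rest on an unestablished rank-two structure theorem.
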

\begin{proof}
By the assumption of local slice construction, 
$\Delta _{(f,g)}$ is irreducible. 
Since $\rank \Delta _{(f,g)}\leq 2$ by assumption, 
there exists a coordinate $p$ of $\kx $ over $k$ 
such that $\Delta _{(f,g)}(p)=0$. 
Hence, 
we know by Lemma~\ref{lem:rank criterion} that 
$\pl \Delta _{(f,g)}=(q)$ 
for some $q\in k[p]\sm \zs $.

Suppose to the contrary that $\rank \Delta _{(f,g)}=1$. 
Then, 
we have $\ker \Delta _{(f,g)}=\sigma (k[x_2,x_3])$ 
for some $\sigma \in \Aut (\kx /k)$. 
Since $\ker \Delta _{(f,g)}=k[f,g]$ by (LSC1), 
it follows that $f$ and $g$ 
are coordinates of $\kx $ over $k$. 
Since $\Delta _{(f,h)}(f)=0$, 
we have $\rank \Delta _{(f,h)}\leq 2$, 
a contradiction. 
Thus, 
we get $\rank \Delta _{(f,g)}=2$. 
By Lemma~\ref{lem:slice irred}, 
this implies that 
$q$ does not belong to $k^{\times }$. 
By (LSC2), 
there exists $F\in k[f]\sm \zs $ such that 
$\Delta _{(f,g)}(s)=gF$. 
Since $D^2(s)=0$, 
we see that $D(s)$ belongs to $\pl \Delta _{(f,g)}$. 
Therefore, 
$q$ is a factor of $gF$.

We show that $q\approx g$. 
First, 
suppose that $q$ is not divisible by $g$. 
Then, 
$q$ is a factor of $F$ by the irreducibility of $g$. 
Since $q$ is an element of $k[p]\sm k$, 
we may find $\alpha \in \bar{k}$ 
such that $p-\alpha $ divides $q$. 
Then, $p-\alpha $ is a factor of $F$. 
By Lemma~\ref{lem:facts} and the remark following it, 
$\bar{k}[f]$ is factorially closed in $\bar{k}[\x ]$. 
Hence, $p-\alpha $ belongs to $\bar{k}[f]$. 
On the other hand, 
$p$ is a coordinate of 
$\bar{k}[\x ]$ over $\bar{k}$, 
since a coordinate of $\kx $ over $k$ 
is necessarily a coordinate of 
$\bar{k}[\x ]$ over $\bar{k}$. 
Hence, $p-\alpha $ is a coordinate of 
$\bar{k}[\x ]$ over $\bar{k}$. 
In particular, 
$p-\alpha $ is an irreducible element of $\bar{k}[\x ]$. 
Thus, 
$p-\alpha $ must be a linear polynomial in $f$ 
over $\bar{k}$. 
Since $p$ and $f$ are elements of $\kx $, 
this implies that $p$ 
is a linear polynomial in $f$ over $k$. 
Hence, 
$f$ is a coordinate of $\kx $ over $k$. 
Thus, 
we get $\rank \Delta _{(f,h)}\leq 2$, 
a contradiction. 
Therefore, 
$q$ is divisible by $g$. 
Next, 
suppose that $q\not\approx g$. 
Then, 
$q':=g^{-1}q$ is a factor of $F$. 
Hence, $q'$ belongs to $k[f]\sm k$ 
by the factorially closedness of $k[f]$ in $\kx $. 
Since $f$ and $g$ are algebraically independent over $k$, 
it follows that $q'$ and $g$ 
are algebraically independent over $k$. 
This contradicts that $q'g=q$ belongs to $k[p]$. 
Thus, we get $q\approx g$, 
proving that $\pl \Delta _{(f,g)}=(g)$. 
Consequently, 
$g$ belongs to $k[p]$. 
Since $g$ is irreducible in $\bar{k}[\x ]$, 
this implies that 
$q$ is a linear polynomial in $p$ over $k$. 
Therefore, 
$g$ is a coordinate of $\kx $ over $k$. 
\end{proof}

For example, 
$D_2$ is of rank three 
if $t_0\geq 3$ and $(t_0,t_1)\neq (3,1)$ 
by Theorem~\ref{prop:rank} (iv), 
and is obtained from the data $(f_2,x_1,r)$ 
by a local slice construction. 
In this case, 
$D_1=\Delta _{(f_2,x_1)}$ is triangular. 
It is previously not known whether 
there exists an example in which 
$\rank \Delta _{(f,h)}=3$, 
$\rank \Delta _{(f,g)}=2$ 
and $\Delta _{(f,g)}$ is not triangularizable. 
In closing this section, 
we construct $f,s\in \kx $ such that 
the data $(f,x_1,s)$ yields 
a rank three locally nilpotent derivation, 
and $\Delta _{(f,x_1)}$ is not triangularizable.

Define $p_1,p_2,f\in \kx $ by 
\begin{gather*}
p_1=(x_1+1)x_2-x_1^2x_3^2,\quad 
p_2=x_1x_3+(x_1+1)p_1^2\\
f=(x_1+1)^{-1}(p_1+p_2^2)=x_2+2x_1x_3p_1^2+(x_1+1)p_1^4. 
\end{gather*}
Then, 
we have 
$k(x_1)[f,p_2]=k(x_1)[p_1,p_2]=k(x_1)[p_1,x_3]=k(x_1)[x_2,x_3]$. 
We show that 
$D:=\Delta _{(f,x_1)}$ is locally nilpotent. 
Since 
$$
df\wedge dx_1\wedge dp_2=(x_1+1)^{-1}dp_1\wedge dx_1\wedge dp_2
=-x_1dx_1\wedge dx_2\wedge dx_3, 
$$
we have $D(p_2)=-x_1$. 
Since $D(f)=D(x_1)=0$, 
it follows that 
$D$ extends to a 
locally nilpotent 
derivation 
of $k(x_1)[x_2,x_3]=k(x_1)[p_2,f]$ over $R:=k(x_1)[f]$. 
Therefore, 
$D$ is locally nilpotent. 
We mention that $\ker D$ 
is contained in $R$, 
and hence in $k(x_1,f)$.

Now, following Daigle~\cite[Example 3.5]{Daigle}, 
we show that $D$ is not triangularizable 
by contradiction. 
Suppose to the contrary that $D$ is triangularizable. 
Then, 
there exists a coordinate $p$ of $\kx $ over $k[x_1]$ 
such that $k(x_1)[f,p]=k(x_1)[x_2,x_3]$ 
(see~\cite[Corollary 3.4]{Daigle}). 
Since 
$k(x_1)[x_2,x_3]=k(x_1)[f,p_2]$, 
it follows that 
$$
R[p]
=k(x_1)[f,p]
=k(x_1)[x_2,x_3]
=k(x_1)[f,p_2]
=R[p_2]. 
$$
This implies that 
$p=ap_2+b$ 
for some $a\in R^{\times }=k(x_1)^{\times }$ 
and $b\in R$. 
Hence, we may write 
$$
a_0(x_1)p=a_1(x_1)p_2+\sum _{i\geq 0}b_i(x_1)f^i, 
$$
where $a_i(x_1)\neq 0$ for $i=0,1$ 
and $b_i(x_1)$ for $i\geq 0$ 
are elements of $k[x_1]$ with no common factor. 
Since 
\begin{equation}\label{eq:further mod x_1}
p_1\equiv x_2,\quad 
p_2\equiv x_2^2,\quad 
f\equiv x_2+x_2^4\pmod{x_1\kx }, 
\end{equation}
it follows that 
$$
a_0(0)p=a_1(0)x_2^2+\sum _{i\geq 0}b_i(0)(x_2+x_2^4)^i. 
$$
When $a_0(0)=a_1(0)=0$, 
the preceding equality implies that 
$b_i(0)=0$ for every $i\geq 0$. 
Hence, 
$x_1$ is a common factor of 
$a_0(x_1)$, $a_1(x_1)$ and $b_i(x_1)$ for $i\geq 0$, 
a contradiction. 
If $a_0(0)\neq 0$ and $a_1(0)=0$, 
then we have $p\approx \sum _{i\geq 0}b_i(0)(x_2+x_2^4)^i$. 
Hence, 
$p$ is an element of $k[x_2]$, 
and is not a linear polynomial. 
This contradicts that $p$ 
is a coordinate of $\kx $ over $k[x_1]$. 
If $a_0(0)=0$ and $a_1(0)\neq 0$, 
then we have 
$$
b_0(0)+b_1(0)(x_2+x_2^4)+\left( 
a_1(0)+\sum _{i\geq 2}b_i(0)(1+x_2^3)^ix_2^{i-2}
\right) x_2^2=0. 
$$
This gives that 
$b_0(0)\equiv 0\pmod{x_2k[x_2]}$, 
and so $b_0(0)=0$. 
Hence, 
we have $b_1(0)x_2\equiv 0\pmod{x_2^2k[x_2]}$, 
and so $b_1(0)=0$. 
Then, 
it follows that $b_i(0)=0$ for every $i\geq 2$, 
and consequently $a_1(0)=0$. 
Thus, 
$x_1$ is a common factor of 
$a_0(x_1)$, $a_1(x_1)$ and $b_i(x_1)$ for $i\geq 0$, 
a contradiction. 
Therefore, 
$D$ is not triangularizable. 
In particular, 
we have $\rank D\neq 1$.

We show that $D$ is irreducible. 
Suppose that $D$ is not irreducible. 
Then, 
$D(\kx )$ is contained in $x_1\kx $, 
since $D(p_2)=-x_1$ as mentioned. 
Hence, 
$x_1^{-1}D$ belongs to $\lnd _k\kx $. 
Since $(x_1^{-1}D)(-p_2)=1$, 
we get $\rank D=\rank x_1^{-1}D=1$ 
by Lemma~\ref{lem:slice irred}, 
a contradiction. 
Therefore, $D$ is irreducible. 
Since $\ker D$ is contained in $k(f,x_1)$ 
as mentioned, 
we conclude that $\ker D=k[x_1,f]$ 
by the ``kernel criterion" 
(cf.~\cite[Proposition~5.12]{Fbook}). 
Hence, 
$D=\Delta _{(f,x_1)}$ satisfies (LSC1) 
for $(f,g)=(f,x_1)$. 
Note that $s:=p_2f+x_1^2$ does not belong to $x_1\kx $, 
since 
$$
s\equiv p_2f\equiv x_2^2(x_2+x_2^4)
\not\equiv 0\pmod{x_1\kx }. 
$$ 
Moreover, 
we have 
$$
D(s)=D(p_2f+x_1^2)=D(p_2)f=-x_1f. 
$$
Hence, 
$D$ satisfies (LSC2) for $s=p_2f+x_1^2$ and $F=-f$.

Now, define 
$q=(s^2-f^3)^2-f^3s$. 
Since $s\equiv p_2f\pmod{x_1\kx }$, 
we see that 
$q$ is congruent to 
$$
(p_2^2f^2-f^3)^2-f^4p_2=
f^4\bigl((p_2^2-f)^2-p_2\bigr)
$$
modulo $x_1\kx $. 
By (\ref{eq:further mod x_1}), 
this polynomial is congruent to 
$$
f^4\Bigl(\bigl((x_2^2)^2-(x_2+x_2^4)\bigr)^2-x_2^2\Bigr)=0
$$ 
modulo $x_1\kx $. 
Hence, 
$q$ belongs to $x_1\kx $. 
We show that $q$ is an irreducible element of $k[f,s]$. 
Write 
$q=f^4((t^2-f)^2-t)$, 
where $t:=s/f$. 
Then, 
$q':=(t^2-f)^2-t$ is a coordinate of $k[f,t]$ over $k$, 
since $k[q',t^2-f]=k[f,t]$. 
Hence, 
$q'$ is an irreducible element of $k[f,t]$. 
Since $q'$ does not belong to $k[f]$, 
it follows that $q'$ is an irreducible polynomial 
in $t$ over $k[f]$, 
and hence over $k(f)$. 
Thus, 
$q$ is an irreducible polynomial in $t$ over $k(f)$. 
Since $t=s/f$, 
this implies that $q$ 
is an irreducible polynomial in $s$ over $k(f)$. 
Because $q$ is a primitive polynomial in $s$ over $k[f]$, 
we conclude that $q$ is an irreducible element of $k[f,s]$. 
Put $h=qx_1^{-1}$. 
Then, 
it follows from (a) and (b) of Theorem~\ref{thm:fbook} 
that $D':=\Delta _{(f,h)}$ is locally nilpotent and $D'(s)=hf$.

We show that $D'$ is irreducible. 
Thanks to Proposition~\ref{prop:lsc:F:rem}, 
it suffices to check that $f$ and $D'(p_2)$ 
have no common factor. 
Since $\ker D=k[f,x_1]$, 
we know by Lemma~\ref{lem:facts} that 
$f\kx $ is a prime ideal of $\kx $. 
Hence, 
it suffices to verify that $D'(p_2)\not\equiv 0\pmod{f\kx }$. 
Since $x_1=qh^{-1}$, 
$D'(f)=D'(h)=0$ 
and $D'(s)=hf$, 
we have 
$$
D'(x_1)=D'(qh^{-1})=D'(q)h^{-1}=
\frac{\partial q}{\partial s}D'(s)h^{-1}
=(4s(s^2-f^3)-f^3)f
$$
by chain rule. 
On the other hand, 
we have 
$$
hf=D'(s)=D'(p_2f+x_1^2)=D'(p_2)f+2x_1D'(x_1). 
$$
From the two equalities above, 
it follows that 
$$
D'(p_2)=h-2x_1\bigl(4s(s^2-f^3)-f^3\bigr). 
$$
Note that $s\equiv x_1^2$, 
$q\equiv s^4\equiv x_1^8$ and 
$h=qx_1^{-1}\equiv x_1^7\pmod{f\kx }$. 
By Lemma~\ref{lem:facts}, 
$x_1$ does not belong to $f\kx $. 
Thus, 
we know that 
$$
D'(p_2)\equiv h-8x_1s^3\equiv -7x_1^7\not\equiv 0
\pmod{f\kx }. 
$$ 
Therefore, 
$D'$ is irreducible. 
Consequently, 
we get $\ker D'=k[f,h]$ by Theorem~\ref{thm:fbook} (c).

Since $D'(s)=hf$ belongs to $\pl D'$, 
there exists a factor $g'$ of $hf$ 
such that $\pl D'=(g')$. 
We show that $g'\approx hf$. 
Since $\ker D'=k[f,h]$, 
we know by Lemma~\ref{lem:facts} 
that $f$ and $h$ are irreducible 
elements of $\kx $ with $f\not\approx h$. 
Hence, 
it suffices to check that $g'$ 
is divisible by $h$ and $f$.

Suppose to the contrary that 
$g'$ is not divisible by $h$. 
Then, 
$\pl D'$ is not contained in $h\kx $. 
Hence, 
we have $(s+\ker D')\cap h\kx \neq \emptyset $ 
by the last part of Lemma~\ref{lem:rank criterion}. 
Since $\ker D'=k[f,h]$, 
it follows that 
$(s+k[f])\cap h\kx \neq \emptyset $. 
Hence, 
$k[f,s]\cap h\kx $ 
contains a linear polynomial in $s$ over $k[f]$. 
On the other hand, 
$k[f,s]\cap h\kx $ 
is a principal prime ideal of $k[f,s]$ 
by Lemma~\ref{lem:minimal} (iii). 
Since $q=x_1h$ belongs to $k[f,s]\cap h\kx $, 
and is an irreducible element of $k[f,s]$, 
we know that $k[f,s]\cap h\kx $ 
is generated by $q$. 
Because $\deg _sq=4>1$, 
it follows that 
$k[f,s]\cap h\kx $ 
contains no linear polynomial in $s$ over $k[f]$, 
a contradiction. 
Therefore, 
$g'$ is divisible by $h$.

Suppose to the contrary that 
$g'$ is not divisible by $f$. 
Then, 
$\pl D'$ is not contained in $f\kx $. 
Hence, 
we have $(s+\ker D')\cap f\kx \neq \emptyset $ by 
Lemma~\ref{lem:rank criterion}. 
Since $\ker D'=k[f,h]$, 
we get 
$(s+k[h])\cap f\kx \neq \emptyset $. 
Hence, 
$k[h,s]\cap f\kx $ 
contains a linear polynomial in $s$ over $k[h]$. 
On the other hand, 
$k[h,s]\cap f\kx $ 
is a principal prime ideal of $k[h,s]$ 
by Lemma~\ref{lem:minimal} (iii). 
Since 
$s^7-h^2$ is an irreducible element of $k[h,s]$ 
with $s^7-h^2\equiv 0\pmod{f\kx }$, 
it follows that 
$k[h,s]\cap f\kx $ is generated by $s^7-h^2$. 
This contradicts that 
$k[h,s]\cap f\kx $ 
contains a linear polynomial in $s$ over $k[f]$. 
Therefore, 
$g'$ is divisible by $f$. 
This proves that 
$g'\approx hf$. 
Because $f$ and $h$ are algebraically independent over $k$, 
we conclude that $\rank D'=3$ 
by Proposition~\ref{prop:rank3criterion}.

\chapter*{Conclusion}
\setcounter{equation}{0}
\label{chap:quest}

In closing this monograph, 
we list some problems, questions and conjectures. 
We assume that $n=3$, 
and $D$ is a nonzero element of $\lnd _k\kx $ 
unless otherwise stated.

\begin{conj}\label{conj:tt}\rm 
If $\exp D$ is tame, 
then $D$ is tamely triangularizable. 
\end{conj}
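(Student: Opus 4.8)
The plan is to recast Conjecture~A as the assertion that a tame exponential automorphism must come from a derivation killing a \emph{tame} coordinate, and then to argue by the rank of $D$. The key reduction is: it suffices to prove that if $\exp D$ belongs to $\T(k,\x)$, then $D$ kills a tame coordinate of $\kx$ over $k$. Granting this, Theorem~\ref{thm:anstoF}~(i) applies verbatim and produces $\tau\in\T(k,\x)$ with $\tau^{-1}\circ D\circ\tau$ triangular, i.e. $D$ is tamely triangularizable. (This is precisely the affirmative answer to Question~\ref{q:strong2}, and it strengthens Question~\ref{q:Freudenburg}.) Throughout I would use Lemma~\ref{lem:e-n} to identify $\kx^{\exp D}=\ker D$, so that the invariant theory of the tame automorphism $\exp D$ is governed by $\ker D$.

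I would then split on $\rank D\in\{1,2,3\}$. When $\rank D\le 2$, there is $\sigma\in\Aut(\kx/k)$ with $D(\sigma(x_3))=0$, so $E:=\sigma^{-1}\circ D\circ\sigma$ lies in $\lnd_{k[x_3]}\kx$ and kills $x_3$; extending to $k(x_3)$ and invoking Theorem~\ref{thm:Rentschler original} over the field $k(x_3)$ gives $\ker E=k(x_3)[h]$ for a coordinate $h$. Here $R=k[x_3]$ is a PID, so $V(R)=k(x_3)^\times$, and the last part of Theorem~\ref{thm:triangularizability} shows that $\exp E$ tame in $\T(k[x_3],\{x_1,x_2\})$ forces $E$ triangular after a tame change of the two variables; by Theorem~\ref{thm:tame23} this tameness coincides with tameness in $\T(k,\x)$. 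The difficulty is that $\sigma$ need not be tame, so $\exp E=\sigma^{-1}\circ(\exp D)\circ\sigma$ need not be tame, which is exactly why the argument cannot be closed naively. The genuine content of the rank-$\le2$ case is therefore to show that the kernel of a rank-$\le2$ derivation with tame exponential already contains a \emph{tame} coordinate; once such a coordinate is produced one is back in the setting of Theorem~\ref{thm:anstoF}.

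For $\rank D=3$ the derivation kills no coordinate at all, so the reduction above provides nothing, and the statement to establish becomes: no $D$ with $\rank D=3$ has a tame exponential. This is the negative form of Question~\ref{prob:rank3}. I would attack it with the generalized Shestakov--Umirbaev machinery of Chapter~\ref{sect:GSU}, trying to exhibit a W-test polynomial or to apply Theorem~\ref{thm:SUcriterion} and the wildness criterion of Lemma~\ref{lem:SUcriterion} to $\exp D$ for a weight $\w$ adapted to the plinth ideal $\pl D$, which by Proposition~\ref{prop:rank3criterion} detects $\rank D=3$ through two algebraically independent factors. Theorems~\ref{thm:lsc1}~(iii) and~\ref{thm:lsc2}~(ii) already carry this out for the local-slice families and confirm wildness whenever $\rank D=3$ there.

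The main obstacle, I expect, is exactly this rank-three case in full generality: it would require controlling $\exp D$ for \emph{every} rank-three locally nilpotent derivation, and there is at present no classification of such derivations --- this is Problem~\ref{p:classify}, which the monograph itself flags as open when $\rank D=3$. A secondary but still genuine obstacle is the production of a tame invariant coordinate in the rank-$\le2$ case, since the natural $\sigma$ realizing a low rank may be wild; bridging this gap seems to require an argument that propagates tameness of $\exp D$ (rather than of $\sigma$) through the Shestakov--Umirbaev reductions. Absent these two ingredients, the conjecture remains, as stated, open.
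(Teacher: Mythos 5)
Your proposal is accurate and matches the paper's own treatment: this statement is Conjecture~\ref{conj:tt}, which the monograph leaves open, and the only argument the paper gives about it is precisely the reduction you describe --- the equivalence with the assertion that $D$ kills a tame coordinate of $\kx$ over $k$ (Conjecture~\ref{conj:no tame coord}), obtained via Theorem~\ref{thm:anstoF}~(i), together with the partial results for derivations killing a tame coordinate and for the local-slice families of rank three. Your identification of the rank-three case (the negative form of Question~\ref{prob:rank3}, i.e.\ Conjecture~\ref{conj:rank3}) and of the possible wildness of the automorphism $\sigma$ realizing $\rank D\leq 2$ as the genuine obstacles coincides with the paper's own assessment, so there is nothing to correct.
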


We claim that Conjecture~\ref{conj:tt} 
is equivalent to the following conjecture.

\begin{conj}\label{conj:no tame coord}\rm 
If $\exp D$ is tame, 
then $D$ kills a tame coordinate of $\kx $ over $k$. 
\end{conj}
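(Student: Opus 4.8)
The plan is to prove the statement by stratifying according to $\rank D\in\{1,2,3\}$, the only possibilities since $n=3$ and $D\neq 0$. I would work in contrapositive form: show that if $D$ kills \emph{no} tame coordinate of $\kx$ over $k$, then $\exp D$ is wild. A derivation of rank $3$ kills no coordinate whatsoever (tame or wild), whereas one of rank $\leq 2$ always kills some coordinate; the two cases are therefore of entirely different character, and I would dispose of them by separate mechanisms. The unifying point to keep in view is that $\exp D$ acts as the identity on $\ker D$, so $\exp D\in\Aut(\kx/\ker D)\cap\T(k,\x)$, and it is this tame intersection that must be exploited.

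First I would treat the case $\rank D\leq 2$. By the discussion following Problem~\ref{p:classify} there is $\sigma\in\Aut(\kx/k)$ with $E:=\sigma^{-1}\circ D\circ\sigma\in\lnd_{k[x_3]}\kx$, so $\sigma(x_3)$ is a coordinate lying in $\ker D$; the substantive point is to produce a \emph{tame} such coordinate, using that $\exp D$ is tame, since a priori $\sigma$ may be wild. The plan is to regard $E$ as a locally nilpotent derivation in the two variables $x_1,x_2$ over the $\Q$-domain $R=k[x_3]$, for which $V(R)=k(x_3)^{\times}=K^{\times}$, and to transfer tameness downward: by Theorem~\ref{thm:tame23} the intersection $\Aut(\kx/\ker D)\cap\T(k,\x)$ descends to the two-variable tame group over $R$, after which the last part of Theorem~\ref{thm:triangularizability} (applicable precisely because $V(R)=K^{\times}$) and the classification of the groups $H(f)$ in Chapter~\ref{chap:atit} would force a generator of $\ker D$ to be tame. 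The delicate sub-step, which I expect to require the tame-intersection classification in full, is to rule out that $\ker D$ consists entirely of wild coordinates while $\exp D$ remains tame; once a tame coordinate in $\ker D$ is secured, Theorem~\ref{thm:anstoF} is not even needed for the conclusion.

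The main obstacle is the case $\rank D=3$, where I must show unconditionally that $\exp D$ is wild. This is exactly the general form of Question~\ref{prob:rank3}, for which the monograph establishes only the partial answers of Theorems~\ref{thm:lsc1}(iii) and~\ref{thm:lsc2}(ii), covering the explicit families produced by local slice construction. The plan is to upgrade that argument to an arbitrary rank-$3$ derivation with $\ker D=k[p,q]$: analyze the plinth ideal $\pl D=(w)$, use that $w$ admits two algebraically independent irreducible factors (the content behind Proposition~\ref{prop:rank3criterion}) to build a totally ordered weight $\w\in(\Gammap)^3$ of rank three for which $(\exp D)(x_1)^{\w},(\exp D)(x_2)^{\w},(\exp D)(x_3)^{\w}$ are jointly but pairwise-not algebraically dependent and satisfy the non-divisibility conditions of Definition~\ref{defn:W-test}, and then conclude wildness via Proposition~\ref{prop:criterion} and the generalized Shestakov--Umirbaev criterion (Theorem~\ref{thm:SUcriterion}). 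The crux, and the reason the statement remains conjectural, is that there is no known way to extract such a witnessing weight together with the requisite control on initial forms from an \emph{arbitrary} rank-$3$ kernel; in the local-slice families this control is supplied by the recurrence $a_{i+1}=t_{i+1}a_i-a_{i-1}$ and by the $\bt$-homogeneity of Proposition~\ref{prop:homogeneous}, neither of which is available in general. If the rank-$3$ step could be carried out, the whole result would follow, and it would simultaneously settle the equivalent Conjecture~\ref{conj:tt}, since Theorem~\ref{thm:anstoF}(i) turns a tame coordinate in $\ker D$ into tame triangularizability; thus I expect essentially all of the difficulty to be concentrated in this rank-$3$ wildness problem.
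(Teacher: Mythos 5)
The statement you are attempting is Conjecture~\ref{conj:no tame coord} of the monograph: it is posed there as an open problem, and the paper contains no proof of it. What the paper does establish is the equivalence with Conjecture~\ref{conj:tt} (one direction because a triangular derivation always kills a tame coordinate, the other via Theorem~\ref{thm:anstoF}~(i)), together with partial evidence: the rank-three derivations produced by local slice construction have wild exponentials (Theorems~\ref{thm:lsc1}~(iii) and \ref{thm:lsc2}~(ii)), which gives only a partial affirmative answer to Question~\ref{prob:rank3}. Your proposal correctly locates the irreducible difficulty in the rank-three stratum and is candid that no argument exists there; as written it is therefore a reduction plus an unproved assertion rather than a proof, and the missing step is not a repairable oversight but precisely the open problem itself.

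Beyond that, your claim to ``dispose of'' the case $\rank D\leq 2$ also contains a genuine gap. Writing $E=\sigma ^{-1}\circ D\circ \sigma \in \lnd _{k[x_3]}\kx $, you need $\exp E$ to be tame before Theorem~\ref{thm:tame23} lets you descend to $\T (k[x_3],\{ x_1,x_2\} )$; but by (\ref{eq:exp normal}) tameness of $\exp D$ transfers to $\exp E$ only when $\sigma $ is tame, which is exactly what is unknown --- if every coordinate in $\ker D$ were wild, no tame $\sigma $ could be chosen, and conjugation by a wild $\sigma $ need not preserve tameness. The machinery of Chapter~\ref{chap:atit} (tamely reduced kernel generators, the groups $H(f)$, Corollary~\ref{cor:expw=>qtw}) likewise operates over $R=k[x_3]$ only after the kernel has been placed over $R$, so invoking it to produce a tame coordinate in $\ker D$ is circular. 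The only unconditional result in the paper in this direction is Theorem~\ref{thm:anstoF}, whose hypothesis is that $D$ already kills a tame coordinate --- the very conclusion you seek. So both strata of your stratification remain open: the proposal is a reasonable map of the problem, and correctly matches the paper's reduction of Conjecture~\ref{conj:tt} to this statement, but neither half constitutes a proof.
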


In fact, 
if $D$ is tamely triangularizable, 
then $D$ kills a tame coordinate of $\kx $ over $k$, 
since a triangular derivation always kills a tame coordinate. 
Conversely, 
Conjecture~\ref{conj:no tame coord} 
implies Conjecture~\ref{conj:tt} 
by virtue of Theorem~\ref{thm:anstoF} (i).

By the remark after Problem~\ref{q:strong}, 
Conjecture~\ref{conj:tt} implies the following conjecture. 
This conjecture is true 
if $D$ kills a tame coordinate of $\kx $ over $k$.

\begin{conj}\label{conj:fixed points}\rm 
If $\exp fD$ is tame for some $f\in \ker D\sm \zs $, 
then $\exp D$ is tame. 
\end{conj}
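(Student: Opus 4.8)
The plan is to derive Conjecture~\ref{conj:fixed points} from Conjecture~\ref{conj:tt} by the reduction mechanism already used in the remark following Problem~\ref{q:strong}, and then to isolate the one genuinely open ingredient.

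First I would set up the reduction. Assume $\exp fD$ belongs to $\T (k,\x )$ for some $f\in \ker D\sm \zs $. Since $D(f)=0$ and $D$ is locally nilpotent, $fD$ is again locally nilpotent, so $fD\in \lnd _k\kx $; as $f\neq 0$ and $D\neq 0$, it is nonzero. The family $\mathcal{D}:=\lnd _k\kx $ is closed under multiplication by kernel elements, so the argument of the remark after Problem~\ref{q:strong} applies word for word: granting that every nonzero $E\in \lnd _k\kx $ with $\exp E$ tame is tamely triangularizable, one can choose $\tau \in \T (k,\x )$ with $D':=\tau ^{-1}\circ (fD)\circ \tau $ triangular. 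Writing $D'=\tau ^{-1}(f)\,(\tau ^{-1}\circ D\circ \tau )$ and using $\tau ^{-1}(f)\neq 0$, triangularity of $D'$ forces $D_0:=\tau ^{-1}\circ D\circ \tau $ to be triangular as well, whence $\exp D=\tau \circ (\exp D_0)\circ \tau ^{-1}$ lies in $\T (k,\x )$ by~(\ref{eq:exp normal}). Thus Conjecture~\ref{conj:fixed points} is a formal consequence of Conjecture~\ref{conj:tt}; one could even weaken the hypothesis to ``tamely triangularizable or tamely affinizable'' by invoking the affine branch of the same remark.

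Second, I would record what is unconditional. When $D$ kills a tame coordinate of $\kx $ over $k$, the conjecture is already Theorem~\ref{thm:anstoF}~(ii), and Conjecture~\ref{conj:tt} itself holds for such $D$ by Theorem~\ref{thm:anstoF}~(i). Moreover Conjecture~\ref{conj:tt} is equivalent to Conjecture~\ref{conj:no tame coord}, since a triangular derivation always kills a tame coordinate, and the converse direction is exactly Theorem~\ref{thm:anstoF}~(i). Hence all the difficulty is concentrated in the case where $D$ admits no evident tame coordinate, and the whole problem collapses to establishing Conjecture~\ref{conj:no tame coord}.

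The hard part will be producing a tame coordinate killed by $D$ without assuming one. The structure theory of Chapters~\ref{chap:trc} through~\ref{chap:atit} and the descent Theorem~\ref{thm:tame23} become available only after one variable has been stripped off, which is precisely what a tame coordinate supplies, so there is a circularity to break. A natural attempt is to feed the tame automorphism $\exp fD$ into the generalized Shestakov-Umirbaev criterion (Theorem~\ref{thm:SUcriterion}): tameness yields, for each admissible weight $\w $, a chain of elementary and Shestakov-Umirbaev reductions, and since $f\in \ker D$ the automorphisms $\exp fD$ and $\exp D$ share both the invariant ring $\ker D$ and the element $f$. One would then hope to transfer reducibility from $\exp fD$ back to $\exp D$. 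The obstruction is that multiplication by $f$ distorts degrees and ranks irreversibly --- Nagata's example shows $\exp D$ tame while $\exp fD$ is wild --- so no purely degree-theoretic transfer can be symmetric; one must genuinely recover a $D$-invariant tame coordinate, and that step is Conjecture~\ref{conj:no tame coord}, which remains open.
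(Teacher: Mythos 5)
Your proposal coincides with the paper's own treatment: the statement is posed there as an open conjecture, and the paper's only supporting argument is exactly your reduction --- Conjecture~\ref{conj:tt} implies it via the remark after Problem~\ref{q:strong} (triangularity of $\tau ^{-1}\circ (fD)\circ \tau $ forcing triangularity of $\tau ^{-1}\circ D\circ \tau $ since $\tau ^{-1}(f)\neq 0$), together with the unconditional special case where $D$ kills a tame coordinate, which is Theorem~\ref{thm:anstoF}~(ii). Your remaining observations (the equivalence of Conjectures~\ref{conj:tt} and~\ref{conj:no tame coord}, and the identification of the genuinely open step) likewise match the paper's discussion, so the proposal is correct and essentially identical in route.
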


Conjecture~\ref{conj:no tame coord} immediately 
implies the following conjecture.

\begin{conj}\label{conj:rank3}\rm 
If $\rank D=3$, then $\exp D$ is wild. 
\end{conj}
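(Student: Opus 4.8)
The plan is to deduce Conjecture~\ref{conj:rank3} from Conjecture~\ref{conj:no tame coord}, and then to indicate how the implication one actually needs might be attacked directly through the generalized Shestakov--Umirbaev machinery. First I would record the standard reinterpretation of rank in dimension three: $\rank D=3$ is equivalent to the assertion that $D$ kills no coordinate of $\kx$ over $k$. Indeed, if $\rank D\leq 2$ then $\sigma^{-1}\circ D\circ \sigma$ lies in $\lnd_{k[x_3]}\kx$ for some $\sigma\in\Aut(\kx/k)$, so $D$ kills the coordinate $\sigma(x_3)$; conversely, if $D(g)=0$ for a coordinate $g=\sigma(x_3)$ then $\sigma^{-1}\circ D\circ\sigma$ kills $x_3$ and $\rank D\leq 2$. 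Granting this, suppose $\rank D=3$ and, for contradiction, that $\exp D$ is tame. Conjecture~\ref{conj:no tame coord} then furnishes a tame coordinate $g$ with $D(g)=0$, forcing $\rank D\leq 2$, a contradiction; hence $\exp D$ is wild. This is the implication described just before the statement. It is worth noting that the target really only requires the contrapositive ``$\exp D$ tame $\Rightarrow \rank D\leq 2$'', i.e.\ the existence of \emph{some} (possibly wild) coordinate in $\ker D$, which is formally weaker than Conjecture~\ref{conj:no tame coord}.

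To attack this directly, I would run the reduction theory of Theorem~\ref{thm:SUcriterion} on $\phi:=\exp D$ with a carefully chosen weight. One has $\ker D=\kx^{\phi}=k[f,g]$ by Lemma~\ref{lem:e-n} together with Miyanishi's theorem, and since $\rank D=3$ the generator $q$ of the plinth ideal $\pl D=(q)$ carries two algebraically independent factors, by the mechanism underlying Proposition~\ref{prop:rank3criterion}; these encode the two independent directions that make $D$ genuinely three-dimensional. The strategy is to manufacture from this data a weight $\w\in(\Gammap)^3$ with $\rank\w=3$ and a W-test polynomial $P$ (Definition~\ref{defn:W-test}, Proposition~\ref{prop:criterion}) intrinsic to $D$, playing the role that $f_2$ and $y_2$ play in Chapters~\ref{chapter:rank3} and \ref{chapter:atcoord}, so that the $\w$-initial forms $\phi(x_i)^{\w}$ realize conditions (a) and (b) of Definition~\ref{defn:W-test}. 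Establishing $\deg_{\w}\phi>|\w|$ through algebraic dependence of the $\phi(x_i)^{\w}$ (Lemma~\ref{lem:minimal autom}) and then showing, via Lemma~\ref{lem:SUcriterion} and Proposition~\ref{prop:criterion}, that $\phi$ admits neither an elementary reduction nor a Shestakov--Umirbaev reduction for $\w$ would prove $\phi$ wild, contradicting tameness.

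The hard part will be producing such a $\w$ and $P$ uniformly for an \emph{arbitrary} rank-three $D$. For the explicit ``local slice'' families this is feasible precisely because one has a normal form and can compute $\exp(hD)(x_i)$ and its $\w$-leading forms exactly (Sections~\ref{sect:lscwildness1}--\ref{sect:lscwildness2}), whereas for a general $D$ the top $\w$-degree part of $\exp D(x_i)$ is not governed by $D^{\w}$ alone, and controlling the iterates $D^l(x_i)$ under the weight filtration is delicate. One must also exclude the degenerate Shestakov--Umirbaev configurations, namely those in which the $\degw f_i$ become pairwise linearly dependent (Lemma~\ref{lem:SUdependent}, Lemma~\ref{lem:sulem}); this is exactly where small exceptional degrees obstruct the argument, the analogue of the values $d=10,12$ in Chapter~\ref{chapter:atcoord}. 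A further technical layer is the reduction to the irreducible case and the treatment of plinth ideals with special factorization, as carried out in Section~\ref{sect:plinth2}. I expect this uniform construction of a W-test polynomial attached to an unnormalized rank-three derivation --- equivalently, a uniform proof of Conjecture~\ref{conj:no tame coord} --- to be the decisive obstacle.
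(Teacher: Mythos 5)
The statement you were asked to prove is a conjecture: the paper offers no proof of it, only the one-line remark that Conjecture~\ref{conj:no tame coord} immediately implies it, and your first paragraph reproduces exactly that derivation (via the equivalence ``$\rank D=3$ if and only if $D$ kills no coordinate''), correctly and with the worthwhile extra observation that only the formally weaker implication ``$\exp D$ tame $\Rightarrow$ $D$ kills \emph{some} coordinate'' is actually needed. Your remaining paragraphs are a research sketch rather than a proof, and they accurately mirror how the paper obtains its partial results (W-test polynomials and the Shestakov--Umirbaev criterion applied in Chapter~\ref{chapter:rank3} to explicit local-slice families of rank three) while correctly naming the obstruction --- the lack of uniform control over the $\w$-leading forms of $(\exp D)(x_i)$ for an arbitrary rank-three $D$ --- which is precisely why the general statement remains open.
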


Next, we discuss totally 
(quasi-totally, exponentially) wildness of 
coordinates of $\kx $ over $k$.

\begin{pp}
For $D\in \lnd _k\kx $ with $\rank D=3$, 
study totally 
(quasi-totally, exponentially) wildness of 
$(\exp D)(x_i)$ for $i=1,2,3$. 
\end{pp}

As we have seen in Chapter~\ref{chapter:atcoord}, 
it is very hard to prove that 
a coordinate is totally wild 
or quasi-totally wild.

\begin{pp}
Find simple criteria for totally 
(quasi-totally, exponentially) wildness 
of coordinates of $\kx $ over $k$. 
\end{pp}

The following question is not answered 
even for Nagata's automorphism.

\begin{qqq}\label{q:ac}\rm 
Assume that $\sigma \in \Aut (\kx /k)$ is wild. 
Is one of $\sigma (x_1)$, $\sigma (x_2)$ and $\sigma (x_3)$ 
always quasi-totally wild? 
\end{qqq}

As remarked after Definition~\ref{defin:wild coordinates}, 
``quasi-totally wild" implies ``exponentially wild". 
In view of Corollary~\ref{cor:expw=>qtw} (i), 
we may ask the following question.

\begin{qqq}\label{qqq:awcexp}\rm 
Is every exponentially wild coordinate of $\kx $ over $k$ 
quasi-totally wild? 
\end{qqq}

Recall that there exists a wild coordinate 
of $\kx $ over $k$ 
which is not exponentially wild.

\begin{conj}\label{conj:qawc}\rm 
Let $f$ be a wild coordinate of $\kx $ over $k$ 
which is not exponentially wild. 
Then, there always 
exists $D\in \lnd _k\kx $ 
with $\rank D=1$ such that $D(f)=0$ and 
$D$ is tamely triangularizable. 
\end{conj}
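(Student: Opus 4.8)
The plan is to unwind the hypothesis and then funnel everything into Theorem~\ref{thm:anstoF}(i). Since $f$ is not exponentially wild, there is a nonzero $D_0\in\lnd_k\kx$ with $D_0(f)=0$ such that $\sigma:=\exp D_0$ induces an element of $\T(k,\x)$. Because $f$ is a coordinate lying in $\ker D_0$, the remark before Question~\ref{prob:rank3} gives $\rank D_0\leq 2$. The first reduction is to replace $D_0$ by a primitive derivation: writing $\kx=k[f][u,v]$ as a polynomial ring in the two variables $u,v$ over the domain $k[f]$, every derivation killing $f$ is a $k[f]$-derivation, and applying Theorem~\ref{thm:Rentschler original} over $k(f)$ produces an irreducible $D^{\ast}\in\lnd_k\kx$ with $\ker D^{\ast}=\ker D_0=k[f,g]$ for a suitable $g$, together with a factorization $D_0=pD^{\ast}$ for some $p\in\ker D^{\ast}$. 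I would carry the whole argument on this canonical $D^{\ast}$.

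The core strategy then splits into three steps. First, I would upgrade tameness of $\sigma=\exp(pD^{\ast})$ to tameness of $\exp D^{\ast}$: this is exactly Question~\ref{p:fixed points} for the pair $(D^{\ast},p)$. Second, once $\exp D^{\ast}$ is known to be tame, I would show that $D^{\ast}$ kills a \emph{tame} coordinate of $\kx$ over $k$; feeding this into Theorem~\ref{thm:anstoF}(i) then yields at once that $D^{\ast}$ is tamely triangularizable, i.e.\ $T:=\tau^{-1}\circ D^{\ast}\circ\tau$ is triangular for some $\tau\in\T(k,\x)$. Third, I would reduce the rank to exactly one. Since $\tau$ is tame and $\tau^{-1}(f)$ is still a coordinate in $\ker T$, it suffices to treat a triangular derivation $T$ of rank $\leq 2$ killing a coordinate $f'$ and to manufacture, from the structure of $\ker T$ and its plinth ideal (via Lemma~\ref{lem:slice irred} and Proposition~\ref{prop:rank3criterion}), a rank-one derivation killing $f'$ that remains triangular; conjugating back by the tame $\tau$ produces the desired rank-one, tamely triangularizable $D$ with $D(f)=0$. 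The Nagata-type example $f=y_2=x_1x_3^2+(x_2+x_2^2x_3)$, killed by the derivation of (\ref{eq:quasi-absolutely wild}), is a guiding test case, since that derivation is already of rank one, triangular after interchanging $x_1$ and $x_3$, and kills the tame coordinate $x_3$.

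The main obstacle is the second step. Producing a tame coordinate in $\ker D^{\ast}$ is precisely the content of Conjecture~\ref{conj:no tame coord} (equivalently Conjecture~\ref{conj:tt}) for the single derivation $D^{\ast}$, and nothing in the wildness of $f$ forces the two-generated kernel $k[f,g]$ to contain a tame coordinate; typically $g$ is itself wild, so one cannot simply read off a tame coordinate from the kernel. A secondary obstacle is the first step, which is an instance of the still-open Question~\ref{p:fixed points}: one cannot freely cancel the factor $p$ without risking the loss of tameness of the exponential automorphism. Consequently I do not expect an unconditional proof from the present tools; the realistic target is a conditional theorem stating that, granting Conjecture~\ref{conj:tt} and an affirmative answer to Question~\ref{p:fixed points} for $D^{\ast}$, the rank reduction of the third step can be completed by a direct computation with triangular derivations, thereby establishing the conjecture.
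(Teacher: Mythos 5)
The statement is a conjecture, and the paper does not claim an unconditional proof; what it proves (in the Conclusion, right after stating Conjecture~\ref{conj:qawc}) is the implication ``Conjecture~\ref{conj:tt} $\Rightarrow$ Conjecture~\ref{conj:qawc}'', so your decision to aim at a conditional result is the right one. However, your route both assumes more than necessary and leaves its decisive step unproved. The paper's argument is: take the witnessing derivation $D_0\in\lnd_k\kx\sm\zs$ with $D_0(f)=0$ and $\exp D_0$ tame, and apply Conjecture~\ref{conj:tt} \emph{directly to $D_0$}. There is no need whatsoever to pass to the irreducible core $D^{\ast}$ with $D_0=pD^{\ast}$, and hence no need to invoke the open Question~\ref{p:fixed points}: Conjecture~\ref{conj:tt} concerns any locally nilpotent derivation whose exponential is tame, and $\exp D_0$ is tame by hypothesis. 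Your first step thus imports an extra unproven assumption for no gain.

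The more serious gap is your third step. You propose to ``manufacture'' a new rank-one triangular derivation killing $f$ from the plinth ideal of the conjugated derivation, but you never say how, and no such construction is needed: the missing idea is Lemma~\ref{lem:2coord}. Once Conjecture~\ref{conj:tt} makes $D_0$ tamely triangularizable, $D_0$ kills a \emph{tame} coordinate $g$ (a triangular derivation always kills a tame coordinate, and conjugating by a tame automorphism preserves tameness of coordinates). Now the hypothesis that $f$ is \emph{wild} --- which your argument never actually uses --- forces $k[f]\neq k[g]$, since $k[f]=k[g]$ would make $f$ a linear polynomial in the tame coordinate $g$ and hence tame. Lemma~\ref{lem:2coord} then says that a nonzero locally nilpotent derivation killing two coordinates $f,g$ with $k[f]\neq k[g]$ automatically has rank one (its proof: reduce to $D$ irreducible, note $\pl D$ is a principal ideal generated simultaneously by an element of $k[f]\sm\zs$ and of $k[g]\sm\zs$ via Lemma~\ref{lem:rank criterion}, conclude $\pl D=\ker D$ by algebraic independence of $f$ and $g$, and apply Lemma~\ref{lem:slice irred}). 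So the \emph{same} derivation $D_0$ already satisfies all three requirements of the conjecture: $\rank D_0=1$, $D_0(f)=0$, and $D_0$ tamely triangularizable. Your proposal, by contrast, ends with an unexecuted construction and a dependence on two open problems where one suffices.
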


For example, 
let $\psi $ be Nagata's automorphism 
defined in (\ref{eq:Nagataautom}). 
Then, 
$\psi (x_2)$ is wild due to Umirbaev-Yu~\cite{UY}, 
but is not exponentially wild as mentioned. 
In fact, 
$\psi (x_2)$ is killed by $D\in \lnd _k\kx $ 
defined in (\ref{eq:quasi-absolutely wild}), 
which is triangular if $x_1$ and $x_3$ are interchanged. 
Since $D$ kills $x_3=\psi (x_3)$ as well as $\psi (x_2)$, 
we know that $\rank D=1$. 
Hence, the conjecture is true in this case.

We show that Conjectures~\ref{conj:tt} 
implies Conjecture~\ref{conj:qawc}\null. 
Since $f$ is not exponentially wild, 
there exists 
$D\in \lnd _k\kx \sm \zs $ 
such that $D(f)=0$ and $\exp D$ is tame. 
Then, 
$D$ is tamely triangularizable by Conjecture~\ref{conj:tt}, 
and hence 
kills a tame coordinate $g$ of $\kx $ over $k$. 
Since $f$ is wild and $g$ is tame, 
we know that $f$ is not a linear polynomial in $g$ over $k$. 
Hence, we have $k[f]\neq k[g]$. 
Therefore, 
we conclude that $\rank D=1$ 
from the following lemma.

\begin{lem}\label{lem:2coord}
Let $f$ and $g$ be coordinates of $\kx $ over $k$ 
such that $k[f]\neq k[g]$. 
If $D(f)=D(g)=0$ for $D\in \lnd _k\kx \sm \zs $, 
then we have $\rank D=1$. 
\end{lem}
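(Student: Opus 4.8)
The plan is to first reduce to the case where $D$ is irreducible, then extract from both $f$ and $g$ a generator of the plinth ideal and play the two generators off against each other. Write $h=\gcd(D(x_1),D(x_2),D(x_3))$ and $D=hD_0$ with $D_0$ irreducible. Since $hD_0$ is locally nilpotent, the criterion that $hD_0\in\lnd_k\kx$ forces $D_0(h)=0$ and $D_0\in\lnd_k\kx$; moreover $\ker D_0=\ker D$, so $D_0(f)=D_0(g)=0$. Because $\rank D\le r$ holds exactly when some coordinate system has its last $3-r$ members in $\ker D$, and $\ker D_0=\ker D$, I have $\rank D=\rank D_0$; also $\rank D\ge 1$ since $D\ne 0$. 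Hence it suffices to prove $\rank D_0=1$, and from now on I assume $D$ itself is irreducible.

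Next I would apply Lemma~\ref{lem:rank criterion} twice. Since $D$ is irreducible and kills the coordinate $f$, it gives $\pl D=(q_0)$ with $q_0\in k[f]\sm\zs$; applied to $g$ it gives $\pl D=(q_1)$ with $q_1\in k[g]\sm\zs$. By Daigle--Kaliman the plinth ideal is principal, so $q_0$ and $q_1$ are associates in $\ker D$; as $\ker D=k[p,q]$ is a polynomial ring (Miyanishi) whose units lie in $k^\times$, I get $q_0=cq_1$ for some $c\in k^\times$, whence $q_0\in k[f]\cap k[g]$ and $q_0\ne 0$. This sets up a dichotomy: either $q_0\in k$, in which case $\pl D=(q_0)=\ker D$, or $q_0\notin k$.

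The decisive step is to rule out $q_0\notin k$ by showing it would force $k[f]=k[g]$, contradicting the hypothesis. Writing $q_0=\phi(f)=\psi(g)$ with $\phi,\psi\in k[t]$ of degree $\ge 1$, I extend scalars to $\bar k$, where $f$ and $g$ remain coordinates of $\bar k[\x]$, so each $f-\alpha$ and $g-\beta$ is a coordinate, hence irreducible. Factoring $\phi(f)=c\prod_i(f-\alpha_i)$ and $\psi(g)=c'\prod_j(g-\beta_j)$ and comparing the two factorizations in the UFD $\bar k[\x]$, some $f-\alpha_i$ is associate to some $g-\beta_j$, i.e. $f=ug+v$ with $u\in\bar k^\times$, $v\in\bar k$. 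The main (though routine) obstacle is the descent: comparing the coefficient in $f=ug+v$ of a nonconstant monomial of $g$ that has coefficient in $k^\times$ forces $u\in k$ and then $v=f-ug\in\kx\cap\bar k=k$, so $k[f]=k[g]$ — the desired contradiction. Therefore $q_0\in k$, so $\pl D=\ker D$, and Lemma~\ref{lem:slice irred} ($(1)\Rightarrow(3)$) yields $\rank D=1$. Transporting back through $\rank D=\rank D_0$ completes the proof. I expect the plinth-ideal comparison and the $\bar k$-to-$k$ descent in the coordinate-matching argument to be the only points requiring care; everything else is a direct invocation of the machinery already in place.
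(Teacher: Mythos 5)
Your proof is correct, and its skeleton matches the paper's: reduce to the case where $D$ is irreducible, apply Lemma~\ref{lem:rank criterion} to both $f$ and $g$ to get $\pl D=(p)=(q)$ with $p\in k[f]\sm \zs $ and $q\in k[g]\sm \zs $, compare the two generators using $(\ker D)^{\times }=k^{\times }$, and finish with Lemma~\ref{lem:slice irred}. Where you genuinely diverge is in ruling out the case that the common generator is nonconstant. The paper first proves that $f$ and $g$ are algebraically independent over $k$: if they were dependent, then since $k(g)$ is algebraically closed in $\kxr $ and $k(g)\cap \kx =k[g]$ (both because $g$ is a coordinate), one gets $f\in k[g]$, and symmetrically $g\in k[f]$, so $k[f]=k[g]$, contrary to hypothesis; once independence is known, the identity $p=cq$ with $p$ a nonconstant polynomial in $f$ and $q$ one in $g$ would be a nontrivial algebraic relation, so $p,q\in k^{\times }$ immediately. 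You instead keep the hypothesis $k[f]\neq k[g]$ in play until the end: from $\phi (f)=c\psi (g)$ with $\phi ,\psi $ nonconstant, you factor both sides into linear factors over $\bar{k}$, use that $f-\alpha $ and $g-\beta $ are coordinates of $\bar{k}[\x ]$ and hence irreducible, match factors in the UFD $\bar{k}[\x ]$ to obtain $f=ug+v$ with $u\in \bar{k}^{\times }$, $v\in \bar{k}$, and then descend (by comparing the coefficient of a nonconstant monomial of $g$) to $u\in k^{\times }$, $v\in k$, whence $k[f]=k[g]$. Both routes are sound. The paper's detour through algebraic independence is shorter and isolates a reusable fact; your factorization argument is more hands-on, avoids the field-theoretic input (algebraic closedness of $k(g)$ in $\kxr $), and proves the slightly stronger statement that any nonconstant polynomial identity $\phi (f)=c\psi (g)$ between two coordinates forces an affine relation between them.
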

\begin{proof}
First, we show that $f$ and $g$ 
are algebraically independent over $k$. 
Suppose to the contrary that 
$f$ and $g$ are algebraically dependent over $k$. 
Then, 
$f$ is algebraic over $k(g)$. 
Since $g$ is a coordinate of $\kx $ over $k$, 
we see that $k(g)$ is algebraically closed in $\kxr $, 
and $k(g)\cap \kx =k[g]$. 
Hence, $f$ belongs to $k[g]$. 
Similarly, 
$g$ belongs to $k[f]$. 
Thus, we get $k[f]=k[g]$, 
a contradiction. 
Therefore, $f$ and $g$ 
are algebraically independent over $k$.

Now, we prove that $\rank D=1$. 
Without loss of generality, 
we may assume that $D$ is irreducible. 
Then, 
we have $\pl D=(p)=(q)$ for some 
$p\in k[f]\sm \zs $ and $q\in k[g]\sm \zs $ 
by Proposition~\ref{lem:rank criterion}. 
Since $(\ker D)^{\times }=k^{\times }$, 
it follows that $p=cq$ for some $c\in k^{\times }$. 
Because $f$ and $g$ 
are algebraically independent over $k$, 
this implies that 
$p$ and $q$ belong to $k^{\times }$. 
Thus, we get $\pl D=\ker D$. 
Therefore, 
we know by Lemma~\ref{lem:slice irred} that $\rank D=1$. 
\end{proof}

For any $n\geq 3$, 
it is widely believed that every $\sigma \in \Aut (\kx /k)$ 
is {\it stably tame}, 
i.e., the natural extension of $\sigma $ to an element of 
$$
\Aut (k[x_1,\ldots ,x_r]/k[x_{n+1},\ldots ,x_r])
$$ 
belongs to $\T (k,\{ x_1,\ldots ,x_r\} )$ 
for some $r\geq n$ 
(Stable Tameness Conjecture). 
Assume that $n=3$. 
Then, 
every element of $\Aut (\kx /k[x_3])$ is stably tame 
due to Berson-van den Essen-Wright~\cite{BEW} 
(see also \cite{Smith}). 
Hence, 
$\exp D$ is stably tame for each 
$D\in \lnd _k\kx $ with $D(x_3)=0$.

The following question is open.

\begin{qqq}
Is $\exp D$ stably tame 
for $D\in \lnd _k\kx $ with $\rank D=3$? 
\end{qqq}

We mention that some $D\in \lnd _k\kx $ 
with $\rank D=3$ can be extended to 
elements of $\lnd _kk[x_1,\ldots ,r]$ 
with $\rank D=r$ for each $r\geq 4$ 
in a very simple manner 
(cf.~\cite[Section 3]{Frank}). 
In view of this fact, 
we may ask the following question, 
in contrast with Conjecture~\ref{conj:rank3}.

\begin{qqq}
Assume that $n\geq 4$. 
Does there exist $D\in \lnd _k\kx $ such that 
$\rank D=n$ and $\exp D$ is tame? 
\end{qqq}

For each subgroup $G$ of $\Aut (\kx /k)$, 
we define $G^*$ 
to be the normal subgroup of $\Aut (\kx /k)$ 
generated by 
$$
\bigcup _{\sigma \in \Aut (\kx /k)}
\sigma ^{-1}\circ G\circ \sigma . 
$$
We say that $\phi \in \Aut (\kx /k)$ is 
{\it absolutely wild} if 
$\phi $ does not belong to $\T (k,\x )^*$. 
We note that the wild automorphism 
defined as in (\ref{eq:scalar auto}) 
is not absolutely wild. 
We do not know whether 
Nagata's automorphism is absolutely wild.

The following is an 
``absolute" version of the Tame Generators Problem.

\begin{pp}\rm 
Decide whether $\T (k,\x )^*$ equal to $\Aut (\kx /k)$. 
\end{pp}

For each $\alpha \in k^{\times }$, 
define $\psi _{\alpha }\in \Aut (\kx /A_1)$ 
by $\psi _{\alpha }(x_1)=\alpha x_1$, 
where $A_i$ is as in (\ref{eq:A_i}) 
for each $i\in \{ 1,2,3\} $. 
Then, 
$$
\iota :k^{\times }\ni \alpha 
\mapsto \psi _{\alpha }\in \Aut (\kx /k)
$$ 
is an injective homomorphism of groups. 
We set $G_m(k)=\iota (k^{\times })$.

\begin{prop}\label{prop:G_m}
We have 
$G_m(k)^*=\T (k,\x )^*$. 
\end{prop}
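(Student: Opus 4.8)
\textbf{Proof proposal for Proposition~\ref{prop:G_m}.}
The plan is to establish the two inclusions $G_m(k)^*\subseteq \T (k,\x )^*$ and $\T (k,\x )^*\subseteq G_m(k)^*$ separately. The first inclusion is the easy direction: since each $\psi _{\alpha }$ belongs to $\Aut (\kx /A_1)$, it is an elementary automorphism, hence belongs to $\E (k,\x )\subseteq \T (k,\x )$. Therefore $G_m(k)=\iota (k^{\times })$ is contained in $\T (k,\x )$. Because the operation $G\mapsto G^*$ is monotone with respect to inclusion (if $G\subseteq H$ then the normal subgroup generated by all conjugates of $G$ is contained in that generated by all conjugates of $H$), we immediately get $G_m(k)^*\subseteq \T (k,\x )^*$. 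This step is routine and should occupy only a few lines.

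The substance lies in the reverse inclusion $\T (k,\x )^*\subseteq G_m(k)^*$. Since $\T (k,\x )^*$ is the normal subgroup generated by all conjugates of $\T (k,\x )$, and $G_m(k)^*$ is already a normal subgroup of $\Aut (\kx /k)$, it suffices to show that $\T (k,\x )$ itself is contained in $G_m(k)^*$. As $\T (k,\x )$ is generated by $\Aff (k,\x )\cup \E (k,\x )$, I would reduce to showing that every affine automorphism and every elementary automorphism lies in $G_m(k)^*$. The key observation to exploit is that $G_m(k)^*$, being normal, is closed under conjugation by arbitrary elements of $\Aut (\kx /k)$, and in particular by all permutations of the variables and by all triangular or elementary maps; so it suffices to produce each generator as a product of conjugates of scalar maps $\psi _{\alpha }$.

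The main technical device I expect to need is the commutator identity expressing an elementary shear as a commutator involving a scalar dilation. Concretely, for a suitable elementary automorphism $\epsilon $ that adds a polynomial to one coordinate, one computes a commutator $[\psi _{\alpha },\tau ]=\psi _{\alpha }^{-1}\circ \tau ^{-1}\circ \psi _{\alpha }\circ \tau $ (with $\tau $ an appropriate elementary or affine map and $\alpha \neq 1$) and checks that it equals the desired shear or a controlled power of it. Such commutators automatically lie in $G_m(k)^*$ since $\psi _{\alpha }^{-1}$ and its conjugate $\tau ^{-1}\circ \psi _{\alpha }\circ \tau $ both lie in $G_m(k)^*$. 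Running this argument for elementary maps in each of the three variable slots (using permutation conjugates of $\psi _{\alpha }$ to move the scalar action onto $x_2$ or $x_3$) will produce all of $\E (k,\x )$, and a parallel computation with diagonal and shear pieces will produce the affine generators, since every element of $\GL (3,k)$ decomposes into scalars and shears. The hard part will be verifying that these commutator identities actually close up to give the full shear rather than only a proper subgroup, i.e.\ that no elementary map escapes the subgroup generated by such commutators; this requires care in choosing $\alpha $ and the conjugating maps so that the resulting relations genuinely generate, and in handling the fact that $k^{\times }$ may be small (e.g.\ when $k$ has few units one must ensure a single well-chosen $\alpha \neq 1$ suffices). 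Once every generator of $\T (k,\x )$ is shown to lie in $G_m(k)^*$, normality gives $\T (k,\x )^*\subseteq G_m(k)^*$, and combining the two inclusions yields the claimed equality.
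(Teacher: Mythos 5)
Your proposal is correct and is essentially the paper's own argument: your commutator $\psi _{\alpha }^{-1}\circ \tau ^{-1}\circ \psi _{\alpha }\circ \tau $ with $\tau \colon x_1\mapsto x_1+f$ computes directly to the shear $x_1\mapsto x_1+(1-\alpha )f$, i.e.\ a scalar composed with a conjugate of a scalar, which is exactly the factorization the paper uses (it fixes the dilation factor $2$, conjugates $\psi _2$ by the shear $x_1\mapsto x_1\pm h$, and corrects the linear coefficient by $\psi _{\alpha /2}$), with the other variable slots handled by permutation conjugation just as you indicate. The ``hard part'' you flag is in fact vacuous---for the single choice $\alpha =2$ (available since $\mathrm{char}\,k=0$) every shear is literally one such commutator, so no generation or small-$k^{\times }$ issue arises---and your separate treatment of $\Aff (k,\x )$ is unnecessary, since over a field $\Aff (k,\x )\subseteq \E (k,\x )$ and hence $\T (k,\x )=\E (k,\x )$, which is how the paper reduces everything to the elementary case.
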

\begin{proof}
Since $\T(k,\x )=\E (k,\x )$, 
it suffices to show that $G_m(k)^*$ 
contains $\Aut (\kx /A_i)$ for $i=1,2,3$. 
For $i=2,3$, 
define $\tau _i\in \Aut (\kx /k)$ by 
$\tau _i(x_1)=x_i$, $\tau _i(x_i)=x_1$ 
and $\tau _i(x_j)=x_j$, 
where $j\in \{ 2,3\} $ with $j\neq i$. 
Then, 
we have 
$\tau _i^{-1}\circ \Aut (\kx /A_1)\circ \tau _i=\Aut (\kx /A_i)$. 
So we show that 
$\Aut (\kx /A_1)$ is contained in $G_m(k)^*$. 
Take any $\phi \in \Aut (\kx /A_1)$. 
Then, we have $\phi (x_1)=\alpha x_1+h$ 
for some $\alpha \in k^{\times }$ and $h\in A_1$. 
Define $\sigma \in G_m(k)^*$ by 
$\sigma (x_1+h)=2(x_1+h)$ 
and $\sigma (x_i)=x_i$ for $i=2,3$. 
Then, 
we have 
$\sigma (x_1)=2x_1+h$, 
and so 
$(\psi _{\alpha /2}\circ \sigma )(x_1)=\alpha x_1+h$. 
Since $(\psi _{\alpha /2}\circ \sigma )(x_i)=x_i$ 
for $i=2,3$, 
we conclude that $\psi _{\alpha /2}\circ \sigma =\phi $. 
Hence, $\phi $ belongs to $G_m(k)^*$. 
Thus, 
$\Aut (\kx /A_1)$ is contained in $G_m(k)^*$. 
Therefore, 
we get $G_m(k)^*=\T (k,\x )^*$. 
\end{proof}

Note that 
$$
\pi :\Aut (\kx /k)\ni \phi \mapsto \det J\phi \in k^{\times }
$$ 
is a homomorphism of groups such that 
$\pi \circ \iota =\id _{k^{\times }}$. 
Hence, 
$\Aut (\kx /k)$ is a semidirect product of 
$\ker \pi $ and $G_m(k)$. 
The {\it Exponential Generators Conjecture} says that 
$\ker \pi $ is generated by $\exp D$ for $D\in \lnd _k\kx $. 
For $\emptyset \neq I\subset \{ 1,2,3\} $, 
we define $\Exp _k^I\kx $ to be the subgroup of $\Aut (\kx /k)$ 
generated by $\exp D$ for $D\in \lnd _k\kx $ with $\rank D=r$ 
for $r\in I$. 
Then, 
$\Exp _k^I\kx $ is a normal subgroup of $\Aut (\kx /k)$ 
for each $\emptyset \neq I\subset \{ 1,2,3\} $. 
However, 
we know nothing about the structure of $\Exp _k^I\kx $.

We would like to conclude this monograph 
with the following remark. 
When $n\geq 4$, 
there exist complicated tame automorphisms. 
Indeed, 
some wild automorphisms in three variables 
extend to tame automorphisms in four or more variables 
by the stable tameness. 
When $n=3$, in contrast, 
the Shestakov-Umirbaev theory suggests that 
the tame automorphisms are simple. 
Indeed, 
the tame automorphisms in three variables 
can be completely controlled 
by elementary reductions and 
some types of reductions. 
We believe that 
no tame automorphism in three variables 
is beyond imagination. 
In other words, 
automorphisms in three variables are 
{\it usually} wild unless 
they are {\it obviously} tame.

\bibliographystyle{amsalpha}

\end{document}